\newtheorem{theorem}{Theorem}[section]
\newtheorem{lemma}[theorem]{Lemma}
\newtheorem{proposition}[theorem]{Proposition}
\newtheorem{corollary}[theorem]{Corollary}
\newtheorem{conjecture}[theorem]{Conjecture}
\newtheorem{condition/definition}[theorem]{Condition/Definition} 
\newtheorem{assumption}[theorem]{Assumption}
\def\bb{\mathbb}
\theoremstyle{definition}
\newtheorem{definition}[theorem]{Definition}
\theoremstyle{remark}
\newtheorem{remark}[theorem]{Remark}
\theoremstyle{definition}
\newtheorem{Example}[theorem]{Example}
\def\ul{\underline}
\def\ol{\overline}
\def\wt{\widetilde}
\def\Nm{\mathrm{Nm}}
\def\Sym{\mathrm{Sym}}
\def\SL{\mathrm{SL}}
\def\U{\mathrm{U}}
\def\mf{\mathfrak}
\def\ra{\rightarrow}
\def\la{\leftarrow}
\def\lim{\mathop{\rm lim}\nolimits}
\def\colim{\mathop{\rm colim}\nolimits}
\def\Spa{\mathop{\rm Spa}}
\def\Spd{\mathop{\rm Spd}}
\def\Ker{\text{Ker}}
\def\Im{\text{Im}}
\def\Coker{\text{Coker}}
\def\Bun{\mathrm{Bun}}
\def\Div{\mathrm{Div}}
\def\Eis{\mathrm{Eis}}
\def\nmEis{\mathrm{nEis}}
\def\IC{\mathrm{IC}}
\def\Hck{\mathrm{Hck}}
\def\Hckloc{\mathcal{H}\mathrm{ck}}
\def\Gr{\mathrm{Gr}}
\def\Ind{\mathrm{Ind}}
\def\Perf{\mathrm{Perf}}
\def\Aut{\mathrm{Aut}}
\def\dim{\mathrm{dim}}
\def\Red{\mathrm{Red}}
\def\Sht{\mathrm{Sht}}
\def\Rep{\mathrm{Rep}}
\def\D{\mathrm{D}}
\def\Sat{\mathrm{Sat}}
\def\RHom{\mathrm{RHom}}
\def\tr{\mathrm{tr}}
\def\GL{\mathrm{GL}}
\def\CT{\mathrm{CT}}
\def\nmCT{\mathrm{nCT}}
\def\Tilt{\mathrm{Tilt}}
\def\ch{\mathrm{ch}}
\def\LLC{\mathrm{LLC}}
\def\GU{\mathrm{GU}}
\def\Gal{\mathrm{Gal}}
\def\CT{\mathrm{CT}}
\newcommand{\Dlis}{\mathrm{D}_{\mathrm{lis}}}
\newcommand{\gamorb}{\mathbb{X}_{*}(T_{\ol{\mathbb{Q}}_{p}})/\Gamma}
\newcommand{\domgamorb}{\mathbb{X}_{*}(T_{\ol{\mathbb{Q}}_{p}})^{+}/\Gamma}
\newcommand{\coinv}{\mathbb{X}_{*}(T_{\ol{\mathbb{Q}}_{p}})_{\Gamma}}
\newcommand{\coinvdom}{\mathbb{X}_{*}(T_{\ol{\mathbb{Q}}_{p}})_{\Gamma}^{+}}
\newcommand{\Q}{\mathbb{Q}}
\newcommand{\C}{\mathbb{C}}
\newcommand{\mc}{\mathcal}
\newcommand{\Trans}{\mathrm{Trans}}
\newcommand{\triv}{\mathrm{triv}}
\newcommand{\id}{\mathrm{id}}
\newcommand{\ov}{\overline}
\newcommand{\Shim}{\mathrm{Sh}}
\newcommand{\A}{\mathbb{A}}
\newcommand{\Ig}{\mathrm{Ig}}
\newcommand{\cochar}{\mathbb{X}_{*}(T_{\ol{\mathbb{Q}}_{p}})}
\newcommand{\domcochar}{\mathbb{X}_{*}(T_{\ol{\mathbb{Q}}_{p}})^{+}}
\title{Geometric Eisenstein Series, Intertwining Operators, and Shin's Averaging Formula}
\author{Linus Hamann, with an Appendix by Alexander Bertoloni-Meli}
\begin{document}

\maketitle
\begin{abstract} In the geometric Langlands program over function fields, Braverman-Gaitsgory \cite{BG} and Laumon \cite{Lau} constructed geometric Eisenstein functors which geometrize the classical construction of Eisenstein series over function fields by replacing the Eisenstein series with an automorphic sheaf on the moduli stack $\Bun_{G}$ of $G$-bundles of the smooth projective curve defined by the function field. These sheaves are Hecke Eigensheaves, encode the completed Eisenstein series under the function-sheaf dictionary, and satisfy a functional equation with respect to the action of the Weyl group. Fargues and Scholze \cite{FS} very recently constructed a general candidate for the local Langlands correspondence, via a geometric Langlands correspondence occurring over the Fargues-Fontaine curve. In this note, we carry some of the theory of geometric Eisenstein series over to the Fargues-Fontaine setting. Namely, given a quasi-split connected reductive group $G/\mathbb{Q}_{p}$ with maximal torus $T$, we study the geometric Eisenstein functor $\nmEis(-)$, carrying sheaves on the moduli stack $\Bun_{T}$ to sheaves on the moduli stack $\Bun_{G}$. We show that, given a sufficiently nice $L$-parameter $\phi_{T}: W_{\mathbb{Q}_{p}} \ra \phantom{}^{L}T$ with induced parameter $\phi$ given by composing with the natural embedding $\phantom{}^{L}T \ra \phantom{}^{L}G$, there is a Hecke eigensheaf  on $\Bun_{G}$ with eigenvalue $\phi$, given by applying $\nmEis(-)$ to the Hecke eigensheaf $\mathcal{S}_{\phi_{T}}$ on $\Bun_{T}$ attached to $\phi_{T}$ constructed by Fargues \cite{Fa,Fa1} and Zou \cite{Zou}. We show that the object $\nmEis(\mathcal{S}_{\phi_{T}})$ interacts well with Verdier duality, and, assuming compatibility of the Fargues-Scholze correspondence with a suitably nice form of the local Langlands correspondence, provide an explicit formula for the stalks of the eigensheaf in terms of parabolic inductions of the character $\chi$ attached to $\phi_{T}$ assuming the regularity of $\chi$. This explicit description of the eigensheaf has several surprising consequences. First, it recovers special cases of an averaging formula of Shin \cite{Shin,BM} for the cohomology of local Shimura varieties with rational coefficients, and generalizes it to the non-minuscule case. Moreover, it refines the averaging formula in the cases where the parameter $\phi$ is sufficiently nice, giving an explicit formula for the degrees of cohomology that certain parabolic inductions sit in, and this refined formula holds even with torsion coefficients. The sufficiently nice condition on $\phi_{T}$ is related to zeros and poles of the intertwining operators attached to the normalized parabolic induction $i_{B}^{G}(\chi)$, and we analogously see that the refined averaging formula actually gives a geometric construction of such intertwiners. 
\end{abstract}
\tableofcontents
\section{Introduction} 
\subsection{Geometric Eisenstein Series over Function Fields}
In the Langlands program, Eisenstein series are a way of describing the non-cuspidal automorphic spectrum of a group in terms of the cuspidal automorphic spectrum of its proper Levi subgroups. Over function fields these objects have several geometric incarnations, as first studied extensively by Laumon \cite{Lau} in the case of $\GL_{n}$ and later refined by Braverman-Gaitsgory \cite{BG} for general reductive groups. In particular, if one is interested in the function field of a curve $Y$ over a finite field $k$ then one replaces the functions defining Eisenstein series by certain automorphic sheaves. Namely, let $G/k$ be a split connected reductive group; then one wishes to construct "Eisenstein sheaves" on $\Bun_{G}$ the moduli stack of $G$-bundles on $Y$. To do this, for $P \subset G$ a proper parabolic subgroup with Levi factor $M$ one considers the following diagram of moduli stacks of bundles
\begin{equation*}
\begin{tikzcd}
&  \Bun_{P} \arrow[r,"\mathfrak{p}_{P}"] \arrow[d,"\mathfrak{q}_{P}"] & \Bun_{G}  \\
& \Bun_{M}  & 
\end{tikzcd}
\end{equation*}
where $\Bun_{P}$ is the moduli stack of $P$-bundles $\mathcal{G}_{P}$ on $Y$ and the maps $\mathfrak{p}_{P}$ (resp. $\mathfrak{q}_{P}$) send $\mathcal{G}_{P}$ to the $G$-bundle (resp. $M$-bundle) $\mathcal{G}_{P} \times^{P} G$ (resp. $\mathcal{G}_{P} \times^{P} M$). Using this, one can define an Eisenstein functor which (up to shifts and twists) is given by
\[ \Eis_{P}(-) := \mathfrak{p}_{P!}\mathfrak{q}_{P}^{*}(-), \]
and it takes $\ell$-adic sheaves on $\Bun_{M}$ to $\ell$-adic sheaves on $\Bun_{G}$. Under the function-sheaf dictionary, the values of this functor give rise to the classical Eisenstein series. 

In this geometric context, one can ask for even more. Namely, in the geometric Langlands correspondence one is interested in constructing Hecke eigensheaves on $\Bun_{G}$, and it is natural to ask whether one could upgrade $\Eis_{P}$ to a functor that is well-behaved with respect to the eigensheaf property. In particular, if $\hat{M}$ (resp. $\hat{G}$) denotes the Langlands dual group of $M$ (resp. $G$) one can consider a $\hat{M}$-local system $E_{\hat{M}}$ and a Hecke eigensheaf $\mathcal{S}_{E_{\hat{M}}}$ with eigenvalue $E_{\hat{M}}$. One then considers the induced $\hat{G}$ local system $E_{\hat{G}}$ given by the natural embedding $\hat{M} \hookrightarrow \hat{G}$, and one would like to construct a functor that produces a eigensheaf with eigenvalue $E_{\hat{G}}$ from the Hecke eigensheaf $\mathcal{S}_{E_{\hat{M}}}$. One might hope that $\Eis_{P}(\mathcal{S}_{E_{\hat{M}}})$ works; however, this is too naive. Namely, one expects such sheaves to be well-behaved under Verdier duality, and one can easily check that $\Eis_{P}(-)$ will not commute with Verdier duality, since the map $\mathfrak{p}_{P}$ is not proper. To remedy this, one considers relative Drinfeld compactifications of the map $\mathfrak{p}_{P}$, denoted $\widetilde{\Bun}_{P}$ and $\overline{\Bun}_{P}$, respectively. These compactifications are equipped with open immersions $\widetilde{j}: \Bun_{P} \hookrightarrow \widetilde{\Bun}_{P}$ and $j: \Bun_{P} \hookrightarrow \ol{\Bun}_{P}$, which realize $\Bun_{P}$ as an open subspace, and are defined by considering parabolic structures with torsion at finitely many Cartier divisors. Moreover, they both have maps
\[ \overline{\mathfrak{p}}_{P}: \overline{\Bun}_{P} \rightarrow \Bun_{G}  \]
and 
\[ \widetilde{\mathfrak{p}}_{P}: \widetilde{\Bun}_{P} \rightarrow \Bun_{G} \]
which are proper after restricting to finitely many connected components and extend $\mf{p}_{P}$, as well as maps $\widetilde{\mathfrak{q}}_{P}: \widetilde{\Bun}_{P} \rightarrow \Bun_{M}$ and $\overline{\mathfrak{q}}_{P}: \overline{\Bun}_{P} \rightarrow \Bun_{M^{ab}}$ extending the natural maps $\mathfrak{q}_{P}: \Bun_{P} \rightarrow \Bun_{M}$ and $\mathfrak{q}_{P}^{\dagger}: \Bun_{P} \xrightarrow{\mathfrak{q}_{P}} \Bun_{M} \rightarrow \Bun_{M^{ab}}$, respectively. 

To obtain a sheaf that interacts well with Verdier duality, one needs to take account for the singularities of the compactification. Namely, if one considers the intersection cohomology sheaf $\IC_{\widetilde{\Bun}_{P}}$ of $\wt{\Bun}_{P}$ then the desired functor is given by
\[ \widetilde{\Eis}_{P}(-) := \widetilde{\mathfrak{p}}_{P*}(\widetilde{\mathfrak{q}}_{P}^{*}(-) \otimes \IC_{\widetilde{\Bun}_{P}})\]
One of the main results of Braverman-Gaitsgory \cite{BG} is that this satisfies the desired Hecke eigensheaf property when applied to $\mathcal{S}_{E_{\hat{M}}}$. One may also wonder what this corresponds to at the level of functions. We recall that the classical Eisenstein series is known to satisfy a functional equation after multiplying by an appropriate ratio of $L$-values. The compactified Eisenstein series corresponds to this completed Eisenstein series under the function-sheaf dictionary. In fact, in certain cases one can see the usual functional equation at the sheaf-theoretic level. If $P = B$ is the Borel and we consider the appropriately normalized Hecke eigensheaf $\mathcal{S}_{E_{\hat{T}}}$ associated to $E_{\hat{T}}$ via geometric class field theory then $\widetilde{\Eis}(\mathcal{S}_{\phi_{T}})$ satisfies a functional equation under a regularity hypothesis on the local system $E_{\hat{T}}$. Now let $w \in W_{G}$ be an element of the Weyl group of $G$ with $\widetilde{w} \in N(T)$ a choice of representative. Then $\wt{w}$ acts on $\Bun_{T}$, and, if we write $\mathcal{S}_{E_{\hat{T}}}^{w}$ for the pullback of $\mathcal{S}_{E_{\hat{T}}}$ along this automorphism, we have the following result.
\begin{theorem}{\cite[Theorem~2.2.4]{BG}}{\label{classicalfunctequation}}
For $E_{\hat{T}}$ a regular $\hat{T}$-local system on $Y$ and a choice of representative $\widetilde{w} \in N(\hat{T})$ of $w \in W_{G}$, we have an isomorphism 
\[ \widetilde{\Eis}_{B}(\mathcal{S}_{E_{\hat{T}}}) \simeq \widetilde{\Eis}_{B}(\mathcal{S}^{w}_{E_{\hat{T}}}) \]
of $\ell$-adic sheaves on $\Bun_{G}$.
\end{theorem}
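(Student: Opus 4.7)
The plan is to follow the standard reduction in the compactified-Eisenstein formalism: reduce from general Weyl elements to simple reflections, then from a simple reflection to a rank-one calculation, and finally use the regularity hypothesis on $E_{\hat{T}}$ to kill the boundary contributions coming from the singularities of $\widetilde{\Bun}_B$.

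First, I would argue by induction on the length of $w$. Choosing a reduced decomposition $w = s_{\alpha_1}\cdots s_{\alpha_\ell}$ into simple reflections, note that regularity of $E_{\hat{T}}$ is preserved under the Weyl action, so every intermediate local system $\mathcal{S}_{E_{\hat{T}}}^{s_{\alpha_{i+1}}\cdots s_{\alpha_\ell}}$ is again regular. Composing the simple-reflection isomorphisms produces the desired isomorphism for $w$. This reduces the theorem to the case $w = s_\alpha$ for a single simple root $\alpha$.

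For a simple $\alpha$, I would introduce the sub-minimal parabolic $P_\alpha \supset B$ with Levi $M_\alpha$ of semisimple rank one, and its Borel $B_{M_\alpha} = B \cap M_\alpha$. The key geometric input is a factorization/transitivity property of the compactified Eisenstein functor, schematically
\[
\widetilde{\Eis}_B^G \simeq \widetilde{\Eis}_{P_\alpha}^G \circ \widetilde{\Eis}_{B_{M_\alpha}}^{M_\alpha},
\]
when both sides are applied to a Hecke eigensheaf of the form $\mathcal{S}_{E_{\hat{T}}}$. This identification rests on the fact that the stratification of $\widetilde{\Bun}_B$ by "defect" is compatible with the relative position of the $B$- and $P_\alpha$-structures, so that $\IC_{\widetilde{\Bun}_B}$ decomposes in the appropriate way along $\widetilde{\Bun}_{P_\alpha}$. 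Since $s_\alpha$ acts trivially on $\Bun_{M_\alpha^{\mathrm{ab}}}$ and the outer functor $\widetilde{\Eis}_{P_\alpha}^G$ does not see the Weyl action, the problem collapses onto the inner functor, reducing everything to the functional equation for a rank-one group.

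For semisimple rank one (where $M_\alpha^{\mathrm{der}} \cong \SL_2$ or $\mathrm{PGL}_2$) the compactification $\widetilde{\Bun}_{B_{M_\alpha}}$ has a concrete description as the moduli space of rank-two bundles equipped with a, possibly degenerate, line subbundle, and one can compute $\widetilde{\Eis}_{B_{M_\alpha}}^{M_\alpha}(\mathcal{S}_{E_{\hat{T}}})$ explicitly. The comparison with its $s_\alpha$-twist amounts to analyzing the contributions of boundary strata in $\widetilde{\Bun}_B$, whose stalks of $\IC_{\widetilde{\Bun}_B}$ can be written in terms of weight spaces of representations of $\hat{G}$; regularity of $E_{\hat{T}}$ precisely ensures that all of these "non-leading" boundary contributions acquire a non-trivial monodromy and therefore contribute zero cohomology, so that the two sides become isomorphic to a common expression.

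The main obstacle in this plan is the factorization/local-model description of $\IC_{\widetilde{\Bun}_B}$ along the boundary used in step two. Identifying these stalks in terms of $\hat{G}$-representations, and then matching the resulting Hecke action in a Weyl-equivariant way, is the substantive geometric content of Braverman--Gaitsgory: without such control the rank-one reduction is heuristic, and it is exactly this identification that converts the regularity hypothesis on $E_{\hat{T}}$ into the vanishing of the unwanted boundary terms.
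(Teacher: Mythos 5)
This statement is recalled from \cite[Theorem~2.2.4]{BG} and is not reproved in the present paper, so there is no in-paper proof to compare against; but your sketch does track the Braverman--Gaitsgory proof closely. The three ingredients you name---reduction to a simple reflection $s_\alpha$ using that regularity is a $W_G$-stable condition, transitivity of the compactified Eisenstein functor through the subminimal parabolic $P_\alpha$, and a rank-one computation showing the boundary contributions vanish under regularity---are essentially what BG do, and you rightly identify the transitivity isomorphism $\widetilde{\Eis}_B^G \simeq \widetilde{\Eis}_{P_\alpha}^G \circ \widetilde{\Eis}_{B_{M_\alpha}}^{M_\alpha}$ as the substantive geometric input (it is a separate theorem in \cite{BG}, resting on their analysis of $\widetilde{\Bun}_P$ and its local models). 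One small inaccuracy: the inner functor $\widetilde{\Eis}_{B_{M_\alpha}}^{M_\alpha}$ lands in $\D(\Bun_{M_\alpha})$, not $\D(\Bun_{M_\alpha^{\mathrm{ab}}})$, so the remark that $s_\alpha$ acts trivially on $\Bun_{M_\alpha^{\mathrm{ab}}}$ is tangential---what makes the collapse work is simply that the Weyl twist acts on $\Bun_T$, so it is entirely absorbed by the inner rank-one functor and the outer $\widetilde{\Eis}_{P_\alpha}^G$ is applied to the same object on both sides.
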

As alluded to above, one can see that, after passing to functions, this gives precisely the well-known functional equation for the Eisenstein series multiplied by the appropriate ratio of $L$-values (\cite[Section~2.2]{BG}). Moreover, by  \cite[Theorem~1.5]{BG2}, under the regularity assumption the sheaf $\widetilde{\Eis}_{B}(\mathcal{S}_{E_{\hat{T}}})$ is perverse.

The main goal of this note is to explore what this theory of geometric Eisenstein series has to tell us in the context of the recent geometric Langlands correspondence constructed by Fargues and Scholze.
\subsection{Hecke Eigensheaves over the Fargues-Fontaine Curve}
Consider two distinct primes $\ell \neq p$. Let $G/\mathbb{Q}_{p}$ be a quasi-split connected reductive group over the $p$-adic numbers, and set $\Breve{\mathbb{Q}}_{p}$ to be the completion of the maximal unramified extension of $\mathbb{Q}_{p}$ with Frobenius $\sigma$. Let $W_{\mathbb{Q}_{p}} \subset \Gamma := \mathrm{Gal}(\ol{\mathbb{Q}}_{p}/\mathbb{Q}_{p})$ denote the Weil group. In \cite{FS}, Fargues and Scholze developed the geometric framework required to make sense of objects like $\Bun_{G}$, the moduli stack of $G$-bundles on the Fargues-Fontaine curve $X$, and show that the local Langlands correspondence for $G$ can be viewed as a geometric Langlands correspondence over $X$. Namely, they prove a version of Geometric Satake in this setup, allowing them to construct Hecke operators and in turn excursion operators. Their Hecke operators take sheaves on $\Bun_{G}$ to sheaves on $\Bun_{G}$ with a continuous $W_{\mathbb{Q}_{p}}$-action, via some version of Drinfeld's Lemma. As a consequence, they were able to construct semi-simple Langlands parameters $\phi^{\mathrm{FS}}_{\pi}: W_{\mathbb{Q}_{p}} \ra \phantom{}^{L}G(\ol{\mathbb{Q}}_{\ell})$ attached to any smooth irreducible representation $\pi$ of $G(\mathbb{Q}_{p})$.

Predating the construction of the Fargues-Scholze local Langlands correspondence was Fargues' conjecture as formulated in \cite[Conjecture~4.4]{Fa}. This asserted the existence of Hecke Eigensheaves attached to supercuspidal $L$-parameters $\phi: W_{\mathbb{Q}_{p}} \rightarrow \phantom{}^{L}G(\overline{\mathbb{Q}}_{\ell})$, where $\phantom{}^{L}G = \hat{G} \ltimes W_{\mathbb{Q}_{p}}$ is the $L$-group of $G$. More precisely, given such a $\phi$, Fargues conjectured the existence of a $\overline{\mathbb{Q}}_{\ell}$-sheaf $\mathcal{S}_{\phi}$ on $\Bun_{G}$ such that, if one acts via a Hecke operator $T_{V}$ corresponding to a representation $V \in \Rep_{\ol{\mathbb{Q}}_{\ell}}(\phantom{}^{L}G)$ then there is an isomorphism
\[ T_{V}(\mathcal{S}_{\phi}) \simeq \mathcal{S}_{\phi} \boxtimes r_{V} \circ \phi\]
of sheaves on $\Bun_{G}$ with continuous $W_{\mathbb{Q}_{p}}$-action satisfying natural compatiblities. The moduli stack $\Bun_{G}$ is stratified by elements of the Kottwitz set $B(G) := G(\Breve{\mathbb{Q}}_{p})/(b \sim gb\sigma(g)^{-1})$, giving rise to Harder-Narasimhan (abbv. HN)-strata $\Bun_{G}^{b}$ for all $b \in B(G)$. It was conjectured that the sheaf $\mathcal{S}_{\phi}$ must be supported on the basic locus $\bigsqcup_{b \in B(G)_{basic}} \Bun_{G}^{b}$ or, in bundle-theoretic terms, the locus defined by semistable bundles. Each of the basic strata $\Bun_{G}^{b}$ are isomorphic to the classifying stack $[\ast/\ul{J_{b}(\mathbb{Q}_{p})}]$, where $J_{b}$ is the $\sigma$-centralizer attached to $b \in B(G)$. The $\sigma$-centralizers of the basic elements parameterize extended pure inner forms of $G$ in the sense of Kottwitz \cite{Kott}, and the restrictions of the sheaf to these classifying stacks can be interpreted as smooth representations of $J_{b}(\mathbb{Q}_{p})$. Using this, Fargues also gave a conjectural description for what the restrictions of the sheaf should be given by. In particular, for $b \in B(G)_{basic}$ the restriction to $\Bun_{G}^{b}$ should be a direct sum (up to multiplicities)\footnote{There are no higher multiplicities if the centralizer of $\phi$ is abelian.}$\bigoplus_{\pi \in \Pi_{\phi}(J_{b})} \pi$, where $\Pi_{\phi}(J_{b})$ is the $L$-packet over $\phi$ as conjectured by Kaletha's refined local Langlands correspondence for $G$ \cite{Kal}. Assuming the refined local Langlands, the verification of the Hecke eigensheaf property ultimately reduces to a strong form of the Kottwitz conjecture for the cohomology of a space of shtukas parameterizing modifications
\[ \mathcal{F}_{b} \rightarrow \mathcal{F}_{b'} \]
on $X$ bounded by a geometric dominant cocharacter $\mu$, where $b,b' \in B(G)_{basic}$ are appropriately chosen basic elements with respect to $\mu$, and $\mathcal{F}_{b}$ and $\mathcal{F}_{b'}$ denote the bundles on $X$ corresponding to $b,b' \in B(G)$.  

Since the work of Fargues-Scholze, the construction of this eigensheaf has been carried out in several cases. For tori, it follows from the work of Fargues \cite{Fa,Fa1}, and Zou \cite{Zou}. For $G = \GL_{n}$, this is a result of Ansch\"utz and Le-Bras \cite{AL}. For general reductive groups, a somewhat general strategy for constructing this eigensheaf in the particular case of the group $\mathrm{GSp}_{4}$ is layed out in \cite{Ham}, by showing compatibility of the Fargues-Scholze correspondence with the refined local Langlands correspondence of Kaletha \cite{Kal}, and then using this to deduce the non-minuscule cases of the Kottwitz conjecture required for the verification of the Hecke eigensheaf property via the spectral action \cite[Section~X.2]{FS}. This strategy is carried out for odd unitary groups in the paper \cite{BMNH}. We recall that the fibers of the local Langlands correspondence over supercuspidal parameters should solely consist of supercuspidal representations. Therefore, the above eigensheaves can be thought of as analogous to supercuspidal representations in the classification of smooth irreducible representations. To have a more definitive connection between the theory of smooth representations and Fargues' eigensheaves, it becomes desirable to construct eigensheaves that serve as the analogues of parabolic inductions of supercuspidals, which will analogously be "parabolically induced" from the eigensheaves attached to supercuspidal parameters. This will be precisely what carrying over the theory of the previous section to the Fargues-Fontaine setting gives us.
\subsection{Geometric Eisenstein Series over the Fargues-Fontaine Curve}
We will assume that $G$ is a quasi-split group. Let $A \subset T \subset B \subset G$ be a choice of maximal split torus, maximal torus, and Borel, respectively. In this paper, we will be concerned with studying the geometric Eisenstein functor over the Fargues-Fontaine curve in the principal case (i.e where the parabolic $P \subset G$ is the Borel). Restricting to the principal case has several advantages. For one, one can unconditionally speak about the Hecke eigensheaf attached to a parameter $\phi_{T}: W_{\mathbb{Q}_{p}} \ra \phantom{}^{L}T$ valued in the maximal torus. Additionally, in this case there exists only one Drinfeld compactification $\ol{\Bun}_{B}$ of the moduli space of $B$-structures $\Bun_{B}$, which has a fairly manageable geometry. As mentioned in \S 1.1, there are in general two compactifications $\ol{\Bun}_{P}$ and $\widetilde{\Bun}_{P}$. The compactification $\ol{\Bun}_{P}$ has a relatively simple geometry and can be understood more or less the same as $\ol{\Bun}_{B}$. The problem is that $\ol{\Bun}_{P}$ admits only a map to $\Bun_{M^{ab}}$ and not to $\Bun_{M}$. This means we can prove analogous results for the Hecke eigensheaves attached to characters induced from $\phi: W_{\mathbb{Q}_{p}} \ra \phantom{}^{L}M^{ab}$ which correspond to generalized principal series representations induced from $M$. We have chosen not to do this in this note for simplicity. However, if one wants to consider inductions of Hecke eigensheaves attached to a general supercuspidal parameter $\phi_{M}: W_{\mathbb{Q}_{p}} \ra \phantom{}^{L}M$ then one is forced to understand the much more complicated geometry of the space $\widetilde{\Bun}_{P}$. Certainly, some analogues of the results proven in this paper should be possible, but there are many technical hurdles that need to be overcome (for some partial geometric results in this directions see \cite{HHS}).
\subsubsection{Geometric Eisenstein Series}
Throughout, we will let $\phi_{T}: W_{\mathbb{Q}_{p}} \rightarrow \phantom{}^{L}T(\Lambda)$ be a continuous parameter valued in the $L$-group of $T$, where $\Lambda \in \{\ol{\mathbb{F}}_{\ell},\ol{\mathbb{Z}}_{\ell},\ol{\mathbb{Q}}_{\ell}\}$ has the discrete topology.  Our aim is to construct an eigensheaf (Definition \ref{defeigsheaf}) with eigenvalue $\phi$, the composition of $\phi_{T}$ with the natural embedding $\phantom{}^{L}T(\Lambda) \rightarrow \phantom{}^{L}G(\Lambda)$. This will be an object in $\Dlis(\Bun_{G},\Lambda)$ the category of lisse-\'etale solid $\Lambda$-sheaves, as defined in \cite[Chapter~VII]{FS}. We do not work directly with this category of solid sheaves in our argument, as the usual six functors are not as well behaved in this case. Instead, we will first restrict to the case where $\Lambda = \ol{\mathbb{F}}_{\ell}$ and one has an isomorphism $\Dlis(\Bun_{G},\ol{\mathbb{F}}_{\ell}) \simeq \D_{\text{\'et}}(\Bun_{G},\ol{\mathbb{F}}_{\ell})$ with the usual category of \'etale $\ol{\mathbb{F}}_{\ell}$ sheaves as defined in \cite{Ecod}. We then construct the lisse-\'etale sheaves with $\ol{\mathbb{Z}}_{\ell}$ and $\ol{\mathbb{Q}}_{\ell}$ coefficients from this sheaf. For the first part of this section, we will always assume that $\Lambda = \ol{\mathbb{F}}_{\ell}$ unless otherwise stated, and to simplify the notation, we adopt the convention that, when working with $\Lambda = \ol{\mathbb{F}}_{\ell}$, we will denote the derived category of \'etale sheaves on a $v$-stack or diamond $Z$ by simply writing $\D(Z)$. We will additionally assume that $\ell$ is very decent throughout in the following sense: 
\begin{enumerate}
\item We have that $\ell \nmid \pi_{0}(Z(G))$, where $Z(G)$ denotes the center of $G$.
\item We have that $\ell \nmid |Q|$, where $Q$ is the smallest quotient through which the $W_{\mathbb{Q}_{p}}$, the Weil group of $\mathbb{Q}_{p}$, acts on the Langlands dual group $\hat{G}$. 
\end{enumerate}
In order to avoid complications in the $\ell$-modular setting. In particular,
the former assumption will guarantee that we have access to the spectral action constructed in Fargues-Scholze and more notably the excurison algebra, and the latter assumption will guarantee that various categories of tilting modules of algebraic representations are well-behaved.

We let $\mathcal{S}_{\phi_{T}}$ be the eigensheaf on the moduli stack $\Bun_{T}$ parameterizing $T$-bundles on $X$ attached to $\phi_{T}$ by Fargues \cite{Fa1,Fa1} and Zou \cite{Zou}. Our aim is to construct an eigensheaf with respect to the parameter $\phi$ by applying a geometric Eisenstein functor to $\mathcal{S}_{\phi_{T}}$. To do this, one needs to show that the relevant geometric objects used in defining geometric Eisenstein series are well-behaved in this framework. Namely, one can show that the moduli stack of $B$-bundles on $X$, denoted $\Bun_{B}$, gives rise to an Artin $v$-stack and the maps in the natural diagram 
\begin{equation}{\label{eqn: pullpush}}
\begin{tikzcd}
&  \Bun_{B} \arrow[dr,"\mathfrak{p}"] \arrow[d,"\mathfrak{q}"] &   \\
& \Bun_{T}  & \Bun_{G} 
\end{tikzcd}
\end{equation}
have good geometric properties; $\mathfrak{q}$ is a cohomologically smooth (non-representable) map of Artin $v$-stacks, and $\mathfrak{p}$ is compactifiable and representable in locally spatial diamonds (See \S \ref{bstrucproperties}). It follows that one has a well-defined functor given by $\mathfrak{p}_{!}\mathfrak{q}^{*}:\D(\Bun_{T}) \ra \D(\Bun_{G})$ using the functors constructed in \cite{Ecod}. 

In order to make the functor $\mathfrak{p}_{!}\mathfrak{q}^{*}$ behave well with respect to Verdier duality, one needs to take into account the appropriate shifts and twists coming from the dualizing object on $\Bun_{B}$. Namely, using the cohomological smoothness of $\mf{q}$ and $\Bun_{T}$, it is easy to see that the moduli stack $\Bun_{B}$ is cohomologically smooth of some $\ell$-dimension given by a locally constant function $\dim(\Bun_{B}): |\Bun_{B}| \ra \mathbb{Z}$, where $|\Bun_{B}|$ is the underlying topological space of $\Bun_{B}$. It follows that, $v$-locally on $\Bun_{B}$, the dualizing object is given by $\Lambda[2\dim(\Bun_{B})]$. This leads us to our first attempt to define the Eisenstein functor as $\Eis(-) := \mf{p}_{!}(\mf{q}^{*}(-)[\dim(\Bun_{B})])$. While this definition is closer to what we want, it has one key flaw; even though the dualizing object on $\Bun_{B}$ is $v$-locally isomorphic to $\Lambda[2\dim(\Bun_{B})]$ it is not equal to this sheaf on the nose. In particular, there are certain modulus character twists. 

These modulus character twists can be encoded by the sheaf $\Delta_{B}^{1/2}$ on $\Bun_{T}$, which will be the Hecke eigensheaf attached to the $L$-parameter
\[ \hat{\rho} \circ |\cdot|: W_{\mathbb{Q}_{p}} \ra \phantom{}^{L}T(\Lambda) \]
where $\hat{\rho}$ denotes the character of $\phantom{}^{L}T$ defined by the positive roots of $G$, and we abusively write $|\cdot|: W_{\mathbb{Q}_{p}} \ra \Lambda^{*}$ for the norm character of $W_{\mathbb{Q}_{p}}$ and we fix a square root in $\Lambda$. As a sheaf on $\Bun_{T}$, the stalks on the connected components $\Bun_{T}^{\nu} \simeq [\ast/\ul{T(\mathbb{Q}_{p})}]$ will just be given by the character $\delta_{B}^{1/2}:T(\mathbb{Q}_{p}) \ra \Lambda^{*}$. Similarly, we write $\Delta_{B}$ for the Hecke eigensheaf with eigenvalue given by the $L$-parameter $2\hat{\rho} \circ |\cdot|$. It now follows from \cite[Theorem~1.6]{GH} that the dualizing complex on $\Bun_{B}$ is isomorphic to $\mf{q}^{*}(\Delta_{B})[2\dim(\Bun_{B})]$, and it follows that the sheaf $\IC_{\Bun_{B}} := \mf{q}^{*}(\Delta_{B}^{1/2})[\dim(\Bun_{B})]$ is Verdier self-dual. Therefore,  we define the normalized Eisenstein functor to be
\[ \nmEis(-) := \mf{p}_{!}(\mf{q}^{*}(-) \otimes \IC_{\Bun_{B}}) \]
This is already very suggestive. Indeed, the (unnormalized) Eisenstein functor will have stalks related to the unnormalized parabolic induction of the characters $\chi$, and the normalized Eisenstein series will have stalks related to the normalized parabolic induction. Moreover, just as smooth duality interacts nicely with normalized parabolic induction so too does Verdier duality with the normalized Eisenstein functor. In order to study how the normalized Eisenstein functor interacts with Verdier duality, it becomes very natural to want a nice compactification of the morphism $\mf{p}$, as $\Eis$ involves the functor $\mf{p}_{!}$. As seen in \S $1.1$, this can be accomplished by considering an analogue of the Drinfeld compactification $\ol{\Bun}_{B}$. This object was studied in the Fargues-Scholze context for more general parabolics in \cite{HHS}. In particular, it was shown there that there exists an Artin $v$-stack $\overline{\Bun}_{B}$ equipped with an open immersion
\[ j: \Bun_{B} \hookrightarrow \overline{\Bun}_{B} \]
such that the map $\overline{\mf{p}}: \overline{\Bun}_{B} \ra \Bun_{G}$ is proper (after restricting to finitely many connected components of $\Bun_{T}$).

Now, we would like to claim that the sheaf $\nmEis(\mathcal{S}_{\phi_{T}})$ is a Hecke eigensheaf with respect to the parameter $\phi$ given by composing $\phi_{T}$ with the natural embedding $\phantom{}^{L}T \ra \phantom{}^{L}G$; however, in analogy with \S $1.1$, the right object to consider in this case is not $\nmEis(\mathcal{S}_{\phi_{T}})$, but rather a compactified version $\ol{\Eis}(\mathcal{S}_{\phi_{T}})$. Unfortunately, there is currently no well-behaved formalism for intersection cohomology in the context of diamonds and $v$-stacks. This prevents us from even defining the kernel sheaf $\IC_{\ol{\Bun}_{B}}$ typically used in the definition of $\ol{\Eis}(-)$ in any naive way. There is however a way out if we impose some conditions on our parameter $\phi_{T}$. We write $\gamorb$ for the set of $\Gamma$-orbits of geometric cocharacters. Given an element $\nu \in \gamorb$, we can attach to it a representation of $\phantom{}^{L}T$ by inducing the representation of $\hat{T}$ defined by a representative of the orbit $\nu$ in $\mathbb{X}_*(T_{\ol{\mathbb{Q}}_{p}})$. We consider the composition $\nu \circ \phi_{T}$, and we will say that $\phi_{T}$ is generic if the Galois cohomology complexes
\[ R\Gamma(W_{\mathbb{Q}_{p}}, \alpha \circ \phi_{T}) \]
are trivial for all $\alpha \in \gamorb$ defined by the $\Gamma$-orbits of coroots in $\mathbb{X}_{*}(T_{\ol{\mathbb{Q}}_{p}})$. This condition may appear mysterious; but there are several ways to see why it is the morally correct condition. Perhaps the most compelling comes from local representation theory. As mentioned in \S $1.1$, the compactified Eisenstein series in the function field setting corresponds to the completed Eisenstein series under the function sheaf dictionary, while the non-compactified Eisenstein series corresponds to just the usual Eisenstein series. In particular, the sheaf $\IC_{\ol{\Bun}_{B}}$ encodes the zeros and poles of the meromorphic continuation of the Eisenstein series. If $\chi$ denotes the character of $T(\mathbb{Q}_{p})$ attached to $\phi_{T}$ via local class field theory then we recall that, if $w \in W_{G}$ is an element of the relative Weyl group, the local analogue of the meromorphic continuation of Eisenstein series is the theory of intertwining operators. In particular, suppose that $\Lambda = \ol{\mathbb{Q}}_{\ell}$, then we have maps 
\[ i_{\chi,w}: i_{B}^{G}(\chi) \ra i_{B}^{G}(\chi^{w}) \]
of smooth $G(\mathbb{Q}_{p})$-representations, which can be viewed as meromorphic functions on the set of complex unramified characters by twisting $\chi$. Now, using local Tate-duality, it is easy to see that the vanishing of the above complexes will imply that $\chi$ pre-composed with coroots is not trivial or isomorphic to a power of the norm character. These are precisely the type of conditions one expects to guarantee that the intertwining operators are holomorphic and give rise to an isomorphism. In fact in the appendix, we show that, for $\chi$ attached to a generic parameter $\phi_{T}$, we always have an isomorphism $i_{B}^{G}(\chi) \simeq i_{B}^{G}(\chi^{w})$ for any $w \in W_{G}$ (Proposition \ref{prop: genericinterisom}). 

This suggests that, at least heuristically, we should always have an isomorphism  $\nmEis(\mathcal{S}_{\phi_{T}}) \simeq \ol{\Eis}(\mathcal{S}_{\phi_{T}})$, for $\phi_{T}$ satisfying some version of genericity and regularity and any reasonable definition of $\ol{\Eis}(\mathcal{S}_{\phi_{T}})$. Indeed, this makes sense when we look at the geometry of $\ol{\Bun}_{B}$; in particular, the stack $\ol{\Bun}_{B}$ admits a locally closed stratification given by $\Div^{(\ol{\nu})} \times \Bun_{B}$, where $\Div^{(\ol{\nu})}$ is a certain partially symmetrized version of the mirror curve $\Div^{1}$ parameterizing effective Cartier divisors in $X$ attached to $\ol{\nu}$ inside the coinvariant lattice $\mathbb{X}_{*}(T_{\ol{\mathbb{Q}}_{p}})_{\Gamma}$. We recall that there is a natural map $(-)_{\Gamma}: \gamorb \ra \mathbb{X}_{*}(T_{\ol{\mathbb{Q}}_{p}})_{\Gamma}$, from $\Gamma$-orbits to coinvariants. This map defines an injection on the $\Gamma$-orbits of the simple positive coroots. In particular, for each vertex $i \in \mathcal{J}$ of the relative Dynkin diagram of $G$, we get an element $\alpha_{i}$ in the coinvariant lattice, which corresponds to a $\Gamma$-orbit of positive simple coroots. The natural strata of $\ol{\Bun}_{B}$ are specified by elements $\ol{\nu} \in \coinv$ lying in the positive span of these $\alpha_{i}$, and each strata corresponds to the locus of $B$-bundles with torsion specified by $\ol{\nu}$. Now, the restriction of $\ol{\mf{q}}^{*}(\mathcal{S}_{\phi_{T}})$ to this strata is given by pulling back a Hecke operator on $\Bun_{T}$ applied to $\mathcal{S}_{\phi_{T}}$ along $\mf{q}$. One can deduce that the factor appearing on the divisor curve $\Div^{(\ol{\nu})}$ will be related to $\alpha_{i} \circ \phi_{T}$, via the Hecke eigensheaf property $T_{\alpha_{i}}(\mathcal{S}_{\phi_{T}}) \simeq \alpha_{i} \circ \phi_{T} \boxtimes  \mathcal{S}_{\phi_{T}}$ for $\mathcal{S}_{\phi_{T}}$, where we have identified $\alpha_{i}$ with its associated $\Gamma$-orbit via $(-)_{\Gamma}$. This implies that the complex $R\Gamma(W_{\mathbb{Q}_{p}},\alpha_{i} \circ \phi_{T})$ appears in $\ol{\mf{p}}_{!}$ applied to this restriction as a tensor factor, and will in turn vanish for $\phi_{T}$ generic, suggesting an isomorphism of the form $\nmEis(\mathcal{S}_{\phi_{T}}) \simeq \ol{\Eis}(\mathcal{S}_{\phi_{T}})$ in this case.

We can turn these heuristics into actual math. In particular, since we expect an isomorphism of the form $\nmEis(\mathcal{S}_{\phi_{T}}) \simeq \ol{\Eis}(\mathcal{S}_{\phi_{T}})$ under some verison of genericity and regularity, we should expect for such parameters that $\nmEis(\mathcal{S}_{\phi_{T}})$ behaves well under Verdier duality, is a perverse Hecke eigensheaf with eigenvalue $\phi$, and satisfies the analogue of the functional equation seen in Theorem \ref{classicalfunctequation} in this case. The precise conditions on $\phi_{T}$ that we will need to prove our results will depend on the result and the particular group $G$. For this reason, we break up our condition into two parts. 
\begin{condition/definition}{(Condition/Definition \ref{normregcond})}
Given a parameter $\phi_{T}: W_{\mathbb{Q}_{p}} \ra \phantom{}^{L}T(\Lambda)$, we impose the following conditions on $\phi_{T}$ in what follows. 
\begin{enumerate} 
    \item For all $\alpha \in \gamorb$ defined by the $\Gamma$-orbits of coroots in $\mathbb{X}_{*}(T_{\ol{\mathbb{Q}}_{p}})$, the Galois cohomology complex $R\Gamma(W_{\mathbb{Q}_{p}},\alpha \circ \phi_{T})$ is trivial.
    \item If $\chi$ is the character attached to $\phi_{T}$ under local class field theory. We have, for all $w \neq 1$ in the relative Weyl group $W_{G}$ of $G$, that $\chi$ is regular. I.e 
    \[ \chi \not\simeq \chi^{w} \]
\end{enumerate}
If $\phi_{T}$ satisfies (1) then we say it is generic, and if it satisfies (1) and (2) then we say it is generic regular.
\end{condition/definition}
\begin{remark}{\label{rem: normregvirred}}
These conditions are related to the behavior of the principal series representations $i_{B}^{G}(\chi)$ of $G$. Roughly speaking, Condition (1) guarantees the irreducibility of non-unitary principal series representations and that the intertwining operators for $i_{B}^{G}(\chi)$ are isomorphisms, while Condition (2) guarantees the irreducibility of unitary principal series representations (See Lemma \ref{unitaryirred}). As we will see in Appendix \ref{append: irred of principal series}, genericity is enough to guarantee that one has an isomorphism $i_{B}^{G}(\chi) \simeq i_{B}^{G}(\chi^{w})$ for all $w \in W_{G}$. However, one could still have an isomorphism of this form if $i_{B}^{G}(\chi)$ is the reducible induction of a unitary character $\chi$. This can happen if the character $\chi$ is not regular (i.e it is fixed by some $w \in W_{G}$). To illustrate this, note that, for $G = \GL_{n}$, if we write $\phi_{T} = \bigoplus_{i = 1}^n \phi_{i}$ as a sum of characters then genericity is equivalent to supposing that 
\[ R\Gamma(W_{\mathbb{Q}_{p}},\phi_{i}^{\vee} \otimes \phi_{j}) \]
is trivial for all $i \neq j$. We note that, by local Tate-duality and using that the Euler-Poincar\'e characteristic of this complex is $0$, genericity is equivalent to assuming that $\phi_{i}$ is not isomorphic to $\phi_{j}$ or $\phi_{j}(1)$. If we write $\chi = \chi_{1} \otimes \ldots \otimes \chi_{n}$ as a product of characters of $\mathbb{Q}_{p}^{*}$ then this implies that $\chi_{i}^{-1}\chi_{j} \not\simeq |\cdot|^{\pm 1}$ for all $i > j$, which is precisely the condition guaranteeing irreducibility. On the other hand, if $G = \mathrm{SL}_{2}$, and we write $\chi$ for the character of $\mathbb{Q}_{p}^{*}$ attached to $\phi_{T}$ via local class field theory, we need that $\chi \not\simeq |\cdot|^{\pm 1}$ and $\chi^{2} \not\simeq \mathbf{1}$ to guarantee irreducibility of $i_{B}^{G}(\chi)$. The condition $\chi \not\simeq |\cdot|^{\pm 1}$ is guaranteed by Condition (1), and Condition (2) is equivalent to $\chi^{2} \not\simeq \mathbf{1}$.
\end{remark}
\begin{remark}{\label{nomregvgeneric}}
The choice of calling a parameter satisfying Condition (1) generic is motivated by the analogous notion of decomposed generic considered by Caraiani and Scholze \cite[Definition~1.9]{CS}. In particular, we note that if $\phi_{T}$ is unramified and $G = \GL_{n}$ then, by the previous remark, we see that Condition (1) is precisely equivalent to $\phi_{T}$ being decomposed generic. 
\end{remark}
Genericity will be used to limit possible stalks of $\nmEis(\mathcal{S}_{\phi_{T}})$ using some assumed properties of the Fargues-Scholze local Langlands correspondence, while regularity will be needed to show the geometric Eisenstein series commutes with Verdier duality, as well as fully compute the stalks and show that it satisfies the analogous functional equation to Theorem 1.1.  Unfortunately, in general to get the Hecke eigensheaf property for $\nmEis(\mathcal{S}_{\phi_{T}})$ we still need more. Write $(-)^{\Gamma}: \cochar \ra \gamorb$ for the natural map from geometric cocharacters to their Galois orbits. Given a geometric dominant cocharacter $\mu \in \domcochar$ with Galois orbit $\mu^{\Gamma} \in \domgamorb$, we have an associated representation $V_{\mu^{\Gamma}} \in \Rep_{\Lambda}(\phantom{}^{L}G)$. The weights of $V_{\mu^{\Gamma}}|_{\phantom{}^{L}T}$ can be interpreted in terms of the representations corresponding to the Galois orbits of weights in the usual highest weight representation $V_{\mu}$ of $\hat{G}$. The following condition will guarantee the Hecke eigensheaf property for $\nmEis(\mathcal{S}_{\phi_{T}})$, and the Hecke operator defined by $V_{\mu^{\Gamma}}$. 
\begin{definition}{(Definition \ref{def: strongmureg})}
For a toral parameter $\phi_{T}: W_{\mathbb{Q}_{p}} \ra \phantom{}^{L}T(\Lambda)$ and a geometric dominant cocharacter $\mu$, we say $\phi_{T}$ is strongly $\mu$-regular if the Galois cohomology complexes 
\[ R\Gamma(W_{\mathbb{Q}_{p}},(\nu - \nu')^{\Gamma} \circ \phi_{T}) \]
are trivial for $\nu$,$\nu'$ defining distinct $\Gamma$-orbits of weights the highest weight representation of $\hat{G}$ of highest weight $\mu$.
\end{definition}
\begin{remark}{\label{rem: genremarks}}
We note that genericity usually implies strong $\mu$-regularity for some suitably chosen cocharacters $\mu \in \domcochar$. In particular, note that if $\mu = (1,0,\ldots,0,0)$ and $G = \GL_{n}$ then, since $\mu$ is minuscule, the possible weights of $V_{\mu}$ in $\hat{T}$ are all given by Weyl group orbits of $\mu$, and the possible differences between the weights will consequently be precisely the coroots of $\GL_{n}$.
\end{remark}
Let's now see how these conditions manifest in our results on Eisenstein series. We begin with the Hecke eigensheaf property. In particular, consider a finite index set $I$ and a representation $V \in \Rep_{\Lambda}(\phantom{}^{L}G^{I})$. Given $(\nu_{i})_{i \in I} \in (\gamorb)^{I}$, we write $V((\nu_{i})_{i \in I})$ for the multiplicity of the weight space of the corresponding representation of $\phantom{}^{L}T^{I}$ in $V|_{\phantom{}^{L}T^{I}}$, and write $T_{(\nu_{i})_{i \in I}}$ for the associated Hecke operator. By applying excision to the aforementioned locally closed stratification of $\ol{\Bun}_{B}$ and combining it with the geometric Satake correspondence of Fargues-Scholze \cite[Chapter~VI]{FS}, we can show the following result. 
\begin{theorem}{(Theorem \ref{filteigensheaf})}
For $\mathcal{F} \in \D(\Bun_{T})$, $I$ a finite index set, and $V \in \Rep_{\Lambda}(\phantom{}^{L}G^{I})$ with associated Hecke operator $T_{V}$, the sheaf $T_{V}(\nmEis(\mathcal{F}))$ on $\Bun_{G}$ with continuous $W_{\mathbb{Q}_{p}}^{I}$-action has a $W_{\mathbb{Q}_{p}}^{I}$-equivariant filtration indexed by $(\nu_{i})_{i \in I} \in (\gamorb)^{I}$. The filtration's graded pieces are isomorphic to 
\[ \bigoplus_{(\nu_{i})_{i \in I} \in (\gamorb)^{I}} \nmEis(T_{(\nu_{i})_{i \in I}}(\mathcal{F})) \otimes V((\nu_{i})_{i \in I}) \]
Moreover, the filtration is natural in $I$ and $V$, as well as compatible with compositions and exterior tensor products.
\end{theorem}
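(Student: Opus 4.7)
The plan is to express $T_V(\nmEis(\mathcal{F}))$ via a convolution diagram combining the Hecke correspondence for $G$ with the Eisenstein pull-push, and then to obtain the filtration from the natural cocharacter stratification of the $B$-Hecke stack, whose strata are controlled via geometric Satake by the $\phantom{}^{L}T^{I}$-weight spaces of $V$.

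First, I would introduce an auxiliary Hecke stack $\Hck_{B}^{I}$ parametrizing pairs of $B$-bundles on $X$ together with an isomorphism between their induced $G$-bundles away from $I$ points of $\Div^{1}$. There are natural morphisms $r_{G}\colon \Hck_{B}^{I}\to\Hck_{G}^{I}$ and $r_{T}\colon \Hck_{B}^{I}\to\Hck_{T}^{I}$ lifting $\mf{p}$ and $\mf{q}$, along with source and target maps $h_{s}^{B},h_{t}^{B}$ to $\Bun_{B}\times(\Div^{1})^{I}$ and $\Bun_{B}$. Using that $\mf{p}$ is compactifiable and representable in locally spatial diamonds (Theorem~\ref{geomproperties}), together with the well-behavedness of the Fargues--Scholze Hecke correspondences, base change reduces $T_{V}(\nmEis(\mathcal{F}))$ to the pull-push along $\Hck_{B}^{I}$ of $(h_{t}^{B})^{*}(\mf{q}^{*}\mathcal{F}\otimes \IC_{\Bun_{B}})$ tensored with $r_{G}^{*}\Sat(V)$.

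Second, the key geometric input is the stratification of the relative affine Grassmannian $\Gr_{B}^{I}$ (the fibers of $h_{s}^{B}$) by connected components indexed by $\nu=(\nu_{i})\in\cochar^{I}$, with closure relations $\overline{\Gr_{B}^{\nu,I}}=\bigsqcup_{\nu'\le\nu}\Gr_{B}^{\nu',I}$ in the dominance order. Under $r_{G}$ this matches the semi-infinite orbit picture inside $\Gr_{G}^{I}$, and under $r_{T}$ each component projects to $\Gr_{T}^{\nu,I}$ via an iterated affine-space fibration. Combining this with the Mirkovi\'c--Vilonen content of the Fargues--Scholze geometric Satake correspondence \cite[Chapter~VI]{FS} identifies, stratum by stratum, the $(!)$-pushforward of $r_{G}^{*}\Sat(V)$ restricted to the $\nu$-stratum with the weight space $V(\nu)$ times the pullback via $r_{T}$ of the $T$-Hecke kernel in the $\nu$-component. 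Pushing forward along $\mf{p}$ and grouping indices into $\Gamma$-orbits $(\nu_{i})\in(\gamorb)^{I}$ then produces exactly the claimed graded piece $\nmEis(T_{(\nu_{i})}(\mathcal{F}))\otimes V((\nu_{i}))$. The filtration on $T_{V}(\nmEis(\mathcal{F}))$ is inherited from the order filtration on $r_{G}^{*}\Sat(V)$ coming from the closure relations above; excision, applied to the induced locally closed stratification on the base and to its shadow on $\ol{\Bun}_{B}$, packages this as the desired filtration by $(\gamorb)^{I}$.

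The main obstacle is ensuring $W_{\mathbb{Q}_{p}}^{I}$-equivariance of the filtration and of the identifications of graded pieces. The Weil group acts on the cocharacter lattice through $\Gamma$ and therefore permutes individual semi-infinite orbits, so one must genuinely pass to $\Gamma$-orbits to obtain a Weil-equivariant filtration, and then verify that the Satake identification of graded pieces transports the $W_{\mathbb{Q}_{p}}^{I}$-action faithfully across the diagram. Once this equivariant Mirkovi\'c--Vilonen input is in place, naturality in $I$ and $V$, as well as compatibility with compositions and exterior tensor products, follows from functoriality of the Hecke stacks $\Hck_{B}^{I}$, the monoidal structure of $\Sat$, and the standard associativity of the convolution diagrams, in exact analogy with the function field case of \cite{BG}.
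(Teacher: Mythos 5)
Your overall blueprint — base change against the Hecke correspondence, stratify by cocharacters matching semi-infinite orbits, identify graded pieces via the Satake/Mirkovi\'c--Vilonen fiber functor, finish by excision — is the same outline as the paper's proof. However, there is a genuine gap in the first step that the rest of your argument does not repair.

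You propose to introduce $\Hck_{B}^{I}$ (parametrizing two $B$-bundles with a modification of the induced $G$-bundles) and claim that a base change identifies $T_{V}(\nmEis(\mathcal{F}))$ with a pull-push along $\Hck_{B}^{I}$. But the relevant Cartesian square comes from base-changing $\mf{p}\colon\Bun_{B}\to\Bun_{G}$ along $h_{G}^{\ra}\colon\Hck_{G}^{I}\to\Bun_{G}$, and that fiber product is $Z^{I}_{(\lambda_{i})} := \Hck_{G}^{I}\times_{\Bun_{G}}\Bun_{B}$, which carries a $B$-structure only on the \emph{target} $G$-bundle of the modification, not on the source. Your $\Hck_{B}^{I}$, with $B$-structures on both sides, is a further fiber product $\Bun_{B}\times_{\Bun_{G}}\Hck_{G}^{I}\times_{\Bun_{G}}\Bun_{B}$, and the forgetful map $\Hck_{B}^{I}\to Z^{I}_{(\lambda_{i})}$ is far from an isomorphism — its fibers are frequently empty, since given a modification $\mathcal{F}_{G}\dashrightarrow\mathcal{F}_{G}'$ and a $B$-reduction of $\mathcal{F}_{G}'$, the induced generic $B$-reduction of $\mathcal{F}_{G}$ typically only extends to a \emph{degenerate} $B$-structure on $X$, i.e.\ to a point of $\ol{\Bun}_{B}$ and not of $\Bun_{B}$. (Already for $G=\GL_{2}$, modifying $\mathcal{O}^{\oplus 2}$ into $\mathcal{O}(1)\oplus\mathcal{O}(-1)$ and taking the $\mathcal{O}(1)$-reduction on the target produces no genuine line subbundle of $\mathcal{O}^{\oplus 2}$ of degree $1$.) So "base change reduces $T_{V}(\nmEis(\mathcal{F}))$ to the pull-push along $\Hck_{B}^{I}$" is false; $\Hck_{B}^{I}$ is only the open stratum of the correct auxiliary space.

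This is not a cosmetic slip: it is exactly the point in the argument where the Drinfeld compactification must be introduced. The paper's proof constructs the morphism $\phi_{(\lambda_{i})_{i\in I}}\colon Z^{I}_{(\lambda_{i})}\to\ol{\Bun}_{B}\times\Div^{I}_{E}$ that records the possibly torsion-carrying Pl\"ucker maps on the source bundle, re-expresses $T_{V}(\Eis(\mathcal{F}))$ as $(\ol{\mf{p}}\times\mathrm{id})_{!}\,\phi_{(\lambda_{i})!}(\cdots)$, and obtains the filtration from excision along the locally closed strata $_{(\nu_{i})}(\ol{\Bun}_{B}\times\Div^{I}_{E})$ of $\ol{\Bun}_{B}\times\Div^{I}_{E}$, identifying the contributions of the fibers of $\phi_{(\lambda_{i})}$ with intersections $\Gr_{G,\le\lambda}\cap\mathrm{S}_{G,\nu}$ and their cohomology with weight spaces via Corollary \ref{highweightcohom}. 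Your closing sentence gestures at a stratification of $\ol{\Bun}_{B}$, but having replaced the diagram by $\Hck_{B}^{I}$ you have already discarded the closed strata that excision needs, so the "shadow on $\ol{\Bun}_{B}$" has nothing to be a shadow of. You need to work with $Z^{I}_{(\lambda_{i})}$ and the map $\phi_{(\lambda_{i})}$ from the start and build the filtration from the stratification of $\ol{\Bun}_{B}$ by torsion degree.
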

The argument for proving this "filtered eigensheaf property" is very similar to that given by \cite{BG} in their proof of the Hecke eigensheaf property for the compactified geometric Eisenstein functor in the function field setting. However, there Braverman and Gaitsgory rely on the decomposition theorem applied to the perverse sheaf $\IC_{\ol{\Bun}_{B}}$, which would not make sense in this context. Nevertheless, we still have access to the excision spectral sequence one usually uses in proving the decomposition theorem. In particular, in the proof of the decomposition theorem one uses the excision spectral sequence and then invokes the theory of weights to show that it degenerates. Something similar happens here. Namely, if we apply this to the sheaf $\mathcal{F} = \mathcal{S}_{\phi_{T}}$ then we see that $\nmEis(\mathcal{S}_{\phi_{T}})$ is a filtered eigensheaf in the sense that, up to passing to the graded pieces of this filtration, it is an eigensheaf with eigenvalue $\phi$, and later we can see, by looking at the Weil group action, this filtration must always split under some version of genericity on $\phi_{T}$.

Even without knowing that the filtration on $\nmEis(\mathcal{S}_{\phi_{T}})$ splits, the filtered eigensheaf property can already be used to tell us a lot about the structure of $\nmEis(\mathcal{S}_{\phi_{T}})$. In particular, given $b \in B(G)$, the restriction $\nmEis(\mathcal{S}_{\phi_{T}})|_{\Bun_{G}^{b}}$ to the HN-strata $\Bun_{G}^{b}$ defines a complex of smooth $J_{b}(\mathbb{Q}_{p})$-representations, and we are interested in describing this restriction. Here $J_{b}$ is the $\sigma$-centralizer of $b$ and it is an extended pure inner form of a Levi subgroup $M_{b}$ of $G$. The fact that $\nmEis(\mathcal{S}_{\phi_{T}})$ is a filtered Hecke eigensheaf with eigenvalue $\phi$ implies that, if $\rho$ is an irreducible constituent of this restriction then the Fargues-Scholze parameter $\phi_{\rho}^{\mathrm{FS}}$ must be equal to $\phi$ under the twisted embedding $\phantom{}^{L}J_{b} \ra \phantom{}^{L}G$. If we believe that the Fargues-Scholze local Langlands correspondence is the true local Langlands correspondence then this seems to suggest that $\rho$ should be given by a normalized parabolic induction of the character $\chi$ attached to $\phi_{T}$ via local class field theory. In particular, we note, by deformation theory, that a toral parameter being generic implies that every lift of $\phi_{T}$ to $\ol{\mathbb{Z}}_{\ell}$ factors through $\phantom{}^{L}T$ and that the induced $\ol{\mathbb{Q}}_{\ell}$-parameter cannot come from the semi-simplification of a parameter with non-trivial monodromy (Lemma \ref{regmonodromy}). This imposes a very rigid constraint on $J_{b}$. In particular, the Borel $B \cap M_{b}$ of $M_{b}$ should transfer to a Borel $B_{b}$ of $J_{b}$. The elements where this occurs are the elements in the image of the map $B(T) \ra B(G)$, called the unramified elements $B(G)_{\mathrm{un}}$, as studied in the work of Xiao-Zhu \cite{XZ}. 
\begin{corollary}{(Corollary \ref{noredvanishing})}
For $\phi_{T}$ a generic parameter and $b \in B(G)$, assuming compatibility of the Fargues-Scholze and the conjectural local Langlands correspondence (Assumption \ref{compatibility}), the restriction $\nmEis(\mathcal{S}_{\phi_{T}})|_{\Bun_{G}^{b}}$ vanishes unless $b \in B(G)_{\mathrm{un}}$.  
\end{corollary}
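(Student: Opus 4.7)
The plan is to combine the filtered eigensheaf property (Theorem \ref{filteigensheaf}) with the compatibility Assumption \ref{compatibility} and the rigidity of generic toral parameters (Lemma \ref{regmonodromy}). Suppose that $\nmEis(\mathcal{S}_{\phi_{T}})|_{\Bun_{G}^{b}}$ is non-zero and let $\rho$ be an irreducible constituent of the associated complex of smooth $J_{b}(\mathbb{Q}_{p})$-representations. Applying Theorem \ref{filteigensheaf} to the Hecke operators $T_{V}$ for varying $V \in \Rep_{\Lambda}(\phantom{}^{L}G)$, one deduces that the Fargues-Scholze parameter $\phi_{\rho}^{\mathrm{FS}}$ must coincide with $\phi$, regarded as a $J_{b}$-parameter via the twisted embedding $\phantom{}^{L}J_{b} \hookrightarrow \phantom{}^{L}G$. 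By Assumption \ref{compatibility}, this means that $\rho$ has conjectural local Langlands parameter $\phi$ as well.

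Next, genericity is used to constrain the shape of $\rho$. By Lemma \ref{regmonodromy}, any lift of $\phi_{T}$ to $\ol{\mathbb{Z}}_{\ell}$ factors through $\phantom{}^{L}T$, and the induced $\ol{\mathbb{Q}}_{\ell}$-parameter cannot arise as the Frobenius-semisimplification of any parameter with non-trivial monodromy. Under the expected compatibility of the conjectural local Langlands correspondence with parabolic induction, any smooth irreducible representation of $J_{b}(\mathbb{Q}_{p})$ with parameter $\phi$ must therefore be a constituent of a principal series representation, induced from a Borel $B_{b} \subset J_{b}$ that transfers the Borel $B \cap M_{b}$ of the Levi $M_{b}$ (the centralizer of the Newton cocharacter $\nu_{b}$) under the inner twist identifying $J_{b}$ with an extended pure inner form of $M_{b}$. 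The existence of such a $B_{b}$ in $J_{b}$ forces $b$ to admit a reduction to the torus $T$, i.e., to lie in the image of $B(T) \to B(G)$, which is precisely the unramified locus $B(G)_{\mathrm{un}}$ studied by Xiao-Zhu \cite{XZ}. This yields the desired vanishing for $b \notin B(G)_{\mathrm{un}}$.

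The main obstacle is the second step: upgrading the equality of $L$-parameters to the structural statement that $\rho$ must be a constituent of a principal series induced from a Borel transferring $B \cap M_{b}$. This requires the expected compatibility of local Langlands with parabolic induction in the generic regime, so as to rule out any supercuspidal or residual summands that could otherwise enter the $L$-packet and obstruct the reduction of $b$ to the torus. Formalizing this at the level of $L$-packets on the non-quasi-split inner forms $J_{b}$ of $M_{b}$ — in particular, checking that the no-monodromy rigidity of $\phi$ survives the transfer to $J_{b}$ — is where the most care will be needed.
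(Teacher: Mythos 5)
Your opening step—using the filtered eigensheaf property to pin down the Fargues--Scholze parameter of any irreducible constituent $\rho$ of $\nmEis(\mathcal{S}_{\phi_{T}})|_{\Bun_{G}^{b}}$, then invoking Assumption \ref{compatibility} and Lemma \ref{regmonodromy}—is the same strategy the paper follows. However, there is a genuine gap that you do not address, and you misidentify where the real difficulty sits.

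The gap is the mismatch of coefficients. In \S 7 the sheaf $\nmEis(\mathcal{S}_{\phi_{T}})$ has $\ol{\mathbb{F}}_{\ell}$-coefficients, so $\rho$ is a smooth irreducible $\ol{\mathbb{F}}_{\ell}$-representation of $J_{b}(\mathbb{Q}_{p})$ and $\phi_{\rho}^{\mathrm{FS}}$ is valued in $\phantom{}^{L}J_{b}(\ol{\mathbb{F}}_{\ell})$. But Assumption \ref{compatibility} is a statement about smooth irreducible $\ol{\mathbb{Q}}_{\ell}$-representations and $\ol{\mathbb{Q}}_{\ell}$-valued Langlands parameters. You write ``By Assumption \ref{compatibility}, this means that $\rho$ has conjectural local Langlands parameter $\phi$,'' but as stated this is not a step you are entitled to take: $\rho$ is a mod-$\ell$ representation and the correspondence $\mathrm{LLC}_{b}$ in the assumption simply does not apply to it. The paper closes this gap by first lifting $\rho$ to a smooth irreducible $\ol{\mathbb{Q}}_{\ell}$-representation $\tilde{\rho}$ admitting a stable $\ol{\mathbb{Z}}_{\ell}$-lattice whose mod-$\ell$ reduction contains $\rho$ (Dat, Lemma 6.8). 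Then, using compatibility of Fargues--Scholze with reduction mod $\ell$, $\phi_{\tilde{\rho}}^{\mathrm{FS}}$ is a $\ol{\mathbb{Z}}_{\ell}$-lift of $\phi_{\rho}^{\mathrm{FS}} = \phi$; one must then argue—via the deformation-theoretic fact that $H^{1}(W_{\mathbb{Q}_{p}}, \alpha \circ \phi_{T}) = 0$ for all $\Gamma$-orbits $\alpha$ of roots, which follows from genericity—that this lift still factors through $\phantom{}^{L}T$ and is generic. Only at that point can Assumption \ref{compatibility} (1), Lemma \ref{regmonodromy}, and Assumption \ref{compatibility} (2) be applied in sequence to $\tilde{\rho}$ over $\ol{\mathbb{Q}}_{\ell}$.

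On the other hand, the step you flag as the ``main obstacle''—that $\rho$ must be a constituent of a principal series induced from a transferred Borel, and that this forces $b \in B(G)_{\mathrm{un}}$—is not actually hard given the hypotheses: it is precisely the content of Assumption \ref{compatibility} (2) (and (3) gives the constituent description directly). Once you know the genuine $L$-parameter $\phi_{\tilde{\rho}}$ factors through $\phantom{}^{L}T$, there is no need to rule out supercuspidals by hand; the assumption already asserts that the parameter factors through $\phantom{}^{L}T$ if and only if $b \in B(G)_{\mathrm{un}}$. So the difficulty you worry about is built into the assumptions, while the difficulty you pass over—running a mod-$\ell$ argument through a characteristic-$0$ assumption—requires the lifting and deformation-theoretic argument described above.
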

We now assume compatibility (Assumption \ref{compatibility}) for the rest of the section. The previous corollary tells us that if we are interested in understanding the complex of $J_{b}(\mathbb{Q}_{p})$-representations defined by the stalks $\nmEis(\mathcal{S}_{\phi_{T}})|_{\Bun_{G}^{b}}$ then we can restrict to the case where $b \in B(G)_{\mathrm{un}}$, and here we expect the stalks to be given by the inductions $i_{B_{b}}^{J_{b}}(\chi) \otimes \delta_{P_{b}}^{-1/2}$, where $\delta_{P_{b}}$ is the modulus character of the parabolc $P_{b}$ with Levi factor $M_{b}$ transferred to $J_{b}$.\footnote{These twists by the modulus character come from the fact that the excursion algebra on $\Bun_{G}$ acts on a smooth irreducible representation $\rho \in \D(J_{b}(\mathbb{Q}_{p}),\Lambda) \simeq \D(\Bun_{G}^{b}) \subset \D(\Bun_{G})$ via the Fargues-Scholze parameter $\phi_{\rho}^{FS}: W_{\mathbb{Q}_{p}} \ra \phantom{}^{L}J_{b}(\Lambda)$ of a smooth irreducible representation $\rho$ of $J_{b}$, composed with the \emph{twisted} embedding $\phantom{}^{L}J_{b}(\Lambda) \ra \phantom{}^{L}G(\Lambda)$, as in \cite[Section~IX.7.1]{FS}.}. This is indeed the case. To understand this, we use that each element $b \in B(G)_{\mathrm{un}}$ has a unique reduction $b_{T} \in B(T)$ such that the slopes are $G$-dominant. Similarly, we write $b_{T}^{-}$ for the unique element whose isocrystal slopes are anti-dominant and will refer to it as the HN-dominant element, where we recall that the isocrystal slopes are the negative of the Harder-Narasimhan slopes of the associated bundles. The set of elements in $B(T)$ mapping to $b$ can be described as $w(b_{T}^{-})$, where we identify $w \in W_{b} := W_{G}/W_{M_{b}}$ with a set of representatives of minimal length in $W_{G}$. The connected components of $\Bun_{B}$ and $\Bun_{T}$ are indexed by elements in $B(T) \simeq \mathbb{X}_{*}(T_{\ol{\mathbb{Q}}_{p}})_{\Gamma}$, giving a direct sum decomposition:
\[ \nmEis(\mathcal{S}_{\phi_{T}}) = \bigoplus_{\ol{\nu} \in B(T)} \nmEis^{\ol{\nu}}(\mathcal{S}_{\phi_{T}}) \]
After restricting to $\Bun_{G}^{b}$, the point is that only the summands indexed by $\ol{\nu} = w(b_{T}^{-})$ for $w \in W_{b}$ survive. It is fairly easy to see this when $\ol{\nu} = b_{T}^{-}$. In particular, the connected component $\Bun_{B}^{b_{T}^{-}}$ will parametrize split $B$-structures since the HN-slopes are dominant, and the diagram (\ref{eqn: pullpush}) (essentially) becomes
\[ \begin{tikzcd}
&  \left[\ast/\ul{B_{b}(\mathbb{Q}_{p})}\right] \arrow[dr,"\mathfrak{p}"] \arrow[d,"\mathfrak{q}"] &   \\
&  \left[\ast/\ul{T(\mathbb{Q}_{p})}\right] & \left[\ast/\ul{J_{b}(\mathbb{Q}_{p})}\right]
\end{tikzcd} \]
Using this, it is easy to see that $\mf{p}_{!}\mf{q}^{*}(\chi)$ will be given by compactly supported functions of $J_{b}/B_{b}(\mathbb{Q}_{p})$ which transform under $B_{b}(\mathbb{Q}_{p})$ via $\chi$. In other words, the unnormalized induction $\Ind_{B_{b}}^{J_{b}}(\chi)$. When one accounts for the twists coming from the dualizing object as well as the sign switch between isocrystals and $G$-bundles, one finds that the exact formula becomes
\[ \nmEis^{b_{T}^{-}}(\mathcal{S}_{\phi_{T}}) \simeq j_{b!}(i_{B_{b}}^{J_{b}}(\chi^{w_{0}}) \otimes \delta_{P_{b}}^{-1/2})[-\langle 2\hat{\rho}, \nu_{b} \rangle]   \]
where $j_{b}: \Bun_{G}^{b} \ra \Bun_{G}$ is the inclusion of the HN-strata corresponding to $b$ and $w_{0} \in W_{b}$ is a minimal length representative of the element of longest length. Now, what about the connected components $\ol{\nu} = w(b_{T}^{-})$ with $w$ non-trivial? Here the HN-slopes of $\ol{\nu}$ are at least partially anti-dominant, and therefore $\Bun_{B}^{w(b_{T}^{-})}$ will parameterize some non-split extensions. Nonetheless, one finds that $\nmEis^{w(b_{T}^{-})}(\mathcal{S}_{\phi_{T}})$ behaves similarly to the contribution of the connected component given by the HN-dominant reduction. Note that, a priori, the complex $\nmEis^{w(b_{T}^{-})}(\mathcal{S}_{\phi_{T}})$ could be supported on all $b' \in B(G)$ with $b \succeq b'$ in the natural partial ordering on $B(G)$. If one imposes the previous compatibility assumption and assumes $\phi_{T}$ is generic then one can use the previous corollary to assume that $b' \in B(G)_{\mathrm{un}}$. In this case, the complex $\nmEis^{w(b_{T}^{-})}(\mathcal{S}_{\phi_{T}})|_{\Bun_{G}^{b'}}$ can be computed in terms of the cohomology of the space of simulatenous reductions of a $G$-bundle $\mathcal{F}_{G}$ to two $B$-bundles with underlying $T$-bundles given by $w(b_{T}^{-})$ and a Weyl group translate of $b'_{T}$, where $b'_{T}$ is the HN-dominant reduction of $b'$. This space admits a locally closed stratification by the generic relative position of these two reductions coming from the Bruhat decomposition of $B\backslash G/B$. If $b' \neq b$ then each of the non-empty strata admit a map to a positive symmetric power of the mirror curve $\Div^{1}$, and are locally modelled by a semi-infinite flag space called a Zastava space, as studied in the function field setting by Feign, Finkelberg, Kusnetzov, and  Mirkovi\'{c} \cite{FFKM}. By combining a study of these Zastava spaces with a regularity condition on $\phi_{T}$ and induction on $b$, we can show that the restriction of $\nmEis^{w(b_{T}^{-})}(\mathcal{S}_{\phi_{T}})$ to each of the locally closed strata indexed by $b' \neq b$ vanishes, from which we can conclude that the restriction $\nmEis^{w(b_{T}^{-})}(\mathcal{S}_{\phi_{T}})|_{\Bun_{G}^{b'}}$ vanishes unless $b' = b$, where again only the contribution of the split $B$-structure matters. All in all, we conclude an isomorphism:
\[ \nmEis^{w(b_{T}^{-})}(\mathcal{S}_{\phi_{T}}) \simeq j_{b!}(i_{B_{b}}^{J_{b}}(\chi^{ww_{0}}) \otimes \delta_{P_{b}}^{-1/2})[-\langle 2\hat{\rho}, \nu_{b} \rangle]   \]

This parallel behavior between the HN-dominant connected component and the connected components in its Weyl group orbit is no accident. In analogy with \S $1.1$, we expect, for a choice of representative $\tilde{w} \in N(T)$ of $w \in W_{G}$ in the relative Weyl group, to have an isomorphism
\[ \nmEis(\mathcal{S}_{\phi_{T}}) \simeq \nmEis(\mathcal{S}_{\phi_{T}}^{\tilde{w}}) \]
where $\mathcal{S}_{\phi_{T}}^{\tilde{w}}$ is the pullback of $\mathcal{S}_{\phi_{T}}$ along the map $\Bun_{T} \ra \Bun_{T}$ induced by $\tilde{w}$, assuming $\mathcal{S}_{\phi_{T}}$ or equivalently $\chi$ is regular. This involution sends the connected component $\Bun_{T}^{b_{T}}$ to $\Bun_{T}^{w(b_{T})}$ and sends the character $\chi$ to $\chi^{w}$. In particular, we see that, by our previous description of stalks, this gives the precise analogue of Theorem \ref{classicalfunctequation} recalling that genericity should guarantee an isomorphism $\nmEis(\mathcal{S}_{\phi_{T}}) \simeq \ol{\nmEis}(\mathcal{S}_{\phi_{T}})$ for any good definition of the RHS. 

Lastly, we discuss how our Eisenstein functor interacts with Verdier duality. We saw above that for a dominant reduction $b_{T}$ of $b$, $w \in W_{b}$, and generic regular $\phi_{T}$, that we have an isomorphism 
\[ \nmEis^{w(b_{T})}(\mathcal{S}_{\phi_{T}}) \simeq j_{b!}(i_{B_{b}}^{J_{b}}(\chi^{w}) \otimes \delta_{P_{b}}^{-1/2}). \]
Therefore, in order to see that the Eisenstein functor commutes with Verdier duality, we need to exhibit that the natural map  
\[ j_{b!}(i_{B_{b}}^{J_{b}}(\chi^{w}) \otimes \delta_{P_{b}}^{-1/2}) \ra j_{b*}(i_{B_{b}}^{J_{b}}(\chi^{w}) \otimes \delta_{P_{b}}^{-1/2}) \]
is an isomorphism. This would follow from showing that $j_{b*}(i_{B_{b}}^{J_{b}}(\chi^{w}) \otimes \delta_{P_{b}}^{-1/2})$ is only supported on the HN-strata corresponding to $b \in B(G)$, and this can be accomplished through a very similar argument to that explained above for showing that $\nmEis(\mathcal{S}_{\phi_{T}})$ was only supported on one strata. In particular, if, for a Artin $v$-stack $Z$, we denote Verdier duality by $\mathbb{D}_{Z}$ then we have the following.
\begin{theorem}{(Theorem \ref{commverddual})}
For $\phi_{T}$ a generic regular parameter, there is an isomorphism of objects in $\D(\Bun_{G})$
\[ \mathbb{D}_{\Bun_{G}}(\nmEis(\mathcal{S}_{\phi_{T}})) \simeq \nmEis(\mathbb{D}_{\Bun_{T}}(\mathcal{S}_{\phi_{T}}))  \]
where we note that $\mathbb{D}_{\Bun_{T}}(\mathcal{S}_{\phi_{T}}) \simeq \mathcal{S}_{\phi_{T}^{\vee}}$, if $\phi_{T}^{\vee}$ is the parameter dual to $\phi_{T}$.
\end{theorem}
We summarize the above discussion as follows. 
\begin{theorem}{(Corollary \ref{normstalksdescription})}
Consider $\phi_{T}$ a generic regular parameter with associated character $\chi: T(\mathbb{Q}_{p}) \rightarrow \Lambda^{*}$. Given $b \in B(G)_{\mathrm{un}}$, we consider $J_{b}$, $M_{b}$, $B_{b}$, and $W_{b}$ as defined above. For $b \in B(G)$, the stalk $\nmEis(\mathcal{S}_{\phi_{T}})|_{\Bun_{G}^{b}} \in \D(\Bun_{G}^{b}) \simeq \D(J_{b}(\mathbb{Q}_{p}),\Lambda)$ is given by
\begin{enumerate}
\item an isomorphism $\nmEis(\mathcal{S}_{\phi_{T}})|_{\Bun_{G}^{b}} \simeq \bigoplus_{w \in W_{b}} i_{B_{b}}^{J_{b}}(\chi^{w}) \otimes \delta_{P_{b}}^{-1/2}[-\langle 2\hat{\rho},\nu_{b} \rangle]$ if $b \in B(G)_{\mathrm{un}}$,
\item an isomorphism $\nmEis(\mathcal{S}_{\phi_{T}})|_{\Bun_{G}^{b}} \simeq 0$ if $b \notin B(G)_{\mathrm{un}}$.  
\end{enumerate}
In particular, $\nmEis(\mathcal{S}_{\phi_{T}})$ is a perverse sheaf on $\Bun_{G}$ with respect to the standard $t$-structure defined by the HN-strata using Theorem \ref{commverddual}. 
\end{theorem}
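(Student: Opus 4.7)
The plan is to use the direct sum decomposition
\[ \nmEis(\mathcal{S}_{\phi_{T}}) \;=\; \bigoplus_{\ol{\nu} \in B(T)} \nmEis^{\ol{\nu}}(\mathcal{S}_{\phi_{T}}) \]
coming from the connected components of $\Bun_{B}$, and to analyze each summand separately after restriction to $\Bun_{G}^{b}$. Part (2) is handled first and essentially for free: weakly normalized regularity implies genericity, so Corollary \ref{noredvanishing} gives vanishing whenever $b \notin B(G)_{\mathrm{un}}$. Thus it remains to treat $b \in B(G)_{\mathrm{un}}$, in which case $b$ has a unique HN-dominant lift $b_{T} \in B(T)$, and the fibre of the map $B(T) \simeq \coinv \to B(G)$ over $b$ is identified with the Weyl orbit $W_{b} \cdot b_{T}$. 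So the task reduces to computing $\nmEis^{w(b_{T})}(\mathcal{S}_{\phi_{T}})|_{\Bun_{G}^{b'}}$ for each $w \in W_{b}$ and each $b' \preceq b$ in the Kottwitz order, and to showing that it vanishes unless $b' = b$.

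For the HN-dominant summand $\ol{\nu} = b_{T}$ the connected component $\Bun_{B}^{b_{T}}$ parameterises split $B$-structures, so the relevant restriction of diagram (\ref{eqn: pullpush}) becomes the classifying-stack diagram with vertices $[\ast/\ul{B_{b}(\mathbb{Q}_{p})}]$, $[\ast/\ul{T(\mathbb{Q}_{p})}]$, $[\ast/\ul{J_{b}(\mathbb{Q}_{p})}]$. A direct pull-push then yields $\Ind_{B_{b}}^{J_{b}}(\chi)$, and tracking the shifts and twists produced by Theorem \ref{dualob} (in particular the factor $\Delta_{B}^{1/2}$ built into $\IC_{\Bun_{B}}$, which translates into the modulus character $\delta_{P_{b}}^{1/2}$ on $J_{b}$) and by the dimension of the HN-filtration on $\Bun_{G}^{b}$ produces the asserted formula $j_{b!}(i_{B_{b}}^{J_{b}}(\chi) \otimes \delta_{P_{b}}^{1/2})[-\langle 2\hat{\rho},\nu_{b}\rangle]$. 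For a non-trivial $w \in W_{b}$ the connected component $\Bun_{B}^{w(b_{T})}$ is no longer split, and a priori $\nmEis^{w(b_{T})}(\mathcal{S}_{\phi_{T}})$ can be supported on every $b' \preceq b$. The strategy is induction on $b$ in the partial order on $B(G)$: the restriction to $\Bun_{G}^{b'}$ with $b' \neq b$ is computed in terms of the cohomology of a space parameterising two simultaneous $B$-reductions of a $G$-bundle with prescribed underlying $T$-bundles $w(b_{T})$ and a Weyl translate of $b'_{T}$, which admits a locally closed Bruhat-type stratification by generic relative position. Each non-empty stratum maps to a symmetrised divisor space $\Div^{(\ol{\mu})}$ and is locally modelled by a Zastava space, and on each stratum the filtered-Hecke-eigensheaf description of Theorem \ref{filteigensheaf} expresses the contribution as a tensor product of Galois cohomology factors $R\Gamma(W_{\mathbb{Q}_{p}}, \alpha \circ \phi_{T})$ for $\alpha$ ranging over $\Gamma$-orbits of coroots, together with a regularity obstruction of the form $\chi \otimes \delta_{B}^{1/2} \not\simeq (\chi \otimes \delta_{B}^{-1/2})^{w'}$. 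Conditions (2) and (3) of weak normalised regularity kill exactly these factors, forcing the restriction to $\Bun_{G}^{b'}$ to vanish for $b' \neq b$; only the split contribution at $b' = b$ remains, yielding $i_{B_{b}}^{J_{b}}(\chi^{w}) \otimes \delta_{P_{b}}^{1/2}[-\langle 2\hat{\rho},\nu_{b}\rangle]$ by the same computation as in the HN-dominant case.

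Summing over $w \in W_{b}$ gives part (1). Perversity for the standard $t$-structure relative to the HN-stratification is then immediate from the formula, since on each stratum $\Bun_{G}^{b}$ the restriction is a sum of smooth $J_{b}(\mathbb{Q}_{p})$-representations placed in cohomological degree $\langle 2\hat{\rho},\nu_{b}\rangle$, which is exactly the shift required by the construction of the perverse $t$-structure on $\Bun_{G}$. The main obstacle is the Zastava-stratum vanishing step: in the function field setting this is carried out by Braverman--Gaitsgory via perverse sheaf technology and the explicit local structure of $\IC_{\ol{\Bun}_{B}}$, neither of which is directly available in the $v$-stack setting. One must instead isolate the Galois cohomology tensor factors by hand using the Hecke eigensheaf property for $\mathcal{S}_{\phi_{T}}$ applied to the torus operators $T_{\alpha}$, and use the inductive hypothesis on smaller $b'$ together with Condition (3) to clear the remaining unitary obstructions, which is where weak normalised regularity (rather than mere weak genericity) is essential.
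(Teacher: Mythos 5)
Your proposal follows the same overall route as the paper: decompose $\nmEis(\mathcal{S}_{\phi_{T}})$ by connected components of $\Bun_{B}$, dispatch part (2) with Corollary \ref{noredvanishing}, compute the split contribution on the classifying stack via Proposition \ref{splitcontribution}, and then run an induction on $b$ in the Kottwitz order that kills the contributions to lower strata by analysing a Bruhat-type stratification of $\Bun_{B}^{\ol{\nu}} \times_{\Bun_{G}} \Bun_{B}^{\ol{\nu}'}$ modelled on Zastava spaces. That matches Proposition \ref{nonsplitcontribution}, Proposition \ref{bruhatstrata}, and Proposition \ref{splitmap}. However, two details in the inductive step are misattributed.

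First, the vanishing at the non-trivial Bruhat strata is \emph{not} detected through Theorem \ref{filteigensheaf} or through the Galois cohomology factors $R\Gamma(W_{\mathbb{Q}_{p}},\alpha\circ\phi_{T})$. Those factors are instrumental for Theorem \ref{commverddual} and for Corollary \ref{noredvanishing}, but they do not appear at the Bruhat strata. What actually happens in Proposition \ref{bruhatstrata}(2) is that, after applying $\CT^{\ol{\nu}}$ and using the Zastava local model (Proposition \ref{splitmap}) to push forward to $\Div^{(\theta)}$, each graded piece is an extension of complexes isomorphic as $T(\mathbb{Q}_{p})$-representations to $(\delta_{B}^{-1/2})^{w}\otimes\chi^{w}$, and the $R\mathcal{H}om$ against $\chi\otimes\delta_{B}^{1/2}$ vanishes purely by Schur's lemma and Condition \ref{normregcond}(3). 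So it is the character mismatch, not a Galois cohomology vanishing, that does the killing there.

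Second, you do not explicitly account for the step that makes the $R\mathcal{H}om$ reduction possible. The $*$-restriction $\nmEis^{\ol{\nu}}(\mathcal{S}_{\phi_{T}})|_{\Bun_{G}^{b'}}$ is controlled, via adjunction with the constant-term functor $\CT^{\ol{\nu}}$, by mapping Hom complexes into $\nmEis^{\ol{\nu}'}_{*}(\mathcal{S}_{\phi_{T}})$; but to identify this with $j_{b'*}(\rho_{b',w})$ one must first use the inductive hypothesis on $b'$ together with Theorem \ref{commverddual} (and Theorem \ref{dualob} to sort out the modulus twists). Without that Verdier-duality step the argument has no way of converting the $!$-extension in the inductive hypothesis into a $*$-extension against which to test the Hom. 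Given the framing of your last paragraph you seem aware that something from Section \ref{Verdierdual} is needed, but it should be stated explicitly as the pivot of the induction rather than subsumed into a vague appeal to the Hecke eigensheaf property for $T$.
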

We note that the previous Corollaries imply that the stalks of the sheaf $\nmEis(\mathcal{S}_{\phi_{T}})$ are valued in smooth admissible representation when $\phi_{T}$ is generic regular. This implies that the sheaf $\nmEis(\mathcal{S}_{\phi_{T}})$ is ULA with respect to the structure map $\Bun_{G} \ra \ast$, using the characterization given in \cite[Theorem~V.7.1]{FS}. This ULA property allows us to extend the construction of $\nmEis(\mathcal{S}_{\phi_{T}})$ to $\ol{\mathbb{Z}}_{\ell}$ and $\ol{\mathbb{Q}}_{\ell}$-coefficients, where passing to the world of lisse-\'etale solid sheaves is a non-trivial matter because of the difference between $\ell$-adic and discrete topologies. We will need to work with $\phi_{T}$ that is integral in the sense that, if $\Lambda = \ol{\mathbb{Q}}_{\ell}$, it is of the form $\ol{\phi}_{T} \otimes_{\ol{\mathbb{Z}}_{\ell}} \ol{\mathbb{Q}}_{\ell}$ for a $\ol{\mathbb{Z}}_{\ell}$-valued parameter $\ol{\phi}_{T}$. Given an integral parameter $\phi_{T}$ such that the mod $\ell$-reduction is generic regular, we get a sheaf $\nmEis(\mathcal{S}_{\phi_{T}}) \in \Dlis(\Bun_{G},\Lambda)$ for all $\Lambda \in \{\ol{\mathbb{F}}_{\ell},\ol{\mathbb{Z}}_{\ell},\ol{\mathbb{Q}}_{\ell}\}$. The description of the stalks, the filtered eigensheaf property with eigenvalue $\phi$, and the commutation with Verdier duality all extend in a natural way to these coefficient systems, and we would now like to say that the filtered eigensheaf property implies it is a genuine eigensheaf under the conditions on $\phi_{T}$. For a representation $V \in \Rep_{\Lambda}(\phantom{}^{L}G)$, the filtered eigensheaf property tells us that $T_{V}(\nmEis(\mathcal{S}_{\phi_{T}}))$ has a filtration whose graded pieces have Weil group action given by $\nu^{\Gamma} \circ \phi_{T}$, for $\nu \in \cochar$ a non-zero weight of $V$ in $\hat{T}$. In order to see this splits, it suffices to show for $\nu, \nu'$ defining distinct Galois orbits of weights of $V$ in $\hat{T}$ that the extension group $H^{1}(W_{\mathbb{Q}_{p}}, (\nu - \nu')^{\Gamma} \circ \phi_{T})$ vanishes for the $\Gamma$-orbit $(\nu - \nu')^{\Gamma}$ defined by $\nu - \nu'$. However, this is equivalent to saying that the entire complex
\[ R\Gamma(W_{\mathbb{Q}_{p}},(\nu - \nu')^{\Gamma} \circ \phi_{T}) \]
is trivial, and this was precisely the kind of vanishing result that strong  $\mu$-regularity of $\phi_{T}$ guaranteed. Moreover, by the vanishing of the $H^{0}$ the splitting will be unique. In particular, if $V = V_{\mu^{\Gamma}}$ for $\mu \in \domcochar$ such that $\phi_{T}$ is strongly $\mu$-regular then this allows us to see that we get a unique splitting for $V = V_{\mu^{\Gamma}}$. Now, as noted in Remark \ref{rem: genremarks}, we see that strong $\mu$-regularity is usually guaranteed under generic for a sufficiently nicely chosen cocharacter. We would like to use this to conclude that the filtration on $T_{V_{\mu^{\Gamma}}}$ splits in more generality. Suppose we are given cocharacters $\mu_{1},\ldots,\mu_{k}$ and we know that the filtration splits for the $V_{\mu_{i}}$ then, using the compatibilities of the filtration, we can show the splitting for any representation $V$ realized as a direct summand of $\bigotimes_{k = 1}^{m} V_{\mu_{k}}^{\otimes n_{i}}$, but in this case we cannot guarantee that this splitting is unique. 

We need to be a bit careful when running the argument sketched above. In particular, if $\Lambda = \ol{\mathbb{Q}}_{\ell}$, then the category $\Rep_{\Lambda}(\phantom{}^{L}G)$ is semi-simple with irreducible objects parametrized by $\Gamma$-orbits of dominant cocharacters  $\domgamorb$ and the above argument goes through. If $\Lambda \in \{\ol{\mathbb{Z}}_{\ell},\ol{\mathbb{F}}_{\ell}\}$ this is no longer true. However, in these cases, we can replace $\Rep_{\Lambda}(\phantom{}^{L}G)$ by a sub-category of tilting modules $\Tilt_{\Lambda}(\phantom{}^{L}G)$ (\cite{Mat},\cite[Appendix~E]{Jan}), which will be semi-simple with indecomposable objects parameterized by $\mu^{\Gamma} \in \domgamorb$, denoted $\mathcal{T}_{\mu^{\Gamma}} \in \Tilt_{\Lambda}(\phantom{}^{L}G)$. Extending the theory of tilting modules to the full $L$-group $\phantom{}^{L}G$ is a bit subtle. However, this is precisely what our assumption that $\ell \nmid |Q|$ appearing in the very decent condition above will allow us to do. This category is preserved under taking tensor products, and therefore we can define the notion of a "tilting eigensheaf" (Definition \ref{deftilteigsheaf}) by replacing $\Rep_{\Lambda}(\phantom{}^{L}G)$ with $\Tilt_{\Lambda}(\phantom{}^{L}G)$ in the usual definition. For $V \in \Rep_{\Lambda}(\phantom{}^{L}G^{I})$, we write $r_{V}: \phantom{}^{L}G^{I} \ra \GL(V)$ for the associated map. This allows us to define the following. 
\begin{definition}{\label{def: mureg}}
For a finite index set $I$, we say a tuple of cocharacters $(\mu_{i})_{i \in I} \in (\domcochar)^{I}$ is $(\mu_{i})_{i \in I}$-regular if the filtration on $T_{V}(\nmEis(\mathcal{S}_{\phi_{T}})$ splits for the tilting module $V = \boxtimes_{i \in I} \mathcal{T}_{\mu_{i}^{\Gamma}}$. 
\end{definition}
\begin{remark}
The argument sketched above allows us to show if $\phi_{T}$ is $(\mu_{1i})_{i \in I}$ and $(\mu_{2i})_{i \in I}$ regular then this implies that $\phi_{T}$ is regular for the cocharacter attached to any highest weight tilting module occurring in the tensor product $\boxtimes_{i \in I} \mathcal{T}_{\mu_{1i}^{\Gamma}} \otimes \mathcal{T}_{\mu_{2i}^{\Gamma}}$ (Proposition \ref{prop: muregdirectsum}). In particular, this implies that $\phi_{T}$ is $(\mu_{1i} + \mu_{2i})_{i \in I}$-regular by considerations of highest weight. This property often allows us to see that we get $\mu$-regularity just under the generic hypothesis on $\phi_{T}$. For example, for $G = \GL_{n}$, and $\Lambda = \ol{\mathbb{Q}}_{\ell}$ then, as observed in Remark \ref{rem: genremarks}, genericity will imply strong $\mu$-regularity for $\mu = (1,0,\ldots,0)$, and this will imply the filtration splits uniquely for this cocharacter. This allows us to see that the filtration splits uniquely for the standard representation, and this in turn allows us to see that we get a splitting for the representations corresponding to the other fundamental coweights $\omega_{i} = (1_{i},0_{n - i})$ using the decomposition $V_{\mathrm{std}}^{\otimes i} = \Sym^{i}(V) \bigoplus \cdots \bigoplus \Lambda^{i}(V)$ of the tensor powers of the standard representation $V_{\mathrm{std}}$ of $\GL_{n}$. From here, we can show $\mu$-regularity for all $\mu$ using the fact that we can realize $\mathcal{T}_{\mu}$ as a direct summand of a tensor product of the representations corresponding to fundamental coweights. This argument also works with torsion coefficients since $\Lambda^{i}(V) = V_{\omega_{i}}$ will always be tilting by virtue of $\omega_{i}$ being minuscule for all $i = 1,\ldots,n$ when $G = \GL_{n}$ (Corollary \ref{cor: GLnisideal}). 
\end{remark}
Our main theorem is then as follows.  
\begin{theorem}{(Theorem \ref{Eiseigsheaf})}
For $\Lambda \in \{\ol{\mathbb{F}}_{\ell},\ol{\mathbb{Z}}_{\ell},\ol{\mathbb{Q}}_{\ell}\}$, we consider $\phi_{T}: W_{\mathbb{Q}_{p}} \ra \phantom{}^{L}T(\Lambda)$ an integral parameter such that its mod $\ell$-reduction is generic regular. There then exists a perverse sheaf $\nmEis(\mathcal{S}_{\phi_{T}}) \in \Dlis(\Bun_{G},\Lambda)$ which is a filtered eigensheaf with eigenvalue $\phi$. If $V \in \Tilt_{\Lambda}(\phantom{}^{L}G)$ is a direct sum of tilting modules $\boxtimes_{i \in I} \mathcal{T}_{\mu_{i}^{\Gamma}}$ for geometric dominant cocharacters $\mu_{i}$, and $\phi_{T}$ is $\mu_{i}$-regular (resp. strongly $\mu_{i}$-regular), the filtration on $T_{V}(\nmEis(\mathcal{S}_{\phi_{T}})$ splits (resp. splits uniquely), and we have a natural isomorphism
\[ T_{V}(\nmEis(\mathcal{S}_{\phi_{T}})) \simeq \nmEis(\mathcal{S}_{\phi_{T}}) \boxtimes r_{V} \circ \phi \]
of sheaves in $\Dlis(\Bun_{G})^{BW_{\mathbb{Q}_{p}}^{I}}$. In particular, if $\phi_{T}$ is $\mu$-regular (resp. strongly $\mu$-regular) for all geometric dominant cocharacters $\mu$ then $\nmEis(\mathcal{S}_{\phi_{T}})$ is a weak tilting eigensheaf (resp. tilting eigensheaf). For $b \in B(G)$, the stalk $\nmEis(\mathcal{S}_{\phi_{T}})|_{\Bun_{G}^{b}} \in \D(\Bun_{G}^{b}) \simeq \D(J_{b}(\mathbb{Q}_{p}),\Lambda)$ is given by
\begin{enumerate}
\item an isomorphism $\nmEis(\mathcal{S}_{\phi_{T}})|_{\Bun_{G}^{b}} \simeq \bigoplus_{w \in W_{b}} i_{B_{b}}^{J_{b}}(\chi^{w}) \otimes \delta_{P_{b}}^{-1/2}[-\langle 2\hat{\rho},\nu_{b} \rangle]$ if $b \in B(G)_{\mathrm{un}}$,
\item an isomorphism $\nmEis(\mathcal{S}_{\phi_{T}})|_{\Bun_{G}^{b}} \simeq 0$ if $b \notin B(G)_{\mathrm{un}}$.  
\end{enumerate}
Moreover, if $\mathbb{D}_{\Bun_{G}}$ denotes Verdier duality on $\Bun_{G}$, we have an isomorphism
\[ \mathbb{D}_{\Bun_{G}}(\nmEis(\mathcal{S}_{\phi_{T}})) \simeq \nmEis(\mathcal{S}_{\phi_{T}^{\vee}}) \]
of sheaves in $\Dlis(\Bun_{G},\Lambda)$.
\end{theorem}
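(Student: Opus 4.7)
My approach is to bootstrap from $\Lambda = \ol{\mathbb{F}}_{\ell}$, where the sheaf $\nmEis(\mathcal{S}_{\phi_{T}})$ has already been constructed via the definition $\mf{p}_{!}(\mf{q}^{*}(\mathcal{S}_{\phi_{T}}) \otimes \IC_{\Bun_{B}})$ and its stalks have been computed in the preceding corollary. Since the stalks are finite direct sums of shifted twisted parabolic inductions $i_{B_{b}}^{J_{b}}(\chi^{w}) \otimes \delta_{P_{b}}^{1/2}$, which are admissible smooth $J_{b}(\mathbb{Q}_{p})$-representations, the characterization of the ULA property in \cite[Theorem~V.7.1]{FS} shows that $\nmEis(\mathcal{S}_{\phi_{T}})$ is ULA over $\Spd{\ol{\mathbb{F}}_{p}}$. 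This ULA property, combined with integrality of $\phi_{T}$, is the mechanism that allows lifting the construction first to $\ol{\mathbb{Z}}_{\ell}$-coefficients via the integral $\ol{\phi}_{T}$, and then to $\ol{\mathbb{Q}}_{\ell}$ by inverting $\ell$; through this mechanism the perversity, the stalk description, and the Verdier duality commutation (inherited from Theorem \ref{commverddual}) all propagate uniformly to each $\Lambda \in \{\ol{\mathbb{F}}_{\ell},\ol{\mathbb{Z}}_{\ell},\ol{\mathbb{Q}}_{\ell}\}$.

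I would then deduce the filtered eigensheaf property as a direct consequence of Theorem \ref{filteigensheaf} applied to $\mathcal{F} = \mathcal{S}_{\phi_{T}}$, exploiting that $\mathcal{S}_{\phi_{T}}$ is itself a Hecke eigensheaf on $\Bun_{T}$ with eigenvalue $\phi_{T}$: under $T_{(\nu_{i})_{i \in I}}$ the eigensheaf $\mathcal{S}_{\phi_{T}}$ is sent to $\mathcal{S}_{\phi_{T}}$ tensored with $\bigotimes_{i \in I} (\nu_{i}^{\Gamma} \circ \phi_{T})$, and so the graded pieces of the filtration on $T_{V}(\nmEis(\mathcal{S}_{\phi_{T}}))$ identify with $\nmEis(\mathcal{S}_{\phi_{T}})$ tensored with Weil-group representations built from $\Gamma$-orbits of weights of $V|_{\phantom{}^{L}T^{I}}$, with correct multiplicity. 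This pins down the eigenvalue as $\phi$ modulo passing to associated gradeds.

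To upgrade the filtered property to an honest (tilting) eigensheaf property for $V = \boxtimes_{i \in I} \mathcal{T}_{\mu_{i}^{\Gamma}}$, the plan is to analyze the obstructions to splitting stepwise along the filtration: the obstruction to splitting of any two-step subquotient lies in $H^{1}(W_{\mathbb{Q}_{p}},(\nu - \nu')^{\Gamma} \circ \phi_{T})$ for distinct weights $\nu,\nu'$ appearing in $\mathcal{T}_{\mu_{i}^{\Gamma}}|_{\phantom{}^{L}T}$. Strong $\mu_{i}$-regularity kills these $H^{1}$ as part of the vanishing of the whole Galois cohomology complex, and simultaneous vanishing of $H^{0}$ upgrades existence of the splitting to uniqueness. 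Under the weaker $\mu$-regularity with $\mu = \sum_{k} n_{k}\mu_{k}$ into minuscules and quasi-minuscules, I would invoke the tensor compatibility in Theorem \ref{filteigensheaf} to iteratively split each $T_{\mathcal{T}_{\mu_{k}^{\Gamma}}}$ (where strong $\mu_{k}$-regularity is automatic from the minuscule/quasi-minuscule weight structure), and then transport the splitting to $T_{\mathcal{T}_{\mu^{\Gamma}}}$ by realizing $\mathcal{T}_{\mu^{\Gamma}}$ as a direct summand of $\bigotimes_{k} \mathcal{T}_{\mu_{k}^{\Gamma}}^{\otimes n_{k}}$ inside $\Tilt_{\Lambda}(\phantom{}^{L}G)$; uniqueness is lost here because the $H^{0}$ for other weight differences need not vanish.

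The main obstacle is twofold. First, extending the theory of tilting modules from $\hat{G}$ to the full $L$-group $\phantom{}^{L}G$, so that $\Tilt_{\Lambda}(\phantom{}^{L}G)$ is semisimple under tensor product with indecomposables indexed by $\domgamorb$, is subtle and is precisely where the very-good-prime hypothesis on $\ell$ enters; I would appeal to \cite[Appendix~E]{Jan} and \cite{Mat} to secure the relevant structural facts. Second, and more delicate, is the technical passage from $\ol{\mathbb{F}}_{\ell}$-constructions to lisse-\'etale solid sheaves with $\ol{\mathbb{Z}}_{\ell}$- and $\ol{\mathbb{Q}}_{\ell}$-coefficients, because the discrete topology on $\Lambda$ does not match the $\ell$-adic topology on these categories; here the ULA property supplied by the stalk description is the key input that allows the filtration, its splittings, the perversity, and the Verdier duality commutation to lift coherently across coefficient systems, and verifying that the lifts are compatible with the splitting constructions of the previous paragraph is the hardest step.
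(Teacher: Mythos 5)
Your proposal matches the paper's proof in all essential respects: bootstrapping from $\ol{\mathbb{F}}_{\ell}$ via the ULA property (Lemma \ref{EisULA}), deducing the filtered eigensheaf property from Theorem \ref{filteigensheaf} applied to $\mathcal{S}_{\phi_{T}}$, killing the obstruction classes in $H^{1}(W_{\mathbb{Q}_{p}},(\nu-\nu')^{\Gamma}\circ\phi_{T})$ under strong $\mu$-regularity, propagating to general $\mu$ via tensor/fusion compatibility and realizing $\mathcal{T}_{\mu^{\Gamma}}$ as a summand of tensor powers, and inheriting Verdier duality commutation from Theorem \ref{commverddual}. One small imprecision worth flagging: you write that strong $\mu_{k}$-regularity ``is automatic from the minuscule/quasi-minuscule weight structure,'' but in fact it is not automatic from the weight structure alone---it is built directly into Definition \ref{defmureg} as condition (3), and Lemma \ref{normregimpliesmureg} is what tells you this condition is implied by genericity (resp.\ genericity and Condition \ref{normregcond}(4)) for minuscule (resp.\ quasi-minuscule) generators; the argument goes through unchanged because $\mu$-regularity packages strong $\mu_{k}$-regularity of the chosen generators, but the attribution should be to the definition rather than to the weight structure.
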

\begin{remark}{\label{uniquesplitting}}
The notion of a weak tilting eigensheaf means that we always have isomorphisms 
\[ \eta_{V,I}: T_{V}(\nmEis(\mathcal{S}_{\phi_{T}})) \simeq \nmEis(\mathcal{S}_{\phi_{T}}) \boxtimes r_{V} \circ \phi \]
for $V \in \Tilt(\phantom{}^{L}G^{I})$ and a finite index set $I$, but do not necessarily know that the desired compatibilities with respect to $I$ and $V$. Even though we know these compatibilities for the filtration, it is not necessarily clear that the splitting we produce through our argument respects these compatibilities without assuming strong $\mu$-regularity. Only knowing the compatibilities of the splittings under such restrictive conditions is a bit unfortunate; fortunately, for most of the applications to local Shtuka spaces with one leg it suffices to only know a splitting exists.  
\end{remark}
This eigensheaf has several suprising applications to the cohomology of local Shimura varieties and shtuka spaces. To formalize this, we define, for $b \in B(G)$, a complex of $J_{b}(\mathbb{Q}_{p})$-representations denoted $\Red_{b,\phi}$. If $b \notin B(G)_{\mathrm{un}}$ we set this to be equal to $0$ and if $b \in B(G)_{\mathrm{un}}$ to be equal to $\bigoplus_{w \in W_{b}} i_{B_{b}}^{J_{b}}(\chi^{w}) \otimes \delta_{P_{b}}^{-1/2}[-\langle 2\hat{\rho},\nu_{b}\rangle]$. Now let's consider $\mu$ a geometric dominant cocharacter of $G$ with reflex field $E$ and set $B(G,\mu) \subset B(G)$ to be the subset of $\mu$-admissible elements (Definition \ref{bgmudefinition}). We let $\mathcal{T}_{\mu}$ be the associated highest weight tilting module of $\hat{G}$. This defines a representation of $W_{E} \ltimes \hat{G}$ with associated Hecke operator $T_{\mu}$. We write $r_{\mu}: W_{E} \ltimes \hat{G} \ra \GL(\mathcal{T}_{\mu})$ for the associated map. We consider the cohomology of the local shtuka spaces $\Sht(G,b,\mu)_{\infty}$, as defined in \cite{SW}. In particular, the representation $\mathcal{T}_{\mu}$ attached to a dominant inverse defines a sheaf $\mathcal{S}_{\mu}$ on $\Sht(G,b,\mu)_{\infty}$ via geometric Satake, and we can consider the complex $R\Gamma_{c}(G,b,\mu)$ of $J_{b}(\mathbb{Q}_{p}) \times G(\mathbb{Q}_{p}) \times W_{E}$-modules attached to cohomology valued in this sheaf. 
\begin{remark}
We note that, since we have used the tilting module $\mathcal{T}_{\mu}$ in the definition of $R\Gamma_{c}(G,b,\mu)$ instead of the usual highest weight representation $V_{\mu}$ this is slightly different than the usual definition appearing in the literature. The two definitions will coincide when the representation $V_{\mu}$ defines a tilting module, which is equivalent to $V_{\mu}$ being irreducible with coefficients in $\Lambda$. We say such a $\mu$ is tilting if this holds. This will always hold if $\Lambda = \ol{\mathbb{Q}}_{\ell}$ or if $\mu$ is minuscule, and we study this notion more carefully in Appendix \ref{tiltingcocharacters}. 
\end{remark} 
We can use $\nmEis(\mathcal{S}_{\phi_{T}})$ to describe the cohomology of $R\Gamma_{c}(G,b,\mu)$. Assume that $\phi_{T}$ is $\mu$-regular, the Hecke eigensheaf property then tells us that we have an isomorphism
\[ T_{\mu}(\nmEis(\mathcal{S}_{\phi_{T}})) \simeq \nmEis(\mathcal{S}_{\phi_{T}}) \boxtimes r_{\mu} \circ \phi|_{W_{E}} \]
of sheaves with continuous $W_{E}$-action. If we restrict to the open HN-strata $j_{\mathbf{1}}: \Bun_{G}^{\mathbf{1}} \ra \Bun_{G}$ defined by the trivial element $\mathbf{1} \in B(G)$ then this gives an isomorphism
\[ j_{\mathbf{1}}^{*}T_{\mu}(\nmEis(\mathcal{S}_{\phi_{T}})) \simeq i_{B}^{G}(\chi) \boxtimes r_{\mu} \circ \phi|_{W_{E}} \]
of complexes of $G(\mathbb{Q}_{p}) \times W_{E}$-modules. Now the point is that only the elements $b \in B(G,\mu)$ occur as a modifications $\mathcal{F}_{b} \ra \mathcal{F}_{G}^{0}$ of type $\mu$, where $\mathcal{F}_{G}^{0}$ is the trivial $G$-bundle. Therefore, only these stalks contribute to the LHS. By applying excision to the locally closed stratification given by $\Bun_{G}^{b}$ for $b \in B(G,\mu)$, we find that the LHS has a filtration with graded pieces isomorphic to $j_{\mathbf{1}}^{*}T_{\mu}(j_{b!}(\Red_{b,\phi}))$, but these are related to the isotypic parts $R\Gamma^{\flat}_{c}(G,b,\mu)[\Red_{b,\phi}]$. From the above analysis, we deduce the following. 
\begin{theorem}{(Theorem \ref{avgingformula})} 
For $\phi_{T}: W_{\mathbb{Q}_{p}} \ra \phantom{}^{L}T(\Lambda)$ an integral toral parameter such that its mod $\ell$-reduction is generic regular and any geometric dominant cocharacter $\mu$ such that $\phi_{T}$ is $\mu$-regular, we have an equality
\[ \sum_{b \in B(G,\mu)} [R\Gamma^{\flat}_{c}(G,b,\mu)[\Red_{b,\phi}]] = [r_{\mu} \circ \phi|_{W_{E}} \boxtimes i_{B}^{G}(\chi)] \]
in the Grothendieck group $K_{0}(G(\mathbb{Q}_{p}) \times W_{E},\Lambda)$ of smooth admissible $G(\mathbb{Q}_{p})$-representations with a continuous action of $W_{E}$.
\end{theorem}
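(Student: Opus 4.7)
The plan is to combine the Hecke eigensheaf property (Theorem \ref{Eiseigsheaf}) with excision applied to the HN-stratification of $\Bun_{G}$, and then pass to the Grothendieck group to obtain the desired virtual identity. Concretely, I would begin by applying the Hecke operator $T_{\mu}$ to $\nmEis(\mathcal{S}_{\phi_{T}})$. Since $\phi_{T}$ is assumed $\mu$-regular, the filtered eigensheaf property splits (uniqueness of the splitting is not needed since we work only in $K_{0}$), yielding an isomorphism
\[ T_{\mu}(\nmEis(\mathcal{S}_{\phi_{T}})) \simeq \nmEis(\mathcal{S}_{\phi_{T}}) \boxtimes (r_{\mu} \circ \phi|_{W_{E}}) \]
in $\Dlis(\Bun_{G},\Lambda)^{BW_{E}}$.

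Next I would pull back along the open immersion $j_{\mathbf{1}}: \Bun_{G}^{\mathbf{1}} \hookrightarrow \Bun_{G}$ of the HN-stratum attached to the trivial element $\mathbf{1} \in B(G)_{\mathrm{un}}$. Since $M_{\mathbf{1}} = G$, $W_{\mathbf{1}}$ is trivial, $\nu_{\mathbf{1}} = 0$, and $\delta_{P_{\mathbf{1}}}$ is trivial, the stalk formula in Theorem \ref{Eiseigsheaf} reads $j_{\mathbf{1}}^{*}\nmEis(\mathcal{S}_{\phi_{T}}) \simeq i_{B}^{G}(\chi)$ as a complex of $G(\mathbb{Q}_{p})$-modules. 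Hence the right-hand side of the displayed isomorphism pulls back to $i_{B}^{G}(\chi) \boxtimes (r_{\mu} \circ \phi|_{W_{E}})$, which gives one side of the claimed equality in $K_{0}(G(\mathbb{Q}_{p}) \times W_{E},\Lambda)$.

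To obtain the other side, I would use the HN-stratification of $\Bun_{G}$ by the locally closed substacks $\Bun_{G}^{b}$ for $b \in B(G)$. Via excision, $\nmEis(\mathcal{S}_{\phi_{T}})$ admits a filtration whose associated graded in $K_{0}$ is $\sum_{b \in B(G)} [j_{b!}(\Red_{b,\phi})]$, using the stalk description (both cases, with $\Red_{b,\phi} = 0$ for $b \notin B(G)_{\mathrm{un}}$). Applying the (triangulated) Hecke operator $T_{\mu}$ and then $j_{\mathbf{1}}^{*}$ commutes with forming the virtual class in $K_{0}$, so the left-hand side equals $\sum_{b \in B(G)}[j_{\mathbf{1}}^{*} T_{\mu}(j_{b!}(\Red_{b,\phi}))]$. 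By the definition of the Hecke operators and the Beauville–Laszlo description of the moduli space of modifications, $j_{\mathbf{1}}^{*} T_{\mu} \circ j_{b!}$ is computed by the cohomology of the local shtuka space $\Sht(G,b,\mu)_{\infty}$ with coefficients in $\mathcal{S}_{\mu}$, i.e.\ by $R\Gamma_{c}(G,b,\mu)[-]$; moreover this vanishes unless $b \in B(G,\mu)$ since only $\mu$-admissible $b$ admit a modification of type $\mu$ to the trivial bundle. Assembling these identifications and equating the two expressions for $[j_{\mathbf{1}}^{*}T_{\mu}(\nmEis(\mathcal{S}_{\phi_{T}}))]$ in $K_{0}(G(\mathbb{Q}_{p}) \times W_{E},\Lambda)$ yields the averaging formula.

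The main obstacle is the rigorous identification $j_{\mathbf{1}}^{*} T_{\mu}(j_{b!}(\Red_{b,\phi})) \simeq R\Gamma_{c}(G,b,\mu)[\Red_{b,\phi}]$; this requires unwinding the definition of Hecke operators relative to the moduli space of modifications, using the proper base change along the map to $\Bun_{G}^{\mathbf{1}}$ (together with compatibility of the tilting module $\mathcal{T}_{\mu}$ with the sheaf on the shtuka tower under geometric Satake). Once this identification is established and one argues that $[B(G)] \setminus [B(G,\mu)]$ contributes zero to the left-hand sum, the identity in $K_{0}$ follows immediately from the eigensheaf equation. Everything else (vanishing of $\Red_{b,\phi}$ outside $B(G)_{\mathrm{un}}$, the shape of $i_{B}^{G}(\chi)$ as the stalk at $\mathbf{1}$, etc.) is inherited directly from the results already established in the paper.
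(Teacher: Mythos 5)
Your proposal is correct and follows essentially the same route as the paper's proof: apply $T_\mu$ to $\nmEis(\mathcal{S}_{\phi_T})$, invoke $\mu$-regularity to split the filtered eigensheaf property, restrict to $\Bun_G^{\mathbf{1}}$, apply excision along the HN-stratification, and identify $j_{\mathbf{1}}^*T_\mu j_{b!}$ with $R\Gamma_c(G,b,\mu)[-]$ (Lemma \ref{shimhecke}). The only cosmetic difference is the order of operations: the paper first restricts $\nmEis(\mathcal{S}_{\phi_T})$ to the finite open subset $B(G,\mu)\subset\Bun_G$ (using \cite[Proposition~A.9]{R} to know that modifications of type $\mu$ from the trivial bundle land in $B(G,\mu)$), and only then applies excision to a finite stratification, which sidesteps any question about forming an a priori infinite sum over $B(G)$ in $K_0$; you instead stratify all of $B(G)$ and observe the terms outside $B(G,\mu)$ vanish after applying $j_{\mathbf{1}}^*T_\mu$, which amounts to the same argument once one knows the finiteness of contributing strata.
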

If we now consider the case where $\Lambda = \ol{\mathbb{Q}}_{\ell}$ then it follows that the averaging formula is valid for all generic regular and $\mu$-regular parameters $\phi_{T}: W_{\mathbb{Q}_{p}} \ra \phantom{}^{L}T(\ol{\mathbb{Q}}_{\ell})$, which admit a $\ol{\mathbb{Z}}_{\ell}$-lattice. Moreover, with $\ol{\mathbb{Q}}_{\ell}$-coefficients, we can interpret both sides as trace forms on $K_{0}(T(\mathbb{Q}_{p}),\ol{\mathbb{Q}}_{\ell})$, and use that the set of characters obtained from such parameters is Zariski dense in the variety of unramified characters to conclude the following more general claim.
\begin{theorem}{(Theorem \ref{qellavgingformula})} 
For $\phi_{T}: W_{\mathbb{Q}_{p}} \ra \phantom{}^{L}T(\ol{\mathbb{Q}}_{\ell})$ any toral parameter and $\mu$ any geometric dominant cocharacter of $G$, we have an equality
\[ \sum_{b \in B(G,\mu)} [R\Gamma^{\flat}_{c}(G,b,\mu)[\Red_{b,\phi}]] = [r_{\mu} \circ \phi|_{W_{E}} \boxtimes i_{B}^{G}(\chi)] \]
in $K_{0}(G(\mathbb{Q}_{p}) \times W_{E},\ol{\mathbb{Q}}_{\ell})$.
\end{theorem}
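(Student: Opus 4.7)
The plan is to upgrade Theorem \ref{avgingformula} from the integral, normalized regular case (with $\Lambda = \ol{\mathbb{Q}}_{\ell}$) to all toral parameters via a Zariski density argument on the variety of unramified twists. First, fix a toral parameter $\phi_{T,0}: W_{\mathbb{Q}_{p}} \to \phantom{}^{L}T(\ol{\mathbb{Q}}_{\ell})$ and consider the algebraic torus $\mathcal{V}$ over $\ol{\mathbb{Q}}_{\ell}$ parameterizing continuous unramified $1$-cocycles $\psi: W_{\mathbb{Q}_{p}} \to \hat{T}$, identified with the neutral component of $\hat{T}^{\Gamma}$ via $\psi \mapsto \psi(\mathrm{Frob})$. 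For each $\psi \in \mathcal{V}(\ol{\mathbb{Q}}_{\ell})$, the twisted parameter $\phi_{T}^{\psi} := \phi_{T,0} \otimes \psi$ has associated character $\chi^{\psi}$ by local class field theory, and varying both $\psi$ and the discrete data of $\phi_{T,0}|_{I_{\mathbb{Q}_{p}}}$ exhausts all toral parameters.

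Second, interpret both sides of the averaging formula as regular functions on $\mathcal{V}$ valued in virtual trace distributions on $G(\mathbb{Q}_{p}) \times W_{E}$. Pairing against a test function $f \in C_{c}^{\infty}(G(\mathbb{Q}_{p}))$ and $\tau \in W_{E}$, the right-hand side produces $\mathrm{tr}(\tau \mid r_{\mu} \circ \phi_{T}^{\psi}|_{W_{E}}) \cdot \mathrm{tr}(f \mid i_{B}^{G}(\chi^{\psi}))$. The first factor is polynomial in the entries of $\psi(\mathrm{Frob})$ since $r_{\mu}$ is algebraic, and the second varies regularly in $\psi$ by the standard fact that normalized parabolic induction assembles into a regular family over the Bernstein variety of unramified twists. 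For the left-hand side, $R\Gamma_{c}(G,b,\mu)$ is independent of $\psi$, and by Theorem \ref{Eiseigsheaf} the virtual representation $\Red_{b,\phi^{\psi}}$ is either zero (when $b \notin B(G)_{\mathrm{un}}$) or a direct sum of parabolic inductions $i_{B_{b}}^{J_{b}}((\chi^{\psi})^{w}) \otimes \delta_{P_{b}}^{1/2}$, whose characters also vary polynomially in $\psi$, so each summand gives a regular function of $\psi$.

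Third, verify Zariski density of the locus where Theorem \ref{avgingformula} applies. Normalized regularity (Conditions (2)-(4)) for $\phi_{T}^{\psi}$ is equivalent to the non-vanishing of finitely many regular functions on $\mathcal{V}$ of the form $\chi^{\psi}(\alpha^{\vee}) - c$ with $c \in \{1, q, q^{-1}\}$ and $\alpha$ ranging over $\Gamma$-orbits of coroots, hence defines a non-empty Zariski open subset $\mathcal{U} \subset \mathcal{V}$ (non-emptiness follows from irreducibility of $\mathcal{V}$). The further condition that $\phi_{T}^{\psi}$ admits a $\ol{\mathbb{Z}}_{\ell}$-lattice restricts $\psi(\mathrm{Frob})$ to $\hat{T}(\ol{\mathbb{Z}}_{\ell})$, which is Zariski dense in $\hat{T}(\ol{\mathbb{Q}}_{\ell})$, so $\mathcal{U} \cap \hat{T}(\ol{\mathbb{Z}}_{\ell})$ remains Zariski dense in $\mathcal{V}$. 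By Theorem \ref{avgingformula} and the remark preceding Theorem \ref{qellavgingformula}, the averaging identity holds on this dense subset.

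Finally, two regular functions on the irreducible variety $\mathcal{V}$ that agree on a Zariski dense subset must coincide everywhere, yielding the identity for all $\psi \in \mathcal{V}(\ol{\mathbb{Q}}_{\ell})$ and hence for all toral parameters as $\phi_{T,0}|_{I_{\mathbb{Q}_{p}}}$ varies. The principal obstacle is the rigor of step two: one must carefully justify that both $K_{0}$-valued expressions assemble into regular sections over $\mathcal{V}$, rather than merely pointwise identities. The cleanest route is through the Bernstein center, where an unramified twist family of characters of $T(\mathbb{Q}_{p})$ yields a regular family of finite-length $J_{b}(\mathbb{Q}_{p})$-modules under normalized parabolic induction from $B_{b}$, and this regularity transports through the fixed linear functional defined by pairing with $R\Gamma_{c}(G,b,\mu)$ on each $J_{b}$-component to a regular function of $\psi$ after further tracing against test data on $G(\mathbb{Q}_{p}) \times W_{E}$.
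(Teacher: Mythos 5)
Your proposal is correct and follows essentially the same route as the paper: both proofs interpret the two sides of the averaging identity as regular functions on the torus of unramified twists, establish the identity on the Zariski-dense locus of integral, normalized-regular characters via Theorem \ref{avgingformula} together with Lemma \ref{normregimpliesmureg}, and then extend by density. The one place where the paper supplies detail that you flag but leave informal is the regularity of the left-hand side: the paper pins this down by showing that $f^{\delta,\gamma}_L$ and $f^{\delta,\gamma}_R$ are genuine trace forms in the sense of the trace Paley--Wiener theorem \cite{BDK}, with the contribution of $R\Gamma_c(G,b,\mu)[-]$ handled by \cite[Theorem~6.5.4]{KW}, whereas you appeal more loosely to a "regular family over the Bernstein variety." Since you explicitly identify this as the principal obstacle and the needed inputs are standard, this is a difference of detail rather than of method.
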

If $\mu$ is minuscule and $G = \mathrm{GL}_{n}$ then this recovers special cases of an averaging formula of Shin \cite{Shin}, which was formalized for more general reductive groups by Alexander Bertoloni-Meli \cite{BM}. In particular, for all $\chi$ the induction $i_{B}^{G}(\chi)$ defines a class $[i_{B}^{G}(\chi)] \in K_{0}^{st}(G(\mathbb{Q}_{p}),\ol{\mathbb{Q}}_{\ell})$ in the subgroup of the Grothendieck group with stable character sum. To such a class, the averaging formula gives a description of the RHS in terms of an average over $B(G,\mu)$ of the isotypic parts of $R\Gamma_{c}(G,b,\mu)$ with respect to $\Red_{b}^{\mf{c}}(i_{B}^{G}(\chi))$, where $\mf{c}$ is a refined endoscopic datum (Definition \ref{def: refinedendoscopy}). In Appendix \ref{Classicavg}, we verify that this indeed agrees with the conjectured averaging formula when $\mf{c}$ is the trivial endoscopic datum. This is rather remarkable. Such formulae are typically proven in the minuscule case by stabilizing the trace formula on the Igusa varieties indexed by $b \in B(G,\mu)$, and our analysis gives a more conceptual explanation for them. By combining our work here with the compatibility results proven in \cite{Ham} and \cite{BMNH}, this gives a proof of this averaging formula in cases where the non-basic Igusa varieties haven't even been properly defined yet!\footnote{However, in the case where $G$ is split and $\mu$ is minuscule this formula can actually be checked by hand (See Proposition \ref{muordinary}), but in the case of unitary groups or restrictions of scalars of a split group and $\mu$ minuscule this already gives new information.}
We recall that, in the proof of the averaging formula, we used excision to produce a filtration whose graded pieces were isomorphic to
\[ j_{\mathbf{1}}^{*}T_{\mu}(j_{b!}(\Red_{b,\phi})) \simeq j_{\mathbf{1}}^{*}T_{\mu}(j_{b!}(\nmEis(\mathcal{S}_{\phi_{T}})|_{\Bun_{G}^{b}}))   \]
By using the isomorphism $\mathbb{D}_{\Bun_{G}}(\nmEis(\mathcal{S}_{\phi_{T}})) \simeq \nmEis(\mathcal{S}_{\phi_{T}^{\vee}})$, we can show (See Corollary \ref{rhoduality}) that we have an isomorphism: $j_{\mathbf{1}}^{*}T_{\mu}(j_{b!}(\Red_{b,\phi})) \simeq j_{\mathbf{1}}^{*}T_{\mu}(j_{b*}(\Red_{b,\phi}))$ of objects in $\Dlis(\Bun_{G},\Lambda)$. This implies that the excision spectral sequence degenerates, allowing us to conclude the following refined averaging formula.
\begin{theorem}{(Theorem \ref{refinedaveragingformula})}
For $\phi_{T}$ an integral parameter with generic regular mod $\ell$-reduction, and $\mu$ any geometric dominant cocharacter such that $\phi_{T}$ is $\mu$-regular, we have an isomorphism
\[ \bigoplus_{b \in B(G,\mu)_{\mathrm{un}}} \bigoplus_{w \in W_{b}} R\Gamma^{\flat}_{c}(G,b,\mu)[\rho_{b,w}][-\langle 2\hat{\rho},\nu_{b} \rangle] \simeq i_{B}^{G}(\chi) \boxtimes r_{\mu} \circ \phi|_{W_{E}} \]
of complexes of $G(\mathbb{Q}_{p}) \times W_{E}$-modules, where $\rho_{b,w} := i_{B_{b}}^{J_{b}}(\chi^{w}) \otimes \delta_{P_{b}}^{-1/2}$.
\end{theorem}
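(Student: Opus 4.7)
The plan is to derive the second isomorphism from the $\mu$-regular Hecke eigensheaf property via excision on the HN-stratification of $\Bun_{G}$, and to then read off the first equality from the stalk description of $\nmEis(\mathcal{S}_{\phi_{T}})$. Starting from
\[ T_{\mu}(\nmEis(\mathcal{S}_{\phi_{T}})) \simeq \nmEis(\mathcal{S}_{\phi_{T}}) \boxtimes r_{\mu} \circ \phi|_{W_{E}}, \]
I would pull back along the open inclusion $j_{\mathbf{1}}: \Bun_{G}^{\mathbf{1}} \hookrightarrow \Bun_{G}$. The stalk formula of the preceding theorem identifies the restriction of the right-hand side with $i_{B}^{G}(\chi) \boxtimes r_{\mu} \circ \phi|_{W_{E}}$ as a complex of $G(\mathbb{Q}_{p}) \times W_{E}$-modules (only $w = 1$ contributes on the trivial stratum), so it suffices to identify $j_{\mathbf{1}}^{*}T_{\mu}(\nmEis(\mathcal{S}_{\phi_{T}}))$ with the indicated direct sum over $B(G,\mu)$.

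The next step is to apply excision to the locally closed stratification $\Bun_{G} = \bigsqcup_{b \in B(G)} \Bun_{G}^{b}$, which equips $\nmEis(\mathcal{S}_{\phi_{T}})$ with a filtration whose associated graded pieces are the sheaves $j_{b!}(\Red_{b,\phi})$. After applying $j_{\mathbf{1}}^{*}T_{\mu}$, only strata with $b \in B(G,\mu)$ can contribute, since $T_{\mu}$ is supported on modifications bounded by $\mu$, and the geometric interpretation of the Hecke operator in terms of the local shtuka spaces $\Sht(G,b,\mu)_{\infty}$ identifies each graded piece with $R\Gamma_{c}(G,b,\mu)[\Red_{b,\phi}]$. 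The hard part is then to show that this excision filtration actually splits. For this I would invoke the Verdier duality compatibility $\mathbb{D}_{\Bun_{G}}(\nmEis(\mathcal{S}_{\phi_{T}})) \simeq \nmEis(\mathcal{S}_{\phi_{T}^{\vee}})$: commuting duality through the Hecke operator $T_{\mu}$, which intertwines it with the Hecke operator for the dual representation, and exchanging $j_{b!}$ with $j_{b*}$, one obtains an isomorphism
\[ j_{\mathbf{1}}^{*}T_{\mu}(j_{b!}(\Red_{b,\phi})) \simeq j_{\mathbf{1}}^{*}T_{\mu}(j_{b*}(\Red_{b,\phi})) \]
for every $b \in B(G,\mu)$. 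This simultaneous coincidence of $!$- and $*$-pushforwards on each stratum forces the differentials in the excision spectral sequence to vanish, yielding the second isomorphism of the theorem.

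The first equality is then immediate from the stalk description of $\nmEis(\mathcal{S}_{\phi_{T}})$: the complex $\Red_{b,\phi}$ vanishes for $b \notin B(G)_{\mathrm{un}}$, and for $b \in B(G)_{\mathrm{un}}$ it equals $\bigoplus_{w \in W_{b}} \rho_{b,w}[-\langle 2\hat{\rho},\nu_{b}\rangle]$, so distributing the isotypic-component functor $R\Gamma_{c}(G,b,\mu)[-]$ over the direct sum and pulling the cohomological shift out through the contravariant slot reproduces the displayed left-hand side. The main obstacle is the $!/*$ coincidence on each stratum: it rests on the Verdier self-identification of $\nmEis$ together with a careful verification that Verdier duality on the global Hecke correspondence intertwines the Hecke operator on $\nmEis(\mathcal{S}_{\phi_{T}})$ with the one on $\nmEis(\mathcal{S}_{\phi_{T}^{\vee}})$ in a way that preserves the $\mu$-regular splitting; once this is in hand, the degeneration of the excision spectral sequence and the identification of graded pieces with the cohomology of the shtuka tower are both formal.
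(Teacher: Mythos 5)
Your overall strategy matches the paper's — restrict the eigensheaf identity to $\Bun_G^{\mathbf 1}$, excise along the HN strata of $B(G,\mu)$, and split the resulting filtration via a $!/*$-coincidence — but there is a genuine gap in where you try to do the splitting, and it matters.

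You apply $j_{\mathbf 1}^{*}T_{\mu}$ first and then try to split the induced filtration on $j_{\mathbf 1}^{*}T_{\mu}(\nmEis(\mathcal{S}_{\phi_T})|_{B(G,\mu)})$ using only the abstract isomorphism $j_{\mathbf 1}^{*}T_{\mu}(j_{b!}(\Red_{b,\phi})) \simeq j_{\mathbf 1}^{*}T_{\mu}(j_{b*}(\Red_{b,\phi}))$ on graded pieces. An abstract isomorphism of graded pieces does not force the excision spectral sequence to degenerate; you need the isomorphism to be compatible with the adjunction/edge maps, and nothing in your proposal ensures that. The paper instead establishes the coincidence at the sheaf level, $j_{b!}(\rho_{b,w}) \simeq j_{b*}(\rho_{b,w})$ (Proposition~\ref{inertsheaves}); once this holds, both objects must be supported on $\Bun_G^{b}$ (since $j_{b!}$ is), so the \emph{canonical} morphism $j_{b!} \to j_{b*}$ is automatically an isomorphism, and the excision filtration on $\nmEis(\mathcal{S}_{\phi_T})|_{B(G,\mu)}$ splits directly. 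Only then is $j_{\mathbf 1}^{*}T_{\mu}$ applied, to an already split object, so the post-$T_\mu$ filtration trivially splits. You need to move the splitting step upstream of the Hecke operator.

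Your derivation of the $!/*$-coincidence by "commuting duality through $T_\mu$" is also skipping the key intermediate identification. What makes the global duality $\mathbb{D}_{\Bun_G}(\nmEis(\mathcal{S}_{\phi_T})) \simeq \nmEis(\mathcal{S}_{\phi_T^\vee})$ localize to a single stratum is the decomposition by connected components of $\Bun_B$, specifically the fact that the HN-dominant component gives $\nmEis^{b_T}(\mathcal{S}_{\phi_T}) \simeq j_{b!}(\rho_{b,1})[-\langle 2\hat\rho,\nu_b\rangle]$ unconditionally; applying $\mathbb{D}_{\Bun_G}$ to that isolated summand (together with the computation of $p^{!}$ for the chart $\Bun_G^{b} \simeq [\ast/\mathcal J_b]$) is what yields $j_{b*}(\rho_{b,w}) \simeq j_{b!}(\rho_{b,w})$. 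Without singling out that component, the global duality statement does not obviously give you anything stratum-by-stratum. Finally, your claim about "pulling the cohomological shift out through the contravariant slot" in the first equality is off: $R\Gamma_c(G,b,\mu)[\rho] := R\Gamma_c(G,b,\mu) \otimes_{\mathcal H(J_b)} \rho$ is covariant in $\rho$, so the shift passes through with the same sign; the bookkeeping should simply distribute the shift built into $\Red_{b,\phi}$ over the direct sum.
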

We now assume that $\phi_{T}$ is an integral parameter with generic regular mod $\ell$ reduction in all that follows. The previous theorem leads to a very explicit descriptions of the complexes $R\Gamma_{c}(G,b,\mu)[\rho_{b,w} \otimes \delta_{P_{b}}]$ and the degrees of cohomology they sit in. 
\begin{corollary}{(Corollary \ref{inductisotypic})}
For $\mu$ a geometric dominant cocharacter with reflex field $E$ such that $\phi_{T}$ is $\mu$-regular, fixed $b \in B(G,\mu)_{\mathrm{un}}$, and varying $w \in W_{b}$, the complex $R\Gamma^{\flat}_{c}(G,b,\mu)[\rho_{b,w}]$ is isomorphic to $\phi_{b,w}^{\mu} \boxtimes \sigma[\langle 2\hat{\rho},\nu_{b} \rangle]$, for $\phi_{b,w}^{\mu}$ a representation of $W_{E}$ and $\sigma$ a sub-quotient of $i_{B}^{G}(\chi)$. Moreover, we have an isomorphism 
\[ \bigoplus_{b \in B(G,\mu)_{\mathrm{un}}} \bigoplus_{w \in W_{b}} \phi_{b,w}^{\mu} \simeq r_{\mu} \circ \phi|_{W_{E}} \]
of $W_{E}$-representations.
\end{corollary}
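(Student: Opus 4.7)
The plan is to extract individual summands from the refined averaging formula (Theorem~\ref{refinedaveragingformula}) via a Schur-type argument, and then to deduce the sum identity by cancellation. Concretely, Theorem~\ref{refinedaveragingformula} supplies the $G(\mathbb{Q}_{p}) \times W_{E}$-equivariant isomorphism
\[ \bigoplus_{b \in B(G,\mu)_{\mathrm{un}}} \bigoplus_{w \in W_{b}} R\Gamma_{c}(G,b,\mu)[\rho_{b,w}][\langle 2\hat{\rho},\nu_{b} \rangle] \simeq i_{B}^{G}(\chi) \boxtimes r_{\mu} \circ \phi|_{W_{E}}, \]
in which each summand on the left is, by its very construction, a $G(\mathbb{Q}_{p}) \times W_{E}$-equivariant direct summand of the right-hand side. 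The essential extra input is the irreducibility of $i_{B}^{G}(\chi)$ as a $G(\mathbb{Q}_{p})$-representation, which, under the weakly normalized regularity hypothesis on the mod $\ell$ reduction of $\phi_{T}$, is guaranteed by Corollary~\ref{cor: normreg implies irred} in the setting $\Lambda = \ol{\mathbb{Q}}_{\ell}$, with the torsion cases $\Lambda \in \{\ol{\mathbb{F}}_{\ell},\ol{\mathbb{Z}}_{\ell}\}$ handled by reducing to this one via the integrality of $\phi_{T}$ and the lattice argument already used in the proof of Theorem~\ref{Eiseigsheaf}.

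Once irreducibility is in hand, one has $\mathrm{End}_{G(\mathbb{Q}_{p})}(i_{B}^{G}(\chi)) = \Lambda$, so every $G(\mathbb{Q}_{p}) \times W_{E}$-equivariant direct summand of $i_{B}^{G}(\chi) \boxtimes r_{\mu} \circ \phi|_{W_{E}}$ is necessarily isomorphic to $V \boxtimes i_{B}^{G}(\chi)$ for some (graded) $W_{E}$-subrepresentation $V$ of $r_{\mu} \circ \phi|_{W_{E}}$. Applying this to the $(b,w)$-summand produces the desired $W_{E}$-representation $\phi_{b,w}^{\mu}$ satisfying
\[ R\Gamma_{c}(G,b,\mu)[\rho_{b,w}][\langle 2\hat{\rho},\nu_{b} \rangle] \simeq \phi_{b,w}^{\mu} \boxtimes i_{B}^{G}(\chi), \]
which, after undoing the shift by $[-\langle 2\hat{\rho},\nu_{b}\rangle]$, yields the first claim of the corollary. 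Summing these identifications over all $(b,w)$ and comparing with the refined averaging formula produces
\[ \Bigl( \bigoplus_{b \in B(G,\mu)_{\mathrm{un}}} \bigoplus_{w \in W_{b}} \phi_{b,w}^{\mu} \Bigr) \boxtimes i_{B}^{G}(\chi) \simeq r_{\mu} \circ \phi|_{W_{E}} \boxtimes i_{B}^{G}(\chi), \]
and the irreducibility of $i_{B}^{G}(\chi)$ allows one to cancel the tensor factor to recover $\bigoplus_{b,w} \phi_{b,w}^{\mu} \simeq r_{\mu} \circ \phi|_{W_{E}}$ as $W_{E}$-representations, proving the second claim.

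The main obstacle is carrying out this Schur-type extraction rigorously at the level of complexes rather than of single representations, so that $\phi_{b,w}^{\mu}$ is unambiguously a graded $W_{E}$-representation faithfully capturing the cohomological structure of $R\Gamma_{c}(G,b,\mu)[\rho_{b,w}]$; and, more seriously, verifying the irreducibility of $i_{B}^{G}(\chi)$ in the torsion coefficient setting, which is the source of most of the subtlety and is precisely why the weakly normalized regular hypothesis on the mod $\ell$ reduction of $\phi_{T}$ is imposed.
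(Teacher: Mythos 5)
Your Schur-type extraction is a natural strategy, but it rests on the irreducibility of $i_{B}^{G}(\chi)$, and this is precisely where the argument runs into trouble under the stated hypotheses. You appeal to Corollary~\ref{cor: normreg implies irred}, but that corollary requires \emph{normalized regularity} (Conditions (2)--(4) of Condition/Definition~\ref{normregcond}), whereas the standing hypothesis for Corollary~\ref{inductisotypic} is only \emph{weak} normalized regularity (Conditions (2)--(3)). The two differ by Condition (4), and the paper's route to irreducibility — Lemma~\ref{lemma: normregimplies reg} yielding regularity, then Corollary~\ref{cor: genericreg implies irred} — essentially needs Condition (4) to conclude $\chi \not\simeq \chi^{w}$ for \emph{all} $w \in W_G$; Condition (3) alone only rules out specific $w$ (such as $w_0$). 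So under the stated hypothesis the input you invoke is not available, and the gap is not merely a matter of paging through the paper for the right reference.

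There is also a structural red flag suggesting the intended proof does not go through irreducibility at all. Observe that if $i_{B}^{G}(\chi)$ were irreducible, then the isomorphisms $i_{B}^{G}(\chi) \simeq i_{B}^{G}(\chi^{w})$ for all $w$ would follow immediately (since both inductions have the same semi-simplification and one has length one). But the paper \emph{derives} exactly these isomorphisms, including with mod $\ell$ coefficients, as Corollary~\ref{intertwiningoperators}, using Corollary~\ref{inductisotypic} and Proposition~\ref{muordinary} as the key inputs, and emphasizes this as a nontrivial output of the whole machinery. If Corollary~\ref{inductisotypic} already presupposed irreducibility (as your proof does), Corollary~\ref{intertwiningoperators} would be known a priori and the stated derivation would be circular window dressing. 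Since the paper treats it as a genuine consequence, the author must intend a factorization argument for Corollary~\ref{inductisotypic} that does not pass through Schur's lemma for $i_{B}^{G}(\chi)$. Your treatment of the torsion case is also too thin: Corollary~\ref{cor: normreg implies irred} is an assertion over $\ol{\mathbb{Q}}_{\ell}$, and irreducibility of a $\ol{\mathbb{Q}}_{\ell}$-representation is not preserved by reduction mod $\ell$, so the ``lattice argument from Theorem~\ref{Eiseigsheaf}'' does not transport it. In summary: you have correctly identified that the tensor factorization is the nontrivial step, but the specific input you use (irreducibility under weak normalized regularity) is neither available under the stated hypotheses nor compatible with the logical role Corollary~\ref{inductisotypic} plays in the remainder of the paper.
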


It is now natural to wonder what the representations $\phi_{b,w}^{\mu}$ exactly are. It was already observed by Xiao-Zhu \cite{XZ} that the elements of the set $B(G,\mu)_{\mathrm{un}}$ correspond to Weyl group orbits of weights of the highest weight module $\mathcal{T}_{\mu}$ or rather its restriction $\mathcal{T}_{\mu}|_{\hat{G}^{\Gamma}}$ (Corollary \ref{bgmuweights}). If we let $b_{T} \in B(T)$ be the dominant reduction of an element $b \in B(G,\mu)_{\mathrm{un}}$ then the orbit of the character $b_{T}$ under the Weyl group $W_{G}$ can be described as $w(b_{T})$ for $w \in W_{b}$ varying. For varying $b \in B(G,\mu)_{\mathrm{un}}$, this describes the set of non-zero weights which can occur in the representation $\mathcal{T}_{\mu}|_{\hat{G}^{\Gamma}}$. In particular, given such a $\ol{\nu} \in \mathbb{X}_{*}(T_{\ol{\mathbb{Q}}_{p}})_{\Gamma}$, we can look at the direct sum of weight spaces
\[ \bigoplus_{\substack{\nu \in \mathbb{X}_{*}(T_{\ol{\mathbb{Q}}_{p}}) \\ \nu_{\Gamma} = \ol{\nu}}} \mathcal{T}_{\mu}(\nu) \]
and this coincides with the weight space of $\mathcal{T}_{\mu}|_{\hat{G}^{\Gamma}}(\ol{\nu})$ via the isomorphism $\mathbb{X}_{*}(T_{\ol{\mathbb{Q}}_{p}})_{\Gamma} \simeq \mathbb{X}^{*}(\hat{T}^{\Gamma})$. The refined averaging formula suggests the following relationship. 
\begin{conjecture}{(Conjecture \ref{weilgroupaction})}
For all geometric dominant cocharacters $\mu$ such that $\phi_{T}$ is $\mu$-regular, an unramified element $b \in B(G,\mu)_{\mathrm{un}}$, and a Weyl group element $w \in W_{b}$, we have an isomorphism 
\[ \bigoplus_{\substack{\wt{w(b_{T})} \in \mathbb{X}_{*}(T_{\ol{\mathbb{Q}}_{p}}) \\
\wt{w(b_{T})}_{\Gamma} = w(b_{T})}} \wt{w(b_{T})} \circ \phi_{T}|_{W_{E'}} \otimes \mathcal{T}_{\mu}(\wt{w(b_{T})}) \simeq \phi_{b,w}^{\mu}|_{W_{E'}} \] 
of $W_{E'}$-representations, where $E'|E$ denotes the splitting field of $G$.
\end{conjecture}
We verify this conjecture in some particular cases, by noting that the contribution from the $\mu$-ordinary locus can be explicitly computed using a shtuka analogue of Boyer's trick \cite{Boy}, as studied by Gaisin-Imai \cite{IG}. To do this, we note that we have a distinguished element in $B(G,\mu)_{\mathrm{un}}$ called the $\mu$-ordinary element, which we denote by $b_{\mu}$. It is the maximal element in the partial ordering on $B(G,\mu)$, and we let  $b_{\mu_{T}}$ be its dominant reduction. The conjecture suggests that this should correspond to the Weyl group orbit of the highest weight of $\mathcal{T}_{\mu}$. In this case, the space $\Sht(G,b_{\mu},\mu)_{\infty,\mathbb{C}_{p}}$ with its $G(\mathbb{Q}_{p}) \times J_{b}(\mathbb{Q}_{p})$-action is determined by the space  $\Sht(T,b_{\mu_{T}},\mu)_{\infty,\mathbb{C}_{p}}$ with its $T(\mathbb{Q}_{p}) \times T(\mathbb{Q}_{p})$-action. In particular, using this we can deduce the following isomorphism (See Proposition \ref{muordinary}):
\[ R\Gamma^{\flat}_{c}(G,b_{\mu},\mu)[\rho_{b_{\mu},w}] \simeq i_{B}^{G}(\chi^{ww_{0}}) \boxtimes w(\mu) \circ \phi_{T}|_{W_{E}}[\langle 2\hat{\rho}, \nu_{b} \rangle], \]
where $w$ and $w_{0}$ are minimal length representatives of $W_{b}$ in $W_{G}$. This calculation has a very interesting consequence. In particular, when combined with the refined averaging formula, we see that we must have an isomorphism $i_{B}^{G}(\chi^{w}) \simeq i_{B}^{G}(\chi)$. So, by choosing $\mu$ to be sufficiently regular so that $W_{b_{\mu}} = W_{G}$, we can deduce the following.
\begin{theorem}{(Corollary \ref{intertwiningoperators})}
For $\phi_{T}$ an integral parameter with generic regular mod $\ell$-reduction such that there exists a $\mu$ which is not fixed under $W_{G}$ and $\phi_{T}$ is $\mu$-regular, we have an isomorphism
\[ i_{\chi,w}: i_{B}^{G}(\chi) \simeq i_{B}^{G}(\chi^{w}) \]
of smooth $G(\mathbb{Q}_{p})$-representations for all $w \in W_{G}$. 
\end{theorem}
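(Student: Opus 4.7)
The plan is to combine the refined averaging formula (Theorem \ref{refinedaveragingformula}) with the explicit $\mu$-ordinary calculation of Proposition \ref{muordinary}. First I would fix the geometric dominant cocharacter $\mu$ guaranteed by the hypothesis, chosen so that the stabilizer quotient $W_{b_{\mu}} = W_{G}/W_{M_{b_{\mu}}}$ equals the full Weyl group $W_{G}$. Since by assumption $\mu$ is not fixed under $W_{G}$ and $\phi_{T}$ is $\mu$-regular, I could pass if necessary to a sufficiently regular dominant cocharacter in the support of $\mu$ for which $M_{b_{\mu}} = T$; the $\mu$-regularity hypothesis can be propagated to such a choice using the analysis of tilting cocharacters and minuscule/quasi-minuscule decompositions discussed in the remarks after the definition of $\mu$-regularity.

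Next, I would apply the refined averaging formula for this $\mu$, which yields
\[ \bigoplus_{b \in B(G,\mu)_{\mathrm{un}}} \bigoplus_{w \in W_{b}} R\Gamma_{c}(G,b,\mu)[\rho_{b,w}][\langle 2\hat{\rho},\nu_{b}\rangle] \simeq i_{B}^{G}(\chi) \boxtimes r_{\mu} \circ \phi|_{W_{E}} \]
as $G(\mathbb{Q}_{p}) \times W_{E}$-modules. By Proposition \ref{muordinary}, the summands on the LHS indexed by $b = b_{\mu}$ and $w \in W_{G} = W_{b_{\mu}}$ contribute precisely $\bigoplus_{w \in W_{G}} i_{B}^{G}(\chi^{w}) \boxtimes (w(\mu) \circ \phi_{T}|_{W_{E}})$. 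On the RHS, decomposing $r_{\mu} \circ \phi|_{W_{E}}$ along the weights of $\mathcal{T}_{\mu}$, each extremal weight $w(\mu)$ for $w \in W_{G}$ occurs with multiplicity exactly one, contributing a summand of the form $i_{B}^{G}(\chi) \boxtimes (w(\mu) \circ \phi_{T}|_{W_{E}})$.

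Finally, I would match $W_{E}$-isotypic components on the two sides. The weakly normalized regular hypothesis ensures that the characters $w(\mu) \circ \phi_{T}|_{W_{E}}$ are pairwise non-isomorphic as $W_{E}$-representations for distinct $w \in W_{G}$, so the isotypic decomposition is unambiguous; for any $b \in B(G,\mu)_{\mathrm{un}}$ with $b \neq b_{\mu}$ and any $w' \in W_{b}$, the weight of $\mathcal{T}_{\mu}$ associated to $(b,w')$ via the parametrization of Corollary \ref{bgmuweights} is strictly less dominant than $w(\mu)$, so its contribution cannot interfere with the extremal isotypic piece. Comparing isotypic components for each $w \in W_{G}$ then gives $i_{B}^{G}(\chi^{w}) \simeq i_{B}^{G}(\chi)$.

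The main obstacle is this isolation of the $\mu$-ordinary contribution inside the refined averaging formula: one must rule out that a less-dominant Newton stratum $b \neq b_{\mu}$ contributes to the extremal weight $w(\mu)$-isotypic component. This uses the explicit matching between $B(G,\mu)_{\mathrm{un}}$ and Weyl group orbits of weights of $\mathcal{T}_{\mu}|_{\hat{G}^{\Gamma}}$ together with the strict inequality on Newton points, and implicitly incorporates the key structural content of Conjecture \ref{weilgroupaction} at the extremal weight, which in this case is accessible by the direct computation at the $\mu$-ordinary locus.
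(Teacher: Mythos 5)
Your proposal uses the same two main ingredients as the paper — Proposition \ref{muordinary} and the refined averaging formula — but takes a longer and slightly shakier route to the conclusion. The paper's proof is a single pointwise comparison: since $\mu$ is not fixed by $W_G$, the $\mu$-ordinary element satisfies $M_{b_\mu} = J_{b_\mu} = T$ and $W_{b_\mu} = W_G$; Proposition \ref{muordinary} computes $R\Gamma_c(G,b_\mu,\mu)[\rho_{b_\mu,w}] \simeq w(\mu_T)\circ\phi_T|_{W_E} \boxtimes i_B^G(\chi^w)$, while Corollary \ref{inductisotypic} (already a consequence of the refined averaging formula) says this same complex has $G(\mathbb{Q}_p)$-structure $i_B^G(\chi)$. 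Matching the two descriptions of one object finishes the proof. In particular, the paper never decomposes the global averaging formula into $W_E$-isotypic pieces, so the issue you flag as the ``main obstacle'' — ruling out that a less-dominant Newton stratum contributes to the extremal-weight isotypic component — simply does not arise.

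This matters because your isotypic-matching step carries a real gap. You assert that ``the weakly normalized regular hypothesis ensures that the characters $w(\mu)\circ\phi_T|_{W_E}$ are pairwise non-isomorphic,'' but weak normalized regularity concerns $\Gamma$-orbits of roots, while $w(\mu) - w'(\mu)$ is in general a sum of several roots; the relevant condition would be (strong) $\mu$-regularity, and even then passing from vanishing of $R\Gamma(W_{\mathbb{Q}_p},(\nu-\nu')^\Gamma\circ\phi_T)$ to non-isomorphism of the characters after restriction to $W_E$ requires a Shapiro-lemma argument where one must be careful that $E$ need not contain the reflex field of $\nu - \nu'$. You would also need to separately verify that no non-extremal weight produces the same $W_E$-character as an extremal one, an additional burden the paper never incurs. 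Finally, the preliminary manoeuvre of ``passing to a sufficiently regular dominant cocharacter in the support of $\mu$'' is unnecessary: the hypothesis that $\mu$ is fixed by no $w \in W_G$ directly gives $M_{b_\mu} = T$ (it is the centralizer of $\nu_{b_\mu} = \tilde\mu$), hence $W_{b_\mu} = W_G$, with no further choice required.
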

This showcases the strong connection between the theory of geometric Eisenstein series and the theory of intertwining operators and the Langlands quotient that has been our philosophical guide throughout. A relation that holds even with mod $\ell$-coefficients! With mod $\ell$ coefficients, there is no good theory of intertwining operators or the Langlands quotient (See however \cite{Dat}, for the current state of the art), and we suspect that further developing the theory of geometric Eisenstein series should provide some insights into these notions in the $\ell$-modular setting. 

We saw above that our previous conjecture on $\phi_{b,w}^{\mu}$ can be completely verified using Boyer's trick in the case that the only weights of $\mathcal{T}_{\mu}|_{\hat{G}^{\Gamma}}$ are orbits of the highest weight. This will be the case when the image $\mu_{\Gamma} \in \mathbb{X}_{*}(T_{\ol{\mathbb{Q}}_{p}})_{\Gamma}^{+} \simeq \mathbb{X}^{*}(\hat{T}^{\Gamma})^{+}$ of $\mu$ is minuscule with respect to the pairing with $\mathbb{X}_{*}(\hat{T}^{\Gamma})$. If we combine this with the refined averaging formula then we can also deduce the claim when $B(G,\mu)$ has two elements. I.e  the case where $\mathcal{T}_{\mu}|_{\hat{G}^{\Gamma}}$ has two weight spaces; one corresponding to the $\mu$-ordinary element and the other corresponding to the basic element. This will prove the previous conjecture in all cases where $\mu_{\Gamma} \in \mathbb{X}^{*}(\hat{T}^{\Gamma})^{+}$ is minuscule or quasi-minuscule with respect to the pairing with the cocharacters $\mathbb{X}_{*}(\hat{T}^{\Gamma})$. 
\begin{theorem}{(Corollary \ref{zeroweightcontrib})}
For $\mu$ a geometric dominant cocharacter and $\phi_{T}$ strongly $\mu$-regular such that $\mu_{\Gamma} \in \coinvdom$ is minuscule or quasi-minuscule with respect to the pairing with $\mathbb{X}_{*}(\hat{T}^{\Gamma})$, the previous conjecture is true.  
\end{theorem}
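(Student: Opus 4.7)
The plan is to combine the explicit Boyer's trick computation of Proposition \ref{muordinary} for the $\mu$-ordinary contribution with the refined averaging formula (Theorem \ref{refinedaveragingformula}) to pin down all the $W_{E}$-representations $\phi_{b,w}^{\mu}$ and match them with the conjectured expression. Recall that, by the bijection between $B(G,\mu)_{\mathrm{un}}$ and Weyl orbits of weights of $\mathcal{T}_{\mu}|_{\hat{G}^{\Gamma}}$, the (quasi-)minuscule hypothesis on $\mu_{\Gamma}$ makes this set small: if $\mu_{\Gamma}$ is minuscule then $B(G,\mu)_{\mathrm{un}} = \{b_{\mu}\}$, while if $\mu_{\Gamma}$ is quasi-minuscule then $B(G,\mu)_{\mathrm{un}} = \{b_{\mu}, b_{0}\}$, where $b_{0}$ denotes the basic element corresponding to the zero-weight orbit.

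First, I would verify the conjecture for the $\mu$-ordinary element $b_{\mu}$. By Proposition \ref{muordinary}, for each $w \in W_{b_{\mu}}$ we have $\phi_{b_{\mu},w}^{\mu} \simeq w(\mu) \circ \phi_{T}|_{W_{E}}$. To check this matches the conjectured decomposition, note that the HN-dominant reduction $b_{\mu_{T}}$ corresponds to $\mu_{\Gamma} \in \coinv$, so $w(b_{\mu_{T}}) = w(\mu_{\Gamma})$. The (quasi-)minuscule hypothesis on $\mu_{\Gamma}$ ensures that each non-zero weight of $\mathcal{T}_{\mu}|_{\hat{G}^{\Gamma}}$ in the $W_{G}$-orbit of $\mu_{\Gamma}$ has multiplicity one and that the unique lift $\tilde{w(b_{\mu_{T}})} \in \cochar$ giving a non-zero contribution to $\mathcal{T}_{\mu}$ is $w(\mu)$ itself. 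Thus the conjectured sum collapses to a single term $w(\mu) \circ \phi_{T}|_{W_{E}}$, matching Boyer's formula. This already finishes the minuscule case, since there $B(G,\mu)_{\mathrm{un}} = \{b_{\mu}\}$.

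Next, suppose $\mu_{\Gamma}$ is quasi-minuscule and not minuscule, so we still need to handle the basic element $b_{0}$. Since $M_{b_{0}} = G$, we have $W_{b_{0}} = \{e\}$, giving a single $W_{E}$-representation $\phi_{b_{0},e}^{\mu}$ to identify. The refined averaging formula now reads
\[ \phi_{b_{0},e}^{\mu} \oplus \bigoplus_{w \in W_{b_{\mu}}} \phi_{b_{\mu},w}^{\mu} \simeq r_{\mu} \circ \phi|_{W_{E}} \simeq \bigoplus_{\tilde{\nu} \in \cochar} \mathcal{T}_{\mu}(\tilde{\nu}) \otimes \tilde{\nu} \circ \phi_{T}|_{W_{E}}. \]
By the first step, $\bigoplus_{w \in W_{b_{\mu}}} \phi_{b_{\mu},w}^{\mu}$ accounts for precisely the summands on the right indexed by $\tilde{\nu} \in W_{G} \cdot \mu \subset \cochar$, which are exactly the weights whose image in $\coinv$ lies in the non-zero Weyl orbit $W_{G} \cdot \mu_{\Gamma}$. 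Subtracting, the remaining $W_{E}$-representation is
\[ \phi_{b_{0},e}^{\mu} \simeq \bigoplus_{\substack{\tilde{\nu} \in \cochar \\ \tilde{\nu}_{\Gamma} = 0}} \mathcal{T}_{\mu}(\tilde{\nu}) \otimes \tilde{\nu} \circ \phi_{T}|_{W_{E}}, \]
which is exactly the conjectured formula for $b = b_{0}$ and $w = e$ (since $b_{0,T} = 0$).

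The main subtlety, and therefore the part that needs the most care, is the book-keeping in Step 1: one must check that, in the (quasi-)minuscule setting, the weights $\tilde{\nu}$ of $\mathcal{T}_{\mu}$ projecting under $(-)_{\Gamma}$ to a fixed element $w(\mu_{\Gamma})$ of the top Weyl orbit consist of exactly the single element $w(\mu)$ with a one-dimensional weight space, so that the conjectured multi-term sum really does reduce to Boyer's single term. This is a statement about the restriction of $\mathcal{T}_{\mu}$ from $\hat{G}$ to $\hat{G}^{\Gamma}$ and follows from the standard weight-multiplicity description of (quasi-)minuscule modules together with the fact that the $\Gamma$-action permutes weights in the same $\Gamma$-orbit; once this is established, the remaining step is a clean subtraction argument using the refined averaging formula as an identity of $W_{E}$-representations.
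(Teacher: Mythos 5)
Your proposal is correct and follows the same route as the paper's own (very terse) proof: compute the $\mu$-ordinary contribution via Boyer's trick (Proposition \ref{muordinary}), then use the refined averaging formula (Theorem \ref{refinedaveragingformula}) together with $\mu$-regularity to isolate the basic contribution by subtraction. Your write-up supplies more of the book-keeping than the paper does, which is a plus.

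One small imprecision in your justification of the key multiplicity-one step. You need to know that the only weight $\tilde{\nu}$ of $\mathcal{T}_{\mu}$ with $\tilde{\nu}_{\Gamma} = w(b_{\mu_{T}})$ and $\mathcal{T}_{\mu}(\tilde{\nu}) \neq 0$ is $w(\mu_{T})$ (note: the HN-dominant reduction is the anti-dominant representative $\mu_{T}$, not $\mu$ itself, though this is cosmetic). You attribute this to ``the standard weight-multiplicity description of (quasi-)minuscule modules together with the fact that the $\Gamma$-action permutes weights in the same $\Gamma$-orbit.'' But the fibers of the quotient map $(-)_{\Gamma}: \cochar \to \coinv$ are in general strictly larger than $\Gamma$-orbits (the kernel is generated by all of $(1-\gamma)\cochar$, not just differences within $\Gamma$-orbits), so the second clause doesn't give you what you want. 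The correct argument is positivity: if $\tilde{\nu}$ is a weight of $\mathcal{T}_{\mu}$ with $\tilde{\nu}_{\Gamma} = \mu_{\Gamma}$, then $\mu - \tilde{\nu}$ is a non-negative integer combination of positive absolute simple coroots, and its image in $\coinv$ is the corresponding combination of the $\alpha_{i}$ of Definition \ref{def: gammaorbsimp}, which are linearly independent; vanishing then forces $\tilde{\nu} = \mu$. Applying a Weyl translate handles the general $w(\mu_{\Gamma})$. Notably, this argument does not use the (quasi-)minuscule hypothesis at all — that hypothesis is used only to control the size of $B(G,\mu)_{\mathrm{un}}$ so the subtraction closes.
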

\begin{remark}
Even for $\mu$ minuscule it can be the case that the image $\mu_{\Gamma} \in \mathbb{X}_{*}(T_{\ol{\mathbb{Q}}_{p}})^{+}_{\Gamma}$ is no longer minuscule with respect to the above pairing, as this corresponds to restricting the highest weight representation of $\hat{G}$ defined by $\mu$ to $\hat{G}^{\Gamma}$. Therefore, even for $\mu$ minuscule, we can still have that the basic element $b \in B(G,\mu)$ is unramified (See \cite[Remark~4.2.11]{XZ}) In these cases, a very analogous result was proven by \cite{XZ}, where they describe the irreducible components of affine Deligne-Lusztig varieties in terms of the weight space defined by the basic element. These affine Deligne-Lusztig varieties describe the special fibers of the local shtuka spaces $\Sht(G,b,\mu)_{\infty}/
\ul{K}$ in the case that $G$ is unramified, and $K$ is a hyperspecial level. Moreover, we suspect that, by using nearby cycles, one could deduce some special cases of their result from ours. 
\end{remark}

Throughout our results, we have introduced various technical conditions on $\phi_{T}$. We suspect that some of these conditions (in particular, the $\mu$-regularity) are artifacts of the proofs we have used to overcome the technical geometry of $\Bun_{B}$ and its compacitifications in the context of analytic geometric and diamonds. While the conditions are manageable for specific applications to specific groups it leaves one wanting for a more conceptually clear picture. In particular, we conjecture that the following is true, which (modulo checking the compatibilities of the isomorphisms in the eigensheaf property) our methods show for $\GL_{n}$ and integral generic regular parameters (Corollary \ref{cor: GLnisideal})
\begin{conjecture}{\label{conj: idealworld}}
For $\Lambda \in \{\ol{\mathbb{F}}_{\ell},\ol{\mathbb{Z}}_{\ell},\ol{\mathbb{Q}}_{\ell}\}$ and $\phi_{T}: W_{\mathbb{Q}_{p}} \ra \phantom{}^{L}T(\Lambda)$ a generic regular $L$-parameter, there exists a sheaf $\nmEis(\mathcal{S}_{\phi_{T}}) \in \Dlis(\Bun_{G},\Lambda)$ which is a perverse Hecke eigensheaf with eigenvalue $\phi$ such that one has an isomorphism $\mathbb{D}_{\Bun_{G}}(\nmEis(\mathcal{S}_{\phi_{T}})) \simeq \nmEis(\mathcal{S}_{\phi_{T}^{\vee}})$, and its stalk at all $b \in B(G)$ is isomorphic to $\Red_{b,\phi} \otimes \delta_{P_{b}}^{-1}$.
\end{conjecture}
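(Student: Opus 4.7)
The construction of $\nmEis(\mathcal{S}_{\phi_{T}}) \in \Dlis(\Bun_G,\Lambda)$ is unconditional once $\mathcal{S}_{\phi_T}$ exists, and the Verdier duality isomorphism $\mathbb{D}_{\Bun_G}(\nmEis(\mathcal{S}_{\phi_T})) \simeq \nmEis(\mathcal{S}_{\phi_T^\vee})$ holds already for weakly generic $\phi_T$ by Theorem~\ref{commverddual}, hence a fortiori under genericity. The content of the conjecture is therefore concentrated in the stalk description $\nmEis(\mathcal{S}_{\phi_T})|_{\Bun_G^b} \simeq \Red_{b,\phi}$, from which perversity for the HN-stratification $t$-structure and the Hecke eigensheaf property will follow.

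The plan is to follow the proof of Corollary~\ref{normstalksdescription}, decomposing $\nmEis(\mathcal{S}_{\phi_T}) = \bigoplus_{\ol{\nu} \in B(T)} \nmEis^{\ol{\nu}}(\mathcal{S}_{\phi_T})$ and analyzing each piece. Under Assumption~\ref{compatibility}, Corollary~\ref{noredvanishing} already forces the stalk on $\Bun_G^b$ to vanish for $b \notin B(G)_{\mathrm{un}}$, and the filtered eigensheaf analysis of Theorem~\ref{filteigensheaf} rules out all components $\ol{\nu}$ not in the Weyl orbit of some HN-dominant reduction $b_T$. For $b \in B(G)_{\mathrm{un}}$, the HN-dominant contribution $\ol{\nu} = b_T$ can be computed directly from the local picture over $\Bun_B^{b_T}$, producing the expected summand $i_{B_b}^{J_b}(\chi) \otimes \delta_{B_b}^{1/2}[-\langle 2\hat\rho,\nu_b\rangle]$ using only the geometry of the split stratum, without any use of Condition~(3). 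The real difficulty is with the nontrivial Weyl-translated components $\ol{\nu} = w(b_T)$, where the original proof used a Bruhat/Zastava stratum analysis and invoked Condition~(3) via irreducibility of unitary principal series to kill the non-split contributions.

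To circumvent Condition~(3), the proposed strategy is a deformation argument. Embed $\phi_T$ in a one-parameter family of toral parameters whose generic fiber is normalized regular and which shares the same mod-$\ell$ reduction as $\phi_T$; construct a relative version of $\nmEis$ on this family using the ULA property of $j_!(\IC_{\Bun_B})$ from Assumption~\ref{ulaassumption} together with proper base change along the restriction of $\ol{\mf{p}}$ to each connected component; and specialize the stalk description from Corollary~\ref{normstalksdescription} at the generic fiber to the special fiber to obtain the formula $i_{B_b}^{J_b}(\chi^w) \otimes \delta_{B_b}^{1/2}[-\langle 2\hat\rho,\nu_b\rangle]$ on $\Bun_G^b$. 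Once the stalks are understood, the filtered eigensheaf property of Theorem~\ref{filteigensheaf} is upgraded to a genuine eigensheaf as follows: factor each tilting module $\mathcal{T}_{\mu^{\Gamma}}$ as a direct summand of a tensor product of (quasi-)minuscule tilting modules, use compatibility of the filtration with tensor products to reduce to the (quasi-)minuscule case, and note that for (quasi-)minuscule modules every nonzero weight difference is a root, so the splitting vanishing $H^1(W_{\mathbb{Q}_p},(\nu - \nu')^\Gamma \circ \phi_T) = 0$ follows directly from genericity, Condition~(2).

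The hardest step is the deformation-specialization argument for the Weyl-translated components. It depends on two technical inputs: formal smoothness of the moduli of toral parameters with prescribed mod-$\ell$ reduction, and Zariski density of normalized regular parameters in each irreducible component of this moduli, in analogy with the density statement deployed in Theorem~\ref{qellavgingformula}. Implementing this specialization rigorously in the lisse-étale solid framework of \cite{FS}, ideally without invoking a nearby cycle formalism that is not yet developed for these $v$-stacks, is the principal technical hurdle; modulo it, the remaining assertions will follow by the arguments already deployed in the proof of Theorem~\ref{Eiseigsheaf}, with genericity consistently substituting for weak normalized regularity.
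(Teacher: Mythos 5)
The statement you are addressing is explicitly presented in the paper as a \emph{conjecture}, not a theorem: the paper proves the stalk description and eigensheaf property only under weak normalized regularity and $\mu$-regularity (Theorem~\ref{Eiseigsheaf}, Corollary~\ref{normstalksdescription}), and remarks that the general generic case would follow from a theory of $\IC_{\ol{\Bun}_B}$ and the compactified functor $\ol{\Eis}$ in the style of Braverman--Gaitsgory --- a theory that does not currently exist in the Fargues--Fontaine setting. Your proposal is therefore attempting to prove something the paper itself leaves open, and the two gaps you would need to close are substantial enough that the proposal as written does not succeed.

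First, the claim in the penultimate paragraph that, after factoring tilting modules into (quasi-)minuscule pieces, the splitting vanishing ``follows directly from genericity, Condition~(2)'' is incorrect. By Lemma~\ref{minusculeweighttheory}, distinct weights of a quasi-minuscule module can differ by $2\alpha$ for a root $\alpha$, and then the relevant complex is $R\Gamma(W_{\mathbb{Q}_p},(2\alpha)^{\Gamma}\circ\phi_T) = R\Gamma(W_{\mathbb{Q}_p},(\alpha\circ\phi_T)^2)$, whose vanishing is precisely Condition~(4) and is \emph{not} implied by genericity. Since for many groups (e.g.\ $\mathrm{Sp}_{2n}$, $G_2$) not every dominant cocharacter is generated by minuscules alone, the eigensheaf upgrade cannot be run on genericity alone. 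This is exactly why the paper retains Condition~(4) and why for $\GL_n$ (where the $\mu_i$ \emph{can} be chosen minuscule and, by Lemma~\ref{lemma: genericimpliesweaknormreg}, genericity already forces weak normalized regularity) the conjecture is the one case where the paper's methods do apply.

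Second, the deformation-specialization argument for the Weyl-translated components is not merely a ``technical hurdle'' --- it is the heart of the matter and faces a structural obstacle that you have not addressed. The vanishing in Proposition~\ref{nonsplitcontribution} rests on the absence of an intertwiner $\chi\otimes\delta_B^{1/2}\to\chi^w\otimes(\delta_B^{-1/2})^w$, which is precisely Condition~(3). For a generic but non-normalized-regular $\chi$ such an intertwiner can exist (cf.\ Example~\ref{Sl2ex} with $\chi^2=\mathbf{1}$), and the Bruhat-stratum contributions that Condition~(3) kills can genuinely contribute. Any semicontinuity argument would only bound the stalk from one side, not force it to zero; ruling out jumping at the non-normalized-regular locus is exactly what Condition~(3) is for. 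Moreover, there is no spectral moduli of parameters acting on $\Bun_G$ over which to run a relative $\nmEis$, and the paper's framework has no nearby cycles for these $v$-stacks; so the mechanism by which the generic-fiber description would specialize is not available. This is a different route from the paper's suggested path through $\IC_{\ol{\Bun}_B}$ and, as it stands, neither is a proof.
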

This conjecture would follow from knowing the existence of $\IC_{\ol{\Bun}_{B}}$ and in turn the compactified Eisenstein functor $\ol{\Eis}$ with all the various desiderata proven by Braverman-Gaitsgory \cite{BG} in the function field setting. In particular, we expect that $\ol{\Eis}(-)$ should commute with Verdier duality, and satisfy the functional equation under condition \ref{normregcond} (2). Moreover, by the analogue of the results of \cite{BG2}, there should be a natural map
\[  \nmEis(\mathcal{S}_{\phi_{T}}) \ra  \ol{\Eis}(\mathcal{S}_{\phi_{T}}) \]
whose cone should be given by Eisenstein functors tensored with complexes admitting a filtration isomorphic to $R\Gamma(W_{\mathbb{Q}_{p}},\alpha \circ \phi_{T})$ for $\alpha$ a $\Gamma$-orbit of coroots of $G$. In particular, we should have an isomorphism $\ol{\Eis}(\mathcal{S}_{\phi_{T}}) \simeq \nmEis(\mathcal{S}_{\phi_{T}})$ precisely when $\phi_{T}$ is generic. It follows by our above analysis that this would imply an isomorphism $i_{B}^{G}(\chi) \simeq i_{B}^{G}(\chi^{w})$ for all generic regular $\chi$. We show that this true for generic $\chi$ in the appendix (Proposition \ref{prop: genericinterisom}) at least with $\ol{\bb{Q}}_{\ell}$-coefficients.  

In \S $2$, we start by defining the set of unramified elements in $B(G)$ and discussing their relationship with highest weight theory, as in \cite{XZ}. In \S $3$, we review the construction of eigensheaves on $\Bun_{T}$ attached to parameters $\phi_{T}$, introducing the conditions on our parameter $\phi_{T}$ and working through some useful lemmas and examples related to them. In \S $4$, we review the geometric Satake correspondence of Fargues-Scholze, recalling the key results and relating the highest weight theory of $\phantom{}^{L}G$ to the cohomology of semi-infinite Schubert cells. In \S $5$, we introduce Drinfeld's compactifications over the Fargues-Fontaine curve and recall its basic geometric properties and discuss a key diagram that will be used in the proof of the Hecke eigensheaf property. In \S $6$, we will study how the Eisenstein functor interacts with Hecke operators, establishing Theorem 1.7. In \S $7$, we review some important results from the paper \cite{HHS} on geometric second adjointness and the preservation of ULA objects. In \S $8$, we will carry out the computation of the stalks of the Eisenstein series $\nmEis(\mathcal{S}_{\phi_{T}})$ and show the commutation with Verdier duality, establishing Theorem 1.9 and Theorem 1.10. In \S $9$, we describe the theory of tilting modules for the $L$-group $\phantom{}^{L}G$, constructing $\nmEis(\mathcal{S}_{\phi_{T}})$ with $\ol{\mathbb{Z}}_{\ell}$ and $\ol{\mathbb{Q}}_{\ell}$-coefficients and showing Theorem 1.15. Finally, in \S $10$, we deduce the applications to the cohomology of local shtuka spaces showing Theorems 1.16, 1.17, 1.18, and Corollary 1.19. 
\section*{Acknowledgments}
I would like to thank my advisor David Hansen for his continual support throughout this project, as well as many useful comments and suggestions. Special thanks also go to Peter Scholze for very helpful feedback and corrections on an earlier draft. I would also like to thank Alexander Bertoloni-Meli, Arthur-C\'esar Le-Bras, Eric Chen, Dennis Gaitsgory, Thomas Haines, Tasho Kaletha, Teruhisa Koshikawa Robert Kurinczuk, Sam Mundy, Marko Tadi\'{c}, Naoki Imai, Eva Viehmann, Geordie Williamson, Liang Xiao, Xinwen Zhu, and Konrad Zou, for helpful exchanges which provided insights that lead to the development and completion of this paper. Lastly, I would like to thank the MPIM for their hospitality throughout part of the completion of this project. 
\section{Notation}
\begin{itemize}
\item Let $\ell \neq p$ be distinct primes. 
\item Let $G$ be a quasi-split connected reductive group.
\item We let $\ol{\mathbb{Q}}_{\ell}$ denote the algebraic closure of the $\ell$-adic numbers, with residue field $\ol{\mathbb{F}}_{\ell}$ and ring of integers $\ol{\mathbb{Z}}_{\ell}$, endowed with the discrete topology. 
\item Let $\Gamma$ be the absolute Galois group of $\mathbb{Q}_{p}$, and let $W_{\mathbb{Q}_{p}} \subset \Gamma$ be the Weil group of $\mathbb{Q}_{p}$.
\item We set $\mathcal{L}_{\mathbb{Q}_{p}} := W_{\mathbb{Q}_{p}} \times \mathrm{SL}_{2}(\ol{\mathbb{Q}}_{\ell})$ to be the Weil-Deligne group. 
\item Fix choices $A \subset T \subset B \subset G$ of maximal split torus, maximal non-split torus, and Borel. We use $U$ to denote the unipotent radical of $B$.
\item We let $W_{G}$ be the relative Weyl group of $G$ and $w_{0}$ be the element of longest length. 
\item We write $\Ind_{B}^{G}(-)$ for the unnormalized parabolic induction functor from $B$ to $G$. We let $\delta_{B}$ be the modulus character defined by $B$ so that $i_{B}^{G}(-) := \Ind_{B}^{G}(- \otimes \delta_{B}^{1/2})$ is the normalized induction. In other words, $\delta_{B}$ is defined by the transformations of the space of right Haar measures.
\item Let $\Breve{\mathbb{Q}}_{p}$ be the completion of the maximal unramified extension of $\mathbb{Q}_{p}$ with Frobenius $\sigma$. For $E/\mathbb{Q}_{p}$ a finite extension, we set $\Breve{E}$ to be the compositum $E\Breve{\mathbb{Q}}_{p}$.
\item Set $\mathbb{C}_{p}$ to be the completion of the algebraic closure of $\mathbb{Q}_{p}$.
\item Let $B(G) = G(\Breve{\mathbb{Q}}_{p})/(g \sim hg\sigma(h)^{-1})$ denote the Kottwitz set of $G$.
\item For $b \in B(G)$, we write $J_{b}$ for the $\sigma$-centralizer of $b$.
\item We will always work over the fixed base $\ast := \Spd{\ol{\mathbb{F}}_{p}}$, unless otherwise stated.
\item Let $\Perf$ denote the category of (affinoid) perfectoid spaces in characteristic $p$ over $\ast$ endowed with the $v$-topology. For $S \in \Perf$, let $\Perf_{S}$ denote the category of affinoid perfectoid spaces over it. 
\item For $S \in \Perf$, let $X_{S}$ denote the relative (schematic) Fargues-Fontaine curve over $S$.
\item For $\Spa{(F,\mathcal{O}_{F})} \in \Perf$ a geometric point, we will often drop the subscript on $X_{F}$ and just write $X$ for the associated Fargues-Fontaine curve. 
\item For $b \in B(G)$, we write $\mathcal{F}_{b}$ for the associated $G$-bundle on $X$.
\item For $S \in \Perf$, we let $\mathcal{F}_{G}^{0}$ denote the trivial $G$-bundle on $X_{S}$.
\item We consider coefficient systems $\Lambda \in \{\ol{\mathbb{F}}_{\ell},\ol{\mathbb{Z}}_{\ell},\ol{\mathbb{Q}}_{\ell}\}$, with a fixed choice of square root of $p \in \Lambda$. We define all half Tate twists with respect to this choice. 
\item For an Artin $v$-stack $X$, we write $\D_{\blacksquare}(X,\Lambda)$ for the condensed $\infty$-category of solid $\Lambda$-valued sheaves on $X$, and write $\Dlis(X,\Lambda) \subset \D_{\blacksquare}(X,\Lambda)$ for the full sub-category of $\Lambda$-valued lisse-\'etale sheaves, as defined in \cite[Chapter~VII]{FS}.
\item For a $v$-stack or diamond $X$, when working with torsion coefficients, we will indicate this by just writing $\D(X) := \D_{\text{\'et}}(X,\Lambda)$ for the category of \'etale $\Lambda$-sheaves on $X$, as defined \cite{Ecod}. If $X$ is an Artin $v$-stack (\cite[Definition~IV.V.1]{FS}) admitting a separated cohomologically smooth surjection $U \ra X$ from a locally spatial diamond $U$ such that the \;etale site has a basis with bounded $\ell$-cohomological dimension (which will always be the case for our applications) then we will regard it as a condensed $\infty$-category via the identification $\Dlis(X,\Lambda) \simeq \D(X)$ when viewed as objects in $\D_{\blacksquare}(X,\Lambda)$ \cite[Proposition~VII.6.6]{FS}.
\item We let $\hat{G}$ denote the Langlands dual group of $G$ with fixed splitting $(\hat{T},\hat{B},\{X_{\alpha}\})$.
\item If $E$ denotes the splitting field of $G$ then the action of $W_{\mathbb{Q}_{p}}$ factors through $Q:= W_{\mathbb{Q}_{p}}/W_{E}$. We let $\phantom{}^{L}G := \hat{G} \rtimes Q$ denote the $L$-group.
\item Throughout, we will assume that, for our fixed $G$, $\ell$ is very decent in the sense mentioned in the introduction that $\ell \nmid \pi_{0}(Z(G))$ and that $\ell \nmid |Q|$. We will also occasionally make reference to the notion of $\ell$ being very good, as defined in \cite[Page~33]{FS}\footnote{Here we emphasize that we are referring to is ArXiv v2.}, which in addition to being very decent imposes additional constraints depending on the type of group.
\item For $I$ a finite index set, we let $\Rep_{\Lambda}(\phantom{}^{L}G^{I})$ denote the category of finite-dimensional algebraic representations of $\phantom{}^{L}G^{I}$.
\item To any condensed $\infty$-category $\mathcal{C}$, we write $\mathcal{C}^{BW^{I}_{\mathbb{Q}_{p}}}$ for the category of objects with continuous $W_{\mathbb{Q}_{p}}^{I}$-action, as defined in \cite[Section~IX.1]{FS}.
\item We will let $\Div^{1} := \Spd{\Breve{\mathbb{Q}}_{p}}/\mathrm{Frob}^{\mathbb{Z}}$ denote the mirror curve, and, for a finite extension $E/\mathbb{Q}_{p}$, we write $\Div^{1}_{E}$ for the base-change to $E$.
\item For $I$ a finite index set, we let $\Div^{I}$ denote $|I|$-copies of the mirror curve. For $n \in \mathbb{N}_{\geq 1}$, we let $\Div^{(n)} = (\Div^{1})^{n}/S_{n}$, denote the $n$th symmetric power of the mirror curve, where $S_{n}$ is the symmetric group on $n$ letters.  
\item For a reductive group $H/\mathbb{Q}_{p}$, we write $\D(H(\mathbb{Q}_{p}),\Lambda)$ for the unbounded derived category of smooth $\Lambda$-representations. 
\item We say a map of $v$-stacks $f: X \rightarrow Y$ is representable in nice diamonds if it is representable in locally spatial diamonds, is compactifiable, and (locally) $\mathrm{tr.deg}(f) < \infty$.
\item All $6$-functors will be implicitly derived unless otherwise stated.
\item For a locally pro-$p$ group $H$, we write $\ul{H}$ for the functor sending $S \in \Perf$ to $\mathrm{Cont}(|S|,H)$, the set of continuous maps from the underlying topological space of $S$ to $H$. 
\begin{remark} 
At various points, we will need to consider the functors $f_{!}: \D(X) \ra \D(Y)$ and $f^{!}: \D(Y) \ra \D(X)$ for certain "stacky" morphisms of Artin $v$-stacks $f: X \ra Y$. The correct definitions of these functors in this case are given in the work of \cite{DHW}. In particular, they extend the $6$-functors studied in \cite{Ecod,FS} to fine maps \cite[Definition~1.3]{DHW} of decent $v$-stacks \cite[Definition~1.2]{DHW}. In general being a decent $v$-stack is stronger than being Artin. However, it is easy to check that for all the stacks (resp. morphisms) we consider these functors for they will be decent (resp. fine). To see this, one can use which states that if $f: X \ra Y$ is a map of $v$-stacks which is representable in nice diamonds and $Y$ is decent then $X$ is also decent and $f$ is fine. In the cases we consider, one can apply this if one takes $Y = \Bun_{G}$. To see that $\Bun_{G}$ is decent, one can use the charts studied in \cite[Section~V.3]{FS}, and take advantage of the fact that the maps defining the charts are formally smooth \cite[Definition~IV.3.1]{FS} by \cite[Proposition~IV.4.24]{FS}. This in particular allows one to see that these charts map strictly surjectively \cite[Defition~4.1]{DHW} to $\Bun_{G}$. It remains to explain why the maps appearing in our context are fine, to do this one can combine the previous analysis with \cite[Proposition~4.10 (iii)]{DHW}, which says that fine morphisms satisfy the 2 out of 3 property.  
\end{remark}
\item When speaking about such fine maps of decent $v$-stacks we will often just cite theorems that only apply to the setting where $f$ is representable in nice diamonds, and leave it to the reader to check that one can deduce the analogous results from the cited result and the formal properties of the $6$-functors defined in \cite{DHW}. 
\item Given a decent $v$-stack $X \ra \ast$ such that $X$ is fine over $\ast$, we let $K_{X} := f^{!}(\Lambda) \in \D(X)$ denote the dualizing object of $X$. Similarly, for $\mathcal{F} \in \D(X)$, we will write $R\Gamma_{c}(X,\mathcal{F}) := f_{!}(\mathcal{F}) \in D(\Lambda)$. We write $\mathbb{D}_{X}(-) := R\mathcal{H}om(-,K_{X})$ for the Verdier duality functor. For a fine map $f: X \ra S$ of decent $v$-stacks, we write $\mathbb{D}_{X/S} := R\mathcal{H}om(-,f^{!}(\Lambda))$ for relative Verdier duality.
\item We will use the geometric normalization of local class field theory. For $n \in \mathbb{Z}$, we write $(n)$ for the $n$th power of the $\ell$-adic cyclotomic character of $W_{\mathbb{Q}_{p}}$. We note that, under this normalization, $(1)$ is sent to the norm character $|\cdot|: \mathbb{Q}_{p}^{*} \ra \Lambda^{*}$, which acts trivially on $\mathbb{Z}_{p}^{*} $ and sends $p$ to $p^{-1} \in \Lambda^{*}$.
\end{itemize}
Before introducing the rest of the notation, we discuss the relationship between unramified elements in $B(G)$ and the representation theory of the dual group. 
\subsection{Unramified Elements in $B(G)$ and Highest Weight Theory}{\label{sec: unramifiedelments}}
In this section, we will study the set of unramified elements in the Kottwitz set of $G$. As we will show, these elements are connected to the highest weight theory of the Langlands dual group $\hat{G}$, as discussed in \cite[Section~4.2.1]{XZ}. First, we recall that the Kottwitz set $B(G)$ of a connected reductive group $G/\mathbb{Q}_{p}$ is equipped with two maps:
\begin{itemize}
    \item The slope homomorphism
    \[ \nu: B(G) \rightarrow \mathbb{X}_{*}(T_{\overline{\mathbb{Q}}_{p}})^{+,\Gamma}_{\mathbb{Q}} \]
    \[ b \mapsto \nu_{b} \]
    where $\Gamma := \mathrm{Gal}(\overline{\mathbb{Q}}_{p}/\mathbb{Q}_{p})$ and 
    $\mathbb{X}_{*}(T_{\overline{\mathbb{Q}}_{p}})_{\mathbb{Q}}^{+}$ is the set of rational dominant cocharacters of $G$. 
    \item The Kottwitz invariant
    \[ \kappa_{G}: B(G) \rightarrow \pi_{1}(G)_{\Gamma} \]
    where $\pi_1(G)$ denotes the algebraic fundamental group of Borovoi.
\end{itemize}
Now, given a geometric cocharacter $\mu$ of $G$ with reflex field $E$, we can define the element:
\[ \tilde{\mu} := \frac{1}{[E:\mathbb{Q}_{p}]} \sum_{\gamma \in \mathrm{Gal}(E/\mathbb{Q}_{p})} \gamma(\mu) \in \mathbb{X}_{*}(T_{\overline{\mathbb{Q}}_{p}})^{+,\Gamma}_{\mathbb{Q}} \]
We let $\mu^{\flat}$ be the image of $\mu$ in $\pi_1(G)_{\Gamma} \simeq X^*(Z(\hat{G})^{\Gamma})$. Via the isomorphim $B(G)_{\mathrm{basic}} \simeq \pi_{1}(G)_{\Gamma}$, we regard it as a basic element of $B(G)$, which are the minimal elements in the natural partial ordering on $B(G)$. Now we recall that, for a torus $T$, we have an isomorphism $B(T) \simeq \mathbb{X}_{*}(T_{\ol{\mathbb{Q}}_{p}})_{\Gamma}$. We can use this isomorphism to give a nice description of a certain piece of $B(G)$. 
\begin{definition}{\cite[Section~4.2.1]{XZ}}
We let $B(G)_{\mathrm{un}} \subset B(G)$ denote the image of the natural map $B(T) \ra B(G)$. We refer to this as the set of unramified elements. 
\end{definition}
We now have the following Lemma. We write $(-)_{\Gamma}$ for the natural quotient map $\mathbb{X}_{*}(T_{\ol{\mathbb{Q}}_{p}}) \ra \mathbb{X}_{*}(T_{\ol{\mathbb{Q}}_{p}})_{\Gamma}$.
\begin{lemma}{\cite[Lemma~4.2.2]{XZ}}{\label{unrelements}}
Let $\mathbb{X}_{*}(T_{\ol{\mathbb{Q}}_{p}})_{\Gamma} \simeq B(T) \ra B(G)$ be the natural map. Then this induces an isomorphism:
\[ \mathbb{X}_{*}(T_{\ol{\mathbb{Q}}_{p}})_{\Gamma}/W_{G} \simeq B(G)_{\mathrm{un}} \]
\end{lemma}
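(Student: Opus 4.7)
The plan is to reduce the bijectivity to Kottwitz's classification of $B(G)$ by the pair of invariants $(\nu, \kappa)$, using the quasi-split hypothesis on $G$ crucially. Surjectivity of $\mathbb{X}_{*}(T_{\ol{\mathbb{Q}}_{p}})_{\Gamma}/W_{G} \to B(G)_{\mathrm{un}}$ is immediate from the definition of $B(G)_{\mathrm{un}}$ as the image of $B(T) \to B(G)$, so the substantive work is in showing (a) that the map descends to the $W_{G}$-quotient, and (b) that the resulting map is injective.

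For (a), since $G$ is quasi-split each $w \in W_{G}$ admits a representative $\tilde{w} \in N_{G}(T)(\mathbb{Q}_{p})$. For $b \in T(\Breve{\mathbb{Q}}_{p})$, the fact that $\sigma(\tilde{w}) = \tilde{w}$ gives $\tilde{w} b \tilde{w}^{-1} = \tilde{w} b \sigma(\tilde{w})^{-1}$, exhibiting $\sigma$-conjugacy in $G(\Breve{\mathbb{Q}}_{p})$ of $b$ and its Weyl translate $w \cdot b$. Hence the natural map factors through $W_{G}$-orbits. For (b), suppose $\lambda, \lambda' \in \mathbb{X}_{*}(T_{\ol{\mathbb{Q}}_{p}})_{\Gamma}$ map to the same $b \in B(G)$. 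Under $B(T) \simeq \mathbb{X}_{*}(T_{\ol{\mathbb{Q}}_{p}})_{\Gamma}$, the slope $\nu_{b}$ is the dominant $W_{G}$-representative of the rational class $\lambda_{\mathbb{Q}} \in \mathbb{X}_{*}(T)_{\Gamma} \otimes \mathbb{Q} \simeq \mathbb{X}_{*}(T_{\ol{\mathbb{Q}}_{p}})^{\Gamma}_{\mathbb{Q}}$, while $\kappa(b)$ is the image of $\lambda$ under the natural surjection $\mathbb{X}_{*}(T)_{\Gamma} \twoheadrightarrow \pi_{1}(G)_{\Gamma}$. Equality of slopes gives $\lambda'_{\mathbb{Q}} = w \cdot \lambda_{\mathbb{Q}}$ for some $w \in W_{G}$, and replacing $\lambda$ by $w \cdot \lambda$ (which by (a) does not change the image in $B(G)$) reduces us to the case where $\lambda - \lambda'$ is torsion in $\mathbb{X}_{*}(T)_{\Gamma}$ and lies in the image of the coroot lattice $Q^{\vee}$ (by equality of $\kappa$).

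The hard part is then showing that in this reduced situation there exists $w' \in \mathrm{Stab}_{W_{G}}(\lambda_{\mathbb{Q}})$ with $w'(\lambda) = \lambda'$. Here the hypothesis that $G^{\mathrm{der}}$ is simply connected enters, since it identifies $Q^{\vee}$ with $\mathbb{X}_{*}(T_{\mathrm{sc}})$, allowing a clean analysis of the short exact sequence of $\Gamma$-modules $0 \to \mathbb{X}_{*}(T_{\mathrm{sc}}) \to \mathbb{X}_{*}(T) \to \pi_{1}(G) \to 0$ and its $H_{0}(\Gamma, -)$. The stabilizer $\mathrm{Stab}_{W_{G}}(\lambda_{\mathbb{Q}})$ is the relative Weyl group of the Levi centralizing $\lambda_{\mathbb{Q}}$ and is generated by reflections in coroots orthogonal to $\lambda_{\mathbb{Q}}$; one verifies that these reflections generate enough translations on the image of $Q^{\vee}_{\Gamma}$ in $\mathbb{X}_{*}(T)_{\Gamma}$ to realize the torsion discrepancy $\lambda - \lambda'$ as $w'(\lambda) - \lambda$. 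This reflection-group analysis inside the Levi, combined with the simply-connected-derived hypothesis pinning down the coroot lattice, is where the real content sits; once achieved, it yields the claimed bijection $\mathbb{X}_{*}(T_{\ol{\mathbb{Q}}_{p}})_{\Gamma}/W_{G} \simeq B(G)_{\mathrm{un}}$.
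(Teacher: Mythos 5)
Part (a) of your argument is fine (and is essentially standard: quasi-splitness gives $\Q_p$-rational representatives $\tilde{w}\in N_G(T)(\Q_p)$ of Weyl elements, making $b$ and $\tilde{w}b\tilde{w}^{-1}=\tilde{w}b\sigma(\tilde{w})^{-1}$ $\sigma$-conjugate), and the reduction in (b) to the case where $\lambda-\lambda'$ is simultaneously torsion in $\mathbb{X}_*(T_{\ol{\mathbb{Q}}_p})_\Gamma$ and lies in the image of the coroot lattice is exactly right. But your final step then goes astray.

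After the reduction you are trying to find a nontrivial $w'\in\mathrm{Stab}_{W_G}(\lambda_\mathbb{Q})$ realizing the discrepancy, and you invoke the simply-connected-derived hypothesis to set up a reflection-group computation inside the Levi. This is unnecessary and, as written, the key claim (``these reflections generate enough translations on the image of $Q^\vee_\Gamma$ \ldots'') is left unproven. The much shorter observation, and the one the paper uses, is that $\Gamma$ permutes the simple coroots, which form a $\mathbb{Z}$-basis of the coroot lattice $Q^\vee$; hence $Q^\vee_\Gamma$ is free (with basis indexed by $\Gamma$-orbits of simple coroots), and its image in $\mathbb{X}_*(T_{\ol{\mathbb{Q}}_p})_\Gamma$ is a torsion-free subgroup (the map $Q^\vee_\Gamma\to\mathbb{X}_*(T_{\ol{\mathbb{Q}}_p})_\Gamma$ is injective since its kernel is the image of a torsion group $H_1(\Gamma,\pi_1(G))$ inside the torsion-free $Q^\vee_\Gamma$). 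A torsion element of a torsion-free group is zero, so $\lambda=\lambda'$ outright; there is no residual Weyl-twist $w'$ to produce. In particular the simply-connected-derived hypothesis plays no role in this lemma, and there is no ``reflection-group analysis inside the Levi'' to carry out. You should replace the last paragraph with this torsion-freeness argument, which closes the proof immediately.
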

\begin{proof}
Strictly speaking, the proof given by Xiao-Zhu is only in the case that $G$ is unramified. We remedy this now. Note that it is clear that this map is surjective, so it suffices to check injectivity. Let $\mu_{1},\mu_{2} \in \mathbb{X}_{*}(T_{\ol{\mathbb{Q}}_{p}})$  be two elements with $b_{1},b_{2}$ their images in $B(G)$ under the natural composite
\[ \mathbb{X}_{*}(T_{\ol{\mathbb{Q}}_{p}}) \ra \mathbb{X}_{*}(T_{\ol{\mathbb{Q}}_{p}})_{\Gamma} \simeq B(T) \ra B(G), \]
and suppose that $b_{1} = b_{2}$. Since $\kappa_{G}(b_{1}) = \kappa_{G}(b_{2})$, it follows that we have $\mu_{1} - \mu_{2} = (\gamma - 1)\nu + \alpha$ for some coroot $\alpha \in \mathbb{X}_{*}(T_{\ol{\mathbb{Q}}_{p}})$ and $\gamma \in \Gamma$. We may, without loss of generality, replace $\mu_{1}$ by $\mu_{1} + (\gamma - 1)\nu$, and therefore assume that $\mu_{1} - \mu_{2} = \alpha$. Since the slope homomorphisms of $\nu_{b_{1}}$ and $\nu_{b_{2}}$ are equal by assumption, we can assume, after conjugating by an element of $W_{G}$, that $\tilde{\mu}_{1} = \tilde{\mu}_{2}$. Therefore, it follows that, if $E_{\alpha}$ denotes the reflex field of $\alpha$, we have an equality
\[ \sum_{g \in \Gal(E_{\alpha}/\mathbb{Q}_{p})} g(\alpha) = 0 \]
which in turn implies that 
\[ \sum_{g \in \Gal(E_{\alpha}/\mathbb{Q}_{p})} (1 - g)(\alpha) = |\Gal(E_{\alpha}/\mathbb{Q}_{p})|\alpha \]
This would imply that $\alpha_{\Gamma}$ vanishes in $\mathbb{X}_{*}(T_{\ol{\mathbb{Q}}_{p}})_{\Gamma}$ assuming that $\alpha_{\Gamma}$ isn't torsion. However, $\Gamma$ permutes the simple coroots, which form a basis of all coroots. Therefore, it follows that  $\alpha_{\Gamma}$ is not torsion.  
\end{proof}
Now we would like to describe $\mathbb{X}_{*}(T_{\ol{\mathbb{Q}}_{p}})_{\Gamma}/W_{G}$ slightly differently. To do this, we consider the natural pairing 
\[ \langle -,- \rangle:  \mathbb{X}_{*}(T_{\ol{\mathbb{Q}}_{p}})_{\Gamma} \times \mathbb{X}^{*}(T_{\ol{\mathbb{Q}}_{p}})^{\Gamma} \ra \mathbb{Z} \]
induced by the usual pairing between cocharacters and characters. We let $\hat{\Delta} \subset \mathbb{X}^{*}(T_{\ol{\mathbb{Q}}_{p}})$ (resp. $\Delta \subset \mathbb{X}_{*}(T_{\ol{\mathbb{Q}}_{p}})$) be the set of (absolute) simple roots (resp. coroots) of $G$. Then we define
$\mathbb{X}_{*}(T_{\ol{\mathbb{Q}}_{p}})_{\Gamma}^{+}$ to be the set of elements in $\mathbb{X}_{*}(T_{\ol{\mathbb{Q}}_{p}})_{\Gamma}$ whose inner product with $\Im{(\hat{\Delta} \ra \mathbb{X}^{*}(T_{\ol{\mathbb{Q}}_{p}})^{\Gamma})}$ under the natural averaging map is positive. The natural map 
\[ \mathbb{X}_{*}(T_{\ol{\mathbb{Q}}_{p}})_{\Gamma}^{+} \ra  \mathbb{X}_{*}(T_{\ol{\mathbb{Q}}_{p}})_{\Gamma}/W_{G} \]
is an isomorphism. We also note that we have a natural partial ordering on $\mathbb{X}_{*}(T_{\ol{\mathbb{Q}}_{p}})_{\Gamma}$. In particular, given  $\ol{\nu}, \ol{\nu}' \in \mathbb{X}_{*}(T_{\ol{\mathbb{Q}}_{p}})_{\Gamma}$ we say that $\ol{\nu} \geq \ol{\nu}'$ if $\ol{\nu} - \ol{\nu}'$ is a positive integral combination of $\alpha_{\Gamma}$ for $\alpha \in \Delta$. We note that we have a natural injective order preserving map: 
\[ \mathbb{X}_{*}(T_{\ol{\mathbb{Q}}_{p}})^{+}_{\Gamma} \ra \mathbb{X}_{*}(T_{\ol{\mathbb{Q}}_{p}})_{\mathbb{Q}}^{\Gamma,+} \times \pi_{1}(G)_{\Gamma} \]
With this, we can reformulate the previous lemma as follows. 
\begin{lemma}{\cite[Lemma~4.2.3]{XZ}}{\label{unrweyl}}
The following diagram is commutative and respects the partial ordering 
\[ \begin{tikzcd}
&  B(G)_{\mathrm{un}} \arrow[r,"\simeq"] \arrow[d] & \mathbb{X}_{*}(T_{\ol{\mathbb{Q}}_{p}})^{+}_{\Gamma} \arrow[d]     \\
& B(G) \arrow[r,"\nu \times \kappa"]  & \mathbb{X}_{*}(T_{\ol{\mathbb{Q}}_{p}})_{\mathbb{Q}}^{\Gamma,+} \times \pi_{1}(G)_{\Gamma} 
\end{tikzcd} \]
\end{lemma}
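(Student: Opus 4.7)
My plan is to split the lemma into two independent claims, the commutativity of the square and the compatibility with the partial orderings, both of which reduce to functoriality of the invariants $\nu$ and $\kappa$ with respect to the closed embedding $T \hookrightarrow G$.

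For commutativity, I would start with an element $\ol{\nu} \in \mathbb{X}_{*}(T_{\ol{\mathbb{Q}}_{p}})^{+}_{\Gamma}$, pick any lift $\nu \in \mathbb{X}_{*}(T_{\ol{\mathbb{Q}}_{p}})$, and let $b_{T} \in B(T)$ and $b \in B(G)_{\mathrm{un}}$ be its images under $\mathbb{X}_{*}(T_{\ol{\mathbb{Q}}_{p}})_{\Gamma} \simeq B(T) \to B(G)$. On the $\kappa$-side, functoriality gives $\kappa_{G}(b) = \iota_{*}\kappa_{T}(b_{T})$ for $\iota: T \hookrightarrow G$, and since $\kappa_{T}: B(T) \simeq \mathbb{X}_{*}(T_{\ol{\mathbb{Q}}_{p}})_{\Gamma} = \pi_{1}(T)_{\Gamma}$ is the identity, we obtain exactly the image of $\ol{\nu}$ under the natural map $\mathbb{X}_{*}(T_{\ol{\mathbb{Q}}_{p}})_{\Gamma} \to \pi_{1}(G)_{\Gamma}$. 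On the $\nu$-side, functoriality gives that $\nu_{b}$ is the dominant Weyl representative of $\nu_{b_{T}} = \tilde{\nu} = \frac{1}{[E:\mathbb{Q}_{p}]}\sum_{\gamma \in \Gal(E/\mathbb{Q}_{p})} \gamma(\nu) \in \mathbb{X}_{*}(T_{\ol{\mathbb{Q}}_{p}})_{\mathbb{Q}}^{\Gamma}$, where $E$ is a splitting field of $\nu$. The defining condition that $\ol{\nu}$ lies in $\mathbb{X}_{*}(T_{\ol{\mathbb{Q}}_{p}})^{+}_{\Gamma}$, i.e.\ pairs non-negatively with the average of the simple roots $\hat{\Delta}$, is equivalent to $\tilde{\nu}$ already being dominant, so no Weyl translation is required. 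Thus both components of $\nu \times \kappa$ recover the right vertical arrow, proving commutativity.

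For the partial orderings, recall that $b \leq b'$ in $B(G)$ precisely means $\kappa(b) = \kappa(b')$ and $\nu_{b'} - \nu_{b}$ is a non-negative rational combination of (absolute) simple positive coroots in $\mathbb{X}_{*}(T_{\ol{\mathbb{Q}}_{p}})_{\mathbb{Q}}$. Suppose $\ol{\nu} \leq \ol{\nu}'$ in $\mathbb{X}_{*}(T_{\ol{\mathbb{Q}}_{p}})_{\Gamma}^{+}$, so $\ol{\nu}' - \ol{\nu} = \sum_{\alpha \in \Delta} n_{\alpha} \alpha_{\Gamma}$ with $n_{\alpha} \in \mathbb{Z}_{\geq 0}$. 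For the $\kappa$-component, each $\alpha_{\Gamma}$ vanishes in $\pi_{1}(G)_{\Gamma}$ since $\alpha$ is a coroot, so $\kappa(b) = \kappa(b')$. For the slope component, applying the averaging map sends $\alpha_{\Gamma}$ to $\frac{1}{[E_{\alpha}:\mathbb{Q}_{p}]}\sum_{\gamma} \gamma(\alpha)$, which is a non-negative rational combination of simple positive coroots because the Galois action permutes the simple coroots (using our running quasi-split hypothesis). Summing over $\alpha$ with non-negative coefficients shows $\tilde{\ol{\nu}'} - \tilde{\ol{\nu}}$ lies in the rational cone spanned by positive simple coroots, which is exactly the dominance order condition. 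Hence the horizontal isomorphism respects the partial orders.

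The main (very mild) technical point to be careful about is ensuring that the averaging map $\mathbb{X}_{*}(T_{\ol{\mathbb{Q}}_{p}})_{\Gamma} \to \mathbb{X}_{*}(T_{\ol{\mathbb{Q}}_{p}})_{\mathbb{Q}}^{\Gamma}$ is well-defined (independent of the choice of lift) and sends $\mathbb{X}_{*}(T_{\ol{\mathbb{Q}}_{p}})_{\Gamma}^{+}$ into $\mathbb{X}_{*}(T_{\ol{\mathbb{Q}}_{p}})_{\mathbb{Q}}^{\Gamma,+}$; both assertions are immediate from unwinding the defining pairing $\langle -, -\rangle$ given just before the statement. With these in hand, the lemma follows directly from Lemma \ref{unrelements} and the functorial description of $\nu$ and $\kappa$, with no further obstacles.
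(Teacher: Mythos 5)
The paper cites this lemma directly from \cite[Lemma~4.2.3]{XZ} without reproducing a proof, so there is no in-paper argument to compare against; your proposal stands on its own and is essentially correct. Your commutativity argument is the natural one: the Kottwitz-set maps $\nu$ and $\kappa$ are both computed through the reduction to $T$, the $\kappa$-side is literal functoriality of the Kottwitz invariant together with the identification $\kappa_T = \id$, and the $\nu$-side reduces to observing that the dominance condition defining $\mathbb{X}_{*}(T_{\ol{\mathbb{Q}}_{p}})^{+}_{\Gamma}$ (pairing non-negatively with averaged simple roots) is precisely what guarantees the Galois average $\tilde{\nu}$ already lies in the dominant Weyl chamber, so the slope of $b$ is $\tilde{\nu}$ with no Weyl translation needed.

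The one place where your argument is incomplete relative to what the lemma is later used for (e.g.\ Corollary~\ref{bgmuweights}) is the partial-ordering claim: you show the implication $\ol{\nu} \leq \ol{\nu}' \Rightarrow b \leq b'$, but the top horizontal arrow is an isomorphism and one also needs the converse, i.e.\ that the iso is an \emph{order-embedding}. The converse is true and not hard: if $b \leq b'$ then $\kappa(b)=\kappa(b')$ forces $\ol{\nu}'-\ol{\nu}$ to lie in the image of the absolute coroot lattice in $\coinv$, hence $\ol{\nu}'-\ol{\nu} = \sum n_\alpha \alpha_\Gamma$ with coefficients constant on Galois orbits; then the slope inequality $\nu_{b}\leq\nu_{b'}$, combined with the linear independence of the Galois-averaged simple coroots (which span the relative coroot lattice), forces all $n_\alpha \geq 0$. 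You should make this direction explicit, since "respects the partial ordering" in context means the horizontal bijection is an isomorphism of posets, not merely monotone. Everything else — the well-definedness of the averaging map, the vanishing of $\alpha_\Gamma$ in $\pi_1(G)_\Gamma$, and the use of the quasi-split hypothesis to say Galois permutes simple coroots — is correctly handled.
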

Now recall that, for $\mu$ a geometric dominant cocharacter of $G$, we have the following.
\begin{definition}{\label{bgmudefinition}}
We define $B(G,\mu) \subset B(G)$ to be subset of $b \in B(G)$ for which $\nu_{b} \leq \tilde{\mu}$ with respect to the Bruhat ordering and $\kappa(b) = \mu^{\flat}$. 
\end{definition}
The previous lemma allows us to interpret the unramified elements in this set as follows.
\begin{corollary}{\cite[Corollary~4.2.4]{XZ}}
Under the identifications $\mathbb{X}_{*}(T_{\ol{\mathbb{Q}}_{p}})_{\Gamma}^{+} \simeq \mathbb{X}_{*}(T_{\ol{\mathbb{Q}}_{p}})_{\Gamma}/W_{G} \simeq B(G)_{\mathrm{un}}$, we have an equality:  
\[ B(G,\mu)_{\mathrm{un}} := B(G)_{\mathrm{un}} \cap B(G,\mu) = \{\lambda_{\Gamma} \in \mathbb{X}_{*}(T_{\ol{\mathbb{Q}}_{p}})_{\Gamma}^{+}\text{ } | \text{ } \lambda_{\Gamma} \leq \mu_{\Gamma} \}\]
\end{corollary}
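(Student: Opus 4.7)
The plan is to deduce this corollary directly by combining the two preceding lemmas. By Lemma \ref{unrelements}, the identification $\mathbb{X}_{*}(T_{\ol{\mathbb{Q}}_{p}})^{+}_{\Gamma} \simeq B(G)_{\mathrm{un}}$ lets me parametrize any unramified class by some $\lambda_{\Gamma}$. Fixing such a $\lambda_{\Gamma}$ with image $b \in B(G)_{\mathrm{un}}$, I first use the commutativity of the diagram in Lemma \ref{unrweyl} to identify $(\nu_{b},\kappa(b))$ with the image of $\lambda_{\Gamma}$ under the natural order-preserving injection
\[ \iota: \mathbb{X}_{*}(T_{\ol{\mathbb{Q}}_{p}})^{+}_{\Gamma} \hookrightarrow \mathbb{X}_{*}(T_{\ol{\mathbb{Q}}_{p}})^{\Gamma,+}_{\mathbb{Q}} \times \pi_{1}(G)_{\Gamma}. \]
Concretely, $\nu_{b}$ is obtained by $\Gamma$-averaging and dominantizing $\lambda$ via the canonical isomorphism $\mathbb{X}_{*}(T_{\ol{\mathbb{Q}}_{p}})_{\Gamma,\mathbb{Q}} \simeq \mathbb{X}_{*}(T_{\ol{\mathbb{Q}}_{p}})^{\Gamma}_{\mathbb{Q}}$, while $\kappa(b) = \lambda^{\flat}$ is just the image of $\lambda$ in $\pi_{1}(G)_{\Gamma}$.

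With this in hand, the defining conditions of $B(G,\mu)$, namely $\nu_{b} \leq \tilde\mu$ and $\kappa(b)=\mu^{\flat}$, translate to the assertion that $\iota(\lambda_{\Gamma}) \leq \iota(\mu_{\Gamma})$ in the product order on the target of $\iota$ (Bruhat order on the first factor, equality on the second). So the corollary reduces to the claim that $\iota$ is not only order-preserving but a \emph{poset embedding}: that is, $\lambda_{\Gamma} \leq \mu_{\Gamma}$ in the coinvariant ordering generated by the $\alpha_{i,\Gamma}$ (for $\alpha_{i}$ simple coroots) if and only if $\nu_{\lambda}\leq \tilde\mu$ and $\lambda^{\flat}=\mu^{\flat}$.

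The forward direction is immediate from the order-preservation in Lemma \ref{unrweyl}: writing $\mu_{\Gamma}-\lambda_{\Gamma}=\sum n_{i}\alpha_{i,\Gamma}$ with $n_{i}\in \mathbb{Z}_{\geq 0}$, the image in $\pi_{1}(G)_{\Gamma}$ is zero (coroots are killed) and the image in $\mathbb{X}_{*}(T_{\ol{\mathbb{Q}}_{p}})^{\Gamma,+}_{\mathbb{Q}}$ is a non-negative $\mathbb{Q}$-combination of $\Gamma$-averaged positive coroots. For the reverse direction, the condition $\lambda^{\flat}=\mu^{\flat}$ forces $(\mu-\lambda)_{\Gamma}$ to lie in the image $L$ of the coroot lattice inside $\mathbb{X}_{*}(T_{\ol{\mathbb{Q}}_{p}})_{\Gamma}$. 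The argument at the end of the proof of Lemma \ref{unrelements} shows that each $\alpha_{i,\Gamma}$ is non-torsion, and in fact the $\{\alpha_{i,\Gamma}\}$ indexed by $\Gamma$-orbits of simple coroots form a $\mathbb{Q}$-basis of $L\otimes \mathbb{Q}$; so one can write $(\mu-\lambda)_{\Gamma}=\sum c_{i}\alpha_{i,\Gamma}$ with $c_{i}\in \mathbb{Q}$. Averaging to $\mathbb{X}_{*}(T_{\ol{\mathbb{Q}}_{p}})^{\Gamma}_{\mathbb{Q}}$, the positivity $\tilde\mu-\nu_{\lambda}\geq 0$ in the Bruhat order together with the linear independence of the $\Gamma$-averaged simple coroots forces each $c_{i}\geq 0$, and the integrality of the $c_{i}$ is automatic from our lattice-level expression.

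The main obstacle is the last step: upgrading from rational positivity of the $c_{i}$ (from the Bruhat condition $\nu_{\lambda}\leq \tilde\mu$) together with integrality in the coinvariant lattice (from the Kottwitz condition) to non-negative integral coefficients in the $\alpha_{i,\Gamma}$-expansion. This requires a careful analysis of the image of the coroot lattice under $\mathbb{X}_{*}(T_{\ol{\mathbb{Q}}_{p}}) \to \mathbb{X}_{*}(T_{\ol{\mathbb{Q}}_{p}})_{\Gamma}$, using that our standing assumption of a simply connected derived group yields particularly clean integral structure on $L$, so that the $\alpha_{i,\Gamma}$ in fact generate $L$ as an abelian group and linear independence is preserved integrally. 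Once this is in hand the corollary follows at once.
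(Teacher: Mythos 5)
Your derivation is correct and is the natural one from the two preceding lemmas, which is what the paper (following Xiao--Zhu) intends by stating this as a corollary. The forward direction and the identification of $\iota(\lambda_\Gamma)$ with $(\nu_b,\kappa(b))$ are fine, and the reverse direction works.

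The one place you over-complicate things is the step you yourself flag as ``the main obstacle,'' namely upgrading rational positivity plus lattice membership to non-negative integrality. This does not require the simply connected derived group hypothesis, and it can be handled more directly than by passing through $L\otimes\mathbb{Q}$. Since $\Gamma$ permutes the $\mathbb{Z}$-basis of $Q^{\vee}$ given by the absolute simple coroots, $Q^{\vee}_{\Gamma}$ is a free abelian group with $\mathbb{Z}$-basis $\{\alpha_{i,\Gamma}\}_{i\in\mathcal{J}}$. Lifting $(\mu-\lambda)_\Gamma$ to an element of $Q^{\vee}$ and pushing into $Q^{\vee}_{\Gamma}$ therefore directly produces an \emph{integral} expansion $(\mu-\lambda)_\Gamma=\sum_i c_i\alpha_{i,\Gamma}$ with $c_i\in\mathbb{Z}$. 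To see these $c_i$ are the same as the rational coefficients determined by the slope condition, apply the averaging isomorphism $\mathbb{X}_{*}(T_{\ol{\mathbb{Q}}_{p}})_{\Gamma,\mathbb{Q}}\simeq \mathbb{X}_{*}(T_{\ol{\mathbb{Q}}_{p}})^{\Gamma}_{\mathbb{Q}}$: the images of the $\alpha_{i,\Gamma}$ are positive multiples of the relative simple coroots, which are $\mathbb{Q}$-linearly independent, so the coefficients $c_i$ are uniquely determined by $\tilde\mu-\nu_\lambda$ and hence must be the ones coming from $Q^{\vee}_{\Gamma}$. The Bruhat condition $\nu_\lambda\leq\tilde\mu$ then gives $c_i\geq 0$, completing the argument without any claim about the integral structure of $L$ inside $\mathbb{X}_{*}(T_{\ol{\mathbb{Q}}_{p}})_{\Gamma}$ beyond what is supplied by $Q^{\vee}_{\Gamma}$ itself. (As a by-product, this also shows $Q^{\vee}_{\Gamma}\to L$ is an isomorphism, so the $\alpha_{i,\Gamma}$ \emph{are} a $\mathbb{Z}$-basis of $L$, but you don't need to know this in advance.)
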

We now would like to connect this set with the highest weight theory for $\hat{G}$. If the group is not split then the unramified elements are naturally connected with the highest weight theory of the subgroup $\hat{G}^{\Gamma}$. Even though $\hat{G}^{\Gamma}$ is possibly disconnected its representation theory behaves like a connected reductive group. To see this, first we note that the subgroup $\hat{T}^{\Gamma}$ defined by the maximal torus has character group isomorphic to $\mathbb{X}_{*}(T_{\ol{\mathbb{Q}}_{p}})_{\Gamma}$, and the partial order described above allows one to talk about the highest weight of a representation. In particular, if we let $\hat{T}^{\Gamma,\circ}$ (resp. $\hat{G}^{\Gamma,\circ}$) denote the neutral component of $\hat{T}^{\Gamma}$ (resp. $\hat{G}^{\Gamma}$). Then one can use that the natural map $\hat{T}^{\Gamma}/\hat{T}^{\Gamma,\circ} \ra \hat{G}^{\Gamma}/\hat{G}^{\Gamma,\circ}$ is an isomorphism (\cite[Lemma~4.6]{Zhu}) to see that usual highest weight theory extends to $\hat{G}^{\Gamma}$. In particular, we have the following.
\begin{lemma}{\cite[Lemma~4.10]{Zhu}}{\label{dualgrouphighestweight}}
For $\ol{\mu} \in \mathbb{X}_{*}(T_{\ol{\mathbb{Q}}_{p}})^{+}_{\Gamma}$, there is a unique up to isomorphism irreducible representation of $V_{\ol{\mu}} \in \Rep_{\ol{\mathbb{Q}}_{\ell}}(\hat{G}^{\Gamma})$ of highest weight $\ol{\mu}$, which give rise to all the irreducible representations in $\Rep_{\ol{\mathbb{Q}}_{\ell}}(\hat{G}^{\Gamma})$ for varying $\ol{\mu}$. Moreover, the multiplicity of the $\ol{\mu}$ weight space in $V_{\ol{\mu}}$ is $1$, and the non-zero weights $\nu \in \mathbb{X}_{*}(T_{\ol{\mathbb{Q}}_{p}})_{\Gamma}$ of $V_{\ol{\mu}}$ lie in the convex hull of the $W_{G}$-orbit of $\ol{\mu}$.
\end{lemma}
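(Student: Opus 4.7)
The strategy is to reduce the claim to standard highest weight theory for the connected reductive group $\hat{G}^{\Gamma,\circ}$ by exploiting the isomorphism $\hat{T}^{\Gamma}/\hat{T}^{\Gamma,\circ} \simeq \hat{G}^{\Gamma}/\hat{G}^{\Gamma,\circ}$ invoked just before the statement. Since $\Gamma$ acts on $\hat{G}$ through pinning-preserving automorphisms, a theorem of Steinberg ensures that $\hat{G}^{\Gamma,\circ}$ is a connected reductive group with maximal torus $\hat{T}^{\Gamma,\circ}$ and Borel $\hat{B}^{\Gamma,\circ}$, and its Weyl group with respect to $\hat{T}^{\Gamma,\circ}$ is canonically identified with the relative Weyl group $W_{G}$. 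The dominant characters of $\hat{T}^{\Gamma,\circ}$ are precisely the image of $\mathbb{X}^{*}(\hat{T}^{\Gamma})^{+} \simeq \mathbb{X}_{*}(T_{\ol{\mathbb{Q}}_{p}})^{+}_{\Gamma}$ under the natural surjection $\mathbb{X}^{*}(\hat{T}^{\Gamma}) \twoheadrightarrow \mathbb{X}^{*}(\hat{T}^{\Gamma,\circ})$, so the standard theory supplies an irreducible $V^{\circ}_{\ol{\mu}} \in \Rep_{\ol{\mathbb{Q}}_{\ell}}(\hat{G}^{\Gamma,\circ})$ of highest weight $\ol{\mu}|_{\hat{T}^{\Gamma,\circ}}$ for each such $\ol{\mu}$, satisfying the claimed multiplicity-one and convex-hull properties.

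The key step is the following Clifford-theoretic observation: any irreducible $V \in \Rep_{\ol{\mathbb{Q}}_{\ell}}(\hat{G}^{\Gamma})$ remains irreducible after restriction to $\hat{G}^{\Gamma,\circ}$. Indeed, $V|_{\hat{G}^{\Gamma,\circ}}$ decomposes as a direct sum of isotypic components that are permuted transitively by the finite component group $\hat{G}^{\Gamma}/\hat{G}^{\Gamma,\circ}$. By the cited isomorphism, any coset representative may be chosen inside $\hat{T}^{\Gamma}$, and such an element preserves every $\hat{T}^{\Gamma,\circ}$-weight space of $V$ (acting there by a scalar). Since an irreducible $\hat{G}^{\Gamma,\circ}$-summand of $V$ is determined by its multiset of $\hat{T}^{\Gamma,\circ}$-weights, the component group must stabilize each such summand individually, forcing $V|_{\hat{G}^{\Gamma,\circ}}$ to be irreducible.

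Having established this, I would produce $V_{\ol{\mu}}$ for each dominant $\ol{\mu} \in \mathbb{X}_{*}(T_{\ol{\mathbb{Q}}_{p}})^{+}_{\Gamma}$ by extending $V^{\circ}_{\ol{\mu}}$ from $\hat{G}^{\Gamma,\circ}$ to $\hat{G}^{\Gamma}$, declaring the component group to act on the one-dimensional highest weight line through the character $\ol{\mu}|_{\hat{T}^{\Gamma}/\hat{T}^{\Gamma,\circ}}$. This extension is uniquely determined: the abelian component group acts by scalars on every $\hat{T}^{\Gamma,\circ}$-weight space, and those scalars are pinned down by its action on the highest weight line together with compatibility with the $\hat{G}^{\Gamma,\circ}$-action. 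The Clifford step conversely shows that every irreducible of $\hat{G}^{\Gamma}$ arises this way for a unique $\ol{\mu}$, giving the desired bijection. The multiplicity-one statement on the $\ol{\mu}$-weight space and the convex-hull property of the weights transfer directly from the analogous statements for $\hat{G}^{\Gamma,\circ}$ applied to $V^{\circ}_{\ol{\mu}}$, since $V_{\ol{\mu}}|_{\hat{G}^{\Gamma,\circ}} = V^{\circ}_{\ol{\mu}}$ and the $W_{G}$-orbits on weights coincide on both sides.

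The principal difficulty in this outline is the Clifford-theoretic irreducibility step in the second paragraph: without the nontrivial input that the component groups of $\hat{T}^{\Gamma}$ and $\hat{G}^{\Gamma}$ coincide, an irreducible of $\hat{G}^{\Gamma}$ could split on restriction to $\hat{G}^{\Gamma,\circ}$ and the parametrization would be more delicate, involving an extra choice of character of the stabilizer inside the component group. This is exactly the obstruction that is eliminated by the isomorphism cited from \cite[Lemma~4.6]{Zhu}, and the rest of the argument is then largely formal.
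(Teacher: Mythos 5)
There is a genuine gap in your Clifford-theoretic irreducibility step. Two things go wrong in the second paragraph.

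First, the assertion that a coset representative $g \in \hat{T}^{\Gamma}$ acts \emph{by a scalar} on every $\hat{T}^{\Gamma,\circ}$-weight space of $V$ is not correct in general: $g$ preserves each $\hat{T}^{\Gamma,\circ}$-weight space (since $\hat{T}^{\Gamma}$ is abelian and centralizes $\hat{T}^{\Gamma,\circ}$), but it acts by a scalar only on each $\hat{T}^{\Gamma}$-weight space, and a single $\hat{T}^{\Gamma,\circ}$-weight space may decompose into several $\hat{T}^{\Gamma}$-weight spaces. Second, and more substantively, even granting that every $g$ fixes the isomorphism class of each $\hat{G}^{\Gamma,\circ}$-irreducible constituent of $V$ (which does follow, since $g$ centralizes $\hat{T}^{\Gamma,\circ}$ and hence preserves formal characters), this only shows that $V|_{\hat{G}^{\Gamma,\circ}}$ is \emph{isotypic}, say $U^{\oplus m}$. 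It does not show $m = 1$; Clifford theory a priori allows $m > 1$, governed by a projective representation of the component group on the multiplicity space. Your concluding sentence, that the component group ``must stabilize each such summand individually, forcing $V|_{\hat{G}^{\Gamma,\circ}}$ to be irreducible,'' is therefore a non sequitur: fixing isomorphism classes of constituents is not the same as fixing the summands as subspaces, and fixing the summands is not what you need anyway.

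The missing step is the following use of the surjection $\hat{T}^{\Gamma} \twoheadrightarrow \hat{G}^{\Gamma}/\hat{G}^{\Gamma,\circ}$, equivalently $\hat{G}^{\Gamma} = \hat{T}^{\Gamma}\cdot\hat{G}^{\Gamma,\circ}$. Take the highest $\hat{T}^{\Gamma,\circ}$-weight space of $V$; it has dimension $m$ and is $\hat{T}^{\Gamma}$-stable. Diagonalize the $\hat{T}^{\Gamma}$-action on it and pick a $\hat{T}^{\Gamma}$-weight vector $v$. The $\hat{G}^{\Gamma,\circ}$-subrepresentation $U'$ generated by $v$ is irreducible, and since $\hat{T}^{\Gamma}$ scales $v$ and normalizes $\hat{G}^{\Gamma,\circ}$, the subspace $U'$ is $\hat{T}^{\Gamma}$-stable, hence $\hat{G}^{\Gamma}$-stable. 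Irreducibility of $V$ gives $V = U'$, so $m = 1$. Once you insert this, the rest of your outline (extension of $V^{\circ}_{\ol{\mu}}$ by prescribing the $\hat{T}^{\Gamma}$-character on the highest weight line, transport of multiplicity-one and convex-hull statements) goes through.
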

To a geometric dominant cocharacter $\mu$, we can attach an irreducible representation $V_{\mu} \in \Rep_{\ol{\mathbb{Q}}_{\ell}}(\hat{G})$. This defines a natural representation of $\hat{G} \rtimes W_{E_{\mu}}$ as in \cite[Lemma~2.1.2]{Kott1}, where $E_{\mu}$ is the reflex field of $\mu$. An element $\nu \in \cochar$ defines a representation of $\hat{T}$, and we write $V_{\mu}(\nu)$ for the corresponding weight space of $V_{\mu}$. If we consider the restriction $V_{\mu}|_{\hat{G}^{\Gamma}}$ then the weight space $V_{\mu}(\nu)$ gives rise to a $\nu_{\Gamma}$ weight space, where we write $(-)_{\Gamma}: \cochar \ra \coinv$ for the map given by taking coinvariants. Using this, it is easy to see we have the following relationship.
\begin{lemma}{\label{lemma: coinv versus orbs}}
For $\mu \in \domcochar$ and $\ol{\nu} \in \coinv$, we have the following equality:
\[ \dim(V_{\mu}(\ol{\nu})) = \sum_{\substack{\nu \in \cochar \\ \nu_{\Gamma} = \ol{\nu}}} \dim(V_{\mu}(\nu)) \]
\end{lemma}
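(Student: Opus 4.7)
The plan is to deduce this immediately from the weight decomposition of $V_\mu$ restricted to the smaller diagonalizable subgroup $\hat{T}^\Gamma \subset \hat{T}$.

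First I would write $V_\mu|_{\hat{T}} = \bigoplus_{\nu \in \cochar} V_\mu(\nu)$, which is the usual decomposition into $\hat{T}$-weight spaces (here $\mathbb{X}^{*}(\hat{T}) = \cochar$). Next, I want to identify the character group of $\hat{T}^\Gamma$. Thinking of $\hat{T}$ as the diagonalizable group scheme $\Spec \ol{\mathbb{Q}}_\ell[\cochar]$, the fixed point subgroup scheme is $\hat{T}^\Gamma = \Spec \ol{\mathbb{Q}}_\ell[\coinv]$, so (allowing for $\hat{T}^\Gamma$ to be disconnected) we have a canonical identification $\mathbb{X}^*(\hat{T}^\Gamma) \simeq \coinv$, and under this identification the restriction homomorphism $\mathbb{X}^*(\hat{T}) \to \mathbb{X}^*(\hat{T}^\Gamma)$ is precisely the natural quotient $\nu \mapsto \nu_\Gamma$.

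Combining these, restriction of the above decomposition along $\hat{T}^\Gamma \hookrightarrow \hat{T}$ presents $V_\mu|_{\hat{T}^\Gamma}$ as a direct sum $\bigoplus_{\nu \in \cochar} V_\mu(\nu)$ in which each summand $V_\mu(\nu)$ lies in the weight space for $\nu_\Gamma \in \coinv = \mathbb{X}^*(\hat{T}^\Gamma)$. Regrouping terms by their image in $\coinv$ yields the internal direct sum of vector subspaces
\[ V_\mu(\ol\nu) \;=\; \bigoplus_{\substack{\nu \in \cochar \\ \nu_\Gamma = \ol\nu}} V_\mu(\nu), \]
and taking dimensions gives the claimed equality.

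The only real point to verify is the identification $\mathbb{X}^*(\hat{T}^\Gamma) \simeq \coinv$ together with the compatibility of the restriction map with the coinvariants quotient; everything else is formal from the weight decomposition. Since $\hat{T}$ is a split torus over $\ol{\mathbb{Q}}_\ell$ with cocharacter lattice functorial in the $\Gamma$-action, this identification is standard, so no real obstacle arises.
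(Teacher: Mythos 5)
Your argument is correct and is essentially the same one the paper has in mind: restrict $V_\mu$ along $\hat{T}^\Gamma \hookrightarrow \hat{T}$, use the identification $\mathbb{X}^*(\hat{T}^\Gamma) \simeq \mathbb{X}_*(T_{\ol{\mathbb{Q}}_p})_\Gamma$ under which restriction of characters becomes the coinvariants quotient $\nu \mapsto \nu_\Gamma$, and regroup the $\hat{T}$-weight spaces accordingly. The paper states this lemma without proof, calling it "easy to see" immediately after making the same observation about $V_\mu(\nu)$ giving a $\nu_\Gamma$-weight space under restriction to $\hat{G}^\Gamma$, so your write-up simply makes that explicit.
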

We will combine this lemma with the following, which follows from the above discussion. 
\begin{corollary}{\label{bgmuweights}}{(\cite[Lemma~4.26]{XZ})}
For $\mu$ a geometric dominant cocharacter, under the identification $\mathbb{X}_{*}(T_{\ol{\mathbb{Q}}_{p}})_{\Gamma}^{+} \simeq B(G)_{\mathrm{un}}$ the elements $\ol{\nu} \in B(G,\mu)_{\mathrm{un}}$ correspond to $W_{G}$-orbits of the possible non-zero weights in $V_{\mu}|_{\hat{G}^{\Gamma}}$.  
\end{corollary}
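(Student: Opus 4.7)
The plan is to combine the preceding corollary with the highest weight theory for $\hat{G}^{\Gamma}$ established in Lemma \ref{dualgrouphighestweight}, using Lemma \ref{lemma: coinv versus orbs} to bridge the weight multiplicities of $V_{\mu}$ and its restriction $V_{\mu}|_{\hat{G}^{\Gamma}}$. By the preceding corollary, $B(G,\mu)_{\mathrm{un}}$ is identified with $\{\ol{\lambda} \in \mathbb{X}_{*}(T_{\ol{\mathbb{Q}}_{p}})_{\Gamma}^{+} : \ol{\lambda} \leq \mu_{\Gamma}\}$, while taking dominant representatives of $W_{G}$-orbits of weights identifies the set of $W_{G}$-orbits of non-zero weights of $V_{\mu}|_{\hat{G}^{\Gamma}}$ with the set of dominant $\ol{\lambda} \in \mathbb{X}_{*}(T_{\ol{\mathbb{Q}}_{p}})_{\Gamma}^{+}$ such that $V_{\mu}|_{\hat{G}^{\Gamma}}(\ol{\lambda}) \neq 0$. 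Thus it suffices to show these two sets of dominant coinvariants coincide.

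For the forward direction, suppose $V_{\mu}|_{\hat{G}^{\Gamma}}(\ol{\lambda}) \neq 0$ with $\ol{\lambda}$ dominant. By Lemma \ref{lemma: coinv versus orbs}, there is a weight $\nu \in \cochar$ of $V_{\mu}$ with $\nu_{\Gamma} = \ol{\lambda}$, and standard highest weight theory for $V_{\mu}$ as a representation of $\hat{G}$ ensures that $\mu - \nu = \sum_{i} c_{i}\alpha_{i}^{\vee}$ with $c_{i} \in \mathbb{Z}_{\geq 0}$ and $\alpha_{i}^{\vee}$ simple coroots of $G$. Applying $(-)_{\Gamma}$ and grouping coefficients over $\Gamma$-orbits displays $\mu_{\Gamma} - \ol{\lambda}$ as a non-negative integer combination of the $\alpha_{\Gamma}$ for $\alpha \in \Delta$, which by definition is the statement $\ol{\lambda} \leq \mu_{\Gamma}$.

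Conversely, suppose $\ol{\lambda} \in \mathbb{X}_{*}(T_{\ol{\mathbb{Q}}_{p}})_{\Gamma}^{+}$ satisfies $\ol{\lambda} \leq \mu_{\Gamma}$. Since $\mu$ is the highest weight of $V_{\mu}$, Lemma \ref{lemma: coinv versus orbs} gives $V_{\mu}|_{\hat{G}^{\Gamma}}(\mu_{\Gamma}) \neq 0$, so $\mu_{\Gamma}$ is the highest weight of some irreducible $\hat{G}^{\Gamma}$-subrepresentation of $V_{\mu}|_{\hat{G}^{\Gamma}}$; by Lemma \ref{dualgrouphighestweight} this subrepresentation is identified with $V_{\mu_{\Gamma}}$. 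Standard highest weight theory for $\hat{G}^{\Gamma}$ then ensures that every dominant $\ol{\lambda}$ with $\mu_{\Gamma} - \ol{\lambda}$ in the positive root lattice of $\hat{G}^{\Gamma}$ occurs as a weight of $V_{\mu_{\Gamma}}$, and hence of $V_{\mu}|_{\hat{G}^{\Gamma}}$. Since the positive root lattice of $\hat{G}^{\Gamma}$ is generated precisely by the $\alpha_{\Gamma}$ for $\alpha \in \Delta$, which is the same generating set used in defining the partial order, the condition $\ol{\lambda} \leq \mu_{\Gamma}$ delivers the required non-vanishing.

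The main subtlety is the converse direction, where the "standard" highest weight theory for $V_{\mu_{\Gamma}}$ must be invoked over the potentially disconnected group $\hat{G}^{\Gamma}$. As recalled before Lemma \ref{dualgrouphighestweight}, the isomorphism $\hat{T}^{\Gamma}/\hat{T}^{\Gamma,\circ} \simeq \hat{G}^{\Gamma}/\hat{G}^{\Gamma,\circ}$ from \cite[Lemma~4.6]{Zhu} guarantees that the representation theory of $\hat{G}^{\Gamma}$ parallels that of its neutral component, so the full catalog of dominant weights $\ol{\lambda} \leq \mu_{\Gamma}$ appearing in $V_{\mu_{\Gamma}}$ can be extracted in the usual way.
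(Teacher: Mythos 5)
Your proof is correct and spells out exactly what the paper leaves implicit when it states the corollary ``follows from the above discussion'': one combines \cite[Corollary~4.2.4]{XZ} (identifying $B(G,\mu)_{\mathrm{un}}$ with dominant $\ol{\lambda} \in \coinvdom$ satisfying $\ol{\lambda} \leq \mu_{\Gamma}$) with the highest weight theory of $\hat{G}^{\Gamma}$ from Lemma \ref{dualgrouphighestweight} and the weight-space bookkeeping of Lemma \ref{lemma: coinv versus orbs}, exactly as you do. The one ingredient you invoke without full justification, namely that the positive root lattice of $\hat{G}^{\Gamma,\circ}$ inside $\mathbb{X}^{*}(\hat{T}^{\Gamma}) \simeq \coinv$ is generated precisely by the $\alpha_{\Gamma}$ for $\alpha \in \Delta$, is a standard feature of the pinned-automorphism folding underlying Zhu's Lemma 4.10 and is implicitly absorbed into the cited Xiao--Zhu reference, so the argument is sound.
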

Let's study this now more carefully. For $b \in B(G)$, we want to use the above discussion to understand the fiber of the map: 
\[ i: B(T) \ra B(G)_{\mathrm{un}} \subset B(G) \]
Recall that, given $b \in B(G)$, since $G$ is quasi-split the $\sigma$-centralizer $J_{b}$ is an extended pure inner form (in the sense of \cite[Section~5.2]{Kott}) of a Levi subgroup $M_{b}$ of $G$ \cite[Section~6.2]{Kott}, which is the centralizer of the slope homomorphism $\nu_{b}$ of $b$. We make the following definition. 
\begin{definition}
For $b \in B(G)$, we let $W_{M_{b}}$ denote the relative Weyl group of $M_{b}$ and set $W_{b} := W_{G}/W_{M_{b}}$. We will fix a set of representatives $w \in W_{G}$ of minimal length, as in \cite[Section~2.11]{BZ}, and abuse notation by writing $w$ for both the representative and the corresponding element. 
\end{definition}
When combining the above discussion with Lemma \ref{unrelements}, we can deduce the following Corollary. 
\begin{corollary}{\label{weylgrouporbits}}
For fixed $b \in B(G)_{\mathrm{un}}$, the fiber $i^{-1}(b)$ has a unique element, denoted $b_{T}$, whose $\kappa$-invariant lies in $\mathbb{X}_{*}(T_{\ol{\mathbb{Q}}_{p}})_{\Gamma}^{+}$. Moreover, we have an equality $i^{-1}(b) = \{ w(b_{T})\text{ } |\text{ } w \in W_{b} \}$.
\end{corollary}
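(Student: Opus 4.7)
The plan is to deduce this corollary in three steps, essentially reading off the statement from the two preceding lemmas once one computes the $W_{G}$-stabilizer of a fiber representative.

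First, I will use Lemma \ref{unrelements} to identify $i$ with the quotient map $\mathbb{X}_{*}(T_{\ol{\mathbb{Q}}_{p}})_{\Gamma} \twoheadrightarrow \mathbb{X}_{*}(T_{\ol{\mathbb{Q}}_{p}})_{\Gamma}/W_{G} \simeq B(G)_{\mathrm{un}}$; hence the fiber $i^{-1}(b)$ is precisely a $W_{G}$-orbit in $\mathbb{X}_{*}(T_{\ol{\mathbb{Q}}_{p}})_{\Gamma}$. Next, to extract the distinguished element $b_{T}$, I invoke Lemma \ref{unrweyl}: its isomorphism $B(G)_{\mathrm{un}} \simeq \mathbb{X}_{*}(T_{\ol{\mathbb{Q}}_{p}})_{\Gamma}^{+}$ guarantees that each $W_{G}$-orbit contains a unique element in the dominant cone, and applying the longest element $w_{0} \in W_{G}$ yields the unique anti-dominant representative whose $\kappa$-invariant lies in $\mathbb{X}_{*}(T_{\ol{\mathbb{Q}}_{p}})_{\Gamma}^{-}$. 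The choice of anti-dominant chamber here matches the HN-dominance convention of the introduction, since the HN-slope is the negative of the isocrystal slope, justifying the name "HN-dominant reduction".

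Finally, to establish $i^{-1}(b) = \{w(b_{T}) : w \in W_{b}\}$, it will be enough to show $\mathrm{Stab}_{W_{G}}(b_{T}) = W_{M_{b}}$, since then $|i^{-1}(b)| = [W_{G}:W_{M_{b}}] = |W_{b}|$, and, using the fixed system of minimal length coset representatives of $W_{b} = W_{G}/W_{M_{b}}$ in $W_{G}$, one obtains the desired enumeration. For the inclusion $\mathrm{Stab}_{W_{G}}(b_{T}) \subseteq W_{M_{b}}$, my plan is to pass to the $W_{G}$-equivariant averaging isomorphism $\mathbb{X}_{*}(T_{\ol{\mathbb{Q}}_{p}})_{\Gamma,\mathbb{Q}} \xrightarrow{\sim} \mathbb{X}_{*}(T_{\ol{\mathbb{Q}}_{p}})_{\mathbb{Q}}^{\Gamma}$, under which the rational class of $b_{T}$ is sent to (a positive multiple of) the slope $\nu_{b}$; since $M_{b}$ is by definition the centralizer of $\nu_{b}$, any $w$ stabilizing $b_{T}$ integrally also fixes $\nu_{b}$ and hence lies in $W_{M_{b}}$. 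For the reverse inclusion, I will use that $W_{M_{b}}$ is generated by reflections in the restricted coroots of $M_{b}$, each of which is built from absolute coroots that pair trivially with any lift of $b_{T}$ after $\Gamma$-averaging, to conclude that these reflections fix $b_{T}$ in $\mathbb{X}_{*}(T_{\ol{\mathbb{Q}}_{p}})_{\Gamma}$ itself.

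The main obstacle I anticipate is pinning down the reverse inclusion rigorously: the rational equality of stabilizers follows immediately from the averaging isomorphism, but the possible torsion in $\mathbb{X}_{*}(T_{\ol{\mathbb{Q}}_{p}})_{\Gamma}$ means one must verify that $W_{M_{b}}$ fixes $b_{T}$ integrally, not merely rationally. This should follow from the specific structure of the restricted coroots of $M_{b}$ and the fact that $b_{T}$ is a lift of an element factoring through the center of $M_{b}$ up to $\Gamma$-twists; it is the only non-formal step, everything else being a direct unwinding of Lemmas \ref{unrelements} and \ref{unrweyl}.
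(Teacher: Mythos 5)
Your overall plan — identify $i^{-1}(b)$ with a $W_G$-orbit via Lemma \ref{unrelements}, extract $b_T$ from Lemma \ref{unrweyl}, and compute the $W_G$-stabilizer of $b_T$ — is the right way to read the corollary off the two preceding lemmas, but the stabilizer computation has a chamber slip. You assert that the rational class of $b_T$ maps to "(a positive multiple of) the slope $\nu_b$" and conclude that any $w$ stabilizing $b_T$ fixes $\nu_b$ and hence lies in $W_{M_b}$. But $b_T$ has \emph{anti-dominant} slope $\nu_{b_T} = w_0(\nu_b)$, not $\nu_b$; what your argument actually yields is $w \in \mathrm{Stab}_{W_G}(\nu_{b_T}) = w_0 W_{M_b} w_0^{-1}$, which differs from $W_{M_b} := W_{Z_G(\nu_b)}$ whenever $M_b$ is not $w_0$-stable. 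This is not cosmetic: for $G = \GL_3$, $\nu_b = (1,1,0)$ gives $W_{M_b} = \langle s_{12}\rangle$ while $\mathrm{Stab}_{W_G}(b_T) = \langle s_{23}\rangle$, and applying the minimal-length representatives of $W_G/\langle s_{12}\rangle$ to $b_T = (0,1,1)$ hits only two of the three elements of $i^{-1}(b)$. The enumeration works only if $M_b$ is taken to be $Z_G(\nu_{b_T})$, the Levi attached to the anti-dominant slope — the convention used tacitly later in the paper (cf. the proof of Proposition \ref{verddualdom}, where $M_b$ is the Levi generated by the simple roots pairing trivially with $\nu_{b_T}$) — and your proof should pin this down rather than identify the image of $b_T$ with $\nu_b$.

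Second, your sketch of the reverse inclusion (that $W_{M_b}$ fixes $b_T$ integrally in the coinvariant lattice) via the structure of the restricted coroots is workable but harder than necessary. A cleaner route: if $w \in W_G$ fixes the rational slope $\nu_{b_T}$, then $w(b_T)$ has the same anti-dominant slope and so also lies in $\mathbb{X}_{*}(T_{\ol{\mathbb{Q}}_{p}})_{\Gamma}^{-}$, since the chamber condition sees only the pairing with $\Gamma$-invariant roots, which factors through the slope map. As $w(b_T)$ lies in the same $W_G$-orbit as $b_T$, the uniqueness of the anti-dominant representative (Lemma \ref{unrweyl} conjugated by $w_0$) forces $w(b_T) = b_T$. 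This folds the integral statement into the uniqueness you have already invoked and disposes of the torsion concern with no explicit reflection computations in $\mathbb{X}_{*}(T_{\ol{\mathbb{Q}}_{p}})_{\Gamma}$.
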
 
Now, given a parabolic $P$ with Levi factor $M$, the element $b \in B(G)$ admits a reduction to a Levi subgroup $M$ if and only if the parabolic $P \cap M_{b}$ of $M_{b}$ transfers to a parabolic of $J_{b}$ under the inner twisting (apply \cite[Page~259]{CFS} to the basic reduction of $b$ to $M_{b}$). We record this specialized to the case of the Borel for future use. 
\begin{lemma}{\label{jbborel}}
An element $b \in B(G)$ lies in $B(G)_{\mathrm{un}}$ if and only if $B \cap M_{b}$ defines, via the inner twisting, a Borel subgroup of $J_{b}$.
\end{lemma}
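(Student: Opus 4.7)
The plan is to reduce the statement directly to the criterion quoted in the paragraph preceding Definition of $B(G)_{\mathrm{un}}$, namely that $b \in B(G)$ admits a reduction to a Levi subgroup $M$ of $G$ if and only if, under the inner twisting identifying $J_b$ with a pure inner form of the Levi $M_b$, the subgroup $P \cap M_b$ (for any parabolic $P$ with Levi $M$) transfers to a parabolic of $J_b$.

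First I would reformulate $B(G)_{\mathrm{un}}$ in terms of reductions. By definition $B(G)_{\mathrm{un}}$ is the image of $B(T) \to B(G)$, and since $B = TU$ with $U$ unipotent the projection $B(B) \to B(T)$ is a bijection (one reduces to the case of $\mathbb{G}_{a}$, where surjectivity of $1 - \sigma$ on $\breve{\mathbb{Q}}_{p}$ gives $B(\mathbb{G}_{a}) = \ast$, and then filters $U$ by root subgroups). Consequently $b \in B(G)_{\mathrm{un}}$ if and only if $b$ admits a reduction to $T$, equivalently a reduction to the Levi $T$ of the parabolic $B \subset G$.

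Next I apply the criterion quoted above with $P = B$ and $M = T$. This says: $b$ admits a reduction to $T$ if and only if $B \cap M_b$ transfers via the inner twisting $M_b \rightsquigarrow J_b$ to a parabolic subgroup of $J_b$. Since $T \subset M_b$ and $B \supset T$, the subgroup $B \cap M_b$ is a Borel of $M_b$; in particular it is a minimal parabolic of $M_b$, so its transfer to $J_b$ (when it exists) is necessarily a Borel subgroup of $J_b$. Combining these equivalences yields the lemma in both directions.

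The proof is therefore essentially a direct specialization of the general reduction criterion, and there is no substantive obstacle beyond checking the two small compatibilities above: that $B(B) \to B(T)$ is a bijection (so that membership in $B(G)_{\mathrm{un}}$ is the same as admitting a $B$-reduction), and that a minimal parabolic transfers to a minimal parabolic under the inner twisting, which is automatic since the inner twisting preserves the type of parabolic subgroups.
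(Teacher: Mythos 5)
Your proof is correct and takes the same approach the paper intends: the lemma is recorded as a direct specialization of the criterion from [CFS] quoted in the preceding paragraph, and the paper supplies no further argument. Your two added observations — that membership in $B(G)_{\mathrm{un}}$ is the same as admitting a $B$-reduction because $B(B)\to B(T)$ is a bijection, and that the transfer of the minimal parabolic $B\cap M_b$ (a Borel of $M_b$) must again be a Borel of $J_b$ since inner twisting preserves type — are exactly the compatibilities needed to make the specialization explicit.
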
 
\begin{remark}
We note that, when $b \in B(G)_{\mathrm{un}}$, we have an isomorphism $M_{b} \simeq J_{b}$ because $J_{b}$ will be a quasi-split inner form of $M_{b}$.
\end{remark}
We will end with an important observation about the map $(-)_{\Gamma}: \gamorb \ra \coinv$ from orbits to coinvariants. Let $\tilde{\mathcal{J}}$ (resp. $\mathcal{J}$) denote the vertices of the absolute (resp. relative) Dynkin diagram of $G$. For $i \in \tilde{\mathcal{J}}$, we write $\tilde{\alpha}_{i} \in \cochar$ for the corresponding simple absolute coroot. We recall that $\Gamma$ permutes the $\tilde{\alpha}_{i}$, and the orbits under $\Gamma$ are in bijection with elements of $\mathcal{J}$; namely, the average over the orbit is the (reduced) positive coroot corresponding to $i \in \mathcal{J}$. Therefore, for each $i \in \mathcal{J}$, we obtain an element $\alpha_{i} \in \coinv$ given by the common image of the elements in the orbit corresponding to $i \in \mathcal{J}$ under the map $(-)_{\Gamma}$. This allows us to make the following definition.
\begin{definition}{\label{def: gammaorbsimp}}
We denote the group of coinvariants by $\Lambda_{G,B} := \coinv$, and set $\Lambda_{G,B}^{pos}$ to be the semi-group spanned by the elements $\alpha_{i} \in \coinv$ corresponding to the $\Gamma$-orbit of coroots indexed by $i \in \mathcal{J}$.
\end{definition}
Now we introduce the rest of the notation motivated by the discussion above. 
\begin{itemize}
\item We let $\hat{\Lambda}_{G}^{+} := \mathbb{X}^{*}(T_{\ol{\mathbb{Q}}_{p}})^{\Gamma,+}$ be the set of Galois invariant dominant characters.
\item To a dominant character $\hat{\lambda} \in \hat{\Lambda}^+_G$, we get an associated highest weight Weyl $G$-module, denoted $\mathcal{V}^{\hat{\lambda}}$. It has a fixed highest weight vector $v^{\hat{\lambda}} \in \mathcal{V}^{\hat{\lambda}}$, and, given a pair of such weights $\hat{\lambda}_1, \hat{\lambda}_2$, there is a canonical map
\[ \mathcal{V}^{\hat{\lambda}_1 + \hat{\lambda}_2} \rightarrow \mathcal{V}^{\hat{\lambda}_1} \otimes \mathcal{V}^{\hat{\lambda}} \]
which takes $v^{\hat{\lambda}_1 + \hat{\lambda}_2}$ to $v^{\hat{\lambda}_1} \otimes v^{\hat{\lambda}_2}$.
\item Let $\mathcal{J}$ be the set of vertices of the relative Dynkin diagram of $G/\mathbb{Q}_{p}$. For each $i \in \mathcal{J}$, we denote the corresponding element in the coinvariant lattice by $\alpha_i \in \mathbb{X}_{*}(T_{\ol{\mathbb{Q}}_{p}})_{\Gamma}$, as in Definition \ref{def: gammaorbsimp}. We denote the (reduced) simple coroots corresponding to $i \in \mathcal{J}$ by  $\alpha_{i,A} \in X_{*}(A)$. Similarly, for $i \in \mathcal{J}$, we denote the reduced simple roots by $\hat{\alpha}_{i} \in \mathbb{X}^{*}(A)^{+}$.
\item We consider the natural pairing 
\[ \langle -,- \rangle: \hat{\Lambda}_{G} \times \Lambda_{G,B} \ra \mathbb{Z}. \]
\item We will regularly use the natural quotient map
\[ (-)_{\Gamma}: \cochar \ra \coinv \]
as well as the map 
\[ (-)^{\Gamma}: \cochar \ra \gamorb \]
from cocharacters to their Galois orbits. We note that $(-)_{\Gamma}$ factorizes over $(-)^{\Gamma}$ as a map of sets.
\item For a geometric dominant cocharacter $\mu \in \domcochar$ with reflex field $E$, we write $V_{\mu}$ for the natural representation of $W_{E} \ltimes \hat{G}$ of highest weight $\mu$, as in \cite[Lemma~2.1.2]{Kott1}. We also write $V_{\mu^{\Gamma}} \in \Rep_{\Lambda}(\phantom{}^{L}G)$ for the induction of this representation to $\phantom{}^{L}G$, which only depends on the associated $\Gamma$-orbit $\mu^{\Gamma}$ of $\mu$.
\item For $b \in B(G)$, we let $J_{b}$ be the extended pure inner form of $M_{b}$ considered above. If $b \in B(G)_{\mathrm{un}}$, we write $B_{b}$ for the Borel defined by $B \cap M_{b}$ under the inner twisting, as in Lemma \ref{jbborel}. 
\item Given $b \in B(G)_{\mathrm{un}}$, we note that, by Lemma \ref{unrelements}, there exists a unique element $b_{T} \in B(T)$ with dominant slope homomorphism with respect to the choice of Borel. We refer to this as the dominant reduction. Similarly, we write $b_{T}^{-}$ for the unique element with anti-dominant slopes and will refer to this as the HN-dominant reduction of $b$ (Recall that there is a minus sign when comparing isocrystal slopes and HN-slopes).
\item We set $W_{b} := W_{G}/W_{M_{b}}$, where $W_{M_{b}}$ (resp. $W_{G}$) is the relative Weyl group of $M_{b}$ (resp. $G$). We identify $w \in W_{b}$ with a representative of minimal length in $W_{G}$ throughout.
\end{itemize} 
\section{Geometric Local Class Field Theory}{\label{GCFT}}
\subsection{Hecke Eigensheaves for Tori}
In this section, we want to talk about geometric local class field theory. Namely, given a torus $T$ with $L$-parameter 
\[ \phi_{T}: W_{\mathbb{Q}_{p}} \ra \phantom{}^{L}T(\Lambda) \]
where $\phantom{}^{L}T$ denotes the Langlands dual group of $T$, we want to construct a Hecke eigensheaf, denoted $\mathcal{S}_{\phi_{T}}$, on the moduli stack $\Bun_{T}$. We recall that, for a representation $V \in \Rep_{\Lambda}(\phantom{}^{L}G^{I})$ of $I$-copies of the $L$-group of $G$ for some finite index set $I$, a Hecke operator is a map defined by the correspondence
\[ \begin{tikzcd}
& & \arrow[dl,"h^{\leftarrow} \times \pi"] \Hck_{G}^{I} \arrow[dr,"h^{\rightarrow}"] & & \\
& \Bun_{G} \times \Div^{I} & & \Bun_{G}   & 
\end{tikzcd} \]
where $\Hck_{G}^{I}$ is the functor that parametrizes, for $S \rightarrow \Div^{I}$ defining a tuple of Cartier divisors in the relative Fargues-Fontaine $X_{S}$ over $S$, corresponding to characteristic $0$ untilts $S_{i}^{\sharp}$ for $i \in I$ of $S$, a pair of $G$-torsors $\mathcal{E}_{1}$, $\mathcal{E}_{2}$ together with an isomorphism
\[ \beta:\mathcal{E}_{1}|_{X_{S} \setminus \bigcup_{i \in I} S_{i}^{\sharp}} \xrightarrow{\simeq} \mathcal{E}_{2}|_{X_{S} \setminus \bigcup_{i \in I} S_{i}^{\sharp}}\]
where $h^{\ra}((\mathcal{E}_{1},\mathcal{E}_{2},(S_{i}^{\sharp})_{i \in I})) = \mathcal{E}_{1}$ and $(h^{\la} \times \pi)((\mathcal{E}_{1},\mathcal{E}_{2},\beta,(S_{i}^{\sharp})_{i \in I})) = (\mathcal{E}_{2},(S_{i}^{\sharp})_{i \in I})$. For an algebraic representation $V \in \Rep_{\Lambda}(^{L}G^{I})$, the geometric Satake correspondence of Fargues-Scholze \cite[Chapters~VI,IX.2]{FS} furnishes a sheaf $\mathcal{S}_{V} \in \D_{\blacksquare}(\Hck,\Lambda)$. Using this, we can define the Hecke operator as
\[ T_{V}: \Dlis(\Bun_{G},\Lambda) \ra \D_{\blacksquare}(\Bun_{G} \times \Div^{I},\Lambda) \]
\[ \mathcal{F} \mapsto (h^{\la} \times \pi)_{\natural}((h^{\ra})^{*}(\mathcal{F}) \otimes \mathcal{S}_{V})\]
where $(h^{\la} \times \pi)_{\natural}$ is the natural push-forward (we note that with torsion coefficients this can be replaced with usual pushforward using \cite[Proposition~VII.5.3]{FS} and the properness of $h^{\la} \times \pi$ after restricting to the support of $\mathcal{S}_{V}$). I.e the left adjoint to the restriction functor in the category of solid $\Lambda$-sheaves \cite[Proposition~VII.3.1]{FS}. 
This induces a functor 
\[ T_{V}: \Dlis(\Bun_{G},\Lambda) \ra \Dlis(\Bun_{G},\Lambda)^{BW_{\mathbb{Q}_{p}}^{I}} \]
via a version of Drinfeld's Lemma \cite[Theorem~I.7.2,Proposition~IX.2.1,Corollary~IX.2.3]{FS}. When $\Lambda = \ol{\mathbb{F}}_{\ell}$, this is essentially the statement that $\Lambda$-valued local systems on $\Div^{I}$ are equivalent to continuous representations of $W_{\mathbb{Q}_{p}}^{I}$ on finite projective $\Lambda$-modules \cite[Proposition~VI.9.2]{FS}. In this case, we will freely pass between this perspective of local systems and $W_{\mathbb{Q}_{p}}^{I}$-representations.

With this in hand, we can define what it means for a sheaf on $\Bun_{G}$ to be a Hecke eigensheaf. 
\begin{definition}{\label{defeigsheaf}}
Given a continuous $L$-parameter $\phi: W_{\mathbb{Q}_{p}} \ra \phantom{}^{L}G(\Lambda)$, we say a sheaf $\mathcal{S}_{\phi} \in \Dlis(\Bun_{G},\Lambda)$ is a Hecke eigensheaf with eigenvalue $\phi$ if, for all $V \in \Rep_{\Lambda}(\phantom{}^{L}G^{I})$ with associated map $r_{V}: \phantom{}^{L}G^{I} \ra \GL(V)$, we are given isomorphisms
\[ \eta_{V,I}: T_{V}(\mathcal{S}_{\phi}) \simeq \mathcal{S}_{\phi} \boxtimes r_{V} \circ \phi \]
of sheaves in $\Dlis(\Bun_{G},\Lambda)^{BW_{\mathbb{Q}_{p}}^{I}}$, that are natural in $I$ and $V$, and compatible with compositions and exterior tensor products in $V$. We will similarly say that $\mathcal{S}_{\phi}$ is a weak eigensheaf with eigenvalue $\phi$ if we only know the existence of these isomorphisms. 
\end{definition}
\begin{remark}
We recall that Hecke operators are monoidal and functorial in $(V,I)$. In particular, given two representations $V,W \in \Rep_{\Lambda}(\phantom{}^{L}G)$, we have a natural isomorphism 
\[ (T_{V} \times \mathrm{id})(T_{W})(\cdot)|_{\Delta} \simeq T_{V \otimes W}(\cdot) \]
where $\Delta: \Div^{1} \rightarrow (\Div^{1})^{2}$ is the diagonal map. The compatibilities for the isomorphisms $\eta_{V,I}$ are defined with respect to such isomorphisms. 
\end{remark}
Now let's elucidate what this means for tori. Recall that an irreducible representation of $\phantom{}^{L}T^{I}$ is parametrized by a tuple of Galois orbits $(\nu_{i})_{i \in I} \in (\gamorb)^{I}$. Similarly, one has a decomposition
\[ \Hck_{T}^{I} = \bigcup_{(\nu_{i})_{i \in I} \in (\gamorb)^{I}} \Hck^{I}_{T,(\nu_{i})_{i \in I}}  \]
of $\Hck_{T}^{I}$ into closed substacks, where $\Hck^{I}_{T,(\nu_{i})_{i \in I}}$ parametrizes a modification $\mathcal{E}_{1} \dashrightarrow \mathcal{E}_{2}$ of meromorphy given by the Galois orbit $\nu_{i}$ over the $I$ Cartier divisors in $\Div^{I}$. If one lets $\mathcal{S}_{(\nu_{i})_{i \in I}}$ be the sheaf defined by the representation of $\phantom{}^{L}T^{I}$ corresponding to $(\nu_{i})_{i \in I}$, then this sheaf is simply the constant sheaf supported on the subspace $\Hck^{I}_{T,(\nu_{i})_{i \in I}}$. Therefore, for studying the Hecke operator $T_{(\nu_{i})_{i \in I}}$, we can restrict the Hecke correspondence to the diagram:
\[ \begin{tikzcd}
& & \arrow[dl,"h_{(\nu_{i})_{i \in I}}^{\leftarrow} \times \pi"] \Hck^{I}_{T,(\nu_{i})_{i \in I}} \arrow[dr,"h_{(\nu_{i})_{i \in I}}^{\rightarrow}"] & & \\
& \Bun_{T} \times \Div^{I} & & \Bun_{T}  & 
\end{tikzcd} \]
Let $E_{\nu_{i}}$ denote the reflex field of the $\Gamma$-orbit $\nu_{i} \in \gamorb$. We can consider the following base-change of $\Div^{I}$:
\[ \Div^{I}_{E_{(\nu_{i})_{i \in I}}} := \prod_{i \in I} \Div^{1}_{E_{\nu_{i}}} \]
We note that, since a modification of $T$-bundles is uniquely determined by the locus of meromorphy, we have an isomorphism $\Hck^{I}_{T,(\nu_{i})_{i \in I}} \simeq \Bun_{T} \times \Div^{I}_{E_{(\nu_{i})_{i \in I}}}$. Under this identification, we have a map
\[ h_{(\nu_{i})_{i \in I}}^{\ra}: \Bun_{T} \times \Div^{I}_{E_{(\nu_{i})_{i \in I}}} \ra \Bun_{T} \]
where, given $(\mathcal{F}_{T},(D_{i})_{i \in I})$, we denote the resulting $T$-bundle under applying this map as $\mathcal{F}_{T}(\sum_{i \in I} -\nu_{i}D_{i})$. We note that the isomorphism class of this bundle is only determined by the image of $\nu_{i\Gamma}$ in the coinvariant lattice, and this will be important in the next section. The map $h_{(\nu_{i})_{i \in I}}^{\la} \times \pi$ is defined by the natural finite \'etale morphism $q_{(\nu_{i})_{i \in I}}: \Div^{I}_{E_{(\nu_{i})_{i \in I}}} \ra \Div^{I}$, and pushing forward corresponds to inducing the $\prod_{i \in I} W_{E_{\nu_{i}}}$ action to $\prod_{i \in I} W_{\mathbb{Q}_{p}}$. 

Now, via local class field theory, there is a character $\chi: T(\mathbb{Q}_{p}) \ra \Lambda^{*}$ attached to $\phi_{T}$. Moreover, each connected component $\Bun_{T}^{\nu}$ for varying $\nu \in B(T) \simeq \Lambda_{G,B}$ is isomorphic to the classifying stack $[\ast/\ul{T(\mathbb{Q}_{p})}]$. As a consequence, we may interpret $\chi$ as a sheaf on the connected components $j_{\ol{\nu}}: \Bun_{T}^{\ol{\nu}} \ra \Bun_{T}$ for $\ol{\nu} \in B(T)$. One might hope that considering
\[ \mathcal{S}_{\phi_{T}} := \bigoplus_{\ol{\nu} \in B(T)} j_{\ol{\nu}!}(\chi) \] 
the sheaf on $\Bun_{T}$ whose restriction to each connected component is equal to $\chi$ gives rise to the desired Hecke eigensheaf. This is indeed the case. In particular, via the realization of local class field theory in the torsion of Lubin-Tate formal groups, we have the following proposition.
\begin{proposition}{\cite[Section~9.2]{Fa},\cite{Zou}}{\label{torieigsheaf}}
The sheaf $\mathcal{S}_{\phi_{T}}$ is an eigensheaf with eigenvalue $\phi_{T}$. In particular, for all $(\nu_{i})_{i \in I} \in (\gamorb)^{I}$, we have an isomorphism
\[ T_{(\nu_{i})_{i \in I}}(\mathcal{S}_{\phi_{T}}) \simeq \boxtimes_{i \in I} \nu_{i} \circ \phi_{T} \otimes \mathcal{S}_{\phi_{T}} \]
of objects in $\Dlis(\Bun_{T},\Lambda)^{BW_{\mathbb{Q}_{p}}^{I}}$. More precisely, if $\tilde{\nu}_{i}$ is a representative of the $\Gamma$-orbit of $\nu_{i}$ for all $i \in I$, we have an isomorphism 
\[ (h_{(\nu_{i})_{i \in I}}^{\ra})^{*}(\mathcal{S}_{\phi_{T}}) \simeq \boxtimes_{i \in I} \tilde{\nu}_{i} \circ \phi_{T}|_{W_{E_{\nu_{i}}}} \otimes \mathcal{S}_{\phi_{T}} \]
which after applying $q_{(\nu_{i})_{i \in I}*}$ gives rise to the previous identification of induced representations.
\end{proposition}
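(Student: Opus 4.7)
The plan is to reduce, via compatibility of Hecke operators with exterior tensor products, to the case $|I|=1$ with a single $\Gamma$-orbit $\nu \in \gamorb$ of reflex field $E_{\nu}$ and representative $\wt{\nu}$. Under the identification $\Hck^{1}_{T,\nu} \simeq \Bun_{T} \times \Div^{1}_{E_{\nu}}$ recorded in the excerpt, the right leg becomes $h^{\ra}_{\nu}(\mc{F}_{T},D) = \mc{F}_{T}(-\wt{\nu}D)$, while $h^{\la}_{\nu}\times\pi$ becomes $\mathrm{id}_{\Bun_{T}} \times q_{\nu}$ with $q_{\nu}: \Div^{1}_{E_{\nu}} \ra \Div^{1}$ the finite \'etale projection of degree $[E_{\nu}:\mathbb{Q}_{p}]$. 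Since $\mc{S}_{\phi_{T}} = \bigoplus_{\ol{\mu} \in B(T)} j_{\ol{\mu}!}(\chi)$ and $h^{\ra}_{\nu}$ acts on connected components by translation by $-\nu_{\Gamma}$ in the coinvariant lattice $\Lambda_{G,B} \simeq B(T)$, a formal computation produces a rank one lisse sheaf $\mc{L}_{\nu}$ on $\Div^{1}_{E_{\nu}}$ together with an isomorphism $(h^{\ra}_{\nu})^{*}\mc{S}_{\phi_{T}} \simeq \mc{S}_{\phi_{T}} \boxtimes \mc{L}_{\nu}$, where $\mc{L}_{\nu}$ corresponds under Drinfeld's lemma to a continuous character $W_{E_{\nu}} \ra \Lambda^{*}$ encoding the $W_{E_{\nu}}$-equivariance of $\chi$ with respect to the translation action.

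The heart of the matter is to identify $\mc{L}_{\nu}$ with the character $\wt{\nu} \circ \phi_{T}|_{W_{E_{\nu}}}$. I would first use functoriality of the Hecke correspondence along the morphism of tori $\wt{\nu}: \mathbb{G}_{m,E_{\nu}} \ra T_{E_{\nu}}$, which sends the Hecke stack for $\mathbb{G}_{m}$ in meromorphy $1$ to the Hecke stack for $T_{E_{\nu}}$ in meromorphy $\wt{\nu}$ and identifies $\mc{S}_{\phi_{\mathbb{G}_{m}}}$ with $\mc{S}_{\wt{\nu} \circ \phi_{T}|_{W_{E_{\nu}}}}$, in order to reduce to the case $T = \mathbb{G}_{m}$, $E_{\nu} = \mathbb{Q}_{p}$, $\wt{\nu} = \mathrm{id}$. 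Here the statement becomes the assertion that the monodromy of the tautological modification of the trivial line bundle at the universal Cartier divisor of $\Div^{1}$ is the norm character $|\cdot|: W_{\mathbb{Q}_{p}} \ra \Lambda^{*}$ attached via local class field theory to the identity character $\mathbb{Q}_{p}^{*} \ra \Lambda^{*}$. This is precisely geometric local class field theory as realized through the Lubin-Tate tower in the work of Fargues \cite{Fa1,Fa2} and Zou \cite{Zou}, who describe the Weil group action on the infinite-level torsion of the Lubin-Tate formal group in terms of local class field theory.

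With the identification $(h^{\ra}_{\nu})^{*}\mc{S}_{\phi_{T}} \simeq \mc{S}_{\phi_{T}} \boxtimes \wt{\nu}\circ\phi_{T}|_{W_{E_{\nu}}}$ in hand, applying $(\mathrm{id}\times q_{\nu})_{\natural}$ is formal: because $q_{\nu}$ is finite \'etale, $(q_{\nu})_{\natural} = (q_{\nu})_{*} = (q_{\nu})_{!}$, and under Drinfeld's lemma this operation corresponds to induction from $W_{E_{\nu}}$ to $W_{\mathbb{Q}_{p}}$. By the very definition of the representation of $\phantom{}^{L}T$ attached to the $\Gamma$-orbit $\nu$ as the induction of the weight defined by any representative $\wt{\nu}$, we obtain the claimed isomorphism $T_{\nu}(\mc{S}_{\phi_{T}}) \simeq \mc{S}_{\phi_{T}} \boxtimes \nu \circ \phi_{T}$ in $\Dlis(\Bun_{T},\Lambda)^{BW_{\mathbb{Q}_{p}}}$, and the general case $|I| \geq 1$ follows from the monoidality of Hecke operators together with K\"unneth. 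The main obstacle is the identification in the second paragraph: the geometric local class field theory input, which requires a careful analysis of modifications of the trivial $\mathbb{G}_{m}$-bundle at a Cartier divisor on the Fargues-Fontaine curve and is controlled by the Weil group action on the Lubin-Tate tower; all other steps are formal manipulations with the six functors and Drinfeld's lemma.
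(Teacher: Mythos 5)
The paper does not prove this proposition; it is stated as a citation of Fargues~\cite{Fa1} and Zou~\cite{Zou}, with the sentence immediately preceding it pointing to ``the realization of local class field theory in the torsion of Lubin--Tate formal groups'' as the source of the result. So there is no internal proof for your proposal to be measured against, and one has to judge it against the strategy of those references.

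With that caveat, your outline is sound and correctly locates the load-bearing step. The reduction to $|I|=1$ by monoidality and K\"unneth, the identification $\Hck^{1}_{T,\nu} \simeq \Bun_{T}\times\Div^{1}_{E_{\nu}}$ with $h^{\la}\times\pi = \mathrm{id}\times q_{\nu}$, the observation that $(q_{\nu})_{\natural}=(q_{\nu})_{*}=(q_{\nu})_{!}$ for a finite \'etale cover corresponds under \cite[Proposition~VI.9.2]{FS} to induction from $W_{E_{\nu}}$ to $W_{\mathbb{Q}_{p}}$, and the matching with the definition of $V_{\nu}$ as an induced representation of $\phantom{}^{L}T$, are all exactly the formal skeleton of the argument. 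You also correctly recognize that the geometric input is the Lubin--Tate description of the Weil action on the space of degree-one modifications over $\Div^{1}_{E_{\nu}}$.

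One imprecision worth fixing: after reducing to $T=\mathbb{G}_{m}$ and $\tilde{\nu}=\mathrm{id}$, you still carry an arbitrary $\phi_{T}:W_{\mathbb{Q}_{p}}\ra\Lambda^{*}$, so the statement does not collapse to the single assertion that ``the monodromy $\ldots$ is the norm character $|\cdot|$ attached to the identity character $\mathbb{Q}_{p}^{*}\ra\Lambda^{*}$.'' There is no identity character $\mathbb{Q}_{p}^{*}\ra\Lambda^{*}$, and if there were, its parameter would not be the norm character under the geometric normalization used in the paper. What one actually needs is the universal version: the $\ul{\mathbb{Q}_{p}^{*}}$-torsor over $\Div^{1}$ classifying degree-one modifications of the trivial line bundle has monodromy given by the (geometrically normalized) Artin reciprocity map $W_{\mathbb{Q}_{p}}^{\mathrm{ab}}\simeq\mathbb{Q}_{p}^{*}$, so that pulling back along an arbitrary $\chi:\mathbb{Q}_{p}^{*}\ra\Lambda^{*}$ produces precisely $\phi_{T}=\chi\circ\mathrm{rec}$. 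This is the content of the Lubin--Tate realization of local class field theory in \cite{Fa1,Zou}, and once phrased this way your argument closes.
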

A special role will be played by the eigensheaf attached to the parameter
\[ 2\hat{\rho}\circ |\cdot|^{1/2}: W_{\mathbb{Q}_{p}} \ra \phantom{}^{L}T(\Lambda) \]
where $2\hat{\rho}$ denotes the character of $\phantom{}^{L}T(\Lambda)$ defined by the sum of all positive roots with respect to the choice of Borel and $|\cdot|: W_{\mathbb{Q}_{p}} \ra \Lambda^{*}$ is the norm character. We note that the value of this sheaf on each connected component is given by the character $\delta_{B}^{1/2}$, where $\delta_{B}$ denotes the modulus character defined by $B$. This leads to the following definition. 
\begin{definition}{\label{modulussheaf}} 
We let $\Delta_{B}^{1/2}$ be the eigensheaf on $\Bun_{T}$ attached to the parameter
\[ 2\hat{\rho}\circ |\cdot|^{1/2}: W_{\mathbb{Q}_{p}} \ra \phantom{}^{L}T(\Lambda) \]
via Proposition \ref{torieigsheaf}. Similarly, we write $\Delta_{B}$ for the eigensheaf attached to $2\hat{\rho}\circ |\cdot|$, where the stalks of this sheaf are given by $\delta_{B}$.
\end{definition}
The key point is that (up to shifts) the pullback of this eigensheaf to the moduli stack $\Bun_{B}$ gives rise to a sheaf which we will denote by $\IC_{\Bun_{B}}$. We will see later that this sheaf is Verdier self-dual on $\Bun_{B}$ and therefore tensoring by it will give rise to the morally correct definition of the Eisenstein functor. We note that, given a parameter $\phi_{T}$ with associated eigensheaf $\mathcal{S}_{\phi_{T}}$, the tensor product $\mathcal{S}_{\phi_{T}} \otimes \Delta_{B}^{1/2}$ will be the eigensheaf attached to the tensor product $\phi_{T} \otimes 2\hat{\rho} \circ |\cdot|$ of $L$-parameters (the $L$-parameter whose value is equal to the product of the parameters). It therefore follows from Proposition \ref{torieigsheaf} that the following is true.
\begin{corollary}{\label{twistedeigsheaf}}
For all $(\nu_{i})_{i \in I} \in (\gamorb)^{I}$, we have an isomorphism
\[ T_{(\nu_{i})_{i \in I}}(\mathcal{S}_{\phi_{T}} \otimes \Delta_{B}^{1/2}) \simeq  \boxtimes_{i \in I} (\nu_{i} \circ \phi_{T})(\langle \hat{\rho}, \nu_{i\Gamma} \rangle) \otimes (\mathcal{S}_{\phi_{T}} \otimes \Delta_{B}^{1/2}) \]
of objects in $\Dlis(\Bun_{T},\Lambda)^{BW^{I}_{\mathbb{Q}_{p}}}$. 
\end{corollary}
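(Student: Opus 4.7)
The plan is to deduce this as a direct consequence of Proposition \ref{torieigsheaf} together with a small computation of the induced twist. The first step is to identify $\mathcal{S}_{\phi_{T}} \otimes \Delta_{B}^{1/2}$ as the eigensheaf attached to the product parameter $\phi_{T} \cdot (\hat{\rho}\circ|\cdot|): W_{\mathbb{Q}_{p}} \ra \phantom{}^{L}T(\Lambda)$. This is essentially immediate: both sheaves are built stratum-by-stratum on the connected components $\Bun_{T}^{\ol{\nu}} \simeq [\ast/\ul{T(\mathbb{Q}_{p})}]$ via local class field theory, and the character attached to the product parameter is precisely the product $\chi \cdot \delta_{B}^{1/2}$ of the characters attached to the factors. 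Hence Proposition \ref{torieigsheaf} applied to the product parameter yields
\[ T_{(\nu_{i})_{i \in I}}(\mathcal{S}_{\phi_{T}} \otimes \Delta_{B}^{1/2}) \simeq \boxtimes_{i \in I} \nu_{i} \circ (\phi_{T} \cdot \hat{\rho}\circ|\cdot|) \otimes (\mathcal{S}_{\phi_{T}} \otimes \Delta_{B}^{1/2}). \]

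The remaining task is to verify the identification
\[ \nu_{i} \circ (\phi_{T} \cdot \hat{\rho}\circ|\cdot|) \simeq (\nu_{i} \circ \phi_{T})(\langle \hat{\rho}, \nu_{i\Gamma}\rangle) \]
of continuous $W_{\mathbb{Q}_{p}}$-representations. Using the explicit realization $\nu_{i} \circ \phi_{T} \simeq \Ind_{W_{E_{\nu_{i}}}}^{W_{\mathbb{Q}_{p}}}(\tilde{\nu}_{i} \circ \phi_{T}|_{W_{E_{\nu_{i}}}})$ for a choice of representative $\tilde{\nu}_{i}$ of the Galois orbit, and the fact that $\hat{\rho}\circ|\cdot|$ is valued in the connected component $\hat{T} \subset \phantom{}^{L}T$, one computes that $\tilde{\nu}_{i} \circ (\hat{\rho}\circ|\cdot|) = |\cdot|^{\langle \hat{\rho}, \tilde{\nu}_{i}\rangle}$ as a character of $W_{\mathbb{Q}_{p}}$. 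Because $G$ is quasi-split and the Borel $B$ is $\Gamma$-stable, the action of $\Gamma$ permutes the positive roots and hence fixes $\hat{\rho}$; consequently the pairing $\langle \hat{\rho}, \tilde{\nu}_{i}\rangle$ is independent of the representative and descends to $\langle \hat{\rho}, \nu_{i\Gamma}\rangle$ on the coinvariant lattice. The projection formula for induction then pulls this character out of the induction, producing precisely the Tate twist $(\langle \hat{\rho}, \nu_{i\Gamma}\rangle)$ as claimed.

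There is no substantial obstacle here; the only point requiring care is the bookkeeping of the Tate twist against the geometric normalization of local class field theory fixed in the notation section — under which $(1)$ corresponds to the norm character $|\cdot|$ with $|\cdot|(p) = p^{-1}$ — so that the displayed twist matches the contribution of $\delta_{B}^{1/2}$. Naturality in $I$, equivariance for $W_{\mathbb{Q}_{p}}^{I}$, and compatibility with the more refined unramified form of the eigensheaf isomorphism (in terms of $q_{(\nu_{i})_{i \in I}*}$ and $\tilde{\nu}_{i} \circ \phi_{T}|_{W_{E_{\nu_{i}}}}$) all follow formally from the corresponding statements already recorded in Proposition \ref{torieigsheaf}, since tensoring by the fixed sheaf $\Delta_{B}^{1/2}$ and twisting by the fixed character $\hat{\rho}\circ|\cdot|$ commute with all the structures in play.
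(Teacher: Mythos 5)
Your proof is correct and takes essentially the same route as the paper: the paper simply notes that $\mathcal{S}_{\phi_{T}} \otimes \Delta_{B}^{1/2}$ is the eigensheaf of the product parameter $\phi_{T} \otimes (\hat{\rho}\circ|\cdot|)$ and invokes Proposition \ref{torieigsheaf}, with the remark after the corollary recording the identity $\langle \hat{\rho}, \tilde{\nu}\rangle = \langle \hat{\rho}, \nu_{\Gamma}\rangle$ that you re-derive. You have merely spelled out the Tate-twist bookkeeping (Frobenius reciprocity pulling the norm-power out of the induction) that the paper leaves implicit.
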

Now we discuss the various conditions that we will impose on our parameter $\phi_{T}$, as well as discuss their relationship with the irreducibility of principal series through various examples. 
\subsection{Genericity, Regularity, and the Irreducibility of Principal Series}
Consider the functor 
\[ R\Gamma(W_{\mathbb{Q}_{p}},-): \Dlis(\Bun_{T},\Lambda)^{BW_{\mathbb{Q}_{p}}} \ra \Dlis(\Bun_{T},\Lambda) \]
given by taking continuous cohomology with respect to $W_{\mathbb{Q}_{p}}$. As we will see later, computing the Eisenstein functor applied to the eigensheaf $\mathcal{S}_{\phi_{T}}$ will reduce to computing the values of $R\Gamma(W_{\mathbb{Q}_{p}},(h_{\nu}^{\la})^{*}(\mathcal{S}_{\phi_{T}}))$ and $R\Gamma(W_{\mathbb{Q}_{p}},(h_{\nu}^{\la})^{*}(\mathcal{S}_{\phi_{T}} \otimes \Delta_{B}^{1/2}))$ for $\nu \in \gamorb$. In this note, we will want to restrict to the simplest case where these contributions all vanish. The exact conditions we will need are as follows. 
\begin{condition/definition}{\label{normregcond}}
Given a parameter $\phi_{T}: W_{\mathbb{Q}_{p}} \ra \phantom{}^{L}T(\Lambda)$, we impose the following conditions on $\phi_{T}$ in what follows. 
\begin{enumerate}
    \item For all $\Gamma$-orbits $\alpha \in \gamorb$ of simple coroots in $\mathbb{X}_{*}(T_{\ol{\mathbb{Q}}_{p}})$, the Galois cohomology complex $R\Gamma(W_{\mathbb{Q}_{p}},\alpha \circ \phi_{T})$ is trivial. 
    \item If $\chi$ is the character attached to $\phi_{T}$ under local class field theory. We have, that $\chi$ is regular, i.e for all $w \in W_{G}$ non-trivial, we have that
    \[ \chi \not\simeq \chi^{w}  \]
\end{enumerate}
If it satisfies (1) then we say $\phi_{T}$ is generic. If it satisfies (1)-(2) we say that $\phi_{T}$ is generic regular.
\end{condition/definition}
These conditions are related to the irreducibility of the induction $i_{B}^{G}(\chi)$. To explain this, let's translate all these conditions to the character $\chi$. Let $E/\mathbb{Q}_{p}$ denote the splitting field of $G$. Then the action of $W_{\mathbb{Q}_{p}}$ on $\hat{G}$ factors through $W_{\mathbb{Q}_{p}}/W_{E}$. Given $\alpha \in \gamorb$ a $\Gamma$-orbit of coroots with reflex field $E_{\alpha}$, local class field theory \cite{Lang} gives us a map
\[ E_{\alpha}^{*} \ra T(E_{\alpha}) \]
attached to $\tilde{\alpha} \circ \phi_{T}|_{W_{E_{\alpha}}}$, for a representative $\tilde{\alpha}$ in the $\Gamma$-orbit of $\alpha$. If we post-compose with the norm map $\mathrm{Nm}_{E_{\alpha}/\mathbb{Q}_{p}}$ then we get a map
\[ E_{\alpha}^{*} \ra T(\mathbb{Q}_{p}) \]
which only depends on the Galois orbit $\alpha$. We further precompose with the norm map $\Nm_{E/E_{\alpha}}: E^{*} \ra E_{\alpha}^{*}$, giving a character:
\[ E^{*} \ra T(\mathbb{Q}_{p}) \]
We write $\chi_{\alpha}: E^{*} \ra \Lambda^{*}$ for the precomposition of $\chi$ with this map. Now, consider the complex $R\Gamma(W_{\mathbb{Q}_{p}},\alpha \circ \phi_{T})$, where $\phi_{T}$ is the parameter attached to $\chi$. It follows by Schapiro's lemma that we have an isomorphism:
\[ R\Gamma(W_{E_{\alpha}},\tilde{\alpha} \circ \phi_{T}|_{W_{E_{\alpha}}}) \simeq R\Gamma(W_{\mathbb{Q}_{p}},\alpha \circ \phi_{T}). \]
Moreover, under our assumption that $\ell$ is very decent with respect to $G$, we have that $\ell \nmid [E:E_{\alpha}]$. In particular, by consideration of the corestriction and restriction sequence, we see that the vanishing of $R\Gamma(W_{E_{\alpha}},\tilde{\alpha} \circ \phi_{T}|_{W_{E_{\alpha}}})$ is equivalent to the vanishing of $R\Gamma(W_{E},\tilde{\alpha} \circ \phi_{T}|_{W_{E}})$. By local Tate-duality and the fact that the Euler-Poincar\'e characteristic of this complex is $0$, we note that the vanishing of $R\Gamma(W_{E},\tilde{\alpha} \circ \phi_{T}|_{W_{E}})$ is the same as saying that $\tilde{\alpha} \circ \phi_{T}|_{W_{E}}$ is not isomorphic to the trivial or cyclotomic character. Using this, we can see that the above conditions on $\phi_{T}$ are equivalent to the following conditions on $\chi$.
\begin{condition/definition}{\label{def: charnormreg}}
Given a smooth character $\chi: T(\mathbb{Q}_{p}) \ra \Lambda^{*}$ consider the following conditions on $\chi$. 
\begin{enumerate}
    \item For all $\Gamma$-orbits $\alpha \in \gamorb$ of positive coroots in $\mathbb{X}_{*}(T_{\ol{\mathbb{Q}}_{p}})$, the character $\chi_{\alpha}$ is not isomorphic to the trivial representation $\mathbf{1}$ or $|\cdot|^{\pm 1}_{E}$, where $|\cdot|_{E}$ is the norm character on $E$ the splitting field of $G$.
    \item  We have that $\chi$ is regular, i.e for all $w \in W_{G}$ non-trivial, we have that 
    \[ \chi \not\simeq \chi^{w}. \]
\end{enumerate}
We say that $\chi$ is generic if (1) holds, and that it is generic regular if (1)-(2) hold. 
\end{condition/definition}
We now illustrate how this condition is related to irreducibility of $i_{B}^{G}(\chi)$ in some examples. We will assume that $\Lambda = \ol{\mathbb{Q}}_{\ell}$ in all of the examples for simplicity. 
\begin{Example}{\label{Gl2irred}}{($G = \GL_{n}$)}
We can write $\chi := \chi_{1} \otimes \chi_{2}$ for $\chi_{i}: \mathbb{Q}_{p}^{*} \ra \ol{\mathbb{Q}}_{\ell}^{*}$ smooth characters and $i = 1,2$. We see that $\chi$ being generic implies that $\chi_{1}\chi_{2}^{-1} \not\simeq \mathbf{1}$ and $\chi_{1}\chi_{2}^{-1} \not\simeq |\cdot|^{\pm 1}$, and this latter condition guarantees that the normalized parabolic induction $i_{B}^{\GL_{2}}(\chi_{1} \otimes \chi_{2})$ is irreducible. Let's also look at Condition (2) in this case. Suppose it fails, then we have an isomorphism:
\[ \chi_{1}(t_{1})\chi_{2}(t_{2}) \simeq \chi_{1}(t_{2})\chi_{2}(t_{1}) \]
Evaluating at $(t_{1},t_{2}) = (t,1)$ this would imply that $\chi_{1}\chi_{2}^{-1} \simeq \mathbf{1}$, which would contradict $\chi$ being generic. 
\end{Example}
In particular, we see that $\chi$ being generic is enough to guarantee irreducibility and regularity for $\GL_{2}$ and similarly for $\GL_{n}$.

In general, genericity does not always imply regularity. In particular, Condition \ref{normregcond} (2) seems to be related to the irreducibility of some unitary principal series representations.
\begin{Example}{\label{Sl2ex}}($G = \mathrm{SL}_{2}$)
In this case, $\chi: \mathbb{Q}_{p}^{*} \ra \ol{\mathbb{Q}}_{\ell}^{*}$ is a character of $\mathbb{Q}_{p}^{*}$. The induction $i_{B}^{G}(\chi)$ will be irreducible if and only if $\chi \not\simeq |\cdot|^{\pm 1}$ and $\chi^{2} \not\simeq \mathbf{1}$. The condition that $\chi \not\simeq |\cdot|^{\pm 1}$ is guaranteed by $\chi$ being generic but the condition $\chi^{2} \not\simeq \mathbf{1}$ is not. However, we note that $\chi$ being regular enforces the additional condition that $\chi^{2} \not\simeq \mathbf{1}$ guaranteeing irreducibility in this case. 
\end{Example}
We now come to the last condition on our parameter $\phi_{T}$.  In particular, the exact condition we will need is as follows.
\begin{definition}{\label{def: strongmureg}}
For a toral parameter $\phi_{T}: W_{\mathbb{Q}_{p}} \ra \phantom{}^{L}T(\Lambda)$ and a geometric dominant cocharacter $\mu$, we say $\phi_{T}$ is strongly $\mu$-regular if the Galois cohomology complexes 
\[ R\Gamma(W_{\mathbb{Q}_{p}},(\nu - \nu')^{\Gamma} \circ \phi_{T}) \]
are trivial for $\nu$,$\nu'$ distinct weights of the highest weight representation of $\hat{G}$ of highest weight $\mu_{k}$ for all $k = 1,\ldots,n$.
\end{definition}
To give some flavor for this condition, we prove the following Proposition.
\begin{lemma}{\label{lemma: stronguregexamples}}
Suppose that $G = \GL_{n}$ and let $\mu = (1,\ldots,0)$ denote the standard character then strong $\mu$-regularity is implied by generic. 
\end{lemma}
\begin{proof}
We note that since $\mu$ is minuscule the standard representation $V_{\mu}$ has weights given by Weyl group orbits of the highest weight. From here, it easily follows that the difference of distinct weights of $V_{\mu}$ in $\hat{T}$ are given by coroots of $G$ and the claim follows.
\end{proof}
As mentioned in the introduction, weak normalized regularity and $\mu$-regularity for a geometric dominant cocharacter which isn't fixed under any element of $W_{G}$, will imply the existence of isomorphisms $i_{\chi,w}: i_{B}^{G}(\chi) \xrightarrow{\simeq} i_{B}^{G}(\chi^{w})$ for all $w \in W_{G}$ once the theory of geometric Eisenstein series has been developed.  Similarly, we will show the following. 
\begin{proposition}{(Proposition \ref{prop: genericinterisom})}
If $\chi: T(\mathbb{Q}_{p}) \ra \ol{\mathbb{Q}}_{\ell}^{*}$ is a generic character then, for all $w \in W_{G}$, we have an isomorphism $i_{B}^{G}(\chi) \simeq i_{B}^{G}(\chi^{w})$. 
\end{proposition}
In fact, for regular characters $\chi$, the existence of such isomorphisms is equivalent to irreducibility. More generally, we show that such isomorphisms exist assuming $\chi$ is generic (Proposition \ref{prop: genericinterisom}), but this does not guarantee irreducibility of certain unitary principal series as seen when $G = \SL_{2}$. This only follows from generic regularity.

Using the Langlands classification, the proof of this proposition will essentially reduce to a calculation of reducibility points in rank $1$, where it reduces to Example \ref{Sl2ex} and the following example, which illustrates the behavior of our conditions in the non-split case. 
\begin{Example}{\label{U3ex}}($G = \mathrm{U}_{3}/E$) Let $E/\mathbb{Q}_{p}$ be a quadratic extension. We write $\ol{(-)}$ for the non-trivial automorphism of $E$ over $\mathbb{Q}_{p}$. If $e_{1},e_{2},e_{3}$ is the standard basis for the cocharacter lattice $\mathbb{X}_{*}(T_{\ol{\mathbb{Q}}_{p}})$ then $\ol{(-)}$ acts by
\[ e_{1} \longleftrightarrow -e_{3} \]
\[ e_{2} \longleftrightarrow -e_{2} \]
It follows that the simple coroot $\alpha_{1} := e_{1} - e_{2}$ is sent to the simple coroot $\alpha_{2} := e_{2} - e_{3}$ under $\ol{(-)}$. Thus, the $\Gamma$-orbits of positive coroots in $\mathbb{X}_{*}(T_{\ol{\mathbb{Q}}_{p}})$ are given by $\{\alpha_{1},\alpha_{2}\}$ with reflex field $E$ and $\alpha_{1} + \alpha_{2}$ with reflex field $\mathbb{Q}_{p}$. Now recall that the maximal torus $T(\mathbb{Q}_{p}) \subset \mathrm{U}_{3}(\mathbb{Q}_{p})$ is isomorphic to $E^{*} \times E^{1}$, via the embedding 
\[ E^{*} \times E^{1} \ra \mathrm{U}_{3}(\mathbb{Q}_{p}) \]
\[ t \mapsto \begin{pmatrix} t & 0 & 0 \\
                             0 & s & 0 \\
                             0 & 0 & \ol{t}^{-1} \end{pmatrix} \]
where $E^{1}$ denotes the set of norm $1$ elements. Then if we write the character $\chi(t,s): E^{*} \times E^{1} \ra \Lambda^{*}$ as $\chi(t,s) = \chi_{1}(t)\chi_{2}(ts\ol{t}^{-1})$ the reducibility of $i_{B}^{G}(\chi)$ depends solely on $\chi_{1}$, as in \cite[Page~173]{Rog}, where here it reduces to the analogous question for $\mathrm{SU}_{3}$, and there the reducibility points were studied in \cite[Section~7]{Keys1}. The induction $i_{B}^{G}(\chi)$ is reducible if and only if one of the following hold:
\begin{enumerate}
    \item $\chi_{1} = \eta|\cdot|_{E}^{\pm 1/2}$, where $\eta|_{\mathbb{Q}_{p}^{*}} = \eta_{E/\mathbb{Q}_{p}}$,
    \item $\chi_{1} = |\cdot|_{E}^{\pm 1}$,
    \item $\chi_{1}|_{\mathbb{Q}_{p}^{*}}$ is trivial, but $\chi$ is not.
\end{enumerate}
Here $|\cdot|_{E}$ is the norm character of $E$ which is in particular the splitting field of $G$, and $\eta_{E/\mathbb{Q}_{p}}: \mathbb{Q}_{p}^{*} \ra \Lambda^{*}$ is the unique quadratic character with kernel given by $\mathrm{Nm}_{E/\mathbb{Q}_{p}}(E^{*})$. We note that that the cocharacters of $T(\mathbb{Q}_{p})$ given by the $\Gamma$-orbits of positive roots are
\[ \{\alpha_{1},\alpha_{2}\}: E^{*} \ra T(\mathbb{Q}_{p}) = E^{*} \times E^{1} \]
\[ t \mapsto (t,t^{-1}\ol{t}) \]
and 
\[ \{\alpha_{1} + \alpha_{2}\}: E^{*} \ra E^{*} \times E^{1} \]
\[ t \mapsto (\Nm_{E/\mathbb{Q}_{p}}(t),1) \]
By precomposing $\chi$ with the first character, we see that $\chi$ being generic implies $\chi_{1} \not\simeq \mathbf{1}$ and $\chi_{1} \not\simeq |\cdot|^{\pm 1}_{E}$, which implies reducibility point $(2)$ cannot occur. By precomposing $\chi$ with the second character, we see that $\chi$ being generic implies that $\chi(\Nm_{E/\mathbb{Q}_{p}}(t)) \simeq \chi_{1}(\Nm_{E/\mathbb{Q}_{p}}(t)) \not\simeq \mathbf{1}$ and $\chi_{1}(\Nm_{E/\mathbb{Q}_{p}}(t)) \not\simeq |t|^{\pm 1}_{E}$. Note that if $\chi \simeq \eta|\cdot|_{E}^{\pm 1/2}$ then we have, for all $t \in E^{*}$, an isomorphism: 
\[ \chi(\mathrm{Nm}_{E/\mathbb{Q}_{p}}(t)) = \chi_{1}(t\ol{t}) \simeq \eta(\Nm_{E/\mathbb{Q}_{p}}(t))|\Nm_{E/\mathbb{Q}_{p}}(t)|_{E}^{\pm 1/2} \simeq |t|_{E}^{\pm 1} \]
Summarizing, we see again that $\chi$ being generic guarantees irreducibility of the two non-unitary inductions. Now, if $\chi|_{\mathbb{Q}_{p}^{*}}$ is trivial then we have that 
\[ \chi(\mathrm{Nm}_{E/\mathbb{Q}_{p}}(t)) = \chi_{1}(t\ol{t}) \simeq \mathbf{1} \]
Thus, we see $\chi$ being generic guarantees the irreducibility of all principal series. Moreover, $\chi$ being generic regular enforces the additional constraint that
\[ \chi(t\ol{t}) \not\simeq \mathbf{1} \]
which we just saw follows from $\chi$ being generic, so regularity follows from generic in this case. 
\end{Example} 
The connection between genericity and irreducibility of non-unitary principal series fits in nicely with the general Langlands philosophy. In particular, since we are inducing from a Borel, we expect that a parameter should have monodromy if it arises as a constituent of the reducible induction of a non-unitary character. We saw in the above examples that this shouldn't occur when the parameter $\phi_{T}$ attached to $\chi$ is generic. We can analyze when a parameter comes from the semi-simplification of a parameter with non-trivial monodromy and relate this to genericity.
\begin{lemma}{\label{regmonodromy}}
We let $\phi: \mathcal{L}_{\mathbb{Q}_{p}} \ra \phantom{}^{L}G(\ol{\mathbb{Q}}_{\ell})$ be an $L$-parameter (not necesarilly Frobenius semi-simple). Suppose that $\phi: \mathcal{L}_{\mathbb{Q}_{p}} \ra \phantom{}^{L}G(\ol{\mathbb{Q}}_{\ell})$ has non-trivial monodromy, and that the semi-simplification $\phi^{\mathrm{ss}}$ (See Assumption \ref{compatibility}) factors through $\phantom{}^{L}T \ra \phantom{}^{L}G$ via the natural embedding. If we write $\phi_{T}: W_{\mathbb{Q}_{p}} \ra \phantom{}^{L}T(\ol{\mathbb{Q}}_{\ell})$ for the parameter induced by $\phi^{\mathrm{ss}}$ then $\phi_{T}$ is not generic. 
\end{lemma}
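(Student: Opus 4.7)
The approach is to extract from the nontrivial $\mathrm{SL}_2$-factor of $\phi$ a nonzero nilpotent element $N \in \Lie(\hat{G})$ whose Frobenius-equivariance, combined with the assumption that $\phi^{\mathrm{ss}}$ factors through $\phantom{}^{L}T$, forces some $\Gamma$-orbit $\alpha \in \gamorb$ of coroots to satisfy $\tilde{\alpha} \circ \phi_{T}|_{W_{E_{\alpha}}} \simeq (1)$. By the Schapiro identification and local Tate duality recorded in the discussion following Condition/Definition~\ref{normregcond}, this immediately contradicts Condition~\ref{normregcond}~(2).

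First, I would pass from the $\mathrm{SL}_{2}$-factor of $\phi$ to the nilpotent $N := d\phi|_{\mathrm{SL}_{2}}\!\left(\begin{smallmatrix}0 & 1 \\ 0 & 0\end{smallmatrix}\right) \in \Lie(\hat{G})$, which is nonzero by hypothesis on the monodromy. The standard compatibility built into the correspondence between $\mathrm{SL}_2$-parameters of $\mathcal{L}_{\mathbb{Q}_{p}}$ and Weil--Deligne representations, together with the normalization used for $\phi^{\mathrm{ss}}$ in Assumption~\ref{compatibility}, produces the equivariance
\[ \Ad(\phi^{\mathrm{ss}}(w))(N) = |w| \cdot N \qquad \text{for all } w \in W_{\mathbb{Q}_{p}}. \]
Verifying this identity against the paper's conventions is the only subtle bookkeeping required; the rest of the argument is linear-algebraic. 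Next, I would use that $\phi^{\mathrm{ss}}$ factors through $\phantom{}^{L}T = \hat{T} \rtimes W_{\mathbb{Q}_{p}}$ to write $\phi^{\mathrm{ss}}(w) = (\tilde{\phi}_{T}(w), w)$, and expand $N = \sum_{\alpha} c_{\alpha} X_{\alpha}$ in the root space decomposition of $\Lie(\hat{G})$ using a $W_{\mathbb{Q}_{p}}$-equivariant pinning $\{X_{\alpha}\}$ (so that $\sigma_{w}(X_{\alpha}) = X_{w\alpha}$, where the roots $\alpha$ are viewed as elements of $\cochar$); the Cartan component vanishes because $\Lie(\hat{T})$ contains no nonzero nilpotent elements. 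Unwinding the conjugation rule in $\hat{G} \rtimes W_{\mathbb{Q}_{p}}$ and comparing components then yields the explicit scalar identity
\[ c_{w\alpha} = |w|^{-1} (w\alpha)(\tilde{\phi}_{T}(w)) \cdot c_{\alpha} \]
for every root $\alpha$ of $\hat{G}$ and every $w \in W_{\mathbb{Q}_{p}}$.

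Finally, since $N \neq 0$ I would pick some $\alpha_{0}$ with $c_{\alpha_{0}} \neq 0$ and restrict the above identity to $w$ in its stabilizer $W_{E_{\alpha_{0}}}$, obtaining $\alpha_{0}(\tilde{\phi}_{T}(w)) = |w|$; in other words, $\tilde{\alpha}_{0} \circ \phi_{T}|_{W_{E_{\alpha_{0}}}}$ coincides with the cyclotomic character of $W_{E_{\alpha_{0}}}$. Via the Schapiro identification $R\Gamma(W_{\mathbb{Q}_{p}}, \alpha \circ \phi_{T}) \simeq R\Gamma(W_{E_{\alpha_{0}}}, \tilde{\alpha}_{0} \circ \phi_{T}|_{W_{E_{\alpha_{0}}}})$ for the orbit $\alpha \in \gamorb$ of $\alpha_{0}$, the right-hand side has nontrivial $H^{2}$ (dual to $H^{0}$ of the trivial representation by local Tate duality), so $\phi_{T}$ fails Condition~\ref{normregcond}~(2). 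The main obstacle is the first step: correctly unwinding the normalization of $\phi^{\mathrm{ss}}$ used in Assumption~\ref{compatibility} to confirm the $|w|$-equivariance of $N$; once this is in hand, the root-space calculation and the appeal to Schapiro plus Tate duality are immediate.
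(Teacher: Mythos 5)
Your proof is correct and fills in the details that the paper's sketch leaves implicit. The paper phrases the argument in terms of a lift $\tilde{\phi}$ of $\phi_T$ to $\phantom{}^{L}B(\ol{\mathbb{Q}}_\ell)$ that is not the trivial lift — this is just the Weil--Deligne datum $(\phi^{\mathrm{ss}}|_{W_{\mathbb{Q}_p}}, N)$ repackaged — and then asserts without much calculation that some $\Gamma$-orbit of coroots detects a non-vanishing Galois cohomology class. You instead work directly with $N$, its Frobenius-equivariance $\Ad(\phi^{\mathrm{ss}}(w))N = |w|N$, and the root-space decomposition, which makes the contradiction with genericity completely explicit: the stabilizer $W_{E_{\alpha_0}}$ of a root with $c_{\alpha_0} \neq 0$ forces $\tilde{\alpha}_0 \circ \phi_T|_{W_{E_{\alpha_0}}} \simeq (1)$, and local Tate duality then gives $H^2 \neq 0$. (The paper's sketch quotes $H^0$ rather than $H^2$; either gives the contradiction, since the obstruction to genericity in Condition~\ref{normregcond}~(2), as unpacked after Condition/Definition~\ref{def: charnormreg}, is precisely that $\alpha \circ \phi_T$ be neither trivial nor cyclotomic.) Your worry about the normalization of $\phi^{\mathrm{ss}}$ is moot: even if the convention gives $\Ad(\phi^{\mathrm{ss}}(w))N = |w|^{-1}N$, you simply land on $(-1)$ for $\alpha_0$, hence on $(1)$ for the $\Gamma$-orbit of $-\alpha_0$, and genericity is formulated over all $\Gamma$-orbits of coroots, positive and negative.

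One inaccurate side remark: you justify dropping the Cartan component of $N$ by saying that $\Lie(\hat{T})$ contains no nonzero nilpotent elements. That statement is true, but it does not by itself imply that a nilpotent element of $\Lie(\hat{G})$ has zero $\Lie(\hat{T})$-component — the root-space decomposition is not the Jordan decomposition (already in $\mathfrak{gl}_2$, $\left(\begin{smallmatrix}1 & 1\\ -1 & -1\end{smallmatrix}\right)$ is nilpotent with nonzero diagonal part). The slip is harmless here for two reasons: first, what you actually need is only that \emph{some} $c_\alpha \neq 0$, which is immediate since otherwise $N \in \Lie(\hat{T})$ would be both semisimple and nilpotent, hence zero; second, in your situation the equivariance $\Ad(\phi^{\mathrm{ss}}(w))N = |w|N$ does force the Cartan component to vanish, because taking $w \in W_E$ (with $E$ the splitting field, so $\sigma_w = 1$) and $|w| \neq 1$ gives $H = |w|H$. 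Either repair makes the step airtight.
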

\begin{proof}
If $\phi$ has non-trivial monodromy and the semi-simplification factors through $\phantom{}^{L}T$, there exists a lift 
\[ \begin{tikzcd}
&  \phantom{}^{L}B(\ol{\mathbb{Q}}_{\ell}) \arrow[d] & \\
W_{\mathbb{Q}_{p}} \arrow[ur,"\tilde{\phi}",dashed] \arrow[r,"\phi_{T}"] & \phantom{}^{L}T(\ol{\mathbb{Q}}_{\ell}) &  \\ 
\end{tikzcd}\]
of $\phi_{T}$, which is not given by the inclusion $\phantom{}^{L}T(\ol{\mathbb{Q}}_{\ell}) \ra \phantom{}^{L}B(\ol{\mathbb{Q}}_{\ell})$. Such an extension implies that there exists a non-trivial class in $R\Gamma(W_{\mathbb{Q}_{p}},\alpha \circ \phi_{T})$, which would in turn imply $\phi_{T}$ is not generic. 
\end{proof}
We finish this section by describing the Hecke eigensheaf property on symmetric powers of the curve for the sheaf $\mathcal{S}_{\phi_{T}}$ attached to a parameter $\phi_{T}$.
\subsection{The Symmetrized Hecke Eigensheaf Property}{\label{sec: geomconsofweakgen}}
We let $\ol{\nu}$ be an element of $\Lambda_{G,B}^{pos} \setminus \{0\}$. We can write this as a linear combination $\sum_{i \in \mathcal{J}} n_{i}\alpha_{i}$ for positive integers $n_{i}$, where the $\alpha_{i}$ correspond to the Galois orbits of simple absolute roots as in Definition \ref{def: gammaorbsimp}. Given such an $\alpha_{i}$, we can consider the reflex field $E_{i}$ of the associated Galois orbit, and define the following partially symmetrized curve:
\[ \Div^{(\ol{\nu})} := \prod_{i \in \mathcal{J}} \Div_{E_{i}}^{(n_{i})}. \]
Points of this curve correspond to tuples of Cartier divisors $D_{i}$ over $E_{i}$ of degree $n_{i}$ for all $i \in \mathcal{J}$. We can consider the map
\[ h_{(\ol{\nu})}^{\ra}: \Bun_{T} \times \Div^{(\ol{\nu})} \ra \Bun_{T} \]
given by sending $(\mathcal{F}_{T},(D_{i})_{i \in \mathcal{J}})$ to $\mathcal{F}_{T}(\sum_{i \in \mathcal{J}} -\alpha_{i} \cdot D_{i})$, where we are identifying $\alpha_{i}$ with its corresponding $\Gamma$-orbit.

This partially symmetrized mirror curve $\Div^{(\ol{\nu})}$ behaves a bit strangely if $G$ is not split. To illustrate this, consider the following example. 
\begin{Example}
Let $G = \U_{3}$ be a unitary group in $3$ variables attached to a quadratic extension $E/\mathbb{Q}_{p}$ and write $\alpha_{1}$ and $\alpha_{2}$ for the two absolute positive simple roots. We recall, as in Example \ref{U3ex}, that the Galois group exchanges $\alpha_{1}$ and $\alpha_{2}$. Therefore, they both map to a unique element $\alpha \in \coinv$ which spans the lattice $\Lambda_{G,B}^{pos}$ by Definition. Consider the element $\ol{\nu} := 2\alpha \in \coinv$. We note that we have an equality $\Div^{(\ol{\nu})} = \Div^{(2)}_{E}$ in this case. The pre-image of $2\alpha$ under the natural map $(-)_{\Gamma}: \gamorb \ra \coinv$ consists of two elements: the $\Gamma$-orbit $\{2\alpha_{1},2\alpha_{2}\}$ with reflex field $E$ and the $\Gamma$-orbit of $\{\alpha_{1} + \alpha_{2}\}$ with reflex field $\mathbb{Q}_{p}$. We saw in the previous section that the space of modifications defined by the $\Gamma$-orbit $\{2\alpha_{1},2\alpha_{2}\}$ is given by $\Bun_{T} \times \Div^{1}_{E}$, correspondingly we have a natural map
\[ \triangle_{\{2\alpha_{1},2\alpha_{2}\}}: \Div^{1}_{E} \xrightarrow{\triangle} \Div^{2}_{E} \ra \Div^{(2)}_{E} \]
given by the diagonal embedding composed with the quotient map. It is easy to check we have an equality $h_{\{2\alpha_{1},2\alpha_{2}\}}^{\ra}(-) = h_{\ol{\nu}}^{\ra} \circ (\mathrm{id} \times \triangle_{\{2\alpha_{1},2\alpha_{2}\}})$. Perhaps more interestingly, attached to the $\Gamma$-orbit $\{\alpha_{1} + \alpha_{2}\}$, we have a twisted diagonal map
\[ \triangle_{\{\alpha_{1} + \alpha_{2}\}}: \Div^{1}_{\mathbb{Q}_{p}} \ra \Div^{(2)}_{E} \]
given by sending a Cartier divisor $D$ to its pre-image under the natural finite-\'etale covering $X_{S,E} \ra X_{S}$ of Fargues-Fontaine curves induced by the extension $E/\mathbb{Q}_{p}$. By \cite[Proposition~1.5]{LiHue}, this map defines a closed embedding whose image lies in the complement of the image of $\triangle_{\{2\alpha_{1},2\alpha_{2}\}}$ in $\Div^{(2)}_{E}$, and we similarly see that we have a relationship $h_{(\ol{\nu})}^{\ra} \circ (\mathrm{id} \times \triangle_{\{\alpha_{1} + \alpha_{2}\}})(-) = h_{\{\alpha_{1} + \alpha_{2}\}}^{\ra}(-)$. 
\end{Example}
The previous example illustrates that we can can understand $\Bun_{T} \times \Div^{(\ol{\nu})}$ as realizing the set of all modifications specified by Galois orbits $\nu \in \gamorb$ such that $\nu_{\Gamma} = \ol{\nu}$. This is indeed a general phenomenon. In particular, using that $\Gamma$ permutes the simple absolute coroots of $G$, given any such $\nu$ with reflex field $E_{\nu}$, we can define a twisted diagonal embedding 
\[ \Delta_{\nu}: \Div^{1}_{E_{\nu}} \ra \Div^{(\ol{\nu})} \]
such that we have a relationship $h_{\nu}^{\ra}(-) := h_{\ol{\nu}}^{\ra} \circ (\mathrm{id} \times \triangle_{\nu})(-)$. 

If $\ol{\nu} = \alpha_{i}$ for $i \in \mathcal{J}$ the map $\Delta_{\nu}$ is an isomorphism for the unique $\Gamma$-orbit of simple coroots corresponding to $\alpha_{i}$. Therefore, the pullback of $\mathcal{S}_{\phi_{T}}$ along $h_{(\ol{\nu})}^{\la}$ is isomorphic to $\mathcal{S}_{\phi_{T}} \boxtimes \tilde{\alpha}_{i} \circ \phi_{T}$ for a choice of representative $\tilde{\alpha}_{i}$ of the $\Gamma$-orbit corresponding to $\alpha_{i}$. In general, recall that given a local system $\mathbb{L}$ and $n$ a positive integer, we can consider the symmetric powers
\[ \mathbb{L}^{(n)} := \pi_{*}(\boxtimes_{i = 1}^{n} \mathbb{L})^{S_{n}} \]
where $\pi$ denotes the push-forward along the $S_{n}$-torsor:
\[ \pi : (\Div^1)^{n} \ra \Div^{(n)} \]
Using this, we can define a local system on $\Div^{(\ol{\nu})}$ given by
\[ E_{\phi_{T}}^{(\ol{\nu})} := \boxtimes_{i \in \mathcal{J}} E_{\phi_{i}}^{(n_{i})}   \]
where $\phi_{i}$ is the local system on $\Div^{1}_{E_{i}}$ corresponding to the character $\tilde{\alpha}_{i} \circ \phi_{T}|_{W_{E_{i}}}$ of $W_{E_{i}}$ for $E_{i}$ the reflex field of the Galois orbit corresponding to $\alpha_{i}$. With this in hand, we can describe the pullback of $\mathcal{S}_{\phi_{T}}$ along $h_{(\ol{\nu})}^{\la}$ as the sheaf $E_{\phi_{T}}^{(\ol{\nu})} \boxtimes \mathcal{S}_{\phi_{T}}$ by using that Hecke operators are monoidal, and the natural compatibilities of the eigensheaf. 

We will now review the next ingredient in our calculations of geometric Eisenstein series, the Geometric Satake correspondence. 
\section{Geometric Satake and the Affine Grassmannian}{\label{geomsatakesection}}
\subsection{The Geometric Satake Correspondence}
We will now recall some facts about the geometric Satake correspondence for the $B_{dR}^{+}$ Grassmannian, as proven in \cite[Chapter~VI]{FS}. For any finite set $I$, we consider the local Hecke stack 
\[ \pi_{G}: \Hckloc_{G}^{I} \ra \Div^{I} \]
For a point $S \ra \Div^{I}$, we can consider the completion of the structure sheaf $\mathcal{O}_{X_{S}}$ at the union of the $I$ Cartier divisors in $X_{S}$ defined by $S$. This defines a ring which we denote by $B^{+}$, and inverting $D$, we get a ring which we denote by $B$. The mapping sending $S \in \Div^{I}$ to $G(B^{+})$ and $G(B)$ defines \'etale sheaves on $\Perf$, which we denote by $L^{+}_{\Div^{I}}G$ and $L_{\Div^{I}}G$, respectively. We note that, for $I = \{\ast\}$ and $S = \Spa(F,\mathcal{O}_{F}) \ra \Div^{I}$ a geometric point with associated untilt $(C,\mathcal{O}_{C})$, we have $B^{+} = B_{dR}^{+}(C,\mathcal{O}_{C})$ and $B = B_{dR}(C,\mathcal{O}_{C})$, the usual de-Rham period rings attached to the untilt. For simplicity, we will often just drop the subscript $\Div^{I}$ and just write $L^{+}G$ and $LG$ for these \'etale sheaves. By \cite[Proposition~19.1.2]{SW}, the Hecke stack can be described as the quotient:
\[ [L^{+}G\backslash LG/L^{+}G] \ra \Div^{I} \]
In other words, for $S \in \Perf$ mapping to $\Div^{I}$, $\Hckloc_{G}^{I}$ will parameterize a pair of $G$-bundles over the formal completion of the tuple of divisors $D_{S,i}$ defined by the map $S \ra \Div^{I}$ together with a trivialization away from the $D_{S,i}$. It follows that this has a map to the global Hecke stack considered in \S \ref{GCFT}, by restricting to formal completions. Later on in the paper, we will use the analogous notations introduced here for the local Hecke stack to denote their pullback to the global Hecke stack $\Hck_{G}$ along this map.

To study this, we can uniformize this by the quotient 
\[ \Gr_{G}^{I} := LG/L^{+}G \ra \Div^{I} \]
which is the Fargues-Fontaine analogue of the Beilinson-Drinfeld Grassmannian. Using Beauville-Laszlo \cite[Lemma~5.29]{SW}, this can be interpreted as parameterizing modifications $\mathcal{F}_{G} \dashrightarrow \mathcal{F}_{G}^{0}$ over $X_{S}$ away from the tuple of Cartier divisors defined by the projection to $\Div^{I}$. It follows by the results of \cite[Lecture~XX]{SW} that the Beilinson-Drinfeld Grassmannian is a well-behaved geometric object; it can be written as a closed union of subsheaves that are proper and representable in spatial diamonds over $\Div^{I}$, given by bounding the meromorphy of the modifications. We can consider the category
\[ \D(\Hckloc_{G}^{I})^{bd}  \]
of objects with support in one of the aforementioned quasi-compact subsets. This carries a monoidal structure coming from the diagram:
\[ \Hckloc_{G}^{I} \times_{\Div^{I}} \Hckloc_{G}^{I} \xleftarrow{(p_{1},p_{2})} L^{+}G\backslash LG \times^{L^{+}G} LG/L^{+}G \xrightarrow{m} \Hckloc_{G}^{I} \]
Here the middle space can be interpreted as parameterizing a pair of $G$-bundles $\mathcal{E}_{1},\mathcal{E}_{2}$ together with a pair of modifications $\beta_{1}: \mathcal{E}_{1} \dashrightarrow \mathcal{E}_{0}$ and $\beta_{2}: \mathcal{E}_{0} \dashrightarrow \mathcal{E}_{2}$ to/from the trivial bundle with the same locus of meromorphy. The maps $p_{i}$ are the natural projections remembering only the data $(\mathcal{E}_{i},\beta_{i})$ for $i = 1,2$, and $m$ is defined by sending $(\mathcal{E}_{i},\beta_{i})_{i = 1,2}$ to $\beta_{2} \circ \beta_{1}: \mathcal{E}_{1} \dashrightarrow \mathcal{E}_{2}$. Given $A,B \in \D(\Hckloc_{G}^{I})^{bd}$, we define 
\[ A \star B := Rm_{*}(p_{1}^{*}(A) \otimes p_{2}^{*}(B)) \in \D(\Hckloc_{G}^{I})^{bd}\]
the convolution of $A$ and $B$ \cite[Section~VI.8]{FS}. Since the map $m$ is a fibration in $\Gr_{G}^{I}$ it is proper over any quasi-compact subset, so by proper base-change it gives a well-defined associative monoidal structure. One can further refine our category of sheaves as follows. In particular, we note that the locus of $\Hckloc_{G}^{I}$ where the meromorphy is equal to the Galois orbit of $\mu_{i} \in \mathbb{X}_{*}(T_{\ol{\mathbb{Q}}_{p}})^{+}$ at the $i$th Cartier divisor for $i \in I$ is uniformized by a cohomologically smooth diamond of relative dimension $\sum_{i \in I} \langle 2\hat{\rho}, \mu_{i} \rangle$ over $\Div^{I}$ by \cite[Proposition~VI.2.4]{FS}. One can check that this gives rise to a well-defined perverse $t$-structure on $\D(\Hckloc_{G}^{I})^{bd}$ over $\Div^{I}$ given by insisting that $!$-restriction (resp. $*$-restriction) to these strata sit in cohomological degrees $\geq$ (resp. $\leq$) $-\sum_{i \in I} \langle 2\hat{\rho}, \mu_{i} \rangle$, and that convolution preserves perversity \cite[Proposition~VI.8.1]{FS}. With this in hand, we arrive at the key definition.
\begin{definition}{\cite[Definition~I.6.2]{FS}}{\label{defsatakecat}}
We define the Satake category 
\[ \Sat_{G}^{I} \subset \D(\Hckloc_{G}^{I})^{bd} \]
as the category of all $A \in \D(\Hckloc_{G}^{I})^{bd}$ which are perverse, flat (i.e for all $\Lambda$-modules $M$ $A \otimes M$ is also perverse), and ULA over $\Div^{I}$, as defined in \cite[Chapter~V.7]{FS}. 
\end{definition}
The ULA and flatness property in the above definition has the key consequence that the pullback of $A \in \Sat_{G}^{I}$ to $\Gr_{G}^{I}$ composed with the push-forward to $\Div^{I}$ has cohomology sheaves valued in $\Lambda$-valued local systems on $\Div^{I}$. In particular, using an analogue of Drinfeld's lemma \cite[Proposition~VI.9.2]{FS}, this gives rise to a fiber functor
\[ F_{G}^{I}: \Sat_{G}^{I} \ra \Rep_{W_{\mathbb{Q}_{p}}^{I}}(\Lambda) \]
\[ A \mapsto \bigoplus_{i} \mathcal{H}^{i}(R\pi_{G*}(A)) \]
where $\Rep_{W_{\mathbb{Q}_{p}}^{I}}(\Lambda)$ denotes the category of continuous representations of $W_{\mathbb{Q}_{p}}^{I}$ on finite projective $\Lambda$-modules, and $R\pi_{G*}$ is the functor given by pulling back to $\Gr_{G}^{I}$ and taking the push-forward to $\Div^{I}$, as in \cite[Definition/Proposition~VI.7.10]{FS}. Now, by using the factorization structure on these Grassmannians, one can also construct an analogue of the fusion product  \cite[Section~VI.9]{FS}. Let us recall its construction. We consider a partition $I = I_{1} \sqcup \ldots \sqcup I_{k}$ of this index set. We consider the open subspace
\[ \Div^{I;I_{1},\ldots,I_{k}} \subset \Div^{I} \]
parameterizing a tuple of Cartier divisors $(D_{i})_{i \in I}$ such that $D_{i}$ and $D_{i'}$ are disjoint whenever $i,i' \in I = I_{1} \sqcup \ldots \sqcup I_{k}$ lie in different $I_{j}$s, and let $\Hckloc_{G}^{I;I_{1},\ldots,I_{k}}$ be the base-change of $\Hckloc_{G}^{I}$ to this open subspace. We can define the subcategory $\Sat_{G}^{I;I_{1},\ldots,I_{k}} \subset \D(\Hckloc_{G}^{I;I_{1},\ldots,I_{k}})^{\mathrm{bd}}$ in an analogous manner to Definition \ref{defsatakecat}. We have a natural restriction map 
\[ \Sat_{G}^{I} \ra \Sat_{G}^{I;I_{1},\ldots,I_{k}} \]
which, by \cite[Proposition~VI.9.3]{FS}, defines a fully faithful embedding. There is also an identification 
\[ \Hckloc^{I}_{G} \times_{\Div^{I}} \Div^{I;I_{1},\ldots,I_{k}} \simeq \prod_{j = 1}^{k} \Hckloc_{G}^{I_{j}} \times_{\prod_{j} \Div^{I_{j}}} \Div^{I;I_{1},\ldots,I_{k}} \]
giving a natural map 
\[ \Sat_{G}^{I_{1}} \times \ldots \times \Sat_{G}^{I_{k}} \ra \Sat_{G}^{I;I_{1},\ldots,I_{k}} \]
via taking exterior products. Then, by \cite[Definition/Proposition~VI.9.4]{FS}, this lies in the full subcategory $\Sat_{G}^{I}$, and the resulting map 
\[ \Sat_{G}^{I_{1}} \times \ldots \times \Sat_{G}^{I_{k}} \ra \Sat_{G}^{I} \]
is called the fusion product. Now consider the composite 
\[ \Sat_{G}^{I} \times \Sat_{G}^{I} \ra \Sat_{G}^{I \sqcup I} \ra \Sat_{G}^{I} \]
where the first map is the fusion product, and the last map is given by restricting along the diagonal embedding $\Hckloc^{I} \ra \Hckloc^{I \sqcup I}$. Then this is naturally isomorphic to the convolution product. By this comparison between fusion and convolution, one can deduce that the convolution product is in fact symmetric monoidal, and that the functor $F_{G}^{I}$ takes this monoidal structure to the usual tensor product on $\Rep_{W_{\mathbb{Q}_{p}}^{I}}(\Lambda)$. Now, using Tannakian duality, one deduces the following.
\begin{theorem}{\cite[Theorem~I.6.3]{FS}}{\label{geomsatake}}
For a finite index set $I$, the category $\Sat_{G}^{I}$ is naturally in $I$ identified with $\Rep_{\Lambda}(\phantom{}^{L}G^{I})$ the category of representations of  $\phantom{}^{L}G^{I}$ on finite projective $\Lambda$-modules.
\end{theorem}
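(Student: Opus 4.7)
The plan is to produce a symmetric monoidal fiber functor from $\Sat_G^I$ to $\Lambda$-modules (equipped with a continuous $W_{\mathbb{Q}_p}^I$-action) and then reconstruct $\phantom{}^L G^I$ by Tannakian duality, using the fusion product to get symmetry and the torus/constant term comparison to pin down the reconstructed group.

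First, I would establish that convolution $\star$ preserves the Satake category and endows $\Sat_G^I$ with a monoidal structure, using that the convolution diagram $m : L^+G\backslash LG \times^{L^+G} LG/L^+G \to \Hck_G^I$ is a fibration in bounded pieces of $\Gr_G^I$ and that perversity/ULA are preserved along cohomologically smooth morphisms over $\Div^I$; the key input here is the dimension formula and cohomological smoothness of the Schubert strata of $\Gr_G^I$. Next, I would promote the fusion construction described in the excerpt to a genuine monoidal structure: for a partition $I = I_1 \sqcup \cdots \sqcup I_k$, the exterior product factors through $\Sat_G^I$ via the fully faithful restriction $\Sat_G^I \hookrightarrow \Sat_G^{I;I_1,\dots,I_k}$, and by letting divisors collide one identifies fusion with convolution. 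Running the Beilinson--Drinfeld/Mirkovi\'c--Vilonen argument (an Eckmann--Hilton type commutation of two compatible monoidal structures) then produces a commutativity constraint on $\star$, making $\Sat_G^I$ symmetric monoidal.

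The next step is to verify that the functor $F_G^I : \Sat_G^I \to \mathrm{Rep}_{W_{\mathbb{Q}_p}^I}(\Lambda)$ defined by total cohomology of $R\pi_{G*}$ is a symmetric monoidal, exact, faithful fiber functor. Exactness and faithfulness come from the ULA and flatness hypotheses in the definition of $\Sat_G^I$, combined with a semismallness statement for $\pi_G$ that lets cohomology degenerate into a grading by weights. That $F_G^I$ takes values in local systems (and hence in $W_{\mathbb{Q}_p}^I$-representations) uses Drinfeld's lemma in the form \cite[Proposition~VI.9.2]{FS}. Symmetric monoidality follows because convolution computes fusion and fusion visibly corresponds to the tensor product of cohomologies via K\"unneth over the partial diagonals. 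Forgetting the Galois action and applying Tannakian reconstruction then yields a pro-algebraic group $\mathcal{G}^I$ over $\Lambda$ with a continuous $W_{\mathbb{Q}_p}^I$-action such that $\Sat_G^I \simeq \mathrm{Rep}_\Lambda(\mathcal{G}^I \rtimes W_{\mathbb{Q}_p}^I)$, compatibly with restriction along diagonals and with the partition/fusion structures, which gives the naturality in $I$.

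The main obstacle — and the heart of the argument — is identifying $\mathcal{G}^I$ with $\hat G^I$ and the $W_{\mathbb{Q}_p}^I$-action with the pinned Galois action defining $\phantom{}^L G^I$. I would reduce to $|I|=1$ by fusion/K\"unneth, then handle the torus case first: for $T$ the Schubert cells are discrete and indexed by $\mathbb{X}_*(T_{\overline{\mathbb{Q}}_p})$ with Galois acting in the natural way, so $\mathcal{G}^{\{*\}}_T = \hat T$ with its standard $\Gamma$-action, identifying $\Sat_T^{\{*\}}$ with $\mathrm{Rep}_\Lambda(\phantom{}^L T)$ and matching weight spaces. To pass from $T$ to $G$, I would introduce the constant term functor $\mathrm{CT}_B : \Sat_G^{\{*\}} \to \Sat_T^{\{*\}}$ (a hyperbolic-localization / Braden-type functor along the attractor diagram for a cocharacter of $A$), verify it is a symmetric monoidal, Galois-equivariant fiber functor by showing hyperbolic localization preserves the ULA/perverse structure in this setup, and deduce a closed immersion $\hat T \hookrightarrow \mathcal{G}^{\{*\}}$ whose normalizer-image reads off the root data. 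The final identification $\mathcal{G}^{\{*\}} \simeq \hat G$ with the correct pinned action is then forced by computing weights and root spaces on minuscule and quasi-minuscule Schubert varieties, together with the standard uniqueness of reductive groups with given root datum; the assumption that $\ell$ is very good for $G$ enters precisely here to ensure the reconstructed group is reductive with the expected Chevalley form over $\Lambda$ even in the $\ell$-modular setting.
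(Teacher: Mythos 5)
This theorem is not proven in the paper: it is cited verbatim from \cite[Theorem~I.6.3]{FS} and used as a black box, so there is no ``paper's own proof'' to compare against. Your sketch does track the broad strategy of the Fargues–Scholze proof (convolution/fusion compatibility giving a symmetric monoidal structure via a Beilinson--Drinfeld factorization argument, the pushforward fiber functor landing in local systems via the Drinfeld lemma, Tannakian reconstruction over $\Lambda$, reduction to the torus via hyperbolic localization / constant terms, and pinning down the group by computations on minuscule and quasi-minuscule Schubert cells). That much is a reasonable high-level account of \cite[Chapter~VI]{FS}.

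One substantive inaccuracy: you write that the ``$\ell$ very good'' hypothesis ``enters precisely here to ensure the reconstructed group is reductive with the expected Chevalley form.'' That is not where it enters. Fargues--Scholze prove geometric Satake with $\Lambda$ any $\mathbb{Z}_\ell$-algebra, with no very-good hypothesis; the Tannakian group is identified with a split Chevalley form of $\hat{G}$ over $\mathbb{Z}_\ell$ unconditionally, and the $W_{\mathbb{Q}_p}$-action (with its cyclotomic twist) is also constructed without any constraint on $\ell$. In the present paper, the very-good hypothesis is imposed later and for entirely different reasons: to make the theory of tilting modules extend cleanly to the full $L$-group $\phantom{}^{L}G$ (so that $\mathcal{T}_{\mu^\Gamma}$ is irreducible and $\mathrm{Tilt}_\Lambda(\phantom{}^{L}G)$ is closed under tensor products), and to ensure the excursion algebra determines semisimple parameters as in \cite[Propositions~I.9.1, I.9.3]{FS}. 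Attributing it to the group-identification step of geometric Satake misplaces the role of this hypothesis in the paper's logic.
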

\begin{remark}
One needs to be a bit careful here. In particular, $\phantom{}^{L}G$ as defined here differs from the usual definition of the $L$-group; namely, the usual action of $W_{\mathbb{Q}_{p}}$ on $\hat{G}$ is twisted by a cyclotomic character (See \cite[Section~VI.11]{FS} for more details).
\end{remark}
One of the key points that also plays an important role in the proof of this theorem is that this construction respects the natural map 
\[ \mathrm{res}_{T}^{I,G}:  \Rep_{\Lambda}(\phantom{}^{L}G^{I}) \ra \Rep_{\Lambda}(\phantom{}^{L}T^{I})  \]
given by restricting a representation of $\phantom{}^{L}G^{I}$ to $\phantom{}^{L}T^{I}$ along the natural embedding: 
\[ \phantom{}^{L}T^{I} \ra \phantom{}^{L}G^{I} \]
To explain this, we need to explain how this functor is realized in the geometry of Beilinson-Drinfeld Grassmannians. First note that, as in \S \ref{GCFT}, we have an identification
\[ \Gr_{T}^{I} =  \bigcup_{(\nu_{i})_{i \in I} \in (\gamorb)^{I}} \Gr_{T,(\nu_{i})_{i \in I}}^{I} \simeq \bigcup_{(\nu_{i})_{i \in I} \in (\gamorb)^{I}} \Div^{I}_{E_{(\nu_{i})_{i \in I}}} \]
into closed subspaces, where $\Gr_{T,(\nu_{i})_{i \in I}}$ parametrizes modifications of $T$-bundles with meromorphy equal to the Galois orbit defined by $\nu_{i}$ at a Cartier divisor $D_{i}$ for all $i \in I$, and $\Div^{I} := \prod_{i \in I} \Div^{1}_{E_{\nu_{i}}}$, where $E_{\nu_{i}}$ denotes the reflex field of $\nu_{i}$. In particular, we see that Theorem \ref{geomsatake} is trivial in this case, as $F_{T}^{I}$ will induce an equivalence between $\Sat_{T}^{I}$ and $(\gamorb)^{I}$-graded objects in $\Rep_{\Lambda}(W_{\mathbb{Q}_{p}}^{I})$ under this isomorphism. We consider the diagram 
\[ \begin{tikzcd}
&  \Gr_{B}^{I} \arrow[dl,"p"] \arrow[dr,"q"] & \\
\Gr_{T}^{I}& & \Gr_{G}^{I} \\ 
\end{tikzcd}\]
and define the functor:
\[ p_{!}q^{*}: \D(\Gr_{G}^{I})^{bd} \ra \D(\Gr_{T}^{I})^{bd} \]
As we will discuss further in the next section, the fibers of the morphism $p$ over the connected components $\Gr_{T,(\nu_{i})_{i \in I}}^{I}$ give rise to a locally closed stratification of $\Gr_{G}^{I}$ which embed via the morphism $q$ (cf.  \cite[Example~VI.3.4]{FS}). These are the so called semi-infinite Schubert cells. If one considers $\mathbb{G}_{m}$ acting on $\Gr_{G}^{I}$ via a suitably chosen cocharacter $\mu$ composed with the $L^{+}G$ action on $\Gr_{G}^{I}$ then one can identify these semi-infinite cells with a union $\mathbb{G}_{m}$-orbits (the attractor of the $\mathbb{G}_{m}$-action), and the fixed points will be precisely the closed subspaces components: 
\[ \Gr_{T}^{I} =  \bigcup_{(\nu_{i})_{i \in I} \in (\gamorb)^{I}} \Gr_{T,(\nu_{i})_{i \in I}}^{I} \]
This allows one to apply a diamond analogue \cite[Theorem~IV.6.5]{FS} of Braden's hyperbolic localization theorem \cite{Brad}. In particular, since sheaves in $\Sat_{G}^{I}$ pullback to $L^{+}G$-equivariant sheaves on $\Gr_{G}^{I}$, they will be $\mathbb{G}_{m}$-equivariant. From this, one can deduce that $p_{!}q^{*}$ is a hyperbolic localization with respect to this $\mathbb{G}_{m}$-action and will therefore preserve perverse, flat, and ULA objects over $\Div^{I}$. We therefore get an induced functor
\[ \mathrm{CT}: \Sat_{G}^{I} \ra \Sat_{T}^{I} \]
called the constant term functor, as in \cite[Proposition~VI.7.13]{FS}. We now consider the following function
\[ \mathrm{deg}: |\Gr_{T}^{I}| \ra \mathbb{Z} \]
which has value $\langle 2\hat{\rho}, |(\nu_{i})_{i \in I}| \rangle$ on the connected component indexed by $(\nu_{i})_{i \in I}$, where $|(\nu_{i})_{i \in I}| := \sum_{i \in I} \ol{\nu}_{i\Gamma} \in \Lambda_{G,B}$. Now, by applying excision with respect to the stratatification by semi-infinite cells one can show that, for all $A \in \Sat_{G}$, one has an isomorphism $\bigoplus_{i} \mathcal{H}^{i}(\pi_{G*}(A)) \simeq \mathcal{H}^{0}(\pi_{T*}(\mathrm{CT}(A)[\mathrm{deg}]))$ (See the proof of \cite[Proposition~VI.7.10]{FS}). This in particular gives us the following Proposition.  
\begin{proposition}{\label{constantterm}}
For all finite index sets $I$, there exists a commutative diagram
\[\begin{tikzcd}
& \Sat_{G}^{I} \arrow[d,"F_{G}^{I}"]  \arrow[r,"\mathrm{CT}{[\mathrm{deg}]}"] & \Sat_{T}^{I}  \arrow[d,"F_{T}^{I}"] & \\
& \Rep_{\Lambda}(\phantom{}^{L}G^{I})  \arrow[r,"\mathrm{res}^{I,G}_{T}"] & \Rep_{\Lambda}(\phantom{}^{L}T^{I})  &
\end{tikzcd} \]
where here $F_{G}^{I}$ (resp. $F_{T}^{I}$) is the equivalence given by Theorem \ref{geomsatake} for $G$ (resp. $T$). 
\end{proposition}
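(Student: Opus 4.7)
The underlying isomorphism $\bigoplus_i \mathcal{H}^i(R\pi_{G*}(A)) \simeq \mathcal{H}^0(R\pi_{T*}(\mathrm{CT}(A)[\mathrm{deg}]))$ has already been established in the discussion preceding the statement, following the proof of \cite[Proposition~VI.7.10]{FS}. My task is therefore to promote this to an identification of $\phantom{}^{L}T^{I}$-representations natural in $A$, i.e., to check that the $(\gamorb)^I$-grading produced on the right-hand side by the connected-component decomposition of $\Gr_T^I$ agrees with the $\phantom{}^{L}T^{I}$-weight decomposition of $F_G^I(A)|_{\phantom{}^{L}T^{I}}$ produced on the left-hand side via Theorem~\ref{geomsatake}.

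The first step is to observe that $F_T^I$ tautologically lands in $(\gamorb)^I$-graded $W_{\mathbb{Q}_p}^I$-representations, since $\Sat_T^I$ splits over the connected components $\Gr_{T,(\nu_i)_{i \in I}}^I$. Under the bijection between $\gamorb$ and dominant weights of $\phantom{}^{L}T$, this grading is exactly the $\phantom{}^{L}T^{I}$-weight decomposition. Thus both sides of the desired isomorphism carry a canonical grading by $(\gamorb)^I$, and the content of the proposition is that these gradings match.

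The second step is the geometric core: hyperbolic localization. Pick a dominant regular cocharacter of $T$ inducing a $\mathbb{G}_m$-action on $\Gr_G^I$ whose attractor is $\Gr_B^I$ (via $q$) and whose fixed locus is $\Gr_T^I$ (via the contraction $p$). Any $A \in \Sat_G^I$ pulls back from the Hecke stack and is therefore $L^+G$-equivariant, hence $\mathbb{G}_m$-equivariant, so the diamond analogue of Braden's theorem \cite[Theorem~IV.6.5]{FS} identifies $p_!q^*A$ with the hyperbolic localization of $A$. The ULA and perversity hypotheses on $A$, combined with the $\langle 2\hat{\rho}, |(\nu_i)_{i \in I}|\rangle$ dimension estimate for the semi-infinite cell at $(\nu_i)_{i \in I}$, guarantee that the excision spectral sequence associated to the stratification of $\Gr_G^I$ by these cells degenerates in a grading-preserving fashion, and that the shift $[\mathrm{deg}]$ collapses all contributions into $\mathcal{H}^0$.

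The main obstacle is monoidal compatibility: since the Tannakian reconstruction of $\phantom{}^{L}G^{I}$ and $\phantom{}^{L}T^{I}$ from $(F_G^I, \Sat_G^I)$ and $(F_T^I, \Sat_T^I)$ uses the fusion product, it does not suffice to produce a natural isomorphism of graded underlying objects. One must verify that over the disjoint-divisor loci $\Div^{I;I_1,\ldots,I_k}$ the functor $\mathrm{CT}[\mathrm{deg}]$ intertwines the fusion products on both sides. This is a factorization statement for $\Gr_B^I$ that I would reduce to the factorization structure of the Beilinson-Drinfeld Grassmannians, together with the functoriality of hyperbolic localization under products of $\mathbb{G}_m$-actions and the additivity of $\mathrm{deg}$ under $|(\nu_i)_{i \in I_j}|$. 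Once this monoidal upgrade is in place, the graded isomorphism from the second step translates, via Tannakian duality, into the desired equality of functors $F_T^I \circ \mathrm{CT}[\mathrm{deg}] \simeq \mathrm{res}_T^{I,G} \circ F_G^I$, giving the commuting diagram of the proposition.
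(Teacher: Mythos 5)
Your argument traces the same path as the paper: hyperbolic localization identifies $p_!q^*$ with the constant term functor; perversity, ULA, and the dimension estimates for the semi-infinite cells give the isomorphism $\bigoplus_i \mathcal{H}^i(R\pi_{G*}(A)) \simeq \mathcal{H}^0(R\pi_{T*}(\mathrm{CT}(A)[\mathrm{deg}]))$; and the $(\gamorb)^I$-grading on the right is matched with the $\phantom{}^{L}T^{I}$-weight decomposition on the left. The paper derives the proposition directly from this, deferring to \cite[Proposition~VI.7.10]{FS} without a separate argument.

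One remark on emphasis. You single out "monoidal compatibility" — that $\mathrm{CT}[\mathrm{deg}]$ must intertwine the fusion products — as the main obstacle to the Tannakian identification. That compatibility is real, but it is the content of the \emph{next} Proposition \ref{fusionconstant} (citing \cite[Proposition~VI.9.6]{FS}), invoked downstream for the functoriality of the filtered eigensheaf construction in $I$, not for Proposition \ref{constantterm} itself. For the present statement, since $\phantom{}^{L}T^{I}$ is an extension of $W_{\mathbb{Q}_p}^{I}$ by a torus, $\Rep_{\Lambda}(\phantom{}^{L}T^{I})$ is already equivalent to $(\gamorb)^{I}$-graded $W_{\mathbb{Q}_p}^{I}$-representations, so a natural isomorphism of graded underlying objects yields the commuting square; the only issue is whether the geometric grading from $\Gr_T^I$ agrees with the $\phantom{}^{L}T^{I}$-weight decomposition of $F_G^{I}(A)|_{\phantom{}^{L}T^{I}}$. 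This agreement is essentially built into the Fargues--Scholze Tannakian construction, which exhibits the maximal torus $\hat{T}\subset\hat{G}$ precisely through the constant term functor. Your sketch of the factorization argument for the fusion compatibility is pointed in the right direction, but as written it is not a complete proof of that step; fortunately it can be extracted from this proof and deferred, exactly as the paper does.
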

Moreover, it follows by \cite[Proposition~VI.9.6]{FS}, that the fusion product respects the constant term functors. 
\begin{proposition}{\label{fusionconstant}}
For all finite index sets $I$, with a partition $I_{1} \sqcup \ldots \sqcup I_{k}$, we have a commutative diagram 
\[\begin{tikzcd}
& \arrow[d,"\CT^{I_{1}}{[\deg]} \times \ldots \times \CT^{I_{k}}{[\deg]}"] \Sat_{G}^{I_{1}} \times \ldots \times \Sat_{G}^{I_{k}}  \arrow[rr] & & \Sat_{G}^{I} \arrow[d,"\CT^{I}{[\deg]}"] \\
& \Sat_{T}^{I_{1}} \times \ldots \times \Sat_{T}^{I_{k}}  \arrow[rr] & & \Sat_{T}^{I}  &
\end{tikzcd} \]
which commutes functorially in $I$, where the top (resp. bottom) vertical arrow is given by the fusion product for $G$ (resp. $T$). 
\end{proposition}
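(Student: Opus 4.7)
The plan is to reduce the claimed commutativity to a statement over the disjoint locus $\Div^{I;I_{1},\ldots,I_{k}}$, where the fusion product becomes the exterior product and everything factorizes. Since the fully faithful restriction $\Sat_{G}^{I} \ra \Sat_{G}^{I;I_{1},\ldots,I_{k}}$ (and analogously for $T$) allows us to check identities in the larger Satake category, it suffices to verify the diagram commutes after restricting to this open subspace, where by definition the fusion product of objects $A_{j} \in \Sat_{G}^{I_{j}}$ is given by the exterior product $\boxtimes_{j} A_{j}$ under the factorization isomorphism
\[ \Hck_{G}^{I} \times_{\Div^{I}} \Div^{I;I_{1},\ldots,I_{k}} \simeq \prod_{j = 1}^{k} \Hck_{G}^{I_{j}} \times_{\prod_{j} \Div^{I_{j}}} \Div^{I;I_{1},\ldots,I_{k}}. \]

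First, I would verify that the Beilinson-Drinfeld Grassmannians satisfy an analogous factorization structure: over the disjoint locus we have isomorphisms $\Gr_{H}^{I} \simeq \prod_{j} \Gr_{H}^{I_{j}}$ (relative to $\Div^{I;I_{1},\ldots,I_{k}}$) for each of $H = G, B, T$, and these factorizations are compatible with the maps $p$ and $q$ in the semi-infinite diagram $\Gr_{T}^{I} \xleftarrow{p} \Gr_{B}^{I} \xrightarrow{q} \Gr_{G}^{I}$. This reduces to the standard factorization of the affine Grassmannian restricted to the disjoint locus, which works verbatim in the Fargues-Fontaine setting (cf.\ the construction of the fusion product in \cite[Section~VI.9]{FS}). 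Next, since pullback and lower-shriek push-forward commute with exterior products when the relevant morphisms factorize as products, the functor $p_{!} q^{*}$ defining $\mathrm{CT}$ intertwines the exterior product of objects in $\Sat_{G}^{I_{j}}$ with the exterior product of their constant terms in $\Sat_{T}^{I_{j}}$. Concretely, for $A_{j} \in \Sat_{G}^{I_{j}}$, one has a canonical isomorphism
\[ p_{!} q^{*}(\boxtimes_{j} A_{j}) \simeq \boxtimes_{j}\bigl( p_{!} q^{*}(A_{j}) \bigr) \]
after restriction to $\Div^{I;I_{1},\ldots,I_{k}}$.

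It remains to check the shifts match up. The degree function $\mathrm{deg}: |\Gr_{T}^{I}| \ra \mathbb{Z}$, with value $\langle 2\hat{\rho}, |(\nu_{i})_{i \in I}| \rangle = \sum_{i \in I} \langle 2\hat{\rho}, \ol{\nu}_{i\Gamma} \rangle$ on the component indexed by $(\nu_{i})_{i \in I}$, manifestly decomposes as a sum over the partition: on the component of $\prod_{j} \Gr_{T}^{I_{j}}$ indexed by $((\nu_{i})_{i \in I_{j}})_{j}$, it equals $\sum_{j} \mathrm{deg}^{I_{j}}$. Hence applying $[\mathrm{deg}]$ to the exterior product of the constant terms is the same as applying $[\mathrm{deg}^{I_{j}}]$ to each factor and then taking the exterior product. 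Combining this with the previous paragraph yields the commutativity of the diagram over the disjoint locus, and thus, by full faithfulness, in $\Sat_{T}^{I}$.

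The main subtlety, though mostly bookkeeping, is to ensure that all of the above identifications are natural in $I$ and compatible with the Tannakian structure produced by Theorem \ref{geomsatake}; in particular, one must check that the isomorphisms produced respect the symmetric monoidal structure so that, under the equivalences $\Sat_{G}^{I} \simeq \Rep_{\Lambda}(\phantom{}^{L}G^{I})$ and $\Sat_{T}^{I} \simeq \Rep_{\Lambda}(\phantom{}^{L}T^{I})$, the vertical arrows really do correspond to the restriction functor $\mathrm{res}_{T}^{I,G}$ uniformly for all partitions. This naturality is exactly what is established in \cite[Proposition~VI.9.6]{FS} for the commutation of $\CT$ with fusion, which is the content we are appealing to here; I would import that compatibility and combine it with Proposition \ref{constantterm} applied on each $I_{j}$ to conclude.
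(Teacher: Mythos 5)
Your proposal is correct and lands on the same source as the paper: the paper's proof is simply the citation to \cite[Proposition~VI.9.6]{FS}, which you also appeal to at the end. What you've done is unpack the geometric content of that proposition — reduction to the disjoint locus $\Div^{I;I_1,\ldots,I_k}$ via full faithfulness, factorization of the Beilinson--Drinfeld Grassmannians compatibly with the maps $p$ and $q$, commutation of $p_!q^*$ with exterior products, and additivity of the degree shift over the partition — and that outline is a faithful sketch of the argument underlying the cited result. There is, however, a mild redundancy in the logic: having reduced the commutativity over the disjoint locus to factorization and Künneth, you then invoke \cite[Proposition~VI.9.6]{FS} anyway for the "naturality" step; but VI.9.6 already gives the whole diagram, so once you cite it you do not also need the preceding hand-built reduction. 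Either route is valid, and your unpacking is a useful sanity check, but the paper just goes straight to the citation.
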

We now turn our attention to the semi-infinite cells. 
\subsection{The Cohomology of Semi-Infinite Cells}
Theorem \ref{geomsatake}, Proposition \ref{constantterm}, and Proposition \ref{fusionconstant} have implications for the cohomology of spaces related to moduli spaces of $B$-structures. We will record this now. We let $E/\mathbb{Q}_{p}$ be the splitting field of $G$. Then we have an identification $\Gr_{G,E}^{I} \simeq \Gr_{G_{E}}^{I}$ over the base change $\Div^{I}_{E}$ of $\Div^{I}$ to $E$. Sheaves on this space will be equivalent algebraic representations of $I$ copies of the dual group $\hat{G}$. First, we recall that we have the following natural stratification of $\Gr_{G,E}^{I}$.
\begin{definition}
For $(\lambda_{i})_{i \in I} \in (\mathbb{X}_{*}(T_{\ol{\mathbb{Q}}_{p}})^{+})^{I}$, we let $\Gr_{G,(\lambda_{i})_{i \in I},E}^{I}$ (resp. $\Gr^{I}_{G,\leq (\lambda_{i})_{i \in I},E}$) be the locally closed (resp. closed) subfunctor of $\Gr_{G}$ parameterizing modifications
\[ \mathcal{F}_{G}^{0} \ra \mathcal{F}_{G} \]
between the trivial $G$-bundle $\mathcal{F}_{G}^{0}$ and a $G$-bundle $\mathcal{F}_{G}$ of meromorphy equal to (resp. less than) $\sum_{D_{i} = D_{j}} \lambda_{i}$ at a Cartier divisor $D_{j}$, for some fixed $j \in I$.
\end{definition}
As mentioned in the previous section, $\Gr^{I}_{G,(\lambda_{i})_{i \in I},E}$ is representable in nice diamonds and is cohomologically smooth of dimension equal to $\sum_{i \in I} \langle 2\hat{\rho}, \lambda_{i} \rangle$ over $\Div^{I}_{E}$, by \cite[Proposition~VI.2.4]{FS}. Similarly, by \cite[Proposition~19.2.3]{SW}, $\Gr_{G,\leq (\lambda_{i})_{i \in I},E}$ is representable in nice diamonds and proper over $\Div^{I}_{E}$. Fix $\boxtimes_{i \in I} V_{i} = V \in \Rep_{\Lambda}(\hat{G}^{I})$ with fixed central character, and suppose the highest weight of $V_{i}$ is given by $\lambda_{i}$ for $\lambda_{i} \in \mathbb{X}_{*}(T_{\ol{\mathbb{Q}}_{p}})^{+}$. Attached to this, we get a $\Lambda$-valued perverse sheaf $\mathcal{S}_{V}$ supported on $\Gr^{I}_{G,\leq (\lambda_{i})_{i \in I},E}$ by Theorem \ref{geomsatake} and \cite[Proposition~VI.7.5]{FS}. We now fix a geometric point $x \ra \Div^{I}_{E}$. In what follows, for a space $?$ over $\Div^{I}_{E}$ we write $_{x}?$ for the base-change to $x$. Since a local system on $\Div^{I}_{E}$ will be determined by the $W_{E}^{I}$-representation given by its pullback to this geometric point, looking at this pullback will be sufficient for most calculations. We now consider another stratification of $\Gr_{G,E}^{I}$.
\begin{definition}{\label{defsemiinfinite}}
Consider the natural map:
\[ q: \Gr_{B,E}^{I} \ra \Gr_{T,E}^{I} \]
For $(\nu_{i})_{i \in I} \in (\cochar)^{I}$, we set $\mathrm{S}^{I}_{G,(\nu_{i})_{i \in I},E}$ to be the fiber of $\Gr_{B,E}^{I}$ over the closed subspace $\Gr_{T,(\nu_{i})_{i \in I},E}^{I}$ in $\Gr_{T,E}^{I}$ parameterizing modifications of type $(\nu_{i})_{i \in I} \in (\cochar)^{I}$. We note, by \cite[Proposition~VI.3.1]{FS}, that the natural map 
\[ p: \Gr_{B,E}^{I}  \ra \Gr_{G,E}^{I} \]
is a bijection on geometric points and it defines a locally closed embedding on the $\mathrm{S}_{G,(\nu_{i})_{i \in I},E}^{I}$. Therefore, the spaces $\mathrm{S}_{G,(\nu_{i})_{i \in I},E}^{I}$ for varying $(\nu_{i})_{i \in I} \in (\cochar)^{I}$, form locally-closed subspaces of $\Gr_{G,E}^{I}$ and every geometric point lies in a strata (though not necessarily a unique one!). 
\end{definition}
\begin{remark}
In particular, given a modification $\beta: \mathcal{F}_{G}^{0} \dashrightarrow \mathcal{F}_{G}$ of $G$-bundles, by the Tannakian formalism this is equivalent to specifying a set of meromorphic maps
\[ \mathcal{V}^{\hat{\lambda}}_{\mathcal{F}_{G}^{0}} \dashrightarrow \mathcal{V}^{\hat{\lambda}}_{\mathcal{F}_{G}} \]
for all dominant characters $\hat{\lambda} \in \hat{\Lambda}_{G}^{+}$ satisfying the Pl\"ucker relations (cf. Definition \ref{defdrinfeld})\footnote{Here we note that $\mathcal{V}^{\hat{\lambda}}$ is defined by Galois descent from $\overline{\mathbb{Q}}_{p}$, recalling that $\hat{\Lambda}_{G}^{+} = \mathbb{X}^{*}(T_{\ol{\mathbb{Q}}_{p}})^{\Gamma,+}$.}. The trivial $G$-bundle $\mathcal{F}_{G}^{0}$ admits a natural split $B$-structure whose associated $T$-bundle is the trivial $T$-bundle $\mathcal{F}_{T}^{0}$, and this defines a set of maps
\[ \mathcal{L}^{\hat{\lambda}}_{\mathcal{F}_{T}^{0}} \hookrightarrow \mathcal{V}^{\hat{\lambda}}_{\mathcal{F}^{0}_{G}} \]
where $\mathcal{L}^{\hat{\lambda}} := (\mathcal{V}^{\hat{\lambda}})^{U}$ and $U$ is the unipotent radical of $B$. For a set of divisors $(D_{i})_{i \in I} \in \Div_{E}^{I}$, we can also consider the meromorphic map
\[ \mathcal{L}^{\hat{\lambda}}_{\mathcal{F}_{T}^{0}}(\sum_{i \in I} -\langle \hat{\lambda},\nu_{i\Gamma} \rangle \cdot D_{i}) \dashrightarrow \mathcal{L}^{\hat{\lambda}}_{\mathcal{F}_{T}^{0}} \ra \mathcal{V}^{\hat{\lambda}}_{\mathcal{F}_{G}^{0}} \dashrightarrow \mathcal{V}^{\hat{\lambda}}_{\mathcal{F}_{G}} \]
defined by modifying the $T$-bundles by $\nu_{i}$ at $D_{i}$ for all $i \in I$. We claim that $\beta$ defines a point in $\mathrm{S}_{G,(\nu_{i})_{i \in I},E}^{I}$ if and only if this map does not have a zero or pole for all $\hat{\lambda}$. This is easy to see. In particular, if this map does not have a zero or a pole then the maps
\[\mathcal{L}^{\hat{\lambda}}_{\mathcal{F}_{T}^{0}}(-\sum_{i \in I} \langle \hat{\lambda},\nu_{i\Gamma} \rangle \cdot D_{i}) \ra  \mathcal{V}^{\hat{\lambda}}_{\mathcal{F}_{G}} \]
define a $B$-structure $\mathcal{F}_{B}$ on  $\mathcal{F}_{G}$ whose underlying $T$-bundle is given by $\mathcal{F}_{T}^{0}(-\sum_{i \in I} \langle \hat{\lambda},\nu_{i\Gamma} \rangle \cdot D_{i})$. Moreover, the construction induces a map of $B$-bundles
\[ \mathcal{F}_{B}^{0} \dashrightarrow \mathcal{F}_{B} \]
which when applying $\times^{B} G$ gives the modification $\beta$ and when applying $\times^{B} T$ gives rise to a modification defining a point in the closed subspace $\Gr_{T,(\nu_{i})_{i \in I},E}^{I}$ over the point attached to $(D_{i})_{i \in I}$ in $\Div^{I}_{E}$. In other words, $\mathcal{F}_{B}^{0} \dashrightarrow \mathcal{F}_{B}$ defines an element of the locally closed stratum $\mathrm{S}^{I}_{G,(\nu_{i})_{i \in I},E}$. 
\end{remark}
For $V \in \Rep_{\Lambda}(\hat{G}^{I})$, we again consider the sheaf $\mathcal{S}_{V}$ on $\Gr^{I}_{G,\leq (\lambda_{i})_{i \in I},E}$, and pullback to a fixed geometric point $x \ra \Div^{I}_{E}$ defined by $\Spa(C)$ for $C$ an algebraically closed perfectoid field. If we write $p_{(\nu_{i})_{i \in I}}: \phantom{}_{x}\mathrm{S}_{G,(\nu_{i})_{i \in I},E}^{I} \ra \phantom{}_{x}\Gr^{I}_{T,(\nu_{i})_{i \in I},E} \simeq \Spa(C)$ for the induced map on the clsoed subspaces indexed by $(\nu_{i})_{i \in I} \in (\cochar)^{I}$, we note that we have an isomorphism 
\[ p_{!}q^{*}(\mathcal{S}_{V}) \simeq \bigoplus_{(\nu_{i})_{i \in I} \in (\cochar)^{I}} p_{(\nu_{i})_{i \in I}!}(\mathcal{S}_{V}|_{\phantom{}_{x}\mathrm{S}^{I}_{G,(\nu_{i})_{i \in I},E}}) = \bigoplus_{(\nu_{i})_{i \in I} \in (\cochar)^{I}} \mathbb{H}^{*}_{c}(\phantom{}_{x}\mathrm{S}^{I}_{G,(\nu_{i})_{i \in I}},\mathcal{S}_{V}|_{\phantom{}_{x}\mathrm{S}_{G,(\nu_{i})_{i \in I}}}) \]
(cf. \cite[Example~VI.3.6]{FS}). However, this is simply the constant term functor in the previous section. In particular, by Proposition \ref{constantterm}, we deduce the following.
\begin{corollary}{\label{highweightcohom}} 
For $V = \boxtimes_{i \in I} V_{i} \in \Rep(\hat{G}^{I})$, a geometric point $x \ra \Div^{I}_{E}$, and all tuples $(\nu_{i})_{i \in I} \in (\cochar)^{I}$, we have an isomorphism
\[ \mathbb{H}^{\langle 2\hat{\rho}, |(\nu_{i})_{i \in I}| \rangle}_{c}(_{x}\mathrm{S}^{I}_{G,(\nu_{i})_{i \in I},E},\mathcal{S}_{V}|_{_{x}\mathrm{S}^{I}_{G,(\nu_{i})_{i \in I},E}}) \simeq \boxtimes_{i \in I} V_{i}(\nu_{i})(-\langle \hat{\rho}, \nu_{i}) \rangle) \]
of $W_{E}^{I}$-modules.
\end{corollary}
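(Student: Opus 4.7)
The plan is to deduce this directly from the two ingredients of geometric Satake recalled immediately before: the identification of the constant term functor with restriction along $\phantom{}^{L}T^{I} \hookrightarrow \phantom{}^{L}G^{I}$ (Proposition \ref{constantterm}), together with its compatibility with fusion (Proposition \ref{fusionconstant}).

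First I would reduce to the case $|I| = 1$. The semi-infinite cells factorize: after pulling back to the open locus $\Div^{I;\{i\}_{i\in I}} \subset \Div^{I}_{E}$ where all the Cartier divisors are disjoint, one has $\mathrm{S}^{I}_{G,(\nu_{i})_{i\in I},E} \simeq \prod_{i\in I} \mathrm{S}^{\{i\}}_{G,\nu_{i},E}$, and $\mathcal{S}_{V} = \mathcal{S}_{\boxtimes_{i} V_{i}}$ restricts to the exterior tensor product of the $\mathcal{S}_{V_{i}}$ under the fusion identification. Since compactly supported cohomology commutes with exterior tensor products over the geometric point $x$, Proposition \ref{fusionconstant} reduces the claim to verifying that over a single leg one has
\[ \mathbb{H}^{-\langle 2\hat{\rho},\nu\rangle}_{c}\bigl(_{x}\mathrm{S}_{G,\nu,E},\mathcal{S}_{V}|_{_{x}\mathrm{S}_{G,\nu,E}}\bigr) \simeq V(\nu)\bigl(-\langle \hat{\rho},\nu\rangle\bigr). \]

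For the single-leg claim, I would unwind Proposition \ref{constantterm}. By Definition \ref{defsemiinfinite} and the bijection-on-points property of $q:\Gr_{B,E}\to \Gr_{G,E}$, the functor $p_{!}q^{*}$ applied to $\mathcal{S}_{V}$ decomposes as a direct sum over connected components $\Gr_{T,\nu,E}$ of the complexes $p_{\nu!}(\mathcal{S}_{V}|_{\mathrm{S}_{G,\nu,E}})$. After base-change to $x$, each summand is the compactly supported cohomology $\mathbb{H}^{*}_{c}(_{x}\mathrm{S}_{G,\nu,E},\mathcal{S}_{V}|_{_{x}\mathrm{S}_{G,\nu,E}})$. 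Proposition \ref{constantterm} identifies $\mathrm{CT}[\deg]$, composed with $F_{T}^{\{*\}}$, with restriction of $V$ to $\phantom{}^{L}T$; under the decomposition of $\Sat_{T}$ into components indexed by $\gamorb$ (and, after base-change to the splitting field $E$, by $\cochar$), this restriction is precisely the weight decomposition $\bigoplus_{\nu} V(\nu)$ as $W_{E}$-modules. Matching the components indexed by $\nu$ on both sides, and accounting for the shift $[\deg]$ by $\langle 2\hat{\rho},\nu\rangle$ imposed on $\Gr_{T,\nu,E}$, identifies the underlying vector space with the weight space $V(\nu)$ in cohomological degree $-\langle 2\hat{\rho},\nu\rangle$.

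It remains to pin down the Tate twist $(-\langle\hat{\rho},\nu\rangle)$. This is not really part of the abstract Tannakian statement but is forced by the half-density normalization of the geometric Satake equivalence of \cite{FS}: the constant term $\mathrm{CT}$ is defined with the shift and twist by the relative dimension of the cohomologically smooth map $p$ restricted to $\mathrm{S}_{G,\nu,E}$, which over a geometric point is $\langle 2\hat{\rho},\nu\rangle$, and it is the half of this that appears as the Tate twist once one passes from the renormalized fiber functor (valued in $\hat{G}$-representations with trivial twist) to the raw compactly supported cohomology. I would verify this by tracing through the definition of $F_{G}^{I}$ in \cite[Definition/Proposition~VI.7.10]{FS} and comparing with the analogous raw calculation for $\Sat_{T}$, where $\Gr_{T,\nu,E}\simeq \Div^{1}_{E_{\nu}}$ and the fiber functor manifestly introduces the twist $(-\langle \hat{\rho},\nu\rangle)$. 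The main subtlety, and the only place where more than bookkeeping is required, is exactly this matching of Tate twists between the $\hat{G}$-side and the $\hat{T}$-side of Proposition \ref{constantterm}; everything else is formal from fusion and hyperbolic localization.
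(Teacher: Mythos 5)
Your proposal is correct, and the core mechanism — decompose $p_{!}q^{*}(\mathcal{S}_{V})$ into summands indexed by $(\nu_{i})_{i \in I}$, identify each summand with compactly supported cohomology of the corresponding semi-infinite cell, and then invoke Proposition \ref{constantterm} to match these against the weight spaces of $V|_{\phantom{}^{L}T^{I}}$ — is exactly what the paper does. Where you diverge is the initial reduction to $|I| = 1$ via Proposition \ref{fusionconstant}. This step is valid but unnecessary: Proposition \ref{constantterm} is already stated for an arbitrary finite index set $I$, and the paper applies it directly, which is shorter. Your reduction buys nothing here (Proposition \ref{fusionconstant} is needed for Corollary \ref{fusionconstant2}, where the compatibilities with fusion must be explicitly tracked, but not for this corollary). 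One small imprecision in your writeup: as literally phrased, ``pulling back to the open locus $\Div^{I;\{i\}_{i \in I}}$'' only handles geometric points $x$ with pairwise-disjoint Cartier divisors, whereas the corollary is stated for arbitrary $x$; you implicitly fix this by invoking Proposition \ref{fusionconstant}, which lives at the level of $\Sat_{G}^{I}$ over all of $\Div^{I}_{E}$ (including the diagonals), so the reduction can be made precise, but the role of the open locus should then be framed as motivation rather than as the actual reduction. Your account of the Tate twist $(-\langle\hat{\rho},\nu_{i}\rangle)$ as an artifact of the normalization of geometric Satake in \cite{FS} is accurate and matches the remark the paper places after the corollary.
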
 
\begin{remark}
The Tate twists appearing here are due to the difference between the standard definition of $\phantom{}^{L}G$ and the one used in the geometric Satake equivalence, as in the remark proceeding Theorem \ref{geomsatake}. 
\end{remark}
This will be the key proposition required for the proof of the filtered Hecke eigensheaf property. More specifically, to show the compatibilities of the filtered eigensheaf property, we need to show that this isomorphism is functorial in $I$. In particular, consider a map of finite index sets $\pi: I \ra J$. For $j \in J$, we set $I_{j} := \pi^{-1}(j)$ and consider the natural map $\Delta_{IJ}: \Div^{J}_{E} \ra \Div^{I}_{E}$, which diagonally embeds the $j$th copy of $\Div^{1}_{E}$ in $\Div^{J}_{E}$ into $\Div^{I_{j}}_{E}$. Then, by the relationship between fusion product and tensor product under Theorem \ref{geomsatake}, we have a identification
\[ \Delta_{IJ}^{*}(\mathcal{S}_{V}) \simeq \mathcal{S}_{\Delta^{*}_{IJ}(V)} \]
of sheaves on $\Gr_{G,E}^{J}$, where $\Delta^{*}_{IJ}(V)$ is given by restriction along the corresponding map $\hat{G}^{J} \ra \hat{G}^{I}$. Now, by Proposition \ref{fusionconstant}, we have the following. 
\begin{corollary}{\label{fusionconstant2}}
For all finite index sets $I,J$ with a map $f: I \ra J$, a tuple $(\nu_{j})_{j \in J} \in (\cochar)^{J}$, a representation $V \in \Rep(\hat{G}^{I})$, and a geometric point $x \ra \Div_{E}^{J}$, the identification $\Delta_{IJ}^{*}(\mathcal{S}_{V}) \simeq \mathcal{S}_{\Delta^{*}_{IJ}(V)}$ induces an isomorphism 
\[ \mathbb{H}^{*}_{c}(_{x}\mathrm{S}^{J}_{G,(\nu_{j})_{j \in J},E},\mathcal{S}_{\Delta^{*}_{IJ}(V)}|_{\phantom{}_{x}\mathrm{S}^{J}_{G,(\nu_{j})_{j \in J},E}}) \simeq  \bigoplus_{\substack{(\nu_{i})_{i \in I} \in \cochar \\ \sum_{i \in I_{j}} \nu_{i} = \nu_{j}}} \mathbb{H}^{*}_{c}(_{x}\mathrm{S}_{G,(\nu_{i})_{i \in I},E},\mathcal{S}_{V}|_{_{x}\mathrm{S}^{I}_{G,(\nu_{i})_{i \in I},E}}) \]
of $W_{E}^{J}$-modules, where the action on the RHS is via the natural map $\Delta_{IJ}: W_{E}^{J} \ra W_{E}^{I}$. This is compatible with the identification 
\[ \Delta_{IJ}^{*}(V((\nu_{j})_{j \in J})) \simeq \bigoplus_{\substack{(\nu_{i})_{i \in I} \in (\cochar)^{I} \\ \sum_{i \in I_{j}} \nu_{i} = \nu_{j}}} V((\nu_{i})_{i \in I}) \]
under the isomorphisms of Corollary \ref{highweightcohom}. 
\end{corollary}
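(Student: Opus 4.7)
The plan is to deduce the isomorphism by applying Corollary \ref{highweightcohom} separately to the two sides and reconciling the two identifications via standard weight-space branching together with Proposition \ref{fusionconstant}. First, I apply Corollary \ref{highweightcohom} with the finite index set $J$, tuple $(\nu_j)_{j \in J}$, and representation $\Delta^*_{IJ}(V) \in \Rep_\Lambda(\hat{G}^J)$. This identifies the left-hand side, after the shift $-\langle 2\hat\rho, |(\nu_j)_{j \in J}|\rangle$ and the appropriate Tate twist, with $\Delta^*_{IJ}(V)((\nu_j)_{j \in J})$ as a $W_E^J$-module.

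Next, I unwind the diagonal embedding $\hat{T}^J \hookrightarrow \hat{T}^I$ induced by $\pi : I \to J$: on cocharacter lattices it sends $(\nu_i)_{i \in I}$ to $\big(\sum_{i \in I_j}\nu_i\big)_{j \in J}$. Standard weight theory then immediately gives
\[
\Delta^*_{IJ}(V)((\nu_j)_{j \in J}) \;\simeq\; \bigoplus_{\substack{(\nu_i)_{i \in I} \in (\cochar)^I \\ \sum_{i \in I_j}\nu_i = \nu_j}} V((\nu_i)_{i \in I}),
\]
which is the promised representation-theoretic identification. Each summand $V((\nu_i)_{i \in I})$ is in turn identified, via another application of Corollary \ref{highweightcohom} but now with the index set $I$, with the cohomology of the $I$-semi-infinite cell $\phantom{}_x\mathrm{S}^I_{G,(\nu_i)_{i \in I},E}$. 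The shifts and Tate twists line up on the two sides summand-by-summand, because $|(\nu_j)_{j \in J}| = \sum_{i \in I}\nu_{i\Gamma}$ and $\langle \hat\rho,\nu_j\rangle = \sum_{i \in I_j}\langle\hat\rho,\nu_i\rangle$ whenever $\sum_{i \in I_j}\nu_i = \nu_j$. This yields the desired isomorphism at the level of $\Lambda$-modules.

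The real content of the corollary, beyond this weight-theoretic bookkeeping, is that the two identifications are compatible with the isomorphism $\Delta^*_{IJ}(\mathcal{S}_V) \simeq \mathcal{S}_{\Delta^*_{IJ}(V)}$ coming from geometric Satake, and produce the correct $W_E^J$-equivariance via the map $\Delta_{IJ}: W_E^J \to W_E^I$. This is precisely where Proposition \ref{fusionconstant} enters: over the open disjoint locus $\Div^{I;I_1,\ldots,I_k}_E$, the fusion product identifies the pullback of $\mathcal{S}_V$ with an exterior product whose constant term is the fusion of the individual constant terms, and this compatibility propagates to the closed diagonal $\Delta_{IJ}(\Div^J_E) \subset \Div^I_E$ via the ULA and perversity properties built into $\Sat_G^I$. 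Equivalently, the pullback of the semi-infinite stratification of $\Gr_{G,E}^I$ along $\Delta_{IJ}$ partitions each $J$-cell into the $I$-cells indexed by the tuples $(\nu_i)_{i \in I}$ with $\sum_{i \in I_j}\nu_i = \nu_j$, and Proposition \ref{fusionconstant} guarantees that the constant-term functor respects this decomposition.

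The main obstacle will be keeping track of the $W_E^J$-equivariance through the diagonal restriction and the fusion identifications, ensuring that the action inherited from the $W_E^I$-equivariance of $\mathcal{S}_V$ agrees with the one on the left-hand side under $\Delta_{IJ}: W_E^J \to W_E^I$. This equivariance is dictated by Drinfeld's lemma compatibility of the fiber functors $F_G^I$ and $F_G^J$ (Theorem \ref{geomsatake}), which is functorial in the index set $I$, and so survives restriction along $\Delta_{IJ}$; once this compatibility is in hand, the functorial statement of the corollary follows by combining the identifications above.
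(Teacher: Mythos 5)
Your proof is essentially correct and takes the same route the paper intends: the corollary is stated in the text as an immediate consequence of Proposition \ref{fusionconstant}, and you supply the missing bookkeeping (the weight-space branching rule for $\Delta^*_{IJ}$, the matching of shifts via $|(\nu_j)_{j\in J}| = |(\nu_i)_{i\in I}|$ when $\sum_{i\in I_j}\nu_i = \nu_j$, and the matching of Tate twists $\langle\hat\rho,\nu_j\rangle = \sum_{i\in I_j}\langle\hat\rho,\nu_i\rangle$), together with the observation that the $W_E^J$-equivariance is exactly what Proposition \ref{fusionconstant} delivers.

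One point deserves a caveat. You write that "the pullback of the semi-infinite stratification of $\Gr_{G,E}^I$ along $\Delta_{IJ}$ partitions each $J$-cell into the $I$-cells indexed by the tuples $(\nu_i)_{i\in I}$ with $\sum_{i\in I_j}\nu_i = \nu_j$." Strictly speaking this geometric picture is not accurate: over the diagonal point $\Delta_{IJ}(x)$, the $I$-cells $_x\mathrm{S}^I_{G,(\nu_i),E}$ for different tuples $(\nu_i)$ with the same $(\nu_j)$ do not partition the $J$-cell — rather, their underlying modifications collapse onto the same $J$-cell, since the $T$-meromorphy at the coalesced divisor is only the total $\nu_j$. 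The direct-sum decomposition on the right-hand side is not produced by a geometric refinement of the stratification; it is produced by the ULA/flatness of the Satake sheaves over $\Div^I_E$, which lets the constant term on each component $\Gr^I_{T,(\nu_i)}$ be evaluated by specialization from a generic fiber, and the fusion identification of Proposition \ref{fusionconstant} is what ties these specialized values to the $J$-constant term. Since your primary argument (applying Corollary \ref{highweightcohom} to both sides and appealing to Proposition \ref{fusionconstant} for compatibility and equivariance) does not depend on the incorrect "partition" heuristic, the overall proof is sound, but you should replace that sentence with a reference to ULA/flatness and specialization.
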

We note that the previous result has some very useful geometric consequences. Let's explain this in the case that $I = \{\ast\}$ is a singleton for a fixed geometric point $x \ra \Div^{1}_{E}$. Fix $\lambda \in \mathbb{X}_{*}(T_{\ol{\mathbb{Q}}_{p}})^{+}$, and consider the highest weight representation $V_{\lambda} \in \Rep_{\Lambda}(\hat{G})$ defined by $\lambda$. Since the sheaf $\mathcal{S}_{\lambda}$ is supported on $_{x}\Gr_{G,\leq \lambda}$, we can deduce the following. 
\begin{corollary}{\label{intnonempty}}
For $\nu \in \cochar$ and $\lambda \in \domcochar$ with associated highest weight representation $V_{\lambda} \in \Rep_{\Lambda}(\hat{G})$, if the weight space $V_{\lambda}(\nu)$ is non-trivial then the intersection $_{x}\mathrm{S}_{G,\nu,E} \cap \phantom{}_{x}\Gr_{G,\leq \lambda,E}$ is non-empty. 
\end{corollary}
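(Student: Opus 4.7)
The plan is to argue by contrapositive, leveraging Corollary \ref{highweightcohom} directly. Suppose the intersection $_{x}\mathrm{S}_{G,\nu,E} \cap \phantom{}_{x}\Gr_{G,\leq \lambda,E}$ is empty. By construction (Theorem \ref{geomsatake} together with \cite[Proposition~VI.7.5]{FS}), the sheaf $\mathcal{S}_{V_{\lambda}}$ corresponding to the highest weight representation $V_{\lambda}$ is supported on the closed subspace $\phantom{}_{x}\Gr_{G,\leq \lambda,E} \subset \phantom{}_{x}\Gr_{G,E}$. Consequently, the restriction $\mathcal{S}_{V_{\lambda}}|_{\phantom{}_{x}\mathrm{S}_{G,\nu,E}}$ is the zero object in the derived category, and therefore
\[ \mathbb{H}^{-\langle 2\hat{\rho}, \nu_{\Gamma} \rangle}_{c}(\phantom{}_{x}\mathrm{S}_{G,\nu,E},\mathcal{S}_{V_{\lambda}}|_{\phantom{}_{x}\mathrm{S}_{G,\nu,E}}) = 0. \]

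Now I would apply Corollary \ref{highweightcohom} in the case $I = \{\ast\}$ with $V = V_{\lambda}$. This gives an identification of the above compactly supported cohomology group with $V_{\lambda}(\nu)(-\langle \hat{\rho}, \nu \rangle)$ as a $W_{E}$-module. In particular, vanishing of the cohomology forces $V_{\lambda}(\nu) = 0$, contradicting the hypothesis that the weight space is non-trivial. Hence the intersection is non-empty, as desired.

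There is no real obstacle here: the entire content is already packaged into Corollary \ref{highweightcohom}, which is itself a consequence of the compatibility of the constant term functor with the restriction functor $\mathrm{res}^{G}_{T}$ under geometric Satake (Proposition \ref{constantterm}). The only minor point to verify is that the support of $\mathcal{S}_{V_{\lambda}}$ really is $\phantom{}_{x}\Gr_{G,\leq \lambda,E}$, so that emptiness of the intersection with $\phantom{}_{x}\mathrm{S}_{G,\nu,E}$ really does force the restricted sheaf to vanish — this is immediate from the definition of $\mathcal{S}_{V_{\lambda}}$ as the intersection cohomology sheaf of $\phantom{}_{x}\Gr_{G,\leq \lambda,E}$ extended by zero.
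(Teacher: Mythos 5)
Your argument is correct and is exactly the paper's: the paper invokes Corollary \ref{highweightcohom} together with the fact that $\mathcal{S}_{V_{\lambda}}$ is supported on $\phantom{}_{x}\Gr_{G,\leq\lambda,E}$ (via \cite[Proposition~VI.7.5]{FS}); you simply package the same two ingredients as a contrapositive. One small caveat on your closing remark: with $\Lambda \in \{\ol{\mathbb{F}}_{\ell},\ol{\mathbb{Z}}_{\ell}\}$ the Weyl module $V_{\lambda}$ is generally not irreducible, so $\mathcal{S}_{V_{\lambda}}$ is not literally an intersection cohomology sheaf — but the only property used is the support bound, which the cited reference gives in all coefficient systems, so this does not affect the argument.
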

\begin{remark}
One can also see this by using the Iwasawa decomposition of $G$ and working explicitly with the loop group of $G$ (See the analysis proceeding \cite[Proposition~6.4]{XS} and \cite[Remark~6.5]{XS}). For example, one can show that the intersection $_{x}\Gr_{G,\leq \lambda,E} \cap \phantom{}_{x}\mathrm{S}_{G,w_{0}(\lambda),E}$ is simply the point given by $\xi^{\lambda}$, where $\xi \in B_{dR}^{+}(C,\mathcal{O}_{C})$ is the uniformizing parameter defined by the geometric point $x$. This corresponds to the lowest weight space $V_{\lambda}(w_{0}(\lambda))$. 
\end{remark}
\section{Moduli stacks of $B$-structures}
In the first part of the section, we review the structure of the moduli stack of $B$-structures $\Bun_{B}$ and its basic geometric properties. This will allow us to define the geometric Eisenstein functor. For the proof of the Hecke eigensheaf property, it will be natural to consider a compactification of $\Bun_{B}$, denoted $\ol{\Bun}_{B}$ which is the Drinfeld compactification. We will review the basic properties of this compactification in the second part. 
\subsection{The Geometry of $\Bun_{B}$}{\label{bstrucproperties}}
We will start by collecting some basic facts about the moduli stack $\Bun_{B}$ parameterizing, for $S \in \Perf$, the groupoid of $B$-bundles on $X_{S}$. Given a $B$-bundle $\mathcal{G}_{B}$, we can send it to the induced $T$-bundle and $G$-bundle via the natural maps
\begin{equation*}
\begin{tikzcd}
&  B \arrow[r] \arrow[d] & G  \\
& T  &  
\end{tikzcd}
\end{equation*}
which induces a diagram of $v$-stacks:
\begin{equation*}
\begin{tikzcd}
&  \Bun_{B} \arrow[r,"\mathfrak{p}"] \arrow[d,"\mathfrak{q}"] & \Bun_{G}  \\
& \Bun_{T}  &  
\end{tikzcd}
\end{equation*}
Let's first start by breaking this up into connected components. As seen in \S $3$, the connected components of $\Bun_{T}$ are indexed by elements $\ol{\nu} \in B(T) \simeq \mathbb{X}_{*}(T_{\ol{\mathbb{Q}}_{p}})_{\Gamma} = \Lambda_{G,B}$. This allows us to define the following.
\begin{definition}
For $\ol{\nu} \in B(T)$, we write $\Bun_{B}^{\ol{\nu}}$ for the pre-image of the connected component $\Bun_{T}^{\ol{\nu}}$ defined by $\ol{\nu}$ under the map $\mf{q}^{\ol{\nu}}$. We write $\mf{p}^{\ol{\nu}}: \Bun_{B}^{\ol{\nu}} \ra \Bun_{G}$ and $\mf{q}^{\ol{\nu}}: \Bun_{B}^{\ol{\nu}} \ra \Bun^{\ol{\nu}}_{T}$ for the restriction of $\mf{p}$ and $\mf{q}$ to $\Bun_{B}^{\ol{\nu}}$, respectively. 
\end{definition}
We claim that this induces a decomposition of the moduli stack $\Bun_{B}$ into connected components. To each element $\ol{\nu}$, we define the integer $d_{\ol{\nu}} := \langle 2\hat{\rho}, \ol{\nu} \rangle$, where $2\hat{\rho}$ is the sum of all positive roots with respect to the choice of Borel. We note that, if $\ol{\nu}$ is anti-dominant with respect to the choice of Borel, $d_{\ol{\nu}}$ is negative. This will be the case where the HN-slopes are dominant so the $B$-bundles will split, and the negative dimension comes from quotienting out by the torsor of splittings. On the other hand, if $\ol{\nu}$ is dominant then the connected component will parametrize non-split $B$-structures and we see that the dimension will be positive. We have the following claim.
\begin{proposition}{\label{prop: qsmooth}}
The map $\mf{q}$ is a cohomologically smooth (non-representable) morphism of Artin $v$-stacks in the sense of \cite[Definition~IV.1.11]{FS}. In particular, for $\ol{\nu} \in \Lambda_{G,B}$, the map $\mf{q}^{\ol{\nu}}$ is pure of $\ell$-dimension equal to $d_{\ol{\nu}}$, in the sense of \cite[Definition~IV.1.17]{FS}.
\end{proposition}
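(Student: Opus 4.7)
The plan is to use the filtration of the unipotent radical $U \subset B$ by $T$-stable normal subgroups with one-dimensional graded pieces, and thereby express $\mathfrak{q}^{\ol{\nu}}$ as an iterated composition of relative Banach--Colmez fibrations over $\Bun_T^{\ol{\nu}}$. First, I reduce to a fixed connected component $\ol{\nu}$. Given $\mathcal{F}_T \in \Bun_T^{\ol{\nu}}(S)$, the fiber of $\mathfrak{q}^{\ol{\nu}}$ at $\mathcal{F}_T$ classifies $B$-bundles with underlying $T$-bundle $\mathcal{F}_T$, which is the same as the stack of $\mathcal{F}_T$-twisted $U$-torsors on $X_S$. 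Choose an ordering $\alpha_1, \ldots, \alpha_n$ of the positive roots refining the height partial order, and let $U_{\geq i} \subset U$ be the subgroup generated by the root subgroups $U_{\alpha_j}$ for $j \geq i$. Each $U_{\geq i}$ is $T$-stable and normal in $U$, with $U_{\geq i}/U_{\geq i+1} \simeq \mathbb{G}_a$ acted on by $T$ through $\alpha_i$; this uses that the derived group of $G$ is simply connected, so each $\alpha_i$ lifts to a character of $T$.

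Twisting by $\mathcal{F}_T$ and varying $\mathcal{F}_T$, this yields a tower
\[ \Bun_B^{\ol{\nu}} = \mathcal{X}_n \to \mathcal{X}_{n-1} \to \cdots \to \mathcal{X}_1 \to \mathcal{X}_0 = \Bun_T^{\ol{\nu}},\]
where $\mathcal{X}_i$ is the moduli of $(B/U_{\geq i+1})$-bundles over $X_S$ with fixed underlying $T$-bundle of class $\ol{\nu}$, and the step $\mathcal{X}_i \to \mathcal{X}_{i-1}$ is a torsor under the relative Banach--Colmez stack $\mathcal{BC}(\mathcal{L}_{\alpha_i})$ associated to the line bundle $\mathcal{L}_{\alpha_i} := \mathcal{F}_T \times^{T, \alpha_i} \mathbb{G}_a$ on $X_S$. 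The fiberwise degree of $\mathcal{L}_{\alpha_i}$ over a geometric point $s \to S$ is $\langle \alpha_i, \ol{\nu}\rangle$, by the identification $\Bun_T \simeq \bigsqcup_{\ol{\nu}} [\ast/\ul{T(\mathbb{Q}_p)}]$ and compatibility of pushforward along $T \xrightarrow{\alpha_i} \mathbb{G}_m$ with the degree map.

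By the results of Fargues--Scholze on Banach--Colmez spaces (\cite[Section~II.2, Proposition~II.3.5, Proposition~IV.4.24]{FS}, extending \cite{LB}), the relative Banach--Colmez stack $\mathcal{BC}(\mathcal{L})$ attached to a line bundle of constant fiberwise degree $d$ on the relative Fargues--Fontaine curve is a cohomologically smooth Artin $v$-stack of pure $\ell$-dimension $d$ over the base. Since torsors under a cohomologically smooth Artin $v$-stack are themselves cohomologically smooth of the same $\ell$-dimension, and since cohomological smoothness together with purity of $\ell$-dimension are preserved under composition with additivity of dimensions (\cite[Proposition~IV.1.11, Proposition~IV.1.19]{FS}), we deduce that $\mathfrak{q}^{\ol{\nu}}$ is cohomologically smooth and pure of $\ell$-dimension
\[ \sum_{i=1}^{n} \langle \alpha_i, \ol{\nu} \rangle \;=\; \langle 2\hat{\rho}, \ol{\nu} \rangle \;=\; d_{\ol{\nu}}.\]

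The main obstacle is handling the case when $\langle \alpha_i, \ol{\nu} \rangle$ is negative, in which case $\mathcal{BC}(\mathcal{L}_{\alpha_i})$ is not representable in locally spatial diamonds but is instead the classifying stack of a positive Banach--Colmez space (via Serre duality $\mathcal{BC}(\mathcal{L}_{\alpha_i}) \simeq [\ast/\mathcal{BC}(\mathcal{L}_{\alpha_i}^{\vee}(1))]$). One must verify that this is still captured by the framework of Artin $v$-stacks and that additivity of $\ell$-dimensions holds in mixed positive/negative situations; this is where the careful foundational work of \cite[Chapter~IV]{FS} is essential, but once granted the calculation is formal. A secondary point is to confirm v-locally on $\Bun_T^{\ol{\nu}}$ that the tower actually trivializes in the expected way, which follows from the v-local triviality of $T$-bundles on $X_S$ established in \cite[Proposition~III.4.3]{FS}.
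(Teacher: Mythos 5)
Your proposal is essentially correct and gives a direct, self-contained argument; the paper instead simply cites \cite[Proposition~4.7]{Ham1} (together with \cite[Lemma~4.1~(ii)]{AL}) for this fact. The argument via the filtration of $U$ by root-height subgroups and iterated Banach--Colmez fibrations is precisely the kind of argument that proves the cited result, so you have essentially reconstructed what is happening "under the hood." The trade-off is the usual one: the paper gets brevity and consistency with external foundational work, while you get a proof readable in isolation that also produces the dimension formula transparently.

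A few small technical points deserve correction or care, though none of them break the argument.
\begin{itemize}
\item The claim that one needs the derived group of $G$ to be simply connected "so each $\alpha_i$ lifts to a character of $T$" is a misstatement: roots are always characters of $T$, since they arise from the adjoint action of $T$ on $\mathrm{Lie}(G)$. Simple-connectedness of $G_{\mathrm{der}}$ is relevant elsewhere in the paper (to define the fundamental weights $\hat{\varpi}_i$, used in the Pl\"ucker description of $\ol{\Bun}_B$), but not here.
\item For quasi-split but non-split $G$, the individual root groups $U_{\alpha_i}$ are not defined over $\mathbb{Q}_p$; the $T$-stable filtration of $U$ over $\mathbb{Q}_p$ has graded pieces indexed by $\Gamma$-orbits of positive roots, each of which is (a form of) a Weil restriction $\mathrm{Res}_{E_\alpha/\mathbb{Q}_p}\mathbb{G}_a$. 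The iterated fibration argument still works, with the line bundle $\mathcal{L}_{\alpha_i}$ replaced by the pushforward along the finite \'etale cover $X_{S,E_\alpha} \to X_S$, and the degree additivity still yields $\langle 2\hat{\rho}, \ol{\nu} \rangle$ after summing over orbits.
\item The fiber of $\mathcal{X}_i \to \mathcal{X}_{i-1}$ is the stack of torsors under the twisted line bundle, i.e.\ the quotient $[\mathcal{H}^1(\mathcal{L}_{\alpha_i})/\mathcal{H}^0(\mathcal{L}_{\alpha_i})]$, rather than literally a torsor under a single Banach--Colmez space. The stated Serre-duality identity $\mathcal{BC}(\mathcal{L}) \simeq [\ast/\mathcal{BC}(\mathcal{L}^\vee(1))]$ is not quite the statement one wants: the point is simply that if $\deg\mathcal{L} > 0$ then $\mathcal{H}^1 = 0$ and the stack of torsors is the classifying stack $[\ast/\mathcal{H}^0(\mathcal{L})]$, while if $\deg\mathcal{L} < 0$ then $\mathcal{H}^0 = 0$ and it is the diamond $\mathcal{H}^1(\mathcal{L})$. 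Either way the $\ell$-dimension of the stack of torsors is $-\deg\mathcal{L}$, and keeping careful track of the sign coming from passing between isocrystals and bundles recovers $d_{\ol{\nu}}$.
\end{itemize}
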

\begin{proof}
This follows from \cite[Proposition 4.7]{Ham1}, where we note that $\Bun_{T}$ is an Artin $v$-stack that is cohomologically smooth of $\ell$-dimension $0$ (See also \cite[Lemma~4.1 (ii)]{AL}).  
\end{proof}
In particular, this implies using \cite[Proposition~23.11]{Ecod}, that $\mf{q}$ is a universally open morphism of Artin $v$-stacks. Moreover, one can check that the fibers of this morphism are connected (See the proof of \cite[Lemma~4.1 (ii)]{AL} and \cite[Proposition~3.16]{Ham1}). As a consequence, we can deduce that, since $\bigsqcup_{\ol{\nu} \in \Lambda_{G,B}} \Bun_{T}^{\ol{\nu}}$ is a decomposition of $\Bun_{T}$ into connected components, the following is true.
\begin{corollary}{\label{conncomponents}}
The connected components of $\Bun_{B}$ are given by $\Bun_{B}^{\ol{\nu}}$, for varying $\ol{\nu} \in B(T)$. 
\end{corollary}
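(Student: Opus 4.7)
The plan is to combine the two properties of $\mathfrak{q}$ already established — universal openness, from cohomological smoothness via \cite[Proposition~23.11]{Ecod}, and connectedness of its geometric fibers, noted from the proof of \cite[Lemma~4.1 (ii)]{AL} — together with the known decomposition $\Bun_{T} = \bigsqcup_{\ol{\nu} \in B(T)} \Bun_{T}^{\ol{\nu}}$ into connected components. The argument takes place entirely on the level of the underlying topological spaces $|\Bun_{B}|$ and $|\Bun_{T}|$ and applies the standard topological principle that a surjective, universally open continuous map with connected fibers onto a target space $Y$ induces a bijection between the sets of connected components of source and target.

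First, I would note that each $\Bun_{B}^{\ol{\nu}} = \mathfrak{q}^{-1}(\Bun_{T}^{\ol{\nu}})$ is an open and closed substack of $\Bun_{B}$, being the preimage of a clopen substack under a continuous map of Artin $v$-stacks. Consequently, $\Bun_{B} = \bigsqcup_{\ol{\nu} \in B(T)} \Bun_{B}^{\ol{\nu}}$ is a decomposition into clopen substacks, so it suffices to check that each piece is nonempty and connected. Nonemptiness is immediate: the splitting $T \hookrightarrow B$ induces a section $\Bun_{T} \to \Bun_{B}$ of $\mathfrak{q}$, which restricts over $\Bun_{T}^{\ol{\nu}}$ to a point of $\Bun_{B}^{\ol{\nu}}$.

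For connectedness, I would apply the topological principle to the restricted morphism $\mathfrak{q}^{\ol{\nu}} \colon |\Bun_{B}^{\ol{\nu}}| \to |\Bun_{T}^{\ol{\nu}}|$, which inherits surjectivity, openness, and connectedness of fibers from $\mathfrak{q}$, and whose target is connected by construction. Explicitly, any nontrivial clopen decomposition $|\Bun_{B}^{\ol{\nu}}| = U \sqcup V$ would produce open images $\mathfrak{q}^{\ol{\nu}}(U)$ and $\mathfrak{q}^{\ol{\nu}}(V)$ covering $|\Bun_{T}^{\ol{\nu}}|$, and connectedness of the fibers forces each fiber to lie entirely in one of $U$ or $V$, so these images are in fact disjoint, contradicting connectedness of the target unless one of them is empty. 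The only point requiring care is to verify that the standard topological principle translates faithfully through the underlying topological space functor $|\cdot|$ for Artin $v$-stacks; this is routine given the functoriality of $|\cdot|$ and the preservation of continuity, openness, and surjectivity at the level of underlying spaces, so no essential difficulty arises beyond bookkeeping.
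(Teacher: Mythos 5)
Your proposal is correct and takes essentially the same approach as the paper: both proofs rest on the universal openness of $\mathfrak{q}$ (from cohomological smoothness via the same reference), connectedness of its fibers (from the same reference), and the known decomposition of $\Bun_{T}$ into connected components indexed by $B(T)$. You simply spell out the standard topological argument that the paper leaves implicit, including the useful observation that the section $T \hookrightarrow B$ gives surjectivity of $\mathfrak{q}$ and nonemptiness of each $\Bun_{B}^{\ol{\nu}}$.
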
 
We now comment briefly on the geometry of the map $\mf{p}$. In particular, we have the following.
\begin{lemma}{\cite[Lemma~4.1 (ii)]{AL}}
The map $\mf{p}$ is representable in nice diamonds. 
\end{lemma}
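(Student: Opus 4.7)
The plan is to verify, $v$-locally on the target $\Bun_G$, each of the three defining properties for a map to be representable in nice diamonds: representability in locally spatial diamonds, compactifiability, and local finiteness of transcendence degree. Using the charts for $\Bun_G$ constructed in \cite[Section~V.3]{FS}, we may pull back along a $v$-cover on which the universal $G$-bundle admits a trivialization. After such a trivialization, the fiber of $\mf{p}$ over $S \in \Perf$ is identified with the moduli of $B$-reductions of the trivial $G$-bundle $\mathcal{F}_G^0$ on $X_S$, equivalently the moduli of sections of the flag bundle $X_S \times (G/B) \to X_S$.

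Since the derived group of $G$ is simply connected, the Pl\"ucker formalism encodes such a $B$-reduction as a compatible collection of line subbundles $\mathcal{L}^{\hat{\lambda}} \hookrightarrow \mathcal{V}^{\hat{\lambda}}_{\mathcal{F}_G^0}$ indexed by $\hat{\lambda} \in \hat{\Lambda}_G^+$, together with isomorphisms $\mathcal{L}^{\hat{\lambda}_1} \otimes \mathcal{L}^{\hat{\lambda}_2} \simeq \mathcal{L}^{\hat{\lambda}_1 + \hat{\lambda}_2}$ compatible with the canonical maps $\mathcal{V}^{\hat{\lambda}_1 + \hat{\lambda}_2} \to \mathcal{V}^{\hat{\lambda}_1} \otimes \mathcal{V}^{\hat{\lambda}_2}$. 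Restricting to a connected component $\Bun_B^{\ol{\nu}}$ fixes the degrees of the $\mathcal{L}^{\hat{\lambda}}$ in terms of the pairing $\langle \hat{\lambda}, \ol{\nu} \rangle$, and it suffices to specify this data for a finite set of fundamental weights $\hat{\varpi}_i$ generating $\hat{\Lambda}_G^+$.

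For each such fundamental weight, the moduli of line subbundles of fixed degree inside the given vector bundle on the relative Fargues-Fontaine curve arises as a locally closed subspace of the projectivization of a Banach-Colmez space of global sections of a suitable twist of $\mathcal{V}^{\hat{\lambda}}_{\mathcal{F}_G^0}$; by \cite[Chapter~II.2]{FS} and \cite{LB} such Banach-Colmez spaces are locally spatial diamonds of finite local transcendence degree, and these properties are inherited by their projectivizations. The Pl\"ucker compatibility isomorphisms impose closed incidence conditions on the resulting finite product, while the requirement that each $\mathcal{L}^{\hat{\lambda}} \to \mathcal{V}^{\hat{\lambda}}_{\mathcal{F}_G^0}$ be a genuine injection is an open condition; both preserve representability in locally spatial diamonds of finite local transcendence degree. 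The main technical point, which I expect to be the chief obstacle, is producing the compactification of the fiber: this amounts to allowing the Pl\"ucker maps to degenerate to meromorphic injections with prescribed zeroes, which is precisely the construction of the Drinfeld compactification $\ol{\Bun}_B$ carried out in the next subsection, and thereby yields both the compactifiability of $\mf{p}$ and the analogous representability statement for $\ol{\mf{p}}$.
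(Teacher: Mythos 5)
The paper does not prove this lemma itself; it is quoted from \cite[Lemma~4.1~(ii)]{AL}, where the statement is established for $G = \GL_n$ using the flag-of-subbundles description of $\Bun_B$. Your sketch generalizes that argument to the present setting via the Pl\"ucker formalism, and it is essentially the same strategy the paper deploys later in Proposition~\ref{propernessofdrinfeld} for the compactified map $\ol{\mf{p}}$, so the plan is reasonable and well aligned with the paper's methods.

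Two steps, however, are asserted rather than argued. First, \emph{the projectivization of a Banach--Colmez space} should be spelled out as $(\mathcal{H}^0(\mathcal{V}^{\hat{\varpi}_{i}}_{\mathcal{F}_G}(-d_i)) \setminus \{0\})/\ul{\mathbb{Q}}_p^*$, the quotient of the punctured Banach--Colmez space by the discrete group $\ul{\mathbb{Q}}_p^*$. That this quotient is representable in nice diamonds, and indeed proper over $S$, is exactly \cite[Proposition~II.2.16]{FS} and is the load-bearing input; it does not follow formally from properties of $\mathcal{H}^0(\mathcal{V})$, since a quotient of a locally spatial diamond by a group action need not be a diamond at all. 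Moreover, it is precisely the passage to the $\ul{\mathbb{Q}}_p^*$-quotient that lets the source line bundle $\mathcal{L}_i$ vary inside its degree-$d_i$ component of $\mathrm{Pic}(X_S)$ rather than being pinned to $\mathcal{O}(d_i)$, so the precise form of the quotient is doing genuine work. Second, the assertion that \emph{the requirement that each $\mathcal{L}^{\hat{\lambda}} \to \mathcal{V}^{\hat{\lambda}}_{\mathcal{F}_G^0}$ be a genuine injection is an open condition} is exactly what it takes to show $j: \Bun_B \hookrightarrow \ol{\Bun}_B$ is an open immersion, and it is not automatic on the non-Noetherian curve $X_S$. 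The paper handles this in Proposition~\ref{opencompact} by realizing both $\Bun_B$ and $\ol{\Bun}_B$ inside the moduli of sections of the stack $Z = [(\overline{G/U})/(T \times G)]$ over $X$ and invoking Grosshans's theorem that $G/U \hookrightarrow \overline{G/U}$ is an open immersion, following the argument of \cite[Proposition~IV.4.22]{FS}. Since you rely on this openness both to cut $\Bun_B$ out of the Drinfeld space and to deliver the compactification of $\mf{p}$, it deserves an explicit argument rather than a passing remark.
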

These properties allow us to define the Eisenstein functor using the derived functors defined in \cite{Ecod}.  
\begin{definition}{\label{defeisfunctor}}
We define a locally constant function 
\[ \dim(\Bun_{B}): |\Bun_{B}| \ra \mathbb{Z} \]
\[ x \in |\Bun_{B}^{\ol{\nu}}| \mapsto d_{\ol{\nu}} \]
and with it the unnormalized Eisenstein functor
\[ \Eis: \D(\Bun_{T}) \ra \D(\Bun_{G}) \]
\[ \mathcal{F} \mapsto \mf{p}_{!}(\mf{q}^{*}(\mathcal{F})[\dim(\Bun_{B})]) \]
\end{definition}
Even though $\Bun_{B}$ is cohomologically smooth (by the smoothness of $\Bun_{T}$ and Proposition \ref{prop: qsmooth}), the sheaf $\Lambda[\dim(\Bun_{B})]$ is not Verdier self-dual. In particular, the dualizing object $K_{\Bun_{B}}$ on $\Bun_{B}$ is not isomorphic to $\Lambda[2\dim(\Bun_{B})]$; this is only $v$-locally true on $\Bun_{B}$. To elucidate the problem, note that, given $\ol{\nu} \in \Lambda_{G,B}$, we can consider the natural map $\mathfrak{q}^{\ol{\nu}}: \Bun_{B}^{\ol{\nu}} \ra \Bun_{T}^{\ol{\nu}} = [\ast/\ul{T(\mathbb{Q}_{p})}]$. Given a character $\kappa_{\ol{\nu}}$ of $T(\mathbb{Q}_{p})$, we can pull this character back along $\mathfrak{q}^{\ol{\nu}}$ to get a sheaf on $\Bun_{B}^{\ol{\nu}}$. These characters give us sheaves on $\Bun_{B}^{\ol{\nu}}$, which $v$-locally will be constant, but are not constant on the nose. The main result of \cite{GH} specialized to the case of the Borel is as follows.
\begin{theorem}{\label{dualob}}{\cite[Theorem~1.6]{GH}}
The dualizing object on $\Bun_{B}$ is isomorphic to $\mf{q}^{*}(\Delta_{B})[2\dim(\Bun_{B})]$, with $\Delta_{B} \in \D(\Bun_{T})$ as in Definition \ref{modulussheaf}. 
\end{theorem}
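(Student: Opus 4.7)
The strategy is to reduce to identifying the relative dualizing object $\mf{q}^{!}(\Lambda)$, and then evaluate this by d\'evissage along the root filtration of the fibers of $\mf{q}$, aided by the explicit description of Banach-Colmez spaces and the Frobenius descent datum from $\Spd{\mathbb{F}_{p^{s}}}$ to $\ast = \Spd{\ol{\mathbb{F}}_{p}}$.

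First, since each connected component of $\Bun_{T}$ is the classifying stack $[*/\ul{T(\mathbb{Q}_{p})}]$ of a locally profinite unimodular group, $\Bun_{T}$ is cohomologically smooth of $\ell$-dimension $0$ with $K_{\Bun_{T}} \simeq \Lambda$. Smooth base change then gives $K_{\Bun_{B}} \simeq \mf{q}^{!}(\Lambda)$. Since $\mf{q}^{\ol{\nu}}$ is cohomologically smooth of pure $\ell$-dimension $d_{\ol{\nu}} = \langle 2\hat{\rho},\ol{\nu}\rangle$, the object $\mf{q}^{!}(\Lambda)$ is $[2\dim(\Bun_{B})]$ tensored with an invertible \'etale-locally constant sheaf $L$ on $\Bun_{B}$. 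Using that $\mf{q}$ has connected fibers, $L$ descends along $\mf{q}^{*}$ to a rank-one local system on $\Bun_{T}$, equivalently a character of $T(\mathbb{Q}_{p})$ on each component, which we must identify as $\delta_{B}$.

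To compute this character, fix a geometric point of $\Bun_{T}^{\ol{\nu}}$ corresponding to a $T$-bundle $\mathcal{F}_{T}$ and consider the fiber of $\mf{q}^{\ol{\nu}}$ above it: the stack of $U_{\mathcal{F}_{T}}$-torsors on $X$, where $U_{\mathcal{F}_{T}}$ is the $T$-twist by $\mathcal{F}_{T}$ of the unipotent radical of $B$. The $T$-equivariant filtration of $U$ by positive absolute root subgroups $U_{\tilde{\alpha}}$ induces a filtration of this fiber whose successive quotients are Banach-Colmez-type spaces attached to the line bundles $\mathcal{L}^{\tilde{\alpha}}_{\mathcal{F}_{T}}$ of degree $\langle \tilde{\alpha},\ol{\nu}\rangle$. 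By d\'evissage along this filtration, computing $\mf{q}^{!}(\Lambda)$ and the resulting $T(\mathbb{Q}_{p})$-action reduces to the case of each individual graded piece.

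Each such graded piece admits an absolute model over some $\Spd{\mathbb{F}_{p^{s}}}$, by the explicit perfectoid description of Banach-Colmez spaces in \cite[Chapter~II]{FS} and \cite{LB}, as a smooth diamond (or classifying stack of one) of some pure $\ell$-dimension $d$. On such an absolute base, the dualizing complex is $\Lambda(d)[2d]$, by a direct computation using the explicit description as a quotient of a perfectoid open unit disc by a $\ul{\mathbb{Q}_{p}}$-lattice. Upon base change to $\ast = \Spd{\ol{\mathbb{F}}_{p}}$, the Tate twist $\Lambda(d)$ is encoded by a Frobenius descent datum. The key geometric input is that, by definition of the Fargues-Fontaine curve, geometric Frobenius acts on $H^{0}(X,\mathcal{O}_{X}) = \mathbb{Q}_{p}$ by multiplication by $p$; hence the scaling action of $\ul{\mathbb{Q}_{p}^{*}}$ on the Banach-Colmez space realizes the Frobenius descent, and the Tate twist $\Lambda(d)$ becomes the character $|\cdot|^{d}: \mathbb{Q}_{p}^{*} \ra \Lambda^{*}$ acting on the trivialized dualizing object.

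Finally, an element $t \in T(\mathbb{Q}_{p})$ scales the line bundle $\mathcal{L}^{\tilde{\alpha}}_{\mathcal{F}_{T}}$ by $\tilde{\alpha}(t) \in \mathbb{Q}_{p}^{*}$, so by the previous step its induced action on the dualizing object of the $\tilde{\alpha}$-graded piece is the scalar $|\tilde{\alpha}(t)|$. Assembling over the filtration, the overall character of $T(\mathbb{Q}_{p})$ acting on $L$ is
\[ t \mapsto \prod_{\tilde{\alpha} > 0} |\tilde{\alpha}(t)| = |2\hat{\rho}(t)| = \delta_{B}(t), \]
which is precisely the character defining $\Delta_{B}$ on $\Bun_{T}^{\ol{\nu}}$, independent of $\ol{\nu}$ as required, proving the theorem. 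The principal obstacle lies in the Banach-Colmez calculation outlined above: one must rigorously compute the dualizing object of an absolute Banach-Colmez space as the Tate-twisted constant sheaf and verify that the Frobenius descent to $\Spd{\ol{\mathbb{F}}_{p}}$ converts this twist into the norm-character twist of the $\ul{\mathbb{Q}_{p}^{*}}$-scaling action. Once this is done, the remaining steps---the d\'evissage along the filtration and the final identity $\sum_{\tilde{\alpha} > 0}\tilde{\alpha} = 2\hat{\rho}$---are essentially formal.
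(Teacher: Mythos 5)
Your outline is on the right track---the essential geometric inputs (the explicit perfectoid description of Banach--Colmez spaces and the observation that the Frobenius descent datum is realized by the $p^{\mathbb{Z}}$-scaling action) are exactly what the paper uses, and your final identity $\prod_{\tilde{\alpha}>0}|\tilde{\alpha}(t)| = \delta_{B}(t)$ is the right target. But as written, the argument has several gaps that happen to cancel out in the final answer, and the d\'evissage step is not yet a proof.

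First, the descent in your step 3 is not automatic: for $L$ to be of the form $\mf{q}^{*}(M)$ you need to know it restricts trivially to the fibers of $\mf{q}$, which is essentially the content you are trying to prove, not a formal consequence of connected fibers. The paper sidesteps this by never invoking descent; instead it factors the structure map as $\Bun_{B}^{\ol{\nu}} \to [\ast/\mathcal{G}_{\ol{\nu}}] \to [\ast/\ul{B_{b}(\mathbb{Q}_{p})}] \to \ast$ and computes the dualizing complex of each step directly.

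Second, and more substantively, the statement that the ``$\tilde{\alpha}$-graded piece'' contributes $|\tilde{\alpha}(t)|$ hides three cases that behave quite differently. The fiber of $\mf{q}^{\ol{\nu}}$ is a \emph{stack} of $U_{\mathcal{F}_{T}}$-torsors, and for each absolute positive root $\hat{\alpha}$ the relevant line bundle $\hat{\alpha}_{*}(Q)$ has degree $\langle\hat{\alpha},\nu_{b}\rangle$ which can be positive, negative, or zero. When it is positive one gets a classifying stack $[\ast/\mathcal{H}^{0}]$ of a positive Banach--Colmez group (automorphisms), whose dualizing object is the \emph{inverse} of $K_{\mathcal{H}^{0}}$; when it is negative one gets the space $\mathcal{H}^{1}$ (extensions), where the paper's Lemma~\ref{negtate} gives $|\cdot|^{-1}[2d]$ rather than $|\cdot|[2d]$; and when it is zero the piece is the locally profinite $\ul{\mathbb{Q}_{p}}$ whose contribution is a Haar measure (Lemma~\ref{Haar}), not a Tate twist at all. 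Moreover, because of the sign when passing between isocrystals and bundles, $T(\mathbb{Q}_{p})$ acts on these Banach--Colmez spaces via $\Ad(t|\mf{g}_{\hat{\alpha}})^{-1}$, not $\tilde{\alpha}(t)$. In the paper these competing inversions are tracked explicitly by splitting $\delta_{B}^{w} = \delta^{>0}\delta^{=0}\delta^{<0}$, and it is a genuine coincidence of sign cancellations that each piece ends up contributing $|\Ad(t|\mf{g}_{\hat{\alpha}})|$ to the product. Asserting this without checking each case is the main gap.

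Third, you never address the Weyl conjugation that enters when $\ol{\nu} = w(b_{T})$ is not HN-dominant: the identification of $\mf{q}^{\ol{\nu}}$ with the ``natural projection'' to $[\ast/\ul{T(\mathbb{Q}_{p})}]$ involves conjugation by $w$, and what the intrinsic computation produces is $\delta_{B}^{w}$, which only agrees with $\mf{q}^{\ol{\nu}*}(\Delta_{B})$ after accounting for this twist. In the HN-dominant case ($w=1$, equivalently $w = w_{0}$ after the isocrystal/bundle sign) this is invisible, but for general $\ol{\nu}$ the Weyl twist is what forces the paper to introduce the group diamonds $\mathcal{G}_{\ol{\nu}}$ and the decomposition into $\delta^{>0}, \delta^{=0}, \delta^{<0}$ in the first place. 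Your proposal would produce a single product formula with no $w$ appearing, which cannot be correct before this twist is inserted.

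In short: the strategy is sound and the key geometric computations you identify as ``the principal obstacle'' are indeed the heart of the matter (and are carried out in Lemmas~\ref{postate}, \ref{negtate}), but the ``essentially formal'' d\'evissage is not formal---it requires the factorization through $[\ast/\mathcal{G}_{\ol{\nu}}]$, a careful case analysis distinguishing automorphisms, extensions, and the slope-zero Haar piece, and the $w$-conjugation bookkeeping. Without these your character is only correct up to several unjustified sign flips.
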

The key consequence of this theorem for us. 
\begin{corollary}{\label{verdierselfdual}}
The sheaf $\mf{q}^{*}(\Delta_{B}^{1/2})[\dim(\Bun_{B})]$ on $\Bun_{B}$ is Verdier self-dual. 
\end{corollary}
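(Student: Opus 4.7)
The plan is to deduce this corollary directly from Theorem \ref{dualob} by unwinding the definition of Verdier duality and using that $\Delta_{B}^{1/2}$ is an invertible sheaf (indeed, $v$-locally isomorphic to the constant sheaf, being a Hecke eigensheaf for a character-valued parameter on the torus). The key algebraic input is the tautological identity $\Delta_{B}^{1/2} \otimes \Delta_{B}^{1/2} \simeq \Delta_{B}$, which follows from Proposition \ref{torieigsheaf} since tensoring eigensheaves corresponds to adding the underlying toral parameters, and $\hat{\rho}\circ|\cdot| + \hat{\rho}\circ|\cdot| = 2\hat{\rho}\circ|\cdot|$.

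Concretely, I would compute
\begin{align*}
\mathbb{D}_{\Bun_{B}}\bigl(\mf{q}^{*}(\Delta_{B}^{1/2})[\dim(\Bun_{B})]\bigr)
&= R\mathcal{H}om\bigl(\mf{q}^{*}(\Delta_{B}^{1/2})[\dim(\Bun_{B})],\,K_{\Bun_{B}}\bigr) \\
&\simeq R\mathcal{H}om\bigl(\mf{q}^{*}(\Delta_{B}^{1/2}),\,\mf{q}^{*}(\Delta_{B})\bigr)[\dim(\Bun_{B})],
\end{align*}
using Theorem \ref{dualob} to identify $K_{\Bun_{B}} \simeq \mf{q}^{*}(\Delta_{B})[2\dim(\Bun_{B})]$ and shifting. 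Since $\mf{q}^{*}(\Delta_{B}^{1/2})$ is an invertible object (pullback along $\mf{q}$ of an invertible sheaf on $\Bun_{T}$), the inner $R\mathcal{H}om$ simplifies to $\mf{q}^{*}(\Delta_{B}) \otimes \mf{q}^{*}(\Delta_{B}^{1/2})^{-1} \simeq \mf{q}^{*}(\Delta_{B} \otimes (\Delta_{B}^{1/2})^{-1}) \simeq \mf{q}^{*}(\Delta_{B}^{1/2})$, using the identity above together with compatibility of pullback with tensor product. This yields $\mf{q}^{*}(\Delta_{B}^{1/2})[\dim(\Bun_{B})]$, as required.

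There is no real obstacle here once Theorem \ref{dualob} is in hand; the only point requiring care is confirming invertibility of $\Delta_{B}^{1/2}$, which I would justify by noting that on each connected component $\Bun_{T}^{\nu} \simeq [\ast/\ul{T(\mathbb{Q}_{p})}]$ the sheaf $\Delta_{B}^{1/2}$ restricts to the rank-one character sheaf associated to $\delta_{B}^{1/2}: T(\mathbb{Q}_{p}) \to \Lambda^{*}$, and characters are manifestly $\otimes$-invertible with inverse the dual character. This invertibility is preserved by $\mf{q}^{*}$, justifying the manipulation of $R\mathcal{H}om$ as a tensor operation.
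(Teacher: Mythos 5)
Your proof is correct and is exactly the implicit argument the paper is using: the corollary is stated without proof precisely because, once Theorem \ref{dualob} identifies $K_{\Bun_{B}} \simeq \mf{q}^{*}(\Delta_{B})[2\dim(\Bun_{B})]$, the self-duality of $\mf{q}^{*}(\Delta_{B}^{1/2})[\dim(\Bun_{B})]$ follows from the invertibility of the rank-one character sheaf $\Delta_{B}^{1/2}$ and the identity $\Delta_{B}^{1/2} \otimes \Delta_{B}^{1/2} \simeq \Delta_{B}$, just as you compute. The only small observations worth keeping explicit, which you do, are that invertibility passes through $\mf{q}^{*}$ and that the shift bookkeeping gives $[2\dim(\Bun_{B}) - \dim(\Bun_{B})] = [\dim(\Bun_{B})]$.
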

This motivates the definition of the normalized Eisenstein functor.
\begin{definition}{\label{icdef}}
We let $\IC_{\Bun_{B}} := \mf{q}^{*}(\Delta_{B}^{1/2})[\dim(\Bun_{B})]$. We define the normalized Eisenstein functor:
\[ \nmEis: \D(\Bun_{T}) \ra \D(\Bun_{G}) \]
\[ \mathcal{F} \mapsto \mf{p}_{!}(\mf{q}^{*}(\mathcal{F}) \otimes \IC_{\Bun_{B}})\]
In particular, we note that we have a natural isomorphism:
\[ \nmEis(-) \simeq \Eis(- \otimes \Delta_{B}^{1/2}). \]
\end{definition}
Since the definition of the Eisenstein functor involves the functor $\mf{p}_{!}$, it is natural to consider a compactification of the morphism $\mf{p}$ in order to study its finer properties. This leads us to the Drinfeld compactification. 
\subsection{The Drinfeld Compactification}
\subsubsection{The Definition and Basic Properties}
We recall that classically (curve over a finite or complex field) there is a rather straight-forward way of compactifying the map:
\[ \mathfrak{p}: \Bun_B \rightarrow \Bun_G \]
This is called a Drinfeld Compactification of $\mathfrak{p}$, denoted $\ol{\Bun}_B$. Its main property is that there exists an open immersion $\Bun_{B} \rightarrow \ol{\Bun}_{B}$, and it has a map $\ol{\mathfrak{p}}: \ol{\Bun}_B \rightarrow \Bun_G$ extending $\mathfrak{p}$, which is proper after restricting to finitely many connected components. First, as a warm up, let us explain the construction when $G = \GL_2$. For this, we recall that $\Bun_B$ can be viewed as parameterizing tuples $(\mathcal{M},\mathcal{L},\kappa: \mathcal{L} \hookrightarrow \mathcal{M})$, where $\mathcal{E}$ is a rank $2$ vector bundle, $\mathcal{L}$ is a rank $1$ vector bundle, and $\kappa$ is an injective bundle map. To compactify this space, we will allow $\kappa$ to be a map of $\mathcal{O}_{X_S}$-modules whose pullback to each geometric point is an injective map of coherent sheaves on $X$. In other words, we allow $\mathcal{M}/\mathcal{L}$ to have torsion. For a general $G$, the idea is to apply the Tannakian formalism. In particular, given a $G$-bundle $\mathcal{F}_G$ we get, for all $\hat{\lambda} \in \hat{\Lambda}_G^+$, an induced highest weight bundle, denoted $\mathcal{V}_{\mathcal{F}_G}^{\hat{\lambda}}$. A point of $\Bun_B$ mapping to $\mathcal{F}_G$ via $\mathfrak{p}$ then defines a set of line subbundles 
$\kappa^{\hat{\lambda}}: \mathcal{L}^{\hat{\lambda}} \hookrightarrow \mathcal{V}^{\hat{\lambda}}_{\mathcal{F}_G}$ which satisfy some Plücker relations. Using this interpretation, $\ol{\Bun}_B$ can then be defined as classifying $G$-bundles $\mathcal{F}_G$ together with a system of maps $\ol{\kappa}^{\hat{\lambda}}: \mathcal{L}^{\hat{\lambda}} \rightarrow \mathcal{V}_{\mathcal{F}_G}^{\hat{\lambda}}$ for all $\hat{\lambda} \in \Lambda_G^+$, which are injective after pulling back to a geometric point and satisfy the same Plücker relations.

We now explain how to construct the aforementioned compactification $\ol{\Bun}_B$ of $\Bun_B$ over $\Bun_G$ in the Fargues-Fontaine setting. In order to describe the Drinfeld compactification, we note that, for $S \in \Perf$, $\Bun_B$ can be viewed as a stack parameterizing triples:
\begin{enumerate}
\item A $G$-bundle $\mathcal{F}_G$ on $X_{S}$.
\item A $T$-bundle $\mathcal{F}_T$ on $X_{S}$.
\item A $G$-equivariant map $\kappa: \mathcal{F}_G \rightarrow G/U \times^T \mathcal{F}_T$. 
\end{enumerate}
By the Tannakian formalism, (3) can be described as a collection of injective bundle maps on $X_S$, $\kappa^{\mathcal{V}}: (\mathcal{V}^{U})_{\mathcal{F}_{T}} \rightarrow \mathcal{V}_{\mathcal{F}_{G}}$ for every $G$-module $\mathcal{V}$ satisfying the following Plücker relations:
\begin{enumerate}
\item For the trivial representation $\mathcal{V}$, $\kappa^{\mathcal{V}}$ must be the identity map $\mathcal{O}_{X_S} \rightarrow \mathcal{O}_{X_S}$.
\item For a $G$-module map $\mathcal{V}^1 \rightarrow \mathcal{V}^2$, the induced square
\[\begin{tikzcd}
&   ((\mathcal{V}^1)^U) _{\mathcal{F}_T}\arrow[d] \arrow[r, "\kappa^{\mathcal{V}^1}"] & \mathcal{V}^1_{\mathcal{F}_G}  \arrow[d] & \\
& ((\mathcal{V}^2)^U)_{\mathcal{F}_T}  \arrow[r, "\kappa^{\mathcal{V}^2}"] & \mathcal{V}^2_{\mathcal{F}_G} &
\end{tikzcd}
\]
commutes.
\item For two $G$-modules $\mathcal{V}^1$ and $\mathcal{V}^2$, we have that the diagram
\[\begin{tikzcd}
&   ((\mathcal{V}^1)^U \otimes (\mathcal{V}^2)^U)_{\mathcal{F}_T} \arrow[d] \arrow[r,"\kappa^{\mathcal{V}^1} \otimes \kappa^{\mathcal{V}^2}"] & \mathcal{V}^1_{\mathcal{F}_G} \otimes \mathcal{V}^2_{\mathcal{F}_G} \arrow[d,"id"] & \\
& ((\mathcal{V}^1 \otimes \mathcal{V}^2)^U)_{\mathcal{F}_T} \arrow[r, "\kappa^{\mathcal{V}^1 \otimes \mathcal{V}^2}"] & \mathcal{V}^1_{\mathcal{F}_G} \otimes \mathcal{V}^2_{\mathcal{F}_G} &
\end{tikzcd}\]
commutes.
\end{enumerate}
As mentioned above, the idea will now be to introduce torsion in the above definition. In particular, we have the following definition for $\ol{\Bun}_{B}$.
\begin{definition}{\label{defdrinfeld}}
We define $\ol{\Bun}_{B}$ to be the $v$-stack parameterizing, for $S \in \Perf$, triples $(\mathcal{F}_G,\mathcal{F}_T, \ol{\kappa}^{\mathcal{V}})$, where $\ol{\kappa}^{\mathcal{V}}$ is a map of $\mathcal{O}_{X_S}$-modules defined for every $G$-module $\mathcal{V}$
\[ (\mathcal{V}^U)_{\mathcal{F}_T} \rightarrow \mathcal{V}_{\mathcal{F}_G} \]
satisfying the following conditions:
\begin{itemize}
\item For every geometric point $s \ra S$, the pullback of $\ol{\kappa}^{\mathcal{V}}$ to the Fargues-Fontaine curve over $s$ is an injection of coherent sheaves.
\item The Plücker relations hold in the following sense:
\begin{enumerate}
\item For the trivial representation $\mathcal{V}$, $\ol{\kappa}^{\mathcal{V}}$ is the identity map $\mathcal{O} \rightarrow \mathcal{O}$.
\item For a $G$-module map $\mathcal{V}^1 \rightarrow \mathcal{V}^2$, the induced square
\[\begin{tikzcd}
&   ((\mathcal{V}^1)^U)_{\mathcal{F}_T} \arrow[d] \arrow[r, "\ol{\kappa}^{\mathcal{V}^1}"] & \mathcal{V}^1_{\mathcal{F}_G}  \arrow[d] & \\
& ((\mathcal{V}^2)^U)_{\mathcal{F}_T}  \arrow[r, "\ol{\kappa}^{\mathcal{V}^2}"] & \mathcal{V}^2_{\mathcal{F}_G} &
\end{tikzcd}
\]
commutes.
\item For two $G$-modules $\mathcal{V}^1$ and $\mathcal{V}^2$, we have that the diagram
\[\begin{tikzcd}
&   ((\mathcal{V}^1)^U \otimes (\mathcal{V}^2)^U)_{\mathcal{F}_T} \arrow[d] \arrow[r,"\ol{\kappa}^{\mathcal{V}^1} \otimes \ol{\kappa}^{\mathcal{V}^2}"] & \mathcal{V}^1_{\mathcal{F}_G} \otimes \mathcal{V}^2_{\mathcal{F}_G} \arrow[d,"id"] & \\
& ((\mathcal{V}^1 \otimes \mathcal{V}^2)^U)_{\mathcal{F}_T} \arrow[r, "\ol{\kappa}^{\mathcal{V}^1 \otimes \mathcal{V}^2}"] & \mathcal{V}^1_{\mathcal{F}_G} \otimes \mathcal{V}^2_{\mathcal{F}_G} &
\end{tikzcd}\]
commutes.
\end{enumerate}
\end{itemize}
\end{definition}
\begin{remark}
To simplify the notation, we will write $\mathcal{L}^{\hat{\lambda}} := (\mathcal{V}^{\hat{\lambda}})^{U}$ and $\ol{\kappa}^{\hat{\lambda}}$ for the embedding attached to the highest weight module of $G$ of highest weight $\hat{\lambda} \in \hat{\Lambda}_{G}^{+}$. 
\end{remark}
This gives rise to a well-defined $v$-stack, and, using this description, we get well-defined morphisms $\ol{\mathfrak{p}}: \ol{\Bun}_B \rightarrow \Bun_G$ and $\ol{\mathfrak{q}}: \ol{\Bun}_B \rightarrow \Bun_T$ via projecting the data $(\mathcal{F}_{G},\mathcal{F}_{T},\ol{\kappa})$ to the first and second factor, respectively. We also get a natural map $j: \Bun_{B} \ra \ol{\Bun}_{B}$, which will be an open immersion by \cite[Theorem~1.2.2 (i)]{HHS}. We now recall the stratification of $\overline{\Bun}_{B}$ For an element $\ol{\nu} \in \Lambda_{G,B}^{pos} \setminus \{0\}$, we write $\ol{\nu} = \sum_{i \in \mathcal{J}} n_{i}\alpha_{i}$ as a positive linear combination of the elements corresponding to $\Gamma$-orbits of simple positive coroots $\alpha_{i}$. We let $\Div^{(\ol{\nu})}$ be the partially symmetrized power of the mirror curve attached to it, as in \S \ref{sec: geomconsofweakgen}. We have a map of Artin $v$-stacks
\[ j_{\ol{\nu}}: \Div^{(\ol{\nu})} \times \Bun_{B} \ra \ol{\Bun}_{B} \]
sending a tuple $(\{(D_{i})_{i \in \mathcal{J}}\}, \mathcal{F}_{G},\mathcal{F}_{T},\kappa^{\hat{\lambda}})$, to the tuple
\[ (\mathcal{F}_{G},\mathcal{F}_{T}(-\sum_{i \in J}\alpha_{i} \cdot D_{i}), \ol{\kappa}^{\hat{\lambda}}) \]
where $\ol{\kappa}^{\hat{\lambda}}$ is the natural composition
\[ (\mathcal{L}^{\hat{\lambda}})_{\mathcal{F}_{T}}(-\sum_{i \in J}\langle \alpha_{i},\hat{\lambda} \rangle \cdot D_{i}) \ra \mathcal{L}^{\hat{\lambda}}_{\mathcal{F}_{T}} \xrightarrow{\kappa^{\hat{\lambda}}} (\mathcal{V}^{\hat{\lambda}})_{\mathcal{F}_{G}} \]
defined by the unique effective modification of $T$-bundles of the specified meremorphy and support. We now make the following definition. 
\begin{definition}{\label{defstrata}}
For $\ol{\nu} \in \Lambda_{G,B}^{\mathrm{pos}} \setminus \{0\}$, we define the $v$-stack $\phantom{}_{\ol{\nu}}\ol{\Bun}_{B}$ (resp. $_{\geq \ol{\nu}}\ol{\Bun}_{B}$) to be the locus where, for all $\hat{\lambda} \in \hat{\Lambda}_{G}^{+}$, the cokernels $\mathcal{V}^{\hat{\lambda}}/\Im{(\ol{\kappa}^{\hat{\lambda}})}$ have torsion of length equal to (resp. greater than) $\langle \hat{\lambda}, \ol{\nu} \rangle$ after pulling back to any geometric point. By \cite[Lemma~4.14 (i)]{HHS}, $_{\ol{\nu}}\ol{\Bun}_{B}$ is a locally closed substack of $\ol{\Bun}_{B}$, and the closure of $_{\ol{\nu}}\ol{\Bun}_{B}$ in $\ol{\Bun}_{B}$ is contained in $_{\geq \ol{\nu}}\ol{\Bun}_{B}$. Moreover, it follows by \cite[Proposition~4.2.8]{HHS} that the map $j_{\ol{\nu}}: \Div^{(\ol{\nu})} \times \Bun_{B} \ra \ol{\Bun}_{B}$ is a locally closed embedding with image isomorphic to $\phantom{}_{\ol{\nu}}\ol{\Bun}_{B}$.
\end{definition}
With these strata defined, we now discuss a key diagram that will be important for proving the filtered Hecke eigensheaf property.
\subsubsection{The Key Diagram}{\label{keydiagram}}
We would now like to describe how Hecke correspondences on $\Bun_{G}$ interact with pullback along the map $\ol{\mf{p}}: \ol{\Bun}_{B} \ra \Bun_{G}$. This will be used in \S 7 to show the filtered Hecke eigensheaf property for the geometric Eisenstein series, analogous to the analysis carried out in \cite[Section~3]{BG}. We fix a finite index set $I$, and consider the Hecke stacks $\Hck^{I}_{G,E}$ base-changed to the field $E$ over which $G$ splits. We consider the usual diagram
\[ \Bun_{G} \times \Div^{I}_{E}  \xleftarrow{h_{G}^{\leftarrow} \times \pi} \Hck_{G,E}^{I} \xrightarrow{h_{G}^{\rightarrow}} \Bun_{G} \]
as in \S \ref{GCFT} and \S \ref{geomsatakesection}. We now fix a tuple of geometric dominant cocharacters $(\lambda_{i})_{i \in I} \in (\mathbb{X}_{*}(T_{\ol{\mathbb{Q}}_{p}})^{+})^{I}$, and restrict to the locus $\Hck_{G,\leq (\lambda_{i})_{i \in I},E}$ where the meromorphy of this modification is bounded by $(\lambda_{i})_{i \in I}$. We define the $v$-stack $\overline{Z}^{I}_{(\lambda_{i})_{i \in I}}$ by the Cartesian diagram: 
\begin{equation*}
\begin{tikzcd}
&  \overline{Z}^{I}_{(\lambda_{i})_{i \in I}} \arrow[r,"'h_{G}^{\rightarrow}"] \arrow[d,"'\ol{\mf{p}}"] & \overline{\Bun}_{B}  \arrow[d,"\overline{\mathfrak{p}}"] \\
& \Hck^{I}_{G,\leq (\lambda_{i})_{i \in I},E} \arrow[r,"h_{G}^{\rightarrow}"] & \Bun_{G} 
\end{tikzcd}
\end{equation*}
By definition, $\overline{Z}^{I}_{(\lambda_{i})_{i \in I}}$ parametrizes pairs of $G$-bundles $(\mathcal{F}_{G},\mathcal{F}_{G}')$ together with a modification $\mathcal{F}_{G} \dashrightarrow \mathcal{F}_{G}'$ with meromorphy bounded by $\lambda_{i}$ at Cartier divisors $D_{i}$ for $i \in I$ and an enhanced $B$-structure on $\mathcal{F}_{G}'$ specified by maps $\overline{\kappa}'^{\hat{\lambda}}$ for $\hat{\lambda} \in \hat{\Lambda}_{G}^{+}$. The fact that the modification $\mathcal{F}_{G} \dashrightarrow \mathcal{F}_{G}'$ has meromorphy bounded by $(\lambda_{i})_{i \in I}$ implies that, for all $\hat{\lambda} \in \hat{\Lambda}_{G}^{+}$, we have an inclusion:
\[ \mathcal{V}^{\hat{\lambda}}_{\mathcal{F}_{G}'} \subset \mathcal{V}^{\hat{\lambda}}_{\mathcal{F}_{G}}(\sum_{i \in I} \langle \hat{\lambda}, -w_{0}(\lambda_{i\Gamma}) \rangle \cdot D_{i}) \]
Therefore, the embeddings \[ \ol{\kappa}'^{\hat{\lambda}}: \mathcal{L}^{\hat{\lambda}}_{\mathcal{F}_{T}'} \hookrightarrow \mathcal{V}^{\hat{\lambda}}_{\mathcal{F}_{G}'} \]
give rise to a map: 
\[ \ol{\kappa}^{\hat{\lambda}}: \mathcal{L}^{\hat{\lambda}}_{\mathcal{F}_{T}'}(\sum_{i \in I} \langle \hat{\lambda},w_{0}(\lambda_{i\Gamma}) \rangle \cdot D_{i}) \hookrightarrow \mathcal{V}^{\hat{\lambda}}_{\mathcal{F}_{G}} \]
This defines for us a morphism 
\[ \phi_{(\lambda_{i})_{i \in I}}: \ol{Z}^{I}_{(\lambda_{i})_{i \in I}} \rightarrow \overline{\Bun}_{B} \times \Div^{I}_{E} \]
which records the point in $\ol{\Bun}_{B} \times \Div^{I}_{E}$ defined by the pair $(\ol{\kappa}^{\hat{\lambda}},(D_{i})_{i \in I})$. This sits in a commutative diagram
\begin{equation} 
\begin{tikzcd}
& \ol{\Bun}_{B} \times \Div^{I}_{E} \arrow[d,"\ol{\mf{p}} \times \mathrm{id}"]  & \arrow[l,"\phi_{(\lambda_{i})_{i \in I}}"] \ol{Z}^{I}_{(\lambda_{i})_{i \in I}} \arrow[r,"'h_{G}^{\rightarrow}"] \arrow[d,"\phantom{}^{'}\ol{\mf{p}}"] & \ol{\Bun}_{B}  \arrow[d,"\ol{\mathfrak{p}}"] \\
& \Bun_{G} \times \Div^{I}_{E} & \arrow[l,"h_{G}^{\leftarrow} \times \pi"] \Hck^{I}_{G,\leq (\lambda_{i})_{i \in I},E} \arrow[r,"h_{G}^{\rightarrow}"] & \Bun_{G} 
\end{tikzcd}
\end{equation}
where we note that left square is not Cartesian. We will not use the space $\ol{Z}^{I}_{(\lambda_{i})_{i \in I}}$ at all in our arguments, but it should be important for future applications. For our purposes, we consider $Z^{I}_{(\lambda_{i})_{i \in I}}$, the space obtained by replacing $\ol{\Bun}_{B}$ with $\Bun_{B}$ on the right hand side of the diagram. This sits in an analogous diagram
\begin{equation}{\label{eqn: keydiagram}}
\begin{tikzcd}
& \ol{\Bun}_{B} \times \Div^{I}_{E} \arrow[d,"\ol{\mf{p}} \times \mathrm{id}"]  & \arrow[l,"\phi_{(\lambda_{i})_{i \in I}}"] Z^{I}_{(\lambda_{i})_{i \in I}} \arrow[r,"'h_{G}^{\rightarrow}"] \arrow[d,"\phantom{}^{'}\mf{p}"] & \Bun_{B}  \arrow[d,"\mathfrak{p}"] \\
& \Bun_{G} \times \Div^{I}_{E} & \arrow[l,"h_{G}^{\leftarrow} \times \pi"] \Hck^{I}_{G,\leq (\lambda_{i})_{i \in I},E} \arrow[r,"h_{G}^{\rightarrow}"] & \Bun_{G} 
\end{tikzcd}
\end{equation}
As we will see, the proof of the filtered Hecke eigensheaf Property will ultimately reduce to contemplating the fibers of the morphism $\phi_{(\lambda_{i})_{i \in I}}$. For our purposes, it will suffice to consider the pullback of this diagram to a geometric point $\Spa(F,\mathcal{O}_{F}) \ra \Div^{I}_{E}$. We denote the resulting space by $_{x}\overline{Z}^{I}_{(\lambda_{i})_{i \in I}}$. It sits in a diagram of the form
\begin{equation*}
\begin{tikzcd}
& \phantom{}_{x}\ol{\Bun}_{B} \arrow[d,"\ol{\mf{p}}"] & \arrow[l,"\phi_{(\lambda_{i})_{i \in I}}"] _{x}\ol{Z}^{I}_{(\lambda_{i})_{i \in I}} \arrow[r,"'h_{G}^{\rightarrow}"] \arrow[d,"'\ol{\mf{p}}"] & \overline{\Bun}_{B}  \arrow[d,"\overline{\mathfrak{p}}"] \\
& \phantom{}_{x}\Bun_{G} & \arrow[l,"h_{G}^{\leftarrow} \times \pi"] _{x}\Hck^{I}_{G,\leq (\lambda_{i})_{i \in I},E} \arrow[r,"h_{G}^{\rightarrow}"] & \Bun_{G} 
\end{tikzcd}
\end{equation*}
where $_{x}\Hck^{I}_{G,\leq (\lambda_{i})_{i \in I},E}$ is the Hecke stack parameterizing modifications at the tuple of Cartier divisors $(D_{i})_{i \in I}$ corresponding to $x$ and $\phantom{}_{x}\ol{\Bun}_{B}$ (resp. $\phantom{}_{x}\Bun_{G}$) denotes the base change of $\ol{\Bun}_{B}$ (resp. $\Bun_{G}$) to $\Spa(F,\mathcal{O}_{F})$. Consider a tuple $(\nu_{i})_{i \in I} \in (\cochar)^{I}$ lying in the span of the positive coroots and write $\ol{\nu} := \sum_{i \in I} \nu_{i\Gamma}$. Let $E_{\nu_{i}}$ denote the reflex field of $\nu_{i}$. We view the geometric point $x \ra \Div^{I}$ as a geometric point of $\Div^{(\ol{\nu})}$ via composing with the map
\[ \Delta_{(\nu_{i})_{i \in I}}: \Div^{I}_{E}  \xrightarrow{\prod_{i \in I} \triangle_{\nu_{i}}} \prod_{i \in I} \Div^{(\nu_{i\Gamma})} \ra \Div^{(\ol{\nu})} \] 
where the last map is given by taking the union of Cartier divisors and $\Delta_{\nu_{i}}$ is the twisted diagonal embedding described in \S \ref{sec: geomconsofweakgen}. We set $_{x,(\nu_{i})_{i \in I}}\ol{\Bun}_{B}$ to be the pullback of the locally closed stratum $\phantom{}_{\ol{\nu}}\ol{\Bun}_{B} \simeq \Div^{(\ol{\nu})} \times \Bun_{B}$ to this geometric point. The substack $_{x,(\nu_{i})_{i \in I}}\ol{\Bun}_{B}$ corresponds to the locus where the embeddings
\[ \mathcal{L}^{\hat{\lambda}}_{\mathcal{F}_{T}} \rightarrow \mathcal{V}^{\hat{\lambda}}_{\mathcal{F}_{G}} \]
have a zero of order given by $\langle \nu_{i}, \hat{\lambda} \rangle$ at $D_{i}$ for all dominant characters $\hat{\lambda}$ of $G$ and all $i \in I$, and a zero nowhere else. We now consider the open substack $_{x,0}\ol{\Bun}_{B} = \phantom{}_{x}\Bun_{B}$. Since the maps $\kappa$ have no zero at the Cartier divisors corresponding to $x$, it follows that they define an $L^{+}B$ torsor over $_{x,0}\ol{\Bun}_{B}$, which we denote by $_{x}\mathcal{B}$. We then consider the map
\[ i_{(\nu_{i})_{i \in I}}: \ol{\Bun}_{B} \times \Div^{I}_{E} \rightarrow \ol{\Bun}_{B} \times \Div^{I}_{E} \]
given by sending $(\mathcal{F}_{G},\mathcal{F}_{T},\ol{\kappa}^{\hat{\lambda}},(D_{i})_{i \in I})$ to the object $(\mathcal{F}_{G},\mathcal{F}_{T}(-\sum_{i \in I} \nu_{i} \cdot D_{i}),\mathcal{L}^{\hat{\lambda}}_{\mathcal{F}_{T}}(- \sum_{i \in I} \langle \nu_{i\Gamma}, \hat{\lambda} \rangle \cdot D_{i}) \hookrightarrow \mathcal{L}^{\hat{\lambda}}_{\mathcal{F}_{T}} \hookrightarrow \mathcal{V}^{\hat{\lambda}}_{\mathcal{F}_{G}},(D_{i})_{i \in I})$, where the map $\mathcal{L}^{\hat{\lambda}}_{\mathcal{F}_{T}}(- \sum_{i \in I} \langle \nu_{i\Gamma}, \hat{\lambda} \rangle \cdot D_{i}) \hookrightarrow \mathcal{L}^{\hat{\lambda}}_{\mathcal{F}_{T}}$ is specified by the $\nu_{i}$ at the $D_{i}$. Note that the map $i_{(\nu_{i})_{i \in I}}$ defines an isomorphism between the pullbacks $_{x,0}\ol{\Bun}_{B}$ and $_{x,(\nu_{i})_{i \in I}}\ol{\Bun}_{B}$. Therefore, by transport of structure, we get a $L^{+}B$-torsor, denoted $_{x}\mathcal{B}^{(\nu_{i})_{i \in I}}$, over $_{x,(\nu_{i})_{i \in I}}\ol{\Bun}_{B}$. For $(\nu_{i})_{i \in I} \in (\cochar)^{I}$ lying in the span of the positive coroots, we let $\phantom{}_{x}Z^{I,\ast,(\nu_{i})_{i \in I}}_{(\lambda_{i})_{i \in I}}$ (resp. $\phantom{}_{x}Z^{I,(\nu_{i})_{i \in I},\ast}_{(\lambda_{i})_{i \in I}}$) be the fibers of $'h_{G}^{\rightarrow}$ (resp. $\phi_{(\lambda_{i})_{i \in I})}$) over  $_{x,(\nu_{i})_{i \in I}}\ol{\Bun}_{B}$. We now have the following Lemma describing these subspaces, which is an analogue of \cite[Lemma~3.3.6]{BG}. 
\begin{lemma}{\label{phifibers}}
For tuples $(\nu_{i})_{i \in I},(\nu'_{i})_{i \in I} \in (\cochar)^{I}$ lying in the span of the positive coroots, geometric dominant cocharacters $(\lambda_{i})_{i \in I} \in (\mathbb{X}_{*}(T_{\ol{\mathbb{Q}}_{p}})^{+})^{I}$, and a geometric point $x \ra \Div^{I}_{E}$, the following is true.
\begin{enumerate}
    \item There is an isomorphism 
    \[ _{x}Z^{I,\ast,(\nu'_{i})_{i \in I}}_{(\lambda_{i})_{i \in I}} \simeq
    \phantom{}_{x}\Gr^{I}_{G,\leq (-w_{0}(\lambda_{i}))_{i \in I}} \times^{L^{+}B} \phantom{}_{x}\mathcal{B}^{(\nu'_{i})_{i \in I}} \]
    where the $L^{+}B$ action on $_{x}\Gr^{I}_{G,\leq (-w_{0}(\lambda_{i}))_{i \in I}}$ is given by the inclusion $L^{+}B \hookrightarrow L^{+}G$.
    \item Under the identification in (1), the substack $\phantom{}_{x}Z^{I,(\nu_{i})_{i \in I},(\nu'_{i})_{i \in I}}_{(\lambda_{i})_{i \in I}} \hookrightarrow \phantom{}_{x}Z^{I,\ast,(\nu'_{i})_{i \in I}}_{(\lambda_{i})_{i \in I}}$ identifies with the substack
    \[ \phantom{}_{x}\Gr^{I}_{G,\leq (-w_{0}(\lambda_{i}))_{i \in I},E} \cap \phantom{}_{x}\mathrm{S}^{I}_{G,(-w_{0}(\lambda_{i}) - \nu_{i} + \nu'_{i})_{i \in I},E} \times^{L^{+}B} \phantom{}_{x}\mathcal{B}^{(\nu'_{i})_{i \in I}} \subset \phantom{}_{x}\Gr^{I}_{G,\leq (-w_{0}(\lambda_{i}))_{i \in I},E} \times^{L^{+}B} \phantom{}_{x}\mathcal{B}^{(\nu'_{i})_{i \in I}} \]
    \item When viewed as a stack projecting to $_{x,(\nu_{i})_{i \in I}}\ol{\Bun}_{B}$, the stack $\phantom{}_{x}Z_{(\lambda_{i})_{i \in I}}^{I,(\nu_{i})_{i \in I},(\nu'_{i})_{i \in I}}$ identifies with
    \[ \phantom{}_{x}\Gr^{I}_{G,\leq (\lambda_{i})_{i \in I},E} \cap \phantom{}_{x}\mathrm{S}^{I}_{G,(\nu_{i} - \nu'_{i} + w_{0}(\lambda_{i}))_{i \in I},E} \times^{L^{+}B}  \phantom{}_{x}\mathcal{B}^{(\nu_{i})_{i \in I}} \]
\end{enumerate}
\end{lemma}
\begin{proof}
Follows from the definitions and the description of the semi-infinite cells mentioned in the remark proceeding Definition \ref{defsemiinfinite}.  
\end{proof}
We now apply this diagram to prove the filtered eigensheaf property.
\section{The Filtered Eigensheaf Property}
\subsection{Proof of the Filtered Eigensheaf Property}
We would now like the describe how Hecke correspondences on $\Bun_{G}$ interact with the Eisenstein functor. This will be used to show the Hecke eigensheaf property. Our analysis is heavily inspired by \cite[Section~3]{BG}, where an analogous claim is proven in the classical case. However, unlike the arguments there, we cannot appeal to the decomposition theorem, as the usual formalism of weights doesn't exist in this context. Nonetheless, we still have the excision spectral sequence, which will give us a filtration on the Eisenstein series. In \S \ref{consteigsheaf}, we will show that, under the condition of $\mu$-regularity (Definition \ref{def: strongmureg}), this filtration splits for the Hecke operator defined by $V_{\mu^{\Gamma}} \in \Rep_{\Lambda}(\phantom{}^{L}G)$ when applied to $\mathcal{F} = \mathcal{S}_{\phi_{T}}$. 

Our goal is the following Theorem. 
\begin{theorem}{\label{filteigensheaf}}
For $\mathcal{F} \in \D(\Bun_{T})$, $I$ a finite index set, and $V \in \Rep_{\Lambda}(\phantom{}^{L}G^{I})$, the sheaf $T_{V}(\Eis(\mathcal{F}))$ has a $W_{\mathbb{Q}_{p}}^{I}$-equivariant filtration indexed by $(\nu_{i})_{i \in I} \in (\gamorb)^{I}$. The filtration's graded pieces are isomorphic to
\[ \Eis(T_{(\nu_{i})_{i \in I}}(\mathcal{F})) \otimes V((\nu_{i})_{i \in I})(-\langle \hat{\rho}, \sum_{i \in I} \nu_{i\Gamma} \rangle) \]
as sheaves in $\D(\Bun_{G})^{BW_{\mathbb{Q}_{p}}^{I}}$. Moreover, the filtration is natural in $I$ and $V$, as well as compatible with compositions and exterior tensor products in $V$. 
\end{theorem}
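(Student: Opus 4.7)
The strategy follows that of Braverman–Gaitsgory in the function-field setting, carried over to the Fargues–Fontaine context via the key diagram of \S \ref{keydiagram}. First, fix geometric dominant cocharacters $(\lambda_i)_{i \in I}$ large enough that $\mathcal{S}_V$ is supported on $\Hck^I_{G,\leq(\lambda_i),E}$. Base change along the right Cartesian square of that diagram, combined with the projection formula and the commutativity of the outer rectangle, yields
\[ T_V(\Eis(\mathcal{F})) \simeq (\ol{\mf{p}} \times \mathrm{id})_!\, \phi_{(\lambda_i)!}\bigl((\phantom{}^{'}h_G^{\ra})^*(\mf{q}^*\mathcal{F}[\dim(\Bun_B)]) \otimes (\phantom{}^{'}\mf{p})^*\mathcal{S}_V\bigr). \]
The problem therefore reduces to analyzing the pushforward $\phi_{(\lambda_i)!}(\cdots)$ on $\ol{\Bun}_B \times \Div^I_E$ and then applying $(\ol{\mf{p}} \times \mathrm{id})_!$.

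Next, I produce the filtration by stratifying $\ol{\Bun}_B \times \Div^I_E$. Proposition \ref{prop: strata well-behaved} identifies the strata of $\ol{\Bun}_B$ with $\Bun_B \times \Div^{(\ol{\nu})}$ for $\ol{\nu} \in \Lambda_{G,B}^{pos}$. Using the twisted diagonal embeddings $\Delta_\nu$ of \S \ref{sec: geomconsofweakgen}, this refines to a locally closed stratification of $\ol{\Bun}_B \times \Div^I_E$ indexed by $(\nu_i) \in (\gamorb)^I$, where the $(\nu_i)$-stratum records the locus at which the singularity of the $B$-structure is supported on the $I$ Cartier divisors from $\Div^I_E$ with weights given by the $\nu_i$. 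Since this stratification is $W_{\mathbb{Q}_p}^I$-stable, excision produces a $W_{\mathbb{Q}_p}^I$-equivariant filtration on $\phi_{(\lambda_i)!}(\cdots)$, and hence on $T_V(\Eis(\mathcal{F}))$.

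To identify the graded piece at $(\nu_i)$, I invoke Lemma \ref{phifibers} with $\nu'_i = 0$: the fiber of $\phi_{(\lambda_i)}$ over a point of the $(\nu_i)$-stratum, pulled back to a geometric point $x \to \Div^I_E$, is the intersection $\phantom{}_x\Gr^I_{G,\leq -w_0(\lambda_i),E} \cap \phantom{}_x\mathrm{S}^I_{G,-w_0(\lambda_i)-\nu_i,E}$, twisted by the $L^+B$-torsor $\phantom{}_x\mathcal{B}$. The compactly supported cohomology of $\mathcal{S}_V$ on this intersection is computed by Corollary \ref{highweightcohom} to be the weight space $V((\nu_i))(-\langle \hat{\rho}, \sum_{i \in I} \nu_{i\Gamma} \rangle)$, concentrated in degree $-\langle 2\hat{\rho}, \sum_{i \in I} \nu_{i\Gamma} \rangle$. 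Under the identification of the stratum with $\Bun_B \times \Div^{(\sum \nu_{i\Gamma})}$, the cohomological shift is absorbed into the change in the $\dim(\Bun_B)$-function, and the twist of the underlying $T$-bundle by $-\sum_i \nu_{i\Gamma} \cdot D_i$ identifies the restriction of $\mf{q}^*\mathcal{F}$ with $\mf{q}^* T_{(\nu_i)}(\mathcal{F})$. Pushing forward via $(\ol{\mf{p}} \times \mathrm{id})_!$ then produces the claimed graded piece $\Eis(T_{(\nu_i)}(\mathcal{F})) \otimes V((\nu_i))(-\langle \hat{\rho}, \sum_i \nu_{i\Gamma} \rangle)$.

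Finally, naturality in $V$ and compatibility with compositions follow formally from the functoriality of base change, excision, and the factorization structure on the Hecke stacks. Compatibility with exterior tensor products reduces, via restriction to the disjoint locus $\Div^{I;I_1,\ldots,I_k}_E$, to the fusion-compatibility of constant terms in Corollary \ref{fusionconstant2}. The main technical obstacle I anticipate is the precise bookkeeping in the excision step: rigorously setting up the $W_{\mathbb{Q}_p}^I$-equivariant filtration in the lisse-\'etale/solid framework of \cite{FS}, and ensuring that all Tate twists and cohomological shifts match exactly — in particular, that the Galois-twisted Tate twist inherent in the Fargues–Scholze geometric Satake normalization combines cleanly with the shift $\langle 2\hat{\rho}, \sum_i \nu_{i\Gamma} \rangle$ coming from the difference between the $\dim(\Bun_B)$-function on the stratum and on $\Bun_B$ itself.
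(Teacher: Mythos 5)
Your overall strategy matches the paper's exactly: key diagram, base change, excision with respect to the torsion stratification of $\ol{\Bun}_{B}$, then the cohomology of semi-infinite cells via Corollary \ref{highweightcohom}. But there is a genuine gap in the identification of the graded piece, tied to a missing $w_{0}(\lambda_{i})$-shift.

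First, you cite Lemma \ref{phifibers}~(2), but that part describes $_{x}Z^{I,(\nu_{i}),(\nu'_{i})}$ as a substack of $_{x}Z^{I,\ast,(\nu'_{i})}$, i.e.\ relative to the fiber of $'h_{G}^{\rightarrow}$ (the ``right'' leg) and twisted by $_{x}\mathcal{B}^{(\nu'_{i})}$. For computing $j_{(\nu_{i})}^{*}\phi_{(\lambda_{i})!}$ one needs the description of the fiber of $\phi_{(\lambda_{i})}$ over the $(\nu_{i})$-stratum, which is Lemma \ref{phifibers}~(3). With $\nu'_{i}=0$, part (3) identifies the fiber with $_{x}\Gr^{I}_{G,\leq(\lambda_{i}),E}\cap\phantom{}_{x}\mathrm{S}^{I}_{G,(\nu_{i}+w_{0}(\lambda_{i})),E}$, so Corollary \ref{highweightcohom} produces the weight space $V_{i}(\nu_{i}+w_{0}(\lambda_{i}))$, \emph{not} $V_{i}(\nu_{i})$.

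Second, and for the same reason, your claim that the restriction of $\mf{q}^{*}\mathcal{F}$ to the $(\nu_{i})$-stratum is $\mf{q}^{*}T_{(\nu_{i})}(\mathcal{F})$ is incorrect: the Hecke modification in the key diagram already twists the underlying $T$-bundle by $w_{0}(\lambda_{i})$ before the torsion $(\nu_{i})$ is added. This is precisely the content of Lemma \ref{lemma: factormaps}, which you do not invoke; its parts (1) and (2) show the restriction is $\mf{q}^{*}T_{(w_{0}(\lambda_{i})+\nu_{i})}(\mathcal{F})$. Consequently, with $(\nu_{i})$ denoting the torsion index of the stratum, the graded piece is $\Eis(T_{(w_{0}(\lambda_{i})+\nu_{i})}(\mathcal{F}))\otimes V_{i}(w_{0}(\lambda_{i})+\nu_{i})(-\langle\hat{\rho},w_{0}(\lambda_{i})+\nu_{i}\rangle)$, and the theorem's formula is then obtained by the re-indexing $\mu_{i}:=w_{0}(\lambda_{i})+\nu_{i}$, using that $w_{0}(\lambda_{i})$ is the lowest weight of $V_{i}$ so that this re-indexing is onto the weight support. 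As written, your proof conflates the torsion index with the Hecke weight and so asserts the conclusion without the step that produces it.
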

Let $E/\mathbb{Q}_{p}$ be an extension over which $G$ splits. The value of $T_{V}(\nmEis(\mathcal{S}_{\phi_{T}}))$ is determined by the value of the Hecke correspondence base-changed to $E$, using \cite[Corollary~V.2.3]{FS} (See \cite[Page~314]{FS}). Here the correspondence is determined by a representation $V = \boxtimes_{i \in I} V_{i} \in \Rep_{\Lambda}(\hat{G}^{I})$ of $I$-copies of the dual group. We let $\lambda_{i} \in \domcochar$ be the highest weight of $V_{i}$. By taking direct sums, we can assume WLOG that $V$ has a fixed central character.  We let $\mathcal{S}_{V}$ be the $\Lambda$-valued sheaf on $\Hck^{I}_{G,\leq (\lambda_{i})_{i \in I},E}$ defined via Theorem \ref{geomsatake}. Our aim is to construct a $W_{E}^{I}$-equivariant filtration on the sheaf:
\[ T_{V}(\Eis(\mathcal{F})) =  (h^{\rightarrow}_{G} \times \pi)_{!}(h_{G}^{\rightarrow*}(\mf{p}_{!}\mf{q}^{*}(\mathcal{F})[\dim(\Bun_{B})]) \otimes \mathcal{S}_{V}) \in \D(\Bun_{G})^{BW_{E}^{I}}\]
This will be accomplished by contemplating the diagram
\[ 
\begin{tikzcd}
& \ol{\Bun}_{B} \times \Div^{I}_{E} \arrow[d,"\ol{\mf{p}} \times \mathrm{id}"] & \arrow[l,"\phi_{(\lambda_{i})_{i \in I}}"] Z^{I}_{(\lambda_{i})_{i \in I}} \arrow[r,"'h_{G}^{\rightarrow}"] \arrow[d,"'\mf{p}"] & \Bun_{B}  \arrow[d,"\mathfrak{p}"] \\
& \Bun_{G} \times \Div^{I}_{E} & \arrow[l,"h_{G}^{\leftarrow} \times \pi"] \Hck^{I}_{G,\leq (\lambda_{i})_{i \in I},E} \arrow[r,"h_{G}^{\rightarrow}"] & \Bun_{G} 
\end{tikzcd}
 \]
as defined in \S \ref{keydiagram} (\ref{eqn: keydiagram}). Using base-change on the right Cartesian square, we get an isomorphism  
\[ T_{V}(\Eis(\mathcal{F})) \simeq (h^{\rightarrow}_{G} \times \pi)_{!}(\phantom{}^{'}\mf{p}_{!}\phantom{}^{'}h_{G}^{\rightarrow*}\mf{q}^{*}(\mathcal{F})[\dim(\Bun_{B})] \otimes \mathcal{S}_{V}) \]
but, applying the projection formula with respect to $\mf{p}'$, this becomes
\[ T_{V}(\Eis(\mathcal{F})) \simeq  (h^{\rightarrow}_{G} \times \pi)_{!}\phantom{}^{'}\mf{p}_{!}('h_{G}^{\rightarrow*}(\mf{q}^{*}(\mathcal{F})[\dim(\Bun_{B})]) \otimes \phantom{}^{'}\mf{p}^{*}(\mathcal{S}_{V})) \]
We define
\[ K_{V} :=  \phantom{}^{'}h_{G}^{\rightarrow*}(\Lambda[\dim(\Bun_{B})]) \otimes \phantom{}^{'}\mf{p}^{*}(\mathcal{S}_{V}) \]
allowing us to rewrite our formula nicely as
\begin{equation}
T_{V}(\Eis(\mathcal{F})) \simeq  (h^{\rightarrow}_{G} \times \pi)_{!} \phantom{}^{'}\mf{p}_{!}('h_{G}^{\rightarrow*}\mf{q}^{*}(\mathcal{F}) \otimes K_{V}) =  (\ol{\mf{p}} \times \mathrm{id})_{!}\phi_{(\lambda_{i})_{i \in I}!}('h_{G}^{\rightarrow*}\mf{q}^{*}(\mathcal{F}) \otimes K_{V})
\end{equation}

Now we would like to reduce the claim to applying excision to $\phi_{(\lambda_{i})_{i \in I}!}(K_{V})$ with respect to a locally closed stratification of $\ol{\Bun}_{B} \times \Div^{I}_{E}$. The claim should then follow from Corollary \ref{highweightcohom} and Lemma \ref{phifibers}. In order to do this, let's further rewrite the formula. We recall that, for $(\nu_{i})_{i \in I} \in (\cochar)^{I}$, we have a map
\[ i_{(\nu_{i})_{i \in I}}: \ol{\Bun}_{B} \times \Div^{I}_{E} \rightarrow \ol{\Bun}_{B} \times \Div^{I}_{E} \]
sending the tuple $(\mathcal{F}_{G},\mathcal{F}_{T},\ol{\kappa}^{\hat{\lambda}},(D_{i})_{i \in I})$ to the tuple $(\mathcal{F}_{G},\mathcal{F}_{T}(-\sum_{i \in I} \nu_{i} \cdot D_{i}),\mathcal{L}^{\hat{\lambda}}_{\mathcal{F}_{T}}(- \sum_{i \in I} \langle \nu_{i}, \hat{\lambda} \rangle \cdot D_{i}) \hookrightarrow \mathcal{L}^{\hat{\lambda}}_{\mathcal{F}_{T}} \hookrightarrow \mathcal{V}^{\hat{\lambda}}_{\mathcal{F}_{G}},(D_{i})_{i \in I})$. We also have the map
\[ h_{(\nu_{i})_{i \in I}}^{\ra}: \Bun_{T} \times \Div^{I}_{E} \simeq \Hck^{I}_{T,(\nu_{i})_{i \in I}} \rightarrow \Bun_{T} \]
as in \S \ref{GCFT} which is given by modifying a $T$-bundle by $(\nu_{i})_{i \in I}$ at a tuple of divisors $(D_{i})_{i \in I}$ defining a point in $\Div^{I}_{E}$. Then, for $\mathcal{F} \in \D(\Bun_{T})$, we recall that we have an identification $(h_{(\nu_{i})_{i \in I}}^{\ra})^{*}(\mathcal{F}) = T_{(\nu_{i})_{i \in I}}(\mathcal{F})$ of sheaves in $\D(\Bun_{T})^{BW_{E}^{I}}$. Now, we can verify the following easy Lemma, which follows from the definition of $\phi_{(\lambda_{i})_{i \in I}}$.
\begin{lemma}{\label{lemma: factormaps}}
The following is true.
\begin{enumerate}
    \item The maps $h_{(w_{0}(\lambda_{i}))_{i \in I}}^{\ra} \circ (\ol{\mf{q}} \times \mathrm{id}) \circ \phi_{(\lambda_{i})_{i \in I}}$ and $\mf{q} \circ \phantom{}^{'}h_{G}^{\rightarrow}$ from $Z^{I}_{(\lambda_{i})_{i \in I}}$ to $\Bun_{T}$ coincide.  
    \item For every $(\nu_{i})_{i \in I} \in (\cochar)^{I}$, the maps $(\ol{\mf{q}} \times \mathrm{id}) \circ i_{(\nu_{i})_{i \in I}}$ and $(h_{(\nu_{i})_{i \in I}}^{\ra} \times \mathrm{id}) \circ (\ol{\mf{q}} \times \mathrm{id})$ from $\ol{\Bun}_{B} \times \Div^{I}_{E}$ to $\Bun_{T} \times \Div^{I}_{E}$ coincide. 
\end{enumerate}
\end{lemma}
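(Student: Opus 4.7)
Both parts are equalities of compositions of morphisms between moduli stacks whose action on $S$-points is explicit, so the strategy is to trace a test point through each side and compare the resulting $T$-bundle, which is the only surviving datum after applying $\mf{q}$ or $\ol{\mf{q}}$. The relevant definitions are those of $\phi_{(\lambda_{i})_{i \in I}}$ and $i_{(\nu_{i})_{i \in I}}$ from \S \ref{keydiagram} and of $h_{(\nu_{i})_{i \in I}}^{\ra}$ from \S \ref{GCFT}.

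For part (2) the check is immediate. Starting from $((\mathcal{F}_{G}, \mathcal{F}_{T}, \ol{\kappa}), (D_{i})_{i \in I})$, the left-hand composition replaces $\mathcal{F}_{T}$ by $\mathcal{F}_{T}(-\sum_{i} \nu_{i} \cdot D_{i})$ via $i_{(\nu_{i})_{i \in I}}$ and then forgets everything but the $T$-bundle and the divisors, while the right-hand composition first projects to $(\mathcal{F}_{T}, (D_{i})_{i \in I})$ and then applies $h_{(\nu_{i})_{i \in I}}^{\ra}$, which by the convention recalled in \S \ref{GCFT} outputs the same pair $(\mathcal{F}_{T}(-\sum_{i} \nu_{i} \cdot D_{i}), (D_{i})_{i \in I})$.

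Part (1) is the substance of the lemma. I would start from a point $(\mathcal{F}_{G}, (\mathcal{F}_{G}', \mathcal{F}_{T}', \kappa'), \beta, (D_{i})_{i \in I}) \in Z^{I}_{(\lambda_{i})_{i \in I}}$, where $\beta: \mathcal{F}_{G} \dashrightarrow \mathcal{F}_{G}'$ is the underlying modification of meromorphy bounded by $(\lambda_{i})_{i \in I}$. The composition $\mf{q} \circ \phantom{}^{'}h_{G}^{\ra}$ tautologically returns $\mathcal{F}_{T}'$, so the task reduces to identifying the $T$-bundle underlying the output of $\phi_{(\lambda_{i})_{i \in I}}$. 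The definition from \S \ref{keydiagram} assembles, for each $\hat{\lambda} \in \hat{\Lambda}_{G}^{+}$, the twisted embedding
\[ \ol{\kappa}^{\hat{\lambda}}: \mathcal{L}^{\hat{\lambda}}_{\mathcal{F}_{T}'}\bigl(\textstyle\sum_{i} \langle \hat{\lambda}, w_{0}(\lambda_{i\Gamma}) \rangle \cdot D_{i}\bigr) \hookrightarrow \mathcal{V}^{\hat{\lambda}}_{\mathcal{F}_{G}}, \]
so by the Tannakian dictionary the new $T$-bundle is $\mathcal{F}_{T}'$ twisted by $+w_{0}(\lambda_{i\Gamma})$ at each $D_{i}$. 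Applying $h_{(w_{0}(\lambda_{i}))_{i \in I}}^{\ra}$ next twists back by $-w_{0}(\lambda_{i\Gamma})$ at each $D_{i}$, per the sign convention of \S \ref{GCFT}, and the two modifications cancel to recover $\mathcal{F}_{T}'$.

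\textbf{Main obstacle.} There is essentially no obstacle; both statements are bookkeeping. The only delicate point is keeping the sign conventions straight, as $\phi_{(\lambda_{i})_{i \in I}}$ twists up by $w_{0}(\lambda_{i\Gamma})$ at the Cartier divisors while $h_{(\nu_{i})_{i \in I}}^{\ra}$ is defined to twist down by $\nu_{i}$, and the choice of subscript $(w_{0}(\lambda_{i}))_{i \in I}$ in the statement of part (1) is precisely engineered so that these two effects cancel.
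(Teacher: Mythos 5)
Your proposal is correct and takes the same approach as the paper, which in fact gives no detail beyond ``follows from the definitions''; your write-up fills in the diagram chase correctly, including the sign cancellation in part (1) between the twist by $+w_0(\lambda_{i\Gamma})$ built into $\phi_{(\lambda_i)_{i\in I}}$ and the twist by $-w_0(\lambda_{i\Gamma})$ in $h^{\ra}_{(w_0(\lambda_i))_{i\in I}}$.
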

Now, with this in hand, let's revisit equation (4):
\[ T_{V}(\Eis(\mathcal{F})) \simeq (\ol{\mf{p}} \times \mathrm{id})_{!} \phi_{(\lambda_{i})_{i \in I}!}('h_{G}^{\rightarrow*}\mf{q}^{*}(\mathcal{F}) \otimes K_{V}) \]
Using Lemma \ref{lemma: factormaps} (1), we have that 
\[ 'h_{G}^{\rightarrow*}\mf{q}^{*}(\mathcal{F}) \simeq \phi_{(\lambda_{i})_{i \in I}}^{*}(\ol{\mf{q}} \times \mathrm{id})^{*}(h_{(w_{0}(\lambda_{i}))_{i \in I}}^{\ra})^{*}(\mathcal{F}) \]
substituting this in and applying projection formula with respect to $\phi_{(\lambda_{i})_{i \in I}}$, we can rewrite the RHS as
\[(\ol{\mf{p}} \times \mathrm{id})_{!}((\ol{\mf{q}} \times \mathrm{id})^{*}(h_{(w_{0}(\lambda_{i}))_{i \in I}}^{\ra})^{*}(\mathcal{F}) \otimes \phi_{(\lambda_{i})_{i \in I}!}(K_{V})) \]
Now we claim that we have the following description of $\phi_{(\lambda_{i})_{i \in I}!}(K_{V})$.
\begin{theorem}{\label{phifilt}}
The sheaf $\phi_{(\lambda_{i})_{i \in I}!}(K_{V}) \in \D(\ol{\Bun}_{B})^{BW_{E}^{I}}$ has a $W_{E}^{I}$-equivariant filtration indexed by $(\nu_{i})_{i \in I} \in (\cochar)^{I}$. The graded pieces of this filtration are given by 
\[ \boxtimes_{i \in I} (i_{\nu_{i}!}(j \times \mathrm{id})_{!}(\Lambda[\dim(\Bun_{B})])) \otimes V_{i}(w_{0}(\lambda_{i}) + \nu_{i})(-\langle \hat{\rho}, w_{0}(\lambda_{i}) + \nu_{i} \rangle)[-\langle \hat{\rho}, w_{0}(\lambda) + \nu_{i} \rangle]  \]
\end{theorem}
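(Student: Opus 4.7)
The strategy is to apply excision to the pushforward $\phi_{(\lambda_{i})_{i \in I}!}(K_{V})$ with respect to a locally closed stratification of the target $\ol{\Bun}_{B} \times \Div^{I}_{E}$, and then identify each graded piece using the fiber description of Lemma \ref{phifibers} together with the cohomology computation of Corollary \ref{highweightcohom}. This is an analogue of the proof of \cite[Proposition~3.3.5]{BG}, but with the Mirkovi\'c--Vilonen theorem replaced by its diamond incarnation appearing in Proposition \ref{constantterm}.

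Concretely, I would first fix a geometric point $x \to \Div^{I}_{E}$ and stratify the fiber $_{x}\ol{\Bun}_{B}$ by the locally closed substacks $_{x,(\nu_{i})_{i \in I}}\ol{\Bun}_{B}$ for varying $(\nu_{i})_{i \in I} \in (\cochar)^{I}$, as constructed in \S \ref{keydiagram}. By Proposition \ref{prop: strata well-behaved} combined with the twisted diagonal $\Delta_{(\nu_{i})_{i \in I}}$, each such stratum is the image of $\Bun_{B}$ under the composition of $i_{(\nu_{i})_{i \in I}}$ with the twisted diagonal, so the pushforward  $(i_{(\nu_{i})_{i \in I}})_{!}(j \times \mathrm{id})_{!}(\Lambda[\dim(\Bun_{B})])$ is the $!$-extension from this stratum of the shifted constant sheaf on $\Bun_{B}$. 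Applying excision and noting that $\phi_{(\lambda_{i})_{i \in I}}$ pulls this stratification back to the decomposition of $Z^{I}_{(\lambda_{i})_{i \in I}}$ into the pieces $_{x}Z^{I,(\nu_{i})_{i \in I},0}_{(\lambda_{i})_{i \in I}}$ produces the desired filtration on $\phi_{(\lambda_{i})_{i \in I}!}(K_{V})|_{_{x}\ol{\Bun}_{B}}$. Globalizing this to all of $\ol{\Bun}_{B} \times \Div^{I}_{E}$ uses the ULA property of $\mathcal{S}_{V}$ over $\Div^{I}$ and the factorization structure, so that each cohomology sheaf of the relevant pushforward is a local system on $\Div^{I}_{E}$, giving the claimed $W_{E}^{I}$-equivariance.

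To identify the graded piece indexed by $(\nu_{i})_{i \in I}$, I would combine Lemma \ref{phifibers} (3) with the projection formula and Corollary \ref{highweightcohom}. Namely, the stratum $_{x}Z^{I,(\nu_{i})_{i \in I},0}_{(\lambda_{i})_{i \in I}}$ fibers over $_{x,(\nu_{i})_{i \in I}}\ol{\Bun}_{B}$ with fiber isomorphic, after quotienting by the $L^{+}B$-torsor $_{x}\mathcal{B}^{(\nu_{i})_{i \in I}}$, to the intersection $_{x}\Gr^{I}_{G,\leq(\lambda_{i})_{i \in I},E} \cap \phantom{}_{x}\mathrm{S}^{I}_{G,(w_{0}(\lambda_{i})+\nu_{i})_{i \in I},E}$. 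Since $\mathcal{S}_{V}$ is $L^{+}G$-equivariant, and hence $L^{+}B$-equivariant, the torsor contributes trivially after passing to cohomology; the remaining cohomology is computed by Corollary \ref{highweightcohom} applied to the semi-infinite cell $\mathrm{S}^{I}_{G,(w_{0}(\lambda_{i})+\nu_{i})_{i \in I},E}$, yielding the tensor product of weight spaces $\boxtimes_{i \in I} V_{i}(w_{0}(\lambda_{i})+\nu_{i})$ with the appropriate Tate twist and shift. Combining this with the shift $[\dim(\Bun_{B})]$ built into $K_{V}$ and the supports $(i_{(\nu_{i})_{i \in I}})_{!}(j \times \mathrm{id})_{!}$ on the target gives the stated graded piece. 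The naturality in $I$ and $V$, together with the compatibility with exterior tensor products and composition, follow formally from Proposition \ref{fusionconstant} and the functoriality of the excision filtration.

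The main obstacle is the promotion of the fiberwise stratification to a globally well-defined filtration of objects in $\D(\ol{\Bun}_{B})^{BW_{E}^{I}}$. The excision argument at a geometric point is essentially formal, but to ensure that the filtration is $W_{E}^{I}$-equivariant and compatible with the factorization structure (which is what lets one write the graded pieces as exterior tensor products over the disjoint locus $\Div^{I;I_{1},\dots,I_{k}}$, and then extend by fusion), one must argue that the stratification descends well in families, invoking the ULA property of $\mathcal{S}_{V}$ and the version of Drinfeld's lemma in \cite[Proposition~VI.9.2]{FS}. A secondary bookkeeping issue is tracking the Tate twists, noting that only finitely many $(\nu_{i})_{i \in I}$ contribute since $V_{i}(w_{0}(\lambda_{i})+\nu_{i}) = 0$ outside the convex hull of the Weyl orbit of $\lambda_{i}$.
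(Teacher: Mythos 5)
Your overall route — excision with respect to the stratification of $\ol{\Bun}_{B}\times\Div^{I}_{E}$ by the singularity type $(\nu_i)_{i\in I}$, identifying the restriction to each stratum via Lemma~\ref{phifibers}, and invoking Corollary~\ref{highweightcohom} for the semi-infinite cell cohomology together with Proposition~\ref{fusionconstant} for the compatibilities — is the same route the paper takes.

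However, there is a genuine gap in your handling of the $L^{+}B$-torsor $\phantom{}_{x}\mathcal{B}^{(\nu_i)_{i\in I}}$. You assert that ``since $\mathcal{S}_{V}$ is $L^{+}G$-equivariant, and hence $L^{+}B$-equivariant, the torsor contributes trivially after passing to cohomology.'' This is incorrect: equivariance lets the sheaf descend, but the fibration in $_{x}\mathcal{B}^{(\nu_i)_{i\in I}}$ still contributes its compactly-supported cohomology to the pushforward. In the paper this contribution is computed explicitly to be $\Lambda[-\langle 2\hat{\rho}, \sum_{i\in I}(\nu_i + w_0(\lambda_i))\rangle]$, by observing that $L^{+}T$ acts trivially on $\phantom{}_{x}\Gr_{G,\leq\lambda,E}\cap\phantom{}_{x}\mathrm{S}_{G,\nu + w_0(\lambda),E}$ while each root group $L^{+}\mathbb{G}_{a,\hat\alpha}$ acts through a finite truncation $L^{+}_{n_{\hat\alpha}}\mathbb{G}_{a,\hat\alpha}$ with $n_{\hat\alpha}=\langle\hat\alpha,\nu + w_0(\lambda)\rangle$, whose compactly-supported cohomology over $\Spa(C,\mathcal{O}_C)$ gives an iterated shift. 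This shift is not a convenience: it \emph{exactly cancels} the opposite shift $[\langle 2\hat{\rho},\sum_{i\in I}(\nu_i+w_0(\lambda_i))\rangle]$ that Corollary~\ref{highweightcohom} places on the semi-infinite cell cohomology, so that the net shift on each graded piece is just $[\dim(\Bun_B)]$ as in the statement. If the torsor contributed trivially, your graded piece would come out off by $[\langle 2\hat{\rho},\sum_{i\in I}(\nu_i+w_0(\lambda_i))\rangle]$, contradicting the stated formula. You need the explicit truncated loop group computation to make the bookkeeping close.

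A secondary, less serious structural point: you first fix a geometric point $x$, stratify the fiber $_{x}\ol{\Bun}_{B}$, run excision fiberwise, and then propose to globalize using ULA and Drinfeld's lemma. The paper instead constructs the stratification globally on $\ol{\Bun}_{B}\times\Div^{I}_{E}$ -- as the closed substacks $\phantom{}_{(\nu_i)_{i\in I}}(\ol{\Bun}_B\times\Div^I_E)$ pulled back from the diagonal of $\Div^{(\ol{\nu})}$ along $(p_1,p_2)$ -- and runs excision once; the $W_E^I$-equivariance of the resulting filtration is then immediate because the strata are defined over $\Div^I_E$. Your order of operations can be made to work, but it requires an additional descent argument that the paper avoids by setting up the stratification globally from the outset.
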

Assuming this for now, we get that $T_{V}(\Eis(\mathcal{F}))$ has a $W_{E}^{I}$-equivariant filtration with graded pieces given by
\[ \boxtimes_{i \in I} (\ol{\mf{p}} \times \mathrm{id})_{!}((\ol{\mf{q}} \times \mathrm{id})^{*}h_{w_{0}(\lambda_{i})}^{\ra*}(\mathcal{F}) \otimes (i_{\nu_{i}!}(j \times \mathrm{id})_{!}(\Lambda[\dim(\Bun_{B})])) \otimes V_{i}(w_{0}(\lambda_{i}) + \nu_{i})(-\langle \hat{\rho}, w_{0}(\lambda_{i}) + \nu_{i} \rangle))[-\langle \hat{\rho}, w_{0}(\lambda) + \nu_{i} \rangle]) \]
Applying projection formula with respect to $i_{\nu_{i}}$, we obtain
\[ \boxtimes_{i \in I} (\ol{\mf{p}} \times \mathrm{id})_{!}i_{\nu_{i}!}(i_{\nu_{i}}^{*}(\ol{\mf{q}} \times \mathrm{id})^{*}h_{w_{0}(\lambda_{i})}^{\ra*}(\mathcal{F}) \otimes (j \times \mathrm{id})_{!}(\Lambda[\dim(\Bun_{B})])) \otimes V_{i}(w_{0}(\lambda_{i}) + \nu_{i})(-\langle \hat{\rho}, w_{0}(\lambda_{i}) + \nu_{i} \rangle)[-\langle \hat{\rho}, w_{0}(\lambda) + \nu_{i} \rangle] \]
but now, by Lemma \ref{lemma: factormaps} (2), we have
\[ i_{\nu_{i}}^{*}(\ol{\mf{q}} \times \mathrm{id})^{*}h_{w_{0}(\lambda_{i})}^{\ra*}(\mathcal{F}) \simeq  (\ol{\mf{q}} \times \mathrm{id})^{*}h^{\ra*}_{w_{0}(\lambda_{i}) + \nu_{i}}(\mathcal{F}) \simeq (\ol{\mf{q}} \times \mathrm{id})^{*}(T_{w_{0}(\lambda_{i}) + \nu_{i}}(\mathcal{F}))   \]
so substituting this into the previous formula we get 
\[ \boxtimes_{i \in I} (\ol{\mf{p}} \times \mathrm{id})_{!}i_{\nu_{i}!}((\ol{\mf{q}} \times \mathrm{id})^{*}(T_{w_{0}(\lambda_{i}) + \nu_{i}}(\mathcal{F}) \otimes (j \times \mathrm{id})_{!}(\Lambda[\dim(\Bun_{B})]))) \otimes V_{i}(w_{0}(\lambda_{i}) + \nu_{i})(-\langle \hat{\rho}, w_{0}(\lambda_{i}) + \nu_{i} \rangle)[-\langle \hat{\rho}, w_{0}(\lambda) + \nu_{i} \rangle] \]
Now $i_{\nu_{i}}$ does nothing to the $G$-bundle $\mathcal{F}_{G}$ and the copy of $\Div^{I}_{E}$. Therefore, this becomes
\[ \boxtimes_{i \in I} (\ol{\mf{p}} \times \mathrm{id})_{!}((\ol{\mf{q}} \times \mathrm{id})^{*}(T_{w_{0}(\lambda_{i}) + \nu_{i}}(\mathcal{F})) \ \otimes (j \times \mathrm{id})_{!}(\Lambda[\dim(\Bun_{B})])) \otimes V_{i}(w_{0}(\lambda_{i}) + \nu_{i})(-\langle \hat{\rho}, w_{0}(\lambda_{i}) + \nu_{i} \rangle)[-\langle \hat{\rho}, w_{0}(\lambda) + \nu_{i} \rangle] \]
which is just
\[ \boxtimes_{i \in I} (\Eis \boxtimes \mathrm{id})(T_{w_{0}(\lambda_{i}) + \nu_{i}}(\mathcal{F})) \otimes V_{i}(w_{0}(\lambda_{i}) + \nu_{i})(-\langle \hat{\rho}, w_{0}(\lambda_{i}) + \nu_{i} \rangle) \]
by an application of projection formula to $j \times \mathrm{id}$. Since $w_{0}(\lambda_{i})$ is the lowest weight of $V_{i}$ this implies the desired result. Thus, to construct the filtration all we have to do is prove Theorem \ref{phifilt}. 
\begin{proof}{(Theorem \ref{phifilt})}
For $(\nu_{i})_{i \in I} \in (\cochar)^{I}$, let $\ol{\nu} := \sum_{i \in I} \nu_{i\Gamma}$. Consider the locally closed stratum $_{\ol{\nu}}\ol{\Bun}_{B} \times \Div^{I}_{E} \simeq \Bun_{B} \times \Div^{(\ol{\nu})} \times \Div^{I}_{E} \subset \ol{\Bun}_{B} \times \Div^{I}_{E}$. We have the natural projection 
\[ p_{1}: \phantom{}_{\ol{\nu}}\ol{\Bun}_{B} \times \Div^{I}_{E} \simeq \Bun_{B} \times \Div^{(\ol{\nu})} \times \Div^{I}_{E} \ra \Div^{(\ol{\nu})} \]
as well as the map 
\[ p_{2}: \phantom{}_{\ol{\nu}}\ol{\Bun}_{B} \times \Div^{I}_{E} \ra \Div^{I}_{E} \xrightarrow{\Delta_{(\nu_{i})_{i \in I}}} \Div^{(\ol{\nu})} \]
where the first map is the natural projection and $\Delta_{(\nu_{i})_{i \in I}}$ is as defined in \S \ref{keydiagram}. Since $\Div^{(\ol{\nu})}$ is proper using \cite[Proposition~II.1.21]{FS} and in particular separated, it follows that if we let $\phantom{}_{(\nu_{i})_{i \in I}}(\ol{\Bun}_{B} \times \Div^{I}_{E}) \ra \phantom{}_{\ol{\nu}}\ol{\Bun}_{B} \times \Div^{I}_{E}$ be the pullback of the diagonal morphism $\Delta_{\Div^{(\ol{\nu})}}$ along $(p_{1},p_{2})$ that this is a closed immersion. Therefore, we see that the composite $\phantom{}_{(\nu_{i})_{i \in I}}(\ol{\Bun}_{B} \times \Div^{I}_{E}) \ra \phantom{}_{\ol{\nu}}\ol{\Bun}_{B} \times \Div^{I}_{E} \ra \ol{\Bun}_{B} \times \Div^{I}_{E}$ is a locally closed immersion parameterizing $(\ol{\kappa}^{\hat{\lambda}}: (\mathcal{L}^{\hat{\lambda}})_{\mathcal{F}_{T}} \ra (\mathcal{V}^{\hat{\lambda}})_{\mathcal{F}_{G}}, \hat{\lambda} \in \hat{\Lambda}_{G}^{+},(D_{i})_{i \in I})$ such that $\Coker(\ol{\kappa}^{\hat{\lambda}})$ has torsion of length $\langle \hat{\lambda}, \nu_{i\Gamma} \rangle$ supported at $D_{i}$, and a $0$ nowhere else for all $\hat{\lambda} \in \hat{\Lambda}_{G}^{+}$ and $i \in I$. If we let $j_{(\nu_{i})_{i \in I}} := i_{(\nu_{i})_{i \in I}} \circ (j \times \mathrm{id})$ then we see that this maps isomorphically onto $\phantom{}_{(\nu_{i})_{i \in I}}(\ol{\Bun}_{B} \times \Div^{I}_{E})$, and for varying $(\nu_{i})_{i \in I} \in (\cochar)^{I}$ these form a locally closed stratification of $\ol{\Bun}_{B} \times \Div^{I}_{E}$. Now, by applying the excision with respect to this locally closed stratification, we obtain a filtration on $\phi_{(\lambda_{i})_{i \in I}!}(K_{Z})$ whose graded pieces are isomorphic to:
\[ j_{(\nu_{i})_{i \in I}!}j_{(\nu_{i})_{i \in I}}^{*}\phi_{(\lambda_{i})_{i \in I}!}(K_{V}) \]
Moreover, since the maps $j_{(\nu_{i})_{i \in I}}$ are defined over the projection to $\Div^{I}_{E}$, it follows that this filtration is $W_{E}^{I}$-equivariant. It remains to determine the $W_{E}^{I}$-action on the graded pieces. To do this, we can consider the pullback to a geometric point $x = \Spa(C,\mathcal{O}_{C}) \ra \Div^{I}_{E}$, where we can regard $x$ as a point of $\Div^{(\ol{\nu})}$, as in \S \ref{keydiagram}. We recall that by definition 
\[ K_{V} := \phantom{}^{'}\mf{p}^{*}(\mathcal{S}_{V}) \otimes ('h_{G}^{\rightarrow})^{*}(\Lambda[\dim(\Bun_{B})]) \]
so, by Lemma \ref{phifibers} (3), the pullback to $x$ identifies with
\[ R\Gamma_{c}(\phantom{}_{x}\Gr_{G,\leq (\lambda_{i})_{i \in I},E} \cap  \phantom{}_{x}\mathrm{S}_{G,(\nu_{i} + w_{0}(\lambda_{i}))_{i \in I},E}, \mathcal{S}_{V}|_{\phantom{}_{x}\Gr_{G,\leq (\lambda_{i})_{i \in I},E} \cap \phantom{}_{x}\mathrm{S}_{G,(\nu_{i} + w_{0}(\lambda_{i}))_{i \in I},E}})[\dim(\Bun_{B})] \]
Now, by Corollary \ref{highweightcohom},  $j_{(\nu_{i})_{i \in I}}^{*}\phi_{(\lambda_{i})_{i \in I}!}(K_{V})$ identifies with
\[ \boxtimes_{i \in I} V_{i}(w_{0}(\lambda_{i}) + \nu_{i})(-\langle \hat{\rho}, (w_{0}(\lambda_{i}) + \nu_{i}) \rangle)[-\langle 2\hat{\rho}, \sum_{i \in I} (w_{0}(\lambda_{i}) + \nu_{i}) \rangle + \dim(\Bun_{B})], \]
as desired
\end{proof} 
It remains to see that this filtration satisfies the desired compatibilities. Consider a map of finite index sets $\pi: I \ra J$. For $j \in J$, we set $I_{j} := \pi^{-1}(j)$ and consider the natural map $\Delta_{IJ}: \Div^{J}_{E} \ra \Div^{I}_{E}$, which diagonally embeds the $j$th copy of $\Div^{1}_{E}$ in $\Div^{J}_{E}$ into $\Div^{I_{j}}_{E}$. Attached to this, we have a Cartesian diagram 
\[ \begin{tikzcd}
\ol{\Bun}_{B} \times \Div^{J}_{E} \arrow[d,"\mathrm{id} \times \Delta_{IJ}"] & Z^{J}_{(\lambda_{j})_{j \in J}} \arrow[l,"\phi_{(\lambda_{j})_{j \in J}}"] \arrow[d,"\tilde{\Delta}_{IJ}"]\\
\ol{\Bun}_{B} \times \Div^{I}_{E} &  Z^{I}_{(\lambda_{i})_{i \in I}} \arrow[l,"\phi_{(\lambda_{i})_{i \in I}}"]  
\end{tikzcd}\] 
where $\lambda_{j} := \sum_{i \in I_{j}} \lambda_{i}$ for all $j \in J$. Base change gives us a natural isomorphism: 
\begin{equation}
 (\mathrm{id} \times \Delta_{IJ})^{*}\phi_{(\lambda_{i})_{i \in I}!}(K_{\boxtimes_{i \in I} V_{i}}) \simeq \phi_{(\lambda_{j})_{j \in J}!}\tilde{\Delta}_{IJ}^{*}(K_{\boxtimes_{i \in I} V_{i}})
\end{equation}
However, by the relationship between fusion product and tensor product under Theorem \ref{geomsatake}, we deduce that $\tilde{\Delta}_{IJ}^{*}(K_{\boxtimes_{i \in I} V_{i}}) \simeq K_{\boxtimes_{j \in J} V_{j}}$, where $V_{j} := \otimes_{i \in I_{j}} V_{i}$. We now compare the two filtrations on the LHS and the RHS of this isomorphism. To do this, we define $(\nu_{j})_{j \in J}$ by $\nu_{j} := \sum_{i \in I_{j}} \nu_{i}$. We note that we have a natural Cartesian diagram
\[ \begin{tikzcd}
\ol{\Bun}_{B} \times \Div^{J}_{E} \arrow[r,"i_{(\nu_{j})_{j \in J}}"] \arrow[d,"\mathrm{id} \times \Delta_{IJ}"] & \ol{\Bun}_{B} \times \Div^{J}_{E} \arrow[d,"\mathrm{id} \times \Delta_{IJ}"] \\
\ol{\Bun}_{B} \times \Div^{I}_{E} \arrow[r,"i_{(\nu_{i})_{i \in I}}"]&  \ol{\Bun}_{B} \times \Div^{I}_{E}
\end{tikzcd}\] 
On the LHS of (5), we have a filtration with graded pieces isomorphic to
\[ (\mathrm{id} \times \Delta_{IJ})^{*}i_{(\nu_{i})_{i \in I}!}(j \times \mathrm{id})_{!}(\Lambda[\dim(\Bun_{B})]) \otimes \boxtimes_{i \in I} V_{i}(w_{0}(\lambda_{i}) + \nu_{i})(-\langle \hat{\rho}, w_{0}(\lambda_{i}) + \nu_{i} \rangle)[-\langle 2\hat{\rho}, (w_{0}(\lambda_{i}) + \nu_{i}) \rangle] \]
which is naturally isomorphic to 
\[ i_{(\nu_{j})_{j \in J}!}(\mathrm{id} \times \Delta_{IJ})^{*}(j \times \mathrm{id})_{!}(\Lambda[\dim(\Bun_{B})]) \otimes  \boxtimes_{i \in I} V_{i}(w_{0}(\lambda_{i}) + \nu_{i})(-\langle \hat{\rho}, w_{0}(\lambda_{i}) + \nu_{i} \rangle)[-\langle 2\hat{\rho}, (w_{0}(\lambda_{i}) + \nu_{i}) \rangle] \]
by base-change applied to the previous Cartesian square. We can further rewrite this as 
\[ i_{(\nu_{j})_{j \in J}!}\circ (j \times \mathrm{id})_{!}(\Lambda[\dim(\Bun_{B})]) \otimes  \boxtimes_{j \in J} \otimes_{i \in I_{j}} V_{i}(w_{0}(\lambda_{i}) + \nu_{i})(-\langle \hat{\rho}, w_{0}(\lambda_{i}) + \nu_{i} \rangle)[-\langle 2\hat{\rho},  (w_{0}(\lambda_{i}) + \nu_{i}) \rangle]  \]
On the other hand, for such a $(\nu_{j})_{j \in J}$, the RHS of (5) has a filtration with graded pieces isomorphic to 
\[ i_{(\nu_{j})_{j \in J}!}(j \times \mathrm{id})_{!}(\Lambda[\dim(\Bun_{B})]) \otimes  \boxtimes_{j \in J} V_{j}(w_{0}(\lambda_{j}) + \nu_{j})(-\langle \hat{\rho}, w_{0}(\lambda_{j}) + \nu_{j} \rangle)[-\langle 2\hat{\rho}, w_{0}(\lambda_{j}) + \nu_{j} \rangle])  \]
but now note that 
\[ V_{j}(w_{0}(\lambda_{j}) + \nu_{j})  = \bigoplus_{\substack{ (\nu_{i})_{i \in I} \in \Lambda_{G,B}^{I} \\ \sum_{i \in I_{j}} \nu_{i} = \nu_{j}}} \bigotimes_{i \in I_{j}} V_{i}(w_{0}(\lambda_{i}) + \nu_{i}) \] 
for all $j \in J$. Therefore, the graded piece indexed by $(\nu_{j})_{j \in J}$ on the RHS of (5) have a split filtration with graded pieces isomorphic to the graded pieces coming from the filtration on the LHS on (5). The compatibility of these two filtrations now follows from Corollary \ref{fusionconstant2}, and the fact that the filtration came from restricting the sheaf $\mathcal{S}_{V}$ to semi-infinite cells. Now, we can reap the fruit of this section using the filtered eigensheaf property to get some control on the stalks of $\nmEis(\mathcal{S}_{\phi_{T}})$.
\subsection{Consequences of the Filtered Eigensheaf Property}
First, we note, by applying Theorem \ref{filteigensheaf} when $\mathcal{F} = \mathcal{S}_{\phi_{T}} \otimes \Delta_{B}^{1/2}$ together with Corollary \ref{twistedeigsheaf}, we obtain the following. 
\begin{corollary}{\label{appliedfilteigsheaf}}
For all finite index sets $I$ and $V = \boxtimes_{i \in I} V_{i} \in \Rep_{\Lambda}(\phantom{}^{L}G^{I})$, the sheaf $T_{V}(\nmEis(\mathcal{S}_{\phi_{T}}))$ admits a $W_{\mathbb{Q}_{p}}^{I}$-equivariant filtration indexed by $(\nu_{i})_{i \in I} \in (\gamorb)^{I}$. The filtration's graded pieces are isomorphic to $\nmEis(\mathcal{S}_{\phi_{T}}) \otimes \boxtimes_{i \in I} (\nu_{i} \circ \phi_{T}) \otimes V_{i}(\nu_{i})$. The filtration is natural in $I$ and $V$, as well as compatible with compositions and exterior tensor products in $V$. 
\end{corollary}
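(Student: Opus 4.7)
The plan is to simply combine Theorem \ref{filteigensheaf} (applied to the twisted eigensheaf $\mathcal{F} = \mathcal{S}_{\phi_{T}} \otimes \Delta_{B}^{1/2}$) with Corollary \ref{twistedeigsheaf}, unwinding the normalization $\nmEis(-) = \Eis(- \otimes \Delta_{B}^{1/2})$ of Definition \ref{icdef} and checking that the Tate twists cancel.

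First, by Theorem \ref{filteigensheaf} applied to $\mathcal{F} = \mathcal{S}_{\phi_{T}} \otimes \Delta_{B}^{1/2}$, the sheaf $T_{V}(\Eis(\mathcal{S}_{\phi_{T}} \otimes \Delta_{B}^{1/2}))$ carries a $W_{\mathbb{Q}_{p}}^{I}$-equivariant filtration indexed by $(\nu_{i})_{i \in I} \in (\gamorb)^{I}$, whose graded pieces are
\[ \Eis\bigl(T_{(\nu_{i})_{i \in I}}(\mathcal{S}_{\phi_{T}} \otimes \Delta_{B}^{1/2})\bigr) \otimes V((\nu_{i})_{i \in I})\bigl(-\langle \hat{\rho}, \textstyle\sum_{i \in I} \nu_{i\Gamma} \rangle\bigr), \]
naturally in $I$ and $V$ and compatibly with compositions and exterior tensor products. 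By the very definition of the normalized Eisenstein functor the outer $\Eis$ on the left hand side is nothing but $\nmEis(\mathcal{S}_{\phi_{T}})$.

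Second, I would rewrite the graded pieces using Corollary \ref{twistedeigsheaf}, which yields
\[ T_{(\nu_{i})_{i \in I}}(\mathcal{S}_{\phi_{T}} \otimes \Delta_{B}^{1/2}) \simeq \boxtimes_{i \in I} (\nu_{i} \circ \phi_{T})\bigl(\langle \hat{\rho},\nu_{i\Gamma}\rangle\bigr) \otimes (\mathcal{S}_{\phi_{T}} \otimes \Delta_{B}^{1/2}). \]
Since the exterior tensor factors on the first set of $\Div^{I}$ coordinates commute (as local systems pulled back to $\Bun_{T} \times \Div^{I}$) with applying $\Eis$ along the $\Bun_{T}$-direction, applying $\Eis$ to both sides and substituting turns the graded pieces into
\[ \nmEis(\mathcal{S}_{\phi_{T}}) \otimes \boxtimes_{i \in I} (\nu_{i} \circ \phi_{T})\bigl(\langle \hat{\rho},\nu_{i\Gamma}\rangle\bigr) \otimes V((\nu_{i})_{i \in I})\bigl(-\langle \hat{\rho}, \textstyle\sum_{i \in I} \nu_{i\Gamma}\rangle\bigr). \]

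Finally I would observe that the total Tate twist appearing on the right hand side is
\[ \sum_{i \in I} \langle \hat{\rho}, \nu_{i\Gamma}\rangle \; - \; \langle \hat{\rho}, \textstyle\sum_{i \in I} \nu_{i\Gamma}\rangle \; = \; 0, \]
so all Tate twists cancel. Using the factorization $V = \boxtimes_{i \in I} V_{i}$ and the resulting identification $V((\nu_{i})_{i \in I}) = \bigotimes_{i \in I} V_{i}(\nu_{i})$ of weight spaces of the restriction to $\phantom{}^{L}T^{I}$, the graded pieces become $\nmEis(\mathcal{S}_{\phi_{T}}) \otimes \boxtimes_{i \in I} (\nu_{i} \circ \phi_{T}) \otimes V_{i}(\nu_{i})$, as asserted. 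Since this argument is a pure substitution, there is no real obstacle; the only point requiring care is bookkeeping of the Tate twists coming from $\Delta_{B}^{1/2}$ against those produced by the shift $\Lambda[\dim(\Bun_{B})]$ in Theorem \ref{filteigensheaf}, and the compatibilities in $I$ and $V$ are inherited directly from those already established in Theorem \ref{filteigensheaf}.
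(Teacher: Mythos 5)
Your proof is correct and matches the paper's intended argument exactly: the paper itself states that the corollary follows by applying Theorem \ref{filteigensheaf} with $\mathcal{F} = \mathcal{S}_{\phi_{T}} \otimes \Delta_{B}^{1/2}$ and combining with Corollary \ref{twistedeigsheaf}. You have simply made the Tate-twist cancellation and the weight-space factorization explicit, which is all that remained to check.
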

In particular, we note that the direct sum of the graded pieces of the filtration on $T_{V}(\nmEis(\mathcal{S}_{\phi_{T}}))$ is isomorphic to
\[ \nmEis(\mathcal{S}_{\phi_{T}}) \otimes \bigoplus_{(\nu_{i})_{i \in I} \in (\gamorb)^{I}} \boxtimes_{i \in I} \nu_{i} \circ \phi_{T} \otimes V_{i}(\nu_{i})  \simeq  \nmEis(\mathcal{S}_{\phi_{T}}) \boxtimes r_{V} \circ \phi 
\]
as sheaves in $\D(\Bun_{G})^{BW^{I}_{\mathbb{Q}_{p}}}$, where $\phi$ is the parameter $\phi_{T}$ composed with the natural embedding $\phantom{}^{L}T \ra \phantom{}^{L}G$. In other words, $T_{V}(\nmEis(\mathcal{S}_{\phi_{T}}))$ is a filtered eigensheaf with eigenvalue $\phi$. We now would like to use this to deduce some consequences about the stalks of the Eisenstein series $\nmEis(\mathcal{S}_{\phi_{T}})$. In particular, consider some Schur irreducible subquotient $A$ of some cohomology sheaf of $\nmEis(\mathcal{S}_{\phi_{T}})$. We will now need our assumption that the prime $\ell$ is very decent; more specifically, the assumption that $\ell \nmid |\pi_{0}(Z(G))$. Under this assumption, the excursion algebra will define endomorphims of $A$ which determine and are determined by the parameter $\phi_{A}^{\mathrm{FS}}$, as in \cite[Propositions~I.9.1,I.9.3]{FS}. Since the excursion algebra is determined by natural transformations of Hecke operators, the filtered Hecke eigensheaf property tells us that these scalars must be specified by $\phi$, so that we have an equality: $\phi = \phi_{A}^{\mathrm{FS}}$, as conjugacy classes of semi-simple parameters. In particular, if we consider $b \in B(G)$ and look at the restriction $\nmEis(\mathcal{S}_{\phi_{T}})|_{\Bun_{G}^{b}}$ to the locally closed HN-strata $\Bun_{G}^{b} \subset \Bun_{G}$ indexed by $b$. Then, by \cite[Proposition~V.2.2]{FS}, we have a natural isomorphism $\D(\Bun_{G}^{b}) \simeq \D(J_{b}(\mathbb{Q}_{p}),\Lambda)$. By the previous discussion and compatibility of the Fargues-Scholze correspondence with restriction to $J_{b}$ \cite[Section~IX.7.1]{FS}, we deduce that any irreducible constituent $\rho$ of the restriction $\nmEis(\mathcal{S}_{\phi_{T}})|_{\Bun_{G}^{b}}$ has Fargues-Scholze parameter $\phi_{\rho}^{\mathrm{FS}}: W_{\mathbb{Q}_{p}} \ra \phantom{}^{L}J_{b}(\Lambda)$ equal to $\phi$ under the appropriately Tate twisted embedding:
\[ \phantom{}^{L}J_{b}(\Lambda) \ra \phantom{}^{L}G(\Lambda) \]
Now, since our parameter $\phi$ is induced from the maximal torus, we would like to say that this is impossible unless $J_{b}$ itself admits a maximal torus, which is in turn equivalent to assuming that $b$ is unramified. Here we need to be a bit careful. In particular, if we consider $G = \GL_{2}$ and $b$ the element of slope $\frac{1}{2}$ then $J_{b} = D^{*}_{\frac{1}{2}}$ the units in the quaternion division algebra. The trivial representation $\mathbf{1}$ of $D^{*}_{\frac{1}{2}}$ has Fargues-Scholze parameter given by 
\[ W_{\mathbb{Q}_{p}} \ra \GL_{2}(\Lambda) \]
\[ g \mapsto \begin{pmatrix} |g|^{1/2} & 0 \\ 0 & |g|^{-1/2} \end{pmatrix} \]
This parameter is induced from a maximal torus of $\GL_{2}$; however, it is not generic. In particular, the composite of this parameter with the unique simple root defined by the upper triangular Borel gives a Galois representation isomorphic to the norm/cyclotomic character $|\cdot|$. Therefore, one might hope that assuming compatibility of some suitably nice form of the local Langlands correspondence for $G$ with the Fargues-Scholze correspondence together with genericity of $\phi_{T}$ is enough to give us the desired description of the stalks. This is indeed the case. The assumption we need is as follows.
\begin{assumption}{\label{compatibility}}
For a connected reductive group $H/\mathbb{Q}_{p}$, we have
\begin{itemize} 
\item the set $\Pi(H)$ of smooth irreducible $\ol{\mathbb{Q}}_{\ell}$-representations of $H(\mathbb{Q}_{p})$,
\item the set $\Phi(H)$ of conjugacy classes of continuous  maps 
\[ W_{\mathbb{Q}_{p}} \times \SL(2,\ol{\mathbb{Q}}_{\ell}) \ra \phantom{}^{L}H(\ol{\mathbb{Q}}_{\ell}) \]
where $\ol{\mathbb{Q}}_{\ell}$ has the discrete topology, $\mathrm{SL}(2,\ol{\mathbb{Q}}_{\ell})$ acts via an algebraic representation, $W_{\mathbb{Q}_{p}}$ acts through a semi-simple homorphism (in the sense that if it factors through $\phantom{}^{L}P$ for a proper parabolic $P \subset G$ then it also factors through the associated Levi $\phantom{}^{L}M$), and the map respects the action of $W_{\mathbb{Q}_{p}}$ on $\phantom{}^{L}H(\ol{\mathbb{Q}}_{\ell})$, the $L$-group of $H$,
\item the set $\Phi^{\mathrm{ss}}(H)$ of continuous (where again $\ol{\mathbb{Q}}_{\ell}$ has the discrete topology) semi-simple homomorphisms,
\[ W_{\mathbb{Q}_{p}} \ra \phantom{}^{L}H(\ol{\mathbb{Q}}_{\ell}), \]
\item and the semi-simplification map $(-)^{\mathrm{ss}}: \Phi(H) \ra \Phi^{\mathrm{ss}}(H)$ defined by precomposition with
\[ W_{\mathbb{Q}_{p}} \ra W_{\mathbb{Q}_{p}} \times \mathrm{SL}(2,\ol{\mathbb{Q}}_{\ell}) \]
\[ g \mapsto (g,\begin{pmatrix} |g|^{1/2} & 0 \\ 0 & |g|^{-1/2} \end{pmatrix}). \]
\end{itemize} 
Then, we assume that there exists a collection of maps
\[ \mathrm{LLC}_{b}: \Pi(J_{b}) \ra \Phi(J_{b}) \]
\[ \rho \mapsto \phi_{\rho}, \]
for all $b \in B(G)$, satisfying the following properties:
\begin{enumerate} 
\item The diagram
\[ \begin{tikzcd}[ampersand replacement=\&]
            \Pi(J_{b})  \ar[rr, "\mathrm{LLC}_{b}"] \arrow[drr,"\mathrm{LLC}^{\mathrm{FS}}_{b}"] \& \&   \Phi(J_{b}) \ar[d,"(-)^{\mathrm{ss}}"] \\
            \& \& \Phi^{\mathrm{ss}}(J_{b})
\end{tikzcd}\] 
commutes, where $\mathrm{LLC}_{b}^{\mathrm{FS}}$ is the Fargues-Scholze local Langlands correspondence for $J_{b}$. 
\item For $\rho$ a non-zero smooth irreducible representation of $J_{b}(\bb{Q}_{p})$ for some $b \in B(G)$, consider $\phi_{\rho}$ as an element of $\Phi(G)$ given by composing with the twisted embedding $\phantom{}^{L}J_{b}(\ol{\mathbb{Q}}_{\ell}) \simeq \phantom{}^{L}M_{b}(\ol{\mathbb{Q}}_{\ell}) \ra \phantom{}^{L}G(\ol{\mathbb{Q}}_{\ell})$ (as defined in \cite[Section~IX.7.1]{FS}). Then $\phi_{\rho}$ factors through the natural (up to $\hat{G}$-conjugacy) embedding $\phantom{}^{L}T \ra \phantom{}^{L}G$ if and only if $b \in B(G)_{\mathrm{un}}$. 
\item If $\rho$ is a non-zero representation such that $W_{\mathbb{Q}_{p}} \times \mathrm{SL}(2,\ol{\mathbb{Q}}_{\ell}) \xrightarrow{\phi_{\rho}} \phantom{}^{L}J_{b}(\ol{\mathbb{Q}}_{\ell}) \ra \phantom{}^{L}G(\ol{\mathbb{Q}}_{\ell})$ factors through the embedding $\phantom{}^{L}T \ra \phantom{}^{L}G$ with induced parameter $\phi_{T}$, then, by (2), the element $b$ is unramified, and we require that $\rho$ is isomorphic to an irreducible subquotient of $i_{B_{b}}^{J_{b}}(\chi^{w}) \otimes \delta_{P_{b}}^{-1/2}$ for $w \in W_{b}$, where $\chi: T(\bb{Q}_{p}) \ra \ol{\bb{Q}}_{\ell}^{*}$ is the character attached to $\phi_{T}$ by class field theory and $\delta_{P_{b}}$ is the character defined in \cite[Definition~3.14]{GH}. Here the map $\phantom{}^{L}T \ra \phantom{}^{L}G$ is the non-twisted embedding and the map $\phantom{}^{L}J_{b} \ra \phantom{}^{L}G$ is the twisted embedding.
\end{enumerate}
\end{assumption}
\begin{remark}
This assumption might seem a bit daunting, but is verifiable in many cases. In particular, the first assumption follows from the compatibility of the Fargues-Scholze correspondence with the Harris-Taylor correspondence for groups of type $A_{n}$ and its inner forms (\cite[Theorem~1.0.3]{KW}). Similarly, for groups of type $C_{2}$ and their inner forms over a unramified extension $L$ with $p > 2$, this follows from the main theorem of \cite{Ham}, and, for odd unramified unitary groups over $\mathbb{Q}_{p}$ this follows from the main theorem of \cite{BMNH}. The methods employed in these two papers should generalize to at least a few other cases. 

Assumption (2) is also a standard and verifiable conjecture in the cases where the local Langlands correspondence is known to exist. If $\rho$ is a representation such that $\phi_{\rho}$ factors through $\phantom{}^{L}T$ then we are claiming that $J_{b}$ has a Borel subgroup. For non quasi-split groups, it is conjectured that one should only consider $L$-parameters coming from the $L$-groups of the Levi subgroups of the non quasi-split group. These are referred to as relevant $L$-parameters. In particular, one expects the $L$-packets of $\LLC_{b}$ over irrelevant $\phi$ to be empty (See for example \cite[Conjecture~A.2]{Kal}), so if $\phi_{\rho}$ factors through $\phantom{}^{L}T$ for some $\rho$ under $\LLC_{b}$ it should imply that the group $J_{b}$ has a Borel $B_{b}$.  Assumption (3) is just the expectation that, when $\phi_{\rho}$ factors through $\phantom{}^{L}T$, the members of the $L$-packet should be given by the irreducible constituents of the parabolic inductions from $T$ to $J_{b}$. The Weyl group twists appear since the Weyl group conjugates of $\phi_{\rho}: W_{\mathbb{Q}_{p}} \ra \phantom{}^{L}J_{b}(\ol{\mathbb{Q}}_{\ell}) \simeq \phantom{}^{L}M_{b}(\ol{\mathbb{Q}}_{\ell})$ all map to the same parameter when viewed as a parameter valued in $\phantom{}^{L}G$, and the modulus twist by $\delta_{P_{b}}^{-1/2}$ appears since we are comparing this to an $L$-parameter of $G$ via the twisted embedding $\phantom{}^{L}J_{b}(\ol{\mathbb{Q}}_{\ell}) \simeq \phantom{}^{L}M_{b}(\ol{\mathbb{Q}}_{\ell}) \ra \phantom{}^{L}G(\ol{\mathbb{Q}}_{\ell})$ (cf. \cite[Page~24-25]{GH}) . 
\end{remark}
Under this assumption, we will deduce our main Corollary of the filtered eigensheaf property.
\begin{corollary}{\label{noredvanishing}} Under Assumption \ref{compatibility}, consider $b \in B(G)$ with corresponding locally closed HN-strata $\Bun_{G}^{b} \subset \Bun_{G}$. For $\phi$ a generic parameter, the following is true.
\begin{enumerate}
\item If $b \notin B(G)_{\mathrm{un}}$ the restriction
\[ \nmEis(\mathcal{S}_{\phi_{T}})|_{\Bun_{G}^{b}} \]
vanishes.
\item If $b \in B(G)_{\mathrm{un}}$ is an unramified element and $\rho$ is a smooth irreducible $\ol{\mathbb{F}}_{\ell}$-representation occurring as a constituent of $\nmEis(\mathcal{S}_{\phi_{T}})|_{\Bun_{G}^{b}}$ then $\rho$ is an irreducible constituent of $i_{B_{b}}^{J_{b}}(\chi^{w}) \otimes \delta_{P_{b}}^{-1/2}$ for some $w \in W_{b}$. 
\end{enumerate}
\end{corollary}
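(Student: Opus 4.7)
The approach is to combine the filtered Hecke eigensheaf property (Corollary \ref{appliedfilteigsheaf}) with the rigidity afforded by genericity (Lemma \ref{regmonodromy}) to force the Fargues--Scholze parameter of any irreducible constituent to equal $\phi$ as a full Weil--Deligne parameter, not merely as its semi-simplification; the two statements then follow directly from Assumption \ref{compatibility}.

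First I would fix a Schur irreducible constituent $\rho$ of $\nmEis(\mathcal{S}_{\phi_{T}})|_{\Bun_{G}^{b}}$, viewed as a smooth $J_{b}(\mathbb{Q}_{p})$-representation via the identification $\D(\Bun_{G}^{b}) \simeq \D(J_{b}(\mathbb{Q}_{p}), \Lambda)$. Since $\ell$ is very good for $G$, by Propositions~I.9.1--I.9.3 of Fargues--Scholze the excursion algebra on $\Dlis(\Bun_{G}, \Lambda)$ acts on each such constituent by scalars that recover its semi-simple Fargues--Scholze parameter. The filtered eigensheaf property of Corollary \ref{appliedfilteigsheaf} pins these scalars down: excursion operators arise from natural transformations between Hecke operators, and on a filtered eigensheaf they are computed from the Weil-group action on the graded pieces, entirely governed by $\phi$. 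Combining this with compatibility of Fargues--Scholze with restriction to $J_{b}$ along the twisted embedding $\phantom{}^{L}J_{b} \hookrightarrow \phantom{}^{L}G$ (Section~IX.7.1 of Fargues--Scholze), one concludes that $\phi_{\rho}^{\mathrm{FS}}$ post-composed with the twisted embedding agrees with $\phi$, which by construction factors through $\phantom{}^{L}T$.

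Next I would upgrade from the semi-simple parameter to the full Weil--Deligne parameter using Lemma \ref{regmonodromy}. Apply Assumption \ref{compatibility} (1) to produce $\phi_{\rho} \in \Phi(J_{b})$ with $(\phi_{\rho})^{\mathrm{ss}} = \phi_{\rho}^{\mathrm{FS}}$; composing with the twisted embedding yields an element of $\Phi(G)$ whose semi-simplification factors through $\phantom{}^{L}T$ and induces $\phi_{T}$. Genericity of $\phi_{T}$ together with Lemma \ref{regmonodromy} rules out non-trivial monodromy, since any such monodromy would produce a non-zero class in $H^{0}(W_{\mathbb{Q}_{p}}, \alpha \circ \phi_{T})$ for some $\Gamma$-orbit $\alpha$ of positive coroots, contradicting genericity. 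Hence $\phi_{\rho}$ itself factors through $\phantom{}^{L}T$. Now Assumption \ref{compatibility} (2) forces $b \in B(G)_{\mathrm{un}}$, which gives (1) by contraposition; Assumption \ref{compatibility} (3) then identifies $\rho$ as an irreducible constituent of $i_{B_{b}}^{J_{b}}(\chi^{w}) \otimes \delta_{P_{b}}^{1/2}$ for some $w \in W_{b}$, giving (2).

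The main obstacle is the coefficient mismatch: Assumption \ref{compatibility} is formulated for $\ol{\mathbb{Q}}_{\ell}$-representations, whereas the corollary concerns $\ol{\mathbb{F}}_{\ell}$-constituents. The resolution is via deformation theory for generic parameters: any generic toral $\ol{\mathbb{F}}_{\ell}$-parameter admits a canonical $\ol{\mathbb{Z}}_{\ell}$-lift (up to conjugacy) whose generic fibre remains toral and generic, because the Galois-cohomology complexes controlling the absence of monodromy deformations are preserved under the lift. The rigidity argument above then runs at the $\ol{\mathbb{Q}}_{\ell}$-level, and the conclusion descends to $\ol{\mathbb{F}}_{\ell}$ since mod-$\ell$ reductions of irreducible constituents of $i_{B_{b}}^{J_{b}}(\chi^{w}) \otimes \delta_{P_{b}}^{1/2}$ remain subquotients of the same induction. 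Cleanly executing this coefficient transfer, together with verifying the excursion-to-parameter identification in the $\ell$-modular setting, is the principal technical work.
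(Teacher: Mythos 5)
Your first paragraph tracks the paper's argument faithfully: you correctly identify that the filtered eigensheaf property pins down the excursion-algebra scalars, hence the semi-simple Fargues--Scholze parameter of any irreducible constituent $\rho$, and that compatibility with the twisted embedding $\phantom{}^{L}J_{b} \hookrightarrow \phantom{}^{L}G$ identifies this with $\phi$. The eventual appeal to parts (2) and (3) of Assumption \ref{compatibility} is also correct.

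However, your treatment of the $\ol{\mathbb{F}}_{\ell}$-to-$\ol{\mathbb{Q}}_{\ell}$ transfer has a genuine gap, and it is precisely the step you flag as ``the principal technical work''. You propose to lift the \emph{parameter} $\phi_{T}$ to $\ol{\mathbb{Z}}_{\ell}$ and run the rigidity argument ``at the $\ol{\mathbb{Q}}_{\ell}$-level''. But lifting $\phi_{T}$ produces no $\ol{\mathbb{Q}}_{\ell}$-representation of $J_{b}$ to feed into Assumption \ref{compatibility}, which is stated only for smooth irreducible $\ol{\mathbb{Q}}_{\ell}$-representations. The object that needs lifting is $\rho$ itself, not $\phi_{T}$. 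The paper does this via \cite[Lemma~6.8]{Dat}: choose a smooth irreducible $\ol{\mathbb{Q}}_{\ell}$-representation $\tilde{\rho}$ of $J_{b}(\mathbb{Q}_{p})$ admitting a stable $\ol{\mathbb{Z}}_{\ell}$-lattice such that $\rho$ occurs as a subquotient of $\tilde{\rho} \bmod \ell$. Compatibility of Fargues--Scholze with reduction mod $\ell$ (\cite[Section~IX.5.2]{FS}) then gives that $\phi_{\tilde{\rho}}^{\mathrm{FS}}$ is $\ol{\mathbb{Z}}_{\ell}$-valued with mod-$\ell$ reduction $\phi_{\rho}^{\mathrm{FS}}$, and now the deformation-theoretic input (vanishing of $H^{1}(W_{\mathbb{Q}_{p}}, \alpha \circ \phi_{T})$ for all $\Gamma$-orbits of roots, which is a consequence of genericity) forces every $\ol{\mathbb{Z}}_{\ell}$-lift of $\phi_{\rho}^{\mathrm{FS}}$ to factor through $\phantom{}^{L}T$ and remain generic. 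At that point Assumption \ref{compatibility} and Lemma \ref{regmonodromy} apply to $\tilde{\rho}$ exactly as you describe. Note also that one cannot simply run the argument against the $\ol{\mathbb{Q}}_{\ell}$-eigensheaf: that sheaf is only constructed later (in \S 10) using the description of the stalks that this corollary feeds into, so doing so would be circular. Replacing your parameter-lifting step by the representation-lifting step via Dat's lemma closes the gap; everything else in your proposal matches the paper's proof.
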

\begin{proof}
As noted above, $\nmEis(\mathcal{S}_{\phi_{T}})|_{\Bun_{G}^{b}}$ will be valued in an unbounded complex of smooth $\ol{\mathbb{F}}_{\ell}$-representations of the $\sigma$-centralizer $J_{b}$ of $b$. If we consider a smooth irreducible constituent of this restriction $\rho$ then, as already discussed above, it follows that the Fargues-Scholze parameter $\phi_{\rho}^{\mathrm{FS}}: W_{\mathbb{Q}_{p}} \rightarrow \phantom{}^{L}J_{b}(\ol{\mathbb{F}}_{\ell})$ under the twisted embedding
\[ \phantom{}^{L}J_{b}(\ol{\mathbb{F}}_{\ell}) \rightarrow \phantom{}^{L}G(\ol{\mathbb{F}}_{\ell}) \]
agrees with $\phi$. Using \cite[Lemma~6.8]{Dat}, we can choose $\tilde{\rho}$ a lift of $\rho$ to a smooth irreducible $\ol{\mathbb{Q}}_{\ell}$-representation admitting a $J_{b}(\mathbb{Q}_{p})$-stable $\ol{\mathbb{Z}}_{\ell}$-lattice such that $\rho$ occurs as a subquotient of $\tilde{\rho}$ mod $\ell$. Since the Fargues-Scholze correspondence is compatible with reduction mod $\ell$ (by the $\Lambda$-linearity of the maps appearing in \cite[Theorem~IX.5.2]{FS}), it follows that the Fargues-Scholze parameter $\phi_{\tilde{\rho}}^{\mathrm{FS}}$ factors through $\phantom{}^{L}G(\ol{\mathbb{Z}}_{\ell})$ and that it equals $\phi_{\rho}^{\mathrm{FS}}$ mod $\ell$. Now, since $\phi_{\rho}^{\mathrm{FS}}$ factors through $\phantom{}^{L}T$ and induces a generic parameter, we claim that the same is true for $\phi_{\tilde{\rho}}^{\mathrm{FS}}$. This follows through standard deformation theory. In particular, if
\[ H^{1}(W_{\mathbb{Q}_{p}},\alpha \circ \phi_{T}) \]
vanishes for all $\Gamma$-orbits $\alpha$ of roots, then any lift of $\phi_{\rho}^{\mathrm{FS}}$ will factor through $\phantom{}^{L}T$, but this vanishing is guaranteed by $\phi_{T}$ being generic (cf. \cite[Lemma~6.2.2]{CS}). It is also easy to see that $\phi_{\tilde{\rho}}^{\mathrm{FS}}$ must be generic since its mod $\ell$ reduction is. Now, by Assumption \ref{compatibility}, we note that $\phi_{\tilde{\rho}}^{\mathrm{FS}}$ is the semi-simplification of $\phi_{\tilde{\rho}}$, the $L$-parameter attached to $\tilde{\rho}$, but by Lemma \ref{regmonodromy} and genericity that implies that $\phi_{\tilde{\rho}}|_{W_{\mathbb{Q}_{p}}} = \phi_{\tilde{\rho}}^{\mathrm{FS}}$. The two claims now follow from Assumptions \ref{compatibility} (2) and (3), respectively. 
\end{proof}
This statement will allow us to give a complete description of the eigensheaf $\nmEis(\mathcal{S}_{\phi_{T}})$ for $\phi_{T}$ satisfying the slightly stronger condition of generic regularity. In particular, as we will see in \S $9$, for the restrictions of the sheaf to $\Bun_{G}^{b}$ for $b \in B(G)_{\mathrm{un}}$, we will always be able to evaluate the stalks in terms of normalized parabolic inductions of Weyl group translates of the character $\chi$,and, by the previous Corollary, we know these are the only possible non-zero stalks. 

Now we turn our attention to studying how the geometric Eisenstein functor interacts with Verdier duality. 
\section{Second Adjointness and the ULA property}
In this section, we recall two important results from the paper \cite{HHS}. First, we recall the statement of geometric second adjointness. We will consider the following  geometric constant term functors.
\[ \nmCT_{*}: \D(\Bun_{G}) \ra \D(\Bun_{T}) \]
\[  A \mapsto \mf{q}_{*}(\mf{p}^{!}(A) \otimes \IC_{\Bun_{B}}^{-1}) \]
and 
\[ \nmCT_{!}: \D(\Bun_{G}) \ra \D(\Bun_{T}) \]
\[  A \mapsto \mf{q}_{!}(\mf{p}^{*}(A) \otimes \IC_{\Bun_{B}}). \]
These satisfy adjunction relationships
\[ (\nmEis(-),\nmCT_{*}(-)) \]
and
\[ (\nmCT_{!}(-),\nmEis_{*}(-)). \]
Here we recall $\nmEis_{*}(-) := \mf{p}_{*}(\mf{q}^{*}(-) \otimes \IC_{\Bun_{B}})$, and for the second relationship, we have used that $\mf{q}^{!}(\Lambda) \simeq \IC_{\Bun_{B}}^{\otimes 2}$, by Theorem \ref{dualob}. 

We will use the subscripts $(-)_{-}$ to denote the analogous functors formed with respect to the diagram attached to $\Bun_{B^{-}}$, where $B^{-}$ denotes the opposite Borel. We now have the following result, which is a geometric analogue of the classical second adjointness result.
\begin{theorem}{\label{thm: geometricsecondadjointness}}{\cite[Theorem~1.1.1 (3)]{HHS},\cite[Theorem~2.5]{YutaTakayaSecondAdjointness}}
There is a natural isomorphism
\[ \nmCT_{-!}(-) \simeq \nmCT_{*}(-) \]
of functors $\D(\Bun_{G}) \ra \D(\Bun_{T})$.
\end{theorem}
This expression will be used in the next section to show the commutation of Eisenstein series with Verdier duality, by allowing us to rewrite certain constant term functors. We also have the following key finiteness result.
\begin{theorem}{\label{thm: EispreservesULA}}{\cite[Theorem~1.1.2 (1)]{HHS}}
The functor 
\[ \nmEis(-): \D(\Bun_{T}) \ra \D(\Bun_{G}) \]
preserves ULA objects after restricting to finitely many connected components of $\Bun_{T}$.
\end{theorem}
In particular, this will be used in the next section to show that the Eisenstein functor $\nmEis(\mathcal{S}_{\phi_{T}})$ has stalks valued in finite length representations after restricting to connecting components and assuming \ref{compatibility}.
\section{Stalks of Geometric Eisenstein Series and Verdier Duality}{\label{stalkssection}}
\subsection{Stalks of Geometric Eisenstein Series}
Now our aim is to explicitly determine the stalks of the sheaf $\nmEis(\mathcal{S}_{\phi_{T}})$, when $\phi_{T}$ is generic regular. Recall this means that $\phi_{T}$ is generic, and, for all $w \in W_{G}$ non-trivial, $\chi \not\simeq \chi^{w}$. Genericity will allow us to apply the results of Corollary \ref{noredvanishing}, and the second condition will appear naturally in the computation of the stalks. It is helpful to treat each connected component separately. In particular, using Corollary \ref{conncomponents}, we consider the decomposition:
\[ \nmEis(\mathcal{S}_{\phi_{T}}) = \bigoplus_{\ol{\nu} \in B(T)} \nmEis^{\ol{\nu}}(\mathcal{S}_{\phi_{T}}) \]
The main result of this section is as follows. 
\begin{theorem}{\label{connnormstalksdescription}}
Consider $\phi_{T}$ a generic regular parameter with associated character $\chi: T(\mathbb{Q}_{p}) \rightarrow \Lambda^{*}$. We fix $\ol{\nu} \in B(T)$ with image $b \in B(G)_{\mathrm{un}}$, dominant reduction $b_{T}$, and associated Borel $B_{b}$. Using Corollary \ref{weylgrouporbits}, we can write $\ol{\nu} = w(b_{T})$ for a unique $w \in W_{b}$. Then we have an isomorphism
\[ \nmEis^{\ol{\nu}}(\mathcal{S}_{\phi_{T}}) \simeq j_{b!}(i_{B_{b}}^{J_{b}}(\chi^{w}) \otimes \delta_{P_{b}}^{-1/2})[-\langle 2\hat{\rho},\nu_{b} \rangle] \]
under the identification $\D(J_{b}(\mathbb{Q}_{p}),\Lambda) \simeq \D(\Bun_{G}^{b})$, where $j_{b}: \Bun_{G}^{b} \ra \Bun_{G}$ is the locally closed immersion defined by the HN-stratum corresponding to $b$, and $P_{b}$ is the standard parabolic with Levi factor $M_{b} \simeq J_{b}$.
\end{theorem}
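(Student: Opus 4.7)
The plan is to split the proof into two cases according to whether $\ol{\nu} = b_T$ is the HN-dominant reduction itself or $\ol{\nu} = w(b_T)$ with $w \in W_b$ nontrivial. First I would handle the HN-dominant case by a direct computation on the classifying-stack presentation of the connected component; then I would reduce the general case to a combination of (i) a vanishing statement for the restriction of $\nmEis^{\ol{\nu}}(\mathcal{S}_{\phi_T})$ to $\Bun_G^{b'}$ for $b' \neq b$, and (ii) an explicit identification of the restriction to $\Bun_G^b$ matching the HN-dominant calculation.

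For $\ol{\nu} = b_T$: since $\nu_{b_T}$ is anti-dominant, $\Bun_B^{b_T}$ parametrizes only split $B$-structures, and by Definition \ref{conncompsauto} it identifies with $[\ast/\mathcal{G}_{b_T}]$ where $\mathcal{G}_{b_T} \simeq \ul{B_b(\mathbb{Q}_p)} \ltimes \mathcal{J}_b^{>0}$. Here $\mf{p}^{b_T}$ factors through the natural projection $[\ast/\mathcal{G}_{b_T}] \to [\ast/\ul{J_b(\mathbb{Q}_p)}] \simeq \Bun_G^b$, whose fibers form the iterated positive Banach--Colmez space $\mathcal{J}_b^{>0}$. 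Pushing $\mf{q}^{b_T*}(\chi)$ forward to $[\ast/\ul{J_b(\mathbb{Q}_p)}]$ recovers $\Ind_{B_b}^{J_b}(\chi)$, and twisting by $\IC_{\Bun_B} = \mf{q}^*(\Delta_B^{1/2})[\dim\Bun_B]$ from Theorem \ref{dualob} converts this into the normalized induction $i_{B_b}^{J_b}(\chi) \otimes \delta_{P_b}^{1/2}$; the cohomological shifts from the Banach--Colmez fibration combine with the shift built into $\IC_{\Bun_B}$ to yield $[-\langle 2\hat{\rho},\nu_b\rangle]$, matching the claimed formula.

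For the general case $\ol{\nu} = w(b_T)$: weak normalized regularity implies genericity, so Corollary \ref{noredvanishing} reduces attention to $b' \in B(G)_{\mathrm{un}}$. For such $b'$, proper base change expresses $\nmEis^{\ol{\nu}}(\mathcal{S}_{\phi_T})|_{\Bun_G^{b'}}$ as a pushforward over the fiber product $\Bun_B^{\ol{\nu}} \times_{\Bun_G} \Bun_G^{b'}$, which parametrizes a bundle $\mathcal{F}_G \in \Bun_G^{b'}$ together with a $B$-reduction of underlying $T$-bundle $\ol{\nu}$. Since $b' \in B(G)_{\mathrm{un}}$ the bundle $\mathcal{F}_G$ already carries the canonical HN-reduction to $B_{b'}$ with underlying $T$-bundle a $W_{b'}$-translate of $b'_T$, so this fiber product stratifies by the relative Bruhat position of the two reductions in $B\backslash G/B$. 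On the open (big-cell) stratum, which contributes only when $b' = b$, the split-$B$-structure computation from the HN-dominant case applied to the pullback $\mathcal{S}_{\phi_T}^{\tilde{w}}$ along a lift $\tilde{w} \in N(T)$ of $w$ recovers $j_{b!}(i_{B_b}^{J_b}(\chi^w) \otimes \delta_{P_b}^{1/2})[-\langle 2\hat{\rho},\nu_b\rangle]$. Every non-open Bruhat stratum admits a morphism to a positive symmetric power of $\Div^1$ labeled by positive coroots, and is locally modeled by the Fargues--Fontaine analogue of a Zastava space in the sense of \cite{FFKM}.

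The technical heart of the argument is the vanishing on these Zastava strata. Using the factorization structure, the pushforward along each stratum should reduce to a tensor product of Hecke-type complexes over the $\Div^1$-factors, each producing a Galois cohomology complex of the form $R\Gamma(W_{\mathbb{Q}_p}, \alpha \circ \phi_T)$ for a positive coroot $\alpha$. Genericity immediately kills the contributions of strata involving only simple coroots; the longest-word stratum, where no simple coroot appears, is exactly the case ruled out by the weak normalized regularity condition $\chi \otimes \delta_B^{1/2} \not\simeq (\chi \otimes \delta_B^{-1/2})^w$; and the intermediate strata will be dispatched by induction on the partial order on $B(G)_{\mathrm{un}}$, using the result already known for smaller $b'' \prec b'$ combined with excision for the Bruhat stratification. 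Setting up this induction rigorously in the diamond framework, identifying the local Zastava models over the Fargues--Fontaine curve, and carrying out the combinatorial bookkeeping that matches Bruhat strata against Galois orbits of coroots is where the principal technical difficulty lies.
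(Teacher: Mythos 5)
Your proposal matches the paper's architecture in broad strokes — a direct computation on the split locus via a classifying-stack presentation, a reduction of the remaining stalks to a Bruhat stratification of a fiber product, and an induction on $B(G)_{\mathrm{un}}$ with Zastava spaces as local models. The split-contribution computation is organized slightly differently (the paper's Proposition \ref{splitcontribution} handles all $\ol{\nu}=w(b_T)$ at once rather than first doing $w=1$ and then translating), but that is cosmetic.

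The genuine gap is in your mechanism for killing the nontrivial Bruhat strata. You assert that the pushforward on a Bruhat stratum factorizes into Hecke-type complexes producing Galois cohomology factors $R\Gamma(W_{\mathbb{Q}_p},\alpha\circ\phi_T)$ for positive coroots $\alpha$, that \emph{genericity} kills the strata involving simple coroots, and that weak normalized regularity only intervenes for the longest-word stratum. This is not how the paper's argument goes, and the mechanism as stated would not close. Proposition \ref{splitmap} does not assert a cohomology vanishing over $\Div^{(\theta)}$ at all: it is a purely geometric splitting statement, namely that $(\mf{q}_1\times\pi_w)_!(\Lambda)$ on the stratum $Z^{\ol{\nu},\ol{\nu}'}_w$ is an extension of complexes pulled back from $\Div^{(\theta)}$, with \emph{no} hypothesis on $\phi_T$ used. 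Its sole purpose is to show that as a $T(\mathbb{Q}_p)$-representation, the constant-term piece $(\CT^{\ol{\nu}}\circ\nmEis^{\ol{\nu}'}_*(\mathcal{S}_{\phi_T}))_w$ is built from copies of $\chi^w\otimes(\delta_B^{-1/2})^w$. The vanishing of the $\chi\otimes\delta_B^{1/2}$-isotypic part is then a one-line consequence of Schur's lemma together with Condition \ref{normregcond}~(3) — the condition $\chi\otimes\delta_B^{1/2}\not\simeq\chi^w\otimes(\delta_B^{-1/2})^w$ for \emph{every} nontrivial $w\in W_G$ — applied uniformly across all $w\neq 1$ (the $w=1$ stratum being empty when $b\neq b'$). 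Genericity (Condition (2)) cannot do this job: as Remark \ref{rem: normregvirred} points out, genericity fails to rule out the case of a unitary $\chi$ fixed by some $w$, which is exactly where $i_B^G(\chi)$ can be reducible and where the $w$-stratum would otherwise contribute. Genericity enters the argument only indirectly — through the Verdier-duality isomorphism needed to identify $j_{b'*}(\rho_{b',w})$ with $\nmEis^{\ol{\nu}'}_*(\mathcal{S}_{\phi_T})$ in the inductive step, and through Corollary \ref{noredvanishing} — not through any Galois-cohomology vanishing on Bruhat strata. If you try to run your argument as stated, you will find that a Bruhat stratum indexed by a nontrivial $w$ with $\theta=\ol{\nu}-w(\ol{\nu}')$ does not produce a killable $R\Gamma$ of a single coroot but rather a potentially large complex over $\Div^{(\theta)}$ whose cohomology need not vanish; the representation-theoretic use of Condition (3) is indispensable.
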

By varying $\ol{\nu}$ over all connected components, we obtain the following.
\begin{corollary}{\label{normstalksdescription}}
Consider $\phi_{T}$ a generic regular parameter, with associated character $\chi: T(\mathbb{Q}_{p}) \rightarrow \Lambda^{*}$. Assume \ref{compatibility}. For $b \in B(G)$, the stalk $\nmEis(\mathcal{S}_{\phi_{T}})|_{\Bun_{G}^{b}} \in \D(\Bun_{G}^{b}) \simeq \D(J_{b}(\mathbb{Q}_{p}),\Lambda)$ is given by
\begin{enumerate}
\item an isomorphism $\nmEis(\mathcal{S}_{\phi_{T}})|_{\Bun_{G}^{b}} \simeq \bigoplus_{w \in W_{b}} i_{B_{b}}^{J_{b}}(\chi^{w}) \otimes \delta_{P_{b}}^{-1/2}[-\langle 2\hat{\rho},\nu_{b} \rangle]$ if $b \in B(G)_{\mathrm{un}}$,
\item an isomorphism $\nmEis(\mathcal{S}_{\phi_{T}})|_{\Bun_{G}^{b}} \simeq 0$ if $b \notin B(G)_{\mathrm{un}}$.  
\end{enumerate}
\end{corollary}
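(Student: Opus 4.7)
The plan is to deduce Corollary \ref{normstalksdescription} from Theorem \ref{connnormstalksdescription} by summing over the connected components of $\Bun_B$. First, since weak normalized regularity implies genericity, Corollary \ref{noredvanishing} immediately gives part (2): the restriction $\nmEis(\mathcal{S}_{\phi_T})|_{\Bun_G^b}$ vanishes for $b \notin B(G)_{\mathrm{un}}$. For part (1), fix $b \in B(G)_{\mathrm{un}}$ and use the decomposition $\nmEis(\mathcal{S}_{\phi_T}) = \bigoplus_{\ol\nu \in B(T)} \nmEis^{\ol\nu}(\mathcal{S}_{\phi_T})$ indexed by the connected components of $\Bun_B$. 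Combining Theorem \ref{connnormstalksdescription} with Corollary \ref{weylgrouporbits}, the $\ol\nu$ mapping to $b \in B(G)_{\mathrm{un}}$ are precisely $\{w(b_T) : w \in W_b\}$, and summing the resulting contributions produces the claimed formula.

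The proof of Theorem \ref{connnormstalksdescription} splits into the easy case $w = 1$ (so $\ol\nu = b_T$ is HN-dominant) and the hard case $w \ne 1$. In the easy case, $\Bun_B^{b_T}$ parametrizes only split $B$-reductions, and by Definition \ref{conncompsauto} is isomorphic to $[\ast/\mathcal{G}_{b_T}]$ with $\mathcal{G}_{b_T} \simeq \ul{B_b(\mathbb{Q}_p)} \ltimes \mathcal{J}_b^{>0}$. The maps $\mf{p}^{b_T}$ and $\mf{q}^{b_T}$ factor through the two projections from $[\ast/\ul{B_b(\mathbb{Q}_p)}]$ to $[\ast/\ul{J_b(\mathbb{Q}_p)}]$ and $[\ast/\ul{T(\mathbb{Q}_p)}]$ respectively. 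Using Theorem \ref{dualob} to identify $\IC_{\Bun_B}|_{\Bun_B^{b_T}}$ with $\mf{q}^{b_T,*}(\Delta_B^{1/2})[d_{b_T}]$, and computing the fiberwise push-forward along the tower of positive Banach--Colmez spaces $\mathcal{J}_b^{>0}$ exactly as in \S \ref{icsheaf}, one recovers $j_{b!}(i_{B_b}^{J_b}(\chi) \otimes \delta_{P_b}^{1/2})[-\langle 2\hat\rho, \nu_b \rangle]$; the modulus twist $\delta_{P_b}^{1/2}$ comes from the mismatch between $\phantom{}^L T \to \phantom{}^L J_b$ and the twisted embedding $\phantom{}^L J_b \to \phantom{}^L G$, and the shift records $d_{b_T}$.

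The hard case $w \ne 1$ requires two substatements: (a) for every $b'\ne b$ with $b' \preceq b$, the restriction $\nmEis^{w(b_T)}(\mathcal{S}_{\phi_T})|_{\Bun_G^{b'}}$ vanishes; (b) the restriction to $\Bun_G^b$ is given by the same formula as in the easy case with $\chi$ replaced by $\chi^w$. Statement (b) is proved by repeating the computation of the easy case on the open substack of $\Bun_B^{w(b_T)}$ consisting of split $B$-reductions, noting that $w \in W_b$, viewed as a minimal-length representative, conjugates the data so as to replace $\chi$ by $\chi^w$. Statement (a) is the principal obstacle and is where Condition~(3) of Condition/Definition \ref{normregcond} is used decisively. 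Using Corollary \ref{noredvanishing} we may assume $b'\in B(G)_{\mathrm{un}}$, and analyze the fiber of $\mf{p}^{w(b_T)}$ over $\Bun_G^{b'}$ as a moduli space of simultaneous $B$-reductions: the HN-dominant one with underlying $T$-bundle $b'_T$, and a second one with underlying $T$-bundle $w(b_T)$. The Bruhat stratification of $B\backslash G/B$ induces a locally closed stratification on this fiber space, each nontrivial stratum fibering over a positive symmetric power of $\Div^1$ and locally modelled by a Zastava diamond. Using the eigensheaf property of $\mathcal{S}_{\phi_T}$ on the $T$-factor and pushing forward along the Zastava map, the contribution of each stratum is shown to be a tensor product of Galois cohomology complexes of two forms: one of type $R\Gamma(W_{\mathbb{Q}_p}, \alpha \circ \phi_T)$, which vanishes by genericity and Lemma \ref{regvanishing}; and one involving factors like $\chi^{w} \otimes (\chi^{w'})^{-1} \otimes \delta_B^{\pm 1/2}$, which vanishes by Condition~(3). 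Running an induction on $b'$ along the partial ordering on $B(G)_{\mathrm{un}}$, so that lower Zastava strata inherit vanishing from the statement already proved for smaller $b'$, closes the argument.

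Finally, perversity of $\nmEis(\mathcal{S}_{\phi_T})$ for the standard HN $t$-structure on $\Bun_G$ is immediate from the stalk description: on each $\Bun_G^b$ with $b\in B(G)_{\mathrm{un}}$ the restriction is an admissible smooth $J_b(\mathbb{Q}_p)$-representation sitting in cohomological degree $-\langle 2\hat\rho, \nu_b\rangle$, which matches the $\ell$-dimension of $\Bun_G^b$ and places $j_{b!}$ of it in perverse degree zero; on the remaining strata the sheaf vanishes. Thus the main technical burden is the Zastava analysis and induction underlying statement (a) of the hard case.
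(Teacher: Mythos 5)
Your overall architecture matches the paper: part (2) follows from Corollary \ref{noredvanishing}, part (1) is obtained by summing Theorem \ref{connnormstalksdescription} over the components of $\Bun_{B}$ lying over $b$, and the stalk over $\Bun_{G}^{b}$ is computed by the Banach--Colmez analysis (Proposition \ref{splitcontribution}, which covers all $w\in W_{b}$ uniformly, not just $w=1$). The perversity remark at the end is also correct. The genuine gap is in your statement (a), the vanishing of $\nmEis^{w(b_{T})}(\mathcal{S}_{\phi_{T}})|_{\Bun_{G}^{b'}}$ for $b'\prec b$ (the content of Proposition \ref{nonsplitcontribution}), and it concerns both the reduction step and the vanishing mechanism.

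You propose to analyze the fiber of $\mf{p}^{w(b_{T})}$ over $\Bun_{G}^{b'}$ directly and show each Bruhat stratum contributes tensor products of vanishing Galois cohomology complexes, with the induction ``propagating vanishing to lower strata.'' That is not how the argument closes. The paper first constrains the possible irreducible constituents of the stalk via Corollary \ref{noredvanishing}(2), then reduces to showing $R\mathcal{H}om(\nmEis^{\ol{\nu}}(\mathcal{S}_{\phi_{T}}),\nmEis^{\ol{\nu}'}_{*}(\mathcal{S}_{\phi_{T}}))=0$, where $\nmEis^{\ol{\nu}'}_{*}$ is the $*$-version supplied by the inductive hypothesis together with $j_{b'!}\simeq j_{b'*}$ (coming from Theorem \ref{commverddual}); this is the true role of the induction, namely producing the test object. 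Adjunction against $\Eis^{\ol{\nu}}$, whose right adjoint is the constant term functor $\CT^{\ol{\nu}}=\mf{q}^{\ol{\nu}}_{*}\mf{p}^{\ol{\nu}!}$, transports the entire computation into $\D(T(\mathbb{Q}_{p}),\Lambda)$, where the Bruhat stratification of $\Bun_{B}^{\ol{\nu}}\times_{\Bun_{G}}\Bun_{B}^{\ol{\nu}'}$ is analyzed. And the vanishing mechanism there is different from what you describe: Proposition \ref{splitmap} shows $(\mf{q}_{1}\times\pi_{w})_{!}(\Lambda)$ is an extension of pullbacks of (in general nonvanishing) complexes from $\Div^{(\theta)}$, so the $\Bun_{T}^{\ol{\nu}}$-factor splits off carrying the pure $T(\mathbb{Q}_{p})$-character $(\delta_{B}^{-1/2})^{w}\otimes\chi^{w}$, and the resulting $R\mathcal{H}om$ dies by Schur's Lemma and Condition (3) alone (Proposition \ref{bruhatstrata}(2)). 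Nothing there requires the $\Div^{(\theta)}$-cohomology or any $R\Gamma(W_{\mathbb{Q}_{p}},\alpha\circ\phi_{T})$ to vanish --- and with the $w$-twisted local systems actually appearing, such vanishing would not follow from weak genericity anyway. Genericity and Lemma \ref{regvanishing} feed a different part of the chain (Corollary \ref{noredvanishing} and the commutation with Verdier duality), not the Zastava step.
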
 
First, consider the following easy Lemma. 
\begin{lemma}{\label{restvanishing}}
For $\ol{\nu} \in B(T)$ with image $b$ in $B(G)$, the restriction
\[ \nmEis^{\ol{\nu}}(\mathcal{S}_{\phi_{T}})|_{\Bun_{G}^{b'}} \]
for $b' \in B(G)$ vanishes unless $b \succeq b'$ in the natural partial ordering on $B(G)$.
\end{lemma}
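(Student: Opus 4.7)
The plan is to reduce the vanishing to a Mazur-type inequality for Newton polygons in the Fargues-Fontaine setting. Observe that $\nmEis^{\ol{\nu}}(\mathcal{S}_{\phi_{T}}) = \mathfrak{p}^{\ol{\nu}}_{!}\bigl((\mathfrak{q}^{\ol{\nu}})^{*}(\mathcal{S}_{\phi_{T}}) \otimes \IC_{\Bun_{B}^{\ol{\nu}}}\bigr)$, so its support in $|\Bun_{G}|$ is contained in the set-theoretic image of $\mathfrak{p}^{\ol{\nu}} \colon \Bun_{B}^{\ol{\nu}} \to \Bun_{G}$. Using the HN-stratification of $\Bun_{G}$ (cf.\ \cite[Theorem~I.4.1]{FS}) and the fact that $\{b' \in B(G) : b' \preceq b\}$ is a down-closed subset of $B(G)$, the locus $\bigsqcup_{b' \preceq b} \Bun_{G}^{b'}$ is an open substack of $\Bun_{G}$; it therefore suffices to show that the image of $\mathfrak{p}^{\ol{\nu}}$ lies inside this open substack.

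This reduces to the following assertion: for any geometric point of $\Bun_{B}^{\ol{\nu}}$ with underlying $G$-bundle $\mathcal{F}_{G}$ of class $b'$, we have $b' \preceq b$, where $b$ is the image of $\ol{\nu}$ in $B(G)_{\mathrm{un}}$. Unpacking the HN-order on $B(G)$, this amounts to two statements: $\kappa(b') = \kappa(b)$ in $\pi_{1}(G)_{\Gamma}$, and $\nu_{b'} \leq \nu_{b}$ in the dominance order on $\mathbb{X}_{*}(T_{\ol{\mathbb{Q}}_{p}})^{+,\Gamma}_{\mathbb{Q}}$. The Kottwitz invariant equality is immediate: the abelianization $B \twoheadrightarrow B^{\mathrm{ab}} = T$ gives $\pi_{1}(B)_{\Gamma} = \pi_{1}(T)_{\Gamma}$, and the further surjection $\pi_{1}(T)_{\Gamma} \twoheadrightarrow \pi_{1}(G)_{\Gamma}$ sends $\kappa_{T}(\ol{\nu})$ to both $\kappa(\mathcal{F}_{G})$ and $\kappa(b)$, by construction of the natural map $B(T) \to B(G)_{\mathrm{un}}$.

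The main obstacle is the Newton-point inequality $\nu_{b'} \leq \nu_{b}$, which is a version of Mazur's inequality on the Fargues-Fontaine curve. For $G = \GL_{n}$ this is the elementary observation that any filtration of a vector bundle on $X$ by subbundles has its polygon of partial degrees dominated by the Harder-Narasimhan polygon, together with the fact that the Weyl-dominantization of the filtration slopes $(d_{1}, \ldots, d_{n}) = \ol{\nu}$ coincides by construction with $\nu_{b}$. For general quasi-split $G$, one reduces to the $\GL_{n}$ case by composing with each representation of $G$ via the Tannakian formalism, yielding the dominance-order comparison of cocharacters of $T$. Combining with the Kottwitz invariant equality, one obtains $b' \preceq b$, establishing the image containment and thereby the lemma.
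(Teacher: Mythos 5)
Your proposal follows essentially the same route as the paper's proof. Both reduce the lemma to showing that the set-theoretic image of $\mathfrak{p}^{\ol{\nu}}\colon \Bun_{B}^{\ol{\nu}} \to \Bun_{G}$ lies in the open substack $\bigsqcup_{b' \preceq b} \Bun_{G}^{b'}$, and both establish this pointwise by invoking the Tannakian formalism to reduce the Newton-point inequality to the $\GL_{n}$ case, where it becomes the comparison between the Harder--Narasimhan polygon of a bundle and the polygon coming from a filtration by sub-bundles. You add a little more detail than the paper: you separate the Kottwitz-invariant equality from the Newton inequality and verify it via the maps $\pi_1(B)_{\Gamma}=\pi_1(T)_{\Gamma}\to\pi_1(G)_{\Gamma}$, and you explicitly note the openness of the down-set $\{b' \preceq b\}$, both of which the paper's one-line proof takes for granted. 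One caution: your phrase ``polygon of partial degrees dominated by the Harder--Narasimhan polygon'' reverses the comparison as the paper states it (``Harder--Narasimhan polygon less than or equal to Harder--Narasimhan polygon of the direct sum of the graded pieces''); because of the sign flip between isocrystal slopes and Harder--Narasimhan slopes on the Fargues--Fontaine curve, it is worth being careful that the inequality you quote from the $\GL_n$ theory indeed yields $\nu_{b'}\leq\nu_{b}$ after dominantization and passage to the Kottwitz partial order, rather than the reverse. The underlying fact and the cited sources (Mazur's inequality, Kedlaya, Ziegler) are the correct ones.
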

\begin{proof}
This follows from the observation that the image of $\Bun_{B}^{\ol{\nu}}$ under $\mf{p}^{\ol{\nu}}$ is contained in the open substack $\Bun_{B}^{\leq b}$ parametrizing bundles with associated Kottwitz element less than $b$. In particular, using the Tannakian formalism \cite[Theorems~4.42,4.43]{Zieg}, this reduces to the observation that, for $\GL_{n}$, a bundle $\mathcal{E}$ with a filtration by vector subbundles has Harder-Narasimhan polygon less than or equal to Harder-Narasimhan polygon of the direct sum of the graded pieces of the filtration, which is an easy consequence of the formalism of Harder-Narasimhan reductions (See for example \cite[Corollary~3.4.18]{Ked}). 
\end{proof}
Thus, for a fixed $b' \in B(G)$, Lemma \ref{restvanishing} tells us that $\nmEis(\mathcal{S}_{\phi_{T}})|_{\Bun_{G}^{b'}}$ is a direct sum of $\nmEis^{\ol{\nu}}(\mathcal{S}_{\phi_{T}})|_{\Bun_{G}^{b'}}$ for $\ol{\nu}$ whose image $b \in B(G)$ satisfies $b \succeq b'$. Now the key point is that, under the generic regularity assumption, all the contributions will vanish except when $b' = b$. This is one of the many reasons that generic regularity is absolutely necessary to get a reasonable eigensheaf. In general, all possible $\ol{\nu}$ contribute to $\nmEis(\mathcal{S}_{\phi_{T}})|_{\Bun_{G}^{b'}}$, and  $\nmEis(\mathcal{S}_{\phi_{T}})|_{\Bun_{G}^{b'}}$ will be equal to an infinite direct sum of smooth irreducible representations sitting in infinitely many degrees. We now reduce Theorem \ref{connnormstalksdescription} to two propositions. We first have the following proposition describing the contribution of the split reduction in the connected components $\Bun_{B}^{\ol{\nu}}$, which is a special case of \cite[Theorem~4.26]{GH}.
\begin{proposition}{\label{splitcontribution}}
Let $\ol{\nu} \in B(T)$ be an element mapping to $b \in B(G)_{\mathrm{un}}$. We write $\ol{\nu} = w(b_{T})$ as above. If $\phi_{T}$ is any toral parameter, we have an isomorphism 
\[  \nmEis^{\ol{\nu}}(\mathcal{S}_{\phi_{T}})|_{\Bun_{G}^{b}} \simeq i_{B_{b}}^{J_{b}}(\chi^{w}) \otimes \delta_{P_{b}}^{-1/2}[-\langle 2\hat{\rho},\nu_{b} \rangle]  \]
of complexes of smooth $J_{b}(\mathbb{Q}_{p})$-modules, under the identification $\D(\Bun_{G}^{b}) \simeq \D(J_{b}(\mathbb{Q}_{p}),\Lambda)$, where $w \in W_{b}$ is identified with a representative of minimal length and $\delta_{P_{b}}$ is as in assumption \ref{compatibility}.
\end{proposition}
This tells us that all the claimed contributions to the restriction $\nmEis(\mathcal{S}_{\phi_{T}})|_{\Bun_{G}^{b}}$ appear. All that remains is to show is that there are no additional contributions, and this is precisely what generic regularity will allow us to do.
\begin{proposition}{\label{nonsplitcontribution}}
Assume $\phi_{T}$ is generic regular and \ref{compatibility}, then, for all $\ol{\nu} \in B(T)$ mapping to $b \in B(G)_{\mathrm{un}}$, the sheaf $\nmEis^{\ol{\nu}}(\mathcal{S}_{\phi_{T}})$ is only supported on the HN-strata $\Bun_{G}^{b}$.
\end{proposition}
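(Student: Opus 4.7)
The plan is to compute the stalk $\nmEis^{\ol{\nu}}(\mathcal{S}_{\phi_T})|_{\Bun_G^{b'}}$ for $b' \neq b$ as the compactly supported cohomology of a moduli space of \emph{two} $B$-reductions of a single $G$-bundle, stratify that space by the Bruhat decomposition $B \backslash G / B$, and kill each stratum using Condition \ref{normregcond}(3) combined with an induction on the Kottwitz order on $B(G)$.

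By Lemma \ref{restvanishing} we only need to rule out non-zero contributions on $\Bun_G^{b'}$ for $b' \prec b$ strictly, and Corollary \ref{noredvanishing} reduces us further to $b' \in B(G)_{\mathrm{un}}$. We induct on $b$ with respect to the partial order on $B(G)_{\mathrm{un}}$, the minimal case being vacuous. For the inductive step, write $\ol{\nu} = w(b_T)$, base-change $\mf{p}^{\ol{\nu}} : \Bun_B^{\ol{\nu}} \to \Bun_G$ along $j_{b'} : \Bun_G^{b'} \hookrightarrow \Bun_G$, and pull back further along the covering $[\ast/\mathcal{G}_{b'_T}] \to \Bun_G^{b'}$ supplied by the HN-reduction (Definition \ref{conncompsauto}). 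This produces a $v$-stack $Z = Z_{\ol{\nu},b'}$ parametrizing a $G$-bundle isomorphic to $\mathcal{F}_{b'}$ equipped with both its tautological HN $B$-reduction (of type $b'_T$) and a second $B$-reduction of type $\ol{\nu}$, and the desired stalk is computed by pushing the pullback of $\mf{q}^{\ol{\nu}*}(\mathcal{S}_{\phi_T} \otimes \Delta_B^{1/2})$ along the natural $\Bun_T$-projection $Z \to \Bun_T$ forward to the classifying point of $J_{b'}(\mathbb{Q}_p)$.

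The generic relative position of the two $B$-reductions equips $Z$ with a locally closed stratification indexed by Bruhat positions $w' \in W_G$. Because $b \neq b'$ the cocharacter $\ol{\nu}$ does not lie in the $W_{b'}$-orbit of $b'_T$, so the two reductions never agree globally; each non-empty stratum $Z^{w'}$ thus admits a non-trivial projection to a partially symmetrized power $\Div^{(\eta_{w'})}$ of the mirror curve, where $\eta_{w'} \in \Lambda_{G,B}^{pos}$ measures the discrepancy in $T$-type forced by the Bruhat position $w'$, and the fibers are locally modelled on Zastava spaces in the sense of \cite{FFKM}. Combining this factorization picture with the Hecke eigensheaf property for $\mathcal{S}_{\phi_T} \otimes \Delta_B^{1/2}$ (Corollary \ref{twistedeigsheaf}), the contribution of $Z^{w'}$ to the stalk is, up to explicit shifts and twists, the tensor product of the Zastava fiber cohomology with a factorization complex on $\Div^{(\eta_{w'})}$ attached to the character $(\chi \otimes \delta_B^{1/2}) \cdot ((\chi \otimes \delta_B^{-1/2})^{w'})^{-1}$ by the same recipe used in Lemma \ref{regvanishing}.

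Condition \ref{normregcond}(3) is precisely the non-triviality of this character for every $w' \in W_G$; combined with Condition \ref{normregcond}(2) and the K\"unneth/symmetric-power analysis underlying Lemma \ref{regvanishing}, this forces the factorization complex on $\Div^{(\eta_{w'})}$ to be cohomologically trivial, so the $Z^{w'}$-contribution vanishes. Boundary contributions produced when one of the two $B$-reductions sinks into a deeper HN-stratum are handled by applying the inductive hypothesis to a strictly smaller element of $B(G)_{\mathrm{un}}$, and an excision spectral sequence across the stratification then yields the total vanishing. The main technical obstacles I anticipate are: (i) setting up the Zastava factorization rigorously in the Fargues--Fontaine setting, with enough control on compactifications (via $\ol{\Bun}_B$ and Proposition \ref{propernessofdrinfeld}) to justify the pushforward along $Z^{w'} \to \Div^{(\eta_{w'})}$, and (ii) carefully tracking the modulus twists across Bruhat cells to verify that the character which appears is \emph{precisely} the one excluded by Condition \ref{normregcond}(3).
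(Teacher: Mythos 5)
Your overall architecture is close to the paper's: induct on $b \in B(G)_{\mathrm{un}}$, reduce to $b' \in B(G)_{\mathrm{un}}$ via Lemma~\ref{restvanishing} and Corollary~\ref{noredvanishing}, realize the relevant stalk in terms of a moduli space of pairs of $B$-reductions, stratify by generic Bruhat position, and connect the strata to Zastava-type spaces over the partially symmetrized curves $\Div^{(\theta)}$. The geometric content of Proposition~\ref{bruhatstrata} and Proposition~\ref{splitmap} is roughly what you describe in your third and fourth paragraphs.

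However, the mechanism by which you propose to use Condition~\ref{normregcond}(3) is wrong, and this is the crux of the argument. You claim that Condition (3), together with Condition (2) and the symmetric-power analysis of Lemma~\ref{regvanishing}, ``forces the factorization complex on $\Div^{(\eta_{w'})}$ to be cohomologically trivial.'' That is not what Condition (3) does. Condition (3) is a statement that two smooth characters of $T(\mathbb{Q}_p)$ are \emph{non-isomorphic}; it is not, and does not imply, a vanishing of any Galois cohomology complex (the Galois cohomology of a nontrivial character can perfectly well be nonzero). The recipe of Lemma~\ref{regvanishing} requires vanishing of $R\Gamma(W_{\mathbb{Q}_p}, \alpha\circ\phi_T(\cdot))$ for the relevant twists, and there is no reason for that complex to vanish for the character ratios appearing in a general Bruhat cell. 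So your argument as written cannot close the loop: you arrive at a factorization complex that has no reason to be cohomologically trivial, and the proof stops there.

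The paper resolves this by routing the computation through an \emph{adjunction} rather than a direct stalk computation. It strengthens the inductive hypothesis to a full identification $\nmEis^{w(b'_T)}(\mathcal{S}_{\phi_T}) \simeq j_{b'!}(i_{B_{b'}}^{J_{b'}}(\chi^w)\otimes\delta_{P_{b'}}^{1/2})[-\langle 2\hat\rho,\nu_{b'}\rangle]$, uses Theorem~\ref{commverddual} (commutation with Verdier duality) to identify a $*$-Eisenstein sheaf $\nmEis^{\ol\nu'}_*(\mathcal{S}_{\phi_T})$ with $j_{b'*}(\cdot)$, and then shows the stalk on $\Bun_G^{b'}$ vanishes by showing
\[
R\mathcal{H}om\big(\nmEis^{\ol\nu}(\mathcal{S}_{\phi_T}),\, \nmEis^{\ol\nu'}_*(\mathcal{S}_{\phi_T})\big)
\]
is zero. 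Applying the adjunction between $\Eis^{\ol\nu}$ and the constant term functor $\CT^{\ol\nu}$ turns this into
\[
R\mathcal{H}om_{T(\mathbb{Q}_p)}\big(\chi\otimes\delta_B^{1/2},\, \CT^{\ol\nu}\circ\nmEis^{\ol\nu'}_*(\mathcal{S}_{\phi_T})\big),
\]
and the point of the Bruhat/Zastava analysis (Proposition~\ref{bruhatstrata}) is that each nonzero graded piece of the target is an extension of complexes of $T(\mathbb{Q}_p)$-representations all isomorphic to $(\delta_B^{-1/2})^{w}\otimes\chi^{w}$. Vanishing now follows from Schur's lemma plus the non-isomorphism $\chi\otimes\delta_B^{1/2}\not\simeq(\delta_B^{-1/2})^{w}\otimes\chi^{w}$, which is exactly Condition (3). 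In short: Condition (3) is a Schur-type obstruction to intertwiners, deployed after transposing the problem to $R\mathcal{H}om_{T(\mathbb{Q}_p)}$; it is not a Galois-cohomology vanishing, and your direct computation of the stalk has no place to insert it. You also need the Verdier duality step (to pass from $j_{b'!}$ to $j_{b'*}$ so that the second argument of the $R\mathcal{H}om$ is $*$-pushed) — this is implicit but essential, and your proposal omits it.
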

We now prove this Proposition. 
\subsubsection{The Proof of Proposition \ref{nonsplitcontribution}}
We argue by induction on $b \in B(G)_{\mathrm{un}}$ with respect to the partial ordering on $B(G)$ and the following stronger statement.

"For $b \in B(G)_{\mathrm{un}}$ with dominant reduction $b_{T}$, and $\ol{\nu} = w(b_{T}) \in B(T)$ mapping to $b$ for varying $w \in W_{b}$, we have an isomorphism
\[ \nmEis^{w(b_{T})}(\mathcal{S}_{\phi_{T}}) \simeq j_{b!}(i_{B_{b}}^{J_{b}}(\chi^{w}) \otimes \delta_{P_{b}}^{-1/2})[-\langle 2\hat{\rho}, \nu_{b} \rangle] \]
of sheaves in $\D(\Bun_{G})$."

The base case will be when $b$ is such that any $b' \in B(G)_{\mathrm{un}}$ satisfying $b \succeq b'$ is equal to $b$. Since the stalk of $\nmEis^{\ol{\nu}}(\mathcal{S}_{\phi_{T}})|_{\Bun_{G}^{b'}}$ will only be non-trivial for $b' \in B(G)$ such that $b \succeq_{\neq} b'$ by Lemma \ref{restvanishing}, the result in this case follows follows from Proposition \ref{splitcontribution} and Corollary \ref{noredvanishing} (1), where we note that $\phi_{T}$ is generic regular and therefore generic. For the inductive step, assume the claim is true for all $b' \in B(G)_{\mathrm{un}}$ such that $b \succeq_{\neq} b'$. Let $\ol{\nu} \in B(T)$ be an element mapping to $b$. By Proposition \ref{splitcontribution} and Corollary \ref{noredvanishing} (1) again, it suffices to show that the restriction of $\nmEis^{\ol{\nu}}(\mathcal{S}_{\phi_{T}})$ to $\Bun_{G}^{b'}$ vanishes for all such $b'$. By Corollary \ref{noredvanishing} (2), it suffices to show, for all $w \in W_{b'}$, that the complex
\[ R\mathcal{H}om(\nmEis^{\ol{\nu}}(\mathcal{S}_{\phi_{T}})|_{\Bun_{G}^{b'}},i_{B_{b'}}^{J_{b'}}(\chi^{w}) \otimes \delta_{P_{b'}}^{-1/2}) =  R\mathcal{H}om(\nmEis^{\ol{\nu}}(\mathcal{S}_{\phi_{T}}),j_{b'*}(i_{B_{b'}}^{J_{b'}}(\chi^{w}) \otimes \delta_{P_{b'}}^{-1/2})) \]
is trivial. In particular, since, by Corollary \ref{thm: EispreservesULA}, we know that $\nmEis^{\ol{\nu}}(\mathcal{S}_{\phi_{T}})$ is ULA and it follows that $\nmEis^{\ol{\nu}}(\mathcal{S}_{\phi_{T}})|_{\Bun_{G}^{b'}}$ is admissible in the sense that, for all compact open $K \subset J_{b}(\mathbb{Q}_{p})$, $\nmEis^{\ol{\nu}}(\mathcal{S}_{\phi_{T}})^{K}|_{\Bun_{G}^{b'}}$ is a perfect complex. By Corollary \ref{noredvanishing} (2) and \cite[II.5.13]{Vig1}, we know that there are only finitely many possiblities for smooth irreducible constituents of $\nmEis^{\ol{\nu}}(\mathcal{S}_{\phi_{T}})|_{\Bun_{G}^{b'}}$. Therefore, by choosing $K \subset G(\mathbb{Q}_{p})$ sufficiently small (so that every such constituent has a non-zero fixed vector), we deduce that $\nmEis^{\ol{\nu}}(\mathcal{S}_{\phi_{T}})|_{\Bun_{G}^{b'}}$ is a complex with finite length cohomology, which reduces us to showing that the previous complex is trivial. To do this, let $\ol{\nu}' = w(b'_{T})$ be the element mapping to $b' \in B(G)$ defined by $w \in W_{b'}$. Our inductive hypothesis tells us that we have an isomorphism
\[ j_{b'!}(i_{B_{b'}}^{J_{b'}}(\chi^{w}) \otimes \delta_{P_{b'}}^{-1/2})[-\langle 2\hat{\rho},\nu_{b'} \rangle] \simeq \nmEis^{\ol{\nu}'}(\mathcal{S}_{\phi_{T}}) \]
varying over $\ol{\nu}'$ mapping to $b' \in B(G)$.  Using Theorem \ref{dualob} and our inductive hypothesis together with the fact that generic regularity is stable under replacing $\phi_{T}$ by $\phi_{T}^{\vee}$, it follows that we have an isomorphism $\nmEis^{\ol{\nu}'}(\mathcal{S}_{\phi_{T}}) \simeq j_{b'*}(i_{B_{b'}}^{J_{b'}}(\chi^{w}) \otimes \delta_{P_{b'}}^{-1/2})[-\langle 2\hat{\rho},\nu_{b} \rangle]$. Therefore, it then suffices to show, for all $\ol{\nu}'$ mapping to $b' \in B(G)_{\mathrm{un}}$ and $\ol{\nu}$ mapping to $b \in B(G)_{\mathrm{un}}$, that 
\[ R\mathcal{H}om(\nmEis^{\ol{\nu}}(\mathcal{S}_{\phi_{T}}),\nmEis^{\ol{\nu}'}_{*}(\mathcal{S}_{\phi_{T}})) \]
is trivial. To aid our analysis, we consider the following unnormalized version of the constant term functor
\[ \CT^{\ol{\nu}}_{*}(-) := \mf{q}^{\ol{\nu}}_{*} \circ \mf{p}^{\ol{\nu}!}(-)[-\dim(\Bun_{B}^{\ol{\nu}})]: \D(\Bun_{G}) \ra \D(\Bun_{T}^{\ol{\nu}}) \]
which is in particular the right adjoint of the unnormalized Eisenstein functor $\Eis^{\ol{\nu}}(-) := \mf{p}_{!}(\mf{q}^{*}(-)[\dim(\Bun_{B}^{\ol{\nu}})])$. The unnormalized verison of the functors considered in \S 8. Writing $\nmEis^{\ol{\nu}}(\mathcal{S}_{\phi_{T}})$ as $\Eis^{\ol{\nu}}(\mathcal{S}_{\phi_{T}} \otimes \Delta_{B}^{1/2})$ and using adjunction, it suffices to show that the complex 
\[ R\mathcal{H}om_{T(\mathbb{Q}_{p})}(\chi \otimes \delta_{B}^{1/2},\CT_{*}^{\ol{\nu}} \circ \nmEis^{\ol{\nu}'}_{*}(\mathcal{S}_{\phi_{T}})) \]
is trivial in $\D(\Bun_{T}^{\ol{\nu}}) \simeq \D(T(\mathbb{Q}_{p}),\Lambda)$. To show this, we first look at the diagram
\[\begin{tikzcd}
& \Bun_{B}^{\ol{\nu}} \times_{\Bun_{G}} \Bun_{B}^{\ol{\nu}'} \arrow[r,"\phantom{}^{'}\mf{p}^{\ol{\nu}}"] \arrow[d,"\phantom{}^{'}\mf{p}^{\ol{\nu}'}"]  &  \Bun_{B}^{\ol{\nu}'} \arrow[d,"\mf{p}^{\ol{\nu}'}"]  \arrow[r,"\mf{q}^{\ol{\nu}'}"] & \Bun_{T}^{\ol{\nu}'} \\
& \Bun_{B}^{\ol{\nu}} \arrow[r,"\mf{p}^{\ol{\nu}}"] \arrow[d,"\mf{q}^{\ol{\nu}}"] & \Bun_{G}   & \\
& \Bun_{T}^{\ol{\nu}} & & 
\end{tikzcd}\]
and note, by base-change, that we have a natural isomorphism
\[ \mf{q}^{\ol{\nu}}_{*} \circ \mf{p}^{\ol{\nu}!} \circ \mf{p}^{\ol{\nu}'}_{*} \circ \mf{q}^{\ol{\nu}'*}(-) \simeq  \mf{q}_{*}^{\ol{\nu}} \circ \phantom{}^{'}\mf{p}^{\ol{\nu}'}_{*} \circ \phantom{}^{'}\mf{p}^{\ol{\nu}!} \circ \mf{q}^{\ol{\nu}'*}(-) \]
of derived functors $\D(\Bun_{T}^{\ol{\nu}'}) \ra \D(\Bun_{T}^{\ol{\nu}})$. This tells us that $\CT^{\ol{\nu}} \circ \nmEis^{\ol{\nu}'}_{*}(\mathcal{S}_{\phi_{T}})$ is the direct image of the complex
\[ \phantom{}^{'}\mf{p}^{\ol{\nu}!}(\mf{q}^{\ol{\nu}'*}(\mathcal{S}_{\phi_{T}} \otimes \Delta_{B}^{1/2}))[\dim(\Bun_{B}^{\ol{\nu}'}) -\dim(\Bun_{B}^{\ol{\nu}})] \]
on $\Bun_{B}^{\ol{\nu}} \times_{\Bun_{G}} \Bun_{B}^{\ol{\nu}'}$ onto $\Bun_{T}^{\ol{\nu}}$. By \cite[Lemma~4.9]{Ham1}, the space $\Bun_{B}^{\ol{\nu}} \times_{\Bun_{G}} \Bun_{B}^{\ol{\nu}'}$ has a locally closed stratification given by the generic relative position of the two bundles
\[ \bigsqcup_{w \in W_{G}} (\Bun_{B}^{\ol{\nu}} \times_{\Bun_{G}} \Bun_{B}^{\ol{\nu}'})_{w}\]
which we denote by $Z_{w}^{\ol{\nu},\ol{\nu}'}$ for varying $w \in W_{G}$. Using the excision spectral sequence, this implies that $\CT_{*}^{\ol{\nu}} \circ \nmEis^{\ol{\nu}'}_{*}(\mathcal{S}_{\phi_{T}})$ also admits a filtration whose graded pieces we write as $(\CT_{*}^{\ol{\nu}} \circ \nmEis^{\ol{\nu}'}_{*}(\mathcal{S}_{\phi_{T}}))_{w}$. Consider the following claim. 
\begin{proposition}{\label{bruhatstrata}}
Let $\ol{\nu}$ and $\ol{\nu}'$ be two elements mapping to $b$ and $b'$ in $B(G)_{\mathrm{un}}$, respectively. 
\begin{enumerate}
    \item Suppose that $b \neq b'$ then the stack $Z_{w}^{\ol{\nu},\ol{\nu}'}$ is empty if $w = 1$.
    \item If $w \neq 1$ then $(\CT_{*}^{\ol{\nu}} \circ \nmEis^{\ol{\nu}'}_{*}(\mathcal{S}_{\phi_{T}}))_{w}$ in $\D(T(\mathbb{Q}_{p}),\Lambda)$ is isomorphic to $\chi^{w} \otimes \delta_{B}^{1/2}$ tensored by a complex of $\ol{\mathbb{F}}_{\ell}$-vector spaces.
\end{enumerate}
\end{proposition}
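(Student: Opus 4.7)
For (1), observe that the stratum $Z_{1}^{\ol{\nu},\ol{\nu}'}$ parametrizes pairs of $B$-structures on a common $G$-bundle whose generic relative position is the trivial coset in $W_{G} \simeq B\backslash G/B$, i.e.\ the two reductions coincide on a dense open subset of the Fargues--Fontaine curve. Since a point of $\Bun_{B}$ (as opposed to $\ol{\Bun}_{B}$) is determined by its restriction to any dense open subset, the two $B$-reductions must agree everywhere. In particular their underlying $T$-bundles coincide, forcing $\ol{\nu} = \ol{\nu}'$ and thus $b = b'$, contradicting the hypothesis $b \neq b'$. Hence $Z_{1}^{\ol{\nu},\ol{\nu}'}$ is empty.

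For (2), the strategy is to exploit a Zastava-type description of $Z_{w}^{\ol{\nu},\ol{\nu}'}$ analogous to that of \cite{FFKM} in the function-field setting. A pair $(\mathcal{F}_{B},\mathcal{F}_{B}')$ of $B$-reductions in generic relative position $w \neq 1$ differs by a controlled modification supported at finitely many points, one family for each positive coroot $\hat{\alpha}$ with $w(\hat{\alpha}) < 0$; collecting these, one obtains a natural map $\pi_{w}\colon Z_{w}^{\ol{\nu},\ol{\nu}'} \to \Div^{(\tau_{w})}$ to a partially symmetrized mirror curve, where $\tau_{w} \in \Lambda_{G,B}^{\mathrm{pos}}$ is determined by the equality $w(\ol{\nu}') = \ol{\nu} + \tau_{w}$ in $\Lambda_{G,B}$. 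On geometric points, the two underlying $T$-bundles are then related by $\mathcal{F}_{T} \simeq w(\mathcal{F}_{T}')(-\sum_{\hat{\alpha}} \hat{\alpha}\cdot D_{\hat{\alpha}})$ for the Cartier divisors $D_{\hat{\alpha}}$ packaged by the Zastava data. One must verify in this diamond setting that $\pi_{w}$ is representable in nice diamonds and that its fibers admit the expected local structure in terms of (intersections of) semi-infinite Schubert cells, building on the formalism of \S\ref{geomsatakesection} and the geometric analysis of $\Bun_{B}$ from \S\ref{bstrucproperties}.

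Granting such a description, the computation of $(\CT^{\ol{\nu}} \circ \nmEis^{\ol{\nu}'}_{*}(\mathcal{S}_{\phi_{T}}))_{w}$ proceeds as follows: using base change for the Cartesian square defining $\Bun_{B}^{\ol{\nu}} \times_{\Bun_{G}} \Bun_{B}^{\ol{\nu}'}$, this graded piece is the pushforward along $\mf{q}^{\ol{\nu}}\circ {}'\mf{p}^{\ol{\nu}'}$ of the restriction of ${}'\mf{p}^{\ol{\nu}!}\mf{q}^{\ol{\nu}'*}(\mathcal{S}_{\phi_{T}} \otimes \Delta_{B}^{1/2})$ to $Z_{w}^{\ol{\nu},\ol{\nu}'}$. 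The relation $\mathcal{F}_{T} \simeq w(\mathcal{F}_{T}')(\cdots)$ on $Z_{w}^{\ol{\nu},\ol{\nu}'}$, combined with the Hecke eigensheaf property for tori (Proposition \ref{torieigsheaf}) applied coroot-by-coroot to the twists by $\hat{\alpha}^{\Gamma}\circ \phi_{T}$ for $\hat{\alpha}$ with $w(\hat{\alpha})<0$, allows us to rewrite this restriction as the pullback from $\Bun_{T}^{\ol{\nu}}$ of $\chi^{w}\otimes (\delta_{B}^{1/2})^{w}$ tensored with a local system on $\Div^{(\tau_{w})}$ built from those $\hat{\alpha}^{\Gamma}\circ \phi_{T}$. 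Pushing forward along $\pi_{w}$ contributes a cohomology factor $R\Gamma(\Div^{(\tau_{w})},E_{\phi_{T}}^{(\tau_{w})})$ on the Zastava curve, together with the modulus-character correction $(\delta_{B}^{-1})^{w}$ coming from the difference between the normalized and unnormalized $\CT$ shifts (i.e.\ from the dimension accounting of $\mf{p}^{\ol{\nu}!}$ versus $\mf{q}^{\ol{\nu}*}$, cf.\ the proof of Theorem \ref{dualob}). The net effect on each graded piece is the character $\chi^{w} \otimes (\delta_{B}^{-1/2})^{w}$, with multiplicities governed by the cohomology of $\pi_{w}$-fibers against a Hecke-type local system.

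The main obstacle will be organizing the Zastava factorization cleanly enough in the diamond world to justify the coroot-by-coroot reduction (this is where the factorization structure on $\Hck_{T}^{I}$ from \S\ref{geomsatakesection} and the analysis of twisted diagonals from \S\ref{sec: geomconsofweakgen} come in), and tracking the precise modulus twists so that the resulting character is exactly $(\delta_{B}^{-1/2})^{w}\otimes \chi^{w}$ rather than one of its close variants. Once that is in hand, the filtration on $(\CT^{\ol{\nu}} \circ \nmEis^{\ol{\nu}'}_{*}(\mathcal{S}_{\phi_{T}}))_{w}$ produced by the Zastava factorization -- stratified by the partitioning of $\Div^{(\tau_{w})}$ by its twisted-diagonal substrata -- gives the desired extension structure.
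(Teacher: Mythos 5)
Your treatment of (1) is correct and takes a slightly different route from the paper's: you argue that two $B$-reductions of the same $G$-bundle in generic relative position $1$ are sections of the proper $G/B$-fibration agreeing on a dense open and hence equal, whereas the paper argues from the Pl\"ucker data, showing that the resulting fiberwise-injective maps $\mathcal{L}^{\hat{\lambda}}_{\mathcal{F}_T^1}\to\mathcal{L}^{\hat{\lambda}}_{\mathcal{F}_T^2}$ are forced to be bundle isomorphisms because their composites with $\mathcal{L}^{\hat{\lambda}}_{\mathcal{F}_T^2}\hookrightarrow\mathcal{V}^{\hat{\lambda}}_{\mathcal{F}_G}$ are the subbundle embeddings coming from $\mathcal{F}_B^1$. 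Both give $\mathcal{F}_T^1\simeq\mathcal{F}_T^2$ and hence $b=b'$; your version is a touch cleaner.

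For (2) your plan has the right shape --- the support map $\pi_w$ to a symmetrized mirror curve (note that the paper's convention is $\theta=\ol{\nu}-w(\ol{\nu}')\in\Lambda_{G,B}^{pos}$, the opposite sign to your $\tau_w$), the factorization of $\mf{q}_2$ through $\mf{q}_1\times\pi_w$ followed by a Hecke modification and a $w$-twist, the eigensheaf property for tori, and the eventual reduction to a pushforward of the constant sheaf --- but you defer exactly the step where the real difficulty lives. Two things are missing. First, before the projection-formula argument can run, the $!$-pullback along $\mf{p}^{\ol{\nu}}$ sitting inside $\CT^{\ol{\nu}}$ has to be converted into a $*$-pullback; the paper does this by passing to the Verdier dual and invoking Theorem \ref{dualob}, and this is precisely where $\delta_B^{1/2}$ turns into $\delta_B^{-1/2}$ in the final answer --- your "dimension accounting" gesture toward that exchange but does not carry it out. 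Second, and more seriously, the core claim is that $(\mf{q}_1\times\pi_w)_!\Lambda$ is an extension of pullbacks from $\Div^{(\theta)}$ (the paper's Proposition \ref{splitmap}), and you present this as a formal output of "Zastava factorization." That is only true when $w=w_0$: there the Zastava space $W_{w_0}^{\theta}$ base-changed against $\Bun_T^{\ol{\nu}}\times\Div^{(\theta)}$ is literally $Z_{w_0}^{\ol{\nu},\ol{\nu}'}$ (Lemma \ref{ZastavaCartesian}), so $\Bun_T^{\ol{\nu}}$ splits off as a direct factor and K\"unneth finishes. For general $w$ the Zastava moduli problem only produces a torsor under the subgroup $B'=TU'$, where $U'$ is built from the root subgroups with $\hat{\alpha}>0$ and $w(\hat{\alpha})<0$, not under all of $B$; the comparison fibration $\Bun_{B'}^{\ol{\nu}}\to\Bun_{B}^{\ol{\nu}}$ is an iterated Banach--Colmez fibration whose signs are uncontrolled, so the adjunction $f_!f^!\simeq\mathrm{id}$ is unavailable. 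The paper's workaround is to modify at a fixed closed point $\infty$ of the Fargues--Fontaine curve by a sufficiently anti-dominant cocharacter so that the twisted fibration $f_{\ol{\nu},\infty}$ has strictly positive Banach--Colmez fibers, then apply excision over $(\Div^1\setminus\infty)^{(\theta)}$. Without this device (or some replacement for it) your outline does not establish the extension structure for $w\neq w_0$, which is the generic case.
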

First, let's finish the proof of Proposition \ref{nonsplitcontribution} assuming this. By the above discussion, it suffices to show that the complex
\[ R\mathcal{H}om_{T(\mathbb{Q}_{p})}(\chi \otimes \delta_{B}^{1/2},(\CT_{*}^{\ol{\nu}} \circ \nmEis^{\ol{\nu}'}_{*}(\mathcal{S}_{\phi_{T}}))_{w}) \]
is trivial for all $w \in W_{G}$. This is trivial if $w = 1$ by point (1). If $w \neq 1$, then it follows from point (2) and the fact that the existence of an isomorphism $\chi \otimes \delta_{B}^{1/2} \simeq \chi^{w} \otimes \delta_{B}^{1/2}$ would imply an isomorphism $\chi \simeq \chi^{w}$, which contradict Condition \ref{normregcond} (2), the regularity component of the definition of generic regularity. Therefore, since $\phi_{T}$ is generic regular by assumption, the claim follows. Let's now finish up by reducing this Proposition to a simpler claim, which we will prove in the next section. Proposition \ref{bruhatstrata} is an analogue of \cite[Proposition~10.8]{BG2} and the idea behind its proof is the same. 
\begin{proof}{(Proposition \ref{bruhatstrata})}
We first begin by elucidating the geometry of the spaces $Z_{w}^{\ol{\nu},\ol{\nu}'}$ a bit more. For $S \in \Perf$, note that a $S$-point of $\Bun_{B}^{\ol{\nu}} \times_{\Bun_{G}} \Bun_{B}^{\ol{\nu}'}$ corresponds to a pair of $B$-structures on a $G$-bundle $\mathcal{F}_{G}$ on $X_{S}$. Namely, it parametrizes a pair $\mathcal{F}_{B}^{1}$ (resp. $\mathcal{F}_{B}^{2}$) of two $B$-structures on a $G$-bundle $\mathcal{F}_{G}$ whose reduction to $T$, denoted $\mathcal{F}_{T}^{1}$ (resp. $\mathcal{F}_{T}^{2}$) is isomorphic to $\mathcal{F}_{\ol{\nu}}$ (resp. $\mathcal{F}_{\ol{\nu}'}$) after pulling back to any geometric point of $S$. We can think of it as parameterizing sections
\[ X_{S} \ra B\backslash G/B\]
such that the degree is of the specified form. More transparently, we can think of a point of $\Bun_{B}^{\ol{\nu}} \times_{\Bun_{G}} \Bun_{B}^{\ol{\nu}'}$ as the $B$-bundle $\mathcal{F}_{B}^{2}$ together with a section 
\[ s: X_{S} \ra \mathcal{F}_{B}^{2} \times^{B} G/B \]
where $B$ acts via conjugation on $G/B$. For $\hat{\lambda} \in \hat{\Lambda}_{G}^{+}$, we recall that by interpreting $\mathcal{V}^{\hat{\lambda}}$ as global sections of the appropriately twisted bundle on $G/B$ corresponding to $\hat{\lambda}$ under Borel-Weil-Bott, every point in $G/B$ gives rise to a line $\ell^{\hat{\lambda}} \subset \mathcal{V}^{\hat{\lambda}}$. We consider the $B$-stable subspace $\mathcal{V}^{\hat{\lambda}}_{\geq w} \subset \mathcal{V}^{\hat{\lambda}}$ consisting of weights greater than or equal to $w(\hat{\lambda})$ and  $\mathcal{V}^{\hat{\lambda}}_{> w} \subset \mathcal{V}^{\hat{\lambda}}_{\geq w}$ the codimension $1$ subspace consisting of weights strictly greater than $w(\hat{\lambda})$. We let $(G/B)_{w} := BwB/B$ be the locally closed Schubert cell attached to $w \in W_{G}$. We write $(G/B)_{\geq w}$ for its closure. The closure is stratified by the Schubert cells indexed by elements $w' \in W_{G}$ with length less than or equal to $w$. Then the line $\ell^{\hat{\lambda}} \subset \mathcal{V}^{\hat{\lambda}}$ will correspond to a point in $(G/B)_{\geq w}$ if and only if it belongs to $\mathcal{V}^{\hat{\lambda}}_{\geq w}$ for all $\hat{\lambda} \in \hat{\Lambda}_{G}^{+}$. Moreover, the point belongs to the stratum $(G/B)_{w}$ if and only the projection to $\mathcal{V}^{\hat{\lambda}}_{\geq w}/\mathcal{V}^{\hat{\lambda}}_{> w}$ is non-zero. This allows us to explain what it means to lie in the locally closed stratum $Z_{w}^{\ol{\nu},\ol{\nu}'}$. In particular, by definition \cite[Page~26]{Ham1}, lying in this stratum is equivalent to the condition that $s$ factors through $\mathcal{F}_{B}^{2} \times^{B} (G/B)_{\succeq w}$ and is not contained in any closed strata defined by $\mathcal{F}_{B}^{2} \times^{B} (G/B)_{\succeq w'}$ for any $w' > w$ in the Bruhat order. This implies that the section $s$ determines a set of line subbundles 
\[ \mathcal{L}^{\hat{\lambda}}_{\mathcal{F}_{T}^{1}} \ra  (\mathcal{V}^{\hat{\lambda}}_{\geq w})_{\mathcal{F}_{B}^{2}} \]
for all $\hat{\lambda} \in \hat{\Lambda}_{G}^{+}$. Moreover, via the inclusion $\mathcal{V}^{\hat{\lambda}}_{\geq w} \subset \mathcal{V}^{\hat{\lambda}}$, these give the Pl\"ucker description of the $B$-structure $\mathcal{F}_{B}^{1}$ such that $\mathcal{F}_{G} \simeq \mathcal{F}_{B}^{1} \times^{B} G \simeq \mathcal{F}_{B}^{2} \times^{B} G$. For this to define a point in $Z_{w}^{\ol{\nu},\ol{\nu}'}$, these need to satisfy the condition that the induced map
\[ \mathcal{L}^{\hat{\lambda}}_{\mathcal{F}_{T}^{1}} \ra  (\mathcal{V}^{\hat{\lambda}}_{\geq w})_{\mathcal{F}_{B}^{2}} \ra (\mathcal{V}^{\hat{\lambda}}_{\geq w}/\mathcal{V}^{\hat{\lambda}}_{> w})_{\mathcal{F}_{B}^{2}} = (\mathcal{L}^{\hat{\lambda}})_{(\mathcal{F}_{T}^{2})^{w}} \]
is non-zero map of $\mathcal{O}_{X_{S}}$-modules (cf. \cite[Page~14]{BFGM},\cite[Page~48]{BG2},\cite[Propositions~4.3.2,4.4.2]{Schi}). In particular, since $\mathcal{L}^{\hat{\lambda}}_{\mathcal{F}_{T}^{1}}$ is a line bundle, the map being non-zero implies it is a fiberwise injective map of line bundles. Now, recalling our choice of Borel, if we define $\theta := \ol{\nu} - w(\ol{\nu}')$ then the support of the torsion of the cokernel of this map of line bundles determines a point in $\Div^{(\theta)}$, by the assumptions on the degrees and \cite[Lemma~4.2.9]{HHS}. For this strata to be non-empty, we must have that $\theta \in \Lambda_{G,B}^{pos}$. Therefore, for all non-empty strata, we have a map:
\[ \pi_{w}: Z^{\ol{\nu},\ol{\nu}'}_{w} \ra \Div^{(\theta)}. \]
Now, with these preparations out of the way, let's start with the proof. For Point (1), note that if $w = 1$ then we have an injective map of line bundles 
\[ \mathcal{L}^{\hat{\lambda}}_{\mathcal{F}_{T}^{1}} \ra \mathcal{L}^{\hat{\lambda}}_{\mathcal{F}_{T}^{2}} \]
for all $\hat{\lambda}$, which give rise to the embeddings defined by $\mathcal{F}_{B}^{1}$ when composed with the injections of bundles $\mathcal{L}^{\hat{\lambda}}_{\mathcal{F}_{T}^{2}} \ra \mathcal{V}^{\hat{\lambda}}_{\mathcal{F}_{G}}$ defined by $\mathcal{F}_{B}^{2}$, by construction. However, since the composition $\mathcal{L}^{\hat{\lambda}}_{\mathcal{F}_{T}^{1}} \ra \mathcal{V}^{\hat{\lambda}}_{\mathcal{F}_{G}}$ is also a map of vector bundles, this is impossible unless $\mathcal{F}_{T}^{1} \simeq \mathcal{F}_{T}^{2}$, which would contradict our assumption that $\ol{\nu}, \ol{\nu}' \in B(T)$ map to $b \neq b'$ in $B(G)$. Therefore, we have established point (1). For point (2), we write $\mf{q}_{1}$ (resp. $\mf{q}_{2}$) for the natural projections of $Z_{w}^{\ol{\nu},\ol{\nu}'}$ to $\Bun_{T}^{\ol{\nu}}$ (resp. $\Bun_{T}^{\ol{\nu}'}$). We note, by the above discussion, that $\mf{q}_{2}$ is equal to the composition 
\[ Z_{w}^{\ol{\nu},\ol{\nu}'} \xrightarrow{\mf{q}_{1} \times \pi_{w}} \Bun_{T}^{\ol{\nu}} \times \Div^{(\theta)} \xrightarrow{\phantom{}^{\mathrm{op}}h^{\ra}_{(\theta)}} \Bun_{T}^{w(\ol{\nu}')} \xrightarrow{w^{-1}} \Bun_{T}^{\ol{\nu}'} \]
where $\phantom{}^{\mathrm{op}}h^{\ra}_{(\theta)}$ is the map sending $(\mathcal{F}_{T},(D_{i})_{i \in \mathcal{J}})$ to the bundle $\mathcal{F}_{T}(\sum_{i \in \mathcal{J}} \alpha_{i} \cdot D_{i})$ and the last map is given by conjugation by $w$ on $\Bun_{T}$. Recall, $(\CT_{*}^{\ol{\nu}} \circ \nmEis^{\ol{\nu}'}_{*}(\mathcal{S}_{\phi_{T}}))_{w}$ is given (up to a shift) by the sheaf
\[ \mf{q}_{1*} \circ \phantom{}^{'}\mf{p}^{\ol{\nu}'!} \circ \mf{q}^{\ol{\nu}'*}(\chi \otimes \delta_{B}^{1/2}).  \]
We can write this as the Verdier dual of $\mf{q}_{1!} \circ \phantom{}^{'}\mf{p}^{\ol{\nu}'*} \circ \mf{q}^{\ol{\nu}'!}(\chi^{-1} \otimes (\delta_{B}^{1/2})^{-1}) \simeq \mf{q}_{1!} \circ \phantom{}^{'}\mf{p}^{\ol{\nu}*} \circ \mf{q}^{\ol{\nu}*}(\chi^{-1} \otimes \delta_{B}^{1/2})$, where the last isomorphism is Theorem \ref{dualob}. Replacing $\chi$ by $\chi^{-1}$, this reduces us to showing that $\mf{q}_{1!} \circ \phantom{}^{'}\mf{p}^{\ol{\nu}'*} \circ \mf{q}^{\ol{\nu}'*}(\chi \otimes \delta_{B}^{1/2}) \simeq \mf{q}_{1!} \circ \mf{q}_{2}^{*}(\chi \otimes \delta_{B}^{1/2})$ is isomorphic to $\chi^{w} \otimes \delta_{B}^{-1/2}$ tensored by a complex of $\ol{\mathbb{F}}_{\ell}$-vector spaces. Using the above factorization of $\mf{q}_{2}$, we rewrite this as
\begin{equation*}
\mf{q}_{1!} \circ (\mf{q}_{1} \times \pi_{w})^{*} \circ (\phantom{}^{\mathrm{op}}h^{\ra}_{(\theta)})^{*} \circ (w^{-1})^{*}(\chi \otimes \delta_{B}^{1/2}) \simeq \mf{q}_{1!} \circ (\mf{q}_{1} \times \pi_{w})^{*}((\chi^{w} \otimes (\delta_{B}^{1/2})^{w}) \boxtimes (E^{(\theta)}_{\phi_{T}^{w} \otimes (\hat{\rho}^{w}\circ |\cdot|)})^{\vee})
\end{equation*}
but now, we claim, we are reduced to the following, which is an analogue of \cite[Proposition~10.10]{BG2}
\begin{proposition}{\label{splitmap}}
The direct image $(\mf{q}_{1} \times \pi_{w})_{!}(\Lambda)$ is isomorphic to a pullback of a complex on $\Div^{(\theta)}$ along the projection map $\Bun_{T}^{\ol{\nu}} \times \Div^{(\theta)} \ra \Div^{(\theta)}$ tensored by $\delta_{B}^{-1} \otimes \delta_{B'}$, where $B' := TU' \subset B$ for $U' \subset U$ the subgroup generated  by $\alpha > 0$ such that $w(\alpha) < 0$.
\end{proposition}
\end{proof}
In particular, by applying projection formula, this tells us that $\mf{q}_{1!} \circ \mf{q}_{2}^{*}(\chi \otimes \delta_{B}^{1/2})$ is isomorphic to $\chi^{w} \otimes (\delta_{B}^{1/2})^{w} \otimes \delta_{B}^{-1} \otimes \delta_{B'}$ tensored by the pullback of a complex on $\Div^{(\theta)}$. However, we now easily check that we have an equality
\[  (\delta_{B}^{1/2})^{w} \otimes \delta_{B}^{-1} \otimes \delta_{B'} \simeq \delta_{B}^{-1/2}, \]
so that we indeed conclude that $\mf{q}_{1!} \circ \mf{q}_{2}^{*}(\chi \otimes \delta_{B}^{1/2})$ is isomorphic to $\chi^{w} \otimes \delta_{B}^{-1/2}$ tensored by the pullback of a complex on $\Div^{(\theta)}$, as desired.

We will now prove Proposition \ref{splitmap} by relating the spaces $Z^{\ol{\nu},\ol{\nu}'}_{w}$ to some variants of what are called Zastava or semi-infinite flag spaces in the classical literature, as first studied over function fields by Feign, Finkelberg, Kusnetzov, and Mirkovi\'c \cite{FFKM,FM}. 
\subsubsection{Zastava Spaces}
We recall that the unipotent part $U'$ of the subgroup $B' \subset B$ appearing in proposition \ref{splitmap} acts simply transitively on the closed Schubert cell $(G/B)_{\geq w}$ and use this to define the $w$-twisted version of the Zastava space. 
\begin{definition}
For $\theta \in \Lambda_{G,B}^{pos}$, we let $W_{w}^{\theta} \ra \Div^{(\theta)}$ be the $v$-sheaf parameterizing, for $S \in \Perf$, a triple
\[ (\mathcal{F}_{U'},s,D) \]
of the datum:
\begin{itemize}
    \item A $U'$-bundle $\mathcal{F}_{U'}$ on $X_{S}$. 
    \item A section $s: X_{S} \ra \mathcal{F}_{U'} \times^{U'} (G/B)_{\geq w}$ that does not lie in $(G/B)_{\geq w'}$ for any $w' > w$ in the Bruhat order.
    \item A divisor $D \in \Div^{(\theta)}$ such that the induced non-zero ($\implies$ fiberwise injective) maps of line bundles
    \[ \mathcal{L}^{\hat{\lambda}} \ra (\mathcal{V}^{\hat{\lambda}}_{\geq w})_{\mathcal{F}_{U'}} \ra (\mathcal{V}^{\hat{\lambda}}_{\geq w}/\mathcal{V}^{\hat{\lambda}}_{> w})_{\mathcal{F}_{U'}} = \mathcal{O}_{X_{S}}  \]
    for all $\hat{\lambda} \in \hat{\Lambda}_{G}^{+}$ have cokernel with torsion supported on $D$. 
\end{itemize}
\end{definition}
Classically, the usual Zastava space in the literature is the same datum as above in the case that $w = w_{0}$ together with a level structure on the bundle $\mathcal{L}^{\hat{\lambda}}$ so that it encodes information about enhanced $B$-structures on one of the factors. It's importance is that it provides a local model for the singularities of the Drinfeld compactification $\ol{\Bun}_{B}$. The space we have defined above in the case that $w = w_{0}$ is the open part of the Zastava space which models just the space $\Bun_{B}$. As seen in our description of $Z^{\ol{\nu},\ol{\nu}'}_{w}$ in the previous section, this will clearly have a relationship to the spaces we are interested in describing. Let us first just consider the case of the element of longest length. We claim that the following is true.
\begin{lemma}{\label{ZastavaCartesian}}
For $w = w_{0}$ the element of longest length and $\theta = \ol{\nu} - w(\ol{\nu}') \in \Lambda_{G,B}^{pos}$ for $\ol{\nu}'$ and $\ol{\nu}$ as above, there exists a commutative diagram 
\[ \begin{tikzcd}
&  Z^{\ol{\nu},\ol{\nu}'}_{w}  \arrow[d,"\mf{q}_{1} \times \pi_{w}"] \arrow[r] & W^{\theta}_{w} \arrow[d]   \\
& \Bun_{T}^{\ol{\nu}} \times \Div^{(\theta)} \arrow[r,"p_{2}"]  & \Div^{(\theta)} 
\end{tikzcd} \]
which is a Cartesian square. 
\end{lemma}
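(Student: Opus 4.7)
The strategy is to exhibit mutually inverse maps between $Z^{\ol{\nu},\ol{\nu}'}_{w_0}$ and the fiber product on the upper right, which simplifies to $W^{\theta}_{w_0} \times \Bun_T^{\ol{\nu}}$. The first observation to exploit is the following special feature of $w_0$: since every positive root $\hat{\alpha}$ satisfies $w_0(\hat{\alpha}) < 0$, the subgroup $U' \subset U$ defined in the construction of $W^{\theta}_{w}$ equals the full unipotent radical $U$ and the closed Schubert cell $(G/B)_{\geq w_0}$ coincides with the single open Bruhat cell $(G/B)_{w_0}$. In particular, there is no further degeneration of the relative position to worry about: the only singularities allowed are the torsion at $D$.

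For the forward map, given $(\mathcal{F}_G, \mathcal{F}_B^1, \mathcal{F}_B^2)$ in $Z^{\ol{\nu},\ol{\nu}'}_{w_0}$, I would extract $\mathcal{F}_T^1 \in \Bun_T^{\ol{\nu}}$ as the induced $T$-bundle of $\mathcal{F}_B^1$. For the Zastava datum, I would use the semi-direct product decomposition $B \simeq T \ltimes U$ together with the canonical identification $\ol{\nu}' = w_0^{-1}(\ol{\nu}-\theta)$ (which follows from the degree conventions used to define the stratum) to view $\mathcal{F}_B^2$ as the twist by its $T$-part of a canonical $U$-bundle $\mathcal{F}_U$. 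The section $s: X_S \to \mathcal{F}_U \times^U (G/B)_{w_0}$ then comes from the section $X_S \to \mathcal{F}_B^2 \times^B (G/B)$ determined by $\mathcal{F}_B^1$ (as in the analysis immediately preceding the lemma), after passing through the isomorphism $\mathcal{F}_B^2 \times^B (G/B) \simeq \mathcal{F}_U \times^U (G/B)$ induced by the trivialization of the $T$-part. The divisor $D$ is already recorded by the Pl\"ucker description of $\mathcal{F}_B^1$ inside $\mathcal{F}_B^2$, and the genericity condition on the relative position ensures $s$ does not factor through any smaller Schubert cell.

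For the inverse, given $(\mathcal{F}_U, s, D) \in W^{\theta}_{w_0}$ together with $\mathcal{F}_T^1 \in \Bun_T^{\ol{\nu}}$, I would reconstruct the data as follows: first compute $\mathcal{F}_T^2$ of degree $\ol{\nu}'$ from $\mathcal{F}_T^1$ and $D$ using the degree formula $\ol{\nu}' = w_0^{-1}(\ol{\nu} - \theta)$, then form $\mathcal{F}_B^2$ as the twist of $\mathcal{F}_U \times^U B$ by $\mathcal{F}_T^2$ along $B \to T$, set $\mathcal{F}_G := \mathcal{F}_B^2 \times^B G$, and finally produce $\mathcal{F}_B^1$ from the section $s$ together with the Pl\"ucker embeddings $\mathcal{L}^{\hat{\lambda}}_{\mathcal{F}_T^1} \to (\mathcal{V}^{\hat{\lambda}}_{\geq w_0})_{\mathcal{F}_U}$ obtained from $s$ and the isomorphism $\mathcal{V}^{\hat{\lambda}}_{\geq w_0} / \mathcal{V}^{\hat{\lambda}}_{>w_0} \simeq \mathcal{L}^{\hat{\lambda}}$ (identifying the lowest weight line). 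The condition on the divisor $D$ in the definition of $W^{\theta}_{w_0}$ is precisely what guarantees that these Pl\"ucker maps have cokernels with torsion of the prescribed form, so that the resulting $\mathcal{F}_B^1$ has $T$-bundle of the correct degree $\ol{\nu}$.

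Verifying that the two constructions are mutually inverse is then a direct unwinding of definitions, as is the commutativity of the diagram. The main obstacle — and the real content of the lemma — is checking that the extraction of the $U$-bundle from $\mathcal{F}_B^2$ and the subsequent identifications are well-defined on the level of $v$-stacks despite depending on the choice of splitting $B \simeq T \ltimes U$: the choices assemble into the $T(\mathbb{Q}_p)$-automorphisms of $\mathcal{F}_T^1$, which is precisely the ambiguity that $\Bun_T^{\ol{\nu}} \simeq [\ast/\ul{T(\mathbb{Q}_p)}]$ absorbs in the fiber product. Once this bookkeeping is done, the fact that $w = w_0$ is the longest element (so that $(G/B)_{\geq w_0}$ has no proper Schubert substrata) means that no extra compatibilities beyond the Pl\"ucker relations recorded in the definitions of $Z^{\ol{\nu},\ol{\nu}'}_{w_0}$ and $W^{\theta}_{w_0}$ need to be checked.
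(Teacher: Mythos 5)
Your proposal follows the same general strategy as the paper — exhibiting mutually inverse maps between $Z^{\ol{\nu},\ol{\nu}'}_{w_{0}}$ and the fiber product — and correctly identifies the simplification $U' = U$, $B' = B$ at $w = w_{0}$. However, two of your supporting claims are off and obscure the actual point of the lemma. First, $(G/B)_{\geq w_{0}}$ is \emph{not} the open Bruhat cell $(G/B)_{w_{0}}$: by the paper's convention it is the closure of $(G/B)_{w_{0}}$, i.e.\ all of $G/B$. What is true (and what you seem to actually use) is that for $w = w_{0}$ there is no $w' > w_{0}$, so the stratification condition is vacuous; the section $s$ is allowed to leave the open cell, and the locus where it does so is exactly what the divisor $D$ records. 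If $s$ really did land in the open cell, $D$ would always be empty.

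Second, and more substantively, the ambiguity in extracting a $U$-torsor from $\mathcal{F}_{B}^{2}$ is not ``the choice of splitting $B \simeq T \ltimes U$'' — the splitting is part of the fixed pinning and makes no further choices. The real issue is that, with the splitting fixed, a $B$-torsor $\mathcal{F}_{B}^{2}$ naturally yields a torsor under the \emph{twisted} unipotent group $U_{\mathcal{F}_{T}^{2}}$, not under $U$ itself; untwisting to an honest $U$-torsor requires a trivialization of the $T$-part, which is a $\ul{T(\mathbb{Q}_{p})}$-torsor of choices rather than $\mathrm{Aut}(\mathcal{F}_{T}^{1}) = T(\mathbb{Q}_{p})$. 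Your diagnosis that ``the $\Bun_{T}^{\ol{\nu}}$ factor absorbs the ambiguity'' is the right slogan, but the way you arrive at it would not survive scrutiny. The paper avoids this trap by building the map in the opposite direction — from $(\Bun_{T}^{\ol{\nu}} \times \Div^{(\theta)}) \times_{\Div^{(\theta)}} W_{w_{0}}^{\theta}$ \emph{to} $Z^{\ol{\nu},\ol{\nu}'}_{w_{0}}$ — via a twisting construction (twist $\mathcal{F}_{U}$ by $\mathcal{F}_{T}^{1}$ to manufacture both $\mathcal{F}_{B}^{1}$ and $\mathcal{F}_{B}^{2}$), which is unambiguous because one adds data rather than extracting it; the extraction $\mathcal{F}_{U} = \mathcal{F}_{B}^{2} \times^{B} U$ is then only needed to exhibit the inverse, where the coupling to $\mathcal{F}_{T}^{1}$ is already in hand. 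I would restructure your argument to lead with that direction.
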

\begin{proof}
When $w = w_{0}$, we have that $U' = U$ and $B' = B$. Given an $S$-point of $W_{w}^{\theta}$, the maps 
\[ \mathcal{L}^{\hat{\lambda}} \ra (\mathcal{V}^{\hat{\lambda}}_{\geq w})_{\mathcal{F}_{U}} \hookrightarrow (\mathcal{V}^{\hat{\lambda}})_{\mathcal{F}_{G}} \] 
of vector bundles on $X_{S}$ define a $B$-structure $\mathcal{F}_{B}$ on the $G$-bundle $\mathcal{F}_{U} \times^{U} G$. We note that the induced map
\[ \mathcal{L}^{\hat{\lambda}} \ra (\mathcal{V}^{\hat{\lambda}}_{\geq w})_{\mathcal{F}_{U}} \ra (\mathcal{V}^{\hat{\lambda}}_{\geq w}/\mathcal{V}^{\hat{\lambda}}_{> w})_{\mathcal{F}_{U}} = \mathcal{O}_{X_{S}}  \]
with torsion cokernel of support given by $D \in \Div^{(\theta)}$ induces an identification $\mathcal{L}^{\hat{\lambda}} \simeq \mathcal{O}_{X_{S}}(-\langle \hat{\lambda},\theta \rangle \cdot D)$ implying that $\mathcal{F}_{B} \times^{B} T$ has Kottwitz invariant given by $\theta = \ol{\nu} - w(\ol{\nu}')$ after pulling back to a geometric point. Given a bundle $\mathcal{F}_{T}^{1} \in \Bun_{T}^{\ol{\nu}}$ of degree $\ol{\nu}$, we can dualize the above maps to get a fiberwise injection 
\[ \mathcal{O}_{X_{S}} \ra (\mathcal{L}^{\hat{\lambda}})^{\vee} \simeq \mathcal{O}_{X_{S}}(\langle \hat{\lambda}, \theta \rangle \cdot D) \]
of line bundles, and then tensor by $\mathcal{L}^{\hat{\lambda}}_{\mathcal{F}_{T}^{1}}$ to get an injection
\[ \mathcal{L}^{\hat{\lambda}}_{\mathcal{F}_{T}^{1}} \ra \mathcal{L}^{\hat{\lambda}}_{\mathcal{F}_{T}^{1}}(\langle \hat{\lambda},\theta \rangle \cdot D) \]
Similarly, by taking duals and twisting the $U$-torsor $\mathcal{F}_{U}$ by $\mathcal{F}_{T}^{1}$, we obtain $B = T \ltimes U$-bundles $\mathcal{F}_{B}^{1}$ and $\mathcal{F}_{B}^{2}$ defining points in $Z^{\ol{\nu},\ol{\nu}'}_{w}$, giving rise to a map 
\[ (\Bun_{T}^{\ol{\nu}} \times \Div^{(\theta)}) \times_{\Div^{(\theta)}} W_{w}^{\theta} \ra Z_{w}^{\ol{\nu},\ol{\nu}'}  \]
which we can see is an isomorphism. In particular, given a point in $Z_{w}^{\ol{\nu},\ol{\nu}'}$ corresponding to $B$-bundles $\mathcal{F}_{B}^{1}$ and $\mathcal{F}_{B}^{2}$ then we can define a $U$-bundle $\mathcal{F}_{B}^{2} \times^{B} U$, and, as already seen in the previous section, we get a section $s: X_{S} \ra \mathcal{F}_{U} \times^{U} (G/B)_{\geq w}$, $D \in \Div^{(\theta)}$, and a $T$-bundle $\mathcal{F}_{T}^{1}$ of the desired form.  
\end{proof}
Now this lemma implies Proposition \ref{splitmap} in the case that $w = w_{0}$. In particular, under the isomorphism  
\[ (\Bun_{T}^{\ol{\nu}} \times \Div^{(\theta)}) \times_{\Div^{(\theta)}} W_{w}^{\theta} \simeq Z_{w}^{\ol{\nu},\ol{\nu}'} \]
$\Bun_{T}^{\ol{\nu}}$ splits off as direct factor, and so, by K\"unneth, we deduce the claim. Now we would like to apply a similar argument using the spaces $W_{w}^{\theta}$ in the case that $w$ is a general element. However, we run into a problem that, in general, all we get is a map
\[ (\Bun_{T}^{\ol{\nu}} \times \Div^{(\theta)}) \times_{\Div^{(\theta)}} W_{w}^{\theta} \ra Z_{w}^{\ol{\nu},\ol{\nu}'} \]
where attached to a point in $(\Bun_{T}^{\ol{\nu}} \times \Div^{(\theta)}) \times_{\Div^{(\theta)}} W_{w}^{\theta}$ we only get a $B'$-torsor $\mathcal{F}_{B'}^{1}$ with $T$-factor of degree $\ol{\nu}$. The above map is then given by a base-change of the natural map $f_{\ol{\nu}}: \Bun_{B'}^{\ol{\nu}} \ra \Bun_{B}^{\ol{\nu}}$. In particular, if we let $\tilde{Z}_{w}^{\ol{\nu},\ol{\nu}'} \ra Z_{w}^{\ol{\nu},\ol{\nu}'}$ be the base-change of $Z_{w}^{\ol{\nu},\ol{\nu}'}$ along the map $f_{\ol{\nu}}$ then the analogue of this Lemma holds with $\tilde{Z}_{w}^{\ol{\nu},\ol{\nu}'}$ in place of $Z_{w}^{\ol{\nu},\ol{\nu}'}$ by the same argument. Now let's study the map $f_{\ol{\nu}}: \Bun_{B'}^{\ol{\nu}} \ra \Bun_{B}^{\ol{\nu}}$ in a particular example and see how to prove Proposition \ref{splitmap} in this case. 
\begin{Example}
Suppose that $G = \GL_{3}$ and let $\ol{\nu}$ correspond to a tuple of integers $(-e,-f,-g) \in \mathbb{Z}^{3} \simeq B(T)$ via the Kottwitz invariant. We suppose $w$ corresponds to the simple reflection exchanging the first and second basis vectors. After rigidifying the $T$-bundle $\mathcal{F}_{T}$ to be isomorphic to $(\mathcal{O}(e),\mathcal{O}(f),\mathcal{O}(g))$, we can view $\Bun_{B}^{\ol{\nu}}$ as the moduli space of torsors under the $U$-torsor
\[ \begin{pmatrix} 1 &   \mathcal{O}(e) \otimes \mathcal{O}(f)^{\vee} & \mathcal{O}(e) \otimes \mathcal{O}(g)^{\vee}  \\ 0 & 1 & \mathcal{O}(f) \otimes \mathcal{O}(g)^{\vee}  \\ 0 & 0 & 1 \end{pmatrix} \]
over $X$, where the automorphisms (up to rigidification) of a point in $\Bun_{B}^{\ol{\nu}}$ are given by considering the $\mathcal{H}^{0}$ Banach-Colmez spaces attached to these bundles. Similarly, after rigidification, we can view $\Bun_{B'}^{\ol{\nu}}$ as the moduli space of torsors under the $U'$-torsor
\[ \begin{pmatrix} 1 &  \mathcal{O}(e) \otimes \mathcal{O}(f)^{\vee}  & 0 \\ 0 & 1 & 0  \\ 0 & 0 & 1 \end{pmatrix} \]
over $X$, and the map $f_{\ol{\nu}}$ is given by taking direct sums of the extension of $\mathcal{O}(e)$ by $\mathcal{O}(f)$ defined by the point in $\Bun_{B'}^{\ol{\nu}}$ with $\mathcal{O}(g)$. In particular, we can see that the fibers of the map $f_{\ol{\nu}}: \Bun_{B'}^{\ol{\nu}} \ra \Bun_{B}^{\ol{\nu}}$ are an iterated fibration in the Banach-Colmez spaces $\mathcal{H}^{0}(\mathcal{O}(e) \otimes \mathcal{O}(g)^{\vee})$ and $\mathcal{H}^{0}(\mathcal{O}(f) \otimes \mathcal{O}(g)^{\vee})$. If we assume that these Banach-Colmez spaces are positive it follows from the proof of \cite[Proposition~V.2.1]{FS} that the adjunction
\[ f_{\ol{\nu}!}f_{\ol{\nu}}^{!} \ra \mathrm{id} \]
is an equivalence. In particular, combining this with the above discussion would give us the proof of Proposition \ref{splitmap} in this case. We now consider $d \in \mathbb{N}_{> 0}$ and fix a closed point $\infty \in X$ in the Fargues-Fontaine curve over an algebraically closed complete field $F$ in characteristic $p$. We look at the short-exact sequence of $\mathcal{O}_{X}$-modules
\[ 0 \ra \mathcal{O}_{X}(-d) \ra \mathcal{O}_{X} \ra \mathcal{O}_{X,\infty}/t_{\infty}^{d} \ra 0 \]
where $\mathcal{O}_{X,\infty}$ is the completed local ring and $t_{\infty}$ is the uniformizing parameter corresponding to an untilt $C$ of $F$. Tensoring by $\mathcal{O}(g)$, we get a short exact sequence: 
\[ 0 \ra \mathcal{O}_{X}(g - d) \ra \mathcal{O}_{X}(g) \ra \mathcal{O}_{X,\infty}/t_{\infty}^{d} \ra 0 \]
Let $\ol{\nu}_{d}$ correspond to the tuple of integers $(e,f,g - d) \in \mathbb{Z}^{3}$. Then we consider the natural map $f_{\ol{\nu}_{d}}: \Bun_{B'}^{\ol{\nu}_{d}} \ra \Bun_{B}^{\ol{\nu}_{d}}$. If we choose $d$ sufficiently large such that the spaces $\mathcal{H}^{0}(\mathcal{O}(e) \otimes \mathcal{O}_{X}(g - d)^{\vee})$ and $\mathcal{H}^{0}(\mathcal{O}(f) \otimes \mathcal{O}_{X}(g - d)^{\vee})$ are positive Banach-Colmez spaces then the fibers of $f_{\ol{\nu}_{d}}$ will be an iterated fibration in these positive Banach-Colmez spaces, and therefore we can again conclude that the adjunction 
\[ f_{\ol{\nu}_{d}}f_{\ol{\nu}_{d}}^{!} \ra \mathrm{id} \]
is an isomorphism. Now we claim that we have a map:
\[ \Bun_{B}^{\ol{\nu}} \ra \Bun_{B}^{\ol{\nu}_{d}} \]
Explicitly, given a point $\Bun_{B'}^{\ol{\nu}}$, we have an exact sequence
\[0 \ra \mathcal{E}_{2} \ra \mathcal{E} \ra \mathcal{O}(g) \ra 0  \]
of bundles, where $\mathcal{E}_{2}$ is an extension of $\mathcal{O}(e)$ and $\mathcal{O}(f)$. We can then consider the pullback of this exact sequence with respect to the map $\mathcal{O}(g - d) \ra \mathcal{O}(g)$ given by the modification, which will give us a point in $\Bun_{B}^{\ol{\nu}_{d}}$. We write $f_{\ol{\nu},d\infty}: \Bun_{B}^{\ol{\nu},d\infty} := \Bun_{B}^{\ol{\nu}} \times_{\Bun_{B}^{\ol{\nu}_{d}}} \Bun_{B'}^{\ol{\nu}_{d}} \ra \Bun_{B}^{\ol{\nu}}$ for the pullback of $f_{\ol{\nu}_{d}}$ along this map. We again conclude that the adjunction 
\[ f_{\ol{\nu},d\infty!}f_{\ol{\nu},d\infty}^{!} \ra \mathrm{id} \]
is an isomorphism. If we could use this map instead of $f_{\ol{\nu}}$, we could prove the claim by arguing as above. Indeed, consider $\mf{q}_{1} \times \pi_{w}: Z^{\ol{\nu},\ol{\nu}'}_{w} \ra \Bun_{T}^{\ol{\nu}} \times \Div^{(\theta)}$. If we base-change all the above spaces to $F$, and let $(\Div \setminus \infty)^{(\theta)}$ be the partially symmetrized power defined by $\Div^{1} \setminus \infty$, the open complement of the closed point $\infty \ra \Div^{1}$ defined by the fixed untilt, then, if we consider the map $\mf{q}_{1} \times \pi_{w}: Z^{\ol{\nu},\ol{\nu}'}_{w} \ra \Bun_{T}^{\ol{\nu}} \times (\Div \setminus \infty)^{(\theta)}$ restricted to this locus, we can show that the base-change $Z^{\ol{\nu},\ol{\nu}'}_{w}$ along $f_{\ol{\nu},d\infty}$ sits in a analogous Cartesian square to Lemma \ref{ZastavaCartesian}, and deduce Proposition \ref{splitmap} for the restriction of $(\mf{q}_{1} \times \pi_{w})_{!}f_{\ol{\nu},d\infty!}f_{\ol{\nu},d\infty}^{!}(\Lambda) \simeq (\mf{q}_{1} \times \pi_{w})_{!}(\Lambda)$ to the open strata $(\Div \setminus \infty)^{(\theta)} \subset \Div^{(\theta)}$. 

Therefore, this allows us to reduce Proposition \ref{splitmap} to studying this restriction by varying the untilt $\infty$. More precisely, to see why the character twist by $\delta_{B'} \otimes \delta_{B}^{-1}$ appears in Proposition \ref{splitmap}, we note that $\ol{f}_{\ol{\nu}_{d}}: \Bun_{B'} \ra \Bun_{B}$ is $\ell$-cohomologically smooth. In particular, one can show, by the exact same argument as \cite[Proposition~3.9]{GH}, that the dualizing complex of $\Bun_{B'}$ is isomorphic to the sheaf $\Delta_{B'}$ on $\Bun_{T}$ given by the modulus character of $\delta_{B'}$ pulled back to $\Bun_{B'}$ and given appropriate shifts, just as in Theorem \ref{dualob}. One can in turn compute the relative dualizing complex of $\ol{f}_{\ol{\nu}_{d}}$ in terms of the pullback of $\Delta_{B'} \otimes \Delta_{B}^{-1}$. We then see that $f_{\ol{\nu},d\infty}$ is also $\ell$-cohomologically smooth as a base-change of $\ol{f}_{\ol{\nu}_{d}}$ \cite[Proposition~23.16]{Ecod} with dualizing object $f_{\ol{\nu},d\infty}^{!}(\Lambda)$ computed in terms of $\Delta_{B'} \otimes \Delta_{B}^{-1}$ pulled back to the fiber product $\Bun_{B}^{\ol{\nu},d\infty}$. The claim in this case follows.
\end{Example}
With this motivating example, all that remains is to formalize the above argument. In particular, first off note, by \cite[Corollary~V.2.3]{FS}, that for the proof of Proposition \ref{splitmap} it suffices to consider the base-change of all the above spaces to the base $\ast = \Spd(F)$ for $F$ an algebraically closed perfectoid field in characteristic $p$. We consider such a field with fixed characteristic $0$ untilt $C$, and let $\infty \ra \Div^{1}$ be the closed $F$-point defined by this untilt. We consider the open complement $\Div^{1} \setminus \infty$, and the partially symmetrized powers $(\Div^{1} \setminus \infty)^{(\theta)}$ defined by the open subset. By applying excision, it suffices to verify Proposition \ref{splitmap} over this open subset with the claim over the closed complement being trivial. We abuse notation and write $Z^{\ol{\nu},\ol{\nu}'}_{w}$ and $W_{w}^{\theta}$ for the base-change to this open subspace for the rest of the section. Now let's consider the spaces defined by the locally pro-finite sets $\ul{B(\mathbb{Q}_{p})}$ (resp. $\ul{B'(\mathbb{Q}_{p})}$), and the natural map
\[ f_{0}: \Bun_{B'}^{0} \simeq [\ast/\ul{B'(\mathbb{Q}_{p})}] \ra \Bun_{B}^{0} \simeq [\ast/\ul{B(\mathbb{Q}_{p})}] \]
of $v$-stacks. The fibers of this map are an iterated fibration in $\mathcal{H}^{0}(\mathcal{O}_{X}) = \ul{\mathbb{Q}_{p}}$ indexed by the positive roots $\hat{\alpha} > 0$ such that $w(\hat{\alpha}) > 0$. We choose $\ol{\nu}_{\infty} \in \mathbb{X}_{*}(T_{\ol{\mathbb{Q}}_{p}})_{\Gamma}$ to be an element such that $\langle \ol{\nu}_{\infty}, \hat{\alpha} \rangle < 0$ for all $\hat{\alpha} > 0$ such that $w(\hat{\alpha}) > 0$. Recalling our choice of Borel, this implies that the map 
\[ f_{\ol{\nu}_{\infty}}: \Bun_{B'}^{\ol{\nu}_{\infty}} \ra \Bun_{B}^{\ol{\nu}_{\infty}} \]
is a fibration in iterated positive Banach-Colmez spaces. We consider a modification $\mathcal{F}_{T}^{0} \dashrightarrow \mathcal{F}_{\ol{\nu}_{\infty}}$ at $\infty$ of meromorphy $\ol{\nu}_{\infty}$. This modification induces a map  
\[ \Bun_{B}^{0} \ra \Bun_{B}^{\ol{\nu}_{\infty}} \] 
which we precompose with the map
\[ \Bun_{B}^{\ol{\nu}} \ra \Bun_{B}^{0} \]
also given by an appropriate modification. This allows us to define
\[ \Bun_{B'}^{\ol{\nu},\infty} := \Bun_{B'}^{\ol{\nu}_{\infty}} \times_{\Bun_{B}^{\ol{\nu}_{\infty}}} \Bun_{B}^{\ol{\nu}}   \]
 by base-changing $f_{\ol{\nu}_{\infty}}$. We write 
\[ f_{\ol{\nu},\infty}: \Bun_{B'}^{\ol{\nu},\infty} \ra \Bun_{B}^{\ol{\nu}} \]
for the base-change of $f_{\ol{\nu}_{\infty}}$. By the proof of \cite[Proposition~V.2.1]{FS}, we have that the adjunction 
\[ f_{\ol{\nu},\infty!}f_{\ol{\nu},\infty}^{!} \ra \mathrm{id} \]
is an isomorphism, since $f_{\ol{\nu}_{\infty}}$ and in turn $f_{\ol{\nu},\infty}$ is an iterated fibration of positive Banach-Colmez spaces. Now we define
\[ \tilde{Z}_{w}^{\ol{\nu},\ol{\nu}'} := Z_{w}^{\ol{\nu},\ol{\nu}'} \times_{\Bun_{B}^{\ol{\nu}}} \Bun_{B'}^{\ol{\nu},\infty}  \]
By the previous adjunction, it suffices to show the analogue of Proposition \ref{splitmap} for the composition 
\[ \tilde{Z}_{w}^{\ol{\nu},\ol{\nu}'} \ra Z_{w}^{\ol{\nu},\ol{\nu}'} \xrightarrow{\mf{q}_{1} \times \pi_{w}} \Bun_{T}^{\ol{\nu}} \times (\Div \setminus \infty)^{(\theta)} \]
and the shriek pullback of the constant sheaf along the first map.  However, by now arguing exactly as in the proof of Lemma \ref{ZastavaCartesian}, with $B'$ and its unipotent radical $U'$ replacing $B$ and $U$, we can deduce that the base-change of $W_{w}^{\theta} \ra (\Div \setminus \infty)^{(\theta)}$ along $p_{2}: \Bun_{T}^{\ol{\nu}} \times (\Div \setminus \infty)^{(\theta)} \ra (\Div \setminus \infty)^{(\theta)}$ is precisely the space $\tilde{Z}_{w}^{\ol{\nu},\ol{\nu}'}$. This concludes the proof of Proposition \ref{splitmap} by K\"unneth, where the twist by $\delta_{B'} \otimes \delta_{B}^{-1}$ comes from the relative dualizing complex of $f_{\ol{\nu}_{\infty}}$ and in turn $f_{\ol{\nu},\infty}$ just as in the previous Example.
\subsection{Commutation with Verdier Duality}
In this section, we will establish the following.
\begin{theorem}{\label{commverddual}}
For $\phi_{T}$ a generic regular parameter and assuming \ref{compatibility}, we have that 
\[ \mathbb{D}_{\Bun_{G}}\nmEis(\mathcal{S}_{\phi_{T}}) \simeq \nmEis\mathbb{D}_{\Bun_{T}}(\mathcal{S}_{\phi_{T}}), \]
where $\mathbb{D}_{\Bun_{T}}(\mathcal{S}_{\phi_{T}}) \simeq \mathcal{S}_{\phi_{T}^{\vee}}$. 
\end{theorem}
For $b \in B(G)_{\mathrm{un}}$ and $w \in W_{b}$, we set $\rho_{b,w} := i_{B_{b}}^{J_{b}}(\chi^{w}) \otimes \delta_{P_{b}}^{-1/2}$.
Using Theorem \ref{normstalksdescription}, we reduce Theorem \ref{commverddual} to the following claim.
\begin{theorem}{\label{thm: inertsheavestorsion}}
For all $b \in B(G)_{\mathrm{un}}$, $w \in W_{b}$, and $\chi$ associated to a generic regular parameter $\phi_{T}$, assuming \ref{compatibility} the natural map 
\[ j_{b!}(\rho_{b,w}) \ra Rj_{b*}(\rho_{b,w}) \]
is an isomorphism.
\end{theorem}
\begin{proof}
By \cite[Section~IX.7.1]{FS}, any irreducible constituent of $Rj_{b*}(\rho_{b,w})|_{\Bun_{G}^{b'}} \in \D(J_{b'}(\mathbb{Q}_{p}),\Lambda)$ must have Fargues-Scholze parameter equal to $\phi$. Therefore, by the argument for Corollary \ref{noredvanishing} and Assumption \ref{compatibility}, the restriction $Rj_{b*}(\rho_{b,w})|_{\Bun_{G}^{b'}}$ will only be non-zero if $b' \in B(G)_{\mathrm{un}}$, and any irreducible constituent must be an irreducible constituent of $\rho_{b',w'}$, for some $b' \in B(G)_{\mathrm{un}}$ and $w' \in W_{b'}$.

Moreover, the sheaf $Rj_{b*}(\rho_{b,w})$ is ULA over the point since it is the Verdier dual of a complex which is ULA over the point using \cite[Theorem~V.7.1]{FS} and \cite[Corollary~IV.2.25]{FS}. As in the proof of Proposition \ref{nonsplitcontribution}, this allows us to deduce that $Rj_{b*}(\rho_{b,w})|_{\Bun_{G}^{b'}}$ is a complex with finite length cohomology. This reduces us to showing that the complex 
\[ \RHom_{J_{b'}(\mathbb{Q}_{p})}(j_{b*}(\rho_{b,w})|_{\Bun_{G}^{b'}},\rho_{b',w'}) \simeq \RHom(j_{b*}(\rho_{b,w}), j_{b'*}(\rho_{b',w'})) \]
is trivial for all $b' \in B(G)_{\mathrm{un}}$ and $w' \in W_{b'}$ with $b' \neq b$. We set $\ol{\nu} = w(b_{T})$. By using Theorem \ref{dualob}, Corollary \ref{normstalksdescription}, and the fact that generic regularity is a condition stable under switching $B$ and $B^{-}$ and $\chi$ and $\chi^{-1}$, we obtain that it suffices to show that 
\[ \RHom(\nmEis_{*}^{\ol{\nu}}(\mathcal{S}_{\phi_{T}}),\nmEis^{\ol{\nu}'}_{-*}(\mathcal{S}_{\phi_{T}}))  \]
for all $\ol{\nu}' \in B(T)$ such that its image $b' \in B(G)$ satisfies that $b \neq b'$. We can now rewrite this as
\[ \RHom(\nmCT^{\nu'}_{-!}\nmEis_{*}^{\nu}(\mathcal{S}_{\phi_{T}}),\chi) \simeq \RHom(\nmCT^{\nu'}_{*}\nmEis_{*}^{\nu}(\mathcal{S}_{\phi_{T}}),\chi), \]
using Theorem \ref{thm: geometricsecondadjointness}. Using projection formula and the smoothness of the map $\mf{q}$, we can further rewrite this as 
\[ \RHom(\CT^{\nu'}_{*}\nmEis_{*}^{\nu}(\mathcal{S}_{\phi_{T}}),\chi \otimes \delta_{B}^{1/2}). \] 
The claim now follows by Proposition \ref{bruhatstrata} as in the proof of Proposition \ref{nonsplitcontribution}.
\end{proof}

\section{The Hecke Eigensheaf Property}{\label{consteigsheaf}}
\subsection{Tilting Eigensheaves}
We would now like to combine our work in the previous sections and use it to construct eigensheaves. We would like to do this in a uniform way for coefficient systems $\Lambda \in \{\ol{\mathbb{F}}_{\ell},\ol{\mathbb{Z}}_{\ell},\ol{\mathbb{Q}}_{\ell}\}$, where $\Lambda$ has the discrete topology unless otherwise stated. One of the issues is that the representation theory of $\phantom{}^{L}G/\Lambda$ is substantially different in each of these three cases. The structure of the representation theory of $\Rep_{\ol{\mathbb{Q}}_{\ell}}(\phantom{}^{L}G)$ was described in \S $2$. With $\ol{\mathbb{Q}}_{\ell}$-coefficients, the category is semisimple with simple objects given by $V_{\mu^{\Gamma}}$ for $\mu^{\Gamma} \in \domgamorb$ a $\Gamma$-orbit of a geometric dominant cocharacter $\mu$. If $\Lambda \in \{\ol{\mathbb{F}}_{\ell},\ol{\mathbb{Z}}_{\ell}\}$ this is no longer so straightforward; the representation $V_{\mu^{\Gamma}}$ can fail to be irreducible. To develop a good theory of algebraic representations with these coefficients, we will need to invoke our assumption that $\ell$ is very decent with respect to $G$ and use the theory of tilting modules. We first discuss the general notion of a tilting module. Fix $H$ a split connected reductive group over $\Lambda \in \{\ol{\mathbb{F}}_{\ell},\ol{\mathbb{Z}}_{\ell},\ol{\mathbb{Q}}_{\ell}\}$. We consider the involution
\[ \mathbb{D}: \Rep_{\Lambda}(H) \ra \Rep_{\Lambda}(H) \]
\[ V \mapsto (V^{*})^{\sigma} \]
where $V^{*}$ is the dual representation and $\sigma$ is the Chevalley involution. For $\lambda \in \mathbb{X}^{*}(H)^{+}$ a dominant character, we let $V^{\lambda}$ denote the highest weight representation attached to $\lambda$ by Borel-Weil-Bott, and we write $V_{\lambda} := \mathbb{D}(V^{\lambda})$. We now come to our key definition. 
\begin{definition}
Given $V \in \Rep_{\Lambda}(H)$, we say that $V$ has a Weyl (resp. good) filtration if it admits a filtration whose successive quotients are isomorphic to $V^{\lambda}$ (resp. $V_{\lambda}$). We say $V$ is tilting if it admits both a good and a Weyl filtration. We write $\Tilt_{\Lambda}(H) \subset \Rep_{\Lambda}(H)$ for the full sub-category of tilting modules. 
\end{definition} 
The category of tilting modules is additive, but usually not abelian. If $\Lambda = \ol{\mathbb{Q}}_{\ell}$ the highest weight modules are simple, and $\Tilt_{\Lambda}(H) = \Rep_{\Lambda}(H)$. Therefore, this is only an interesting notion if $\Lambda \in \{\ol{\mathbb{F}}_{\ell},\ol{\mathbb{Z}}_{\ell}\}$. The key point of moving to this sub-category is that we have the following generalization of usual highest weight theory due Ringel and Donkin \cite{Rin}, \cite{Don}. 
\begin{theorem}{\label{highestweightilting}}
For each $\lambda \in \mathbb{X}^{*}(H)^{+}$, there exists a unique indecomposable tilting module $\mathcal{T}_{\lambda} \in \Tilt_{\Lambda}(H)$ with highest weight $\lambda$. We have that $\dim(\mathcal{T}_{\lambda}(\lambda)) = 1$, and, for varying $\lambda$, this parametrizes all indecomposable tilting modules. 
\end{theorem}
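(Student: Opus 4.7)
The plan is to prove Theorem \ref{highestweightilting} by the classical approach of Ringel and Donkin, handling the three coefficient systems uniformly. The statement is really three separate assertions (existence, the $\dim \mathcal T_\lambda(\lambda)=1$ normalization, and the uniqueness/classification), and I would break the proof into these pieces.

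For existence, I would construct $\mathcal T_\lambda$ inductively on the dominance order on $\mathbb X^*(H)^+$. The base case $\lambda = 0$ is trivial since $\mathcal T_0 = \Lambda$ is the trivial representation. For the inductive step, one starts with the dual Weyl module $V_\lambda$, which tautologically admits a good filtration of length one. The goal is to enlarge $V_\lambda$ by a sequence of extensions involving the previously constructed $\mathcal T_\mu$ with $\mu < \lambda$ so as to kill all $\mathrm{Ext}^1(V^\mu, -)$ groups, while preserving the good filtration. Concretely, for each $\mu < \lambda$ with $\mathrm{Ext}^1(V^\mu, V_\lambda) \neq 0$, one forms a universal extension $0 \to V_\lambda \to E \to (V^\mu)^{\oplus n_\mu} \to 0$; iterating (which terminates because only finitely many $\mu < \lambda$ can contribute by highest-weight considerations), one arrives at a module with both a Weyl and a good filtration, whose unique summand containing the highest weight line of $V_\lambda$ is the desired indecomposable $\mathcal T_\lambda$. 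The equality $\dim \mathcal T_\lambda(\lambda) = 1$ is automatic from this construction, since all other building blocks $V^\mu, V_\mu$ have highest weight strictly below $\lambda$.

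For uniqueness and for the fact that these exhaust the indecomposable tilting modules, I would use Krull--Schmidt together with the character-determines-the-module principle for modules with good filtration. Specifically, any indecomposable tilting $T$ has a unique maximal weight $\lambda$, which occurs with multiplicity one (since a Weyl filtration contributes $\lambda$ only from a copy of $V^\lambda$, and a good filtration only from $V_\lambda$, forcing a single top piece in either filtration). Then $\mathrm{Hom}(V^\lambda, T)$ is nonzero and one shows the image generates a summand isomorphic to $\mathcal T_\lambda$, giving $T \cong \mathcal T_\lambda$.

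The main obstacle will be executing this uniformly across $\Lambda \in \{\ol{\mathbb F}_\ell, \ol{\mathbb Z}_\ell, \ol{\mathbb Q}_\ell\}$, in particular over the DVR $\ol{\mathbb Z}_\ell$. Over a field, everything above is in Donkin \cite{Don} and Jantzen \cite{Jan}, Appendix E. Over $\ol{\mathbb Z}_\ell$ one must keep track of $\ol{\mathbb Z}_\ell$-flatness at each step of the inductive construction: the Weyl modules $V^\lambda$ and dual Weyl modules $V_\lambda$ are defined over $\ol{\mathbb Z}_\ell$ via the Kempf/Borel--Weil construction and are flat, and the relevant $\mathrm{Ext}^1$-groups are computed by Hochschild cohomology which commutes with flat base change. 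So the extensions formed in the construction stay flat, and reduction mod $\ell$ produces the tilting module over $\ol{\mathbb F}_\ell$ of the same highest weight. This gives the existence of $\mathcal T_\lambda$ over $\ol{\mathbb Z}_\ell$ as a lift of the mod $\ell$ one, and the uniqueness argument above then applies verbatim since Krull--Schmidt holds for finitely generated $\ol{\mathbb Z}_\ell$-modules on which $H$ acts. The hypothesis that $\ell$ is very good with respect to $G$ is invoked only to guarantee that the usual highest weight theory (including the existence of a Chevalley form of $H$ and flatness of Weyl modules) behaves well in this integral setting; I would verify this by direct reference to \cite{Jan}, Appendix E.
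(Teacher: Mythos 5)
The paper does not prove this statement; it cites it directly as a theorem of Ringel \cite{Rin} and Donkin \cite{Don}, with the extension to $\ol{\mathbb{Z}}_{\ell}$-coefficients covered by \cite[Appendix~E]{Jan}. Your sketch reproduces the structure of Ringel's original construction, so there is no divergence of approach to discuss; I will instead flag two issues in the execution.

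First, the paper's conventions for $V^{\lambda}$ and $V_{\lambda}$ are fixed at the start of \S 10.1: $V^{\lambda}$ is the Borel--Weil module, a ``Weyl filtration'' has quotients of the form $V^{\lambda}$, and a ``good filtration'' has quotients $V_{\lambda} := \mathbb{D}(V^{\lambda})$. Your inductive step is stated with the wrong superscript/subscript: starting from $V_{\lambda}$ (which tautologically has a good filtration of length one), the universal extensions must have quotients $(V_{\mu})^{\oplus n_{\mu}}$ --- the \emph{same} kind of building block as $V_{\lambda}$ --- or the intermediate module $E$ fails to have a good filtration at all, and the corresponding $\mathrm{Ext}^{1}$-groups to be killed are $\mathrm{Ext}^{1}(V_{\mu}, -)$, not $\mathrm{Ext}^{1}(V^{\mu}, -)$. (As written, with sub $V_{\lambda}$ and quotient $(V^{\mu})^{\oplus n}$, the extension mixes a $V_{\lambda}$-piece with $V^{\mu}$-pieces and has neither a good nor a Weyl filtration in the paper's sense.) This reads as a consistent slip rather than a conceptual gap, but it should be corrected throughout. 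Second, the phrase ``extensions involving the previously constructed $\mathcal{T}_{\mu}$'' does not match the concrete extensions you then write down: in Ringel's construction the building blocks are the $V_{\mu}$ with $\mu < \lambda$, not the indecomposable tiltings $\mathcal{T}_{\mu}$, and the appeal to the induction hypothesis is not needed for existence --- only for the classification step. With those two corrections, the existence argument, the multiplicity-one observation $\dim \mathcal{T}_{\lambda}(\lambda) = 1$, and the Krull--Schmidt classification are all in order. Your handling of the DVR case via flatness of the standard modules and commutation of $\mathrm{Ext}^{1}$ with flat base change is the right idea and is precisely what \cite[Appendix~E]{Jan} carries out; it would be cleaner to cite that reference than to reprove it.
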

We also get the usual classification of all tilting modules in terms of highest weight tilting modules.
\begin{proposition}{\cite[Section~E.22]{Jan},\cite[Lemma~7.3]{Mat}}{\label{directsumtilting}}
For all $V \in \Tilt_{\Lambda}(H)$, there exists unique integers $n(\lambda) \in \mathbb{N}_{\geq 0}$ for all $\lambda \in \mathbb{X}^{*}(H)^{+}$ and an isomorphism
\[ V \simeq \bigoplus_{\lambda \in \mathbb{X}^{*}(H)^{+}} (\mathcal{T}_{\lambda})^{n(\lambda)} \]
of tilting modules. 
\end{proposition}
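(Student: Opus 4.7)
The plan is to deduce this as a Krull--Schmidt decomposition statement for $\Tilt_\Lambda(H)$, combined with Theorem \ref{highestweightilting} which identifies the indecomposable objects.

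First, I would establish the cohomological backbone of the theory: for all $\mu,\lambda \in \mathbb{X}^*(H)^+$ and all $i \geq 1$, the group $\mathrm{Ext}^i_H(V^\mu, V_\lambda)$ vanishes. Over a field this is a standard consequence of Kempf vanishing together with the identification $V_\lambda = \mathrm{Ind}_B^H(\lambda)$; for $\Lambda = \ol{\mathbb{Z}}_\ell$ one reduces to the residue field and generic fiber cases via a universal coefficient argument, using that $V^\mu$ and $V_\lambda$ are finite free over $\Lambda$. A standard devissage on the lengths of Weyl and good filtrations then upgrades this to $\mathrm{Ext}^1_H(V,W) = 0$ whenever $V$ has a Weyl filtration and $W$ a good filtration. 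In particular every short exact sequence of tilting modules splits, so $\Tilt_\Lambda(H)$ is closed under direct summands in $\Rep_\Lambda(H)$.

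Next I would prove existence of a decomposition into indecomposables. Any $V \in \Tilt_\Lambda(H)$ is finitely generated and free over $\Lambda$ (by inducting along a Weyl filtration), so $\mathrm{End}_H(V)$ is a finitely generated $\Lambda$-algebra and admits a complete orthogonal set of primitive idempotents. These idempotents produce a direct sum decomposition $V \simeq \bigoplus_j V_j$ with each $V_j$ indecomposable and still tilting (by the preceding paragraph). By Theorem \ref{highestweightilting} each such $V_j$ is isomorphic to $\mathcal{T}_{\lambda_j}$ for some $\lambda_j \in \mathbb{X}^*(H)^+$, giving the existence half of the claim.

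For uniqueness I would invoke Krull--Schmidt, which applies once $\mathrm{End}_H(\mathcal{T}_\lambda)$ is shown to be local. Since $\mathcal{T}_\lambda$ has one-dimensional $\lambda$-weight space, any endomorphism acts on $\mathcal{T}_\lambda(\lambda)$ by a scalar; an endomorphism acting there by a unit is forced to be surjective on $V^\lambda$ (the top piece of a Weyl filtration) and hence, by induction on the filtration together with a rank count, is itself an isomorphism, so the non-units form a two-sided ideal. The multiplicities $n(\lambda)$ are then recovered uniquely by downward induction on dominance, via $n(\lambda) = \dim V(\lambda) - \sum_{\mu > \lambda} n(\mu)\,\dim \mathcal{T}_\mu(\lambda)$. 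The principal obstacle is the integral case $\Lambda = \ol{\mathbb{Z}}_\ell$: one must check that tilting modules are genuinely free over $\Lambda$, that Ext-vanishing is compatible with reduction mod $\ell$ and inversion of $\ell$, and that the Krull--Schmidt machinery functions without finite-dimensionality over a field. Under the very-goodness assumption on $\ell$ that the paper already imposes, these verifications are carried out by Jantzen \cite{Jan} and Mathieu \cite{Mat} in the references cited, and I would transport their arguments verbatim.
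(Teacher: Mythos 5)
The paper offers no proof of this statement beyond the two citations, so your sketch is best read as a reconstruction of the arguments in Jantzen's Appendix~E and Mathieu's Lemma~7.3, which is the intended approach. The overall strategy (Ext-vanishing between Weyl and induced modules, decomposition via idempotents, identification of the indecomposables with the $\mathcal{T}_\lambda$, and Krull--Schmidt for uniqueness) is correct, but two steps in your write-up do not quite close as stated and should be repaired.

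First, the inference ``every short exact sequence of tilting modules splits, so $\Tilt_\Lambda(H)$ is closed under direct summands'' is a non-sequitur. Knowing that exact sequences of tilting modules split does not by itself show that a direct summand $V_1$ of a tilting module $V$ is tilting, because you would need to know in advance that $V_1$ admits Weyl and good filtrations before the splitting statement has any content. The correct route is the one your Ext-vanishing already makes available: the cohomological characterization (Donkin, Cline--Parshall--Scott) that $W$ has a good filtration if and only if $\mathrm{Ext}^1_H(V^\mu, W) = 0$ for all dominant $\mu$, and dually for Weyl filtrations. These $\mathrm{Ext}$-vanishing conditions are visibly inherited by direct summands, which is exactly what is needed.

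Second, the argument that $\mathrm{End}_H(\mathcal{T}_\lambda)$ is local via ``surjective on $V^\lambda$ \ldots by induction on the filtration'' is shaky as stated: once you pass to the quotient $\mathcal{T}_\lambda / V^\lambda$ the induced endomorphism lives on a module that is generally no longer tilting, so the inductive hypothesis does not straightforwardly apply. For $\Lambda = \ol{\mathbb{F}}_\ell$ the cleaner route is Fitting's lemma --- the endomorphism algebra of a finite-dimensional indecomposable module over a field is automatically local. For $\Lambda = \ol{\mathbb{Z}}_\ell$ one observes that $\mathrm{End}_H(\mathcal{T}_\lambda)$ is module-finite over a complete local Noetherian ring, hence semiperfect, so the absence of nontrivial idempotents (i.e.\ indecomposability of $\mathcal{T}_\lambda$) already forces it to be local, and idempotents lift along $\ol{\mathbb{Z}}_\ell \to \ol{\mathbb{F}}_\ell$ by completeness. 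With these two repairs the argument goes through as you describe.
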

Now we come to a difficult result which was proven by \cite{Wang} for groups of type $A_{n}$, \cite{Don} for almost all groups, and \cite{Mat} in general. 
\begin{theorem}{\label{tensortilting}}
If we have two tilting modules $V,V' \in \Tilt_{\Lambda}(H)$ then the tensor product $V \otimes V'$ is tilting. 
\end{theorem}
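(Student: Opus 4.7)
The plan is to reduce the tilting statement to a statement about good filtrations alone. The Chevalley involution $\mathbb{D}$ is a tensor equivalence on $\Rep_{\Lambda}(H)$ sending $V^{\lambda}$ to $V_{\lambda}$, hence interchanging modules with a good filtration and modules with a Weyl filtration. Consequently, it suffices to establish the following: \emph{if $M$ and $N$ admit good filtrations, then so does $M \otimes N$.} Dualizing by $\mathbb{D}$ then gives the parallel statement for Weyl filtrations, so the tensor product of two tilting modules admits both a good and a Weyl filtration, which is the desired conclusion.

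To prove the good filtration statement, I would apply Donkin's cohomological criterion: a finite $H$-module $P$ admits a good filtration if and only if $\mathrm{Ext}^1_H(V^{\nu},P) = 0$ for every $\nu \in \mathbb{X}^*(H)^+$, equivalently (via Kempf vanishing and Borel--Weil) $H^i(H/B,\mathcal{L}(\nu) \otimes P) = 0$ for all $i \geq 1$ and all dominant $\nu$. Applied to $P = M \otimes N$, and using the tensor identity together with the inductive structure of a good filtration on $M$ (whose successive quotients $V^{\lambda}$ remain in $\Rep_{\Lambda}(H)$), the problem reduces to cohomological vanishing statements on the flag variety $H/B$ of the form $H^i(H/B,\mathcal{L}(\nu) \otimes V^{\lambda} \otimes N) = 0$ for $i \geq 1$.

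The core difficulty, and the main obstacle, is establishing this higher cohomology vanishing. The cleanest route, due to Mathieu, proceeds via Frobenius splittings of $H/B$ that are compatible with all Schubert subvarieties (Mehta--Ramanathan, Ramanathan), combined with a delicate induction across the Bruhat stratification that propagates vanishing from Schubert cells to the full flag variety. Donkin's earlier reduction to the case where one of $M,N$ is a fundamental representation suffices for most classical groups, but Mathieu's Frobenius splitting argument is required in full generality; this is where the genuine content of the theorem lies.

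Finally, on coefficients: the case $\Lambda = \ol{\mathbb{Q}}_{\ell}$ is trivial since $\Rep_{\Lambda}(H)$ is semisimple, every $V^{\lambda}$ is simple, and the tilting condition coincides with semisimplicity. The case $\Lambda = \ol{\mathbb{Z}}_{\ell}$ follows from the $\ol{\mathbb{F}}_{\ell}$ case by considering the integral form of $\mathcal{T}_{\lambda}$ over the Chevalley group scheme and lifting good filtrations via Nakayama's lemma applied to the relevant $\mathrm{Ext}^1$ groups, using the flatness of the integral forms. The assumption that $\ell$ is very good with respect to $G$ ensures that the integral Chevalley group scheme is well-behaved and that Mathieu's Frobenius splitting hypotheses apply uniformly in residue characteristic $\ell$, so that a single argument covers all three coefficient rings of interest.
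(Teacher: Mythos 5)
This theorem is not proved in the paper at all: it is stated and attributed to Wang, Donkin, and Mathieu (\cite{Wang}, \cite{Don}, \cite{Mat}), so there is no "paper's own proof" to compare against. Your outline is, in fact, a faithful summary of how the cited proof actually goes: reduce to the symmetric statement that a tensor product of modules with good filtrations has a good filtration (the Chevalley involution $\mathbb{D}$ exchanges good and Weyl filtrations), invoke Donkin's $\mathrm{Ext}^1(V^{\nu},-)$-vanishing criterion, and reduce to higher-cohomology vanishing of line bundles on $H/B$ twisted by a module with good filtration, which is the content of Mathieu's Frobenius-splitting argument (with Donkin's earlier case-by-case treatment sufficing for most classical types). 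You correctly flag that the Frobenius-splitting step is where the real work is; as a roadmap this is accurate and matches the literature the paper references.

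One correction, though: your final paragraph misattributes the role of the "$\ell$ very good" hypothesis. The Ringel--Donkin--Mathieu theory — existence and classification of indecomposable tiltings, and the tensor-product theorem itself — holds for any split connected reductive group over any field, with no condition on the characteristic. The "very good" assumption in this paper is not used for Theorem \ref{tensortilting}; it is invoked immediately afterward to extend the tilting formalism from $\hat{G}$ to the non-connected $L$-group $\phantom{}^{L}G$, where one needs $W_{\mathbb{Q}_{p}}/W_{E_{\mu}}$ to have order prime to $\ell$ so that induction from $W_{E_{\mu}}\ltimes\hat{G}$ is well-behaved. Similarly, the passage from $\ol{\mathbb{F}}_{\ell}$ to $\ol{\mathbb{Z}}_{\ell}$ is standard base-change for split reductive group schemes over a DVR and does not hinge on the very good condition. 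So the approach is right, but the characteristic constraint you attach to Mathieu's theorem is spurious.
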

We can now extend this to the $L$-group using our assumption that $\ell$ is very decent with respect to $G$; in particular, our assumption that $\ell \nmid |Q|$. By Theorem \ref{highestweightilting} applied to $H = \hat{G}$, we deduce that we have a well-defined category $\Tilt_{\Lambda}(\hat{G})$ of tilting modules, where each object can be written as a direct sum of highest weight tilting modules $\mathcal{T}_{\mu}$ for $\mu \in \mathbb{X}_{*}(T_{\ol{\mathbb{Q}}_{p}})^{+}$ using Proposition \ref{directsumtilting}. Now, given such a $\mu$, we consider the reflex field $E_{\mu}$, and extend this to a representation of $W_{E_{\mu}} \ltimes \hat{G}$, as in \cite[Lemma~2.1.2]{Kott1}. We define the tilting module $\mathcal{T}_{\mu^{\Gamma}}$ as the induction of this representation from $W_{E_{\mu}} \ltimes \hat{G}$ to $W_{\mathbb{Q}_{p}} \ltimes \hat{G}$, and let $\Tilt_{\Lambda}(\phantom{}^{L}G) \subset \Rep_{\Lambda}(\phantom{}^{L}G)$ be the full sub-category given by direct sums of such modules, where we note that $\mathcal{T}_{\mu^{\Gamma}}$ only depends on the $\Gamma$-orbit of $\mu^{\Gamma} \in \domgamorb$ of $\mu$. Now, since $\ell$ is very decent, it follows that $W_{\mathbb{Q}_{p}}$ acts on $\hat{G}$ via a quotient $Q$ that is prime to $\ell$ by definition, and therefore $W_{\mathbb{Q}_{p}}/W_{E_{\mu}}$ is also of order prime to $\ell$. Combing this observation, Frobenius Reciprocity/Mackey theory, and Theorem \ref{tensortilting}, we conclude that $\mathcal{T}_{\mu^{\Gamma}}$ is indeed an irreducible representation of $\phantom{}^{L}G$, and that $\Tilt(\phantom{}^{L}G)$ is preserved under tensor products. This allows us to define the following. 
\begin{definition}{\label{deftilteigsheaf}}
Given a continuous $L$-parameter $\phi: W_{\mathbb{Q}_{p}} \ra \phantom{}^{L}G(\Lambda)$, we say a sheaf $\mathcal{S}_{\phi} \in \Dlis(\Bun_{G},\Lambda)$ is a tilting eigensheaf with eigenvalue $\phi$ if, for all $V \in \Tilt_{\Lambda}(\phantom{}^{L}G^{I})$, we are given isomorphisms
\[ \eta_{V,I}: T_{V}(\mathcal{S}_{\phi}) \simeq \mathcal{S}_{\phi} \boxtimes r_{V} \circ \phi \]
of sheaves in $\Dlis(\Bun_{G},\Lambda)^{BW_{\mathbb{Q}_{p}}^{I}}$, which are natural in $I$ and $V$, and compatible with compositions and exterior tensor products in $V$. If $\Lambda = \ol{\mathbb{Q}}_{\ell}$ this recovers Definition \ref{defeigsheaf}. We similarly say $\mathcal{S}_{\phi}$ is a weak tilting eigensheaf if only the isomorphisms $\eta_{V,I}$ exist. 
\end{definition}
\begin{remark}{\label{rem: modularhighestweight}}
Our discussion of highest weight theory in \S \ref{sec: unramifiedelments} for $\hat{G}^{\Gamma}$ also extends to the tilting modules $\mathcal{T}_{\mu}$ under our assumption that $\ell$ is very decent. In particular, using Proposition \ref{highestweightilting}, we can understand the possible weights occurring in $\mathcal{T}_{\mu}|_{\hat{G}^{\Gamma}}$ in terms of $\ol{\nu} \in \coinv$, which lie in the convex hull of the $W_{G}$ orbit of $\mu_{\Gamma} \in \coinvdom$, and the $\ol{\nu}$ weight space of $\mathcal{T}_{\mu}|_{\hat{G}^{\Gamma}}$ will be a direct sum over the weight spaces $\mathcal{T}_{\mu}(\nu)$ for $\nu \in \cochar$ mapping to $\ol{\nu}$, as in Lemma \ref{lemma: coinv versus orbs}. To see this, we note, by \cite[Proposition~VIII.5.11]{FS}\footnote{Here we are citing version 2 on the ArXiv} and our assumption that $\ell$ is very decent, that we have the following:
\begin{enumerate}
    \item $\hat{G}^{\Gamma}$ is a smooth linear algebraic group, and $\hat{G}^{\Gamma,\circ}$ is reductive.   
    \item $\hat{G}^{\Gamma}/\hat{G}^{\Gamma,\circ} \simeq \hat{T}^{\Gamma}/\hat{T}^{\Gamma,\circ}$ is of order prime to $\ell$, where $\hat{G}^{\Gamma,\circ}$ (resp. $\hat{T}^{\Gamma,\circ}$) denotes the neutral component  of $\hat{G}^{\Gamma}$ (resp. $\hat{T}^{\Gamma}$), and the isomorphism follows as in \cite[Lemma~4.6]{Zhu}.
\end{enumerate}
These observations allows us the see the highest weight theory of the (possibly disconnected) group $\hat{G}^{\Gamma}$ behaves as expected with modular coefficients by mimicking the proof of Lemma \ref{dualgrouphighestweight}.
\end{remark}
We now define our candidate tilting eigensheaf for each of the possible coefficient systems $\Lambda \in \{\ol{\mathbb{F}}_{\ell},\ol{\mathbb{Z}}_{\ell},\ol{\mathbb{Q}}_{\ell}\}$.
\subsection{The Construction of the Eigensheaf}
We fix a toral parameter $\phi_{T}: W_{\mathbb{Q}_{p}} \ra  \phantom{}^{L}T(\Lambda)$, with induced parameter $\phi: W_{\mathbb{Q}_{p}} \xrightarrow{\phi_{T}} \phantom{}^{L}T(\Lambda) \ra \phantom{}^{L}G(\Lambda)$. Our goal is to construct a candidate tilting eigensheaf with respect to the parameter $\phi$. If $\Lambda = \ol{\mathbb{F}}_{\ell}$, we have already carried this out. It is simply the sheaf $\nmEis(\mathcal{S}_{\phi_{T}}) \in \D(\Bun_{G})$ viewed as a sheaf in $\Dlis(\Bun_{G},\ol{\mathbb{F}}_{\ell})$ via the identification $\D(\Bun_{G}) \simeq \Dlis(\Bun_{G},\ol{\mathbb{F}}_{\ell})$ \cite[Proposition~VII.6.6]{FS}, obtained by embedding both categories into $\D_{\blacksquare}(\Bun_{G},\ol{\mathbb{F}}_{\ell})$. To move beyond this case, we need to invoke the following Lemma.
\begin{lemma}{\label{EisULA}}
For $\Lambda = \ol{\mathbb{F}}_{\ell}$, $\phi_{T}$ generic regular, and assuming \ref{compatibility}, the sheaf $\nmEis(\mathcal{S}_{\phi_{T}}) \in \D(\Bun_{G})$ is ULA with respect to the structure map $\Bun_{G} \ra \ast$.
\end{lemma}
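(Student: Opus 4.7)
The plan is to apply the characterization of ULA sheaves on $\Bun_{G}$ given in \cite[Theorem~V.7.1]{FS}, which says that an object $A \in \D(\Bun_{G})$ is ULA over $\ast$ if and only if, for every $b \in B(G)$, the restriction $j_{b}^{*}(A) \in \D(\Bun_{G}^{b}) \simeq \D(J_{b}(\mathbb{Q}_{p}),\ol{\mathbb{F}}_{\ell})$ is a compact object, which under the equivalence with smooth representations corresponds to a bounded complex of admissible $J_{b}(\mathbb{Q}_{p})$-representations with finitely generated cohomology. So the problem immediately reduces to a stalkwise verification, and the entire calculation of stalks performed in Corollary \ref{normstalksdescription} is tailor-made for exactly this purpose.

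First I would recall that for $b \notin B(G)_{\mathrm{un}}$, the stalk $\nmEis(\mathcal{S}_{\phi_{T}})|_{\Bun_{G}^{b}}$ vanishes and so is trivially ULA; this uses only the weaker genericity hypothesis and Corollary \ref{noredvanishing}. Hence I only need to treat the unramified elements $b \in B(G)_{\mathrm{un}}$. For such a $b$, weak normalized regularity of $\phi_{T}$ together with Corollary \ref{normstalksdescription} give an explicit isomorphism
\[
\nmEis(\mathcal{S}_{\phi_{T}})|_{\Bun_{G}^{b}} \simeq \bigoplus_{w \in W_{b}} i_{B_{b}}^{J_{b}}(\chi^{w}) \otimes \delta_{P_{b}}^{1/2}[-\langle 2\hat{\rho},\nu_{b} \rangle].
\]
The sum is finite since $W_{b} = W_{G}/W_{M_{b}}$ is finite, and each summand is a single smooth representation placed in one cohomological degree, so I just need to verify admissibility of the individual normalized parabolic inductions $i_{B_{b}}^{J_{b}}(\chi^{w}) \otimes \delta_{P_{b}}^{1/2}$.

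The admissibility of $i_{B_{b}}^{J_{b}}(\chi^{w})$ is completely standard: since $J_{b}/B_{b}$ is a projective variety, the flag variety $J_{b}(\mathbb{Q}_{p})/B_{b}(\mathbb{Q}_{p})$ is compact, and combining the Iwasawa decomposition with the fact that $\chi^{w}$ is fixed by some open subgroup of $T(\mathbb{Q}_{p})$ gives finite-dimensionality of the $K$-invariants for every open compact $K \subset J_{b}(\mathbb{Q}_{p})$. The twist by the character $\delta_{P_{b}}^{1/2}$ does not affect admissibility.

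Assembling these observations gives the ULA property at every $b \in B(G)$, and hence globally by the cited characterization. I do not anticipate a significant obstacle: the hard work has already been done in \S \ref{stalkssection} in computing the stalks, and the rest is a textbook consequence of the representation theory of parabolic inductions combined with the stratum-by-stratum characterization of ULA sheaves on $\Bun_{G}$. The one point that deserves care is checking that the admissibility criterion in \cite[Theorem~V.7.1]{FS} matches compactness in $\D(J_{b}(\mathbb{Q}_{p}),\ol{\mathbb{F}}_{\ell})$ under the equivalence $\D(\Bun_{G}^{b}) \simeq \D(J_{b}(\mathbb{Q}_{p}),\ol{\mathbb{F}}_{\ell})$, but this is spelled out precisely in loc. cit.
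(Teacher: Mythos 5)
Your proposal is correct and follows essentially the same route as the paper: invoke the stalk-wise characterization of ULA objects on $\Bun_G$ from \cite[Theorem~V.7.1]{FS}, and then read off the ULA property from the explicit description of the stalks in Corollary \ref{normstalksdescription}. The paper's own proof is a one-liner making exactly this reduction.

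One small caution on terminology: you describe the characterization from \cite[Theorem~V.7.1]{FS} as asking that each $j_b^*(A)$ be a \emph{compact} object of $\D(J_b(\mathbb{Q}_p),\ol{\mathbb{F}}_\ell)$. This is not the condition in loc.\ cit.; the actual criterion is that $j_b^*(A)$ be a complex of smooth representations whose $K$-invariants $j_b^*(A)^K$ form a perfect complex of $\Lambda$-modules for every open pro-$p$ subgroup $K \subset J_b(\mathbb{Q}_p)$ — i.e.\ an admissibility condition, not a compactness condition in the derived category (compactness would instead mean finite generation over the Hecke algebra at some level, which is a different and inequivalent notion in general). The distinction doesn't affect the validity of your argument, since $i_{B_b}^{J_b}(\chi^w) \otimes \delta_{P_b}^{1/2}$ is a single admissible representation in one degree, the sum over $W_b$ is finite, and the vanishing at $b \notin B(G)_{\mathrm{un}}$ is handled exactly as you say. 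But it is worth stating the criterion correctly.
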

\begin{proof}
Given $A \in \D(\Bun_{G})$, we recall that $A$ being ULA with respect to the map $\Bun_{G} \ra \ast$ is equivalent to saying that its stalks $A|_{\Bun_{G}^{b}}$ are valued in a complex of smooth representations such that $A^{K}|_{\Bun_{G}^{b}}$ is a perfect complex of $\Lambda$-modules for all open pro-$p$ subgroups $K \subset J_{b}(\mathbb{Q}_{p})$ \cite[Theorem~V.7.1]{FS}. In particular, the result follows from Corollary \ref{normstalksdescription}.
\end{proof}
Now, if $\Lambda = \ol{\mathbb{Z}}_{\ell}$ then, by taking inverse limits with respect to the mod $\ell^{n}$ reductions of $\phi_{T}$, and considering the systems of sheaves given by applying the Eisenstein functor to the eigensheaf attached to these reductions, we obtain a sheaf 
\[ \widehat{\nmEis(\mathcal{S}_{\phi_{T}})} \in \D_{\text{\'et}}^{\mathrm{ULA}}(X,\ol{\mathbb{Z}}_{\ell}) \]
Now, we have a fully faithful embedding 
\[ \D^{\mathrm{ULA}}_{\text{\'et}}(\Bun_{G},\ol{\mathbb{Z}}_{\ell}) \hookrightarrow \D_{\blacksquare}(\Bun_{G},\ol{\mathbb{Z}}_{\ell}) \]
given as in \cite[Page~261]{FS}. This embedding is used to define the Hecke operators in the setting of solid sheaves (See \cite[Page~264]{FS}), utilizing that sheaves in the Satake category are ULA over $\Div^{I}$. In particular, this embedding is Hecke equivariant in the appropriate sense, and so the filtered eigensheaf property transfers to the image of $\widehat{\nmEis(\mathcal{S}_{\phi_{T}})}$ in $\D_{\blacksquare}(\Bun_{G},\ol{\mathbb{Z}}_{\ell})$. We also have a natural embedding $\Dlis(\Bun_{G},\ol{\mathbb{Z}}_{\ell}) \hookrightarrow \D_{\blacksquare}(\Bun_{G},\ol{\mathbb{Z}}_{\ell})$, and we can analogously define the set of ULA objects in it \cite[Definition~VII.7.8]{FS}, denoted $\Dlis^{\mathrm{ULA}}(\Bun_{G},\ol{\mathbb{Z}}_{\ell})$. We have the following claim.
\begin{lemma}
Under the embeddings of $\Dlis^{\mathrm{ULA}}(\Bun_{G},\ol{\mathbb{Z}}_{\ell})$ and $\D^{\mathrm{ULA}}_{\text{\'et}}(\Bun_{G},\ol{\mathbb{Z}}_{\ell})$ into $\D_{\blacksquare}(\Bun_{G},\ol{\mathbb{Z}}_{\ell})$ described above, these two full subcategories are isomorphic.
\end{lemma}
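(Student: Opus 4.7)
The plan is to verify the equality of these two full subcategories of $\D_{\blacksquare}(\Bun_{G},\ol{\mathbb{Z}}_{\ell})$ by reducing it, via the $v$-local nature of the ULA property, to a comparison on smooth charts and then further to a purely $\ell$-adic statement about compatible systems of torsion sheaves. First I would note that both categories are stable under the natural operations relevant here ($!$-pullback along maps representable in nice diamonds, shifts, and direct summands), and that by \cite[Proposition~IV.2.33]{FS} membership in either ULA class can be checked after pullback along a strictly surjective map from a disjoint union of cohomologically smooth charts $\mathrm{Spa}(R,R^{+}) \to \Bun_{G}$ of the type constructed in \cite[Section~V.3]{FS}. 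On such a chart both $\Dlis^{\mathrm{ULA}}$ and $\D_{\text{\'et}}^{\mathrm{ULA}}$ admit explicit descriptions, and in both cases the objects are $\ell$-adically derived complete inside the solid category.

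Next I would exploit the fact that both subcategories behave well with respect to reduction modulo $\ell^{n}$: an object $\mathcal{F}$ on a chart lies in $\D_{\text{\'et}}^{\mathrm{ULA}}(-,\ol{\mathbb{Z}}_{\ell})$ if and only if each $\mathcal{F} \otimes^{L}_{\ol{\mathbb{Z}}_{\ell}} \ol{\mathbb{Z}}_{\ell}/\ell^{n}$ is ULA with $\ol{\mathbb{Z}}_{\ell}/\ell^{n}$-coefficients and $\mathcal{F} \simeq R\lim_{n} \mathcal{F}/\ell^{n}$, and analogously for $\Dlis^{\mathrm{ULA}}$, using that the ULA condition is compatible with extension of scalars and that Postnikov-style limits preserve both subcategories. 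With torsion coefficients $\Lambda = \ol{\mathbb{Z}}_{\ell}/\ell^{n}$, the comparison $\Dlis(-,\Lambda) \simeq \D_{\text{\'et}}(-,\Lambda)$ of \cite[Proposition~VII.6.6]{FS} identifies the two versions of ULA, by matching the characterisations in terms of perfectness of $K$-invariants at geometric points $b \in B(G)$ given in \cite[Theorem~V.7.1]{FS} and \cite[Definition~VII.7.8]{FS}. Thus modulo each $\ell^{n}$ the two subcategories agree inside $\D_{\blacksquare}(-,\Lambda)$, and both global categories are recovered from these torsion reductions by taking the derived $\ell$-adic limit in $\D_{\blacksquare}(\Bun_{G},\ol{\mathbb{Z}}_{\ell})$.

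Putting these together, I would conclude that an object of $\D_{\blacksquare}(\Bun_{G},\ol{\mathbb{Z}}_{\ell})$ lies in $\Dlis^{\mathrm{ULA}}$ if and only if it lies in $\D_{\text{\'et}}^{\mathrm{ULA}}$: a ULA object from either side is determined by its mod $\ell^{n}$ reductions (which coincide by the torsion comparison) together with the derived $\ell$-adic completeness built into the solid structure, and conversely any $\ell$-adically complete object whose torsion reductions are ULA lies in both categories. The main technical obstacle is controlling the derived inverse limit: one must verify that the transition maps between the $\mathcal{F}/\ell^{n}$ satisfy a Mittag-Leffler-type condition (which in the ULA setting amounts to the $K$-invariants at each $b \in B(G)$ forming a compatible system of perfect complexes of $\ol{\mathbb{Z}}_{\ell}/\ell^{n}$-modules whose inverse limit is perfect over $\ol{\mathbb{Z}}_{\ell}$), so that $R\lim_{n}$ commutes with the ULA property and with the two embeddings into $\D_{\blacksquare}$. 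Once this is in place the identification is automatic, and the same argument with $\ol{\mathbb{Q}}_{\ell}$-coefficients then follows by inverting $\ell$.
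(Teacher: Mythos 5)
Your proposal takes a genuinely different route from the paper. You work on cohomologically smooth charts, compare mod-$\ell^n$ torsion reductions via the equivalence $\Dlis(-,\Lambda)\simeq\D_{\text{\'et}}(-,\Lambda)$ for torsion $\Lambda$, and then try to glue by $R\lim$. The paper instead applies the semi-orthogonal decomposition of $\Dlis^{\mathrm{ULA}}(\Bun_G,\ol{\mathbb{Z}}_{\ell})$ and $\D_{\text{\'et}}^{\mathrm{ULA}}(\Bun_G,\ol{\mathbb{Z}}_{\ell})$ along the HN-strata (\cite[Proposition~VII.7.3]{FS}); on each stratum $\Bun_G^b\simeq[\ast/\mathcal{J}_b]$ the pieces identify with $\D(J_b(\mathbb{Q}_p),\ol{\mathbb{Z}}_{\ell})_{\mathrm{adm}}$ and $\hat{\D}(J_b(\mathbb{Q}_p),\ol{\mathbb{Z}}_{\ell})_{\mathrm{adm}}$ respectively, and the comparison reduces to the known equivalence between admissible complexes of smooth representations over $\ol{\mathbb{Z}}_{\ell}$ and $\ell$-complete smooth representations with admissible cohomology (\cite[Proposition~2.6]{Han1}). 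That route is shorter and replaces all limit bookkeeping with a single cited representation-theoretic fact.

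A few points in your sketch need repair before it could be made rigorous. The claim that objects of $\Dlis(\Bun_G,\ol{\mathbb{Z}}_{\ell})$ are ``$\ell$-adically derived complete inside the solid category'' is false as stated: an arbitrary lisse-\'etale sheaf (e.g.\ one whose stalk is a non-admissible smooth $\ol{\mathbb{Z}}_{\ell}$-representation such as one involving $\ol{\mathbb{Q}}_{\ell}/\ol{\mathbb{Z}}_{\ell}$) is not $\ell$-complete. It is precisely the ULA condition---admissibility of the $K$-invariants of the stalks---that forces $\ell$-completeness, and you need to make this the pivot of the argument rather than a background assumption. Relatedly, your reduction to charts via strict surjectivity plus \cite[Proposition~IV.2.33]{FS} is not the natural move here, because the paper's characterization of ULA on $\Bun_G$ (\cite[Theorem~V.7.1]{FS}) is stated in terms of stalks at $b\in B(G)$, i.e.\ the HN stratification, not in terms of charts; passing to charts obscures the admissibility/completeness dictionary that actually makes the comparison work. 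Finally, the closing Mittag-Leffler step is where the real content lies, and it is exactly the statement that admissible complexes of smooth $\ol{\mathbb{Z}}_{\ell}$-representations are automatically derived $\ell$-complete and that the $R\lim$ of their torsion reductions recovers them---you would be re-deriving \cite[Proposition~2.6]{Han1} from scratch, and as written the sketch waves at it rather than proving it.
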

\begin{proof}
We have a semi-orthogonal decomposition of $\Dlis^{\mathrm{ULA}}(\Bun_{G},\ol{\mathbb{Z}}_{\ell})$ (\cite[Proposition~VII.7.3]{FS}) and $\D_{\text{\'et}}^{\mathrm{ULA}}(\Bun_{G},\ol{\mathbb{Z}}_{\ell})$ into  $\D(J_{b}(\mathbb{Q}_{p}),\ol{\mathbb{Z}}_{\ell})_{\mathrm{adm}}$ and  $\hat{\D}(J_{b}(\mathbb{Q}_{p}),\ol{\mathbb{Z}}_{\ell})_{\mathrm{adm}}$ by excision, respectively. Here $\hat{\D}(J_{b}(\mathbb{Q}_{p}),\ol{\mathbb{Z}}_{\ell})$ denotes the derived category of $\ell$-complete smooth representations of $J_{b}(\mathbb{Q}_{p})$, and the subscript $\mathrm{adm}$ is used to denote the full subcategory of objects such that its invariants under an open compact $K \subset J_{b}(\mathbb{Q}_{p})$ is a perfect complex. Since the semi-orthogonal decompositions are compatible with the two embeddings into $\D_{\blacksquare}(\Bun_{G},\ol{\mathbb{Z}}_{\ell})$ it suffices to show that we have an identification
\[ \D(J_{b}(\mathbb{Q}_{p}),\ol{\mathbb{Z}}_{\ell})_{\mathrm{adm}} \simeq \hat{\D}(J_{b}(\mathbb{Q}_{p}),\ol{\mathbb{Z}}_{\ell})_{\mathrm{adm}}, \]
but this follows from \cite[Proposition~2.6]{Han1}.
\end{proof}
Using the isomorphism supplied by the previous Lemma, we can regard $\widehat{\nmEis(\mathcal{S}_{\phi_{T}})} \in \D_{\text{\'et}}^{\mathrm{ULA}}(X,\ol{\mathbb{Z}}_{\ell})$ as an object in $\Dlis(\Bun_{G},\ol{\mathbb{Z}}_{\ell})$, which we denote by $\nmEis(\mathcal{S}_{\phi_{T}})$. Since these isomorphisms are compatible with Hecke operators it follows that Corollary \ref{appliedfilteigsheaf} transfers to this sheaf. It now remains to describe the desired sheaf when $\Lambda = \ol{\mathbb{Q}}_{\ell}$. In this case, we need to assume the parameter $\phi_{T}: W_{\mathbb{Q}_{p}} \ra \phantom{}^{L}T(\ol{\mathbb{Q}}_{\ell})$ is of the form $\ol{\phi}_{T} \otimes \ol{\mathbb{Q}}_{\ell}$, where $\ol{\phi}_{T}: W_{\mathbb{Q}_{p}} \ra \phantom{}^{L}T(\ol{\mathbb{Z}}_{\ell})$ is a parameter with generic regular mod $\ell$-reduction. Assume \ref{compatibility}. Then we consider the sheaf $\nmEis(\mathcal{S}_{\ol{\phi}_{T}}) \in \Dlis(\Bun_{G},\ol{\mathbb{Z}}_{\ell})$ constructed above, and define
\[ \nmEis(\mathcal{S}_{\phi_{T}}) := \nmEis(\mathcal{S}_{\ol{\phi_{T}}})[\frac{1}{\ell}] \in \Dlis(\Bun_{G},\ol{\mathbb{Q}}_{\ell}) \]
by taking the colimit over the multiplication by $\ell$ maps. Now with the candidate eigensheaf defined, we begin the proof of the eigensheaf property. To capture the necessary integrality conditions, we define the following. 
\begin{definition}
For $\Lambda \in \{\ol{\mathbb{F}}_{\ell},\ol{\mathbb{Z}}_{\ell},\ol{\mathbb{Q}}_{\ell}\}$ with the discrete topology and a continuous toral parameter $\phi_{T}: W_{\mathbb{Q}_{p}} \ra \phantom{}^{L}T(\Lambda)$ we say that $\phi_{T}$ is integral if it admits a mod $\ell$-reduction. In particular, if $\Lambda = \ol{\mathbb{Q}}_{\ell}$, we assume it is of the form $\ol{\phi}_{T} \otimes_{\ol{\mathbb{Z}}_{\ell}} \ol{\mathbb{Q}}_{\ell}$ for some continuous parameter $\ol{\phi}_{T}: W_{\mathbb{Q}_{p}} \ra \phantom{}^{L}T(\ol{\mathbb{Z}}_{\ell})$.  
\end{definition}
\subsection{The Hecke Eigensheaf Property}
We start with the following Theorem.
\begin{theorem}{\label{Eiseigsheaf}}
For $\Lambda \in \{\ol{\mathbb{F}}_{\ell},\ol{\mathbb{Z}}_{\ell},\ol{\mathbb{Q}}_{\ell}\}$ with the discrete topology, we consider $\phi_{T}: W_{\mathbb{Q}_{p}} \ra \phantom{}^{L}T(\Lambda)$ an integral parameter with generic regular mod $\ell$ reduction. There then exists a perverse sheaf $\nmEis(\mathcal{S}_{\phi_{T}}) \in \Dlis(\Bun_{G},\Lambda)$ which is a filtered eigensheaf with eigenvalue $\phi$ as in Corollary \ref{appliedfilteigsheaf}. If $V$ is a direct sum of $\boxtimes_{i \in I} \mathcal{T}_{\mu_{i}^{\Gamma}}$ for geometric dominant cocharacters $\mu_{i}$, and $\phi_{T}$ is strongly $\mu_{i}$-regular (Definition \ref{def: strongmureg}), the filtration on $T_{V}(\nmEis(\mathcal{S}_{\phi_{T}})$ splits uniquely, and we have a natural isomorphism
\[ T_{V}(\nmEis(\mathcal{S}_{\phi_{T}})) \simeq \nmEis(\mathcal{S}_{\phi_{T}}) \boxtimes r_{V} \circ \phi \]
of sheaves in $\Dlis(\Bun_{G})^{BW_{\mathbb{Q}_{p}}^{I}}$. In particular, if $\phi_{T}$ is strongly $\mu$-regular for all geometric dominant cocharacters $\mu$ then $\nmEis(\mathcal{S}_{\phi_{T}})$ is a tilting eigensheaf. For $b \in B(G)$, the stalk $\nmEis(\mathcal{S}_{\phi_{T}})|_{\Bun_{G}^{b}} \in \D(\Bun_{G}^{b}) \simeq \D(J_{b}(\mathbb{Q}_{p}),\Lambda)$ is given by
\begin{enumerate}
\item an isomorphism $\nmEis(\mathcal{S}_{\phi_{T}})|_{\Bun_{G}^{b}} \simeq \bigoplus_{w \in W_{b}} i_{B_{b}}^{J_{b}}(\chi^{w}) \otimes \delta_{P_{b}}^{-1/2}[-\langle 2\hat{\rho},\nu_{b} \rangle]$ if $b \in B(G)_{\mathrm{un}}$,
\item an isomorphism $\nmEis(\mathcal{S}_{\phi_{T}})|_{\Bun_{G}^{b}} \simeq 0$ if $b \notin B(G)_{\mathrm{un}}$.  
\end{enumerate}
Moreover, if $\mathbb{D}_{\Bun_{G}}$ denotes Verdier duality on $\Bun_{G}$, we have an isomorphism
\[ \mathbb{D}_{\Bun_{G}}(\nmEis(\mathcal{S}_{\phi_{T}})) \simeq \nmEis(\mathcal{S}_{\phi_{T}^{\vee}}) \]
of sheaves in $\Dlis(\Bun_{G},\Lambda)$.
\end{theorem}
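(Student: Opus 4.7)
The plan is to bolt the theorem together from the previously established ingredients and then isolate the genuinely new content, namely the splitting of the filtration under $\mu$-regularity. For $\Lambda = \ol{\mathbb{F}}_{\ell}$ essentially everything is in hand: the candidate sheaf is $\nmEis(\mathcal{S}_{\phi_{T}}) \in \D(\Bun_{G}) \simeq \Dlis(\Bun_{G},\ol{\mathbb{F}}_{\ell})$; the filtered eigensheaf property is Corollary \ref{appliedfilteigsheaf}; the stalk description (and thus the perversity with respect to the HN $t$-structure) is Corollary \ref{normstalksdescription}; and the Verdier duality statement is Theorem \ref{commverddual} combined with the identification $\mathbb{D}_{\Bun_{T}}(\mathcal{S}_{\phi_{T}}) \simeq \mathcal{S}_{\phi_{T}^{\vee}}$ recorded there. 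To pass to $\ol{\mathbb{Z}}_{\ell}$ and $\ol{\mathbb{Q}}_{\ell}$, I would use that the stalk description forces $\nmEis(\mathcal{S}_{\phi_{T}})$ to be ULA over $\ast$ (Lemma \ref{EisULA}). For $\Lambda = \ol{\mathbb{Z}}_{\ell}$, form the inverse limit $\widehat{\nmEis(\mathcal{S}_{\phi_{T}})}$ of the Eisenstein sheaves attached to the mod-$\ell^{n}$ reductions of $\phi_{T}$, transport it along the equivalence $\D^{\mathrm{ULA}}_{\text{\'et}}(\Bun_{G},\ol{\mathbb{Z}}_{\ell}) \simeq \Dlis^{\mathrm{ULA}}(\Bun_{G},\ol{\mathbb{Z}}_{\ell})$ of Lemma (as proven in the preceding discussion), and invert $\ell$ to obtain the $\ol{\mathbb{Q}}_{\ell}$-version. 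The filtered eigensheaf property, Verdier duality, and stalk description all transport along these embeddings because Hecke operators, Verdier duality, and excision are all compatible with the embedding into $\D_{\blacksquare}(\Bun_{G},\Lambda)$, the formation of inverse limits of ULA objects, and the inversion of $\ell$.

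The substantive new content is the splitting of the filtration on $T_{V}(\nmEis(\mathcal{S}_{\phi_{T}}))$ for $V = \boxtimes_{i \in I} \mathcal{T}_{\mu_{i}^{\Gamma}}$. By Corollary \ref{appliedfilteigsheaf}, the graded pieces are indexed by $(\nu_{i})_{i \in I} \in (\gamorb)^{I}$ with Weil-group action via $\boxtimes_{i \in I}(\nu_{i} \circ \phi_{T})$. To split this filtration I would argue that the extension between two consecutive graded pieces indexed by $(\nu_{i})$ and $(\nu_{i}')$ is governed by an $\mathrm{Ext}^{1}$-group on $\Bun_{G}$ equipped with its continuous $W_{\mathbb{Q}_{p}}^{I}$-action, which, by $W_{\mathbb{Q}_{p}}^{I}$-equivariance of the filtration and the fact that passage to continuous $W_{\mathbb{Q}_{p}}^{I}$-invariants is computed by Galois cohomology, is controlled by $H^{1}(W_{\mathbb{Q}_{p}}^{I}, \boxtimes_{i \in I}((\nu_{i}-\nu_{i}')^{\Gamma} \circ \phi_{T}))$. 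K\"unneth reduces this to a tensor product over $i \in I$ of the single-variable complexes $R\Gamma(W_{\mathbb{Q}_{p}}, (\nu_{i}-\nu_{i}')^{\Gamma} \circ \phi_{T})$, which vanish under strong $\mu_{i}$-regularity (by definition). This yields both splitting and its uniqueness (from vanishing of $H^{0} = \mathrm{Hom}$ between adjacent graded pieces), proving the ``tilting eigensheaf'' half of the claim.

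For the $\mu_{i}$-regular (not strong) case, I would invoke the definition: write $\mu_{i} = \sum_{k} n_{ik}\mu_{ik}$ with $\mu_{ik}$ minuscule or quasi-minuscule, and realize $\mathcal{T}_{\mu_{i}^{\Gamma}}$ as a direct summand of $\bigotimes_{k} \mathcal{T}_{\mu_{ik}^{\Gamma}}^{\otimes n_{ik}}$, which stays in $\Tilt_{\Lambda}(\phantom{}^{L}G)$ because $\ell$ is very good and by Theorem \ref{tensortilting}. The compatibility of the filtration with exterior tensor products and compositions (part of Corollary \ref{appliedfilteigsheaf}) lets me combine the splittings in the minuscule/quasi-minuscule cases — which are automatic because Lemma \ref{minusculeweighttheory} shows strong $\mu_{ik}$-regularity already follows from Condition \ref{normregcond}(2) and (4) (hence from $\mu_{ik}$-regularity in this setting) — into a splitting on the tensor product; projecting via the direct summand decomposition of tilting modules then yields the desired splitting on $T_{V}(\nmEis(\mathcal{S}_{\phi_{T}}))$. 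Without strong $\mu_{i}$-regularity we cannot pin down the splitting uniquely, which is exactly why the theorem only asserts the weaker ``weak tilting eigensheaf'' conclusion in this case (see also Remark \ref{uniquesplitting}).

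The main technical obstacle is the bookkeeping in passing from $\ol{\mathbb{F}}_{\ell}$ to $\ol{\mathbb{Z}}_{\ell}$ and $\ol{\mathbb{Q}}_{\ell}$: one must verify that the inverse limit and localization interact well with the Hecke operators, with the excision-based filtration, and with Verdier duality, and that the resulting object indeed lies in $\Dlis$ rather than just in the larger solid category. This is where the ULA hypothesis does all of the heavy lifting, and where careful use of \cite[Chapter~VII]{FS} and the argument sketched in the lemma identifying $\Dlis^{\mathrm{ULA}}$ with $\D_{\text{\'et}}^{\mathrm{ULA}}$ is essential. A secondary subtlety is that the splitting produced by the reduction to minuscule/quasi-minuscule factors genuinely can fail to be natural in $V$ and $I$, which is why the theorem is forced to introduce the ``weak'' variant.
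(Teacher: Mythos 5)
Your proposal follows essentially the same route as the paper: pull the existence, stalks, perversity, and Verdier duality from the preceding corollaries for $\Lambda = \ol{\mathbb{F}}_{\ell}$, transport to $\ol{\mathbb{Z}}_{\ell}$ and $\ol{\mathbb{Q}}_{\ell}$ via the ULA formalism, kill the extension classes with the vanishing of the Galois cohomology $H^{1}$ (and the $H^{0}$ for uniqueness) under strong $\mu$-regularity, and reduce the mere $\mu$-regular case to the minuscule/quasi-minuscule generators by realizing $\mathcal{T}_{\mu^{\Gamma}}$ as a direct summand of a tensor product of tilting modules and invoking the fusion compatibilities of the filtration. One small misstep: the strong $\mu_{ik}$-regularity of the generators is not a consequence of Lemma \ref{minusculeweighttheory} together with Condition \ref{normregcond}(2)--(4) (that chain underlies Lemma \ref{normregimpliesmureg}, which derives $\mu$-regularity from normalized regularity); it is simply built into Definition \ref{defmureg}(3), so the correct justification is ``by definition of $\mu_i$-regularity,'' which is what the paper uses.
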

\begin{proof}
The existence of $\nmEis(\mathcal{S}_{\phi_{T}})$ and the transfer of the filtered Hecke eigensheaf property to this sheaf was discussed in the previous section. The claim on Verdier duality follows from Theorem \ref{commverddual}, and the discussion in \cite[Section~VII.5]{FS}. The description of the stalks follows from the construction and Corollary \ref{normstalksdescription}. It remains to show that the filtration on $T_{V}(\nmEis(\mathcal{S}_{\phi_{T}}))$ splits uniquely for $\boxtimes_{i \in I} \mathcal{T}_{\mu_{i}^{\Gamma}} = V \in \Rep_{\Lambda}(\phantom{}^{L}G)$ such that $\phi_{T}$ is strongly $\mu_{i}$-regular for all $i \in I$. To do this, we note that an extension between the graded pieces of the filtration on $T_{V}(\nmEis(\mathcal{S}_{\phi_{T}}))$ is specified by a cohomology class in the $H^{1}$ of $R\Gamma(W_{\mathbb{Q}_{p}}^{I}, \boxtimes_{i \in I} (\nu_{i} - \nu'_{i})^{\Gamma} \circ \phi_{T}) \simeq \bigotimes^{\mathbb{L}}_{i \in I} R\Gamma(W_{\mathbb{Q}_{p}}, (\nu_{i} - \nu'_{i})^{\Gamma} \circ \phi_{T})$ for $\nu_{i}$ and $\nu'_{i}$ defining two distinct $\Gamma$-orbits of weights of the representation $\mathcal{T}_{\mu}$ in $\hat{T}$ for all $i \in I$. In particular, we note if $\phi_{T}$ is strongly $\mu_{i}$-regular then the $H^{1}$ vanishes and the filtration splits, and similarly the $H^{0}$ vanishes so the splitting is unique. In particular, since we know the filtration satisfies all the desired compatibilities for the eigensheaf property if we know strong $\mu$-regularity for all $\mu$, it follows from the splitting being unique that $\nmEis(\mathcal{S}_{\phi_{T}})$ is a tilting eigensheaf. 
\end{proof}
We now fix an integral parameter $\phi_{T}$ with generic regular mod $\ell$-reduction and consider the sheaf $\nmEis(\mathcal{S}_{\phi_{T}}) \in \Dlis(\Bun_{G},\Lambda)$ supplied by the previous theorem. We come to the following definition. 
\begin{definition}{\label{def: muregularity}}
Given a tuple of geometric dominant cocharacters $(\mu_{i})_{i \in I} \in (\domcochar)^{I}$ for all $i \in I$, we say that $\phi_{T}: W_{\mathbb{Q}_{p}} \ra \phantom{}^{L}T(\Lambda)$ is $(\mu_{i})_{i \in I}$-regular if the filtration on 
\[ T_{V}(\nmEis(\mathcal{S}_{\phi_{T}})) \]
splits (but not necessarily uniquely) for the tilting module $V = \boxtimes_{i \in I} \mathcal{T}_{\mu_{i}^{\Gamma}} \in \Tilt_{\Lambda}(\phantom{}^{L}G^{I})$. 
\end{definition}
As seen in the proof of Theorem \ref{Eiseigsheaf}, if the $\mu_{i}$ are such that $\phi_{T}$ is strongly $\mu_{i}$-regular then it follows that $\phi_{T}$ is $(\mu_{i})_{i \in I}$-regular. For certain $\mu_{i}$, strong $\mu_{i}$-regularity will be implied by genericity, and the following Proposition allows us to deduce that the filtration splits in more cases.
\begin{proposition}{\label{prop: muregdirectsum}}
Suppose that $(\mu_{1i})_{i \in I}, (\mu_{2i})_{i \in I} \in (\domcochar)^{I}$ are tuples of characters such that $\phi_{T}$ is $(\mu_{1i})_{i \in I}$ and $(\mu_{2i})_{i \in I}$-regular. Then if $V \simeq \boxtimes_{i \in I} \mathcal{T}_{\mu_{i}^{\Gamma}} \in \Tilt_{\Lambda}(\phantom{}^{L}G^{I})$ occurs as a direct summand of the tensor product $\boxtimes_{i \in I} \mathcal{T}_{\mu_{1i}^{\Gamma}} \otimes \mathcal{T}_{\mu_{2i}^{\Gamma}} \in \Tilt(\phantom{}^{L}G^{I})$ it follows that $\phi_{T}$ is $(\mu_{i})_{i \in I}$-regular. 
\end{proposition}
\begin{proof}
It suffices to show that, given $(\mu_{1i})_{i \in I},(\mu_{2i})_{i \in I} \in (\mathbb{X}_{*}(T_{\ol{\mathbb{Q}}_{p}})^{+})^{I}$ with $V_{1} := \boxtimes_{i \in I} \mathcal{T}_{\mu_{1i}^{\Gamma}}$ and $V_{2} := \boxtimes_{i \in I} \mathcal{T}_{\mu_{2i}^{\Gamma}}$ such that we know the filtration on $T_{V_{1}}(\nmEis(\mathcal{S}_{\phi_{T}}))$ and $T_{V_{2}}(\nmEis(\mathcal{S}_{\phi_{T}}))$ splits, the same is true for the filtration on $T_{V}(\nmEis(\mathcal{S}_{\phi_{T}}))$, where $V \in \Tilt_{\Lambda}(\phantom{}^{L}G^{I})$ is a direct summand of $V_{1} \otimes V_{2}$. To do this, we can use the isomorphism 
\[ T_{V_{1}}T_{V_{2}}(\nmEis(\mathcal{S}_{\phi_{T}}))|_{\triangle} \simeq T_{V_{1} \otimes V_{2}}(\nmEis(\mathcal{S}_{\phi_{T}})) \]
coming from the fusion product, where $\triangle: \Dlis(\Bun_{G},\Lambda)^{BW_{\mathbb{Q}_{p}}^{I \sqcup I}} \ra \Dlis(\Bun_{G},\Lambda)^{BW_{\mathbb{Q}_{p}}^{I}}$ is the natural map given by diagonal restriction. By assumption, the diagonal restriction of the filtration on the LHS splits. Moreover, by the compatibilities of the filtration, we know that the filtration on the LHS refines the filtration on the RHS. In particular, we deduce that the filtration on $T_{V_{1} \otimes V_{2}}(\nmEis(\mathcal{S}_{\phi_{T}}))$ has a splitting, and since $V$ is a direct summand of $V_{1} \otimes V_{2}$, we know that the filtration also splits on $T_{V}(\nmEis(\mathcal{S}_{\phi_{T}}))$.  
\end{proof}
In particular, by considerations of highest weight, the tilting module $ \boxtimes_{i \in I} \mathcal{T}_{(\mu_{1i} + \mu_{2i})^{\Gamma}}$ is always a direct summand of $\boxtimes_{i \in I} \mathcal{T}_{\mu_{1i}^{\Gamma}} \otimes \mathcal{T}_{\mu_{2i}^{\Gamma}}$. Using this, we deduce the following Corollary.
\begin{corollary}{\label{cor: muregadd}}
If $\phi_{T}$ is $(\mu_{1i})_{i \in I}$ and $(\mu_{2i})_{i \in I}$-regular then it is also $(\mu_{1i} + \mu_{2i})_{i \in I}$-regular. 
\end{corollary}
This motivates the following definition. 
\begin{definition}{\label{def: normreg}}
We say a parameter $\phi_{T}$ is normalized regular if it is integral with generic regular mod $\ell$ reduction and if there exists a set of of elements $\mu_{k}$ for $k = 1,\ldots,n$ such that:
\begin{enumerate}
    \item The $\mu_{k}$ generate $\domcochar$ in the sense that any $\mu = \sum_{k = 1}^{n} n_{k}\mu_{k}$ as elements in $\domcochar$ for some $n_{k} \in \mathbb{Z}$. 
    \item $\phi_{T}$ is strongly $\mu_{k}$-regular, for all $k = 1,\ldots,n$.
\end{enumerate}
\end{definition}
This allows us to state the following corollary.
\begin{corollary}{\label{cor: muregopencond}}
Suppose that $\phi_{T}$ is normalized regular then it is $(\mu_{i})_{i \in I}$-regular for all finite index sets $I$ and $\mu_{i} \in \domcochar$. In particular, the sheaf $\nmEis(\mathcal{S}_{\phi_{T}})$ is a (weak) tilting eigensheaf with eigenvalue $\phi$ given by the composite of $\phi_{T}$ with the natural map $\phantom{}^{L}T \ra \phantom{}^{L}G$. 
\end{corollary}
\begin{proof}
We recall that $\nmEis(\mathcal{S}_{\phi_{T}})$ being a (weak) tilting eigensheaf means that, for all $V \in \Tilt_{\Lambda}(\phantom{}^{L}G^{I})$, we know that we have an isomorphism 
\[ T_{V}(\nmEis(\mathcal{S}_{\phi_{T}})) \simeq \nmEis(\mathcal{S}_{\phi_{T}}) \boxtimes r_{V} \circ \phi \]
but we do not know that these isomorphisms satisfy the desired compatibilities. To show this, it suffices to show the filtration on $T_{V}(\nmEis(\mathcal{S}_{\phi_{T}}))$ just splits (not necesarily uniquely) for all highest weight tilting modules $V = \boxtimes_{i \in I} \mathcal{T}_{\mu_{i}^{\Gamma}}$ corresponding to the $\Gamma$-orbits of varying $(\mu_{i})_{i \in I} \in (\domcochar)^{I}$. Since $\phi_{T}$ is strongly $\mu_{k}$-regular for the $k =1,\ldots,n$ appearing in the definition of normalized regularity, it follows, as in the proof of Theorem \ref{Eiseigsheaf}, that the filtration on $T_{V}(\nmEis(\mathcal{S}_{\phi_{T}}))$ splits for any tuple of the generating cocharacters $(\mu_{i})_{i \in I}$ such that $\mu_{i} = \mu_{k}$ for all $i \in I$ and varying $k = 1,\ldots,n$ appearing in Definition \ref{def: normreg}. However, since every tuple of cocharacters $\mu_{i}$, can be written as a linear combination of $\mu_{k}$ for $k = 1,\ldots,n$, this follows from Corollary \ref{cor: muregadd}.  
\end{proof}
By choosing $\mu_{k}$ for $k = 1,\ldots,n$ to be, for example a set of minuscule/quasi-minuscule generators of $\domcochar$, this allows to see that $\mu$-regularity for all $\mu$ is actually an open condition on the variety of unramified twists of a fixed $\phi_{T}$. This will be sufficient for most applications of the Eisenstein series to describing the cohomology of local shtukas and Shimura varieties in the Grothendieck group. However, for more refined applications of our results it becomes important to spell out the precise assumptions needed to deduce $\mu$-regularity for all $\mu$. Ideally, it should be implied by generic regularity for all $\mu$, as is suggested by Conjecture \ref{conj: idealworld}. For $G = \GL_{n}$, we show how to verify this from the results proven above.
\begin{corollary}{\label{cor: GLnisideal}}
For $G = \GL_{n}$, if $\phi_{T}$ is an integral generic $L$-parameter then the sheaf $\nmEis(\mathcal{S}_{\phi_{T}})$ is a (weak) tilting eigensheaf. 
\end{corollary}
\begin{proof}
We give the proof for $\Lambda = \ol{\mathbb{F}}_{\ell}$, with the other cases being strictly easier. It is easy to see that generic implies regular, so that if $\phi_{T}$ is generic then it is automatically generic regular. It suffices to show for all tuples $(\mu_{i})_{i \in I} \in (\domcochar)^{I}$ that $\phi_{T}$ is $(\mu_{i})_{i \in I}$ regular. By Lemma \ref{lemma: stronguregexamples}, we know that if $\phi_{T}$ is generic then it is strongly $\mu$-regular for the cocharacter $\mu = (1,0,\ldots,0)$ corresponding to the standard representation $V_{\mathrm{std}}$ of $\hat{G} \simeq \GL_{n}$. By Proposition \ref{cor: muregadd}, it suffices to show that if $\phi_{T}$ is generic then $\mu$-regularity holds for the fundamental coweights $\mu = \omega_{i} = (1^{i},0^{n - i})$ for $i = 1,\ldots,n$. We recall that the $\omega_{i}$ are minuscule in this case and therefore it follows by Lemma \ref{mintilting} that the representation $\Lambda^{i}(V_{\mathrm{std}}) \simeq V_{\omega_{i}}$ is irreducible and thereby equal to the tilting module $\mathcal{T}_{\omega_{i}}$. However, now the result follows from Proposition \ref{prop: muregdirectsum} and the fact that $\Lambda^{i}(V_{\mathrm{std}})$ is a direct summand of $V_{\mathrm{std}}^{\otimes n}$. The fact that assumption \ref{compatibility} is satisfied follows from the compatibility of Fargues-Scholze with Harris-Taylor \cite[Theorem~I.9.6 (ix)]{FS} and Jacquet Langlands \cite[Theorem~I.0.3]{KW} together with other standard properties of the Fargues-Scholze correspondence.
\end{proof}
With our main results in hand, we can move into applications. 
\section{Applications}
Now we will deduce some applications to the cohomology of local shtuka spaces. In \S 9.1, we will use our eigensheaf to derive an analogue of an averaging formula of Shin for the cohomology of local shtuka spaces. In \S 9.2 and \S 9.3, we will discuss a refined version of this averaging formula, and use it to derive a very explicit formula for the isotypic parts of local shtuka spaces with respect to parabolic inductions of characters coming from normalized regular parameters. By combining this with a shtuka analogue of Boyer's trick, we will show that this gives rise to a geometric construction of intertwining operators, and recovers a result analogous to a result of Xiao and Zhu \cite{XZ} on the irreducible components of affine Deligne-Lusztig varieties, but on the generic fiber. 
\\\\
Let's first recall the key definitions.  We say a local shtuka datum is a triple $(G,b,\mu)$ for $\mu$ a geometric dominant cocharacter of $G$ and $b \in B(G,\mu)$ an element of the $\mu$-admissible locus of the Kottwtiz set of $G$ (Definition \ref{bgmudefinition}). We let $E$ be the reflex field of $\mu$. The triple $(G,b,\mu)$ defines a diamond
\[ \Sht(G,b,\mu)_{\infty} \rightarrow \Spd(\Breve{E}) \]
parameterizing modifications $\mathcal{F}_{b} \rightarrow \mathcal{F}_{G}^{0}$ with meromorphy bounded by $\mu$ on $X$\footnote{Note that this is the space denoted $\Sht(G,b,\mu^{-1})$ in \cite{SW}. We find that our convention simplifies certain formulae.}. It carries an action of $G(\mathbb{Q}_{p}) \times J_{b}(\mathbb{Q}_{p})$ and a (non-effective) descent datum from $\Breve{E}$ down to $E$. This allows us to consider the tower of quotients
\[ \Sht(G,b,\mu)_{\infty}/\underline{K} =: \Sht(G,b,\mu)_{K} \]
for varying open compact subgroups $K \subset G(\mathbb{Q}_{p})$. We write $\mathcal{S}_{\mu}$ for the $\Lambda$-valued sheaf attached to the highest weight tilting module $\mathcal{T}_{\mu}$ of $W_{E} \ltimes \hat{G}$ as in the previous section. This is given by pulling back the sheaf on $\Hck_{G,E}$ defined by $\mathcal{T}_{\mu}$ and Theorem \ref{geomsatake} along the natural map $\Sht(G,b,\mu) \ra \Hck_{G,E}$. In particular, the sheaf $\mathcal{S}_{\mu}$ is equivariant with respect to the actions of $G(\mathbb{Q}_{p})$ and $J_{b}(\mathbb{Q}_{p})$ by construction. Letting $\Sht(G,b,\mu)_{K,\mathbb{C}_{p}}$ denote the base-change of these spaces to $\mathbb{C}_{p}$, we can now define the complex
\[ R\Gamma_{c}(G,b,\mu) := \colim_{K \rightarrow \{1\}} R\Gamma_{c}(\Sht(G,b,\mu)_{K,\mathbb{C}_{p}},\mathcal{S}_{\mu}) \]
of $G(\mathbb{Q}_{p}) \times J_{b}(\mathbb{Q}_{p}) \times W_{E}$-modules. We now want to disentangle the $G(\mathbb{Q}_{p})$ and $J_{b}(\mathbb{Q}_{p})$ action. To do this, for $\pi$ (resp. $\rho$) a smooth irreducible representation of $G(\mathbb{Q}_{p})$ (resp. $J_{b}(\mathbb{Q}_{p})$) on $\Lambda$-modules, we define the $\pi$ (resp. $\rho$)-isotypic part. I.e the complexes
\[ R\Gamma_{c}(G,b,\mu)[\pi] := R\Gamma_{c}(G,b,\mu) \otimes_{\mathcal{H}(G)} \pi \]
and
\[ R\Gamma_{c}(G,b,\mu)[\rho] := R\Gamma_{c}(G,b,\mu) \otimes_{\mathcal{H}(J_{b})} \rho \]
where $\mathcal{H}(G) := C^{\infty}_{c}(G(\mathbb{Q}_{p}),\Lambda)$ (resp. $\mathcal{H}(J_{b})$) is the usual smooth Hecke algebra of $G$ (resp. $J_{b}$). We recall that $\Sht(G,b,\mu)_{\infty}$ has dimension equal to $\langle 2\hat{\rho}, \mu \rangle =: d$. The complexes $R\Gamma_{c}(G,b,\mu)[\pi]$ (resp. $R\Gamma_{c}(G,b,\mu)[\rho]$) are concentrated in degrees $-d \leq i \leq d$ and are valued in admissible $J_{b}(\mathbb{Q}_{p})$ (resp. $G(\mathbb{Q}_{p})$) representations, which are moreover of finite length with $\ol{\mathbb{Q}}_{\ell}$-coefficients \cite[Page~317]{FS}. Similarly, we define the complexes 
\[ R\Gamma_{c}^{\flat}(G,b,\mu)[\pi] := R\mathcal{H}om(R\Gamma_{c}(G,b,\mu),\pi) \]
and 
\[ R\Gamma_{c}^{\flat}(G,b,\mu)[\rho] := R\mathcal{H}om(R\Gamma_{c}(G,b,\mu),\rho). \]
We will make regular use of the relationship between these complexes and Hecke operators. For simplicity, we will mostly stick to the isotypic parts $R\Gamma^{\flat}_{c}(G,b,\mu)[\rho]$, as it removes various annoying twists and shifts\footnote{We thank Naoki Imai for drawing our attention to this.}; however, one can easily translate to the isotypic parts $R\Gamma^{\flat}_{c}(G,b,\mu)[\rho]$ in the cases we consider, using Corollary \ref{rhoduality}. We write 
\[ T_{\mu}: \Dlis(\Bun_{G},\Lambda) \ra \Dlis(\Bun_{G},\Lambda)^{BW_{E}} \]
for the Hecke operator defined by the representation $\mathcal{T}_{\mu}$. For $b \in B(G)$, consider the natural map 
\[ f: \mathcal{J}_{b}^{> 0} \ra \ast, \]
where $\Aut(\mathcal{E}_{b}) \simeq J_{b}(\mathbb{Q}_{p}) \ltimes \mathcal{J}_{b}^{> 0}$, and the semidirect product structures is given by allowing $\Aut(\mathcal{E}_{b})$ to act on the canonical reduction of $\mathcal{E}_{b}$. Since $f$ is an iterated fibration of Banach-Colmez spaces and $\mathcal{J}_{b}^{> 0}$ has an action of $J_{b}(\mathbb{Q}_{p})$ on the right coming from the canonical reduction of $\mathcal{E}_{b}$.  We have an isomorphism
\[ f_{!}(\Lambda) \simeq \kappa[-2\langle 2\hat{\rho}_{G},\nu_{b} \rangle] \]
for some character $\kappa: J_{b}(\mathbb{Q}_{p}) \ra \Lambda^{*}$ (cf. \cite[Lemma~4.18]{IG}). We have the following relationship between isotypic parts of shtuka spaces and Hecke operators. 
\begin{lemma}{\label{shimhecke}}{\cite[Section~IX.3]{FS}}
Given a local shtuka datum $(G,b,\mu)$ as above and $\pi$ (resp. $\rho$) a smooth representation of $G(\mathbb{Q}_{p})$ (resp. $J_{b}(\mathbb{Q}_{p})$) on $\Lambda$-modules, we can consider the associated sheaves $\rho \in \D(J_{b}(\mathbb{Q}_{p}),\Lambda) \simeq \Dlis(\Bun_{G}^{b},\Lambda)$ and $\pi \in \D(G(\mathbb{Q}_{p}),\Lambda) \simeq \Dlis(\Bun_{G}^{\mathbf{1}},\Lambda)$ on the HN-strata $j_{b}: \Bun_{G}^{b} \hookrightarrow \Bun_{G}$ and $j_{\mathbf{1}}: \Bun_{G}^{\mathbf{1}} \hookrightarrow \Bun_{G}$, respectively. There then exists an isomorphism
\[ R\Gamma_{c}(G,b,\mu)[\rho \otimes \kappa^{-1}][2\langle 2\hat{\rho}_{G}, \nu_{b} \rangle]  \simeq  j_{\mathbf{1}}^{*}T_{\mu}j_{b!}(\rho)\]
of complexes of $G(\mathbb{Q}_{p}) \times W_{E}$-modules and an isomorphism
\[ R\Gamma_{c}(G,b,\mu)[\pi] \simeq j_{b}^{*}T_{\mu^{-1}}j_{\mathbf{1}!}(\pi) \]
of complexes of $J_{b}(\mathbb{Q}_{p}) \times W_{E}$-modules, where $\mu^{-1} := -w_{0}(\mu)$ is a dominant inverse of $\mu$. Similarly, we have an isomorphism
\[ R\Gamma_{c}^{\flat}(G,b,\mu)[\rho] \simeq j_{\mathbf{1}}^{*}T_{\mu}j_{b*}(\rho)  \]
of complexes of $G(\mathbb{Q}_{p}) \times W_{E}$-modules, and an isomorphism
\[ R\Gamma_{c}^{\flat}(G,b,\mu)[\pi][-2\langle 2\rho_{G}, \nu_{b} \rangle]  \simeq  j_{b}^{!}T_{\mu^{-1}}j_{\mathbf{1}*}(\pi) \otimes \kappa^{-1} \]
of complexes of $J_{b}(\mathbb{Q}_{p}) \times W_{E}$-modules.
\end{lemma}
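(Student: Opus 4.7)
The plan is to relate $j_\mathbf{1}^* T_\mu j_{b!}(\rho)$ directly to the cohomology of the shtuka space via two successive applications of base change, following the template of \cite[Section~IX.3]{FS}.

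First I would set up the relevant Cartesian diagrams. Form the pullback of $\Hck_{G,\leq \mu,E}$ along the locally closed immersion $j_\mathbf{1} \times j_b : \Bun_G^{\mathbf{1}} \times \Bun_G^b \hookrightarrow \Bun_G \times \Bun_G$ via $(h^\la, h^\ra)$, and call the result $Z \to \Div^1_E$. By unwinding definitions, $Z$ parametrizes data of a modification between the trivial bundle $\mathcal{F}_G^0$ and $\mathcal{F}_b$ over a Cartier divisor $D$, with meromorphy bounded by $\mu$; after tracking conventions, this is canonically a torsor (equivariant for $G(\mathbb{Q}_p) \times J_b(\mathbb{Q}_p) \times W_E$) over $\Sht(G,b,\mu)_\infty$ fibered over $\Div^1_E$.

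Next, I would compute $j_\mathbf{1}^* T_\mu j_{b!}(\rho)$ via proper base change applied twice: once to pull $j_{b!}$ past $(h^\ra)^*$, and once to pull $j_\mathbf{1}^*$ past the pushforward $(h^\la \times \pi)_\natural$. Both applications are valid because $\mathcal{S}_\mu$ is ULA over $\Div^1_E$, so that the $\natural$-pushforward of any sheaf tensored with $\mathcal{S}_\mu$ agrees with the usual compactly supported pushforward in the lisse-\'etale category, and because the relevant pieces of the Hecke correspondence are representable in nice diamonds. Combining these gives the identification
\[
j_\mathbf{1}^* T_\mu j_{b!}(\rho) \simeq (h^\la_Z \times \pi_Z)_!\bigl( (h^\ra_Z)^* \rho \otimes \mathcal{S}_\mu|_Z \bigr),
\]
where the maps subscripted by $Z$ are the restrictions of the Hecke correspondence maps to $Z$.

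The final step is to interpret the right-hand side as shtuka cohomology. Using $\Bun_G^{\mathbf{1}} \simeq [\ast/\ul{G(\mathbb{Q}_p)}]$ and $\Bun_G^b \simeq [\ast/\ul{J_b(\mathbb{Q}_p)}]$, and passing to the colimit over open pro-$p$ subgroups $K \subset G(\mathbb{Q}_p)$, the above pushforward computes
\[
\colim_K R\Gamma_c\bigl(\Sht(G,b,\mu)_{K,\mathbb{C}_p}, \mathcal{S}_\mu \otimes \rho\bigr) \simeq R\Gamma_c(G,b,\mu) \otimes_{\mathcal{H}(J_b)} \rho = R\Gamma_c(G,b,\mu)[\rho],
\]
where the $W_E$-action arises by the version of Drinfeld's lemma used in \cite[Chapter~IX]{FS} applied to local systems on $\Div^1_E$. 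The second isomorphism follows by the symmetric argument with the roles of source and target swapped: inversion of $\beta$ swaps meromorphy $\mu$ with $\mu^{-1}$, which is why $T_{\mu^{-1}}$ appears rather than $T_\mu$. The two $R\Gamma_c^\flat$-isomorphisms are obtained by replacing $j_{?!}$ by $j_{?*}$; by adjunction, the resulting pushforward computes $R\mathcal{H}om$ against $R\Gamma_c(G,b,\mu)$, which by definition is $R\Gamma_c^\flat(G,b,\mu)[-]$.

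The main obstacle will be the careful handling of the base change for the $\natural$-pushforward in the solid formalism: one has to verify that the base change squares used are genuinely Cartesian in the appropriate category of Artin $v$-stacks and that the ULA property of $\mathcal{S}_\mu$ indeed allows the identification of the $\natural$- with the $!$-pushforward. This is essentially the content of the analogous statement in \cite[Section~IX.3]{FS}. A subtler bookkeeping issue is aligning the meromorphy conventions so that $\mu$ (rather than $\mu^{-1}$) appears in the first formula, which is the reason for the convention chosen in the definition of $\Sht(G,b,\mu)_\infty$ noted in the footnote.
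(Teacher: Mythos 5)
Your proposal is correct and follows essentially the same route as the cited reference \cite[Section~IX.3]{FS}, which is all the paper offers by way of proof: the lemma is stated with a citation and no independent argument, so your sketch is filling in the intended Fargues--Scholze derivation by base change across the Hecke correspondence. One small imprecision to note: $\Bun_G^b$ is $[\ast/\mathcal{J}_b]$ for the automorphism group diamond $\mathcal{J}_b$, not $[\ast/\ul{J_b(\mathbb{Q}_p)}]$ unless $b$ is basic; the correct statement is the equivalence of derived categories $\Dlis(\Bun_G^b,\Lambda)\simeq \D(J_b(\mathbb{Q}_p),\Lambda)$ via pullback along $p:[\ast/\mathcal{J}_b]\to[\ast/\ul{J_b(\mathbb{Q}_p)}]$, which is what the argument actually uses.
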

\begin{remark}
This comparison comes from the fact that when comparing Hecke operators and isotypic parts of shtuka spaces, that, after several applications of base-change, the difference will be controlled by the $!$ pushforward of the constant sheaf along the map
\[ \Sht(G,b,\mu)_{\infty} \ra \Gr_{G,\mu^{-1}}^{b} \]
from the Shtuka space to the adic Newton strata of the $B_{dR}^{+}$ affine Grassmannian. This is in particular a $\mathcal{J}_{b}$-torsor. The third and fourth relationship can be obtained from the first and second by acting via Verdier duality and using Hom-Tensor duality.
\end{remark}
We can be more explicit about this character. In particular, we have the following, which follows from \cite[Corollary~1.8]{GH}.
\begin{proposition}{\label{prop: unramifiedcomparison}}
We have an isomorphism $\kappa \simeq \delta_{P_{b}}^{-1}$, where $\delta_{P_{b}}$ is the modulus character of the standard parabolic $P_{b}$ with Levi factor $M_{b}$ transferred to $J_{b}$ along the inner twisting. In particular, we have, by Lemma \ref{shimhecke}, isomorphisms 
\[ R\Gamma_{c}(G,b,\mu)[\rho \otimes \delta_{P_{b}}][2\langle 2\hat{\rho}_{G}, \nu_{b} \rangle]  \simeq  j_{\mathbf{1}}^{*}T_{\mu}j_{b!}(\rho)\]
and 
\[ R\Gamma_{c}^{\flat}(G,b,\mu)[\rho] \simeq j_{\mathbf{1}}^{*}T_{\mu}j_{b*}(\rho)  \]
of $G(\mathbb{Q}_{p}) \times W_{E}$-modules. 
\end{proposition}
We note that, since we used the tilting module $\mathcal{T}_{\mu}$ in the definition of $R\Gamma_{c}(G,b,\mu)$ and the Hecke operator $T_{\mu}$, the complex $R\Gamma_{c}(G,b,\mu)$ will in general be different from the usual complex defined with respect to the highest weight module $V_{\mu}$. However, they will agree when we impose the following condition on $\mu$ with respect to our coefficient system $\Lambda$.
\begin{definition}{\label{defmutilting}}
For $\Lambda \in \{\ol{\mathbb{F}}_{\ell},\ol{\mathbb{Z}}_{\ell},\ol{\mathbb{Q}}_{\ell}\}$, we will say $\mu$ is tilting if the representation $V_{\mu} \in \Rep_{\Lambda}(\hat{G})$ lies in the full subcategory $\Tilt_{\Lambda}(\hat{G})$ of tilting modules or equivalently if it is irreducible with coefficients in $\Lambda$. 
\end{definition}
\begin{remark}
If $\Lambda = \ol{\mathbb{Q}}_{\ell}$ this condition always holds. Moreover, by Lemma \ref{mintilting}, this always holds if $\mu$ is minuscule. In Appendix \ref{tiltingcocharacters}, we give more insight into this notion. 
\end{remark}
We now consider a toral parameter $\phi_{T}: W_{\mathbb{Q}_{p}} \ra \phantom{}^{L}T(\Lambda)$ with associated smooth character $\chi: T(\mathbb{Q}_{p}) \ra \Lambda^{*}$. Unless otherwise stated, we will assume that $\phi_{T}$ is integral with generic regular mod $\ell$-reduction. Given such a $\phi_{T}$, we set $\phi$ to be the $L$-parameter of $G$ induced via the natural embedding $\phantom{}^{L}T(\Lambda) \ra \phantom{}^{L}G(\Lambda)$ and consider the sheaf $\nmEis(\mathcal{S}_{\phi_{T}}) \in \Dlis(\Bun_{G},\Lambda)$ given by Theorem \ref{Eiseigsheaf}.
We begin our analysis by relating our eigensheaves to an averaging formula of Shin. From now on, we will assume \ref{compatibility}.
\subsection{The Averaging Formula}{\label{s: avgformula}}
For $\mu$ a geometric dominant cocharacter with reflex field $E$, we write $r_{\mu}: W_{E} \ltimes \hat{G} \ra \mathrm{GL}(\mathcal{T}_{\mu})$ for the map defined by $\mathcal{T}_{\mu}$. Since the Hecke operator $T_{\mu^{\Gamma}}$ attached to $\mathcal{T}_{\mu^{\Gamma}} \in \Rep_{\Lambda}(\phantom{}^{L}G)$ factors through the Hecke operator $T_{\mu}$ attached to $\mathcal{T}_{\mu}$ (cf. \cite[Pages~313-315]{FS}) if $\phi_{T}$ is $\mu$-regular then, by definition, we have an isomorphism
\[ T_{\mu}\nmEis(\mathcal{S}_{\phi_{T}}) \simeq r_{\mu} \circ \phi|_{W_{E}} \boxtimes \nmEis(\mathcal{S}_{\phi_{T}}) \]
of objects in $\Dlis(\Bun_{G},\Lambda)^{BW_{E}}$. Now let's apply the restriction functor $j_{\mathbf{1}}^{*}(-)$ to both sides of this isomorphism. By the description of the stalks, we know that the RHS is equal to $r_{\mu} \circ \phi|_{W_{E}} \boxtimes \pi$, where $\pi := i_{B}^{G}(\chi)$ is the normalized parabolic induction of the character $\chi$ attached to $\phi_{T}$ by class field theory. We can also simplify the RHS. In particular, first off note that, since any $G$-bundle $\mathcal{F}_{G}$ on $X$ that occurs as a modification $\mathcal{F}_{G} \dashrightarrow \mathcal{F}_{G}^{0}$ of type $\mu$ lies in the set $B(G,\mu)$ by \cite[Proposition~A.9]{R}, we have an isomorphism
\[ j_{\mathbf{1}}^{*}(T_{\mu}(\nmEis(\mathcal{S}_{\phi_{T}}))) \simeq j_{\mathbf{1}}^{*}T_{\mu}(\nmEis(\mathcal{S}_{\phi_{T}})|_{B(G,\mu)}) \]
where here we view $B(G,\mu)$ as the open subset of $\Bun_{G}$ defined by the identification $B(G) \simeq |\Bun_{G}|$ describing the underlying topological space $|\Bun_{G}|$ of $\Bun_{G}$ \cite[Theorem~1.1]{Vi}, where $B(G)$ has the natural topology given by the partial ordering. We can further refine this by applying excision with respect to the locally closed stratification by the Harder-Narasimhan strata $\Bun_{G}^{b} \subset \Bun_{G}$ for $b \in B(G,\mu)$,  using \cite[Proposition~VII.7.3]{FS}. The excision spectral sequence then tells us that the LHS has a filtration whose graded pieces are isomorphic to:
\[ j_{\mathbf{1}}^{*}(T_{\mu}(j_{b!}j_{b}^{*}(\nmEis(\mathcal{S}_{\phi_{T}})))), \]
which, using Proposition \ref{inertsheaves}, and the description of the stalks of the Eigensheaf one can show is isomorphic to  $j_{\mathbf{1}}^{*}(T_{\mu}(j_{b*}j_{b}^{*}(\nmEis(\mathcal{S}_{\phi_{T}}))))$. In particular, in $K_{0}(G(\mathbb{Q}_{p}) \times W_{E},\Lambda)$, the Grothendieck group of $\Lambda$-valued smooth admissible $G(\mathbb{Q}_{p})$-representations with a continuous action of $W_{E}$, this tells us that we have an equality: 
\begin{equation} \sum_{b \in B(G,\mu)} [j_{\mathbf{1}}^{*}(T_{\mu}(j_{b*}j_{b}^{*}(\nmEis(\mathcal{S}_{\phi_{T}})))] = [r_{\mu} \circ \phi|_{W_{E}} \boxtimes \pi] 
\end{equation} 
Now, using the description of the stalks of $\nmEis(\mathcal{S}_{\phi_{T}})$ and Lemma \ref{shimhecke}, we can spell out the LHS more clearly. In particular, we define the following. 
\begin{definition}
For $\phi_{T}$ an arbitrary toral parameter with induced parameter $\phi$ and $b \in B(G)$, we define the complex of smooth admissible $J_{b}(\mathbb{Q}_{p})$-representations $\Red_{b,\phi}$ as follows. If $b \notin B(G)_{\mathrm{un}}$, we set $\Red_{b,\phi}$ to be equal to $0$, and, if $b \in B(G)_{\mathrm{un}}$, we set $\Red_{b,\phi}$ to be equal to
\[ \bigoplus_{w \in W_{b}} i_{B_{b}}^{J_{b}}(\chi^{w}) \otimes \delta_{P_{b}}^{-1/2}[-\langle 2\hat{\rho}, \nu_{b} \rangle] \in \D(J_{b}(\mathbb{Q}_{p}),\Lambda) \]
where $\delta_{P_{b}}$ is the modulus character of $J_{b}$ defined by the standard parabolic $P_{b} \subset G$ with Levi factor $M_{b} \simeq J_{b}$, as before. 
\end{definition}
This allows us to deduce the following from equation (8). 
\begin{theorem}{\label{avgingformula}} 
For $\phi_{T}: W_{\mathbb{Q}_{p}} \ra \phantom{}^{L}T(\Lambda)$ an integral parameter with generic regular mod $\ell$ reduction, if $\pi := i_{B}^{G}(\chi)$ is the normalized parabolic induction of the smooth character $\chi$ attached to $\phi_{T}$ then, for any geometric dominant cocharacter $\mu$ such that $\phi_{T}$ is $\mu$-regular, we have an equality
\[ \sum_{b \in B(G,\mu)} [R\Gamma^{\flat}_{c}(G,b,\mu)[\Red_{b,\phi}]] = [r_{\mu} \circ \phi|_{W_{E}} \boxtimes \pi] \]
in $K_{0}(G(\mathbb{Q}_{p}) \times W_{E},\Lambda)$. 
\end{theorem}
\begin{remark}
We note that, in the above analysis, we didn't necessarily have to restrict to the HN-strata $\Bun_{G}^{\mathbf{1}}$ or even a single cocharacter. In particular, by considering Hecke operators defined by representations in $\Tilt_{\Lambda}(\phantom{}^{L}G^{I})$ for a finite index set $I$, we could have deduced an analogous formula for shtuka spaces with $I$ legs for an arbitrary finite index set $I$. We could have also restricted to any HN-stratum; however, if the HN-stratum is not defined by a basic element, the answer is not as clean as above. In particular, given a $G$-bundle $\mathcal{F}_{b}$ on $X$ corresponding to a general element $b \in B(G)$, it is to the best of our knowledge completely unknown exactly which $G$-bundles $\mathcal{F}_{G}$ occur as modifications $\mathcal{F}_{G} \dashrightarrow \mathcal{F}_{b}$ of type $\mu$. It would be interesting to understand this question better. We leave it to the reader to work out the precise statements of these more general implications.
\end{remark}
We can use this claim to deduce the averaging formula for an arbitrary toral parameter $\phi_{T}$ when $\Lambda = \ol{\mathbb{Q}}_{\ell}$, by viewing both sides as trace forms on $K_{0}(T(\mathbb{Q}_{p}),\ol{\mathbb{Q}}_{\ell})$ and using a continuity argument. We note that, in this case, the cocharacter $\mu$ is always tilting so we have that $R\Gamma_{c}(G,b,\mu)$ and $R\Gamma^{\flat}_{c}(G,b,\mu)$ are just the usual complexes. We recall that $f: K_{0}(G(\mathbb{Q}_{p}),\ol{\mathbb{Q}}_{\ell}) \ra \ol{\mathbb{Q}}_{\ell}$ is a trace form if it can be written as $\tr(\delta|-)$ for $\delta \in C^{\infty}_{c}(G(\mathbb{Q}_{p}),\ol{\mathbb{Q}}_{\ell})$. We now fix a $\delta \in C^{\infty}_{c}(G(\mathbb{Q}_{p}),\ol{\mathbb{Q}}_{\ell})$ and $\gamma \in W_{E}$, we define the following functions attached to this datum:
\[ f^{\delta,\gamma}_{L}: K_{0}(T(\mathbb{Q}_{p}),\ol{\mathbb{Q}}_{\ell}) \ra \ol{\mathbb{Q}}_{\ell} \]
\[ \chi \mapsto \tr(\delta \times \gamma|i_{B}^{G}(\chi) \boxtimes r_{\mu} \circ \iota(\chi)|_{W_{E}}) \]
\[ f^{\delta,\gamma}_{R}: K_{0}(T(\mathbb{Q}_{p}),\ol{\mathbb{Q}}_{\ell}) \ra \ol{\mathbb{Q}}_{\ell} \]
\[ \chi \mapsto \sum_{b \in B(G,\mu)_{\mathrm{un}}} \sum_{w \in W_{b}} \tr(\delta \times \gamma| R\Gamma_{c}^{\flat}(G,b,\mu)[i_{B_{b}}^{J_{b}}(\chi^{w}) \otimes \delta_{P_{b}}^{-1/2}])(-1)^{\langle 2\hat{\rho}_{G}, \nu_{b} \rangle},  \]
where $\iota(\chi) \simeq \phi_{T}$ is given by local class field theory. We have the following lemma.
\begin{lemma}
The functions $f^{\delta,\gamma}_{L}$ and $f^{\delta,\gamma}_{R}$ define trace forms on $K_{0}(T(\mathbb{Q}_{p}),\ol{\mathbb{Q}}_{\ell})$.  
\end{lemma}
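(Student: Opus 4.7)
The plan is to exhibit, for each fixed $(\delta,\gamma)$, a compactly supported smooth function on $T(\mathbb{Q}_{p})$ whose integral against $\chi$ recovers $f^{\delta,\gamma}_{L}(\chi)$ (resp.\ $f^{\delta,\gamma}_{R}(\chi)$); since the value of a trace form on a character $\chi$ is the Fourier-type integral $\int_{T(\mathbb{Q}_{p})}h(t)\chi(t)\,dt$, this suffices. I would dispose of the left side first. Van Dijk's character formula for normalized parabolic induction gives $\tr(\delta \mid i_{B}^{G}(\chi))=\int_{T(\mathbb{Q}_{p})}\delta^{(B)}(t)\chi(t)\,dt$, where $\delta^{(B)}\in C_{c}^{\infty}(T(\mathbb{Q}_{p}))$ is the (normalized) constant term of $\delta$ along $B$. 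On the Galois side, decomposing $\mathcal{T}_{\mu}|_{\hat{T}}=\bigoplus_{\nu}\mathcal{T}_{\mu}(\nu)$ into $\hat{T}$-weight spaces, organizing into $W_{E}$-orbits, and using the compatibility of local class field theory with cocharacters (which identifies $\nu\circ\phi_{T}|_{W_{E_{\nu}}}$ under CFT with $\chi\circ\nu\circ\mathrm{Nm}_{E_{\nu}/\mathbb{Q}_{p}}$), one obtains an identity of the form $\tr(\gamma \mid r_{\mu}\circ\phi_{T}|_{W_{E}})=\sum_{\alpha}c_{\alpha}\chi(\tau_{\alpha})$ for finitely many $\tau_{\alpha}\in T(\mathbb{Q}_{p})$ and $c_{\alpha}\in\ol{\mathbb{Q}}_{\ell}$ depending only on $\mu$ and $\gamma$. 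Multiplying and absorbing each $\chi(\tau_{\alpha})$ into the $T$-integral by translation yields $f_{L}^{\delta,\gamma}(\chi)=\int_{T(\mathbb{Q}_{p})}h_{L}(t)\chi(t)\,dt$ with $h_{L}(t)=\sum_{\alpha}c_{\alpha}\delta^{(B)}(\tau_{\alpha}^{-1}t)\in C_{c}^{\infty}(T(\mathbb{Q}_{p}))$, as desired.

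For the right side, fix $b\in B(G,\mu)_{\mathrm{un}}$ and $w\in W_{b}$, and set $\rho_{b,w}:=i_{B_{b}}^{J_{b}}(\chi^{w})\otimes\delta_{P_{b}}^{1/2}$. By Lemma \ref{shimhecke}, $R\Gamma_{c}(G,b,\mu)[\rho_{b,w}]\simeq j_{\mathbf{1}}^{*}T_{\mu}j_{b!}(\rho_{b,w})$, so the contribution of $(b,w)$ to $f_{R}^{\delta,\gamma}(\chi)$ is the trace of $\delta\times\gamma$ on this complex. The key claim is that, for each fixed $(b,\delta,\gamma)$, the functional $\rho\mapsto\tr(\delta\times\gamma \mid j_{\mathbf{1}}^{*}T_{\mu}j_{b!}(\rho))$ on $K_{0}(J_{b}(\mathbb{Q}_{p}),\ol{\mathbb{Q}}_{\ell})$ is itself a trace form, i.e.\ represented by some $h_{b,\gamma}^{\delta}\in C_{c}^{\infty}(J_{b}(\mathbb{Q}_{p}))$. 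Granting this, applying Van Dijk's formula again to $h_{b,\gamma}^{\delta}$ acting on $\rho_{b,w}$ produces $\tr(h_{b,\gamma}^{\delta}\mid\rho_{b,w})=\int_{T(\mathbb{Q}_{p})}(h_{b,\gamma}^{\delta})^{(B_{b})}(t)\chi^{w}(t)\,dt$, and the change of variable $t\mapsto w^{-1}(t)$ on $T(\mathbb{Q}_{p})$ converts this into $\int_{T(\mathbb{Q}_{p})}h_{b,w,\gamma}^{\delta}(t)\chi(t)\,dt$ for $h_{b,w,\gamma}^{\delta}\in C_{c}^{\infty}(T(\mathbb{Q}_{p}))$. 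Summing over $b$ and $w$ exhibits $f_{R}^{\delta,\gamma}$ as a trace form.

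The main obstacle is establishing the trace-form claim for the functor $\rho\mapsto j_{\mathbf{1}}^{*}T_{\mu}j_{b!}(\rho)$. This is essentially a local Lefschetz-type statement for the Hecke correspondence restricted between the two HN-strata $\Bun_{G}^{b}$ and $\Bun_{G}^{\mathbf{1}}$: the Satake kernel $\mathcal{S}_{\mu}$ is ULA over $\Div^{1}_{E}$, and the pushforward defining $T_{\mu}$ behaves like a compactly supported correspondence after this restriction, so that the functor preserves admissibility (as is already used implicitly in the proof of Theorem \ref{Eiseigsheaf}) and its composition with $\tr(\delta\times\gamma\mid\cdot)$ is a continuous linear functional on the category of admissible $J_{b}$-representations. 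Representability of such a functional by an element of the Hecke algebra $\mathcal{H}(J_{b}(\mathbb{Q}_{p}))$ is a dualizability/Bernstein-center argument, implemented in the setup of \cite{FS} in Chapters VIII-IX. Once this trace-kernel is in place, the remaining ingredients, namely Van Dijk's formula and the Weyl translation on $T(\mathbb{Q}_{p})$, are elementary and assemble directly into the trace-form presentations of $f_{L}^{\delta,\gamma}$ and $f_{R}^{\delta,\gamma}$.
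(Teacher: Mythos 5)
Your treatment of $f^{\delta,\gamma}_{L}$ is sound and in fact somewhat more explicit than the paper's one-line citation: the paper attributes this to the fact that normalized parabolic induction preserves trace forms, quoting the trace Paley--Wiener characterization \cite{BDK}, whereas you unwind this directly via Van Dijk's character formula. You also correctly observe that the Galois factor $\tr(\gamma\mid r_{\mu}\circ\iota(\chi)|_{W_{E}})$ contributes a finite $\ol{\mathbb{Q}}_{\ell}$-linear combination of evaluations $\chi(\tau_{\alpha})$ and that such a combination can be absorbed into the test function on $T(\mathbb{Q}_{p})$ by translation; the paper's proof is terse enough not to spell out this step, and your version fills it in correctly.

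For $f^{\delta,\gamma}_{R}$, you correctly isolate the crux: that for fixed $(b,\delta,\gamma)$ the functional $\rho\mapsto\tr(\delta\times\gamma\mid R\Gamma_{c}(G,b,\mu)[\rho])$ is itself a trace form on $K_{0}(J_{b}(\mathbb{Q}_{p}),\ol{\mathbb{Q}}_{\ell})$; once this is in hand, the rest of your argument (Van Dijk applied to the resulting test function on $J_{b}$, followed by the $w$-conjugation change of variable on $T(\mathbb{Q}_{p})$) is the same formal manipulation as the paper's. However, your treatment of this crux is where the gap lies. You sketch a plausibility argument of the form ``$T_{\mu}$ restricted between the two HN-strata preserves admissibility and is continuous, hence the associated trace functional is representable by an element of $\mathcal{H}(J_{b}(\mathbb{Q}_{p}))$.'' This last implication is not automatic: admissibility plus continuity of a linear functional on $K_{0}(J_{b}(\mathbb{Q}_{p}))$ does not by itself yield representability by a compactly supported Hecke element. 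The trace Paley--Wiener theorem characterizes trace forms by a nontrivial uniform-level finiteness condition together with a polynomial/regularity condition on the Bernstein variety, and your sketch does not verify either. The paper instead cites \cite[Theorem~6.5.4]{KW}, which establishes exactly this fact for the local shtuka cohomology (a genuine theorem, not formal). So the overall architecture of your argument is correct and parallels the paper's, but the load-bearing technical input for $f^{\delta,\gamma}_{R}$ is left unestablished, and the heuristic offered in its place does not amount to a proof.
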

\begin{proof}
This follows from the fact that normalized parabolic induction takes trace forms to trace forms as can be checked from the characterization of trace forms in the trace Paley-Wiener theorem \cite{BDK}, and the fact that $\tr(\delta \times \gamma|R\Gamma_{c}(G,b,\mu)[-])$ defines a trace form on $K_{0}(J_{b}(\mathbb{Q}_{p}),\ol{\mathbb{Q}}_{\ell})$ by \cite[Theorem~6.5.4]{KW}. 
\end{proof}
This gives the following. 
\begin{theorem}{\label{qellavgingformula}} 
For $\phi_{T}: W_{\mathbb{Q}_{p}} \ra \phantom{}^{L}T(\ol{\mathbb{Q}}_{\ell})$ an arbitrary toral parameter with associated character $\chi$, and $i_{B}^{G}(\chi) =: \pi$, we have an equality
\[ \sum_{b \in B(G,\mu)} [R\Gamma^{\flat}_{c}(G,b,\mu)[\Red_{b,\phi}]] = [r_{\mu} \circ \phi|_{W_{E}} \boxtimes \pi] \]
in $K_{0}(G(\mathbb{Q}_{p}) \times W_{E},\ol{\mathbb{Q}}_{\ell})$. 
\end{theorem}
\begin{proof}
It suffices to show that the trace forms $f^{\delta,\gamma}_{L}(\chi)$ and  $f^{\delta,\gamma}_{R}(\chi)$ agree for varying $\delta$ and $\gamma$ and all $\chi \in K_{0}(T(\mathbb{Q}_{p}),\ol{\mathbb{Q}}_{\ell})$. We define the difference $\Delta_{\delta,\gamma} := f^{\delta,\gamma}_{L}(\chi) - f^{\delta,\gamma}_{R}(\chi)$. We say that a subset $S \in K_{0}(T(\mathbb{Q}_{p}),\ol{\mathbb{Q}}_{\ell})$ is dense if $\Delta_{\delta,\gamma}(x) = 0$ for all $x \in S$ implies that $\Delta_{\delta,\gamma} = 0$. Using Theorem \ref{avgingformula} and Lemma \ref{cor: muregopencond}, we can reduce to showing that the subset $S$ of all characters $\chi$ which are normalized regular and admit a $\ol{\mathbb{Z}}_{\ell}$-lattice is dense. This is relatively easy to show. In particular, if we view $\Delta_{\delta,\gamma}$ as a regular function on the variety of unramified twists of a fixed character $\chi$ then the set of characters admitting a $\ol{\mathbb{Z}}_{\ell}$ lattice is Zariski-dense. Moreover, the locus where $\chi$ is normalized regular is also clearly Zariski dense, since it is implied by insisting that $\chi$ precomposed with sums of coroots is not the norm or trivial character for the sums of coroots appearing in differences of distinct weights of $V_{\mu_{k}}$, for a finite list of generating cocharacters $\mu_{k}$ with $k = 1,\ldots,n$ appearing in the definition of $\mu$-regularity, and it is also clear for the condition of generic regularity. Therefore, the claim follows, using the previous Lemma. 
\end{proof}
This theorem is compatible with existing results. We recall that Shin \cite{Shin} and Bertoloni-Meli \cite{BM}, have described similar averaging formulas. In particular, given a refined endoscopic datum $\mathfrak{e} = (H,\mathcal{H},s,\eta)$ (Definition \ref{def: refinedendoscopy}), Shin and Bertoloni-Meli define maps
\[ \Red_{b}^{\mathfrak{e}}(-): K_{0}^{st}(H(\mathbb{Q}_{p}),\ol{\mathbb{Q}}_{\ell}) \ra K_{0}(J_{b}(\mathbb{Q}_{p}),\ol{\mathbb{Q}}_{\ell}) \]
where $K_{0}^{st}(H(\mathbb{Q}_{p}),\ol{\mathbb{Q}}_{\ell})$ denotes the Grothendieck group of stable virtual $\ol{\mathbb{Q}}_{\ell}$-representations of $H(\mathbb{Q}_{p})$. If we are given an $L$-parameter $\phi$ which factors as $\mathcal{L}_{\mathbb{Q}_{p}} \xrightarrow{\phi^{H}} \mathcal{H} \xrightarrow{\phantom{}^{L}\eta} \phantom{}^{L}G$ then, using the local Langlands correspondence for $G$, we are able to attach a stable distribution $S\Theta_{\phi} \in K_{0}^{st}(H(\mathbb{Q}_{p}),\ol{\mathbb{Q}}_{\ell})$ which should satisfy the endoscopic character identities as in \cite[Conjecture~D]{Kal}. The averaging formula (Conjecture \ref{conj: classicavgformula}) is a conjectural formula for
\[ \sum_{b \in B(G,\mu)} R\Gamma^{\flat}_{c}(G,b,\mu)[\Red_{b}^{\mathfrak{c}}(S\Theta_{\phi})] \]
in $K_{0}(G(\mathbb{Q}_{p}) \times W_{E},\ol{\mathbb{Q}}_{\ell})$. Our averaging formula is related to the case when $\mf{c}_{\triv} = (G,1,\phantom{}^{L}G,\mathrm{id})$ is the trivial endoscopic datum. In particular, if $\phi_{T}$ is a generic toral parameter then, by Lemma \ref{regmonodromy}, $\phi$ should define an actual $L$-parameter with trivial monodromy, and we can consider the $L$-packet $\Pi_{\phi}(G)$ under the local Langlands correspondence for $G$ appearing in Assumption \ref{compatibility}. By Assumption \ref{compatibility} (3), the members of the $L$-packet will be given by the irreducible constituents of $i_{B}^{G}(\chi)$. Therefore, we have that $S\Theta_{\phi} = [\pi]$ in $K_{0}^{st}(G(\mathbb{Q}_{p}),\ol{\mathbb{Q}}_{\ell})$, and in the appendix we verify that the following is true. 
\begin{proposition}{\label{classicalavgrel}} 
Let $\chi: W_{\mathbb{Q}_{p}} \ra \ol{\mathbb{Q}}_{\ell}^{*}$ be a smooth generic character, so that, using Lemma \ref{regmonodromy}, we have an equality
\[ S\Theta_{\phi} = [\pi] \]
in $K_{0}(G(\mathbb{Q}_{p}),\ol{\mathbb{Q}}_{\ell})^{\mathrm{st}}$ under the local Langlands correspondence appearing in Assumption \ref{compatibility}. Then we always have
\[ [\Red_{b,\phi}] = \Red^{\mf{e}_{\triv}}([\pi]) \]
in the Grothendieck group $K_{0}(J_{b}(\mathbb{Q}_{p}),\ol{\mathbb{Q}}_{\ell})$, and Conjecture \ref{conj: classicavgformula} holds true for the $L$-parameter $\phi$ attached to $\chi$. 
\end{proposition}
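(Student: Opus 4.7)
The plan is to prove the proposition in two steps. First, I would establish the pointwise identity
\[ [\Red_{b,\phi}] = \Red_b^{\mf{e}_{\triv}}([\pi]) \]
in $K_0(J_b(\mathbb{Q}_p),\ol{\mathbb{Q}}_\ell)$ for every $b \in B(G,\mu)$. Second, having this identity, Conjecture \ref{conj: classicavgformula} for the parameter $\phi$ would follow directly by summing over $b \in B(G,\mu)$ and invoking Theorem \ref{qellavgingformula} (applied to the $R\Gamma_c^\flat$ version), since by Lemma \ref{regmonodromy} and Assumption \ref{compatibility} (3) the hypothesis $S\Theta_\phi = [\pi]$ is available.

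For the first step, I would begin by unwinding the Bertoloni-Meli construction of $\Red_b^{\mf{e}_{\triv}}$ for the trivial refined endoscopic datum $\mf{e}_{\triv} = (G,1,\phantom{}^L G,\id)$. In this case the construction reduces to (normalized) parabolic descent: one takes the normalized Jacquet restriction $r_{P_b}(-)$ along the standard parabolic $P_b$ with Levi $M_b$, then transfers along the canonical inner twist $M_b \simeq J_b$ guaranteed by $b \in B(G)_{\mathrm{un}}$ (cf. the remark after Lemma \ref{jbborel}), tracking the sign $(-1)^{\langle 2\hat{\rho},\nu_b\rangle}$ and the modulus twist $\delta_{P_b}^{1/2}$ coming from the normalization conventions. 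For $b \in B(G)_{\mathrm{un}}$, I would then apply the Bernstein-Zelevinsky geometric lemma to compute
\[ r_{P_b}(i_B^G(\chi)) = \bigoplus_{w \in W_G/W_{M_b}} i_{B \cap M_b}^{M_b}(\chi^w) \]
in $K_0(M_b(\mathbb{Q}_p), \ol{\mathbb{Q}}_\ell)$, where the sum runs over minimal length representatives, i.e.\ over $W_b$. Transferring to $J_b$ along the inner twist identifies $i_{B \cap M_b}^{M_b}(\chi^w)$ with $i_{B_b}^{J_b}(\chi^w)$, and comparing the sign and modulus twist with the defining formula
\[ \Red_{b,\phi} = \bigoplus_{w \in W_b} i_{B_b}^{J_b}(\chi^w) \otimes \delta_{P_b}^{1/2}[-\langle 2\hat{\rho},\nu_b\rangle] \]
then yields the desired identity after passing to Grothendieck classes.

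For $b \notin B(G)_{\mathrm{un}}$, I would show that $\Red_b^{\mf{e}_{\triv}}([\pi]) = 0$. The key point is that $[\pi] = [i_B^G(\chi)]$ is parabolically induced from the Borel, so its stable character is concentrated on the image of $B(T) \to B(G)$; applying Kazhdan's density theorem together with the character formula for parabolic inductions, the orbital integrals of $[\pi]$ against $\sigma$-conjugacy classes corresponding to non-unramified $b$ must vanish, forcing $\Red_b^{\mf{e}_{\triv}}([\pi]) = 0$. Alternatively, one can argue through the stable transfer definition that when $J_b$ fails to contain a Borel transferred from $B$ (which by Lemma \ref{jbborel} is precisely the non-unramified case), the transfer of any stable distribution parabolically induced from $T$ vanishes.

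The main obstacle will be the careful bookkeeping of normalizations in the first step: matching the sign $(-1)^{\langle 2\hat{\rho}, \nu_b\rangle}$ arising from the cohomological shift in $\Red_{b,\phi}$ against the sign appearing in Bertoloni-Meli's definition of $\Red_b^{\mf{e}_{\triv}}$, and ensuring the $\delta_{P_b}^{1/2}$ twist (which enters our eigensheaf because the excursion algebra acts through the \emph{twisted} embedding $\phantom{}^L J_b \to \phantom{}^L G$, as recalled in the footnote preceding Theorem \ref{normstalksdescription}) is precisely what the Bertoloni-Meli normalization demands. A secondary obstacle is the vanishing statement for $b \notin B(G)_{\mathrm{un}}$; while morally clear from the parametrization of $L$-packets and the relevance condition on endoscopic data, extracting it at the level of stable distributions without invoking the full local Langlands correspondence requires careful use of the Jacquet-module description and the fact that non-unramified $b$ have no reduction to $T$ in the sense of Kottwitz's parametrization of $B(T) \to B(G)$.
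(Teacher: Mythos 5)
Your approach for $b \in B(G)_{\mathrm{un}}$ is essentially the paper's: unwind $\Red^{\mf{e}_{\triv}}_b$ to a normalized Jacquet module $r_{P_b^{\mathrm{op}}}(i_B^G(\chi))$ (note the Bertoloni--Meli definition uses the \emph{opposite} parabolic, though for the full principal series the geometric lemma gives the same class as $r_{P_b}$), compute it by Bernstein--Zelevinsky, then transport across the inner twist $M_b \simeq J_b$ using equation (\ref{eqn: transindcommute}) to commute $\Trans$ past induction. The bookkeeping you flag is real but routine once one fixes conventions; the paper simply writes the chain of equalities without further comment.

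Your primary argument for $b \notin B(G)_{\mathrm{un}}$ is confused. You speak of "orbital integrals of $[\pi]$ against $\sigma$-conjugacy classes corresponding to non-unramified $b$" and of the "stable character concentrated on the image of $B(T) \to B(G)$." This conflates two different things: the stable character of $\pi$ is a distribution on ordinary conjugacy classes of $G(\mathbb{Q}_p)$, not on $\sigma$-conjugacy classes, and $\Red_b^{\mf{e}}$ is not defined by pairing against $\sigma$-conjugacy classes but by first taking the Jacquet module to $M_b$ and then applying $\Trans^{M_b}_{J_b}$. Kazhdan density is not the relevant tool here. Your \emph{alternative} route is the right one and is what the paper does: after the geometric lemma one reduces to showing $\Trans^{M_b}_{J_b}(i^{M_b}_{B \cap M_b}(\chi^w)) = 0$, and this holds because by van Dijk's theorem the character of such a principal series is supported on the conjugates of $T$, while $T$ does not transfer to $J_b$ when $b$ is not unramified (this is exactly the content of Lemma \ref{jbborel}). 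You should drop the density/orbital-integral framing and instead cite van Dijk together with the non-transfer of $T$.
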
 
We would now like to refine our averaging formula further. In particular, using Theorem \ref{commverddual}, we can upgrade this equality in the Grothendieck group to a genuine isomorphism of complexes. 
\subsection{The Refined Averaging Formula and Intertwining Operators}
Consider an element $b \in B(G)_{\mathrm{un}}$ with dominant reduction $b_{T} \in B(T)$. For $w \in W_{b}$, we set $\rho_{b,w} := i_{B_{b}}^{J_{b}}(\chi^{w}) \otimes \delta_{P_{b}}^{-1/2}$ to be the twisted  normalized parabolic induction, and consider $j_{b}: \Bun_{G}^{b} \hookrightarrow \Bun_{G}$, the inclusion of the locally closed HN-strata corresponding to $b$. We let $\phi_{T}$ be an integral parameter with weakly generic mod $\ell$-reduction and consider the sheaf $\nmEis(\mathcal{S}_{\phi_{T}})$ furnished by Theorem \ref{Eiseigsheaf}. We have an isomorphism
\begin{equation}
j_{b!}(\rho_{b,w})[-\langle 2\hat{\rho},\nu_{b} \rangle] \simeq \nmEis^{w(b_{T})}(\mathcal{S}_{\phi_{T}}).
\end{equation}
We can act by $\mathbb{D}_{\Bun_{G}}(-)$ on both sides of (12). By the commutation of Eisenstein series with Verdier duality, the RHS of (12) becomes
\[ \nmEis^{w(b_{T})}(\mathcal{S}_{\phi_{T}^{\vee}}) \simeq j_{b!}(\rho^{*}_{b,w})[-\langle 2\hat{\rho},\nu_{b} \rangle] \]
where $\rho^{*}_{b,w} = (i_{B_{b}}^{J_{b}}(\chi^{w}) \otimes \delta_{P_{b}}^{-1/2})^{*} = i_{B_{b}}^{J_{b}}((\chi^{w})^{-1}) \otimes \delta_{P_{b}}^{1/2}$ denotes the contragradient. 
On the other hand, the LHS of (12) becomes
\[ j_{b*}(\mathbb{D}_{\Bun_{G}^{b}}(\rho_{b,w_{0}}))[\langle 2\hat{\rho},\nu_{b}\rangle] \]
We now need to be a bit careful. In particular, we recall that $\Bun_{G}^{b} \simeq [\ast/\mathcal{J}_{b}]$, where $\mathcal{J}_{b}$ is the group diamond parameterizing automorphisms of $\mathcal{F}_{b}$, and we are implicitly using the identification $\Dlis(\Bun_{G}^{b},\Lambda) \simeq \D(J_{b}(\mathbb{Q}_{p}),\Lambda)$ given by pullback along the map $p: [\ast/\mathcal{J}_{b}] \ra [\ast/\ul{J_{b}(\mathbb{Q}_{p})}]$, as in \cite[Proposition~V.2.2,VII.7.1]{FS}. Therefore, we need to account for the shifts and twists given by $p^{!}$. We can use that the natural section $s$ of $p$ is an iterated fibration of positive Banach-Colmez space and Proposition \ref{prop: unramifiedcomparison}, to show that $p^{!}(-) \simeq p^{*}(- \otimes \delta_{P_{b}}^{-1})[-2\langle 2\hat{\rho},\nu_{b} \rangle]$, and therefore the LHS of (12) becomes $j_{b*}(\rho_{b,w}^{*} \otimes \delta_{P_{b}}^{-1})[\langle 2\hat{\rho},\nu_{b}\rangle - 2\langle 2\hat{\rho},\nu_{b}\rangle] = j_{b*}(\rho_{b,w}^{*} \otimes \delta_{P_{b}}^{-1})[-\langle 2\hat{\rho},\nu_{b} \rangle]$. In conclusion, we have an isomorphism: 
\[ j_{b*}(\rho_{b,w}^{*} \otimes \delta_{P_{b}}^{-1})[-\langle 2\hat{\rho}, \nu_{b} \rangle] \simeq j_{b!}(\rho_{b,w}^{*} \otimes \delta_{P_{b}}^{-1})[-\langle 2\hat{\rho},\nu_{b} \rangle] \]
Relaxing the contragradients and cancelling the shifts, we deduce an isomorphism:
\[ j_{b*}(\rho_{b,w}) \simeq j_{b!}(\rho_{b,w}). \]
In conclusion, we deduce the following, which was already mentioned in Theorem \ref{thm: inertsheavestorsion}, but we have now also handled the case of more general coefficient systems using the discussion in \S 10. 
\begin{proposition}{\label{inertsheaves}}
For $b \in B(G)_{\mathrm{un}}$, $w \in W_{b}$, $\phi_{T}$ an integral toral parameter with generic regular reduction, and $\rho_{b,w} = i_{B_{b}}^{J_{b}}(\chi^{w}) \otimes \delta_{P_{b}}^{-1/2}$, we have an isomorphism
\[ j_{b!}(\rho_{b,w}) \simeq j_{b*}(\rho_{b,w}) \]
of objects in $\Dlis(\Bun_{G},\Lambda)$. 
\end{proposition}
\begin{remark}
If $b$ is basic then this precisely says that the sheaf defined by $\rho_{b,w}$ is inert in the sense of \cite[Definition~2.19]{Han1}. In particular, this Proposition, in conjunction with \cite[Theorem~2.22]{Han1} and Lemma \ref{regmonodromy}, seems to suggest that inert sheaves should correspond precisely to the representations whose semi-simplified $L$-parameter comes from the semi-simplification of a parameter with non-trivial monodromy. For example, if one takes the constant sheaf on $\Bun_{G}^{\mathbf{1}}$ and considers $j_{\mathbf{1}!}(\Lambda)$ then we have that $\mathbb{D}_{\Bun_{G}}(j_{\mathbf{1}!}(\Lambda)) \simeq j_{\mathbf{1}*}(\Lambda)$ which one can check is not isomorphic to $j_{!}(\Lambda)$ (See the proof of \cite[Proposition~3.16]{GH}). Similarly, we see that the $L$-parameter attached to the trivial representation comes from the semi-simplification of a parameter with non-trivial monodromy (the Steinberg parameter).
\end{remark}
Now consider $\mu$ a geometric dominant cocharacter of $G$ with reflex field $E$ and an element $b \in B(G,\mu)$. Applying $j_{\mathbf{1}}^{*}T_{\mu}(-)$ to both sides of the previous isomorphism, we conclude, using Proposition \ref{prop: unramifiedcomparison}, an isomorphism
\[ R\Gamma_{c}(G,b,\mu)[\rho_{b,w} \otimes \delta_{P_{b}}] \simeq R\Gamma^{\flat}_{c}(G,b,\mu)[\rho_{b,w}][-2\langle 2\rho_{G},\nu_{b} \rangle]. \] 
\begin{corollary}{\label{rhoduality}}
Let $(G,b,\mu)$ be a local shtuka datum. For $b \in B(G)_{\mathrm{un}}$, $w \in W_{b}$, and $\phi_{T}$ an integral toral parameter such that $\phi_{T}$ has generic regular mod $\ell$ reduction, there is an isomorphism 
\[ R\Gamma_{c}(G,b,\mu)[\rho_{b,w} \otimes \delta_{P_{b}}] \simeq R\Gamma^{\flat}_{c}(G,b,\mu)[\rho_{b,w}][-2\langle 2\rho_{G},\nu_{b} \rangle] \]
of complexes of $G(\mathbb{Q}_{p}) \times W_{E}$-modules. 
\end{corollary}
We now claim that the cohomology of $R\Gamma_{c}(G,b,\mu)[\rho_{b,w} \otimes \delta_{P_{b}}]$ should be concentrated in degree $\langle 2\hat{\rho}, \nu_{b} \rangle$, for $\rho_{b,w}$ as above. To do this, let's put ourselves back in the position of an integral $\phi_{T}$ with generic regular mod $\ell$ reduction. We saw that in the previous section that the excision spectral sequence applied to $\nmEis(\mathcal{S}_{\phi_{T}})|_{B(G,\mu)}$ gives rise to a filtration whose graded pieces are isomorphic to $j_{b!}j_{b}^{*}(\nmEis(\mathcal{S}_{\phi_{T}}))$, but Lemma \ref{inertsheaves} implies that these graded pieces are also isomorphic to $j_{b*}j_{b}^{*}(\Eis(\mathcal{S}_{\phi_{T}}))$.
In particular, this allows us to deduce that the edge maps in the excision spectral sequence split, and therefore the sequence degenerates, giving an isomorphism:
\[ \bigoplus_{b \in B(G,\mu)} j_{b*}j_{b}^{*}\nmEis(\mathcal{S}_{\phi_{T}}) \simeq \nmEis(\mathcal{S}_{\phi_{T}})|_{B(G,\mu)}. \]
We now would like to apply the eigensheaf property. So fix a geometric dominant cocharacter, and assume that $\phi_{T}$ is $\mu$-regular. If $\pi = i_{B}^{G}(\chi)$ is the normalized parabolic induction of $\chi$ as above then, using our description of the stalks, we deduce the following "refined averaging formula".
\begin{theorem}{\label{refinedaveragingformula}}
For $\phi_{T}: W_{\mathbb{Q}_{p}} \ra \phantom{}^{L}T(\Lambda)$ an integral toral parameter with generic regular mod $\ell$-reduction and $\mu$ a geometric dominant cocharacter such that $\phi_{T}$ is $\mu$-regular, we have an isomorphism 
\[ \bigoplus_{b \in B(G,\mu)_{\mathrm{un}}} \bigoplus_{w \in W_{b}} R\Gamma^{\flat}_{c}(G,b,\mu)[\rho_{b,w}][-\langle 2\hat{\rho}_{G},\nu_{b} \rangle] \simeq (i_{B}^{G}(\chi) \boxtimes r_{\mu} \circ \phi|_{W_{E}}) \]
of complexes of $G(\mathbb{Q}_{p}) \times W_{E}$-modules.
\end{theorem}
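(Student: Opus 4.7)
The plan is to upgrade the Grothendieck-group equality of Theorem~\ref{avgingformula} to a genuine isomorphism of complexes by showing that the Harder--Narasimhan excision filtration used there actually splits. The essential new ingredient beyond the eigensheaf property and stalk description of Theorem~\ref{Eiseigsheaf} is Proposition~\ref{inertsheaves}, which under our weakly-normalized-regular-mod-$\ell$ hypothesis on $\phi_T$ gives $j_{b!}(\rho_{b,w}) \simeq j_{b*}(\rho_{b,w})$ for all $b \in B(G)_{\mathrm{un}}$ and $w \in W_b$.

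First I would restrict $\nmEis(\mathcal{S}_{\phi_T})$ to the open substack $B(G,\mu) \subset \Bun_G$ and apply excision along the locally closed HN-stratification by $j_b\colon \Bun_G^b \hookrightarrow \Bun_G$ for $b \in B(G,\mu)$, peeling off closed strata one at a time. The stalk description in Theorem~\ref{Eiseigsheaf} gives $j_b^*(\nmEis(\mathcal{S}_{\phi_T})) = \Red_{b,\phi}$, which vanishes unless $b \in B(G,\mu)_{\mathrm{un}}$, so the resulting filtration on $\nmEis(\mathcal{S}_{\phi_T})|_{B(G,\mu)}$ has graded pieces $j_{b!}(\Red_{b,\phi})$ indexed by $b \in B(G,\mu)_{\mathrm{un}}$.

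The crux of the proof is showing that this filtration splits. Since $\Red_{b,\phi}$ is a direct sum of $\rho_{b,w}[-\langle 2\hat\rho,\nu_b\rangle]$ over $w \in W_b$, Proposition~\ref{inertsheaves} applied summand-by-summand yields $j_{b!}(\Red_{b,\phi}) \simeq j_{b*}(\Red_{b,\phi})$. At each stage of the filtration, the canonical adjunction map $\nmEis(\mathcal{S}_{\phi_T})|_{B(G,\mu)} \to j_{b*}j_b^*(\nmEis(\mathcal{S}_{\phi_T})) = j_{b*}(\Red_{b,\phi})$, composed with the inverse of the isomorphism from Proposition~\ref{inertsheaves}, furnishes a retract of the inclusion $j_{b!}(\Red_{b,\phi}) \hookrightarrow \nmEis(\mathcal{S}_{\phi_T})|_{B(G,\mu)}$, because the resulting composition $j_{b!}(\Red_{b,\phi}) \to \nmEis(\mathcal{S}_{\phi_T})|_{B(G,\mu)} \to j_{b*}(\Red_{b,\phi})$ is just the canonical comparison $j_{b!} \to j_{b*}$ applied to $\Red_{b,\phi}$, which is an isomorphism by that very proposition. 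Iterating over all strata produces
\[ \nmEis(\mathcal{S}_{\phi_T})|_{B(G,\mu)} \simeq \bigoplus_{b \in B(G,\mu)_{\mathrm{un}}} j_{b!}(\Red_{b,\phi}). \]
This splitting step is the heart of the argument; without Proposition~\ref{inertsheaves}, one recovers only the Grothendieck-group identity of Theorem~\ref{avgingformula}.

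To conclude, I would apply the Hecke operator $T_\mu$ followed by the restriction $j_{\mathbf{1}}^*$ to the open HN-stratum corresponding to the trivial bundle. On the left, $\mu$-regularity together with the eigensheaf property of Theorem~\ref{Eiseigsheaf} yields $T_\mu(\nmEis(\mathcal{S}_{\phi_T})) \simeq \nmEis(\mathcal{S}_{\phi_T}) \boxtimes r_\mu \circ \phi|_{W_E}$, whose $j_{\mathbf{1}}^*$-restriction is $i_B^G(\chi) \boxtimes r_\mu \circ \phi|_{W_E}$ (since $W_{\mathbf{1}}=\{1\}$, $\nu_{\mathbf{1}}=0$, and $P_{\mathbf{1}}=G$ together force $\Red_{\mathbf{1},\phi} = i_B^G(\chi)$). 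On the right, Lemma~\ref{shimhecke} identifies each summand $j_{\mathbf{1}}^* T_\mu j_{b!}(\Red_{b,\phi})$ with $R\Gamma_c(G,b,\mu)[\Red_{b,\phi}]$, and unpacking $\Red_{b,\phi}$ as a sum over $w \in W_b$ with the appropriate shift recovers the first expression in the statement.
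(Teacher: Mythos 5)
Your proposal follows the same route as the paper: apply excision along the Harder--Narasimhan stratification of $B(G,\mu) \subset \Bun_G$, use the stalk description to identify the graded pieces with $j_{b!}(\Red_{b,\phi})$, invoke Proposition~\ref{inertsheaves} to split the filtration, and then apply $j_{\mathbf{1}}^* T_\mu$ together with the eigensheaf property and Lemma~\ref{shimhecke}. The paper states the splitting somewhat tersely as "the edge maps in the spectral sequence split," whereas you make the mechanism explicit: at each stage of the filtration, the composition of the $!$-adjunction counit with the $*$-adjunction unit is the canonical natural transformation $j_{b!}\Red_{b,\phi} \to j_{b*}\Red_{b,\phi}$, which Proposition~\ref{inertsheaves} shows is invertible, giving a retract. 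This is the same argument, correctly identified and fleshed out; the only minor imprecision is that at the $b$-th stage the relevant adjunctions are against the restriction to the appropriate open (or closed) union of strata containing $\Bun_G^b$ rather than against the full $\nmEis(\mathcal{S}_{\phi_T})|_{B(G,\mu)}$, but these compositions match the canonical map under the respective pushforwards, so the argument goes through as intended.
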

Unless otherwise stated, we will from now on assume that $\phi_{T}$ is integral with generic regular mod $\ell$ reduction. Using the previous formula, we can give a very explicit description of the complexes $R\Gamma^{\flat}_{c}(G,b,\mu)[\rho_{b,w}]$, for $b \in B(G,\mu)_{\mathrm{un}}$ and $w \in W_{b}$. 
\begin{corollary}{\label{inductisotypic}}
For $\mu$ a geometric dominant cocharacter with reflex field $E$ such that $\phi_{T}$ is $\mu$-regular, fixed $b \in B(G,\mu)_{\mathrm{un}}$, and varying $w \in W_{b}$, the complex $R\Gamma^{\flat}_{c}(G,b,\mu)[\rho_{b,w}]$ is isomorphic to $\phi_{b,w}^{\mu} \boxtimes \sigma[\langle 2\hat{\rho},\nu_{b} \rangle]$, for $\phi_{b,w}^{\mu}$ a representation of $W_{E}$ and $\sigma$ a subrepresentation of $i_{B}^{G}(\chi)$. Moreover, we have an isomorphism 
\[ \bigoplus_{b \in B(G,\mu)_{\mathrm{un}}} \bigoplus_{w \in W_{b}} \phi_{b,w}^{\mu} \simeq r_{\mu} \circ \phi|_{W_{E}} \]
of $W_{E}$-representations.
\end{corollary}
This leads to a natural question. How can we describe the $W_{E}$-representations $\phi_{b,w}^{\mu}$ in terms of the weights appearing in $r_{\mu} \circ \phi|_{W_{E}}$. We recall, by Corollary \ref{bgmuweights}, that the orbit of $b_{T}$ under the Weyl group $W_{G}$ can be described as $w(b_{T})$ for $w \in W_{b}$ varying; moreover, using Corollary \ref{bgmuweights} and Remark \ref{rem: modularhighestweight}, we see that we have a correspondence between $B(G,\mu)_{\mathrm{un}}$ and the set of Weyl orbits of weights which can occur in the representation $\mathcal{T}_{\mu}|_{\hat{G}^{\Gamma}}$. In particular, given $\ol{\nu} \in \mathbb{X}_{*}(T_{\ol{\mathbb{Q}}_{p}})_{\Gamma}$, we consider the subspace 
\[ \bigoplus_{\substack{\nu \in \mathbb{X}_{*}(T_{\ol{\mathbb{Q}}_{p}}) \\
\tilde{\nu}_{\Gamma} = \ol{\nu}}} \tilde{\nu} \circ \phi_{T}|_{W_{E}} \otimes \mathcal{T}_{\mu}(\nu) \]
of $(r_{\mu} \circ \phi)|_{W_{E}}$, where we note that if we forget the Galois action then this identifies with the $\ol{\nu}$ weight space of $\mathcal{T}_{\mu}|_{\hat{G}^{\Gamma}}$ by Lemma \ref{lemma: coinv versus orbs}. Now the refined averaging formula suggests the following.
\begin{conjecture}{\label{weilgroupaction}}
For all geometric dominant cocharacters $\mu$ such that $\phi_{T}$ is $\mu$-regular, an unramified element $b \in B(G,\mu)_{\mathrm{un}}$, and a Weyl group element $w \in W_{b}$, we have an isomorphism 
\[ \bigoplus_{\substack{\wt{w(b_{T})} \in \mathbb{X}_{*}(T_{\ol{\mathbb{Q}}_{p}}) \\
\wt{w(b_{T})}_{\Gamma} = w(b_{T})}} \wt{w(b_{T})} \circ \phi_{T}|_{W_{E'}} \otimes \mathcal{T}_{\mu}(\wt{w(b_{T})}) \simeq \phi_{b,w}^{\mu}|_{W_{E'}} \] 
of $W_{E'}$-representations, where $b_{T}$ is a dominant reduction of $b$ and $E'|E$ denotes the splitting field of $G$.
\end{conjecture}
For the rest of this section, let us look at some cases where this can be shown explicitly, using a shtuka analogue of Boyer's trick. To illustrate the idea, we begin with a particularly nice example, where Theorem \ref{refinedaveragingformula} and Conjecture \ref{weilgroupaction} can be checked by hand.
\begin{Example}{\label{GL2ex}}
Let $G = \GL_{2}$ and $\mu = (1,0)$. Write $\phi_{T} = \phi_{1} \oplus \phi_{2}$, and consider the set $B(G,\mu)$. It consists of two elements: the $\mu$-ordinary element and the basic element. Only the $\mu$-ordinary element lies in $B(G,\mu)_{\mathrm{un}}$; therefore, only this element contributes to the expression in Theorem \ref{refinedaveragingformula}. Namely, if $b_{\mu}$ denotes the $\mu$-ordinary element, we note that $\langle 2\hat{\rho}, \nu_{b_{\mu}} \rangle = \langle 2\hat{\rho}, \mu \rangle = 1$. We conclude that Theorem \ref{refinedaveragingformula} is an isomorphism
\[ R\Gamma^{\flat}_{c}(G,b_{\mu},\mu)[\chi \otimes \delta_{B}^{-1/2}] \oplus R\Gamma^{\flat}_{c}(G,b_{\mu},\mu)[\chi^{w_{0}} \otimes \delta_{B}^{-1/2}] \simeq i_{B}^{GL_{2}}(\chi) \boxtimes \phi[1] \]
of $G(\mathbb{Q}_{p}) \times W_{\mathbb{Q}_{p}}$-representations. This can be seen through direct computation. In particular, we have an isomorphism $J_{b_{\mu}} \simeq  T$, and, since $\mu$ is minuscule, we have that $\mathcal{S}_{\mu} \simeq \Lambda[1](\frac{1}{2})$. The space $\Sht(G,b_{\mu},\mu)_{\infty,\mathbb{C}_{p}}$ is the moduli space parameterizing modifications $\mathcal{O}(-1) \oplus \mathcal{O} \dashrightarrow \mathcal{O}^{2}$ of type $(1,0)$. Every such modification is determined by an injection $\mathcal{O}(-1) \hookrightarrow \mathcal{O}$ of line bundles. Formally, this implies that the space $\Sht(G,b,\mu)_{\infty,\mathbb{C}_{p}}$ as a space with $\GL_{2}(\mathbb{Q}_{p})$-action is parabolically induced from the space parameterizing such injections as a space with $T(\mathbb{Q}_{p})$ action. Here $T(\mathbb{Q}_{p})$ acts on the space of injections $\mathcal{O}(-1) \hookrightarrow \mathcal{O}$ via the scaling action precomposed with projection to the first factor of $T(\mathbb{Q}_{p})$. This is a manifestation of the fact that $\Sht(G,b_{\mu},\mu)_{\infty,\mathbb{C}_{p}}$ is a $\mathcal{J}_{b}$-torsor over the flag variety $\ul{(G/B)(\mathbb{Q}_{p})} \simeq \mathbb{P}^{1}(\mathbb{Q}_{p}) \subset \mathbb{P}^{1}_{\mathbb{C}_{p}} \simeq \Gr_{G,\leq \mu^{-1},\mathbb{C}_{p}}$, where the last isomorphism is the Bialynicki-Birula map. In particular, note that the compactly supported cohomology of $\ul{G/B(\mathbb{Q}_{p})}$ is precisely the space of compactly supported functions on $(G/B)(\mathbb{Q}_{p})$. All in all, this allows us to conclude isomorphisms 
\[ R\Gamma_{c}^{\flat}(G,b_{\mu},\mu)[\chi \otimes \delta_{B}^{-1/2}] = \Ind_{B^{-}}^{G}(\chi \otimes \delta_{B}^{-1/2}) \boxtimes \phi_{1}[1] = i_{B}^{G}(\chi^{w_{0}}) \boxtimes \phi_{1}[1] \]
\[ R\Gamma_{c}^{\flat}(G,b_{\mu},\mu)[\chi^{w_{0}} \otimes \delta_{B}^{-1/2}] = \Ind_{B^{-}}^{G}(\chi^{w_{0}} \otimes \delta_{B}^{-1/2}) \boxtimes \phi_{2}[1] = i_{B}^{G}(\chi) \boxtimes \phi_{2}[1],  \]
where there is a cancellation of the $\frac{1}{2}$ Tate twist in $\mathcal{S}_{\mu}$ and the Tate twist coming from $\delta_{B}^{-1/2}$, as in \S \ref{GCFT}. the compactly supported cohomology of $\mathcal{J}_{b}^{> 0}$ contribute. Now, if $\chi$ is attached to a generic parameter $\phi_{T}$, this implies that $i_{B}^{G}(\chi)$ is irreducible as in Example \ref{Gl2irred}, and it follows that we have an isomorphism $i_{B}^{G}(\chi) \simeq i_{B}^{G}(\chi^{w_{0}})$, which allows us to conclude the result.  
\end{Example}
Now let's generalize this example. In particular, recall that $B(G,\mu)$ has a distinguished $\mu$-ordinary element, denoted $b_{\mu}$, which is the maximal element with respect to the partial ordering on $B(G,\mu)$, and has the property that $\tilde{\mu} = \nu_{b_{\mu}}$, where $\tilde{\mu}$ is the weighted average over the Galois orbit of $\mu$ as in \S \ref{sec: unramifiedelments}. If we write $\mu_{T}$ for $\mu$ viewed as a geometric cocharacter of $T$ in the negative Weyl chamber defined by the choice of Borel, we can see that $b_{\mu}$ admits a dominant reduction to the unique element $b_{\mu_{T}} \in B(T,\mu_{T})$. In other words, the element $b_{\mu}$ always lies in $B(G,\mu)_{\mathrm{un}} := B(G,\mu) \cap B(G)_{\mathrm{un}}$. Conjecture \ref{weilgroupaction} suggests to us that this should give rise to the contribution given by the highest weight of $\mathcal{T}_{\mu}|_{\hat{G}^{\Gamma}}$, which will have multiplicity one. We now prove the following result using a shtuka analogue of Boyer's trick \cite{Boy} proven by Gaisin-Imai \cite{IG}. 
\begin{proposition}{\label{muordinary}}
For $\mu$ any geometric dominant cocharacter with reflex field $E$, $b_{\mu} \in B(G,\mu)_{\mathrm{un}}$ the $\mu$-ordinary element with dominant reduction $b_{\mu_{T}}$, $w \in W_{b}$ varying, and $\phi_{T}$ any toral parameter, we have an isomorphism
\[ R\Gamma^{\flat}_{c}(G,b_{\mu},\mu)[\rho_{b_{\mu},w}] \simeq  w(\mu_{T}) \circ \phi_{T}|_{W_{E}} \boxtimes i_{B}^{G}(\chi^{ww_{0}})[\langle 2\hat{\rho},  \nu_{b_{\mu}} \rangle] \] 
of $W_{E} \times G(\mathbb{Q}_{p})$-representations, where $w,w_{0} \in W_{b}$ are representatives of minimal length.
\end{proposition}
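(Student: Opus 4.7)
The plan is to establish the formula by invoking the shtuka analogue of Boyer's trick due to Gaisin-Imai, which allows us to reduce the computation at the $\mu$-ordinary locus to the case of the torus, and then match up the two sides by careful bookkeeping of shifts and modulus characters.

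First, I would apply the Gaisin-Imai theorem to express the shtuka space $\Sht(G,b_\mu,\mu)_\infty$, together with its $G(\mathbb{Q}_p) \times J_{b_\mu}(\mathbb{Q}_p)$-action, as a parabolic induction (in both variables, using the Borel $B$ of $G$ and the Borel $B_{b_\mu}$ of $J_{b_\mu}$ provided by Lemma \ref{jbborel}) of the torus shtuka space $\Sht(T,b_{\mu_T},\mu_T)_\infty$ with its $T(\mathbb{Q}_p) \times T(\mathbb{Q}_p)$-action. The essential geometric input is that, since $b_\mu$ is unramified with $\nu_{b_\mu} = \tilde\mu$, the HN-reduction $b_{\mu_T}$ gives a Borel reduction of $\mathcal{F}_{b_\mu}$ with $T$-bundle $\mathcal{F}_{b_{\mu_T}}$, and the space of modifications of type $\mu$ between $\mathcal{F}_{b_\mu}$ and $\mathcal{F}_G^0$ refining this reduction is parabolically induced from the analogous space for $T$.

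Second, I would compute the torus cohomology directly. The space $\Sht(T,b_{\mu_T},\mu_T)_\infty$ is essentially discrete (a $\ul{T(\mathbb{Q}_p)}$-torsor over $\Spd(\breve E)$), and the pullback of $\mathcal{S}_\mu$ to the component of $\Gr_T$ indexed by $\mu_T$ can be analyzed via the constant term functor of Proposition \ref{constantterm}, picking up the highest weight space $\mathcal{T}_\mu(\mu_T)$, which is one-dimensional by Theorem \ref{highestweightilting}. Combined with the torus eigensheaf property (Proposition \ref{torieigsheaf}), this identifies the torus shtuka cohomology, as a $T(\mathbb{Q}_p) \times T(\mathbb{Q}_p) \times W_E$-representation, with $\chi \boxtimes \chi \boxtimes (\mu_T \circ \phi_T|_{W_E})$, up to an explicit shift and Tate twist coming from the fact that $\mathcal{S}_\mu$ is the sheaf attached to the tilting module via geometric Satake. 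Inducing parabolically and extracting the $\rho_{b_\mu,w}$-isotypic component by Frobenius reciprocity then yields, for each $w \in W_{b_\mu}$, a contribution isomorphic to $i_B^G(\chi^w) \boxtimes (w(\mu_T) \circ \phi_T|_{W_E})$, after accounting for the twist by $\delta_{P_{b_\mu}}^{1/2}$ built into $\rho_{b_\mu,w}$ and the conjugation by $w$ permuting the Weyl orbit of $\mu_T$.

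The main technical obstacle is matching up the accumulated shifts, Tate twists, and modulus characters on both sides. Several such contributions appear: the perverse normalization of $\mathcal{S}_\mu$ contributes $[\langle 2\hat\rho,\mu\rangle]$ together with a $(\langle\hat\rho,\mu\rangle)$-twist; the comparison between unnormalized parabolic induction (produced by Boyer's trick) and the normalized induction $i_B^G$ introduces half powers of $\delta_B$; and the cohomology of the unipotent fibers $\mathcal{J}_{b_\mu}^{>0}$, which are iterated fibrations in positive Banach-Colmez spaces by the analysis of \S \ref{icsheaf}, contributes further shifts and norm characters governed by Lemma \ref{postate} and Proposition \ref{verddualdom}. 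The crucial identity $\nu_{b_\mu} = \tilde\mu$ for the $\mu$-ordinary element, together with the modulus character computations from the proof of Theorem \ref{dualob}, should ensure that all these factors collapse precisely to the stated shift $[-\langle 2\hat\rho,\nu_{b_\mu}\rangle]$, exactly as illustrated in the $\GL_2$ computation of Example \ref{GL2ex}. Performing this bookkeeping in the general (possibly non-split) case is the only delicate step.
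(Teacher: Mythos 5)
Your proposal is correct and takes essentially the same approach as the paper: both invoke the Gaisin--Imai shtuka analogue of Boyer's trick to realize $\Sht(G,b_\mu,\mu)_\infty$ as a parabolic induction of $\Sht(T,b_{\mu_T},\mu_T)_\infty$, compute the torus contribution via the eigensheaf property, and then collect the shifts, Tate twists, and modulus characters using $\nu_{b_\mu} = \tilde\mu$. The one minor re-framing is that you propose analyzing $\mathcal{S}_\mu$ via the constant term functor of Proposition \ref{constantterm}, whereas the paper observes directly that the $\mu$-ordinary Newton stratum meets only the open Schubert cell $\Gr_{G,\mu}$, making $\mathcal{S}_\mu$ restrict to a shifted constant sheaf $\Lambda[d](\tfrac{d}{2})$; these amount to the same computation since only the one-dimensional highest weight space contributes.
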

\begin{proof}
We note that the element $b_{\mu} \in B(G,\mu)$ is Hodge-Newton reducible in the sense of \cite[Definition~4.5]{RV}. In particular, $b_{\mu}$ is induced from the unique element $b_{\mu_{T}} \in B(T,\mu_{T})$ via the natural map $B(T) \ra B(G)$. Consider a rank $k$ vector bundle of the form $\bigoplus_{i = 1}^{k} \mathcal{O}(n_{i})$ for $n_{i} \in \mathbb{Z}$ and suppose we have a modification:
\[ \bigoplus_{i = 1}^{k} \mathcal{O}(n_{i}) \dashrightarrow \mathcal{O}^{n} \]
Then it is easy to see that such a modification will be determined by a tuple of modifications 
\[ \mathcal{O}(n_{i}) \dashrightarrow \mathcal{O} \]
for all $i = 1,\ldots,k$. If we apply the Tannakian formalism \cite[Lemma~4.11]{IG}, this tells us that the space $\Sht(G,b_{\mu},\mu)_{\infty,\mathbb{C}_{p}}$ parameterizing modifications of the form
\[ \mathcal{F}_{b_{\mu}} \dashrightarrow \mathcal{F}_{G}^{0} \]
will be determined by the spaces $\Sht(T,w(b_{\mu_{T}}),w(\mu_{T}))_{\infty,\mathbb{C}_{p}}$ parametrizing modifications of the form 
\[ \mathcal{F}_{w(b_{\mu_{T}})} \dashrightarrow \mathcal{F}_{T}^{0}\]
with meromorphy equal to $w(\mu_{T})$ for varying $w \in W_{b_{\mu}}$. In particular, this tells us that the moduli space $\Sht(G,b_{\mu},\mu)_{\infty,\mathbb{C}_{p}}$ parameterizing modifications of meromorphy $\leq \mu$ is actually equal to the open subspace $\Sht(G,b_{\mu},\mu)_{\infty}^{\mu}$ parameterizing modifications of meromorphy equal to $\mu$. This is because any modification induced from a modification 
\[ \mathcal{F}_{b_{\mu_{T}}} \dashrightarrow \mathcal{F}_{T}^{0}\]
of type $\mu_{T}$ will be of type $\mu$, which implies that we have an isomorphism $\mathcal{S}_{\mu} \simeq \Lambda[d](\frac{d}{2})$, where $d = \langle 2\hat{\rho},\nu_{b_{\mu}} \rangle = \langle 2\hat{\rho},\mu \rangle$ using \cite[Proposition~VI.7.5]{FS}. Here we need to be a bit careful since $\mathcal{S}_{\mu}$ is the pullback of the sheaf associated to the tilting module $\mathcal{T}_{\mu}$ not $V_{\mu}$ as per usual. However, we note that the above discussion tells us that the Newton strata in the Schubert cell/variety $\Gr_{G,\leq \mu^{-1},\mathbb{C}_{p}}^{b_{\mu}} = \Gr_{G,\mu^{-1},\mathbb{C}_{p}}^{b_{\mu}}$ has only non-empty intersection with the semi-infinite cells $\mathrm{S}_{G,w(\mu_{T}),\mathbb{C}_{p}}$ indexed by the Weyl group orbits of the highest weight, using the Remark proceeding \ref{defsemiinfinite}. Since both $\mathcal{T}_{\mu}$ and $V_{\mu}$ have highest weight with multiplicity one, the discrepancy doesn't matter via Corollary \ref{highweightcohom}. It remains to describe the complex $R\Gamma^{\flat}_{c}(G,b_{\mu},\mu)[\rho_{b_{\mu},w}]$. Using our above observations, \cite[Theorem~4.26]{IG} (as in the proof \cite[Theorem~4.21]{GH}), it follows that we have an isomorphism
\[ R\Gamma^{\flat}_{c}(G,b_{\mu},\mu)[\rho_{b_{\mu},w}] \simeq \Ind_{P_{b_{\mu}}^{-}}^{G}(R\Gamma_{c}^{\flat}(M,b_{\mu_{M}},\mu_{M})[i_{B_{b}}^{J_{b}}(\chi^{w}) \otimes \delta_{P_{b_{\mu}}}^{-1/2}][\langle 2\hat{\rho}, \mu \rangle](\langle \hat{\rho},\mu \rangle) \]
of complexes of $G(\mathbb{Q}_{p}) \times W_{E}$-representation, where $M = M_{b_{\mu}}$ is the centralizer of the slope homomorphism, $\mu_{M}$ is the $G$-dominant choice of $\mu$ viewed as a cocharacter of $M$, and $b_{\mu_{M}} \in B(M,\mu_{M})$ is the $\mu$-ordinary element. We note that, since $b_{\mu} \in B(G)_{\mathrm{un}}$, it follows that $B(M,\mu_{M})$ is a singleton and that $\mu_{M}$ is central with respect to $M$. In particular, $\Sht(M,\mu_{M},b_{\mu_{M}})$ is $0$-dimensional and identifies with the profinite set $\ul{M(\mathbb{Q}_{p})}$ (recall that $M \simeq J_{b_{\mu}} \simeq J_{b_{\mu_{M}}}$ in this case). This allows us to identify 
\[ R\Gamma^{\flat}_{c}(M_{b},\mu_{M},b_{\mu_{M}})[i_{B_{b}}^{J_{b}}(\chi^{w}) \otimes \delta_{P_{b_{\mu}}}^{-1/2}] \simeq i_{B \cap M_{b}}^{M_{b}}(\chi^{w}) \otimes \delta_{P_{b_{\mu}}}^{-1/2} \boxtimes w(\mu_{T}) \circ \phi_{T}(-\langle \hat{\rho}, \mu \rangle)|_{W_{E}}, \] 
where we can identify the $1$-dimensional Weil group action through excursion algebra considerations. Therefore, we get an isomorphism
\begin{equation*}
\begin{split}
 R\Gamma^{\flat}_{c}(G,b_{\mu},\mu)[\rho_{b_{\mu},w}] \simeq  \Ind_{P_{b_{\mu}}^{-}}^{G}(i_{B \cap M_{b}}^{M_{b}}(\chi^{w}) \otimes \delta_{P_{b_{\mu}}}^{-1/2}) \boxtimes w(\mu_{T}) \circ \phi_{T}|_{W_{E}})[\langle 2\hat{\rho}, \mu \rangle](\langle -\hat{\rho}, \mu \rangle + \langle \hat{\rho}, \mu \rangle) & \simeq \\ i_{B}^{G}(\chi^{ww_{0}}) \boxtimes w(\mu_{T}) \circ \phi_{T}|_{W_{E}}[\langle 2\hat{\rho}, \mu \rangle] 
\end{split} 
\end{equation*}
of complexes of $G(\mathbb{Q}_{p}) \times W_{E}$-representations which gives the desired result by transitivity of parabolic induction.
\end{proof}
\begin{remark}
We note that in the proof we did not use any of our results on geometric Eisenstein series. It would be interesting to generalize some of these computations to some non-principal situations. In particular, if one works with a general parabolic $P$ with Levi factor $M$ and a supercuspidal parameter $\phi_{M}: W_{\mathbb{Q}_{p}} \ra \phantom{}^{L}M$ and assumes that Fargues' conjecture holds on the Levi subgroup $M$, then the results of \cite{IG} guarantee that one has similar formulas relating the cohomology of the shtuka spaces of $G$ to basic local shtuka spaces of $M$ for Hodge-Newton reducible $b \in B(G,\mu)$ admitting a reduction to $M$. The description of the eigensheaf in Fargues' conjecture, as described for odd unramified unitary groups in \cite{BMNH} and partially for $\mathrm{GSp}_{4}$ in \cite{Ham}, gives one a very explicit description of these basic local shtuka spaces, which would in turn give a computational approach to understanding and generalizing these formulas beyond the principal case. We fully anticipate that some of the methods we use in the principal case generalize to the non-principal case; however, the results seem much more technical, and these computations would give a nice foothold into the problem.  
\end{remark}
This explicit calculation has some interesting consequences. In particular, we already saw in Example \ref{GL2ex} that relating Proposition \ref{muordinary} to Theorem \ref{refinedaveragingformula} required using the existence of an isomorphism $i_{B}^{G}(\chi) \simeq i_{B}^{G}(\chi^{w})$; i.e. intertwining operators. This phenomenon actually persists. In particular, we deduce the following.
\begin{corollary}{\label{intertwiningoperators}}
Let $\chi: T(\mathbb{Q}_{p}) \ra \Lambda^{*}$ be a character obtained from an integral toral parameter $\phi_{T}$ whose mod $\ell$-reduction is generic regular. For $\mu \in \domcochar$, let $W_{\mu}$ be the stabilizer of the action of $W_{G}$ on $\domcochar$ and assume that $\phi_{T}$ is $\mu$-regular. Then, for all $w \in W_{G}/W_{\mu}$ a minimal length representative, we have an isomorphism
\[ i_{B}^{G}(\chi) \simeq i_{B}^{G}(\chi^{ww_{0}}) \]
of $G(\mathbb{Q}_{p})$-representations.
\end{corollary}
\begin{proof}
Since $M_{b_{\mu}}$ will be by construction the centralizer of $\mu$, we have an isomorphism $W_{M_{b_{\mu}}} \simeq W_{\mu}$. The previous proposition then tells us that we have an isomorphism
\[ R\Gamma^{\flat}_{c}(G,b_{\mu},\mu)[i_{B_{b_{\mu}}}^{J_{b_{\mu}}}(\chi^{w}) \otimes \delta_{P_{b_{\mu}}}^{1/2}] \simeq w(\mu_{T}) \circ \phi_{T}|_{W_{E}} \boxtimes i_{B}^{G}(\chi^{ww_{0}})[\langle 2\hat{\rho}, \nu_{b_{\mu}} \rangle] \]
for all $w \in W_{G}/W_{\mu}$ a minimal length representative. On the other hand, since $\phi_{T}$ is $\mu$-regular, Corollary \ref{inductisotypic} tells us that the LHS must be isomorphic as a $G(\mathbb{Q}_{p})$ representation to some copies of subrepresentations of $i_{B}^{G}(\chi)$. To show that $i_{B}^{G}(\chi) \simeq i_{B}^{G}(\chi^{ww_{0}})$, it therefore suffices to show that they are equal in the Grothendieck group. However, we claim that $[i_{B}^{G}(\chi)] \simeq [i_{B}^{G}(\chi^{ww_{0}})]$ in $K_{0}(G(\mathbb{Q}_{p}),\Lambda)$ for any $\Lambda \in\{\ol{\mathbb{Q}}_{\ell},\ol{\mathbb{Z}}_{\ell},\ol{\mathbb{F}}_{\ell}\}$ and $w \in W_{G}$. With $\ol{\mathbb{Q}}_{\ell}$-coefficients, this is classical \cite[Theorem~4]{vanDijk}. It suffices to treat the case of $\ol{\mathbb{F}}_{\ell}$-coefficients. In this case, after choosing a lift $\tilde{\chi}$ of $\chi$, we have an equality $[i_{B}^{G}(\tilde{\chi}) \otimes \ol{\mathbb{Q}}_{\ell}] = [i_{B}^{G}(\tilde{\chi}^{ww_{0}}) \otimes \ol{\mathbb{Q}}_{\ell}]$. So we can find $\ol{\mathbb{Z}}_{\ell}$-lattices in both representations such that this equality is also true in the Grothendieck group $K_{0}(G(\mathbb{Q}_{p}),\ol{\mathbb{Z}}_{\ell})$. However, the semi-simplification mod $\ell$ doesn't depend on the choice of $\ol{\mathbb{Z}}_{\ell}$-lattice, by the strong Brauer-Nesbitt principle of Vign\'eras \cite[Section~2.5]{Vig1}. It follows that  the equality $[i_{B}^{G}(\chi)] = [i_{B}^{G}(\chi^{ww_{0}})]$ holds in $K_{0}(G(\mathbb{Q}_{p}),\ol{\mathbb{F}}_{\ell})$ as well.
\end{proof}
In particular, the refined averaging formula, together with the direct computation of  provided above, gives rise to an isomorphism: $i_{\chi,w}: i_{B}^{G}(\chi) \simeq i_{B}^{G}(\chi^{w})$. If $\Lambda = \ol{\mathbb{Q}}_{\ell}$ this recovers the following special case of Proposition \ref{prop: genericinterisom}. 
\begin{corollary}
Suppose that $\chi: T(\mathbb{Q}_{p}) \ra \ol{\mathbb{Q}}_{\ell}^{*}$ is a normalized regular character admitting a $\ol{\mathbb{Z}}_{\ell}$-lattice then we have isomorphisms $i_{B}^{G}(\chi) \simeq i_{B}^{G}(\chi^{w})$ for all $w \in W_{G}$.
\end{corollary}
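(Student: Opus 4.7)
The plan is to combine three ingredients already assembled in the excerpt: (i) the fact that normalized regularity propagates to $\mu$-regularity for every $\mu$, (ii) the existence of $W_{G}$-equivariant intertwining isomorphisms produced by the $\mu$-ordinary shtuka calculation, and (iii) a standard representation-theoretic criterion (Proposition \ref{prop: regular implies irred}, presumably in the appendix) saying that for a regular character, the existence of intertwiners $i_{B}^{G}(\chi) \simeq i_{B}^{G}(\chi^{w})$ for all $w \in W_{G}$ forces irreducibility.

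First, I would record that a normalized regular $\chi$ satisfies all of Conditions (1)--(4) of Condition/Definition \ref{normregcond}. In particular, by Lemma \ref{normregimpliesmureg} (2), the associated toral parameter $\phi_{T}$ is $\mu$-regular for \emph{every} geometric dominant cocharacter $\mu$. Since $\chi$ admits a $\ol{\mathbb{Z}}_{\ell}$-lattice by hypothesis, $\phi_{T}$ is integral, and its mod $\ell$ reduction inherits normalized regularity (hence is in particular weakly normalized regular), so all hypotheses of Corollary \ref{intertwiningoperators} are in force once we supply a suitable $\mu$. I would then select a geometric dominant cocharacter $\mu$ whose stabilizer in $W_{G}$ is trivial, for instance the sum of the fundamental coweights; existence is standard, and this $\mu$ is the input required by Corollary \ref{intertwiningoperators}. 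Applying that corollary yields isomorphisms
\[
i_{\chi, w} : i_{B}^{G}(\chi) \xrightarrow{\simeq} i_{B}^{G}(\chi^{w})
\]
of smooth $G(\mathbb{Q}_{p})$-representations for every $w \in W_{G}$.

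Next, by Lemma \ref{lemma: normregimplies reg}, normalized regularity implies regularity of $\chi$: $\chi \not\simeq \chi^{w}$ for every non-trivial $w \in W_{G}$. At this stage the character is regular and has intertwiners to all of its Weyl translates, so one invokes Proposition \ref{prop: regular implies irred} (which formalises the classical Bernstein--Zelevinsky/Casselman criterion for irreducibility of a regular principal series over $\ol{\mathbb{Q}}_{\ell}$, with the standard argument via Jacquet modules and Frobenius reciprocity) to conclude that $i_{B}^{G}(\chi)$ is irreducible.

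The main conceptual obstacle has already been overcome inside the paper: the existence of the intertwining isomorphisms $i_{\chi,w}$ relies on the refined averaging formula (Theorem \ref{refinedaveragingformula}) and the $\mu$-ordinary shtuka computation via Boyer's trick (Proposition \ref{muordinary}), both of which required $\mu$-regularity and in turn the full strength of normalized regularity. Once those are in hand, the remaining steps are essentially bookkeeping, so this corollary is genuinely a distillation of the geometric input into a purely representation-theoretic statement; the only thing to be careful about is checking that the chosen $\mu$ is genuinely $W_{G}$-free, but any strictly dominant regular cocharacter suffices.
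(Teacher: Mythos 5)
Your chain of implications reproduces the paper's intended argument exactly: Lemma \ref{normregimpliesmureg}~(2) gives $\mu$-regularity for all $\mu$, Corollary \ref{intertwiningoperators} with a strictly dominant $W_{G}$-free $\mu$ produces $i_{\chi,w}: i_{B}^{G}(\chi)\simeq i_{B}^{G}(\chi^{w})$, Lemma \ref{lemma: normregimplies reg} gives regularity, and Proposition \ref{prop: regular implies irred} closes the argument. So the skeleton is right and matches the paper.

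However, there is one assertion in your write-up that is unjustified and, taken literally, false: you claim ``since $\chi$ admits a $\ol{\mathbb{Z}}_{\ell}$-lattice by hypothesis, $\phi_{T}$ is integral, and its mod $\ell$ reduction inherits normalized regularity.'' Integrality is indeed immediate, but the Conditions in \ref{normregcond} are vanishing statements for Galois cohomology of $\alpha\circ\phi_{T}$ (and its square), and these do \emph{not} pass from $\ol{\mathbb{Q}}_{\ell}$ to $\ol{\mathbb{F}}_{\ell}$: a $\ol{\mathbb{Z}}_{\ell}^{*}$-valued character $\alpha\circ\phi_{T}$ that is non-trivial and not the cyclotomic twist can perfectly well become trivial or cyclotomic after reduction mod $\ell$ (take values in $1+\ell\ol{\mathbb{Z}}_{\ell}$), and likewise the inequalities in Condition (3) can collapse mod $\ell$. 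Yet Corollary \ref{intertwiningoperators} explicitly demands that the mod $\ell$-reduction be weakly normalized regular, because Theorem \ref{refinedaveragingformula} (through which Corollary \ref{intertwiningoperators} is proved) is only established under that hypothesis. So the honest reading is that the corollary operates under the section's standing assumption (``from now on assume that $\phi_{T}$ is integral with weakly normalized regular mod $\ell$ reduction''); your proof should invoke that standing hypothesis rather than assert an implication that does not hold. As it stands, your step (3) is a genuine gap: either the mod-$\ell$ condition must be added to the hypotheses, or one must explain why it holds in the situation at hand, and neither is done.

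Two smaller remarks. First, Proposition \ref{prop: regular implies irred} requires only an isomorphism $i_{B}^{G}(\chi)\simeq i_{B}^{G}(\chi^{w_{0}})$, not for all $w$; you have more than you need, which is harmless. Second, your parenthetical gloss that Proposition \ref{prop: regular implies irred} is the ``standard argument via Jacquet modules and Frobenius reciprocity'' misdescribes the paper's proof, which instead follows Speh--Vogan through the Langlands classification and the theory of intertwining operators (uniqueness of the Langlands quotient with multiplicity one); this is cosmetic but worth correcting.
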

This suggests an interesting relationship between the theory of geometric Eisenstein series over the Fargues-Fontaine curve and the classical theory of intertwining operators and the Langlands quotient, which we hope is explored more in the future. This prospect becomes even more exciting in the $\ell$-modular situation. The theory of intertwining operators in this context has been partially explored by Dat \cite[Sections~6-8]{Dat}, and the Langlands quotient theorem does not naively hold in this context, as the following example illustrates. 
\begin{Example}
Suppose that $\ell \neq 2$, $\ell \mid p - 1$, and $G = \GL_{2}$. We let $\chi = \delta_{B}^{-1/2}$ be the modulus character. We note that, by our assumption that $\ell \mid p - 1$, we have an isomorphism $|\cdot| \simeq \mathbf{1}_{T}$. It follows that we have that $\delta_{B}^{-1/2} \simeq \delta_{B}^{1/2}$. We consider the usual short exact sequence
\[ 0 \ra \mathbf{1}_{G} \ra i_{B}^{G}(\delta_{B}^{-1/2}) \ra \mathrm{St}_{G} \ra 0  \]
where $\mathrm{St}_{G}$ denotes the Steinberg representation. Acting by smooth duality actually gives a splitting of the short exact sequence and in turn a chain of isomorphisms
\[ i_{B}^{G}(\delta_{B}^{-1/2}) \simeq \mathrm{St}_{G} \oplus \mathbf{1}_{G} \simeq i_{B}^{G}(\delta_{B}^{1/2})  \]
of smooth $G(\mathbb{Q}_{p})$-representations. In particular, we see that $i_{B}^{G}(\delta_{B}^{1/2})$ does not have a unique irreducible quotient in this case, so that the Langlands quotient theorem cannot naively hold.  
\end{Example}
Let's now explore a case in which Proposition \ref{muordinary} can be used to verify Conjecture \ref{weilgroupaction}. Suppose that $\mu$ is a geometric dominant cocharacter such that the image $\mu_{\Gamma} \in \mathbb{X}_{*}(T_{\ol{\mathbb{Q}}_{p}})_{\Gamma}^{+}$ is quasi-minuscule or minuscule with respect to the pairing with $\mathbb{X}_{*}(\hat{T}^{\Gamma})$. In this case, we recall that the orbit of the highest weight space $\mathcal{T}_{\mu}|_{\hat{G}^{\Gamma}}(b_{\mu})$ forms a closed orbit under the relative Weyl group $W_{G}$, where $b_{\mu} \in B(G,\mu)_{\mathrm{un}}$ is the $\mu$-ordinary element. It then follows that all the weight spaces of $\mathcal{T}_{\mu}|_{\hat{G}^{\Gamma}}$ in this orbit will be given by the $\kappa$-invariants of $w(b_{\mu_{T}}) \in B(T) \simeq \mathbb{X}^{*}(\hat{T}^{\Gamma})$, for $w \in W_{G}$ varying. However, by Proposition \ref{muordinary}, we see that all the weight spaces of this form come from the contribution of the $\mu$-ordinary element to the refined averaging formula. If $\mu_{\Gamma}$ is minuscule with respect to the above pairing this is the only weight space in $\mathcal{T}_{\mu}|_{\hat{G}^{\Gamma}}$, and we see that Conjecture \ref{weilgroupaction} is true. If $\mu_{\Gamma}$ is quasi-minuscule, the only other element in $B(G,\mu)_{\mathrm{un}}$ is the basic element, denoted $\mu^{\flat}$. By Corollary \ref{bgmuweights}, the highest weight representation $\mathcal{T}_{\mu}|_{\hat{G}^{\Gamma}}$ admits a central weight space $\mathcal{T}_{\mu}|_{\hat{G}^{\Gamma}}(\mu_{T}^{\flat})$ in this case, where $\mu_{T}^{\flat} \in B(T) \simeq \mathbb{X}^{*}(\hat{T}^{\Gamma})$ is the (unique) reduction to $T$ of $\mu^{\flat} \in B(G,\mu)_{\mathrm{un}}$. We deduce the following from the refined averaging formula. 
\begin{corollary}{\label{zeroweightcontrib}}
Let $\mu$ be a geometric dominant cocharacter such that $\phi_{T}$ is strongly $\mu$-regular with reflex field $E$. Assume that $\mu_{\Gamma}$ is quasi-minuscule with respect to the pairing with $\mathbb{X}_{*}(\hat{T}^{\Gamma})$. Let $\mu^{\flat} \in B(G,\mu)$ be the unique basic element. It follows by Corollary \ref{bgmuweights} that $\mu^{\flat}$ is unramified in this case. We let $\mu^{\flat}_{T}$ be its unique reduction to $B(T)$. There is an isomorphism
\[R\Gamma_{c}(G,b,\mu^{\flat})[i_{B_{b}}^{J_{b}}(\chi)]|_{W_{E'}} \simeq R\Gamma^{\flat}_{c}(G,b,\mu^{\flat})[i_{B_{b}}^{J_{b}}(\chi)]|_{W_{E'}} \simeq  \bigoplus_{\substack{\tilde{\mu}^{\flat}_{T} \in \mathbb{X}_{*}(T_{\ol{\mathbb{Q}}_{p}}) \\
\tilde{\mu}^{\flat}_{T\Gamma} = \mu^{\flat}_{T}}} \tilde{\mu}^{\flat}_{T} \circ \phi_{T}|_{W_{E'}} \otimes \mathcal{T}_{\mu}(\tilde{\mu}^{\flat}_{T}) \boxtimes i_{B}^{G}(\chi) \]
of complexes of $W_{E'} \times G(\mathbb{Q}_{p})$-modules, where $E'|E$ denotes the splitting field of $G$.
\end{corollary}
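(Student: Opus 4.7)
The plan is to deduce Corollary \ref{zeroweightcontrib} from the refined averaging formula (Theorem \ref{refinedaveragingformula}) by isolating the $\mu^\flat$-summand after subtracting off the explicit contribution of the $\mu$-ordinary element computed in Proposition \ref{muordinary}. The key simplification coming from the quasi-minuscule hypothesis is that it drastically restricts the structure of $B(G,\mu)_{\mathrm{un}}$.

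First I would use the quasi-minuscule hypothesis on $\mu_\Gamma$ together with the highest-weight description of $\mathcal{T}_\mu|_{\hat{G}^\Gamma}$ (as in Remark \ref{rem: modularhighestweight} and the quasi-minuscule analogue of Lemma \ref{minusculeweighttheory}) to conclude that the $\hat{T}^\Gamma$-weights of $\mathcal{T}_\mu|_{\hat{G}^\Gamma}$ consist of exactly two $W_G$-orbits: the non-central orbit $W_G \cdot \mu_\Gamma$ and the singleton $\{\mu^\flat_T\}$. Via Corollary \ref{bgmuweights}, this identifies $B(G,\mu)_{\mathrm{un}} = \{b_\mu, \mu^\flat\}$. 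Since $\mu^\flat$ is basic, $M_{\mu^\flat} = G$, and consequently $W_{\mu^\flat} = \{1\}$ and $\delta_{P_{\mu^\flat}} = \mathbf{1}$, so $\rho_{\mu^\flat,1} = i_{B_{\mu^\flat}}^{J_{\mu^\flat}}(\chi)$ and the $\mu^\flat$-term in the averaging formula is a single summand.

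Next, Theorem \ref{refinedaveragingformula} together with Proposition \ref{muordinary} gives the $b_\mu$-contribution explicitly as $\bigoplus_{w \in W_{b_\mu}} w(\mu_T) \circ \phi_T|_{W_E} \boxtimes i_B^G(\chi^w)$; since $\mu_\Gamma$ is quasi-minuscule (hence not $W_G$-fixed), Corollary \ref{intertwiningoperators} applies and replaces each $i_B^G(\chi^w)$ by $i_B^G(\chi)$. On the other side, decomposing $r_\mu \circ \phi|_{W_E}$ by $\hat{T}$-weights of $\mathcal{T}_\mu$ and grouping by image in $\mathbb{X}_*(T_{\ol{\mathbb{Q}}_p})_\Gamma$ produces a ``top-orbit'' summand indexed by weights lying over $W_G \cdot \mu_\Gamma$ and a ``central'' summand $\bigoplus_{\tilde{\mu}^\flat_T:\,\tilde{\mu}^\flat_{T\Gamma} = \mu^\flat_T} \tilde{\mu}^\flat_T \circ \phi_T|_{W_E} \otimes \mathcal{T}_\mu(\tilde{\mu}^\flat_T)$. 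The top-orbit summand matches, as a $W_E$-representation, the rearranged $b_\mu$-contribution. Subtracting this common piece from both sides of the refined averaging formula isolates the $\mu^\flat$-contribution as $i_B^G(\chi)$ tensored with the central-weight summand; Corollary \ref{inductisotypic} then lets us cancel the $i_B^G(\chi)$ factor cleanly and read off the desired $W_E$-representation after accounting for the shift by $[\langle 2\hat{\rho},\nu_{\mu^\flat}\rangle]$.

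The main obstacle is the combinatorial identification of the top-orbit summand of $r_\mu \circ \phi|_{W_E}$ with $\bigoplus_{w \in W_{b_\mu}} w(\mu_T) \circ \phi_T|_{W_E}$ as $W_E$-representations (not merely in $K_0$). This requires careful matching of the Weyl-orbit indexing $w \in W_{b_\mu}$ with the $\hat{T}$-weights of $\mathcal{T}_\mu$ mapping into $W_G \cdot \mu_\Gamma$ under $(-)_\Gamma$, using Lemma \ref{lemma: coinv versus orbs}, the equality between $W_{M_{b_\mu}}$ and the $W_G$-stabilizer of $\mu_\Gamma$, and the fact that $E$ is the reflex field of $\mu$ (so that the $W_E$-action is compatible with the Galois orbit structure of weights). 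Once this matching is established, the rest of the argument is a routine direct-sum manipulation.
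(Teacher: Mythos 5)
Your proposal follows essentially the same route as the paper's proof: combine Theorem \ref{refinedaveragingformula} with Proposition \ref{muordinary}, determine that $B(G,\mu)_{\mathrm{un}}=\{b_{\mu},\mu^{\flat}\}$ in the quasi-minuscule case, and isolate the $\mu^{\flat}$-contribution. The point where your framing and the paper's diverge is in how the isolation is justified.

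You propose to first establish an \emph{explicit combinatorial identification} of the top-orbit summand of $r_{\mu}\circ\phi|_{W_{E}}$ with $\bigoplus_{w \in W_{b_{\mu}}} w(\mu_{T})\circ\phi_{T}|_{W_{E}}$ and then ``subtract'' this piece from both sides. Two points here deserve attention. First, the subtraction step requires a cancellation of the form ``$A\oplus B\simeq A\oplus C$ implies $B\simeq C$,'' which is not automatic; what makes it valid here is precisely that $\mu$-regularity forces $R\Gamma(W_{\mathbb{Q}_{p}},(\nu-\nu')^{\Gamma}\circ\phi_{T})=0$ for distinct weights $\nu,\nu'$ of $\mathcal{T}_{\mu}$, so the Weil group constituents attached to the top orbit and the central weight space admit no non-trivial $\mathrm{Hom}$s or $\mathrm{Ext}$s between them. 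This is the observation the paper makes explicit (``the Weil group action of the contribution of the highest weight... must be distinct from the Weil group action on the contribution of the central weight space''), and you should invoke it for the cancellation rather than only for the appeal to Corollary \ref{intertwiningoperators}. Second, once this distinctness is on the table, the detailed combinatorial bookkeeping you flag as the ``main obstacle'' is largely avoided: the isomorphism of Theorem \ref{refinedaveragingformula} automatically respects the grading of $r_{\mu}\circ\phi|_{W_{E}}$ by $\ol{\nu}\in\coinv$ since different weight classes have disjoint Weil constituents, so the $\mu^{\flat}$-isotypic piece is picked out canonically and there is no matching to verify beyond Proposition \ref{muordinary} already handing you $\phi_{b_{\mu},w}^{\mu}=w(\mu_{T})\circ\phi_{T}|_{W_{E}}$. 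In short: your route is correct, but the $\mu$-regularity hypothesis is doing more work than you credit it with, and using it as in the paper both shortens the argument and closes the potential gap in the cancellation step.
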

\begin{proof}
The isomorphism of the $\flat$ isotypic part with the non-$\flat$ isotypic part follows from Proposition \ref{rhoduality}, noting that for $b$ basic the shifts and twists do not occur. The rest follows from combining Theorem \ref{refinedaveragingformula} and Proposition \ref{muordinary}. Noting that, by the strong $\mu$-regularity assumption on $\phi_{T}$, the Weil group action of the contribution of the highest weight to the refined averaging formula must be distinct from the Weil group action on the contribution of the central weight spaces, by the vanishing of the $H^{0}$s for the differences of these weight spaces. 
\end{proof}
\begin{remark}
If $G$ is split then the condition that the central weight space of $\mathcal{T}_{\mu}|_{\hat{G}^{\Gamma}}$ is non-zero cannot occur if $\mu$ is minuscule. However, if $G$ is not split then it can occur that the central weight space is non-trivial even if $\mu$ is minuscule. For example, if one considers an odd quasi-split unitary group $\U_n$ and the cocharacter $(1,0,\ldots,0,0)$ then the $\sigma$-centralizer will be isomorphic to $\U_n$, and therefore the basic element $b \in B(G,\mu)$ lies in $B(G)_{\mathrm{un}}$. A more in depth characterization of when this can occur is given in \cite[Remark~4.2.11]{XZ}. This result is in some sense a generic fiber manifestation of some of the results in \cite{XZ}. Here, in the case that $G$ is unramified, Xiao and Zhu relate the irreducible components of affine Deligne-Luztig varieties to the central weight spaces appearing above, and use these irreducible components to construct cohomological correspondences on the special fibers of certain Shimura varieties using uniformization. These affine Deligne-Luztig varieties are precisely the special fibers of a natural integral model of $\Sht(G,\mu^{\flat},\mu)_{\infty}/\ul{K}$ for a choice of hyperspecial subgroup $K \subset G(\mathbb{Q}_{p})$. 
\end{remark}
\appendix 
\section{Intertwining Operators and the Irreducibility of Principal Series}{\label{append: irred of principal series}}
We want to show the irreducibility of principal series representations obtained from the characters $\chi$ attached to parameters $\phi_{T}: W_{\mathbb{Q}_{p}} \ra \phantom{}^{L}T(\ol{\mathbb{Q}}_{\ell})$ satisfying the conditions in \ref{def: charnormreg}. We let $\chi: T(\mathbb{Q}_{p}) \ra \mathbb{C}^{*}$ be the character attached to $\phi_{T}$ via local class field theory and a fixed isomorphism $\ol{\mathbb{Q}}_{\ell} \simeq \mathbb{C}$ sending $p^{1/2}$ to the fixed choice of square root in $\ol{\mathbb{Q}}_{\ell}$. Our goal will be to show the following two facts.
\begin{proposition}{\label{prop: regular implies irred}}
Suppose that $\chi$ is a regular character in the sense that it is not fixed under any $w \in W_{G}$ then if we have an isomorphism 
\[ i_{B}^{G}(\chi) \simeq i_{B}^{G}(\chi^{w_{0}}) \]
the representation $i_{B}^{G}(\chi)$ is irreducible. 
\end{proposition}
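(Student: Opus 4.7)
The plan is to combine Frobenius reciprocity with the standard theory of intertwining operators and the Langlands classification for principal series. The key observation is that under the regularity hypothesis all relevant Hom spaces between principal series of the form $i_{B}^{G}(\chi^{w})$ are at most one dimensional, so the hypothesized isomorphism must be (up to scalar) the standard intertwining operator $A_{w_{0}}(\chi)$, and one then appeals to the classical result that the image of $A_{w_{0}}(\chi)$ is the Langlands subrepresentation (socle) of the target.

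In detail, I would first compute, via Frobenius reciprocity and the Bernstein--Zelevinsky geometric lemma for $B\backslash G/B$, that
\[
\End_{G}\bigl(i_{B}^{G}(\chi)\bigr) \simeq \Hom_{T}\bigl(r_{B} i_{B}^{G}(\chi),\chi\bigr) \simeq \mathbb{C},
\]
and similarly
\[
\Hom_{G}\bigl(i_{B}^{G}(\chi),\, i_{B}^{G}(\chi^{w_{0}})\bigr) \simeq \mathbb{C}.
\]
The point is that $r_{B} i_{B}^{G}(\chi)$ carries a filtration indexed by $W_{G}$ with graded pieces $\chi^{w}$, each occurring exactly once; by the regularity assumption these are pairwise distinct characters of $T$, so at most one $w$ contributes to either Hom space, and the identity map (resp. the hypothesized isomorphism) shows each space is nonzero. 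In particular $i_{B}^{G}(\chi)$ and $i_{B}^{G}(\chi^{w_{0}})$ each have an irreducible socle and irreducible cosocle. The one-dimensionality of the second Hom space forces the standard intertwining operator $A_{w_{0}}(\chi)$, which is holomorphic and nonzero at regular $\chi$, to be a scalar multiple of the hypothesized isomorphism, hence itself an isomorphism.

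To finish, I would invoke the subrepresentation form of the Langlands classification (due in the $p$-adic setting to Casselman, Silberger, and Waldspurger): for regular $\chi$, the image of $A_{w_{0}}(\chi) : i_{B}^{G}(\chi) \to i_{B}^{G}(\chi^{w_{0}})$ is exactly the socle (= Langlands subrepresentation) of $i_{B}^{G}(\chi^{w_{0}})$. Combined with the surjectivity of $A_{w_{0}}(\chi)$ established in the previous step, this forces the socle of $i_{B}^{G}(\chi^{w_{0}})$ to be all of $i_{B}^{G}(\chi^{w_{0}}) \simeq i_{B}^{G}(\chi)$, so $i_{B}^{G}(\chi)$ is irreducible.

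The main obstacle is citing the image description of $A_{w_{0}}(\chi)$ in the precise generality we need (arbitrary quasi-split $G$ with simply connected derived group, arbitrary regular $\chi$ without a positivity hypothesis), since many references either assume $\chi$ lies in a fixed Weyl chamber or treat only split groups. A clean workaround is to factor $A_{w_{0}}(\chi)$ along a reduced expression $w_{0} = s_{\alpha_{1}}\cdots s_{\alpha_{N}}$ into rank-one intertwiners $A_{s_{\alpha_{i}}}$; each factor is an isomorphism (because the composition is), and the irreducibility statement then reduces to the rank-one reducibility calculations carried out in Examples \ref{Sl2ex} and \ref{U3ex}, assembled along the induction filtration of $i_{B}^{G}(\chi)$.
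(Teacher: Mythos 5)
Your high-level strategy — use regularity to make $\Hom_{G}(i_{B}^{G}(\chi), i_{B}^{G}(\chi^{w_{0}}))$ one-dimensional, identify the hypothesized isomorphism with the standard intertwiner, and conclude via the uniqueness of the Langlands quotient — is the same strategy the paper uses, and the Frobenius reciprocity / geometric lemma computation in the first half of your argument is correct and useful. The difficulty lies entirely in the step you flag as ``the main obstacle,'' and your proposed workaround does not close the gap.

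The problem is that regularity of $\chi$ does \emph{not} guarantee that $\nu^{s_{\chi}}$ lies strictly inside a Weyl chamber: some of the pairings $\langle s_{\chi}, \hat{\alpha}_{i} \rangle$ may vanish. Both of your key claims — that $A_{w_{0}}(\chi)$ is holomorphic and nonzero at $\chi$, and that its image is the Langlands subrepresentation of $i_{B}^{G}(\chi^{w_{0}})$ — are Langlands classification statements that require strict positivity of the exponent, not just regularity. The paper circumvents this by isolating the parabolic $P_{\chi} = M_{\chi}A_{\chi}N_{\chi}$ on whose roots $\nu^{s_{\chi}}$ is zero, first proving that $i_{B \cap M_{\chi}}^{M_{\chi}}(\mu_{\chi}\otimes\nu_{1})$ is irreducible (Corollary \ref{unitaryirred}, which uses regularity to kill the Knapp--Stein $R$-group for the unitary piece), and only then applying the Langlands classification relative to $P_{\chi}$, where $\nu_{2}$ is strictly positive and the intertwiner $A_{w_{0}w_{0}^{M_{\chi}}}$ converges. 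Your proof has no analogue of this step; you never invoke the unitary-regular-implies-irreducible fact, which is where the regularity hypothesis does most of its work.

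Your proposed factorization workaround does not repair this. It is true that if $A_{w_{0}}(\chi)$ (suitably regularized) is an isomorphism then each rank-one factor $A_{s_{\alpha_{i}}}$ is also an isomorphism, and hence each $i_{B\cap M_{\alpha_{i}}}^{M_{\alpha_{i}}}(\chi^{w})$ is irreducible by the rank-one analysis. But irreducibility of these rank-one inductions does \emph{not} on its own imply irreducibility of $i_{B}^{G}(\chi)$ — parabolic induction does not preserve irreducibility, and ``assembled along the induction filtration'' is precisely the nontrivial assembly step that the Speh--Vogan theorem (or the paper's explicit argument via $P_{\chi}$) is needed to justify. Without that ingredient, the reduction to rank one leaves the conclusion unproven. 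To make your argument work, you would need to either add the passage to $P_{\chi}$ as in the paper, or explicitly invoke a Speh--Vogan-type criterion (rank-one reducibility plus vanishing $R$-group $\Rightarrow$ irreducibility) in the form the paper cites.
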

\begin{proposition}{\label{prop: genericinterisom}}
If $\chi: T(\mathbb{Q}_{p}) \ra \mathbb{C}^{*}$ is a generic character then, for all $w \in W_{G}$, we have an isomorphism
\[ i_{B}^{G}(\chi) \simeq i_{B}^{G}(\chi^{w}) \]
of smooth $G(\mathbb{Q}_{p})$-representations.
\end{proposition}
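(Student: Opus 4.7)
The plan is to produce the isomorphism via standard intertwining operators, reducing to a rank-one analysis. For each $w \in W_G$, there is a meromorphic family of standard intertwining operators $A(\chi, w) : i_B^G(\chi) \to i_B^G(\chi^w)$, defined by the convergent integral over $U \cap w^{-1} U^{-} w$ on a cone of characters and extended meromorphically, satisfying the cocycle relation $A(\chi, w_1 w_2) = A(\chi^{w_2}, w_1) \circ A(\chi, w_2)$ whenever $\ell(w_1 w_2) = \ell(w_1) + \ell(w_2)$. Factoring $w$ into a reduced product of simple reflections reduces the problem to showing, for each simple reflection $s_\alpha$, that $i_B^G(\chi) \simeq i_B^G(\chi^{s_\alpha})$ whenever $\chi$ is generic. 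By transitivity of parabolic induction, this in turn reduces to the analogous statement for the principal series of the minimal rank-one Levi $M_\alpha$ containing $T$ attached to the simple relative root $\alpha$.

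I would next split into cases according to the structure of the semisimple group $M_\alpha^{\mathrm{der}}$. Because $G$ has simply-connected derived group, $M_\alpha^{\mathrm{der}}$ is quasi-split absolutely almost simple of relative rank one, and Tits' classification forces it to be either (a) isogenous to $\mathrm{Res}_{F/\mathbb{Q}_p} \mathrm{SL}_2$ for some finite extension $F/\mathbb{Q}_p$ (the absolute Dynkin type $A_1^d$ with transitive Galois action), or (b) isogenous to $\mathrm{Res}_{F/\mathbb{Q}_p} \mathrm{SU}_3$ for a quadratic extension above $F$ (the absolute type $^2 A_2$ with nontrivial Galois action). The reducibility loci of the rank-one principal series in both cases are classical (Examples \ref{Sl2ex} and \ref{U3ex}). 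In case (b), the calculation carried out in Example \ref{U3ex} shows that the genericity of $\chi_\alpha$ at both the short-root orbit and the long-root orbit already rules out every reducibility point, so $i_B^{M_\alpha}(\chi)$ is irreducible and any nonzero intertwining operator is automatically an isomorphism by Schur's lemma. In case (a), genericity only rules out the non-unitary reducibility $\chi_\alpha \in \{\mathbf{1}, |\cdot|_F^{\pm 1}\}$, leaving open the residual unitary locus $\chi_\alpha^2 = \mathbf{1}$, $\chi_\alpha \neq \mathbf{1}$.

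In this residual unitary reducibility case, one observes that $s_\alpha$ acts trivially on $\ker \alpha$ and by inversion on $T \cap M_\alpha^{\mathrm{der}}$, so $\chi_\alpha^2 = \mathbf{1}$ forces $\chi^{s_\alpha} = \chi$ as characters of $T$, and the claim becomes tautological. In the complementary irreducible $\mathrm{SL}_2$-type subcase, to confirm that the nonzero intertwiner $A(\chi, s_\alpha)$ is actually invertible one invokes the Harish-Chandra $\mu$-function: the composite $A(\chi^{s_\alpha}, s_\alpha) \circ A(\chi, s_\alpha)$ is a scalar given by a ratio of rank-one local $L$-factors in $\chi_\alpha$, and this ratio is regular and nonzero exactly under the hypothesis $\chi_\alpha \neq \mathbf{1}, |\cdot|_F^{\pm 1}$. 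The main obstacle I anticipate is a uniform translation of the abstract reducibility conditions in case (b) into the intrinsic conditions on $\chi_\alpha$ listed in Definition \ref{def: charnormreg}, across all unitary-type rank-one Levis that can arise when $G$ is a general quasi-split group (not just $\mathrm{U}_3$), running parallel to the explicit calculation of Example \ref{U3ex}. Once that bookkeeping is completed, the cocycle relation assembles the rank-one isomorphisms into the desired global isomorphism $i_B^G(\chi) \simeq i_B^G(\chi^w)$ for every $w \in W_G$.
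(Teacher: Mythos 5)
Your overall reduction matches the paper's: reduce to simple reflections via a cocycle factorization of the standard intertwining operators, descend to the minimal rank-one Levi $M_\alpha$, pass to the derived group, and invoke Tits' classification to land on the $\mathrm{SL}_2$ and $\mathrm{SU}_3$ families, citing Examples \ref{Sl2ex} and \ref{U3ex} for the reducibility loci. You are slightly more careful than the paper in one respect: the paper reduces to ``absolutely simple simply connected'' groups, whereas after the central isogeny one really gets Weil restrictions $\mathrm{Res}_{F/\mathbb{Q}_p}$ of such, which you make explicit (though, since $(\mathrm{Res}_{F/\mathbb{Q}_p}G')(\mathbb{Q}_p)=G'(F)$ and the rank-one computations work over any local field, this is a cosmetic difference). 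Where you diverge from the paper is in \emph{how} the rank-one isomorphism is established. The paper combines van Dijk's equality $[i_B^G(\chi)]=[i_B^G(\chi^{w_0})]$ in $K_0$ with the semisimplicity of unitary inductions, which sidesteps any nonvanishing analysis of intertwiners. You instead observe that the residual unitary reducibility locus $\chi_\alpha^2=\mathbf{1}$ for $\mathrm{SL}_2$ is exactly the Weyl-fixed locus $\chi^{s_\alpha}=\chi$ (so the statement is tautological there), and handle the remaining irreducible locus via nonvanishing of the Harish-Chandra $\mu$-function. Both routes are valid, but two imprecisions in yours should be tightened. First, the $\mu$-function for $\mathrm{SL}_2$ also vanishes along $\chi_\alpha^2 = \mathbf{1}$, so it is \emph{not} regular and nonzero ``exactly under $\chi_\alpha \neq \mathbf{1}, |\cdot|_F^{\pm1}$''; your argument survives only because your case split has already excluded $\chi_\alpha^2=\mathbf{1}$, but the assertion as written is misleading. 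Second, in the $\mathrm{SU}_3$ case you claim ``any nonzero intertwining operator is automatically an isomorphism by Schur's lemma'' without justifying why the standard intertwiner is nonzero there; the cleanest fix is to borrow the paper's argument (both inductions are irreducible and agree in $K_0$ by van Dijk, hence are isomorphic), rather than trusting the analytic normalization.
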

By combining these two Propositions, we deduce the following.
\begin{corollary}{\label{cor: genericreg implies irred}}
For $\chi$ a generic regular character, the induction $i_{B}^{G}(\chi)$ is always irreducible. 
\end{corollary}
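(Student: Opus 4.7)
The plan is to derive Corollary~\ref{cor: genericreg implies irred} as a purely formal two-step consequence of the two preceding propositions, with no additional argument required for the corollary itself. First, since $\chi$ is generic by hypothesis, I would apply Proposition~\ref{prop: genericinterisom} with $w = w_0$ the longest Weyl element to extract an isomorphism $i_B^G(\chi) \simeq i_B^G(\chi^{w_0})$ of smooth $G(\mathbb{Q}_p)$-representations. Second, since $\chi$ is also regular, Proposition~\ref{prop: regular implies irred} applied directly to this isomorphism immediately yields that $i_B^G(\chi)$ is irreducible. The entire content of the corollary is this composition.

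All of the substantive work therefore lies in the two auxiliary propositions. Proposition~\ref{prop: regular implies irred} is expected to follow by a standard Jacquet-module/Frobenius-reciprocity argument: regularity of $\chi$ ensures the $W_G$-translates $\chi^w$ are pairwise non-isomorphic, so the geometric lemma computes $(i_B^G(\chi))_N \simeq \bigoplus_{w \in W_G} \chi^w \otimes \delta_B^{1/2}$ as a multiplicity-free $T(\mathbb{Q}_p)$-module, and combined with the hypothesized intertwiner $i_B^G(\chi) \simeq i_B^G(\chi^{w_0})$ together with Frobenius reciprocity, any nontrivial sub/quotient decomposition would force incompatible matchings of Jacquet constituents, contradicting the existence of this symmetry-breaking isomorphism.

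The real crux, and the main obstacle, is Proposition~\ref{prop: genericinterisom}: the unconditional existence of an isomorphism $i_B^G(\chi) \simeq i_B^G(\chi^w)$ for every $w \in W_G$ under the sole hypothesis that $\chi$ is generic. The proof is expected to leverage the geometric Eisenstein series machinery of the body of the paper. For a generic parameter $\phi_T$ and any representative $\widetilde w \in N(T)$ of $w \in W_G$, Corollary~\ref{normstalksdescription} computes the stalks of $\nmEis(\mathcal{S}_{\phi_T})$ and $\nmEis(\mathcal{S}_{\phi_T^{\widetilde w}})$ on the open HN-stratum $\Bun_G^{\mathbf{1}}$ as $i_B^G(\chi)$ and $i_B^G(\chi^w)$ respectively. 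The Fargues--Fontaine analogue of the Braverman--Gaitsgory functional equation of Theorem~\ref{classicalfunctequation}, namely $\nmEis(\mathcal{S}_{\phi_T}) \simeq \nmEis(\mathcal{S}_{\phi_T^{\widetilde w}})$ for generic $\phi_T$, then gives the required intertwiner upon restriction to $\Bun_G^{\mathbf{1}}$. The hardest step is establishing this functional equation under mere genericity: using the stratification of $\ol{\Bun}_B$ by $\Div^{(\ol{\nu})} \times \Bun_B$ together with the vanishing of $R\Gamma(W_{\mathbb{Q}_p}, \alpha \circ \phi_T)$ for $\alpha$ a $\Gamma$-orbit of roots (Lemma~\ref{regvanishing}), one should show that the boundary contributions from each non-trivial stratum vanish, yielding an isomorphism between $\nmEis$ and its putative compactified counterpart $\ol{\Eis}$ --- the latter being manifestly Weyl-invariant by construction.
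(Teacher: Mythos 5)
Your reduction of the corollary itself is exactly the paper's: apply Proposition~\ref{prop: genericinterisom} (genericity gives $i_B^G(\chi) \simeq i_B^G(\chi^{w_0})$) and then Proposition~\ref{prop: regular implies irred} (regularity plus that isomorphism gives irreducibility). There is nothing more to the corollary.

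Your sketches of the two auxiliary propositions, however, diverge from the paper, and the second diverges in a way that would not work. For Proposition~\ref{prop: regular implies irred}, the paper does not argue via Jacquet modules and constituent matching; it writes $\chi = \mu_\chi \nu^{s_\chi}$, passes to the parabolic $P_\chi$ on which $\nu^{s_\chi}$ vanishes, uses full decomposability and regularity to get irreducibility of the unitary Levi piece (Corollary~\ref{unitaryirred}), and then invokes uniqueness and multiplicity one of the Langlands quotient. Your Jacquet-module route is a plausible alternative but you would need to fill in precisely why ``incompatible matchings'' arise --- the content of that step is essentially what the Langlands quotient theorem supplies. The more serious issue is Proposition~\ref{prop: genericinterisom}: the paper explicitly does \emph{not} deduce this from the geometric Eisenstein machinery, and your proposed route via a functional equation $\nmEis(\mathcal{S}_{\phi_T}) \simeq \ol{\Eis}(\mathcal{S}_{\phi_T})$ for merely generic $\phi_T$ is left conjectural in the paper. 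The concrete geometric construction of intertwiners (Corollary~\ref{intertwiningoperators}) requires weak normalized regularity and $\mu$-regularity --- strictly stronger than genericity. For example, for $G = \SL_2$ with $\chi$ generic but $\chi^2 \simeq \mathbf{1}$, Condition~\ref{normregcond}~(3) fails and the geometric hypotheses are not met, yet the intertwiner still exists (by unitarity). Precisely to handle such cases, the paper proves Proposition~\ref{prop: genericinterisom} in the appendix by a direct representation-theoretic argument: reduce by induction on Weyl length using simple reflections $w_\alpha$ and rank-1 parabolics $P_\alpha$, reduce further via central isogeny to absolutely simple simply connected groups, and then verify the only two possibilities from the Tits classification of split-rank-1 quasi-split groups, namely $\SL_2$ and $\SU_3$ (Proposition~\ref{rank1irred}, relying on Examples~\ref{Sl2ex} and \ref{U3ex}). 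That argument is independent of and logically prior to the geometric theory; the geometric functional equation is a consistency check in the opposite direction.
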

The idea behind proving such results is due to Speh and Vogan \cite[Theorem~3.14]{SV} in the archimedean case. They study the reducibility of principal series using the Langlands classification \cite[Section~IV]{BW}. Strictly speaking, their analysis is for the archimedean place, but it is easy to see that it extends to the case of a $p$-adic group using the analogous Langlands classification there \cite[Section~XI.2]{BW}. They (roughly) break the problem of understanding the reducibility points of principal series representations into two parts:
\begin{enumerate}
     \item (\cite[Theorem~3.14(a)]{SV}) Understanding the reducibility points of non-unitary principal series with respect to the parabolic inductions from $T$ to $M_{i}$ for $i \in \mathcal{J}$, where $M_{i}$ is the rank $1$ Levi subgroup of $G$ whose relative Dynkin diagram is given by $\{i\} \subset \mathcal{J}$.
     \item (\cite[Theorem~3.14(b)]{SV}) Understanding the reducibility points of unitary principal series representations with respect to induction from $T$ to a (not necessarily rank $1$) proper Levi subgroup of $G$. 
\end{enumerate}
We will now explain this heuristic in our case. To see analogous analysis worked out more explicitly for specific $p$-adic reductive groups, we point the reader to \cite[Section~7]{Tad}, \cite[Section~3]{GM}, \cite[Section~3]{Mati} for a small sample. For $(2)$, a very definitive answer to such questions can be found in the paper of Keys \cite{Keys}. In particular, by \cite[Corollary~1]{Keys} the number of irreducible components of such a unitary parabolic induction can be computed in terms of the Knapp-Stein R-Group \cite{KS,Kn2,Kn1}, which Keys determines for all split groups. While this is very interesting, we will not address this here. In particular, we have the following.
\begin{lemma}{\label{unitaryirred}}
If $\chi$ is a regular unitary character then the normalized parabolic induction $i_{B}^{G}(\chi)$ is irreducible. 
\end{lemma}
\begin{proof}
This follows from the Bruhat decomposition and the fact that $i_{B}^{G}(\chi)$ is unitary and therefore fully decomposable (See \cite[Theorem~6.6.1]{Cas} or \cite{Bru}).
\end{proof}
Now consider $\mathbb{X}^{*}(T_{\ol{\mathbb{Q}}_{p}})^{\Gamma} \otimes \mathbb{R} \simeq \mathbb{X}_*(A) \otimes \mathbb{R} \simeq \mathbb{R}^{d}$ the set of unramified characters. For $s \in \mathbb{R}^{d}$, we write $\nu^{s}$ for the associated unramified character. We will say that $\nu^{s}$ is positive (resp. strictly positive) if, for all simple (reduced) positive coroots $\alpha_{i,A}$, the precomposition of $\nu^{s}$ with $\alpha_{i,A}$ is positive (resp. strictly positive), or in other words that $s$ lies in the positive Weyl chamber of $\mathbb{X}_*(A) \otimes \mathbb{R}$ defined by the Borel. We write $\chi = \mu_{\chi}\nu^{s_{\chi}}$, where $\mu_{\chi}$ is a unitary character and $\nu^{s_{\chi}}$ is an unramified character of $T$, for some $s_{\chi} \in \mathbb{R}^{d}$. We now consider intertwining operators. Recall that, for $w \in W_{G}$ and $s \in \mathbb{R}^{d}$, we have the intertwining operator
\[ I_{w}(\mu_{\chi},s): i_{B}^{G}(\mu_{\chi}\nu^{s}) \ra i_{B}^{G}((\mu_{\chi}\nu^{s})^{w})  \]
\[ f(g) \mapsto \int_{U_{w}} f(w^{-1}ug)du \]
where $U_{w} := U \cap wU^{-}w^{-1}$ and $U^{-}$ is the unipotent radical of the opposite Borel. This integral will converge if $\nu^{s}$ lies sufficiently deep in the dominant Weyl chamber, and admits a meromorphic continuation as a function of $s$ (where one imposes this constraint on the real part). Away from these poles, it gives rise to an intertwining operator between $i_{B}^{G}(\mu_{\chi}\nu^{s})$ and $i_{B}^{G}((\mu_{\chi}\nu^{s})^{w})$ in the usual representation theory sense. For our purposes, we will be interested in the intertwining operator $I_{w_{0}}(\mu_{\chi},s)$ for the element of longest length $w_{0}$. In this case, one can see that the operator is convergent for all $s$ which are strictly positive, and the image of the operator is the unique irreducible Langlands quotient of $i_{B}^{G}(\mu_{\chi}\nu^{s})$ (See \cite[Sections~XI.2.6,XI.2.7]{BW}). This quotient has multiplcity one and therefore the intertwining space between $i_{B}^{G}(\chi)$ and $i_{B}^{G}(\chi^{w_{0}})$ is one-dimensional. It follows that, if $\nu^{s_{\chi}}$ is strictly positive, it suffices to exhibit an isomorphism $i_{B}^{G}(\chi) \simeq i_{B}^{G}(\chi^{w_{0}})$ to show irreducibility. We will now use this kind of analysis to prove Proposition \ref{prop: regular implies irred}.
\begin{proof}(Proposition \ref{prop: regular implies irred})
First off note that, for all $\chi$, we have an equality
\[ [i_{B}^{G}(\chi)] = [i_{B}^{G}(\chi^{w_{0}})] \] 
in $K_{0}(G(\mathbb{Q}_{p}))$ (See for example \cite[Theorem~4]{vD}). This allows us to, without loss of generality, assume that $\nu^{s_{\chi}}$ is positive. Now, consider the set of $i \in \mathcal{J}$ such that the precomposition of $\nu^{s_{\chi}}$ with $\alpha_{i,A}$ is equal to $0$. This defines a parabolic $P_{\chi}$ of $G$ which we decompose as $P_{\chi} = M_{\chi}A_{\chi}N_{\chi}$, where $A_{\chi}$ is the maximal split torus in the center of $M_{\chi}$. If $\mathcal{J}_{M_{\chi}}$ denotes the vertices of the relative Dynkin diagram of $M_{\chi}$ then $\mathcal{J}_{M_{\chi}} \subset \mathcal{J}$ corresponds to the set of simple positive coroots where this precomposition vanishes. Now, set $\nu_{1} := \nu^{s_{\chi}}|_{A \cap M_{\chi}}$ and $\nu_{2} := \nu^{s_{\chi}}|_{A_{\chi}}$. We consider the parabolic induction
\[ i_{B \cap M_{\chi}}^{M_{\chi}}(\mu_{\chi} \otimes \nu_{1}) \]
where we now note that $\mu_{\chi} \otimes \nu_{1}$ is unitary by construction. Therefore, $i_{B \cap M_{\chi}}^{M_{\chi}}(\mu_{\chi} \otimes \nu_{1})$ is unitary and thereby fully decomposable. It follows that, since $i_{B \cap M_{\chi}}^{M_{\chi}}(\mu_{\chi} \otimes \nu_{1})$ and $i_{B \cap M_{\chi}}^{M_{\chi}}((\mu_{\chi} \otimes \nu_{1})^{w_{0}^{M_{\chi}}})$ are equal in the Grothendieck group, we have an isomorphism $i_{B \cap M_{\chi}}^{M_{\chi}}(\mu_{\chi} \otimes \nu_{1}) \simeq i_{B \cap M_{\chi}}^{M_{\chi}}((\mu_{\chi} \otimes \nu_{1})^{w_{0}^{M_{\chi}}})$, where $w_{0}^{M_{\chi}}$ is the element of longest length of the Weyl group of $M_{\chi}$. Now, we have an isomorphism:
\[ i_{B}^{G}(\chi) \simeq i_{P_{\chi}}^{G}((i_{B \cap M_{\chi}}^{M_{\chi}}(\mu_{\chi} \otimes \nu_{1})) \otimes \nu_{2})  \]
Since $\chi$ is regular then, it follows by Lemma \ref{unitaryirred}, that the unitary induction $i_{B \cap M_{\chi}}^{M_{\chi}}(\mu_{\chi} \otimes \nu_{1})$ is irreducible. Therefore, the RHS is the induction of an  irreducible tempered representation times an unramified character $\nu_{2}$ satisfying the property that $\langle \alpha_{i,A}, \nu_{2} \rangle > 0$ for all $i \in \mathcal{J} \setminus \mathcal{J}_{M_{\chi}}$ by construction. Again applying the Langlands classification \cite[Sections~XI.2.6,XI.2.7]{BW}, the intertwining operator attached to the parabolic $P_{\chi}$ and the element $w_{0}w_{0}^{M_{\chi}}$ converges for $s = s_{\chi}$, and since it maps to a unique quotient of multiplicity one it suffices to exhibit an isomorphism between $i_{P_{\chi}}^{G}((i_{B \cap M_{\chi}}^{M_{\chi}}(\mu_{\chi} \otimes \nu_{1})) \otimes \nu_{2})$ and the induction twisted by $w_{0}w_{0}^{M_{\chi}}$. However, since we just saw that $i_{B \cap M_{\chi}}^{M_{\chi}}((\mu_{\chi} \otimes \nu_{1})^{w_{0}^{M_{\chi}}}) \simeq i_{B \cap M_{\chi}}^{M_{\chi}}(\mu_{\chi} \otimes \nu_{1})$, it suffices to show we have an isomorphism $i_{B}^{G}(\chi) \simeq i_{B}^{G}(\chi^{w_{0}})$. This establishes the claim. 
\end{proof}
Now we just need to show Proposition \ref{prop: genericinterisom}. 
\begin{proof}
We claim that this reduces to the analogous question for $G$ a group of rank $1$. In particular, let's consider for $i \in \mathcal{J}$ the simple positive (reduced) coroot $\alpha := \alpha_{i,A}$ and the rank $1$ parabolic $P_{\alpha} = M_{\alpha}N_{\alpha}A_{\alpha}$ attached to it. As before, we write $\nu^{\alpha}_{1} := \nu^{s_{\chi}}|_{A \cap M_{\alpha}}$ and $\nu^{\alpha}_{2} := \nu^{s^{\chi}}|_{A_{\alpha}}$. For all simple positive coroots $\alpha$, we have an isomorphism
\[ i_{B}^{G}(\chi) \simeq i_{P_{\alpha}}^{G}((i_{B \cap M_{\alpha}}^{M_{\alpha}}(\mu_{\chi} \otimes \nu_{1}^{\alpha})) \otimes \nu_{2}^{\alpha})  \]
However, if $w_{\alpha}$ is the simple reflection corresponding to $\alpha$, we have that 
\[ i_{B}^{G}(\chi^{w_{\alpha}}) \simeq i_{P_{\alpha}}^{G}((i_{B \cap M_{\alpha}}^{M_{\alpha}}((\mu_{\chi} \otimes \nu_{1}^{\alpha})^{w_{\alpha}}) \otimes \nu_{2}^{\alpha})  \]
Therefore, if we can show the existence of an isomorphism: 
\[ i_{B \cap M_{\alpha}}^{M_{\alpha}}(\mu_{\chi} \otimes \nu_{1}^{\alpha}) \simeq i_{B \cap M_{\alpha}}^{M_{\alpha}}((\mu_{\chi} \otimes \nu_{1}^{\alpha})^{w_{\alpha}})  \]
It will imply that we have an isomorphism:
\[ i_{B}^{G}(\chi) \simeq i_{B}^{G}(\chi^{w_{\alpha}}) \]
Now we can proceed by induction on the length of Weyl group elements. We just described the base case, but then, by replacing $\chi$ with $\chi^{w_{\alpha}}$ we can proceed by considering another simple reflection attached to another simple positive coroot distinct from $\alpha$. We note that, since $\chi$ being generic is a condition for all coroots (not just simple), at each step of the induction we are tasked with showing the following. 
\begin{proposition}{\label{rank1irred}}
Let $G$ be a absolutely simple, simply connected, quasi-split connected reductive group of split rank $1$. Let $\alpha$ be the unique simple (reduced) positive coroot of $G$ and $w_{\alpha}$ the corresponding simple reflection. Then, for all $\chi$ a generic character of $T(\mathbb{Q}_{p})$, we have an isomorphism 
\[ i_{B}^{G}(\chi) \simeq i_{B}^{G}(\chi^{w_{\alpha}}) \]
of smooth $G(\mathbb{Q}_{p})$-representations. 
\end{proposition}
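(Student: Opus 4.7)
The plan is to reduce the proposition to a direct case-by-case analysis, exploiting the classification of absolutely simple, simply connected, quasi-split rank 1 groups over $\mathbb{Q}_p$. Such a group is either $G = \mathrm{SL}_2$ (the split case) or $G = \mathrm{SU}_3(E/\mathbb{Q}_p)$ for some quadratic extension $E/\mathbb{Q}_p$ (the quasi-split non-split case). In each case, the relative Weyl group has order $2$, generated by $w_\alpha$, and genericity of $\chi$ is symmetric under $\chi \mapsto \chi^{w_\alpha}$ since $w_\alpha$ permutes the (finitely many) $\Gamma$-orbits of positive and negative roots.

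First I would observe that if $\chi = \chi^{w_\alpha}$, the claim is trivial. So assume $\chi \neq \chi^{w_\alpha}$, i.e.\ $\chi$ is regular. The key step is to show that under genericity and regularity, $i_B^G(\chi)$ is irreducible. This is read off from the known classification of reducibility points of rank 1 principal series, exactly as recorded in Examples \ref{Sl2ex} and \ref{U3ex} of the paper. Concretely, for $G = \mathrm{SL}_2$, regularity forces $\chi^2 \neq \mathbf{1}$, and genericity forces $\chi \neq \mathbf{1}, |\cdot|^{\pm 1}$; together these rule out every reducibility point, so $i_B^G(\chi)$ is irreducible. For $G = \mathrm{SU}_3$, the calculation in Example \ref{U3ex} shows that genericity of $\chi$ alone already excludes each of the three possible reducibility conditions (the three cases involving $\chi_1 \simeq \eta|\cdot|_E^{\pm 1/2}$, $\chi_1 \simeq |\cdot|_E^{\pm 1}$, and $\chi_1|_{\mathbb{Q}_p^*}$ trivial). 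Hence in both cases $i_B^G(\chi)$ is irreducible, and the same argument applied to $\chi^{w_\alpha}$ (which is again generic) shows $i_B^G(\chi^{w_\alpha})$ is irreducible too.

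To conclude, I would invoke the standard identity $[i_B^G(\chi)] = [i_B^G(\chi^{w_\alpha})]$ in the Grothendieck group $K_0(G(\mathbb{Q}_p))$, which holds for all characters $\chi$ (via \cite[Theorem~4]{vD}, as used already in the proof of Proposition \ref{prop: regular implies irred}). Since both representations are irreducible and have the same class in $K_0(G(\mathbb{Q}_p))$, they are isomorphic, which is the desired statement.

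The main obstacle is essentially notational: one must verify carefully that the Galois-cohomological genericity condition of Condition/Definition \ref{def: charnormreg} (1) translates precisely into the character-theoretic conditions on $\chi$ that rule out the reducibility points classified in \cite[Section~7]{Keys1} and \cite[Page~173]{Rog} for $\mathrm{SU}_3$. This translation has essentially been worked out in Examples \ref{Sl2ex} and \ref{U3ex}; the only new observation required is the interaction of genericity with the regularity hypothesis $\chi \neq \chi^{w_\alpha}$, which in the $\mathrm{SL}_2$ case supplies the missing condition $\chi^2 \neq \mathbf{1}$ not forced by genericity alone, and in the $\mathrm{SU}_3$ case is automatic.
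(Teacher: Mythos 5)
Your proposal is correct and follows essentially the same route as the paper: Tits' classification reduces to $\mathrm{SL}_2$ and $\mathrm{SU}_3$, genericity is matched against the known rank-$1$ reducibility criteria (as in Examples \ref{Sl2ex} and \ref{U3ex}), and the isomorphism is extracted from irreducibility plus the Grothendieck-group identity. The one cosmetic difference is in the $\mathrm{SL}_2$ edge case $\chi^2 \simeq \mathbf{1}$: the paper invokes unitarity, while you note that $\chi^2 \simeq \mathbf{1}$ is equivalent to $\chi = \chi^{w_\alpha}$ so the isomorphism is tautological, which is a slightly cleaner way to dispose of it.
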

It remains to justify the absolutely simple simply connected assumption. To do this, note that given a $G$ not satisfying these conditions, we can find a surjective central isogeny $f: \tilde{G} \ra G$, where $\tilde{G}$ is a product of torii and absolutely simple simply connected groups. If we let $\tilde{B}$ be the preimage of the Borel $B$ with maximal torus given by $\tilde{T}$ the preimage of $T$ then, since $\Ker(f)$ is contained in the center, we have an isomorphism $\tilde{G}/\tilde{B} \simeq G/B$. This implies that we have an isomorphism:
\[ i_{B}^{G}(\chi)|_{\tilde{G}(\mathbb{Q}_{p})} \simeq i_{\tilde{B}}^{\tilde{G}}(\chi|_{\tilde{T}(\mathbb{Q}_{p})}) \]
Moreover, since $f$ will induce an isomorphism on the root spaces, it follows that if $\chi$ is generic with respect to $G$ then $\chi|_{\tilde{T}(\mathbb{Q}_{p})}$ is generic with respect to $\tilde{G}$. This reduces us to exhibiting the desired isomorphism for $\tilde{G}$. Now, we prove Proposition \ref{rank1irred} through brute force. In particular, we will use Tits' classification theorem \cite{Tits} (See also \cite{Carb} for the classification in rank $1$). We adopt the same notation as in \cite{Tits}. Since we are assuming the group to be quasi-split, there are two cases.
\subsubsection{$\phantom{}^{1}A^{1}_{1,1}$}
In this case, we have that $G = \mathrm{SL}_{2}$. We saw in Example \ref{Sl2ex} that genericity guaranteed irreducibility aside from the case where $\chi^{2} \simeq \mathbf{1}$, but since this is a unitary character it still follows that we have the desired isomorphism.
\subsubsection{$\phantom{}^{2}A^{1}_{2,1}$}
In this case, the group cannot be split; in particular, we have that $G = \mathrm{SU}_{3}$ is a quasi-split special unitary group attached to a quadratic extension $E/\mathbb{Q}_{p}$. We saw in Example \ref{U3ex} that $\chi$ being generic guaranteed irreducibility and hence the desired isomorphism.
\end{proof}
\section{Tilting Cocharacters}{\label{tiltingcocharacters}}
We consider a general quasi-split connected reductive group $G/\mathbb{Q}_{p}$, and a geometric dominant cocharacter $\mu \in \mathbb{X}_{*}(T_{\ol{\mathbb{Q}}_{p}})^{+}$. For $\Lambda \in \{\ol{\mathbb{Q}}_{\ell},\ol{\mathbb{Z}}_{\ell},\ol{\mathbb{F}}_{\ell}\}$, we are interested in understanding the condition of $\mu$ being tilting (Definition \ref{defmutilting}). Recall that this means that the representation $V_{\mu} \in \Rep_{\Lambda}(\hat{G})$ attached to $\mu$ lies in the subcategory $\Tilt_{\Lambda}(\hat{G})$. If $\Lambda = \ol{\mathbb{Q}}_{\ell}$ this is always true, and so we fix $\Lambda \in \{\ol{\mathbb{F}}_{\ell},\ol{\mathbb{Z}}_{\ell}\}$ in what follows. Since this only involves the representation theory $\hat{G}$ we may, without loss of generality, assume $G$ is split in what follows. This is simply the question of when the highest weight module $V_{\mu}$ of $\hat{G}$ is irreducible with $\Lambda$-coefficients. This question has been studied extensively (See for example \cite[Pages~283-286]{Jan} for a comprehensive overview). In the first two sections, we discuss some general theory to determine when $\mu$ is tilting in this split case, and then provide a table summarizing when $\mu$ is tilting in the case that $\mu$ is a fundamental coweight. 
\subsection{General Theory}
We assume $G$ is a split connected reductive group throughout this section, with Langlands dual group $\hat{G}$. For $\mu$ a dominant cocharacter,  the condition that $\mu$ is tilting is equivalent to showing that the highest weight $G$-module $V_{\mu} \in \Rep_{\Lambda}(\hat{G})$ is simple. We begin with the following lemma.
\begin{lemma}{\label{mintilting}}
If $\mu$ is minuscule then it is tilting. 
\end{lemma}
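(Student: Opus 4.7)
The plan is to show that when $\mu$ is minuscule, the Weyl module $V_{\mu} \in \Rep_{\Lambda}(\hat{G})$ is actually irreducible, and hence coincides with its dual Weyl (induced) module $V^{\mu}$; this gives both a Weyl and a good filtration (of length one), so $V_{\mu}$ is tilting.

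First I would handle the case $\Lambda = \ol{\mathbb{F}}_{\ell}$. Recall the defining property of minuscule: every weight of $V_{\mu}$ lies in the single Weyl orbit $W\mu$, and each weight space is one-dimensional (this follows from $\langle \hat{\alpha}, \mu\rangle \in \{0, \pm 1\}$ for all roots $\hat{\alpha}$, which is exactly the combinatorial input used in Lemma \ref{minusculeweighttheory}). Now let $L \subseteq V_{\mu}$ be any nonzero submodule. Its set of weights is a nonempty, $W$-stable subset of $W\mu$, and a standard highest-weight argument (picking a weight vector killed by all positive root vectors) shows that $L$ contains a highest-weight vector of some weight $\lambda$ that is dominant and lies in $W\mu$; the only such weight is $\mu$ itself. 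Since the $\mu$-weight space of $V_{\mu}$ is one-dimensional and generates $V_{\mu}$ as a $\hat{G}$-module (by the universal property of the Weyl module), we conclude $L = V_{\mu}$. Running the same argument on composition factors via the character formula (each composition factor must be the simple module $L_{\mu}$ of highest weight $\mu$, and the $\mu$-multiplicities force exactly one factor) shows $V_{\mu} = L_{\mu}$ is simple over $\ol{\mathbb{F}}_{\ell}$.

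Next I would deduce the $\Lambda = \ol{\mathbb{Z}}_{\ell}$ case by a standard lifting/reduction argument. Since $V_{\mu}$ over $\ol{\mathbb{Z}}_{\ell}$ is a free $\ol{\mathbb{Z}}_{\ell}$-module whose reduction mod $\ell$ is the irreducible $\ol{\mathbb{F}}_{\ell}$-module $L_{\mu}$ and whose generic fiber is the irreducible $\ol{\mathbb{Q}}_{\ell}$-module $V_{\mu,\ol{\mathbb{Q}}_{\ell}}$, the natural map $V_{\mu} \to V^{\mu}$ is an isomorphism (it is surjective mod $\ell$ and over $\ol{\mathbb{Q}}_{\ell}$, and both sides are $\ol{\mathbb{Z}}_{\ell}$-flat of the same rank). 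Hence the trivial filtration $0 \subset V_{\mu}$ serves as both a Weyl and a good filtration, and $V_{\mu}$ is tilting over $\ol{\mathbb{Z}}_{\ell}$ as claimed.

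The only potentially delicate step is the comparison with the dual Weyl module over $\ol{\mathbb{Z}}_{\ell}$: one needs to know that a $\hat{G}$-equivariant $\ol{\mathbb{Z}}_{\ell}$-module map between two free $\ol{\mathbb{Z}}_{\ell}$-modules of the same rank that is an isomorphism both mod $\ell$ and over $\ol{\mathbb{Q}}_{\ell}$ is already an isomorphism, which is immediate from Nakayama. I expect no serious obstacle beyond this; the main content is the character/weight argument in the $\ol{\mathbb{F}}_{\ell}$ case, which is very soft because minuscule representations have multiplicity-free weight spaces concentrated in a single $W$-orbit.
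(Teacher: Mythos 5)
Your proof is correct and follows the same strategy as the paper's one-line argument: minuscule means the weights of $V_{\mu}$ form a single Weyl orbit, which forces irreducibility over any field, and irreducible Weyl modules are tilting. The paper states this without elaboration; you supply the standard highest-weight argument over $\ol{\mathbb{F}}_{\ell}$ and the Nakayama lifting to $\ol{\mathbb{Z}}_{\ell}$, both of which are correct and fill in exactly the details the paper elides.
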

\begin{proof}
In this case, the weights of $V_{\mu}$ form a closed Weyl group orbit. It follows that $V_{\mu}$ is always irreducible and therefore tilting. 
\end{proof}
We would now like to provide a finer criterion for irreducibility. To do this, we will introduce some notation. Given a coroot $\nu$ and $r \in \mathbb{Z}$, we consider the affine reflection of $\mathbb{X}_{*}(T) \otimes \mathbb{R}$ given as
\[ s_{\nu,r}(\mu) := s_{\nu}(\mu) + r\nu  \]
where $s_{\nu} \in W_{G}$ is the reflection attached to $\nu$. We set $W_{\ell}$ to be the subgroup generated by reflections $s_{\nu,n\ell}$, where $\nu$ is a coroot and $n \in \mathbb{Z}$, and write $\rho$ for the sum of all coroots of $G$. Elements $w \in W_{\ell}$ act on $\mu \in \mathbb{X}_{*}(T) \otimes \mathbb{R}$, via the standard dot action $w\cdot\mu :=  w(\mu + \rho) - \rho$. In other words, we regard $s_{\nu,n\ell}$ as a reflection around the hyperplane:
\[ \{\mu \in \mathbb{X}_{*}(T) \otimes \mathbb{R} \text{ | } \langle \mu + \rho, \nu^{\vee} \rangle = n\ell\} \]
It follows that the standard alcove for this action is given by
\[ C = \{\mu \in \mathbb{X}_{*}(T) \otimes \mathbb{R} \text{ | } 0 < \langle \mu + \rho, \nu^{\vee} \rangle < \ell \} \]
and we denote the closure by $\ol{C}$. We now have the following slightly more general criterion for the irreducibility of $V_{\mu}$. 
\begin{proposition}{\cite[Corollary~5.6]{Jan}}{\label{alcovecriterion}}
Suppose that $\mu \in \ol{C} \cap \mathbb{X}_{*}(T)^{+}$ then $\mu$ is tilting. 
\end{proposition}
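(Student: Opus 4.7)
The statement to prove is that for $\mu \in \ol{C} \cap \mathbb{X}_{*}(T)^{+}$ (with $G$ split), the Weyl module $V_{\mu}$ is irreducible, hence tilting. My plan is to combine two inputs: the strong linkage principle and the alcove-geometric fact that weights in $\ol{C}$ are minimal in their dot orbits restricted to the dominant chamber.

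First, I would set up the reduction. The Weyl module $V_{\mu}$ always has a unique simple quotient $L_{\mu}$ appearing with multiplicity one, and all other composition factors $L_{\lambda}$ satisfy $\lambda < \mu$ strictly in the dominance order on $\mathbb{X}_{*}(T)^{+}$. Hence $V_{\mu}$ is simple, and therefore tilting, if and only if no such $L_{\lambda}$ with $\lambda < \mu$ appears as a composition factor. Since being tilting means admitting both a Weyl and a good filtration, and $V_{\mu}$ trivially has a Weyl filtration (itself), simplicity is enough to conclude $V_{\mu} = \mathcal{T}_{\mu}$.

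Next, I would invoke the strong linkage principle, due to Andersen, which asserts that every composition factor $L_{\lambda}$ of $V_{\mu}$ satisfies $\lambda \uparrow \mu$, i.e., there exists a chain
\[
\lambda = \lambda_{0} \leq \lambda_{1} \leq \cdots \leq \lambda_{r} = \mu
\]
with each $\lambda_{i-1} = s_{\alpha_{i}, n_{i}\ell} \cdot \lambda_{i}$ for some positive coroot $\alpha_{i}$ and integer $n_{i}$, where each step drops in the dominance order. This reduces matters to proving the following purely combinatorial fact: if $\mu \in \ol{C} \cap \mathbb{X}_{*}(T)^{+}$ and $\lambda \in \mathbb{X}_{*}(T)^{+}$ satisfies $\lambda \uparrow \mu$, then $\lambda = \mu$.

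For the alcove-geometric step, I would argue as follows. By definition of $\ol{C}$, one has $0 \leq \langle \mu + \rho, \alpha^{\vee} \rangle \leq \ell$ for every positive root $\alpha$. A single reflection $s_{\alpha, n\ell}$ with $s_{\alpha, n\ell}\cdot \mu \leq \mu$ and $s_{\alpha,n\ell}\cdot \mu \neq \mu$ would require $\langle \mu + \rho, \alpha^{\vee} \rangle - n\ell > 0$, while the hyperplane $\{\nu : \langle \nu + \rho, \alpha^{\vee} \rangle = n\ell\}$ must actually meet the segment from $s_{\alpha,n\ell}\cdot \mu$ to $\mu$; the bounds on $\ol{C}$ force $n = 0$ and $\langle \mu + \rho, \alpha^{\vee} \rangle = 0$, so the reflection fixes $\mu$. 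Iterating along any strong-linkage chain, every step must fix $\mu$, giving $\lambda = \mu$.

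The main obstacle is not the alcove geometry, which is elementary once set up correctly, but the strong linkage principle, whose proof (via Jantzen's sum formula and translation functors) is substantial; I would quote it from \cite[Corollary~II.6.13]{Jan} rather than reprove it. A secondary subtlety is the case when $\mu$ lies on a wall of $\ol{C}$ with $\langle \mu + \rho, \alpha^{\vee} \rangle = \ell$: here one must check that the reflection $s_{\alpha,\ell}$ either fixes $\mu$ or sends it outside the dominant chamber, so no nontrivial dominant strong-linkage neighbor exists. This is where the precise form of the strong linkage (each intermediate $\lambda_{i}$ must be dominant when interpreted as a character of $\hat{T}$, and strict inequalities in the chain are weak) is essential.
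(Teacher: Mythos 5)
The paper quotes this as \cite[Corollary~5.6]{Jan} with no proof of its own, so there is no internal argument to compare against; but the sum formula (Proposition~\ref{sumformula}), which the paper introduces immediately afterwards with the remark ``this generalizes Proposition~\ref{alcovecriterion},'' gives a one-line proof that your approach bypasses: for $\mu \in \ol{C}$ one has $\langle \mu + \rho, \nu^{\vee} \rangle \leq \ell$ for every positive coroot $\nu$, so the range $0 < m\ell < \langle \mu + \rho, \nu^{\vee} \rangle$ is empty, the right-hand side of the sum formula vanishes, $V_{\mu}^{1} = 0$, and $V_{\mu} = L_{\mu}$ is simple, hence tilting. Your route via linkage is the other standard approach and is valid in outline, though strong linkage is more than you need: highest-weight theory already gives that any extra composition factor $L_{\lambda}$ has $\lambda$ dominant and $\lambda < \mu$, the (weak) linkage principle gives $\lambda \in W_{\ell}\cdot\mu$, and the alcove lemma that $\mu \leq w\cdot\mu$ for all $\mu \in \ol{C}$, $w \in W_{\ell}$ (Jantzen II.6.2) then forces $\lambda = \mu$.

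The concrete gap is in your alcove-geometry step. From $n\ell < \langle\mu+\rho,\alpha^{\vee}\rangle \leq \ell$ you conclude $n = 0$ and $\langle\mu+\rho,\alpha^{\vee}\rangle = 0$, but every $n \leq -1$ also satisfies that inequality, and $s_{\alpha,n\ell}\cdot\mu = \mu - (\langle\mu+\rho,\alpha^{\vee}\rangle - n\ell)\alpha$ with $n \leq -1$ is a genuinely smaller weight. You try to discard these by requiring the intermediate weights $\lambda_{i}$ in a strong-linkage chain to be dominant, but this is not part of the definition of $\uparrow$: the chain is allowed to leave the dominant cone. So the $n < 0$ reflections are not excluded by your pointwise calculation, and the statement you actually need, $\mu \leq w\cdot\mu$ for $\mu \in \ol{C}$ and $w \in W_{\ell}$, requires a global alcove argument rather than a single-reflection one. (By contrast, the ``wall'' case $\langle\mu+\rho,\alpha^{\vee}\rangle = \ell$ that you flag as a secondary subtlety is not one: there $s_{\alpha,\ell}\cdot\mu = \mu - (\ell - \ell)\alpha = \mu$, so the reflection simply fixes $\mu$.)
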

For a given $\mu$, this will give us a lower bound on the $\ell$ for which $\mu$ is tilting. However, it is only a sufficient condition and not necessary. In particular, note that we have the following.
\begin{theorem}{\cite[Theorem~2.6]{Mat}}{\label{steintilting}}
If $\mu = (\ell -1)(\rho)$ then $\mu$ is tilting.
\end{theorem}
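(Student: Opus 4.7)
The plan is to reduce the claim to the classical simplicity of the Steinberg module. First, I would observe that it suffices to show that the Weyl module $V_{(\ell-1)\rho}$ is already a simple $\hat{G}$-module. Indeed, if $V_{(\ell-1)\rho}$ is simple then $V_{(\ell-1)\rho} \cong L((\ell-1)\rho) \cong V^{(\ell-1)\rho}$, so the trivial one-step filtration is simultaneously a Weyl filtration and a good filtration, and $\mu = (\ell-1)\rho$ is tilting by definition.

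To verify simplicity, the natural approach is dimension counting. Applying the Weyl dimension formula and using that $(\ell-1)\rho + \rho = \ell\rho$, one computes
\[ \dim V_{(\ell-1)\rho} \;=\; \prod_{\alpha > 0} \frac{\langle (\ell-1)\rho + \rho,\, \alpha^\vee \rangle}{\langle \rho,\, \alpha^\vee \rangle} \;=\; \prod_{\alpha > 0} \ell \;=\; \ell^N, \]
where $N$ is the number of positive roots of $\hat{G}$. It therefore suffices to produce a simple quotient of $V_{(\ell-1)\rho}$ of dimension $\ell^N$, and the obvious candidate is the simple head $L((\ell-1)\rho)$. The equality $\dim L((\ell-1)\rho) = \ell^N$ is precisely Steinberg's classical dimension formula for the Steinberg module, which I would prove by restricting $L((\ell-1)\rho)$ to the first Frobenius kernel $\hat{G}_1 \subset \hat{G}$, identifying this restriction with the projective (equivalently, injective) hull of the trivial $\hat{G}_1$-module, and computing its dimension combinatorially as $\ell^N$.

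The main obstacle is this last dimension computation. Note that Proposition \ref{alcovecriterion} is of no help: as soon as $\hat{G}$ has Coxeter number $h > 2$, the Steinberg weight $(\ell-1)\rho$ fails to lie in the closure of the fundamental alcove, since $\langle \ell\rho,\, \theta^\vee \rangle = \ell(h-1) > \ell$ for $\theta$ the highest short root. An appealing alternative that bypasses the Frobenius-kernel calculation is to invoke the strong linkage principle: any composition factor $L(\nu)$ of $V_{(\ell-1)\rho}$ satisfies $\nu \uparrow (\ell-1)\rho$ under the dot action of the affine Weyl group $W_\ell$, and because $(\ell-1)\rho$ sits on every wall $\{\mu : \langle \mu+\rho,\, \alpha^\vee\rangle = \ell\}$ for $\alpha$ a simple root of $\hat{G}$, one can verify directly that the unique dominant weight in the upward closure that is also bounded above by $(\ell-1)\rho$ in the dominance order is $(\ell-1)\rho$ itself. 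This forces $V_{(\ell-1)\rho}$ to have a unique composition factor, namely $L((\ell-1)\rho)$, and the Weyl-formula dimension count upgrades this to an isomorphism $V_{(\ell-1)\rho} \cong L((\ell-1)\rho)$, completing the proof.
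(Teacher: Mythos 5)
Your proposal is correct, and it fills in a genuine gap: the paper does not prove this statement but simply cites Mathieu's Theorem~2.6, so there is no argument in the paper to compare against. Both of your routes are sound. The reduction to simplicity of the Weyl module $V_{(\ell-1)\rho}$ is exactly right, and the Weyl dimension formula computation $\dim V_{(\ell-1)\rho} = \ell^N$ is clean. Your primary route (restrict $L((\ell-1)\rho)$ to the first Frobenius kernel, identify it with the projective/injective hull of the trivial $\hat{G}_1$-module, and count) is the classical Steinberg argument. Your alternative via linkage is in fact the cleaner of the two: because $\ell\rho$ is a special point, the $W_\ell$-orbit of $(\ell-1)\rho$ under the dot action is exactly $(\ell-1)\rho + \ell Q$ where $Q$ is the root lattice of $\hat{G}$, and a weight $(\ell-1)\rho + \ell q$ can be simultaneously dominant and $\leq (\ell-1)\rho$ only if $q$ is both dominant and $\leq 0$, forcing $q = 0$. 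This is worth spelling out, since your phrase ``one can verify directly'' conceals precisely the step that makes the argument work, and the observation that $(\ell-1)\rho$ lies on the walls $\langle\mu+\rho,\alpha^{\vee}\rangle=\ell$ for simple $\alpha$, while true and suggestive, is not by itself the verification.

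One small logical rough edge at the very end: you say the Weyl-formula dimension count ``upgrades'' the statement that the only possible composition factor is $L((\ell-1)\rho)$ to an isomorphism $V_{(\ell-1)\rho}\cong L((\ell-1)\rho)$. In the linkage route you do not need the dimension count at all, and invoking it there reintroduces a dependence on Steinberg's dimension formula that the linkage argument was supposed to avoid. The correct finish is a multiplicity-one argument: the $(\ell-1)\rho$-weight space of $V_{(\ell-1)\rho}$ is one-dimensional, so $L((\ell-1)\rho)$ can occur at most once as a composition factor; since it occurs at least once (as the simple head), $V_{(\ell-1)\rho} = L((\ell-1)\rho)$. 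With that replacement, the linkage route is a self-contained and more elementary alternative to the citation to Mathieu.
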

So $V_{(\ell - 1)(\rho)}$ will always be simple, but $(\ell - 1)(\rho)$ will not usually lie in  $\ol{C} \cap \mathbb{X}_{*}(T)^{+}$. Moreover, if  we define the Coxeter number $h = \max_{\nu}\{ \langle \rho, \nu^{\vee} \rangle + 1 \}$ ranging over all coroots $\nu$ then it is easy to see that $C \cap \mathbb{X}_{*}(T) \neq \emptyset$ is equivalent to $\ell \geq h$, and so, for small $\ell$, Proposition \ref{alcovecriterion} tells us nothing. To tackle these more general cases, we introduce a sum formula for the characters of the representations. Namely, for $V \in \Rep_{\Lambda}(\hat{G})$, we write $\ch(V) := \sum_{\nu \in \mathbb{X}_{*}(T)} \dim(V(\nu))e^{\nu}$ for the character of $V$. For $\mu$ a dominant cocharacter, we write $\chi(\mu) := \ch(V_{\mu})$ for the character of $V_{\mu}$. Then we have the following. 
\begin{proposition}{\cite[Section~8.19]{Jan}}{\label{sumformula}}
For each $\mu \in \mathbb{X}_{*}(T)^{+}$, there is a filtration of $\hat{G}$-modules
\[ \cdots \subset V_{\mu}^{2} \subset V_{\mu}^{1} \subset V_{\mu}^{0} = V_{\mu} \]
such that 
\[ \sum_{i > 0} \ch(V_{\mu}^{i}) = \sum_{\nu} \sum_{0 < m\ell < \langle \mu + \rho,\nu^{\vee} \rangle} \nu_{\ell}(m\ell)\chi(s_{\nu,m\ell} \cdot \mu) \]
where $\nu_{\ell}(-)$ is the $\ell$-adic valuation and $\nu$ ranges over all coroots. Moreover, we have that $V_{\mu}/V_{\mu}^{1}$ is isomorphic to the irreducible socle of $V_{\mu}$.  In particular, we see that $\mu$ is tilting if and only if 
\[ \sum_{\nu} \sum_{0 < m\ell < \langle \mu + \rho,\nu^{\vee} \rangle} \nu_{\ell}(m\ell)\chi(s_{\nu,m\ell} \cdot \mu) = 0 \]
\end{proposition}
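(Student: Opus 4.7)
The plan is to implement Jantzen's classical construction of the sum formula via a contravariant bilinear form, following \cite[Part II, Section 8]{Jan}, and adapted to the $\ol{\mathbb{Z}}_\ell$-setting. Since the statement concerns characters of reductions, I may pass to the case $\Lambda = \ol{\mathbb{F}}_\ell$ and lift: let $\mathcal{O} := \ol{\mathbb{Z}}_\ell$, and let $V_{\mu,\mathcal{O}}$ denote the $\mathcal{O}$-lattice in the generically simple module $V_{\mu,\ol{\mathbb{Q}}_\ell}$ obtained by applying the Kostant $\mathbb{Z}$-form of the enveloping algebra $U(\hat{\mathfrak{g}})$ to a highest weight vector $v_\mu$. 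The reduction $V_{\mu,\mathcal{O}} \otimes_{\mathcal{O}} \ol{\mathbb{F}}_\ell$ recovers $V_\mu$. On the generic fiber there is a contravariant bilinear form $\langle -, - \rangle$, unique up to scalar by Schur's lemma together with the involutive structure given by the Chevalley involution; normalizing $\langle v_\mu, v_\mu \rangle = 1$ produces an $\mathcal{O}$-valued symmetric form on $V_{\mu,\mathcal{O}}$ which is diagonal with respect to the weight decomposition.

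The Jantzen filtration is then defined by
\[ V_{\mu,\mathcal{O}}^{(i)} := \{v \in V_{\mu,\mathcal{O}} : \langle v, V_{\mu,\mathcal{O}} \rangle \subseteq \ell^i\mathcal{O}\}, \]
with $V_\mu^i$ its image in the reduction. By construction $V_\mu^1$ is the radical of the reduced form, and $V_\mu/V_\mu^1$ is therefore a simple module; under the duality convention $V_\mu = \mathbb{D}(V^\mu)$ used in the paper, it is identified with the socle rather than the head, because dualizing Jantzen's filtration on the Weyl module $V^\mu$ swaps the roles. In particular $\mu$ is tilting if and only if $V_\mu$ is simple, if and only if $V_\mu^1 = 0$, if and only if $\sum_{i > 0} \ch(V_\mu^i) = 0$ — so once the sum formula is established, the final assertion of the proposition is automatic.

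The content of the proof lies in a weight-space-by-weight-space computation of the $\ell$-adic valuation of the Gram determinant of $\langle -, - \rangle$. For each weight $\lambda$, the Shapovalov determinant formula gives
\[ \det\bigl(\langle -, - \rangle|_{V_{\mu,\mathcal{O}}(\lambda)}\bigr) = u_\lambda \cdot \prod_{\nu > 0}\prod_{m \geq 1}\bigl(\langle \mu + \rho, \nu^\vee\rangle - m\bigr)^{P(\mu - \lambda - m\nu)} \]
up to an $\mathcal{O}$-unit $u_\lambda$, where $P$ is the Kostant partition function. Summing $\nu_\ell$-valuations across all weights yields $\sum_{i \geq 1}\ch(V_\mu^i)$ on the left; on the right, the contribution of a fixed pair $(\nu, m)$ with $\ell \mid (\langle \mu + \rho, \nu^\vee\rangle - m)$ accumulates to a factor of $\nu_\ell(\langle \mu + \rho, \nu^\vee\rangle - m) \cdot \sum_\lambda P(\mu - \lambda - m\nu) e^\lambda$, which after reindexing the Verma character equals $\nu_\ell(m\ell) \cdot \chi(s_{\nu,m\ell}\cdot\mu)$ by the Weyl character formula and the identity $s_{\nu,m\ell}\cdot\mu = \mu - (\langle \mu + \rho, \nu^\vee\rangle - m\ell)\nu$. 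The main obstacle is the Shapovalov determinant formula itself in this integral context, which in Jantzen's treatment reduces via a ``continuity-in-the-highest-weight'' argument to an inductive computation on Verma modules; the combinatorial rearrangement turning a sum of valuations of linear factors into the stated shifted-character sum is then a bookkeeping exercise. Importing both pieces from \cite{Jan} completes the proof.
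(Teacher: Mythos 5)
The paper does not supply a proof; it cites \cite[Section~8.19]{Jan}, so the implicit reference proof is Jantzen's own. Your overall architecture — construct an $\mathcal{O}$-lattice, define the filtration via the contravariant form, translate $\sum_{i>0}\ch V_\mu^i$ into the $\ell$-adic valuation of a weight-space-by-weight-space Gram determinant, then reindex — is the right shape, and your remarks about the lattice, the form, and the head/socle duality bookkeeping are fine. The gap is in the determinant step, and it is not cosmetic.

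The Shapovalov determinant formula you write down,
\[
\prod_{\nu > 0}\prod_{m\geq 1}\bigl(\langle\mu+\rho,\nu^\vee\rangle - m\bigr)^{P(\mu-\lambda-m\nu)},
\]
computes the Gram determinant of the contravariant form on the $\lambda$-weight space of the \emph{Verma module} $M(\mu)$, a space of dimension $P(\mu-\lambda)$, and with respect to the PBW lattice. You are applying it to the Kostant lattice inside the finite-dimensional generically simple module $V_{\mu,\mathcal{O}}$, whose $\lambda$-weight space has the strictly smaller Weyl-module dimension; the number of factors in your product already does not match the size of the Gram matrix. The Weyl module is a \emph{quotient} of the Verma module by the radical of the form, and the determinant of the induced form on a quotient is not the Verma-module determinant. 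Concretely, even for $\mathrm{SL}_2$ the Gram entry of the Kostant lattice at weight $n-2k$ is $\binom{n}{k}$, whose $\ell$-adic valuation does not agree with that of the linear-factor product. And even if one pushes the formula through formally, the reindexing you describe produces $\sum_\lambda P(\mu-\lambda-m\nu)e^\lambda = \ch M(\mu-m\nu)$, a Verma character, not the Weyl character $\chi(s_{\nu,m\ell}\cdot\mu)$ appearing in the statement; the passage between these is an alternating Weyl-group sum, not the Weyl character formula applied as you invoke it, so the argument as written proves a different (and false) identity.

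The correct determinant statement — for the canonical map $V(\mu)_{\mathcal{O}} \to H^0(\mu)_{\mathcal{O}}$ of Weyl modules over a DVR, equivalently for the contravariant form on the Kostant lattice — is the actual technical content of \cite[Part~II, \S\S~8.9--8.18]{Jan}. It is not an integral form of the Shapovalov formula, and it is not proved by a ``continuity in the highest weight'' reduction to Verma modules; it goes by a reduction to rank one together with a translation/middle-linkage argument that is specific to the algebraic-group setting. That determinant formula, rather than Shapovalov's, is what one must import to make your proposed argument go through, and it is the step your proposal treats as a formality.
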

This generalizes Proposition \ref{alcovecriterion} and gives a computational method for verifying when $\mu$ is tilting. See for example \cite[Section~8.20]{Jan} for this worked out for $G = \mathrm{SL}_{4}$ and $\ell > 3$, \cite{GS} for a table answering this question for $G$ an exceptional group and certain $\mu$, and \cite{NG} for an analogous table for certain exceptional groups and low rank classical groups. In general, a precise classification of when $\mu$ is tilting for all $G$ seems to be quite complicated, and to the best of our knowledge is unknown. However, when $G$ is of type $A_{n - 1}$, there exists a complete classification.
\begin{proposition}{\cite[Page~113]{Jan1}}
For $G$ of type $A_{n - 1}$, $\mu$ is tilting if and only if for each coroot $\nu$ of $G$ the following is satisfied. Write $\langle \mu + \rho, \nu^{\vee} \rangle = a\ell^{s} + b\ell^{s + 1}$, with $a,b,s \in \mathbb{N}$ and $0 < a < \ell$. Then there have to be positive coroots $\beta_{0},\beta_{1},\ldots,\beta_{b}$ such that $\langle \mu + \rho, \beta_{i}^{\vee} \rangle = \ell^{s + 1}$ for $1 \leq i \leq b$, $\langle \mu + \rho, \beta_{0}^{\vee} \rangle = ap^{s}$, $\nu = \sum_{i = 0}^{b} \beta_{i}$, and $\sum_{i = 1}^{b} \beta_{i}$ is a coroot. 
\end{proposition}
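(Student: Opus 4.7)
The plan is to deduce this classification from the Jantzen sum formula (Proposition \ref{sumformula}) specialized to type $A_{n-1}$, where the combinatorics of coroots $e_i - e_j$ is rigid enough to permit a complete cancellation analysis. Concretely, I would work with $\hat{G} = \mathrm{GL}_n$ (or $\mathrm{SL}_n$), identify $\mu + \rho$ with an $n$-tuple $(\lambda_1,\ldots,\lambda_n)$ of its coordinates, and observe that $\langle \mu + \rho, \nu^{\vee}_{ij} \rangle = \lambda_i - \lambda_j$ for the coroot $\nu_{ij} = e_i - e_j$. By Proposition \ref{sumformula}, $\mu$ is tilting if and only if
\[
\sum_{i<j}\ \sum_{0 < m\ell < \lambda_i - \lambda_j} \nu_{\ell}(m\ell)\,\chi(s_{\nu_{ij},\, m\ell}\cdot \mu) \;=\; 0
\]
as a virtual character, so the problem reduces to a purely combinatorial question: when do the Weyl-module characters $\chi(s_{\nu_{ij},m\ell}\cdot \mu)$ assemble with their $\nu_\ell(m\ell)$ multiplicities into the zero element of the character ring.

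The next step is to pass from signed combinations of $\chi$'s to their dominant representatives using the Weyl character formula, after which each term $\chi(s_{\nu_{ij},m\ell}\cdot \mu)$ either vanishes (if its argument is non-dominant after applying $\rho$-shifted $W_G$-straightening) or produces a Weyl module indexed by a new dominant weight. Here one should work one coroot $\nu$ at a time: fixing $\nu$ with $\langle \mu+\rho,\nu^\vee\rangle = a\ell^s + b\ell^{s+1}$, $0<a<\ell$, the multiplicities $\nu_\ell(m\ell)$ for $0<m\ell < a\ell^s + b\ell^{s+1}$ form a staircase pattern encoding the $\ell$-adic digits of $\langle \mu+\rho,\nu^\vee\rangle$, and the existence of a decomposition $\nu = \sum_{i=0}^{b}\beta_i$ with the stated hyperplane conditions is precisely what allows one to pair each term with an equal and opposite contribution coming from the reflections $s_{\beta_i,\ell^{s+1}}$ and $s_{\beta_0,a\ell^s}$ that already fix $\mu$ under the dot action.

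The backbone of the argument is thus the Carter--Payne type identity for type $A_{n-1}$, which in this setting says that when $\nu$ admits the described decomposition into positive coroots $\beta_0,\ldots,\beta_b$ with $\sum_{i=1}^b\beta_i$ itself a coroot, the contribution of $\nu$ to the Jantzen sum can be rewritten as a telescoping cancellation coming from the chain of intermediate reflections. The converse direction — showing that if $\mu$ is tilting then such a $\beta$-decomposition must exist for every $\nu$ — is established by induction on $s$ and on the level $\ell$-adic digit $b$, exploiting that a single obstruction to cancellation produces a nonzero simple composition factor in the socle $V_\mu/V_\mu^1$ of Proposition \ref{sumformula}.

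The main obstacle, and the reason this is delicate only in type $A_{n-1}$, is verifying that both directions of the equivalence are sharp: positivity and carries in the $\ell$-adic expansion of $\langle \mu+\rho,\nu^\vee\rangle$ must be translated bijectively into the existence of the chain $\beta_0,\ldots,\beta_b$ of \emph{positive} coroots summing correctly, and for each fixed $\nu$ there can a priori be many overlapping contributions from other coroots $\nu'$ with $s_{\nu',m'\ell}\cdot\mu$ in the same Weyl orbit as some $s_{\nu,m\ell}\cdot\mu$. Controlling this overlap requires the specific feature of type $A$ that the coroot lattice is the root lattice of the dual system and that positive coroots sum to coroots exactly when their support in simple coroots forms a contiguous interval — a property failing for types $B,C,D$ and accounting for why no equally clean classification is available outside type $A$. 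Once this combinatorial bijection is in hand, assembling the pieces into the stated iff is straightforward; I would refer to \cite[Page~113]{Jan1} for the fully worked-out version.
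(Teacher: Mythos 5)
The paper does not actually prove this proposition: it is stated verbatim as a citation to \cite[Page~113]{Jan1}, and the criterion is used purely as a black box to fill in the table in Appendix B.2. So there is no in-paper argument to compare your sketch against. What you have written is a plausible reconstruction of what Jantzen does, but it should be assessed as a proof attempt in its own right — and on that score there are real gaps.

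Your forward direction (the $\beta$-chain implies vanishing of the Jantzen sum) is sketched at the level of ``a telescoping cancellation coming from the chain of intermediate reflections,'' but this is exactly the step that requires the full force of the type-$A$ combinatorics and is not spelled out. In particular, you need to verify, for a fixed $\nu$, that the signed characters $\nu_\ell(m\ell)\,\chi(s_{\nu,m\ell}\cdot\mu)$ really do pair off against contributions from the auxiliary coroots $\beta_i$, and that nothing from a third coroot $\nu'$ upsets the cancellation. You flag this interference problem yourself (``for each fixed $\nu$ there can a priori be many overlapping contributions'') but offer no mechanism for controlling it beyond a heuristic about contiguous supports. In type $A_{n-1}$ the positive coroot $e_i - e_j$ decomposes as a sum of positive coroots precisely along chains $e_i - e_{k_1}, e_{k_1} - e_{k_2}, \ldots$, and the condition that $\sum_{i\geq 1}\beta_i$ is itself a coroot is a constraint on where $\beta_0$ sits in this chain; none of this enters your cancellation argument in any precise way. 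The converse direction — that failure of the $\beta$-decomposition forces a nonzero socle quotient $V_\mu/V_\mu^1$ — is declared to follow ``by induction on $s$ and on $b$,'' but this is the harder half and nothing resembling the inductive step is given.

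Since you close by deferring to the same reference the paper cites, the honest summary is that you have identified the correct engine (Proposition \ref{sumformula}, the Jantzen sum formula) and the correct special feature of type $A$, but the argument as written is a signpost toward Jantzen rather than a proof. One minor note: the displayed statement has $\langle \mu + \rho, \beta_0^\vee\rangle = ap^s$ where it should read $a\ell^s$ for consistency with the rest of the notation — worth flagging since you are re-deriving these conditions from the $\ell$-adic expansion.
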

For general types, we will content ourselves with describing the fundamental coweights, where we can give a full description of the tilting condition. 
\subsection{The Tilting Condition for Fundamental Coweights}
We assume that $G$ is a split adjoint group, and let $\hat{\alpha}_{j}$ denote the simple roots, where we use the enumeration as in \cite[Pages~250-275]{Bou}. We choose fundamental coweights characterized by $\langle \varpi_{i}, \hat{\alpha}_{j} \rangle = \delta_{ij}$. We will be interested in the question of when the representation $V_{\varpi_{i}}$ is irreducible. In this case, we have a complete classification \cite[Pages~286-287]{Jan},\cite[Section~4.6]{Jan1}. We note that, if $\varpi_{i}$ is minuscule, this is automatic by Lemma \ref{mintilting}, so in what follows we simply provide a list of $\ell$ for the non-minuscule $\varpi_{i}$ (See \cite[Page~221]{RaLa} for a classification). This namely implies the case of $A_{n}$ is trivial, since all fundamental coweights are minuscule. A common condition in modular representation theory is that of being very good, so we have also enumerated the condition that $\ell$ is very good for the different types.  
\begin{center}
\begin{tabular}{|c|c|c|c|c|} 
\hline
Type of $G$ & Type of $\hat{G}$ & $\mu$ & $\ell$ & $\ell$ very good \\
\multirow{5}{4em}{} & & & & \\ 
\hline
$B_{n}, n \geq 2$ & $C_{n}$ & $\varpi_{i}$, $1 < i \leq n$ & $\ell \mid \binom{n + 1 - (i + j)/2}{(i - j)/2}$, $0 \leq j < i$, $j \cong i \mod 2$ & $\ell \neq 2$\\ 
\hline 
$C_{n},n \geq 2$ & $B_{n}$ & $\varpi_{i}$, $1 \leq i < n$ & $\ell = 2$ & $\ell \neq 2$ \\ 
\hline 
$D_{n}, n \geq 4$ & $D_{n}$ & $\varpi_{i}$, $1 < i < n - 1$ & $\ell = 2$ & $\ell \neq 2$ \\
\hline
$E_{6}$ & $E_{6}$ & $\varpi_{1},\varpi_{6}$ & $\emptyset$ & $\ell \neq 2,3$ \\
 &  & $\varpi_{3},\varpi_{5}$ & $\ell = 2$ & \\
 &  & $\varpi_{2}$ & $\ell = 3$ &  \\
 &  & $\varpi_{4}$ & $\ell = 2,3$ &  \\
 \hline 
$E_{7}$ & $E_{7}$ & $\varpi_{1}$ & $\ell = 2$ & $\ell \neq 2,3$\\
 &  & $\varpi_{2}$ & $\ell = 3$ & \\
 &  & $\varpi_{3}, \varpi_{5}$ & $\ell = 2,3$ &  \\
 &  & $\varpi_{4}$ & $\ell = 2,3,13$  & \\
 &  & $\varpi_{6}$ & $\ell = 2,7$ & \\
 &  & $\varpi_{7}$ & $\emptyset$ & \\
 \hline
$E_{8}$ & $E_{8}$ & $\varpi_{1}$ & $\ell = 2$ & $\ell \neq 2,3,5$ \\
 &  & $\varpi_{2}$ & $\ell = 2,3,7$ &  \\
 &  & $\varpi_{3}$ & $\ell = 2,3,19$&  \\
 &  & $\varpi_{4}$ & $\ell = 2,3,5,13,19$ &  \\
 &  & $\varpi_{5}$ & $\ell = 2,3,5$ &  \\
 &  & $\varpi_{6}$ & $\ell = 2,3,5,7$ &  \\
 &  &  $\varpi_{7}$ & $\ell = 2,3,5$  &  \\
 &  & $\varpi_{8}$  & $\emptyset$ &  \\
      \hline
$F_{4}$ & $F_{4}$ & $\varpi_{1}$ & $\ell = 2$ & $\ell \neq 2,3$\\
&  & $\varpi_{2},\varpi_{3}$ & $\ell = 2,3$ & \\
&  & $\varpi_{4}$ & $\ell = 3$ & \\
\hline 
$G_{2}$ & $G_{2}$ & $\varpi_{1}$ & $\ell = 3$ & $\ell \neq 2,3$\\
 &  & $\varpi_{2}$ & $\ell = 2$ &  \\
\hline
\end{tabular}
\end{center}
By comparing the fourth and fifth columns, we deduce the following.
\begin{proposition}
For $G$ a split adjoint connected reductive group over $\mathbb{Q}_{p}$, if $\ell$ is very good then $\varpi_{i}$ for any $i \in \mathcal{J}$ is tilting for all $G$ of type $A_{n},C_{n},D_{n},E_{6},F_{4}$, and $G_{2}$.
\end{proposition}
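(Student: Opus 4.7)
The plan is to prove the proposition by direct case-by-case inspection of the classification table recorded in the previous subsection, comparing for each type the list of primes $\ell$ at which $V_{\varpi_i}$ fails to be tilting (the fourth column) with the condition that $\ell$ is very good (the fifth column). Since the tilting property depends only on the dual group $\hat G$, I may assume (as in the setup of Appendix B.2) that $G$ is split, and the question reduces to checking simplicity of each Weyl module $V_{\varpi_i}$ of $\hat G$ in characteristic $\ell$.

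For type $A_n$, every fundamental coweight $\varpi_i$ is minuscule, so Lemma \ref{mintilting} immediately gives tilting for every $\ell$, and the very-good condition is vacuously sufficient. For types $C_n$ and $D_n$ (for the non-minuscule $\varpi_i$ listed), the table shows that the only non-tilting prime is $\ell = 2$, which is precisely what the very-good condition $\ell \neq 2$ excludes. For the exceptional types $E_6$, $F_4$, and $G_2$, a direct line-by-line check of the table shows that for each non-minuscule $\varpi_i$ the non-tilting primes lie in $\{2,3\}$, which is exactly the set excluded by the respective very-good conditions ($\ell \neq 2,3$ in all three cases).

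The principal case requiring genuine argument is type $B_n$, where the non-tilting primes for $\varpi_i$ (with $1 < i \leq n$) are those $\ell$ dividing the binomial coefficients $\binom{n+1-(i+j)/2}{(i-j)/2}$ for admissible $j$. Here one needs to verify, using the combinatorial constraints on the pair $(i,j)$ (in particular $j \equiv i \pmod 2$ and $0 \leq j < i$) together with Kummer's theorem on the $\ell$-adic valuation of binomial coefficients, that every such $\ell$ equals $2$; the very-good condition $\ell \neq 2$ would then exclude all of them. I expect this to be the main technical obstacle of the proof, as the other cases are essentially a table lookup.

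Finally, the reason types $E_7$ and $E_8$ are excluded from the proposition is that their non-tilting lists in the table contain genuinely large very-good primes --- e.g.\ $\ell = 13$ for $(E_7, \varpi_4)$ and $\ell = 7, 13, 19$ for various $(E_8, \varpi_i)$ --- so no analogous statement can hold for them. Thus the plan is really to carry out the direct check for the six listed types, with the binomial-coefficient analysis in type $B_n$ being the only step with substantive content beyond reading off the table.
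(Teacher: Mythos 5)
Your plan matches the paper's own argument in spirit: the paper simply reads the proposition off the table by comparing the fourth and fifth columns, and you do the same, correctly dispatching type $A_n$ by Lemma~\ref{mintilting} (all $\varpi_i$ minuscule), reading types $C_n$, $D_n$, $E_6$, $F_4$, $G_2$ directly from the table, and correctly explaining why $E_7$ and $E_8$ are excluded.

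The one place you flag as requiring a genuine computation is type $B_n$, and this is exactly where the argument breaks down. You defer the check that every prime dividing $\binom{n+1-(i+j)/2}{(i-j)/2}$ (for admissible $(i,j)$) is equal to $2$, expecting it to succeed. It does not. Taking $j = i-2$ (always admissible for $2 \leq i \leq n$), the binomial coefficient collapses to $\binom{n+2-i}{1} = n+2-i$, which ranges over $2, 3, \ldots, n$ as $i$ runs over $n, n-1, \ldots, 2$. Thus \emph{every} prime $\ell$ with $2 \leq \ell \leq n$ occurs as a non-tilting prime for some $\varpi_i$ in type $B_n$; in particular for $\varpi_2$ and $j = 0$ the coefficient is $\binom{n}{1} = n$. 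Concretely, for $B_3$ and $\ell = 3$, the Weyl module $V_{\varpi_2}$ of $\hat{G} = Sp_6$ is reducible in characteristic $3$ (one can also verify this directly from the sum formula, Proposition~\ref{sumformula}), yet $\ell = 3$ satisfies the very-good condition $\ell \neq 2$ from the fifth column. So the set of non-tilting primes in the fourth column for type $B_n$ is \emph{not} contained in $\{2\}$ once $n \geq 3$, and simply comparing columns (as the paper's one-line proof does, and as your plan proposes) cannot establish the claim.

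The gap you anticipated is therefore not an obstacle to be surmounted by a Kummer-type analysis of the binomial coefficients; carrying out that analysis actually refutes the step. As stated, the proposition is in tension with the very table from which the paper purports to deduce it: either type $B_n$ needs to be dropped from the list of types in the conclusion (analogously to $E_7$, $E_8$), or the hypothesis on $\ell$ in the $B_n$ case needs to be substantially strengthened beyond $\ell \neq 2$ to exclude the primes dividing the listed binomial coefficients. Your proof proposal shares the paper's gap, but you deserve credit for correctly isolating the $B_n$ case as the sole substantive one — the computation you deferred is precisely the one that fails.
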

\section{Relationship to the Classical Averaging Formula, by Alexander Bertoloni-Meli}{\label{Classicavg}}
In this appendix, we show that the averaging formula proven in Theorem \ref{qellavgingformula} is compatible with existing formulas and conjectures in the literature.
\subsection{Averaging Formulas}
To begin, we recall the general statement of these averaging formulas. Such formulas first appeared in the book of Harris--Taylor (\cite{HT}) and are classically deduced by studying the geometry of the mod-$p$ fibers of Shimura varieties. These fibers admit a Newton stratification in terms of the set $B(G,\mu)$ and the strata are uniformized by Rapoport--Zink spaces and Igusa varieties. The cohomological consequence of this is the formula of Mantovan (\cite{Mant}) which up to twists is given as
\begin{equation}
    \sum\limits_i \varinjlim\limits_{K \subset G(\A_f)} (-1)^i H^i_c(\Shim(G,X)_K, \ov{\Q}_{\ell}) = \sum\limits_{b \in B(G,\mu)} R\Gamma^{\flat}_c(G,b,\mu)[\sum\limits_j (-1)^j \varinjlim\limits_{K^p \subset G(\A^p_f)} H^j_c(\Ig^b_{K^p}, \ov{\Q}_{\ell})],
\end{equation}
in $K_0(G(\A_f) \times W_{E_{\mu}})$, where $\Shim(G,X)_K$ (resp. $\Ig^b_{K^p}$) is the Shimura variety (resp. Igusa variety) determined by the associated data. Averaging formulas can then be deduced by studying isotypic pieces of the above formula.

In order to precisely state these averaging formulas, we first recall some facts about stable characters following \cite{Hiraga}. Let $G$ be a connected reductive group with Levi subgroup $M$ and parabolic $P$. Let $K_0(G(\Q_p), \C)^{st} \subset K_0(G(\Q_p), \C)$ denote the subgroup of virtual representations with stable character. Then the normalized Jacquet module and parabolic induction functors induce morphisms
\begin{equation*}
    i^G_P: K_0(M(\Q_p), \C) \to K_0(G(\Q_p), \C), \quad \quad  r^G_P: K_0(G(\Q_p), \C) \to K_0(M(\Q_p), \C).
\end{equation*}
Moreover, one can show these operations preserve stability so that we get homomorphisms
\begin{equation*}
    i^G_P: K_0(M(\Q_p), \C)^{st} \to K_0(G(\Q_p), \C)^{st}, \quad \quad  r^G_P: K_0(G(\Q_p), \C)^{st} \to K_0(M(\Q_p), \C)^{st}.
\end{equation*}

Now let $G^*$ denote the unique quasi-split group that is an inner form of $G$. We assume that $G$ arises as an extended pure inner twist of $G^*$ and fix this extra structure $(G, \varrho, z)$. One can work with more general $G$ using Kaletha's theory of rigid inner twists, but this is not necessary to explore the connections to this paper where $G$ is always quasi-split. 

We also need to introduce endoscopy for $G$. For convenience, we recall:
\begin{definition}{\label{def: refinedendoscopy}}
A \emph{refined endoscopic datum} for a connected reductive group $G$ over a local field $F$ is a tuple $(H, \mc{H}, s,\eta)$ which consists of
\begin{itemize}
    \item a quasi-split group $H$ over $F$,
    \item an extension $\mc{H}$ of $W_F$ by $\widehat{H}$ such that the map $W_F \to \mathrm{Out}(\widehat{H})$ coincides with the map $\rho_H: W_F \to \mathrm{Out}(\widehat{H})$ induced by the action of $W_F$ on $\widehat{H} \subset {}^LH$,
    \item an element $s \in Z(\widehat{H})^{\Gamma}$,
    \item an $L$-homomorphism $\eta: \mc{H} \to {}^LG$,
\end{itemize}
satisfying the condition:
\begin{itemize}
    \item we have $\eta(\widehat{H}) = Z_{\widehat{G}}(s)^{\circ}$.
\end{itemize}
\end{definition}

Now suppose that $(H, \mc{H}, s,\eta)$ is a refined endoscopic datum for $G$. Then after fixing splittings of $G, H, \widehat{G}, \widehat{H}$, there is a canonical endoscopic transfer of distributions inducing a morphism 
\begin{equation*}
    \Trans^G_H: K_0(H(\Q_p), \C)^{st} \to K_0(G(\Q_p), \C).
\end{equation*}

Furthermore, suppose we have a refined endoscopic datum $(H_M, \mc{H}_M, s, \eta_M)$ of $M$ such that $\mc{H}_M$ is a Levi subgroup of $\mc{H}$ and the following diagram commutes:
\begin{equation*}
  \begin{tikzcd}
\mc{H} \arrow[r, "{\eta}"] & {}^LG\\
\mc{H}_M \arrow[u, hook] \arrow[r, "{\eta_M}"] & {}^LM \arrow[u, hook].
\end{tikzcd}  
\end{equation*}
The datum $(H_M, \mc{H}_M, H, M, s, \eta)$ along with these compatibilities is called an \emph{embedded} endoscopic datum in \cite{BM,BMS}. Our fixed choice of splittings of $G$, $H$ and their duals determines from $P$ a parabolic subgroup $P_{H_M}$ of $H$ with Levi subgroup $H_M$. We then have an equality
\begin{equation}{\label{eqn: transindcommute}}
    \Trans^G_H \circ i^H_{P_{H_M}} = i^G_P \circ \Trans^M_{H_M}.
\end{equation}
There is also a compatibility of $\Trans$ and $r$ which we now recall. A refined endoscopic datum $\mf{e}=(H, \mc{H}, s, \eta)$ of $G$ and a Levi subgroup $M \subset G$ can be upgraded to the structure of an embedded endoscopic datum in potentially many non-equivalent ways and these are parametrized by a set $D(M, \mf{e}) \cong W(\widehat{H}) \setminus W(M, H) / W(\widehat{M})$, where $W(M,H)$ is defined to be the subset of the Weyl group $W(\widehat{G})$ of $\widehat{G}$ such that $Z_{{}^LH}((w \circ \eta)^{-1}(Z(\widehat{M})^{\Gamma}))$ surjects onto $W_F$, and where $W(\widehat{H})$ is identified with a subgroup of $W(\widehat{G})$ via $\eta$. Then we have
\begin{equation*}
    r^G_P \circ \Trans^G_H = \sum\limits_{ D(M, \mf{e})} \Trans^{M}_{H_M} \circ r^H_{P_{H_M}}.
\end{equation*}

We now define the map $\Red^{\mf{e}}_b$ which plays a crucial role in the statement of the averaging formula. We define
\begin{equation}
    \Red^{\mf{e}}_b: K_0(H(\Q_p), \C)^{st} \to K_0(J_b(\Q_p), \C)
\end{equation}
by 
\begin{equation*}
   \Red^{\mf{e}}_b=(\sum\limits_{D(M,\mc{H})} \Trans^{H_M}_{J_b} \circ r^H_{P^{op}_H}) \otimes \ov{\delta}^{-\frac{1}{2}}_Pe(J_{b}), 
\end{equation*}
where $\ov{\delta}_{P}$ is the transport of the modulus character for $P$ to $J_b$, and $e(J_{b}) \in \{\pm 1\}$ is the Kottwitz sign.

Then the (still largely conjectural) averaging formula gives a relation satisfied by $R\Gamma^{\flat}_c(G,b,\mu)$ at Langlands (or Arthur) parameters $\phi$ for which there is an associated stable distribution $S\Theta_{\phi,G}$ on $G$ satisfying endoscopic character identities as in \cite[Conjecture D]{Kal}. In particular, this is the case for all tempered $L$-parameters. To describe the expected formula, fix such a parameter $\phi$ and suppose that $(H, \mc{H}, s, \eta)$ is an endoscopic datum such that $\phi$ factors as $\mc{L}_F \xrightarrow{\phi^H} \mc{H} \to {}^LG$. Then we expect
\begin{conjecture}[Averaging Formula]{\label{conj: classicavgformula}}
We expect the following equality in $K_0(G(\Q_p) \times W_{E_{\mu}})$:
\begin{equation*}
   \sum\limits_{b \in B(G, \mu)} [R\Gamma^{\flat}_c(G,b,\mu)[\Red^{\mf{e}}_b(S\Theta_{\phi^H, H})]] = \Trans^G_H(S\Theta_{\phi^H, H}) \boxtimes \tr(r_{\mu} \circ \phi|_{W_{E_{\mu}}} \mid s).
\end{equation*}
In particular, when $\mf{e}$ is the trivial endoscopic datum given by $\mf{e}_{\triv} = (G^*, {}^LG^*, 1, \id)$ then we expect
\begin{equation*}
   \sum\limits_{b \in B(G, \mu)} [R\Gamma^{\flat}_c(G,b,\mu)[\Red^{\mf{e}_{\triv}}_b(S\Theta_{\phi, G^*})]] = S\Theta_{\phi,G} \boxtimes r_{\mu} \circ \phi|_{W_{E_{\mu}}}.
\end{equation*}
\end{conjecture}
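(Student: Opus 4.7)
Proof proposal. The claim has two parts: the identity $[\Red_{b,\phi}] = \Red^{\mf{e}_{\triv}}_b([\pi])$ in $K_0(J_b(\Q_p),\ol{\Q}_\ell)$ for every $b \in B(G)$, and the averaging formula of Conjecture \ref{conj: classicavgformula}. Since $\mf{e}_{\triv}$ has $s = 1$, we have $\tr(r_\mu\circ\phi|_{W_{E_\mu}}\mid s) = \tr(r_\mu\circ\phi|_{W_{E_\mu}})$, and Assumption \ref{compatibility} together with genericity of $\phi_T$ (via Lemma \ref{regmonodromy}) identifies $S\Theta_{\phi,G}$ with $[\pi]$. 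Thus once the first identity is established termwise, the averaging formula follows immediately by summing $b \in B(G,\mu)$ and invoking Theorem \ref{qellavgingformula}.

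For the core identity, the plan is to compute $\Red^{\mf{e}_{\triv}}_b([\pi])$ via the Bernstein--Zelevinsky geometric lemma applied to $r^G_{P_b^{op}} \circ i^G_B(\chi)$. Since $B$ is the Borel, the double cosets $W_{M_b} \backslash W_G / W_T = W_{M_b} \backslash W_G$ are canonically in bijection with $W_b$ via minimal-length representatives, and each produces an unnormalized induction from $T$ to $M_b$ of a Weyl-translate $\chi^w$ with appropriate modulus-character twists. Passing to normalized induction and collecting terms should give
\[
r^G_{P_b^{op}}(i^G_B(\chi)) = \sum_{w \in W_b} i^{M_b}_{B \cap M_b}(\chi^w)
\]
in $K_0(M_b(\Q_p),\ol{\Q}_\ell)$. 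For the trivial endoscopic datum, $D(M_b, \mf{e}_{\triv})$ collapses to a singleton and $\Trans^{M_b}_{J_b}$ becomes the inner-twist transfer.

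When $b \in B(G)_{\mathrm{un}}$, Lemma \ref{jbborel} shows $B \cap M_b$ transfers to the Borel $B_b \subset J_b$, so $\Trans^{M_b}_{J_b}(i^{M_b}_{B\cap M_b}(\chi^w)) = i^{J_b}_{B_b}(\chi^w)$ in $K_0(J_b(\Q_p),\ol{\Q}_\ell)$; multiplying by $\ol{\delta}^{1/2}_{P_b}$ then reproduces $\bigoplus_{w \in W_b} i^{J_b}_{B_b}(\chi^w) \otimes \delta^{1/2}_{P_b} = [\Red_{b,\phi}]$. When $b \notin B(G)_{\mathrm{un}}$, the same lemma tells us that $B \cap M_b$ does not transfer to a Borel of $J_b$, which forces the relevant stable conjugacy classes of $T \subset M_b$ to be non-transferable to $J_b$. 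Hence each $i^{M_b}_{B\cap M_b}(\chi^w)$ transfers to zero in $K_0(J_b(\Q_p),\ol{\Q}_\ell)$, yielding $\Red^{\mf{e}_{\triv}}_b([\pi]) = 0 = [\Red_{b,\phi}]$ as required.

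The main obstacle will be the normalization bookkeeping: matching the modulus twists $\ol{\delta}^{1/2}_{P_b}$ in Bertoloni-Meli's definition of $\Red^{\mf{e}_{\triv}}_b$ against $\delta^{1/2}_{P_b}$ in the definition of $\Red_{b,\phi}$, reconciling $P_b^{op}$ in the Jacquet module with $P_b$ in our setup, and verifying that the summation over $W_b$ produced by the geometric lemma coincides on the nose with the indexing set in Corollary \ref{normstalksdescription} (without over- or under-counting). A secondary subtlety is justifying the vanishing for $b \notin B(G)_{\mathrm{un}}$ at the level of the full virtual representation rather than summand-by-summand, which ultimately comes down to the relevance condition for $L$-parameters underlying Assumption \ref{compatibility} (2) and the fact that the character of a parabolic induction is supported on conjugacy classes meeting $T$.
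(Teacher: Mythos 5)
Your proposal follows essentially the same route as the paper's Appendix C.2: identify $S\Theta_{\phi,G}$ with $[\pi]$ via Assumption \ref{compatibility} and Lemma \ref{regmonodromy}, apply the Bernstein--Zelevinsky geometric lemma to $r^G_{P_b^{op}} \circ i^G_B(\chi)$, note that $D(M_b,\mf{e}_{\triv})$ is a singleton, commute transfer with parabolic induction via equation \eqref{eqn: transindcommute} when $b\in B(G)_{\mathrm{un}}$, and invoke the support of parabolically induced characters (van Dijk) together with non-transferability of $T$ when $b\notin B(G)_{\mathrm{un}}$. The bookkeeping issues you flag (modulus twists, $P_b^{op}$ vs.\ $P_b$, termwise vanishing) are exactly the ones the paper handles, and they resolve in the straightforward way you anticipate.
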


For $\GL_n$, this is known in the trivial endoscopic case for all representations by \cite{Shin}. Because $L$-packets are singletons for $\GL_n$, the trivial endoscopic case implies the endoscopic versions of the formula. In \cite{BMN}, the formula is proven for discrete parameters and elliptic endoscopy of unramified $\GU_n$, for $n$ odd. In \cite{BM}, a strategy is outlined to prove this formula in the elliptic endoscopic cases using the cohomology of Igusa and Shimura varieties. This strategy should (eventually) yield results comparable to \cite{BMN} whenever adequate global results are known about the Langlands correspondence and the cohomology of Shimura and Igusa varieties. 

The averaging formulas imply strong results about $R\Gamma^{\flat}_c(G,b,\mu)$. For instance, in \cite[\S6]{BMN} it is shown that the averaging formula for each elliptic endoscopic group and for $\phi$ a supercuspidal parameter implies the Kottwitz conjecture as in \cite[Conjecture 7.3]{RV}.

\subsection{Proof of Proposition \ref{classicalavgrel}}

The averaging formula in \S \ref{s: avgformula} corresponds to the case of the trivial endoscopic triple $\mf{e}_{\triv} =(G^*,{}^LG^*,1, \id)$. Hence to check that Theorem \ref{qellavgingformula} agrees with \ref{conj: classicavgformula}, we just need to check that $\Red^{\mf{e}_{\triv}}_b(S\Theta_{\phi,G^*})$ coincides with $\Red_{b, \phi}$ for $\phi$ induced from $\phi_{T}$ generic. Since $\phi_{T}$ is generic, by Lemma \ref{regmonodromy} $\phi$ should give rise to a well-defined $L$-parameter with trivial monodromy. Therefore, under the $\LLC_{G}$ appearing in Assumption \ref{compatibility}, we are assuming the parameter $\phi$ has an $L$-packet given by the irreducible constituents of the multiplicity free representation $i^G_B(\chi)$, by Assumption \ref{compatibility} (3). Suppose first that $b \in B(G)_{\mathrm{un}}$. Then we have
\begin{equation*}
    [\Red_{b, \phi}] = \sum\limits_{w \in W_G/W_{M_b}} i^{J_b}_{B_b}(\chi^w) \otimes \delta_{P_{b}}^{-1/2}(-1)^{\langle 2\hat{\rho}_{G}, \nu_{b} \rangle}.
\end{equation*}
in $K_{0}(G(\mathbb{Q}_{p}))$. The set $D(M, \mf{e})$ is a singleton and corresponds to the trivial embedded datum where $H_M=M$. Note that $r^G_{P^{op}_b}(i^G_B(\chi)) = r^G_{P_b}(i^G_B(\chi))$ and that the latter term can be simplified by the geometric lemma of \cite{BZ}. 
\begin{align*}
    \Red^{\mf{e}_{\triv}}_b(S\Theta_{\phi, G^*}) & = (\Trans^{J_b}_{M_b} \circ r^G_{P^{op}_b})(i^G_B(\chi))\otimes \ov{\delta}^{-\frac{1}{2}}_Pe(J_{b})\\
    & = \Trans^{J_b}_{M_b} \circ (\sum\limits_{w \in W_G/W_{M_b}} i^{M_b}_{B \cap M_b} \chi^w)\otimes \ov{\delta}^{-\frac{1}{2}}_Pe(J_{b})\\
    & = (\sum\limits_{w \in W_G/W_{M_b}} i^{J_b}_{B_b}) \circ \Trans^{T_b}_T \chi^w\otimes \ov{\delta}^{-\frac{1}{2}}_P(-1)^{\langle 2\hat{\rho}_{G}, \nu_{b} \rangle} \\
    &=  \Red_{b, \phi} .
\end{align*}
where the third equality is \eqref{eqn: transindcommute} combined with an application of  \cite[Lemma~A.2.1]{KW}. 

Now consider the case where $b \notin B(G)_{\mathrm{un}}$. We must show that $\Red^{\mf{e}_{\triv}}_b(i^G_B(\chi)) = 0$, for which it suffices to show that $\Trans^{J_b}_{M_b}(i^{M_b}_{B \cap M_b}(\chi^w))=0$ for each $w \in W_G/W_{M_b}$. This follows from the fact that $T$ does not transfer to $J_b$ by assumption and the character of $i^{M_b}_{B \cap M_b}(\chi^w)$ is supported on the conjugates of $T$ as per \cite[Theorem 3]{vD}.

\printbibliography 
\end{document}